\newtheorem{theorem}{Theorem}[section]
\newtheorem{proposition}[theorem]{Proposition}
\newtheorem{lemma}[theorem]{Lemma}
\newtheorem{corollary}[theorem]{Corollary}
\newtheorem{conjecture}[theorem]{Conjecture}
\theoremstyle{definition}
\newtheorem{definition}{Definition}[section]
\newtheorem{example}{Example}[section]
\newtheorem{remark}[example]{Remark}
\numberwithin{equation}{subsection}
\newcommand{\R}{\mathbb{R}}
\newcommand{\Z}{\mathbb{Z}}
\newcommand{\C}{\mathbb{C}}
\newcommand{\CP}[1]{\mathbb{CP}^{#1}}
\newcommand{\RP}[1]{\mathbb{RP}^{#1}}
\newcommand{\To}{\rightarrow}
\newcommand{\del}[2]{\frac{\partial #1}{\partial #2}}
\newcommand{\wedgestar}{\Lambda \! ^*}
\newcommand{\fmod}[2]{#1\!\operatorname{-}\!#2}
\newcommand{\fmodf}[2]{#1\!\operatorname{-mod-}\!#2}
\newcommand{\Cl}{C \! \ell}
\newcommand{\llbracket}{[\![}
\newcommand{\rrbracket}{]\!]}
\newcommand{\cA}{\mathcal{A}}
\newcommand{\CO}{\mathcal{CO}}
\newcommand{\OC}{\mathcal{OC}}
\newcommand{\cG}{\mathcal{G}}
\newcommand{\cF}{\mathcal{F}}
\newcommand{\cM}{\mathcal{M}}
\newcommand{\hcM}{\widehat{\mathcal{M}}}
\newcommand{\scrA}{\mathscr{A}}
\begin{document}
\title{On the Fukaya category of a Fano hypersurface in projective space}
\author{Nick Sheridan}
\address{Institute for Advanced Study, Einstein Drive, Princeton NJ 08540}
\email{nicks@math.ias.edu}
\thanks{This work was partially supported by the National Science Foundation through Grant number DMS-1310604, and under agreement number DMS-1128155.
Any opinions, findings and conclusions or recommendations expressed in this material are those of the author and do not necessarily reflect the views of the National Science Foundation.}
\begin{abstract}
This paper is about the Fukaya category of a Fano hypersurface $X \subset \CP{n}$.
Because these symplectic manifolds are monotone, both the analysis and the algebra involved in the definition of the Fukaya category simplify considerably.
The first part of the paper is devoted to establishing the main structures of the Fukaya category in the monotone case: the closed--open string maps, weak proper Calabi--Yau structure, Abouzaid's split-generation criterion, and their analogues when weak bounding cochains are included.
We then turn to computations of the Fukaya category of the hypersurface $X$: we construct a configuration of monotone Lagrangian spheres in $X$, and compute the associated disc potential.
The result coincides with the Hori--Vafa superpotential for the mirror of $X$ (up to a constant shift in the Fano index $1$ case).
As a consequence, we give a proof of Kontsevich's homological mirror symmetry conjecture for $X$.
We also explain how to extract non-trivial information about Gromov--Witten invariants of $X$ from its Fukaya category.
\end{abstract}
\maketitle

\tableofcontents

\section{Introduction}

This paper computes the Fukaya category of a Fano hypersurface in projective space, by applying techniques which were developed in \cite{Sheridan2015} to compute the Fukaya category of a Calabi--Yau hypersurface in projective space. 
This enables us to prove a version of Kontsevich's homological mirror symmetry conjecture \cite{Kontsevich1994,Kontsevich1998}.
One main challenge of this work was to understand the `right' way to turn our computations into a proof of homological mirror symmetry: i.e., the way that is most likely to generalize to other monotone symplectic manifolds.

To start with, one has to understand the nature of the Fukaya category (and Gromov--Witten invariants) of a monotone symplectic manifold.
We draw on the results of many authors to give a reasonably complete survey of the construction of these invariants, and the main results about them.
The main results are summarized in \S \ref{subsec:imonfuk} of this introduction.

Then, one has to understand what homological mirror symmetry means for Fano hypersurfaces.
We give our version in \S \ref{subsec:ihms} of the introduction, where we state our main results precisely, then discuss the implications of our results for the interpretation of homological mirror symmetry for more general monotone symplectic manifolds.
In this introduction, the results stated as `theorems' and so on are given precisely, but some of the explanatory text glosses over technical details to explain the important ideas.

\subsection{The monotone Fukaya category}
\label{subsec:imonfuk}

When $X$ is a monotone symplectic manifold, one can use `classical' pseudoholomorphic curve theory to define Gromov--Witten invariants (see \cite{Ruan1995,mcduffsalamon}) and Lagrangian Floer theory (see \cite{Oh1993,Oh1995}), without appealing to the heavy analytic machinery required for the fully general definitions \cite{fooo}.
Furthermore, the algebra involved in studying the Fukaya category simplifies considerably.
We give a survey of the construction of the Fukaya category of a monotone symplectic manifold in \S \ref{sec:monfuk}, together with the associated algebraic structures.
Much of the material is not original, nor is it as general as possible: a more general treatment will appear in \cite{Abouzaid2012}.
Nevertheless, we feel it is useful to collect these results in a unified way in this simple case: it displays many of the crucial features of the general theory, with many fewer technical details, and furthermore some of the features (e.g., the decomposition into eigenvalues of $c_1 \star$) are specific to the monotone case.
We also note that our proof that the closed--open and open--closed string maps are dual (Proposition \ref{proposition:phico}) is original: it avoids the need to construct a strictly cyclic structure on the Fukaya category, and uses instead the notion of a weak proper Calabi--Yau structure.

In this section, we summarize the main results of \S \ref{sec:monfuk}.

\begin{remark}
\label{remark:coeff}
First, a remark about coefficients.
Our algebraic structures (algebras, categories and so on) will be $\C$-linear.
In particular, we do not use a Novikov ring.
This is possible by monotonicity: the infinite sums which the Novikov ring is supposed to deal with are in fact finite in this setting.
We could also have chosen to work over a Novikov polynomial ring:
\begin{equation}
\left\{ \sum_{j=0}^N c_j r^{\lambda_j}: c_j \in \C, \lambda_j \in \R_{\ge 0}\right\}
\end{equation}
(by weighting each count of holomorphic maps by $r^{symp. \, area}$), or over its completion with respect to the energy filtration (the Novikov ring $\Lambda_0$), or over the field of fractions thereof (the Novikov field $\Lambda$).
All of the results in this paper have variations which hold over these various coefficient rings.
We have chosen to work over $\C$ to make things as conceptually simple as possible.
\end{remark}

We recall the \emph{quantum cohomology ring} \cite{Ruan1995,mcduffsalamon}.
We define $QH^*(X):=H^*(X;\C)$, and equip it with the quantum cup product, which we denote by $\star$.
It is a graded, supercommutative, unital $\C$-algebra, and furthermore a Frobenius algebra with respect to the intersection pairing:
\begin{equation} \langle \alpha \star \beta, \gamma \rangle = \langle \alpha, \beta \star \gamma \rangle\end{equation}
(we caution that the grading group is not $\Z$, but we won't go into details about the grading in this introduction).
We denote the unit by $e \in QH^0(X)$, and recall it coincides with the unit $e \in H^0(X)$.

In \S \ref{subsec:monfuk}, we define the monotone Fukaya category $\cF(X)$, following \cite{Seidel2008}, with minor modifications (compare \cite{Biran2013}).
The objects of $\cF(X)$ are monotone Lagrangian submanifolds $L \subset X$ equipped with a brane structure (grading, spin structure, $\C^*$-local system), such that the image of $\pi_1(L)$ is trivial in $\pi_1(X)$.
Such Lagrangians are always orientable, so their minimal Maslov number is $\ge 2$.
For each such $L$, the signed count of Maslov index $2$ discs passing through a generic point on $L$, weighted by the monodromy of the local system around the boundary, defines a number $w(L) \in \C$.

For any two objects $L_0,L_1$, one defines the morphism space $CF^*(L_0,L_1)$ to be the graded $\C$-vector space generated by intersection points between $L_0$ and $L_1$ (perturbing by a Hamiltonian flow to make the intersections transverse).
One defines $A_{\infty}$ structure maps
\begin{equation}\mu^s: CF^*(L_{s-1},L_s) \otimes \ldots \otimes CF^*(L_0,L_1) \To CF^*(L_0,L_s)\end{equation}
for $s \ge 1$ by counting pseudoholomorphic discs with boundary conditions on the $L_j$, weighted by the holonomy of the local systems around the boundary.

These structure maps $\mu^s$ satisfy the $A_{\infty}$ relations, with the sole exception that the differential
\begin{equation} \mu^1: CF^*(L_0,L_1) \To CF^*(L_0,L_1)\end{equation}
does not square to zero:
\begin{equation}
\label{eqn:mu1no0}
\mu^1(\mu^1(x)) = (w(L_0) - w(L_1))x.\end{equation}

\begin{definition}
For each $w \in \C$, we define $\cF(X)_w$ to be the full subcategory whose objects are those $L$ with $w(L) = w$.
Then each $\cF(X)_w$ is individually a graded $A_{\infty}$ category.
\end{definition}

In particular, for any two objects $K,L$ of $\cF(X)_w$, the Floer cohomology group is well defined:
\begin{equation}
 HF^*(K,L) := H^*(CF^*(K,L),\mu^1).
\end{equation}
Furthermore, each $\cF(X)_w$ is cohomologically unital: we denote the cohomological unit by
\begin{equation}
e_L \in HF^*(L,L)
\end{equation}

\begin{proposition}
\label{proposition:co0} (see \cite{Auroux2007}, and our \S \ref{subsec:co})
For any object $L$ of $\cF(X)_w$, there is a unital $\C$-algebra homomorphism
\begin{equation} \CO^0: QH^*(X) \To HF^*(L,L).\end{equation}
\end{proposition}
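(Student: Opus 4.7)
The plan is to define $\mathcal{CO}^0$ at the cochain level by counting pseudoholomorphic disks with boundary on (a Hamiltonian perturbation of) $L$, carrying one interior marked point constrained by a cycle representing the input class and one boundary marked point serving as the output, then to verify the chain-map, unital, and multiplicative properties via standard cobordism analyses of moduli spaces.

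First I would fix a generic $\omega$-tame almost complex structure $J$ and a Hamiltonian perturbation making $L$ transverse to its pushoff. For each $\alpha \in H^*(X)$ I would choose a generic pseudocycle representative $Z_\alpha \subset X$. For each generator $x$ of $CF^*(L,L)$, consider the moduli space $\mathcal{M}(x; Z_\alpha)$ of $J$-holomorphic disks with one interior marked point $z_{\mathrm{in}}$ and one boundary marked point $z_{\mathrm{out}}$, modulo biholomorphisms fixing $z_{\mathrm{out}}$, satisfying $u(z_{\mathrm{in}}) \in Z_\alpha$ and $u(z_{\mathrm{out}}) = x$. Monotonicity and the condition Maslov index $\geq 2$ yield standard transversality and Gromov compactness for generic data, and the cochain-level closed-open map is
\begin{equation}
\mathcal{CO}^0(\alpha) := \sum_x \#\,\mathcal{M}(x; Z_\alpha) \cdot x,
\end{equation}
counted with signs and local-system monodromy weights.

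To see that $\mathcal{CO}^0$ descends to cohomology, I would analyze the codimension-one boundary of the one-dimensional components of $\mathcal{M}(x; Z_\alpha)$. The boundary decomposes into: (a) strip-breaking at $z_{\mathrm{out}}$, contributing $\mu^1(\mathcal{CO}^0(\alpha))$; (b) the pseudocycle boundary, contributing $\mathcal{CO}^0(\partial Z_\alpha)$, which vanishes for cycles; (c) sphere bubbling at $z_{\mathrm{in}}$, which reproduces the quantum correction to the cohomological differential on the input; and (d) disk bubbling, carrying the factor $w(L) - w(L) = 0$ since both boundary labels agree, exactly as is consistent with \eqref{eqn:mu1no0}. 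Unitality $\mathcal{CO}^0(e) = e_L$ follows from taking $Z_e = X$, no interior constraint: the free $S^1$-action rotating $z_{\mathrm{in}}$ cancels all positive-area contributions, leaving only constant disks, which produce the Floer unit in the usual PSS fashion.

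The main obstacle is multiplicativity, $[\mathcal{CO}^0(\alpha \star \beta)] = [\mathcal{CO}^0(\alpha)] \cdot [\mathcal{CO}^0(\beta)]$ in $HF^*(L,L)$. I would handle this with a parameterized moduli space of disks carrying two interior marked points $z_1, z_2$ incident to $Z_\alpha, Z_\beta$ and one boundary output $x$, cut down to a one-dimensional family by a real constraint on the configuration (e.g.\ $z_1, z_2, z_{\mathrm{out}}$ collinear in a fixed disk model). The codimension-one boundary comprises: (i) $z_1$ and $z_2$ colliding, which forces a sphere bubble passing through both $Z_\alpha$ and $Z_\beta$ and therefore contributes $\mathcal{CO}^0(\alpha \star \beta)$ by the definition of the quantum product; and (ii) a boundary node forming, splitting the disk into two half-disks, each carrying one interior constraint, contributing $\mu^2(\mathcal{CO}^0(\alpha), \mathcal{CO}^0(\beta))$ up to $\mu^1$-exact terms. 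The delicate step is to confirm that no further strata appear in the monotone Maslov $\geq 2$ regime and that all signs line up with the weighted local-system contributions; once this is verified, equating the two boundary contributions yields the algebra-homomorphism property on cohomology.
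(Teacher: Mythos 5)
Your plan follows the same moduli-theoretic blueprint as the paper, with one packaging difference: the paper defines the full closed-open map $\mathcal{CO}\colon QH^*(X)\to HH^*(\mathcal{F}(X)_w)$ (disks with $s\geq 0$ incoming boundary punctures, one outgoing, and one interior marked point), proves it is an algebra homomorphism at the Hochschild level (Proposition \ref{proposition:coalg}), and then obtains $\mathcal{CO}^0$ by composing with the length-zero projection $HH^*(CF^*(L,L))\to HF^*(L,L)$, which is automatically an algebra map. You instead work directly at length zero. This is a legitimate shortcut for the present proposition, though the full $\mathcal{CO}$ is needed elsewhere (for the split-generation criterion), so the paper's route costs nothing extra. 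Your unitality argument via an $S^1$-action is also fine, but slightly heavier than the paper's observation (Remark \ref{remark:co0un}) that the moduli spaces defining $\mathcal{CO}^0(e)$ and $e_L$ count the same objects when the interior constraint is all of $X$.

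There is, however, a genuine gap in your multiplicativity step: the collinearity constraint on $z_1, z_2, z_{\mathrm{out}}$ does not produce the degeneration you describe. Generically, as the parameter runs to the end of such a one-dimensional family, only one of $z_1, z_2$ reaches the boundary circle, so a single boundary node forms, separating the domain into a bubble carrying one interior constraint and a main component that still carries the other interior constraint plus the output. That configuration contributes the length-one part of the Hochschild cochain $\mathcal{CO}(\alpha)$ applied to $\mathcal{CO}^0(\beta)$, not the desired $\mu^2\bigl(\mathcal{CO}^0(\alpha), \mathcal{CO}^0(\beta)\bigr)$, for which one needs a three-component limit: two disk bubbles, each carrying one interior marked point, attached at boundary nodes to a central $\mu^2$-triangle. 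The paper forces this by placing the interior marked points at $\pm t$ on the real axis with the output fixed at $-i$: as $t\to 1$, both marked points reach $\pm 1$ simultaneously, so both bubbles form at once. This coordinated degeneration is the essential feature of the parametrization, not an incidental choice. Separately, your stratum (c) (sphere bubbling ``reproducing the quantum correction to the cohomological differential'') is not a codimension-one stratum of the one-dimensional moduli for a fixed pseudocycle $Z_\alpha$ in the monotone regime, and should not appear in the boundary decomposition; the contribution is simply zero, so that error is harmless, but the bookkeeping should be cleaned up.
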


\begin{proposition}
\label{proposition:coc1} (see \cite{Auroux2007}, and our Lemma \ref{lemma:c1u0})
The map $\CO^0$ satisfies
\begin{equation} \CO^0(c_1) = w \cdot e_L,\end{equation}
where $c_1$ is the first Chern class.
\end{proposition}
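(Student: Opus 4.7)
The plan is to identify $\mathcal{CO}^0(c_1)$ by a direct analysis of the moduli spaces of disks contributing to it, exploiting the monotonicity identity relating Maslov index and first Chern class. Choose a generic pseudocycle $Z$ representing $c_1 \in H^2(X;\C)$. The chain $\mathcal{CO}^0(c_1)$ is then defined by the evaluation map from the moduli space of pseudoholomorphic disks $u:(D^2,\partial D^2)\to (X,L)$ carrying one interior marked point constrained to $Z$ and one boundary output marked point; we wish to identify its cohomology class with $w(L)\cdot e_L$.

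The key input is the identity $c_1 \cdot [u] = \mu(u)/2$ for every disk $u$ with Lagrangian boundary condition on an oriented $L$; this is topological and independent of monotonicity. A routine dimension count shows that, after modding out by $\operatorname{Aut}(D^2)$ and imposing the codimension-$2$ constraint on the interior marked point, the moduli space of disks of Maslov index $\mu$ contributing to $\mathcal{CO}^0(c_1)$ has dimension $n+\mu-2$. Pushing this moduli forward to $L$ by the boundary evaluation produces a chain of dimension $n+\mu-2$ in an $n$-manifold, which is generically zero for $\mu>2$. By monotonicity of $L$ (minimal Maslov number $\geq 2$), only $\mu=2$ disks can contribute.

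For $\mu = 2$, the identity gives $c_1\cdot [u] = 1$, so each Maslov $2$ disk meets $Z$ algebraically in a single point. The plan is then to identify the constrained moduli space of Maslov $2$ disks (with an interior point on $Z$ and one boundary marked point) with the unconstrained moduli of Maslov $2$ disks (with one boundary marked point), by sending each constrained disk to its underlying unconstrained disk — the fibre of this projection over a generic disk $u$ is the finite set $u^{-1}(Z)\subset D^2$, which is a single signed point. Consequently the boundary-evaluation chains agree, and the unconstrained one is by definition the top-dimensional chain on $L$ representing $w(L)\cdot [L]$. Passing to cohomology gives $\mathcal{CO}^0(c_1)=w(L)\cdot e_L$, using that $[L]$ is the cohomological unit $e_L \in HF^0(L,L)$.

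The main obstacle I anticipate is making this pushforward identification rigorous at the chain level in the perturbation scheme used in Section \ref{sec:monfuk}: one must choose the perturbation data for the moduli with and without the interior marked point coherently enough that each Maslov $2$ disk is counted with the same sign on both sides, and that the factorization through $u^{-1}(Z)$ produces no spurious boundary contributions from sphere or disk bubbling (the latter controlled by minimal Maslov $\geq 2$ and by placing $Z$ in generic position with respect to $L$). A subsidiary bookkeeping point is to check consistency with the $\bm{G}$-grading, but since $c_1$ and $e_L$ lie in the same grading class modulo the minimal Maslov number $2$, no obstruction arises there.
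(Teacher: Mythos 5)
Your approach is on the right track but has a genuine gap in the choice of pseudocycle, which is the crux of why the paper's proof of Lemma \ref{lemma:c1u0} works with $2c_1$ and a carefully chosen representative rather than a generic pseudocycle for $c_1$.

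The key issue is the step where you claim that the fibre of the forgetful projection over a Maslov $2$ disk $u$ is a single signed point, via the identity $c_1\cdot[u] = \mu(u)/2 = 1$. That identity is a statement about a particular lift of $c_1 \in H^2(X;\C)$ to a class in $H^2(X,L;\C)$ (namely $\mu/2$), and the intersection count $u \cdot Z$ realizes it only if $Z$ is chosen \emph{disjoint from $L$} and represents that specific relative class. A generic pseudocycle $Z$ Poincar\'e dual to $c_1$ in $X$ will meet $L$ (in a codimension-$2$ subset of $L$), and then $u\cdot Z$ is not a topological invariant of $[u]\in H_2(X,L)$: it jumps as $Z$ is moved across $\partial u$, and the fibre count can depend on the particular disk. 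Worse, if $Z \cap L\neq\emptyset$, constant disks of Maslov index $0$ with interior marked point in $Z\cap L$ enter the zero-dimensional moduli and produce an additional contribution in degree $2$ that your argument silently discards. The paper sidesteps both problems at once by taking $f: A \to X\setminus L$ to be a pseudocycle Poincar\'e dual to the Maslov class $\mu\in H^2(X,L)$ --- whose image in $H^2(X)$ is the integral class $2c_1$, not $c_1$ --- so that $u\cdot f = \mu(u) = 2$ is forced, constant bubbles cannot carry the constraint, and the result $\mathcal{CO}^0(2c_1)=2w(L)\cdot e_L$ divides through by $2$ over $\C$. To repair your proof you would need to replace ``generic $Z$'' by ``$Z$ disjoint from $L$ and representing $\mu/2$ in $H^2(X,L;\C)$'' and justify why that choice is admissible; you cannot get $u\cdot Z = 1$ for free.

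The secondary issue is the one you flag yourself: the forgetful projection from the constrained moduli to the moduli defining $w(L)$ is not literally a map of the regularized moduli spaces, because the $\mathcal{CO}^0$ moduli has an outgoing boundary puncture with Deligne--Mumford-consistent perturbation data, while the $w(L)$ moduli uses a boundary marked point and the constant almost-complex structure $J_L$. The paper's proof of Lemma \ref{lemma:c1u0} bridges this by the explicit one-parameter family of domains in Figure \ref{subfig:Fig4b}, which degenerates the interior marked point to the boundary so a $J_L$-holomorphic bubble carrying it breaks off; the identification of that boundary stratum with (a multiple of) $w(L)\cdot e_L$ is where the argument actually closes. Naming this as an ``anticipated obstacle'' is honest, but without the degeneration the chain-level identification you want is not established, and the cobordism between the two perturbation schemes is precisely the missing step.
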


Now, let
\begin{equation}
\label{eqn:qhdecomp}
 QH^*(X) \cong \bigoplus_w QH^*(X)_w,
\end{equation}
where $QH^*(X)_w$ is the generalized eigenspace of the endomorphism
\begin{equation}
c_1 \star: QH^*(X) \To QH^*(X)
\end{equation}
corresponding to the eigenvalue $w \in \C$.
If $e \in QH^*(X)$ denotes the identity element, then we denote by $e_w$ the projection of $e$ to $QH^*(X)_w$.
Because $QH^*(X)$ is a Frobenius algebra, \eqref{eqn:qhdecomp} is a decomposition as algebras, and $QH^*(X)_w$ is a unital subalgebra with unit $e_w$.

\begin{proposition}
\label{proposition:co}
(see \cite[\S 13]{fooo}, \cite{Seidel2002}, and our \S \ref{subsec:co}) For each $w \in \C$, there is a unital algebra homomorphism
\begin{equation} \CO: QH^*(X) \To HH^*(\cF(X)_w),\end{equation}
extending $\CO^0$, called the \emph{closed--open string map}.
Here `$HH^*$' denotes Hochschild cohomology: it carries an associative product called the Yoneda product.
\end{proposition}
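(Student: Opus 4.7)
The plan is to define $\mathcal{CO}$ at the chain level as a sum $\mathcal{CO} = \sum_s \mathcal{CO}^s$, where $\mathcal{CO}^s$ sends a cycle $\alpha$ in a Morse or singular cochain model $C^*(X)$ for $QH^*(X)$ to a Hochschild $s$-cochain on $\mathcal{F}(X)_w$. For $L_0,\ldots,L_s$ objects of $\mathcal{F}(X)_w$, the cochain $\mathcal{CO}^s(\alpha)$ evaluated on inputs $x_i \in CF^*(L_{i-1},L_i)$ is the signed count of rigid pseudoholomorphic disks with boundary on the $L_i$, with $s$ boundary marked points carrying the $x_i$, one output boundary marked point in $CF^*(L_0,L_s)$, and one interior marked point constrained to meet $\alpha$, weighted by the holonomies of the $\C^*$-local systems around the boundary. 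Monotonicity and the transversality and Gromov compactness results recalled in Section~\ref{sec:monfuk} ensure that these moduli spaces are regular and that the counts are finite, with no need for virtual perturbation.

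The main work is the analysis of the codimension-one strata of the associated $1$-dimensional moduli spaces. For the chain-map identity $b \circ \mathcal{CO} = \mathcal{CO} \circ d$, the strata split into three types: (a) the breaking of an $A_\infty$ subdisk along the outer boundary, contributing the Hochschild coboundary terms $b\, \mathcal{CO}^\bullet(\alpha)$; (b) the interior marked point escaping the chain $\alpha$ through its boundary, contributing $\mathcal{CO}^\bullet(d\alpha)$; and (c) the bubbling off of a Maslov-$2$ disk at one of the boundary marked points, with factor $w(L_i)$. Because every object of $\mathcal{F}(X)_w$ has disk potential exactly $w$, the contributions in (c) assemble into a telescoping sum that cancels identically, so $b \circ \mathcal{CO} = \mathcal{CO} \circ d$ holds strictly; passing to cohomology yields the linear map in the statement. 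Unitality of the length-$0$ component $\mathcal{CO}^0(e) = e_L$ is immediate from Proposition~\ref{proposition:co0}, and the higher-length components of $\mathcal{CO}(e)$ are Hochschild-exact by a standard forgetful-marked-point argument, so $\mathcal{CO}(e)$ equals the identity in $HH^*$.

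For multiplicativity $\mathcal{CO}(\alpha \star \beta) = \mathcal{CO}(\alpha) \cup \mathcal{CO}(\beta)$, I would consider a one-parameter family of disks with two interior marked points whose relative conformal position (say cross-ratio on the unit disk) interpolates between the two points colliding and being maximally separated. The collision limit produces a sphere bubble through which both $\alpha$ and $\beta$ are inserted at a single interior point, realizing the quantum product and contributing $\mathcal{CO}(\alpha \star \beta)$; the separation limit degenerates the domain into two disks joined along a Floer strip, producing the Yoneda cup $\mathcal{CO}(\alpha) \cup \mathcal{CO}(\beta)$; the remaining boundary produces Hochschild-coboundary terms together with the telescoping disk-bubble terms handled as above.

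The principal obstacle is the sign and orientation bookkeeping in this two-point family, together with the verification that the collision-limit sphere bubbles are enumerated with precisely the weights used to define $\star$ on $QH^*(X)$. This follows the strategy of \cite[Section~13]{fooo} and \cite{Seidel2002}; the monotone setting simplifies matters because every relevant moduli space is a genuine smooth manifold of the expected dimension, the sums over disk homotopy classes are finite, and no Novikov completion needs to be tracked.
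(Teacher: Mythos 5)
Your multiplicativity argument and the overall shape of the construction match the paper's, but there is a conceptual error in the treatment of disk bubbling that deserves a closer look.

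Your step (c) claims that Maslov-$2$ disks bubble off along the boundary arcs, contributing factors $w(L_i)$ that ``telescope'' to zero because all objects of $\mathcal{F}(X)_w$ have the same $w$. This does not describe what happens. In the paper's setup (Section~\ref{subsec:co}), disk bubbling is not cancelled --- it simply does not occur on the codimension-one boundary, for dimension reasons. The relevant $1$-dimensional moduli space $\mathcal{M}^0(\varphi,\beta,f)$ carries the extra constraint that the interior marked point lies on a pseudocycle $f$ of codimension $d = d(\varphi,\beta)-1$. The compactification strata are reorganized into pieces $\widetilde{\mathcal{M}}^i$ of dimension $d(\varphi,\beta)-i$, where $i \ge 2$ whenever a disk or sphere bubbles off (because the bubble absorbs at least $2$ from the Maslov index and the virtual dimension of the remaining constrained component drops below zero). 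For generic perturbation data, the images of $\tilde{ev}^i$ for $i \ge 2$ are disjoint from $f$, so only strip-breaking ($\mathcal{M}^1$) contributes to $\partial\mathcal{M}^0(\varphi,\beta,f)$; that is precisely the Hochschild coboundary, giving $\delta\,\mathcal{CO}(\alpha;f) = 0$. There is nothing to telescope. The telescoping phenomenon you have in mind is what produces $\mu^1\circ\mu^1 = (w(L_0)-w(L_1))\,\mathrm{id}$ in the strip case, but that relies on the remaining component being a constant strip of dimension zero, which has no analogue here once an interior marked point is constrained. Moreover, your version (a single bubble at each of the $s+1$ arcs, each weighted by $w$) would sum to $(s+1)w$ with no sign mechanism to cancel, so even on its own terms the cancellation would not go through.

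Two smaller points. First, you work with a Morse or singular cochain model and then assert a chain-level identity $b \circ \mathcal{CO} = \mathcal{CO} \circ d$. The paper avoids this by working directly with pseudocycle representatives $f$ of $PD(\alpha)$ and showing (i) $\mathcal{CO}(\alpha;f)$ is a Hochschild cocycle and (ii) the class is independent of the choice of $f$ via a bordism argument. Your framing can be made to work, but in the monotone setting with $QH^*(X) = H^*(X;\C)$ there is no differential on the target of $d$ to track, so the pseudocycle route is tidier. Second, your unitality claim --- $\mathcal{CO}^0(e)=e_L$ plus ``a standard forgetful-marked-point argument'' for the higher terms --- is a correct intuition, but the actual argument in Lemma~\ref{lemma:counital} requires passing to the two-pointed model $_2\mathcal{CO}$ and choosing $\R$-invariant perturbation data so that the unconstrained interior marked point can be translated along a dotted line; without that choice, the forgetful map does not obviously produce the vanishing you need. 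So that step is not as routine as the phrase ``standard argument'' suggests, and you should spell it out.
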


It is an observation going back to \cite{Auroux2007} that Proposition \ref{proposition:coc1} implies that the Fukaya category, as well as the various closed--open and open--closed maps, split up into components indexed by the eigenvalues of $c_1 \star$. 
The following results make this idea precise: our formulations and proofs of these results are heavily based on work of Ritter and Smith \cite{Ritter2012}.

\begin{proposition}
\label{proposition:covan}
(see \cite[Theorem 9.6]{Ritter2012} and our Proposition \ref{proposition:coeigsplit})
The restriction of the closed--open string map
\begin{equation} \CO: QH^*(X)_{w'} \To HH^*(\cF(X)_w)\end{equation}
vanishes if $w' \neq w$, and is a unital algebra homomorphism if $w' = w$.
\end{proposition}

\begin{corollary}
\label{corollary:eigfuksplit}
$\cF(X)_w$ is trivial unless $w$ is an eigenvalue of $c_1 \star$.
\end{corollary}

\begin{proposition}
\label{proposition:oc}
(see \cite[\S 13]{fooo}, \cite{Abouzaid2010a,Ganatra2012,Ritter2012}, and our \S \ref{subsec:oc}) For each $w \in \C$, there is a homomorphism of $QH^*(X)$-modules
\begin{equation} \OC: HH_*(\cF(X)_w) \To QH^{*+n}(X),\end{equation}
called the \emph{open--closed string map}.
Here `$HH_*$' denotes Hochschild homology: it acquires its $QH^*(X)$-module structure via $\CO$ (recalling that Hochschild homology is naturally a module over Hochschild cohomology).
\end{proposition}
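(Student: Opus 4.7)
The plan is to define $\mathcal{OC}$ at the chain level as a signed count of pseudoholomorphic disks with one interior marked point, verify that it descends to homology, and then prove that it intertwines the $\mathcal{CO}$-induced action with the quantum cup product.

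First I would set up the moduli spaces. For a tuple of objects $L_0,\ldots,L_s$ in $\mathcal{F}(X)_w$ and an element of $CF^*(L_s,L_0)\otimes CF^*(L_{s-1},L_s)\otimes\cdots\otimes CF^*(L_0,L_1)$, let $\mathcal{R}^{s+1,1}$ be the moduli space of disks with $s+1$ cyclically ordered boundary marked points and one interior marked point. Counting (with signs and local-system holonomy) rigid pseudoholomorphic maps with the boundary inputs labeled by the given Floer generators in cyclic order and with the interior marked point constrained to pass through a cycle Poincar\'e dual to a closed cochain defines a map
\begin{equation}
\mathcal{OC}^s: CF^*(L_s,L_0) \otimes CF^*(L_{s-1},L_s) \otimes \cdots \otimes CF^*(L_0,L_1) \To C^{*+n}(X;\C).
\end{equation}
Summing over $s$ and over tuples of objects gives $\mathcal{OC}: CC_*(\mathcal{F}(X)_w) \To C^{*+n}(X;\C)$.

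Second, I would prove $\mathcal{OC}$ is a chain map by examining the boundary of the corresponding one-dimensional moduli spaces. Strip breaking at inputs, combined with boundary disk bubbling, reproduces exactly the Hochschild differential on the domain. The remaining codimension-one strata consist of sphere bubbles (which contribute the de Rham differential of the target) and disk bubbles with no boundary input attached to the principal disk; by the monotonicity identity \eqref{eqn:mu1no0}, the latter contribute a factor $\sum_i\bigl(w(L_i)-w(L_{i-1})\bigr)$, which vanishes because every $L_i$ lies in $\mathcal{F}(X)_w$. Hence $d\circ \mathcal{OC} = \mathcal{OC}\circ b$, and $\mathcal{OC}$ descends to $HH_*(\mathcal{F}(X)_w) \To QH^{*+n}(X)$.

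Third, and this is the heart of the matter, I would show $QH^*(X)$-linearity. The action of $\alpha \in QH^*(X)$ on $HH_*(\mathcal{F}(X)_w)$ is by cap product with $\mathcal{CO}(\alpha) \in HH^*(\mathcal{F}(X)_w)$. To compare $\mathcal{OC}(\mathcal{CO}(\alpha)\cdot \xi)$ with $\alpha \star \mathcal{OC}(\xi)$, I would introduce a parametrized moduli space $\mathcal{R}^{s+1,2}_\tau$, $\tau \in [0,1]$, of disks with two interior marked points whose relative position is governed by $\tau$. At $\tau = 0$ the two marked points collide at an interior node, and the boundary stratum factors as a $\mathcal{CO}$ disk (inserting $\alpha$ somewhere on the boundary) glued to an $\mathcal{OC}$ disk, yielding $\mathcal{OC}(\mathcal{CO}(\alpha)\cdot \xi)$. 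At $\tau = 1$ the two points separate onto a sphere bubble, giving $\alpha \star \mathcal{OC}(\xi)$. The remaining codimension-one boundary of the parametrized family produces a chain homotopy between these two expressions, establishing the identity in cohomology. The main obstacle will be the orientation bookkeeping on this parametrized moduli space, together with verifying that bad disk and sphere bubbles do not appear. In our monotone setting the latter issue is painless: the hypothesis that $\pi_1(L) \to \pi_1(X)$ is trivial forces the minimal Maslov number to be $\ge 2$, ruling out sub-maximal disk bubbles, and positive monotonicity rules out negative-Chern-number sphere bubbles, so transversality reduces to classical pseudoholomorphic curve theory as in \cite{mcduffsalamon}.
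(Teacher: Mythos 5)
Your skeleton --- disks with interior marked points and a one-parameter family interpolating between two degenerate configurations --- matches the paper's strategy, but the proposal has a genuine gap and several errors in the details.

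The gap is that your module-homomorphism argument presupposes that $\mathcal{OC}(\xi)$ can be represented by a cycle obtained by evaluating at a free interior ``output'' marked point, which is then quantum-multiplied by $\alpha$ on a sphere bubble. This push-forward does not exist at the chain level, because the moduli space underlying $\mathcal{OC}$ has codimension-one boundary (Remark \ref{remark:nopseudo}). The paper therefore never takes that push-forward: $\mathcal{OC}$ is defined by \emph{pairing} against pseudocycles Poincar\'e dual to test classes $\beta \in QH^*(X)$ and then dualizing using the intersection form (Remark \ref{remark:dualbas}), and the module property is proved with \emph{two} constrained interior marked points (one for $\alpha$, one for $\beta$) rather than one constrained and one free. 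The one-parameter family then gives, up to Hochschild boundaries and chain homotopy, the identity $\langle\mathcal{OC}(\varphi),\alpha\star\beta\rangle = \langle\mathcal{OC}(\mathcal{CO}(\alpha)\cap\varphi),\beta\rangle$. To extract $\alpha\star\mathcal{OC}(\varphi) = \mathcal{OC}(\mathcal{CO}(\alpha)\cap\varphi)$ from this, the paper invokes the \emph{Frobenius property} of $QH^*(X)$, which converts $\langle\mathcal{OC}(\varphi),\alpha\star\beta\rangle$ to $\langle\alpha\star\mathcal{OC}(\varphi),\beta\rangle$; since $\beta$ is arbitrary, the result follows. This algebraic step is the ingredient entirely missing from your proposal, and it is exactly what allows one to avoid ever representing $\mathcal{OC}(\varphi)$ geometrically as a cycle.

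Beyond the gap, several of your geometric descriptions are incorrect. You have the two boundary strata of the parametrized family swapped: when the two interior marked points \emph{collide} they bubble off onto a \emph{sphere}, which is the stratum producing the quantum cup product, whereas the configuration of a $\mathcal{CO}$-type disk glued to an $\mathcal{OC}$-type disk arises when an interior marked point escapes to the boundary of the disk. Sphere bubbling does not ``contribute the de Rham differential of the target'': in the monotone setting it is a codimension-two phenomenon, so it does not appear in the boundary of a one-dimensional family at all. And the claimed cancellation $\sum_i\bigl(w(L_i)-w(L_{i-1})\bigr)$ is a telescoping sum that vanishes identically regardless of which $\mathcal{F}(X)_w$ the objects lie in, which makes it clear it is not the mechanism actually at work; the boundary disk bubbles in question are instead handled by the transversality and dimension arguments already used in setting up $\mathcal{F}(X)_w$.
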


\begin{proposition}
\label{proposition:ocim}
(see \cite[Theorem 9.4]{Ritter2012} and our Corollary \ref{corollary:oceigsplit}) 
The open--closed string map
\begin{equation}
\OC: HH_*(\cF(X)_w) \to QH^{*+n}(X)
\end{equation}
lands in the $w$-generalized eigenspace $QH^{*+n}(X)_w \subset QH^{*+n}(X)$.
\end{proposition}

In particular, both of the maps $\CO$ and $\OC$ decompose into components indexed by eigenvalues of $c_1\star$: 
\begin{align}
\CO_w: QH^*(X)_w &\to HH^*(\cF(X)_w)\\
\OC_w: HH_*(\cF(X)_w) &\to QH^{*+n}(X)_w,
\end{align}
and all other components of the maps vanish.

The next piece of structure that we consider on the monotone Fukaya category is a weak version of a Calabi--Yau structure.

\begin{proposition}
\label{proposition:cy}
(see \S \ref{subsec:infin})
For each $w \in \C$, the element
\begin{equation}
[\phi] \in HH_n(\cF(X)_w)^\vee,
\end{equation}
 defined by
\begin{equation} [\phi](b) := \langle \OC(b), e \rangle,\end{equation}
is an $n$-dimensional weak proper Calabi--Yau structure in the sense of Definition \ref{definition:weakcy}.
\end{proposition}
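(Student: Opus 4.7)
The plan is to extract $[\phi]$ from a chain-level functional that exhibits the cyclic symmetry geometrically. The open-closed map $\mathcal{OC}$ of Proposition \ref{proposition:oc} admits a chain-level model in which $\mathcal{OC}(x_s \otimes \cdots \otimes x_1)$ counts pseudoholomorphic disks with cyclically ordered boundary marked points labeled by the $x_i$ (with boundary conditions on $L_0,L_1,\ldots,L_s=L_0$) and one interior marked point constrained to hit a cycle representing the output class. Because $e \in H^0(X)$ is the unit and the intersection pairing with $e$ is integration against the fundamental class, the chain-level functional
\begin{equation}
\phi(x_s \otimes \cdots \otimes x_1) := \langle \mathcal{OC}(x_s \otimes \cdots \otimes x_1), e\rangle
\end{equation}
reduces to a signed weighted count of the same disks with the interior marked point constrained only to lie at a generic fixed point of $X$. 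That $\phi$ descends to a functional $[\phi] \in HH_n(\mathcal{F}(X)_w)^\vee$ is then immediate from the chain-map property of $\mathcal{OC}$ and from $e$ being a cocycle; the degree $n$ is built into the degree shift of $\mathcal{OC}$.

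Next, I would verify the weakly cyclic identity of Definition \ref{definition:weakcy}. The key geometric input is the cyclic $\mathbb{Z}/s$-symmetry of the moduli spaces defining $\phi$: rotating the labels of the $s$ boundary marked points is an automorphism of the underlying unparametrized disk, and the sole interior constraint at a generic point of $X$ is manifestly invariant under this rotation. Choosing perturbation data compatibly with the cyclic action on those moduli spaces contributing to $\phi$ and to its Hochschild codifferential, the count is preserved modulo the standard Koszul signs that arise when one permutes the graded inputs $x_i$. This is precisely the defining identity of a weakly cyclic structure, which then descends to $[\phi]$ on cohomology.

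The main obstacle is the sign and perturbation-data bookkeeping. Strictly cyclic-symmetric perturbations cannot always be chosen compatibly across all Fukaya moduli spaces --- this is exactly why one resorts to weak rather than strict cyclicity --- but for the weak identity one only needs equivariance on the finite list of strata that contribute to $\phi$ and to its codifferential, which can be arranged. The resulting statement is essentially dual to Proposition \ref{proposition:phico}: the closed-open/open-closed duality furnishes the same cyclic identity algebraically, and so one could in principle deduce Proposition \ref{proposition:cy} from Proposition \ref{proposition:phico} without the direct geometric construction, by transporting the Yoneda-product symmetry of $HH^*$ across the duality to a cyclic symmetry on $HH_*$.
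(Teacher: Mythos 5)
Your proposal misidentifies the content of Definition \ref{definition:weakcy}. In this paper, a weakly cyclic structure is \emph{not} a chain-level functional with cyclic $\mathbb{Z}/s$-symmetry; it is defined (via Definition \ref{definition:homnond}) to be a class $[\phi]\in HH_n(\mathcal{A})^\vee$ that is \emph{homologically non-degenerate}, meaning that for all objects $K,L$ the induced pairing
\begin{equation}
HF^*(K,L)\otimes HF^{n-*}(L,K)\to\C,\qquad p\otimes q\mapsto \langle \mathcal{OC}^0(\mu^2(p,q)),e\rangle
\end{equation}
is perfect. Nothing in your argument touches this. You spend the bulk of the proposal arguing that the chain-level functional $\phi$ is invariant (up to Koszul signs) under rotation of the boundary marked points, but that has no bearing on whether the induced Poincar\'e pairing on Floer cohomology groups is non-degenerate. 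Cyclic symmetry and homological non-degeneracy are independent; the latter is the entire mathematical content of the proposition and must be established directly. The paper's proof (Lemma \ref{lemma:weakcy}) does this by constructing an explicit inverse to the pairing map: it defines a coproduct $\Delta:HF^*(L,L)\to HF^*(K,L)\otimes HF^*(L,K)[n]$ by counting disks with one input and two outputs, and shows via a gluing/degeneration argument that the composite of the pairing map with contraction against $\Delta(\mathcal{CO}^0(e))$ is multiplication by $\mathcal{CO}^0(e\star e)=e_L$, hence the identity. That geometric TQFT-style argument (or the equivalent identification of the pairing with a Floer continuation map from $CF^*(K,L;H,J)$ to $CF^*(L,K;-H,J)^\vee$) is what you are missing.

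Your closing suggestion --- deducing Proposition \ref{proposition:cy} from Proposition \ref{proposition:phico} --- has the logical dependency reversed. Proposition \ref{proposition:phico} establishes a commuting square whose bottom arrow $-\cap[\phi]:HH^*(\mathcal{F})\to HH_*(\mathcal{F})^\vee[-n]$ is an isomorphism \emph{because} $[\phi]$ is a weakly cyclic structure (this is Lemma \ref{lemma:hhdual}). So the non-degeneracy of $[\phi]$ is an input to \ref{proposition:phico}, not a consequence of it. The only places in your writeup that are correct are the easy observations that $[\phi]$ is well-defined in $HH_n(\mathcal{F})^\vee$ and has degree $n$.
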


Proposition \ref{proposition:cy} is a reflection of the Poincar\'{e} duality isomorphisms
\begin{equation}
HF^*(K,L) \cong HF^{n-*}(L,K)^\vee
\end{equation}
in the Donaldson--Fukaya category.
Combining Propositions \ref{proposition:oc} and \ref{proposition:cy}, we obtain

\begin{proposition}
\label{proposition:coocdual}
(see Corollary \ref{corollary:cowdual})
For each $w \in \C$, there is a commutative diagram
\begin{equation}
\xymatrixcolsep{5pc}\xymatrix{QH^*(X)_w \ar[r]^-{\alpha \mapsto \langle \alpha, - \rangle}_-{\cong} \ar[d]^-{\CO_w} & QH^*(X)_w^{\vee}[-2n]  \ar[d]^-{\OC_w^\vee} \\
HH^*(\cF(X)_w) \ar[r]^-{-\cap [\phi]}_-{\cong} & HH_*(\cF(X)_w)^\vee[-n]}
 \end{equation}
(for the bottom isomorphism, see Lemma \ref{lemma:hhdual}).
In particular, the maps $\CO_w$ and $\OC_w$ are dual, up to natural identifications of their respective domains and targets.
\end{proposition}

\begin{remark}
In \cite{fooo,Abouzaid2012}, the duality of $\CO$ and $\OC$ holds because the corresponding moduli spaces are identified, so the maps are dual on the chain level.
This is impossible to arrange with our analytic setup, but Proposition \ref{proposition:coocdual} gives a cohomology-level analogue which suffices for our purposes.
\end{remark}

The next result is a version of Abouzaid's split-generation criterion \cite{Abouzaid2010a}, adapted to the monotone setup:

\begin{proposition}
\label{proposition:gencrit}
(see \cite{Abouzaid2010a,Abouzaid2012}, and our Corollary \ref{corollary:gen2})
If $\cG_w \subset \cF(X)_w$ is a full subcategory such that $e_w$ is contained in the image of the map
\begin{equation} \OC_w: HH_*(\cG_w) \To QH^*(X)_w,\end{equation}
then $\cG_w$ split-generates $\cF(X)_w$.
\end{proposition}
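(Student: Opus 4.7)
The plan is to reduce the proposition to Abouzaid's algebraic split-generation criterion by constructing a geometric commutative square that factors $\mathcal{CO}^0 \circ \mathcal{OC}$ through a module-theoretic open-closed map attached to each test Lagrangian. Fix an arbitrary object $L$ of $\mathcal{F}(X)_w$; it suffices to show that $L$ lies in the split-closed triangulated envelope of $\mathcal{G}$. Let $\mathcal{Y}^L$ and $\mathcal{Y}_L$ denote the restrictions to $\mathcal{G}$ of the right and left Yoneda modules of $L$, so that $\mathcal{Y}^L(K) = CF^*(K,L)$ and $\mathcal{Y}_L(K) = CF^*(L,K)$. The higher compositions $\mu^s$ in $\mathcal{F}(X)_w$ assemble into a canonical ``evaluation'' chain map
\begin{equation}
\mu_L : CC_*(\mathcal{G}, \mathcal{Y}_L \otimes \mathcal{Y}^L) \To CF^*(L,L),
\end{equation}
and Abouzaid's purely algebraic split-generation criterion (\cite{Abouzaid2010a}; in the $A_\infty$ setting \cite{Ganatra2012}) asserts that $L$ is split-generated by $\mathcal{G}$ if and only if $e_L$ lies in the image of $H^*(\mu_L)$. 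I would recall this algebraic lemma first, which reduces the geometric problem to showing that $e_L$ lies in this image.

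The key geometric construction is a ``two-pointed'' open-closed map
\begin{equation}
\mathcal{OC}^L : HH_*(\mathcal{G}, \mathcal{Y}_L \otimes \mathcal{Y}^L) \To HF^*(L,L),
\end{equation}
defined by counting pseudoholomorphic disks whose boundary traces a cyclic sequence of objects of $\mathcal{G}$ together with one sub-arc on $L$ carrying the input/output marked points that account for the two $\mathcal{Y}$-factors, and with one interior marked point fed by the Hochschild input. A standard moduli-space comparison (forgetting the interior puncture and passing to the boundary of the locus where it escapes to an $L$-bubble) identifies $H^*(\mathcal{OC}^L)$ with $H^*(\mu_L)$. I would then produce the chain-level commutative square
\begin{equation}
\begin{CD}
HH_*(\mathcal{G}) @>{\mathcal{OC}}>> QH^*(X) \\
@V{j_L}VV @VV{\mathcal{CO}^0}V \\
HH_*(\mathcal{G}, \mathcal{Y}_L \otimes \mathcal{Y}^L) @>{\mathcal{OC}^L}>> HF^*(L,L)
\end{CD}
\end{equation}
by running a one-parameter family of moduli spaces in which the $L$-arc on the boundary is allowed to shrink: at one endpoint the arc collapses to a puncture and the two module inputs are forced to be paired against $e_L$, yielding $\mathcal{CO}^0 \circ \mathcal{OC}$, while at the other endpoint the arc survives macroscopically and yields $\mathcal{OC}^L \circ j_L$, where $j_L$ denotes insertion of the strict Floer cochain unit $e_L$ into both module factors.

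Given the square the conclusion is immediate: by hypothesis there exists $\alpha \in HH_*(\mathcal{G})$ with $\mathcal{OC}(\alpha) = e_w$, and by Corollary \ref{corollary:co0van} we have $\mathcal{CO}^0(e_w) = e_L$ since $w(L) = w$, so $\mu_L(j_L(\alpha)) = e_L$ in $HF^*(L,L)$ and the algebraic criterion shows that $L$ is split-generated by $\mathcal{G}$. The main obstacle will be the perturbation-theoretic bookkeeping required to construct the chain-level homotopy that witnesses the commutative square: one has to build a coherent family of perturbation data on a moduli space of disks with boundary on $L$ together with objects of $\mathcal{G}$ and with both interior and boundary marked points, compatibly with the data already used to define $\mathcal{OC}$, $\mathcal{CO}^0$, and the module composition $\mu_L$. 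In the monotone setting the underlying transversality and compactness are classical, so the work is combinatorial rather than analytic, but matching the boundary strata along the cobordism still demands care.
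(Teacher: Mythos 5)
Your overall architecture is correct and matches the paper's: reduce to Abouzaid's purely algebraic split-generation criterion (the paper's Lemma \ref{lemma:husplgen}), establish a commutative square relating $\mathcal{OC}$, $\mathcal{CO}^0$, and a module-theoretic map to $HF^*(K,K)$, then feed in $\mathcal{CO}^0(e_w) = e_K$ from Lemma \ref{lemma:coeigen}. But the two geometric constituents you propose for the square are not the ones that make it commute, and as described they do not work.

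The left vertical map $j_L$, ``insertion of the strict Floer cochain unit $e_L$ into both module factors,'' is not a well-defined map of chain complexes $CC_*(\mathcal{G}) \to CC_*(\mathcal{G},\mathcal{Y}^l_K \otimes \mathcal{Y}^r_K)$. A Hochschild chain for the diagonal bimodule has the form $a_0 \otimes a_s \otimes \ldots \otimes a_1$ with $a_0 \in hom(K_s,K_0)$ and $K_0,\ldots,K_s$ in $\mathcal{G}$; to land in the target complex one must produce a pair $m \in hom(K_s,K)$, $m' \in hom(K,K_0)$, and the unit $e_K \in hom(K,K)$ has the wrong type (the test object $K$ is generally not in $\mathcal{G}$). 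What the paper uses instead is a genuine bimodule homomorphism $\Delta: \mathcal{F}_\Delta \To \mathcal{Y}^l_K \otimes \mathcal{Y}^r_K[n]$ of degree $n$, defined by counting pseudoholomorphic disks with $k+l+3$ boundary punctures (one incoming on the $\mathcal{G}$ side, two outgoing on the $K$ side, plus $k+l$ Hochschild inputs); this is the map whose induced $HH_*(\Delta)$ sits on the left of the square. There is real geometry in $\Delta$; it is not a formal unit insertion.

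The commutativity is then the Cardy relation, and the one-parameter family that witnesses it is a moduli space of \emph{annuli} with modulus $r \in [1,\infty)$, with the $\mathcal{G}$-punctures on the outer boundary and the single $K$-output on the inner boundary. As $r \to \infty$ the annulus pinches at an \emph{interior} node, giving two disks meeting at an interior marked point — one is the $\mathcal{OC}$ disk, the other the $\mathcal{CO}^0$ disk — and the pairing of the two marked points is implemented by a bordism from $\sum_i e_i \times e^i$ to the diagonal in $X \times X$ (the paper is careful about this because $\mathcal{OC}$ does not come from a pseudocycle). At $r = 1$ the configuration degenerates to the composition $H^*(\mu) \circ HH_*(\Delta)$. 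Your proposed family (``shrinking the $L$-arc on a disk'') would produce a \emph{boundary} node, not an interior one, so it cannot yield the interior-marked-point configuration that defines $\mathcal{CO}^0 \circ \mathcal{OC}$, and your auxiliary $\mathcal{OC}^L$ with an interior marked point ``fed by the Hochschild input'' has no coherent interpretation — the Hochschild input already lives on the boundary. The missing idea is the annulus, and the missing construction is the pseudoholomorphic coproduct $\Delta$; once those replace $j_L$ and your one-parameter family, the argument goes through exactly as you outline at the end.
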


\begin{remark}
The hypothesis of Abouzaid's split-generation criterion for the wrapped Fukaya category of an exact symplectic manifold \cite[Theorem 1.1]{Abouzaid2010a} is that the unit $e$ lies in the image of $\OC$.
However, in order to prove that $\cG_w$ split-generates an object $K$, it suffices to prove that $\CO^0 \circ \OC$ contains the unit $e_K \in HF^*(K,K)$ in its image.
Thus, in view of Proposition \ref{proposition:covan}, it suffices to check that $e_w$ lies in the image of $\OC$: and more importantly, in light of Proposition \ref{proposition:ocim}, $\OC$ could not possibly contain the unit $e$ in its image unless $c_1 \star$ had only a single eigenvalue.
So Proposition \ref{proposition:gencrit} is the `right' split-generation criterion.
\end{remark}

Combining Propositions \ref{proposition:coocdual} and \ref{proposition:gencrit}, we obtain:

\begin{corollary}
\label{corollary:gendual}
(see \cite{Abouzaid2012}, and our Corollary \ref{corollary:splitgen})
If $\cG_w \subset \cF(X)_w$ is a full subcategory such that
\begin{equation} \CO_w: QH^*(X)_w \To HH^*(\cG_w)\end{equation}
is injective, then $\cG_w$ split-generates $\cF(X)_w$.
\end{corollary}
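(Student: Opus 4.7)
The plan is to dualize Corollary \ref{corollary:coocdual}, restrict it to the generalized eigenspace $QH^*(X)_w$, and then invoke Proposition \ref{proposition:gencrit}. The argument is essentially the one sketched in the paragraph preceding the statement.

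First, I would apply the commutative diagram of Corollary \ref{corollary:coocdual} to the full subcategory $\mathcal{G}$ (everything in it is natural in the chosen category) and analyse the intersection pairing on $QH^*(X)$ in light of the eigenspace decomposition $QH^*(X) = \bigoplus_{w'} QH^*(X)_{w'}$. Because this is a decomposition of unital algebras, one has $QH^*(X)_{w'} \star QH^*(X)_{w''} = 0$ for $w' \neq w''$; inserting the idempotent $e_{w''}$ into the Frobenius identity $\langle a \star b, c \rangle = \langle a, b \star c \rangle$ forces $\langle a, b \rangle = 0$ whenever $a \in QH^*(X)_{w'}$, $b \in QH^*(X)_{w''}$ with $w' \neq w''$. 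Hence the Frobenius pairing restricts to a non-degenerate pairing on each $QH^*(X)_{w'}$, and the top isomorphism of Corollary \ref{corollary:coocdual} splits as a direct sum of isomorphisms indexed by $w'$.

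Second, I would verify that $\mathcal{OC}$ takes values in $QH^*(X)_w$, and dually that $\mathcal{CO}$ factors through the projection $QH^*(X) \To QH^*(X)_w$. For the first, $\mathcal{OC}$ is $QH^*(X)$-linear by Proposition \ref{proposition:oc} with $HH_*(\mathcal{G})$ a module via $\mathcal{CO}$, and Proposition \ref{proposition:coc1} implies that $c_1$ acts as the scalar $w$ on this module; hence $(c_1 \star - w)^N \mathcal{OC}(b) = \mathcal{OC}((c_1 - w)^N \cdot b) = 0$ for $N$ large, forcing $\mathcal{OC}(b) \in QH^*(X)_w$. The corresponding statement for $\mathcal{CO}$ follows by a diagram chase from Corollary \ref{corollary:coocdual} using the orthogonality just established. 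Combining these observations, the diagram of Corollary \ref{corollary:coocdual} restricts, on the $QH^*(X)_w$-summand, to a commutative square whose horizontal arrows are isomorphisms, identifying $\mathcal{CO}: QH^*(X)_w \To HH^*(\mathcal{G})$ with the linear dual of $\mathcal{OC}: HH_*(\mathcal{G}) \To QH^*(X)_w$. Injectivity of the former is then equivalent to surjectivity of the latter; in particular $e_w$ lies in the image of $\mathcal{OC}$, and Proposition \ref{proposition:gencrit} delivers that $\mathcal{G}$ split-generates $\mathcal{F}(X)_w$.

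The main obstacle I anticipate is precisely the verification that $\mathcal{OC}$ and $\mathcal{CO}$ respect the eigenspace decomposition at the cohomology level. Once this module-theoretic compatibility is in hand, the remainder of the argument is formal linear algebra on a finite-dimensional Frobenius summand.
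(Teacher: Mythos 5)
Your overall strategy — dualize Corollary~\ref{corollary:coocdual}, restrict to the summand $QH^*(X)_w$ using the Frobenius-pairing orthogonality, and feed the resulting surjectivity into the split-generation criterion — is exactly the paper's route (Corollary~\ref{corollary:cowdual} followed by Corollary~\ref{corollary:splitgen}). Your step 1 (orthogonality of the eigenspace decomposition with respect to $\langle-,-\rangle$) is a valid alternative phrasing of the paper's observation that $c_1\star$ is self-adjoint, and is fine.

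The gap is in your step 2, the claim that $\mathcal{OC}$ literally takes values in $QH^*(X)_w$. You justify this by saying Proposition~\ref{proposition:coc1} ``implies that $c_1$ acts as the scalar $w$'' on $HH_*(\mathcal{G})$. But Proposition~\ref{proposition:coc1} only computes $\mathcal{CO}^0(c_1)$, i.e.\ the length-zero projection of $\mathcal{CO}(c_1)$ in $HF^*(L,L)$ — it does not say that the full Hochschild class $\mathcal{CO}(c_1) \in HH^*(\mathcal{G})$ equals $w\cdot e$, nor that $\mathcal{CO}(c_1) - w\cdot e$ acts nilpotently via the cap product on $HH_*(\mathcal{G})$. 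Nothing in the results you cite controls the length-$\geq 1$ part of $\mathcal{CO}(c_1)$, so $(c_1-w)^N\cdot b = 0$ does not follow. (A correct argument would need either to show that $\mathcal{CO}(e_{w'})=0$ in $HH^*(\mathcal{G})$ for $w'\neq w$ — e.g.\ by exploiting that an idempotent in the length filtration with vanishing length-zero part must vanish — or to establish $\mathcal{CO}(c_1)=w\cdot e$ directly.)

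Fortunately this step is also unnecessary, and the paper sidesteps it. Working with the composite $\mathrm{proj}_w \circ \mathcal{OC} : HH_*(\mathcal{G}) \To QH^*(X)_w$ throughout avoids any claim about where $\mathcal{OC}$ itself lands. The orthogonality you established in step 1 shows that, for $\alpha \in QH^*(X)_w$, the functional $\langle \alpha, \mathcal{OC}(-)\rangle$ equals $\langle \alpha, \mathrm{proj}_w\,\mathcal{OC}(-)\rangle$, so the commuting square of Corollary~\ref{corollary:coocdual} already identifies $\mathcal{CO}|_{QH^*(X)_w}$ with the dual of $\mathrm{proj}_w \circ \mathcal{OC}$ — no restriction of codomain needed. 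Injectivity of the former then gives surjectivity of $\mathrm{proj}_w \circ \mathcal{OC}$, hence some $b$ with $\mathrm{proj}_w(\mathcal{OC}(b)) = e_w$. This is enough to run the generation criterion, since $\mathcal{CO}^0$ applied to $\mathcal{OC}(b)$ kills all summands $QH^*(X)_{w'}$ with $w'\neq w$ by Lemma~\ref{lemma:coeigen}, producing the required $e_K$. If you replace step 2 with this observation, the rest of your argument closes as written.
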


We draw attention to one special case of this result:

\begin{corollary}
\label{corollary:isemisimpgen}
(see \cite{Abouzaid2012}, and our Corollary \ref{corollary:semisimpgen})
If $QH^*(X)_w$ has rank $1$, then any non-trivial object of $\cF(X)_w$ split-generates it.
\end{corollary}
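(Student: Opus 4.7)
The strategy is to apply Corollary \ref{corollary:gendual} with $\mathcal{G}$ equal to the full subcategory on a single non-trivial object $L \in \mathcal{F}(X)_w$. Under this reduction it suffices to show that
\begin{equation}
\mathcal{CO}: QH^*(X)_w \To HH^*(\{L\})
\end{equation}
is injective.

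Because $QH^*(X)_w$ is a unital subalgebra with unit $e_w$ and has $\C$-dimension one, it equals $\C \cdot e_w$, so injectivity is equivalent to the single assertion $\mathcal{CO}(e_w) \neq 0$. To verify this, I would compose with the canonical length-zero projection $HH^*(\{L\}) \To HF^*(L,L)$; by construction this recovers $\mathcal{CO}^0$. Since $L$ is an object of $\mathcal{F}(X)_w$ we have $w(L) = w$, and Corollary \ref{corollary:co0van} says that $\mathcal{CO}^0$ restricted to $QH^*(X)_w$ is a \emph{unital} algebra homomorphism. Therefore $\mathcal{CO}^0(e_w) = e_L \in HF^*(L,L)$.

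The only remaining point, and what I would flag as the main (though minor) obstacle, is the interpretation of the word `non-trivial': in the cohomologically unital setting the correct reading is that $L$ is not quasi-isomorphic to the zero object, equivalently $e_L \neq 0$. With this reading, $\mathcal{CO}^0(e_w) = e_L \neq 0$, hence a fortiori $\mathcal{CO}(e_w) \neq 0$ in $HH^*(\{L\})$; the required injectivity follows, and Corollary \ref{corollary:gendual} then yields that $\{L\}$ split-generates $\mathcal{F}(X)_w$. The entire algebraic content is thus packed into the preceding two corollaries; the proof is essentially a one-line unwinding.
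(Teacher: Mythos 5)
Your proof is correct and follows exactly the argument the paper gives for Corollary \ref{corollary:semisimpgen}: reduce injectivity to $\mathcal{CO}(e_w)\neq 0$, factor through the length-zero projection to get $\mathcal{CO}^0(e_w)=e_L$ via unitality (Lemma \ref{lemma:coeigen}), and invoke $HF^*(L,L)\neq 0$. Your explicit flag about the reading of ``non-trivial'' matches the hypothesis as the paper states it in the body (Corollary \ref{corollary:semisimpgen}).
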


\subsection{The relative Fukaya category}
\label{subsec:irelfuk}

In \S \ref{sec:relfuk}, we recall the construction of the Fukaya category of $X$ relative to a divisor $D$ which is Poincar\'{e} dual to a multiple of the symplectic form, denoted $\cF(X,D)$ \cite{Sheridan2015}.
We recall that it is defined over the ring $R:=\C[r_1,\ldots,r_n]$ (no formal power series ring is necessary by monotonicity).
Its objects are exact Lagrangians in $X \setminus D$, and the $A_\infty$ structure maps count pseudoholomorphic discs $u$, weighted by $r_1^{u \cdot D_1} \ldots r_n^{u \cdot D_n} \in R$, where $D_i$ are the irreducible components of the smooth normal-crossings divisor $D$.

We expect there to exist an embedding
\begin{equation} \cF(X,D) \otimes_R \C \hookrightarrow \cF(X),\end{equation}
obtained by setting all $r_j = 1$ in $R$.
Unfortunately, the relationship between $\cF(X,D)$ and $\cF(X)$ is not quite so straightforward for technical reasons: in particular, because $J$-holomorphic discs are treated differently in the definitions of the two categories.
Nevertheless, we show that the analogy between the two is close enough that computations in the relative Fukaya category can be transferred to the monotone Fukaya category.

We also expect that
\begin{equation}
\label{eqn:codjdif}
 \CO(D_j) = \left[ r_j\del{\mu^*}{r_j} \right] \otimes_R 1,
\end{equation}
as proposed in \cite[Lemma 8.5]{Sheridan2015}, where the proof was only sketched.
Again, the incompatibility of the conventions in the definition of the relative and monotone Fukaya categories make the true relationship slightly more involved, but we prove a version that is sufficient for our purposes.

\subsection{Weak bounding cochains and the disc potential}
\label{subsec:iwbc}

In \S \ref{sec:wbcalg} (algebra) and \S \ref{sec:wbcgeom} (geometry), we explain how to formally enlarge the monotone Fukaya category by including weak bounding cochains, closely following \cite{fooo}.
The first technical issue to confront is that, in order for weak bounding cochains to make sense, we need \emph{strict} units; but the Fukaya category need only have cohomological units as we have defined it.
Following \cite{fooo} (although our technical setup is closer to \cite{Ganatra2012}), we circumvent the issue by constructing a \emph{homotopy unit} structure on the Fukaya category.
For the purposes of the rest of this introduction, we will brush the issue under the rug and pretend the Fukaya category has strict units $e_L \in CF^*(L,L)$.

We construct a \emph{curved} $A_\infty$ category $\cF(X)$ by putting all of the categories $\cF(X)_w$ together and setting $\mu^0 = w(L) \cdot e_L \in CF^*(L,L)$ for all $L$.
It follows from \eqref{eqn:mu1no0} and strict unitality that the $A_\infty$ relations are satisfied.
We then enlarge this category by allowing formal direct sums of objects: the resulting curved, strictly unital $A_\infty$ category is called $\cF(X)^\oplus$.
We then enlarge the category again, by allowing objects $(\bm{L},\alpha)$, where $\bm{L}$ is an object of $\cF(X)^\oplus$, and $\alpha$ is a solution of the \emph{Maurer--Cartan equation},
\begin{equation}
\label{eqn:mcint}
\mu^0+ \mu^1(\alpha)+\mu^2(\alpha,\alpha)+ \ldots = \mathfrak{P}(\alpha) \cdot e_{\bm{L}}.
\end{equation}
Such a solution $\alpha$ is called a \emph{weak bounding cochain}, and the space of all weak bounding cochains for a given $\bm{L}$ is denoted $\hcM_{weak}(\bm{L})$ (its quotient by gauge equivalence is called the \emph{Maurer--Cartan moduli space} $\cM_{weak}(\bm{L})$, but we will not use this notion), and $\mathfrak{P}$ defines a function
\begin{equation} \mathfrak{P}: \hcM_{weak}(\bm{L}) \To \C,\end{equation}
called the \emph{disc potential}.
The $A_\infty$ structure maps of this category are defined by `inserting the weak bounding cochains in all possible ways' into the previous $A_\infty$ structure maps:
\begin{multline}
\label{eqn:mcstruct}
\mu^s_{\cF^{wbc}}(p_s,\ldots,p_1) := \\ \sum_{i_s,\ldots,i_0} \mu^*_{\cF^\oplus}(\underbrace{\alpha_s,\ldots,\alpha_s}_{i_s},p_s,\underbrace{\alpha_{s-1},\ldots,\alpha_{s-1}}_{i_{s-1}},p_{s-1},\ldots,p_1,\underbrace{\alpha_0,\ldots,\alpha_0}_{i_0}).
\end{multline}
This defines a new curved, strictly unital $A_\infty$ category, which we call $\cF^{wbc}(X)$.
The curvature of the object $(\bm{L},\alpha)$ is $\mathfrak{P}(\alpha) \cdot e_{\bm{L}}$.

\begin{remark}
\label{remark:conv}
The Maurer--Cartan equation \eqref{eqn:mcint}  and the definition of the $A_\infty$ structure maps \eqref{eqn:mcstruct} do not make sense as written, because they are infinite sums which may not, a priori, converge.
To make sense of them, extra conditions must be imposed on $\alpha$:
 \begin{itemize}
\item The category of twisted complexes \cite[\S 3l]{Seidel2008} is formally analogous to the construction of $\cF^{wbc}(X)$.
There, $\alpha$ is required to be strictly lower-triangular with respect to some filtration (the curvature $\mu^0_{\bm{L}}$ and disc potential $\mathfrak{P}(\alpha)$ are required to vanish also).
This ensures convergence of \eqref{eqn:mcint}  and \eqref{eqn:mcstruct}.
However, we will want to consider weak bounding cochains which are not lower-triangular with respect to any filtration.

\item In \cite{fooo},  weak bounding cochains are required to have positive energy:
\begin{equation} \alpha \in C^*(L;\Lambda_+)\end{equation}
where $\Lambda_+$ is the maximal ideal in the Novikov ring $\Lambda_0$.
This ensures convergence in \eqref{eqn:mcint}  and \eqref{eqn:mcstruct}, because the Novikov ring is complete with respect to the energy filtration.
However, we have chosen to define the Fukaya category over $\C$, rather than with Novikov coefficients, so there is no analogue of the energy filtration on our coefficient ring (compare Remark \ref{remark:coeff}).

\item We give a different reason for convergence: we place a geometric restriction on our weak bounding cochains called \emph{monotonicity} (see \S \ref{subsec:conv}), which is specific to the geometric context of the monotone Fukaya category, and which ensures that \eqref{eqn:mcint}  and \eqref{eqn:mcstruct} are \emph{finite} sums, for degree reasons.
To motivate this terminology, observe that if we allowed non-monotone Lagrangians as objects of our Fukaya category, we would be forced to use a Novikov coefficient ring to achieve convergence of the disc counts defining the $A_\infty$ structure maps.
Informally, a weak bounding cochain $\alpha$ on $\bm{L}$ corresponds to a deformation of the object $\bm{L}$, which may in principle correspond to a geometric deformation of the Lagrangians $\bm{L}$ (e.g., by Lagrange surgery, compare \cite[\S 3.3.2]{Auroux2013} and \cite{Fukaya2009a}).
It makes sense that only certain special types of weak bounding cochains can correspond to monotone deformations, so that their Floer theory can be defined over $\C$: these are the monotone weak bounding cochains.
\end{itemize}
\end{remark}

We go on to establish some useful results for working with weak bounding cochains.
In Lemma \ref{lemma:prediscdisc}, we establish sufficient conditions under which an entire subspace $V \subset CF^*(\bm{L},\bm{L})$ is contained in the space of weak bounding cochains: $V \subset \hcM_{weak}(\bm{L})$.
This is an analogue of \cite[Proposition 4.3]{Fukaya2010d}, which says that if $L$ is a torus fibre in a symplectic toric manifold, then there is an embedding
\begin{equation}
\label{eqn:toricwbc}
H^1(L) \subset \hcM_{weak}(L).\end{equation}
One of the main differences between the two results is that the weak bounding cochains of \cite[Proposition 4.3]{Fukaya2010d} can \emph{never} be monotone.
The associated disc potential always contains \emph{infinitely} many terms, because constraining a disc by a codimension-$1$ cycle on the boundary does not change its index, as with the divisor axiom in Gromov--Witten theory.
Indeed, the associated disc potential often turns out to be polynomial in the exponentials of the generators, which are infinite power series.
Thus, in \cite{Fukaya2010d}, it is crucial that the coefficient ring be a Novikov ring, to deal with these infinite sums.
In contrast, in the setting of our Lemma \ref{lemma:prediscdisc}, the disc potential
\begin{equation} \mathfrak{P}: V \To \C\end{equation}
will always be a \emph{polynomial}.

We also prove a version of the well-known result that critical points $v \in V \subset CF^*(\bm{L},\bm{L})$ of the disc potential $\mathfrak{P}$ correspond to weak bounding cochains with non-vanishing Floer cohomology (compare \cite{Cho2006}), and that under some additional assumptions, the endomorphism algebra of the object $(\bm{L},v)$ is the Clifford algebra associated to the Hessian of $\mathfrak{P}$ at $v$ (compare \cite{Cho2005,Hori2001,Kapustin2004}).

Finally, we establish analogues of the basic structures of the monotone Fukaya category (i.e., those presented in \S \ref{sec:monfuk}: closed--open string maps, the split-generation criterion, etc.), when weak bounding cochains are included.
In order to obtain an analogue of the split-generation criterion (Proposition \ref{proposition:gencrit} and its dual Corollary \ref{corollary:gendual}), we are forced to impose an additional condition on our weak bounding cochains $\alpha$, which is not satisfied in general: namely, we restrict to those $\alpha$ such that the algebra homomorphism
\begin{equation} \CO^0: QH^*(X)_w \To HF^*((\bm{L},\alpha),(\bm{L},\alpha))\end{equation}
is unital when $\mathfrak{P}(\alpha) = w$, and vanishes when $\mathfrak{P}(\alpha) \neq w$.
We call the resulting subcategories $\cF^{wbc,u}(X)_w$.
Because monotone Lagrangians with vanishing weak bounding cochain have this property by Proposition \ref{proposition:covan}, we have
\begin{equation} \cF(X)_w \subset \cF^{wbc,u}(X)_w \subset \cF^{wbc}(X)_w.\end{equation}

\subsection{Homological mirror symmetry}
\label{subsec:ihms}

Following \cite{Kontsevich1998}, one expects the mirror to the monotone symplectic manifold $X$ to be a Landau-Ginzburg model $(Y,W)$, where $Y$ is a complex algebraic variety and $W: Y \To \C$ a regular function.
The $A$-model on $X$ (i.e., the monotone Fukaya category) should be mirror to the $B$-model on $(Y,W)$, which is Orlov's triangulated category of singularities of the fibres of $W$, $D^bSing(W^{-1}(w))$ (see \cite{Orlov2004}).
The triangulated category of singularities is trivial if the fibre $W^{-1}(w)$ is non-singular.
If $Y = Spec(S)$ is affine, with $W \in S$, then we can also introduce the $\Z/2\Z$-graded DG category of matrix factorizations $MF(S,W-w)$.
There is an equivalence of triangulated categories
\begin{equation}
\label{eqn:orlov}
 D^bSing(W^{-1}(w)) \cong H^*(MF(S,W-w))\end{equation}
for all $w$ (see \cite{Orlov2004}).

The eigenvalues of $c_1 \star$, which index non-trivial components of the Fukaya category, should correspond to singular values of the superpotential $W$, which index fibres with non-trivial triangulated category of singularities (compare \cite[Theorem 6.1]{Auroux2007}).
Homological mirror symmetry then predicts quasi-equivalences of $\C$-linear, $\Z/2\Z$-graded, split-closed, triangulated $A_{\infty}$ categories
\begin{equation} D^{\pi}(\cF(X)_w) \cong D^{\pi}Sing(W^{-1}(w))\end{equation}
for all $w \in \C$ (the superscript `$\pi$' denotes the idempotent or Karoubi closure).
A proof of this version of homological mirror symmetry for Fano toric varieties has been announced by Abouzaid, Fukaya, Oh, Ohta and Ono (in fact, their results go well beyond the Fano case).

Let $X^n_a$ be a smooth degree-$a$ hypersurface in $\CP{n-1}$ (we apologize for the awkward notation, which makes $X^n_a$ an $(n-2)$-dimensional manifold, but it really makes the formulae less complicated).
It is monotone if $a \le n-1$.
Let $P \in QH^*(X^n_a)$ be the class Poincar\'{e} dual to a hyperplane.
The relation satisfied by $P$ in $QH^*(X^n_a)$ is computed by Givental \cite{Givental1996} (extending the results of Beauville \cite{Beauville1995} and Jinzenji \cite{Jinzenji1997} in lower degrees):

\begin{proposition} \cite[Corollaries 9.3 and 10.9]{Givental1996}
\label{proposition:givent}
Define
\begin{equation}\label{eqn:qna}
 q^n_a(x) := x^{n-1} - a^ax^{a-1},\end{equation}
and
\begin{equation}
\label{eqn:bwna} \bm{w}^n_a := \left\{ \begin{array}{rl}
					0 & \mbox{if $a \le n-2$}\\
					-a! & \mbox{if $a=n-1$.}
				\end{array} \right. \end{equation}
Then the subalgebra of $QH^*(X^n_a)$ generated by $P$ is isomorphic to
\begin{equation} \C[P]/q^n_a(P + \bm{w}^n_a).\end{equation}
\end{proposition}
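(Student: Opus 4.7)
The result is due to Givental, so the plan is to sketch the strategy of his argument. Classically, the Lefschetz hyperplane theorem together with $\dim_\C X^n_a = n-2$ gives that the subring of $H^*(X^n_a;\C)$ generated by the hyperplane class $P$ is $\C[P]/(P^{n-1})$; the task is to determine how this relation deforms in $QH^*(X^n_a)$.

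The main computation uses the hypergeometric $I$-function of the hypersurface,
\begin{equation}
I^n_a(q,\hbar) \;=\; e^{P\log q/\hbar}\sum_{d\ge 0} q^d\,\frac{\prod_{m=1}^{ad}(aP+m\hbar)}{\prod_{m=1}^d (P+m\hbar)^n},
\end{equation}
taking values (after pullback to $X^n_a$) in $H^*(X^n_a;\C)$. A direct inspection shows that $I^n_a$ is annihilated by the hypergeometric differential operator $(\hbar q\partial_q)^{n-1} - a^a q\,(\hbar q\partial_q)^{a-1}$. Givental's mirror theorem identifies $I^n_a$, possibly after a mirror transformation, with the small $J$-function of $X^n_a$; the quantum differential equation satisfied by the latter is the semiclassical ($\hbar \to 0$) limit of the hypergeometric ODE, with $\hbar q\partial_q$ playing the role of quantum multiplication by $P$. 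Setting $q=1$ in line with this paper's conventions then translates the operator into the relation $P^{\star(n-1)} = a^a P^{\star(a-1)}$ in $QH^*(X^n_a)$, i.e., $q^n_a(P)=0$.

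The subtle step, and the source of the shift $\bm{w}^n_a$, is the mirror transformation from $I^n_a$ to $J^n_a$. For $a \le n-2$ the mirror map is trivial, so the relation is $q^n_a(P)=0$ on the nose. For the Fano-index-$1$ case $a=n-1$, the $d=1$ term of $I^n_a$ contributes an extra scalar $a!$ to the $\hbar^{-1}$-coefficient (obtained by expanding $\prod_{m=1}^{n-1}(aP+m\hbar)/(P+\hbar)^n$ and picking out the identity-class term), and passing from $I$ to $J$ absorbs this scalar into a coordinate shift $P\mapsto P+\bm{w}^n_a$ with $\bm{w}^n_a=-a!$, yielding $q^n_a(P+\bm{w}^n_a)=0$.

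Finally one checks that this single relation presents the entire subalgebra generated by $P$: both $\C[P]/q^n_a(P+\bm{w}^n_a)$ and the subalgebra in question have rank $n-1$, the latter by flat deformation of the classical subring $\C[P]/(P^{n-1})$, so the surjection from polynomials in $P$ modulo $q^n_a(P+\bm{w}^n_a)$ onto the subalgebra is an isomorphism. The main obstacle is to pin down the mirror-map shift in the index-$1$ case precisely (both the numerical value and the sign): this requires carefully expanding the hypergeometric factor and tracking the normalization of the $J$-function through the Birkhoff decomposition, which is the delicate heart of Givental's original argument.
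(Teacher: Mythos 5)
Your sketch correctly reproduces the structure of Givental's original argument — the hypergeometric $I$-function, the hypergeometric ODE it solves, the mirror theorem identifying $I$ with the small $J$-function up to a mirror transformation, and the semiclassical limit yielding the quantum relation, with the shift $\bm{w}^n_a$ in the Fano-index-one case coming from the nontrivial $d=1$ contribution to the mirror map — and that is exactly what the paper does here: the proposition is stated with the citation \cite[Corollaries 9.3 and 10.9]{Givental1996} and the paper offers no proof of its own at this point. Your identification of the $\hbar^{-1}$-coefficient of the identity class in the $d=1$ term as $a! = (n-1)!$ is correct, as is your rank argument for why the single relation $q^n_a(P+\bm{w}^n_a)=0$ presents the full subalgebra.

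It is worth noting, though, that the paper does give a genuinely different partial re-proof later (Proposition~\ref{proposition:qhfromhh}, Section~\ref{subsec:qhfromhh}): it computes the $\Gamma^n_a$-invariant part of the Hochschild cohomology of the endomorphism algebra of the configuration of Lagrangian spheres in the relative Fukaya category, and transports the relation to $QH^*(X^n_a,D)$ via the closed-open string map $\mathcal{CO}_{X,D}$ using the divisor-type formula of Proposition~\ref{proposition:cod}. That route recovers the relation $q^n_a(P-\bm{w}\cdot T)=0$ purely from Floer-theoretic data, whereas your sketch relies on the hypergeometric/mirror-theorem machinery; but the Fukaya-theoretic re-proof is incomplete on exactly the point you flag as delicate — it cannot determine the value $\bm{w}=-a!$ in the $a=n-1$ case (Corollary~\ref{corollary:w} falls back on Givental for that), and it leaves two signs undetermined — so Givental's argument, the one you sketch, remains the actual proof.
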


We will re-prove Proposition \ref{proposition:givent}, via our computations in the Fukaya category, in Proposition \ref{proposition:qhfromhh} (see \S \ref{subsec:qhfromhh}).

Now, because $c_1(TX^n_a) = (n-a) c_1(\mathcal{O}(1)) = (n-a)P$, we have

\begin{corollary}
\label{corollary:c1eval}
The eigenvalues of $c_1 \star: QH^*(X^n_a) \To QH^*(X^n_a)$ are equal to the roots of $q^n_a$, shifted by $\bm{w}^n_a$, and multiplied by $(n-a)$.
Explicitly, the eigenvalues are:
\begin{itemize}
\item $0$ (which we call the \emph{big} eigenvalue) and the $n-a$ numbers
\begin{equation} (n-a) \xi\end{equation}
where $\xi^{n-a} = a^a$ (which we call the \emph{small} eigenvalues), if $a \le n-2$;
\item $-a!$  (which we call the \emph{big} eigenvalue) and $a^a - a!$  (which we call the \emph{small} eigenvalue), if $a = n-1$.
\end{itemize}
\end{corollary}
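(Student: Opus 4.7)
The proof reduces to two observations. First, by the standard Chern class computation for hypersurfaces (using the normal bundle sequence of $X^n_a \subset \CP{n-1}$, which gives $c_1(TX^n_a) = c_1(T\CP{n-1})|_X - c_1(\mathcal{O}(a))|_X$), one has $c_1 = (n-a) P$ in $H^*(X^n_a;\C) = QH^*(X^n_a)$. Consequently $c_1 \star = (n-a)(P \star)$ as endomorphisms of $QH^*(X^n_a)$, so the spectrum of $c_1 \star$ equals $(n-a)$ times the spectrum of $P \star$, and the problem reduces to identifying the eigenvalues of $P \star$.

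Second, I invoke Proposition \ref{proposition:givent} to pin those eigenvalues down. Since $P$ satisfies the relation $q^n_a(P + \bm{w}^n_a) = 0$ in $QH^*(X^n_a)$, the polynomial $q^n_a(x + \bm{w}^n_a)$ annihilates the $\C$-linear endomorphism $P \star$, so every eigenvalue of $P \star$ lies in its root set. Conversely, Proposition \ref{proposition:givent} identifies the subalgebra of $QH^*(X^n_a)$ generated by $P$ with the cyclic quotient $\C[x]/q^n_a(x + \bm{w}^n_a)$; on such a quotient, multiplication by $x$ has $q^n_a(x + \bm{w}^n_a)$ itself as its minimal polynomial (because $1, x, \ldots, x^{n-2}$ remain linearly independent in the quotient). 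Hence every distinct root of $q^n_a(x + \bm{w}^n_a)$ is realized as an eigenvalue of $P \star$ already on this subalgebra, and a fortiori on all of $QH^*(X^n_a)$. This shows that the eigenvalues of $P \star$ are precisely the distinct roots of $q^n_a(x + \bm{w}^n_a)$.

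Finally, one solves the polynomial equation explicitly. Factoring
\begin{equation}
q^n_a(y) = y^{a-1}\bigl(y^{n-a} - a^a\bigr),
\end{equation}
the distinct roots in $y$ are $y = 0$ and the $n-a$ distinct $(n-a)$-th roots of $a^a$. Translating back to $x = y - \bm{w}^n_a$, then multiplying by $n-a$, produces the eigenvalues of $c_1 \star$. Dividing into the two cases according to the piecewise definition of $\bm{w}^n_a$ (namely $\bm{w}^n_a = 0$ for $a \le n-2$, and $\bm{w}^n_a = -a!$ with $n-a = 1$ for $a = n-1$) yields exactly the two lists stated in the corollary. The whole argument is algebraic and I anticipate no substantive obstacle; the only delicate point worth flagging is the step identifying eigenvalues of $P \star$ on the full quantum cohomology with those on the subalgebra $\C[P]$, which is precisely where the cyclic-quotient description furnished by Proposition \ref{proposition:givent} is used.
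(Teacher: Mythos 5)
Your route is the same as the paper's: the paper gives no separate argument for this corollary beyond the observation $c_1 = (n-a)P$ immediately preceding it, so the intended proof is exactly the algebra you carry out, and the cyclic-quotient observation (that $q^n_a(x+\bm{w}^n_a)$ is the \emph{minimal} polynomial of $P\,\star$ on the subalgebra it generates, so every distinct root is realized) is a genuinely useful step that the paper leaves implicit.

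However, the last step of your arithmetic does not in fact reproduce the stated eigenvalues, and you assert that it does without checking. Taking Proposition~\ref{proposition:givent} at face value, the relation is $q^n_a(P+\bm{w}^n_a)=0$, so with your substitution $y=x+\bm{w}^n_a$ the eigenvalues of $P\,\star$ are $x=y-\bm{w}^n_a$ for $y$ a root of $q^n_a$. In the Fano index $1$ case $a=n-1$ one has $\bm{w}^n_a=-a!$ and the roots of $q^n_a$ are $\{0,a^a\}$, so this gives eigenvalues of $c_1\,\star=(n-a)P\,\star=P\,\star$ equal to
\begin{equation}
\{\,0-(-a!),\ a^a-(-a!)\,\}=\{\,a!,\ a^a+a!\,\},
\end{equation}
whereas the corollary asserts $\{-a!,\ a^a-a!\}$. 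These differ by replacing $\bm{w}^n_a$ with $-\bm{w}^n_a$.

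The discrepancy traces to a sign inconsistency inside the paper rather than to a conceptual error of yours: Proposition~\ref{proposition:givent} writes the relation as $q^n_a(P+\bm{w}^n_a)$, but Proposition~\ref{proposition:qhfromhh}, which re-proves it, writes $\C[P]/\tilde q^n_a(P-\bm{w}\cdot T)$, and after specializing $T\mapsto 1$ and $\bm{w}\mapsto\bm{w}^n_a$ this becomes $q^n_a(P-\bm{w}^n_a)$. It is this latter sign that is compatible both with the phrasing of Corollary~\ref{corollary:c1eval} (``roots of $q^n_a$, shifted by $\bm{w}^n_a$, times $n-a$'') and with the independent check for the cubic surface: Proposition~\ref{proposition:craud} gives eigenvalues $\{-6,21\}=\{-a!,\,a^a-a!\}$ for $n=4$, $a=3$, which forces $(P+6)^2(P-21)=q^4_3(P+6)=q^4_3(P-\bm{w}^4_3)=0$. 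So the relation you should be using is $q^n_a(P-\bm{w}^n_a)=0$, i.e.\ substitute $y=x-\bm{w}^n_a$, $x=y+\bm{w}^n_a$; then the two cases come out exactly as the corollary claims. You should correct the sign rather than assert a match that the arithmetic does not deliver.
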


This tells us where the non-trivial Fukaya categories are.
We correspondingly call the component of the Fukaya category corresponding to the big eigenvalue, the \emph{big} component of the Fukaya category, and the other components the \emph{small} components.
We prove (using our Fukaya category computations) that if $w$ is a small eigenvalue, then $QH^*(X)_w$ has rank $1$.
$QH^*(X)_w$ is expected to be isomorphic to $HH^*(\cF(X)_w)$, so it gives a measure of how `complicated' the corresponding Fukaya category is: so we expect the small components of the Fukaya category to be rather simple, and the big components to be more complicated.
For example, Corollary \ref{corollary:isemisimpgen} shows that any non-trivial object in a small component of the Fukaya category necessarily split-generates that component, and as a corollary any two non-trivial objects intersect, and so on.

Now we consider the mirror.

\begin{definition}
\label{definition:mir}
We define the polynomials
\begin{equation} \label{eqn:zna}
Z^n_a := -u_1 \ldots u_n + \sum_{j=1}^n u_j^a \in \C[u_1,\ldots, u_n]\end{equation}
and
\begin{equation} W^n_a := Z^n_a + \bm{w}^n_a\end{equation}
(where the constant $\bm{w}^n_a $ is as in Proposition \ref{proposition:givent}).
We define
\begin{equation} \label{eqn:gammana}
\Gamma^n_a := (\Z/a\Z)^n/(\Z/a\Z),\end{equation}
the quotient of $(\Z/a\Z)^n$ by the diagonal subgroup.
Its character group is
\begin{equation} (\Gamma^n_a)^* \cong \{ (\zeta_1, \ldots,\zeta_n) \in (\Z/a\Z)^n: \zeta_1+ \ldots+\zeta_n = 0 \}.\end{equation}
There is an obvious action of $(\Gamma^n_a)^*$ on $\C^n$ by multiplying coordinates by $a$th roots of unity, and $Z^n_a$ and $W^n_a$ are $(\Gamma^n_a)^*$-invariant.
\end{definition}

For the rest of this section, we will fix $n$ and $a$, and write `$X$' instead of `$X^n_a$', and so on, to avoid notational clutter.

\begin{definition}
The mirror to $X$ is the Landau-Ginzburg model
\begin{equation} (Y,W) := (\C^n/\Gamma^*, W).\end{equation}
The $B$-model category associated to $w \in \C$ is the split-closure of the $\Gamma^*$-equivariant triangulated category of singularities of the corresponding fibre of $W$:
\begin{equation} D^{\pi} \mathrm{Sing}^{\Gamma^*}(W^{-1}(w)).\end{equation}
It is a $\Z/2\Z$-graded, $\C$-linear, triangulated category.
It is equivalent to the cohomology of the corresponding $\Gamma^*$-equivariant category of matrix factorizations
\begin{equation} H^*(D^\pi MF^{\Gamma^*}(W - w)).\end{equation}
\end{definition}

\begin{remark}
\label{remark:dgen}
There is a natural DG enhancement of Orlov's triangulated category of singularities, but the author does not know if the equivalence with the category of matrix factorizations \eqref{eqn:orlov} lifts to a quasi-equivalence of the underlying DG categories (\cite{Orlov2004} only proves equivalence of triangulated categories, on the level of cohomology).
Our proof of homological mirror symmetry (Theorems \ref{theorem:big} and \ref{theorem:small}) is formulated as a quasi-equivalence of $A_\infty$ categories; we always take the DG enhancement of $D^\pi Sing(W^{-1}(w))$ given by the DG category of matrix factorizations $MF^{\Gamma^*}(W - w)$, rather than the natural one (of course, one hopes they are the same).
\end{remark}

\begin{lemma}
\label{lemma:wnacrit}
There are two types of critical points of the mirror superpotential $W$.
First, we have the origin, which we call the \emph{big} critical point.
It is a fixed point of the action of $\Gamma^*$.
The corresponding critical value is the big eigenvalue of $c_1 \star$ on $X$.

The remaining critical points are called \emph{small} critical points.
The critical values associated to the small critical points are equal to the small eigenvalues of $c_1 \star$ on $X$ (in particular, there are $n-a$ of them).
For each of the small critical values, the action of $\Gamma^*$ on the critical points with that critical value is free and transitive.
Furthermore, the Hessian of $W$ at the small critical points is non-degenerate.
\end{lemma}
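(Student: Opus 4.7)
The plan is to compute everything in coordinates on $\C^n$, then descend to $Y = \C^n/\Gamma^*$. Writing $\Pi := u_1 \cdots u_n$, the critical point equations $\partial W/\partial u_i = 0$ read
\begin{equation*}
a u_i^{a-1} = \prod_{j \neq i} u_j \quad \text{for all } i.
\end{equation*}
Multiplying the $i$-th equation by $u_i$ gives the symmetric form $a u_i^a = \Pi$, which must hold for every $i$.

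First I would dispense with the degenerate case. If any $u_i = 0$, then $\Pi = 0$, whence $u_j^a = 0$ for all $j$, so all $u_j = 0$. This gives the origin (the unique fixed point of $\Gamma^*$ on $\C^n$), and the critical value is just $\bm{w}^n_a$, which matches the big eigenvalue of $c_1 \star$ in both cases of Corollary~\ref{corollary:c1eval}. Otherwise all $u_i$ are nonzero. Taking the product of the relations $a u_i^a = \Pi$ yields $a^n \Pi^a = \Pi^n$, i.e., $\Pi^{n-a} = a^n$. This equation has exactly $n-a$ roots, giving $n-a$ possible values of $\Pi$. Since $\sum u_j^a = n \Pi/a$, the critical value at any such point is
\begin{equation*}
W(u) = -\Pi + \frac{n\Pi}{a} + \bm{w}^n_a = \frac{(n-a)\Pi}{a} + \bm{w}^n_a.
\end{equation*}
Writing $\xi := (n-a)\Pi/a$, one checks directly that $\xi^{n-a} = a^a$, so in the case $a \le n-2$ (where $\bm{w}^n_a = 0$) we recover precisely the $n-a$ small eigenvalues $(n-a)\xi$ of Corollary~\ref{corollary:c1eval}. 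In the case $a = n-1$ we get the single value $a^{n-1} + \bm{w}^n_a = a^a - a!$, again matching.

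Next I would analyze the $\Gamma^*$-action at a small critical point $u$. Since every coordinate is nonzero, any $(\zeta_1,\ldots,\zeta_n) \in \Gamma^*$ fixing $u$ must satisfy $\zeta_i = 1$ for all $i$, so the action is free. For transitivity, fix one of the $n-a$ values of $\Pi$ and consider the finite set $S$ of tuples $(u_i)$ with $a u_i^a = \Pi$ and $\prod u_i = \Pi$. The set $\{u : u_i^a = \Pi/a\}$ has cardinality $a^n$, and the map to $\mu_a$ sending $u \mapsto \prod u_i / \Pi$ is easily seen to be a surjective group-homomorphism-torsor with fibres of size $a^{n-1}$; hence $|S| = a^{n-1} = |\Gamma^*|$, so freeness forces transitivity. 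This shows simultaneously that the small critical points are isolated and that $\Gamma^*$ permutes those with a given critical value freely and transitively.

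Finally, the Hessian at a small critical point. Using $a u_i^a = \Pi$, one computes
\begin{equation*}
\partial_i \partial_i W = a(a-1) u_i^{a-2} = (a-1)\Pi/u_i^2, \qquad \partial_i \partial_j W = -\Pi/(u_i u_j) \quad (i \neq j).
\end{equation*}
Conjugating by the diagonal matrix $D = \mathrm{diag}(1/u_i)$ and pulling out $\Pi$ reduces the computation to $\det(aI_n - J_n)$, where $J_n$ is the all-ones matrix; the eigenvalues of $J_n$ are $n$ (simple) and $0$ (of multiplicity $n-1$), so this determinant equals $(a-n) a^{n-1}$. Altogether $\det\mathrm{Hess}(W)(u) = \Pi^{n-2}(a-n) a^{n-1}$, which is nonzero since $a \le n-1 < n$ and $\Pi \ne 0$. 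I expect the whole argument to be a direct computation; the only point requiring any care is matching critical values with eigenvalues in the $a = n-1$ case, where the shift $\bm{w}^n_a$ is indispensable.
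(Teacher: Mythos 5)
Your argument follows the same route as the paper's: reduce the critical-point equations to $au_i^a = \Pi$ for all $i$, separate the origin from the non-degenerate case, take the product to pin down $\Pi$ (equivalently the paper's $U = \Pi/a$), and read off the critical values. The proposal is correct, with two remarks worth making.

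First, a small typo: you set $\xi := (n-a)\Pi/a$, but then assert $\xi^{n-a} = a^a$ and that the critical value is $(n-a)\xi + \bm{w}^n_a$; both of these hold with $\xi := \Pi/a$, not $\xi := (n-a)\Pi/a$. With that substitution everything you write is internally consistent and matches Corollary~\ref{corollary:c1eval} in both cases.

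Second, you fill in two spots that the paper leaves to the reader, and in one of them your version is actually cleaner. For transitivity, the paper says ``one easily verifies''; your torsor/fiber-count argument ($|S| = a^{n-1} = |\Gamma^*|$, so freeness forces transitivity) is a complete and tidy way to do it. For the Hessian, you compute $\mathrm{Hess}(W)(u) = \Pi\, D(aI_n - J_n)D$ with $D = \mathrm{diag}(1/u_i)$ and hence $\det \mathrm{Hess} = \Pi^{n-2}(a-n)a^{n-1} \neq 0$. This is a more careful statement than the paper's displayed formula, which asserts the Hessian is a \emph{scalar} multiple of $aI_n - J_n$; in general it is only conjugate to such a matrix by $D$ (the $u_i$ can differ by $a$-th roots of unity, so $D$ need not be scalar). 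The non-degeneracy conclusion is of course unaffected, but your derivation is the one that actually justifies it.
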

\begin{proof}
At a critical point of $Z^n_a$, we have
\begin{equation} \frac{u_1 \ldots  u_n}{u_j} = a u_j ^{a-1}\end{equation}
for each $j$.
Taking the product of these relations, we obtain the relation
\begin{equation} q^n_a \left(U \right)=0,\end{equation}
where
\begin{equation} U := \frac{u_1 \ldots u_n}{a} = u_j^a \mbox{ for all $j$.}\end{equation}
Therefore, either $U = 0$, in which case $u_1 = \ldots = u_n = 0$ (the big critical point), or $U = \xi$ is an $(n-a)$th root of $a^a$, in which case we get a small critical point.
In the latter case, one easily verifies that $(\Gamma^n_a)^*$ acts freely and transitively on the critical points corresponding to a fixed $\xi$.
For each such critical point, the corresponding critical value is
\begin{equation} - u_1 \ldots u_n + \sum_j u_j^a = (n-a) \xi.\end{equation}

Finally, it is straightforward to check that the Hessian of $Z^n_a$ at a small critical point has the form
\begin{equation} \omega a^{\frac{n-2}{n-a}} \left[ \begin{array}{cccc}
					a-1 & -1 & \ldots & -1 \\
					-1 & a-1 & \ldots & -1 \\
					\vdots & \vdots & \ddots & \vdots \\
					-1 & -1 & \ldots & a-1
					\end{array} \right],\end{equation}
where $\omega$ is some root of unity.
In particular, as $a \le n-1$, it is invertible.
 \end{proof}

Therefore, the critical values of $W$ match up with the eigenvalues of $c_1 \star$ on $X$, so the non-trivial categories lie in the same places on the two sides of homological mirror symmetry.
Our first main result is that the categories over the big eigenvalue/critical value are quasi-equivalent:

\begin{theorem}
\label{theorem:big}
If $2 \le a \le n-1$, then there is a quasi-equivalence of $\C$-linear, $\Z/2\Z$-graded, triangulated, split-closed $A_{\infty}$ categories
\begin{equation} D^{\pi}\cF(X)_w \cong D^{\pi}Sing^{\Gamma^*}(W^{-1}(w)),\end{equation}
where $w$ is the big eigenvalue of $c_1 \star$ ($=$ the big critical value of $W$).
\end{theorem}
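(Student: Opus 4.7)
The strategy is the standard one for homological mirror symmetry via a split-generator: exhibit an explicit object $(\bm{L},0) \in \mathcal{F}^{wbc,u}(X)_w$ whose endomorphism $A_\infty$ algebra matches that of a known split-generator of $D^\pi \mathrm{Sing}^{\Gamma^*}(W^{-1}(w))$, then apply Corollary \ref{corollary:gendual} (in its weak-bounding-cochain version) to conclude that $(\bm{L},0)$ split-generates the Fukaya side.

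I would first take $\bm{L} = L_1 \oplus \ldots \oplus L_n \in \mathcal{F}(X)^\oplus$, where the $L_j$ are the monotone Lagrangian spheres constructed in the paper, configured so that the associated disk potential on $\bm{L}$ coincides with the Hori--Vafa superpotential $W^n_a$ (this is the main computational input advertised in the abstract, carried out by counting Maslov-2 disks and applying the relative Fukaya category technology of Section \ref{subsec:irelfuk} together with \eqref{eqn:codjdif}). The big critical point of $W^n_a$ is the origin, so the undeformed object $(\bm{L},0)$ has curvature $\mathfrak{P}(0) = \bm{w}^n_a$, which by Lemma \ref{lemma:wnacrit} equals the big eigenvalue $w$; hence $(\bm{L},0)$ belongs to $\mathcal{F}^{wbc}(X)_w$. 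To verify that it is actually an object of $\mathcal{F}^{wbc,u}(X)_w$ and that it split-generates, I would check injectivity of
\begin{equation}
\mathcal{CO} : QH^*(X)_w \To HH^*((\bm{L},0)).
\end{equation}
By Proposition \ref{proposition:givent}, $QH^*(X)_w$ is generated over $\C$ by the hyperplane class $P$, so injectivity reduces to evaluating $\mathcal{CO}^0(P)$ using \eqref{eqn:codjdif} and checking that the resulting Hochschild cocycle is non-zero to the correct polynomial order, which follows from the structure of the disk potential.

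The core step is matching the $A_\infty$ endomorphism algebra $CF^*(\bm{L},\bm{L})$ with that of an explicit $\Gamma^*$-equivariant matrix factorization of $W - w$. At the big critical point (the origin), a natural split-generator on the $B$-side is the Koszul-type matrix factorization built from the partial derivatives of $W^n_a$, whose endomorphism DG algebra admits a completely explicit presentation with structure constants determined by the coefficients of $W$. On the $A$-side, the undeformed cohomological endomorphism algebra of $\bm{L}$ is the exterior algebra generated by the intersection points between the $L_j$ (it has the same Koszul shape, coming from the $A_n$-type incidence pattern), and the higher $A_\infty$ products are controlled by the same disk potential $W^n_a$. I would then invoke a formal classification of $A_\infty$ deformations of this Koszul model: by Hochschild cohomology computations analogous to those in \cite{Sheridan2014}, the deformation class of the $A_\infty$ structure is determined by a single polynomial, which on both sides equals $W^n_a$. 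This gives a quasi-isomorphism of minimal $A_\infty$ algebras, which passes to a quasi-equivalence of split-closed triangulated envelopes in the standard way \cite{Seidel2008}.

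The main obstacle is controlling the $A_\infty$ structure on $CF^*(\bm{L},\bm{L})$ well enough to identify it with the matrix-factorization side, because the big critical point of $W^n_a$ is \emph{degenerate} (its Hessian vanishes when $a \ge 3$), so the Clifford-algebra shortcut used for the small critical points is unavailable and no formality argument applies directly. One must instead establish a deformation-theoretic uniqueness statement: show that within the appropriate moduli of $A_\infty$ structures on the underlying Koszul algebra equivariant for $\Gamma^*$, the versal deformation parameter is canonically the polynomial $W^n_a$ itself, so that any two $A_\infty$ structures yielding the same disk potential (equivalently, the same image of $\mathcal{CO}$) are automatically quasi-isomorphic. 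This uniqueness, together with the disk-potential computation, is what forces the equivalence in the theorem.
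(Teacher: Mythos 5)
Your overall strategy matches the paper's: take the direct sum $\bm{L}$ of all $\Gamma$-lifts of the immersed Lagrangian sphere, identify the endomorphism $A_\infty$ algebra with the endomorphism algebra of the stabilizer-twisted skyscraper at the origin on the matrix-factorization side via a deformation-theoretic uniqueness result (the paper's Theorem \ref{theorem:typean}, proved by computing $THH^2_{\bm{G}}$ and showing higher-order deformation spaces vanish), and then use the $\mathcal{CO}$-injectivity form of the split-generation criterion. However, there is a genuine gap in your split-generation step. You write that ``by Proposition \ref{proposition:givent}, $QH^*(X)_w$ is generated over $\C$ by the hyperplane class $P$,'' and reduce injectivity of $\mathcal{CO}$ on $QH^*(X)_w$ to a computation of $\mathcal{CO}^0(P)$. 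This is false: Proposition \ref{proposition:givent} concerns only the subalgebra of $QH^*(X)$ generated by $P$, which is the Lefschetz (``Hodge'') part. By Proposition \ref{proposition:smallw} the \emph{entire} primitive cohomology $H^*_P(X^n_a)$ sits inside the big generalized eigenspace $QH^*(X)_{\bm{w}}$, and $P$ sees none of it. So $\mathcal{CO}$ restricted to $QH^*(X)_w$ is not reducible to the $P$-subalgebra, and injectivity in the form you want cannot be checked this way. The paper circumvents this by exploiting the $\Gamma = (\Z/a\Z)^n/(\Z/a\Z)$ action on $X$ and on the lifts $L_\gamma$: since $P$ \emph{does} generate the $\Gamma$-invariant part $QH^*(X)^\Gamma_{\bm{w}} = H^*_H \cap QH^*(X)_{\bm{w}}$ (by Lemma \ref{lemma:eqhodg}), one proves injectivity of the $\Gamma$-invariant closed-open map and then invokes a $\Gamma$-equivariant version of Corollary \ref{corollary:gendual} (cf.\ Proposition \ref{proposition:comon} and Corollary \ref{corollary:asplitgens}). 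Without this equivariant refinement, your argument does not close.

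Two smaller points. First, you route the argument through $\mathcal{F}^{wbc,u}(X)_w$ and the object $(\bm{L},0)$, but no weak bounding cochains are needed here: the Lagrangian spheres with $\alpha = 0$ already sit in the big component, and the whole weak-bounding-cochain machinery (Sections \ref{sec:wbcalg}, \ref{sec:wbcgeom}) is only required for Theorem \ref{theorem:small}. Working directly in $\mathcal{F}(X)_{\bm{w}}$ is both simpler and what the paper does. Second, the uniqueness you invoke should not be phrased as ``same disk potential $\Rightarrow$ quasi-isomorphic''. The disk potential is the cyclic trace of the $A_\infty$ structure through the unit and is strictly less data than the full $A_\infty$ structure; what the paper actually controls is the Hochschild--Kostant--Rosenberg image $\Phi(\mu^*)$ of the entire structure map, and the classification (Definition \ref{definition:typean}, Theorem \ref{theorem:typean}) is stated and proved in those terms, using the $\bm{G}^n_1$-grading on the relative Fukaya category to kill higher-order deformations. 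Your instinct that there is a ``one-polynomial'' rigidity result is correct, but the polynomial in question is $\tilde Z^n_a$ as a first-order Hochschild class, not the disk potential $\mathfrak{P}$.
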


\begin{remark}
The case $a=2$ of Theorem \ref{theorem:big} was proven by Smith in \cite{Smith2012}.
The quadric hypersurface $X^n_2$ can be regarded as a compactification of $T^*S^{n-2}$; Smith proved that the zero-section $S^{n-2}$ split-generates $\cF(X^n_2)_0$, and that its endomorphism algebra is a Clifford algebra.
\end{remark}

To prove Theorem \ref{theorem:big}, we consider the action of
\begin{equation} \Gamma = (\Z/a\Z)^n / (\Z/a\Z)\end{equation}
on the Fermat hypersurface
\begin{equation} X = \left\{ \sum_j z_j^a = 0\right\} \end{equation}
by multiplication of the coordinates by $a$th roots of unity.
In \S \ref{sec:fermats}, we consider the immersed Lagrangian sphere $L$ in $X/\Gamma$, constructed in \cite{Sheridan2011}, and let $\{L_\gamma\}_{\gamma \in \Gamma}$ be its  lifts to $X$.
Although $L$ is immersed, its lifts $L_\gamma$ are embedded (Lemma \ref{lem:Lliftemb}), and all satisfy $w(L_\gamma) = \bm{w}^n_a$ (the big eigenvalue).
We consider their direct sum $\bm{L} = \oplus_\gamma  L_\gamma$, and prove that $\bm{L}$ split-generates the big component of the Fukaya category, and that its endomorphism algebra is $A_\infty$ quasi-isomorphic to the endomorphism algebra of the sum of equivariant twists of the skyscraper sheaf at the origin in the category of singularities.
As the latter split-generates the category of singularities, this completes the proof.

Our computations in the Fukaya category are drawn straight from the computations in the relative Fukaya category of \cite{Sheridan2015}, transferred to the monotone Fukaya category via the results of \S \ref{sec:relfuk}.
Also as in \cite{Sheridan2015}, our proof of split-generation relies on the dual version of the split-generation criterion, Corollary \ref{corollary:gendual}.
Namely, we consider the map
\begin{equation} \CO: QH^*(X)_w \To HH^*(CF^*(\bm{L},\bm{L})).\end{equation}
We know $\CO(P)$ from \eqref{eqn:codjdif}, and that $\CO$ is an algebra homomorphism; by explicitly computing the Hochschild cohomology, this allows us to prove that $\CO$ is injective on the subalgebra generated by $P$.
There is more to $QH^*(X)_w$ than the subalgebra generated by $P$; but we observe that there is a natural action of $\Gamma$ on $X$ and on the objects constituting $\bm{L}$, and that $P$ \emph{does} generate the $\Gamma$-invariant part $QH^*(X)^\Gamma_w$, so we can apply a $\Gamma$-equivariant version of  Corollary \ref{corollary:gendual} to prove that $\bm{L}$ split-generates.

Note that the Fukaya category appearing in Theorem \ref{theorem:big} does not involve weak bounding cochains: the Lagrangians $L_\gamma$ satisfy the hypothesis of the split-generation criterion, with no weak bounding cochains required.
In particular, none of the machinery developed in \S \ref{sec:wbcalg} and \S \ref{sec:wbcgeom} is necessary to study the big component of the Fukaya category.
Furthermore, because these Lagrangians satisfy the hypothesis of the split-generation criterion, they also split-generate $\cF^{wbc,u}(X)_w$.
It follows that Theorem \ref{theorem:big} also holds if we replaced $D^\pi \cF(X)_w$  by $D^\pi \cF^{wbc,u}(X)_w $: adding weak bounding cochains would not add any new information.

However, in order to address the remaining small components of the Fukaya category, we do need weak bounding cochains.
We prove:

\begin{theorem}
\label{theorem:small}
If $2 \le a \le n-1$, then there is a quasi-equivalence of $\C$-linear, $\Z/2\Z$-graded, triangulated, split-closed $A_{\infty}$ categories
\begin{equation} D^{\pi}\cF^{wbc,u}(X)_w \cong D^{\pi}Sing^{\Gamma^*}(W^{-1}(w)),\end{equation}
where $w$ is any of the small eigenvalues of $c_1 \star$.

In fact, both categories are quasi-equivalent to $D^b(\C)$ (if $n$ is even), and $D^\pi(\Cl_1)$ (if $n$ is odd), where $\Cl_1$ denotes the $\Z/2\Z$-graded Clifford algebra $\C[\theta]/\theta^2=1$, with $\theta$ in odd degree.
\end{theorem}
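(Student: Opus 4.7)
The plan is to use the same Lagrangian configuration $\bm{L} = \bigoplus_{\gamma \in \Gamma} L_\gamma$ constructed in Section \ref{sec:fermats}, but now equipped with a non-trivial weak bounding cochain corresponding to a small critical point of the disk potential. The computational core is to exhibit a subspace $V \subset CF^*(\bm{L},\bm{L})$ of monotone weak bounding cochains, with $V \subset \widehat{\mathcal{M}}(\bm{L})$ guaranteed by Lemma \ref{lemma:prediskdisk}, on which the disk potential $\mathfrak{P}: V \to \C$ agrees, after the shift by $\bm{w}^n_a$, with the Hori--Vafa superpotential $W^n_a$ of Definition \ref{definition:mir}. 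This is a direct transfer of the computation of \cite{Sheridan2014} from the relative to the monotone Fukaya category via the comparison results of Section \ref{sec:relfuk}.

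For each small critical value $w$, Lemma \ref{lemma:wnacrit} produces a critical point $v \in V$ of $\mathfrak{P}$ with non-degenerate Hessian on $\C^n$. Applying the Cho-type result quoted in Section \ref{subsec:iwbc}, the pair $(\bm{L},v)$ is a non-trivial object of $\mathcal{F}^{wbc}(X)_w$ whose self-Floer cohomology is the Clifford algebra $\Cl(\mathrm{Hess}_v \mathfrak{P})$ of the Hessian. Since this is the Clifford algebra of a non-degenerate quadratic form in $n$ variables, $\Z/2\Z$-graded Morita reduction identifies it with $\C$ when $n$ is even and with $\Cl_1$ when $n$ is odd, so $D^\pi$ of its endomorphism algebra is $D^b(\C)$ or $D^\pi(\Cl_1)$ as claimed. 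Before invoking split-generation I would check that $(\bm{L},v)$ lies in $\mathcal{F}^{wbc,u}(X)_w$, i.e., that the extended $\mathcal{CO}^0: QH^*(X)_w \to HF^*((\bm{L},v),(\bm{L},v))$ is unital; this follows from the curvature of $(\bm{L},v)$ being $\mathfrak{P}(v) \cdot e_{\bm{L}} = w \cdot e_{\bm{L}}$ combined with the weak-bounding-cochain analogue of Proposition \ref{proposition:co0} set up in Section \ref{sec:wbcgeom}.

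Since $QH^*(X)_w$ has rank $1$ at any small eigenvalue, the weak-bounding-cochain analogue of Corollary \ref{corollary:isemisimpgen} (itself a consequence of the extended split-generation criterion established in Section \ref{sec:wbcgeom}) then forces the non-trivial object $(\bm{L},v)$ to split-generate $\mathcal{F}^{wbc,u}(X)_w$, giving the stated quasi-equivalence on the A-side. On the B-side, Lemma \ref{lemma:wnacrit} says $\Gamma^*$ acts freely and transitively on the small critical points over $w$, so locally near any one of them the orbifold $Y = \C^n/\Gamma^*$ is smooth and $\Gamma^*$-equivariant matrix factorizations reduce to ordinary ones over a single critical point; the singularity is a Morse singularity on $\C^n$, so $\lfloor n/2 \rfloor$ applications of Kn\"orrer periodicity identify $D^\pi MF^{\Gamma^*}(W-w)$ with $D^b(\C)$ when $n$ is even and with $D^\pi(\Cl_1)$ when $n$ is odd. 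Matching the two computations yields the desired quasi-equivalence.

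The main obstacle is the identification of $\mathfrak{P}$ with $W^n_a$: one must choose $V$ correctly, verify that its elements satisfy the monotonicity condition of Section \ref{subsec:conv} (so that $\mathfrak{P}$ is a genuine polynomial of finite degree, in the sense promised after Lemma \ref{lemma:prediskdisk}), and check that each monomial of $W^n_a$ is reproduced with the correct coefficient from an explicit count of Maslov-index-two disks on the components of $\bm{L}$. Once this is in hand, the remaining steps are either formal consequences of the machinery of Sections \ref{sec:wbcalg} and \ref{sec:wbcgeom} or standard facts about Morse singularities in the matrix factorization category.
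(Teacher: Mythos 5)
Your overall strategy matches the paper's, but the step where you exhibit a subspace $V \subset CF^*(\bm{L},\bm{L})$ of weak bounding cochains directly via Lemma \ref{lemma:prediskdisk} and then apply the Cho-type result to $(\bm{L},v)$ has a real gap. Lemma \ref{lemma:prediskdisk} requires $CC^{\le 0}(V, CF^*(\bm{L},\bm{L})) \cong \C \cdot e_{\bm{L}}$, and Proposition \ref{proposition:critpd} requires that $V$ generate the endomorphism algebra under the order-zero product. Neither holds for $V \cong \C^n$ sitting inside $CF^*(\bm{L},\bm{L}) \cong A^\theta$, which has rank $|\Gamma^n_a| \cdot 2^n$: there are length-zero cochains beyond multiples of $e_{\bm{L}}$ (e.g., the projections onto individual summands $L_\gamma$), and $V$ only generates the $\Gamma$-invariant factor. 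Consequently you cannot conclude from the Hessian of $\mathfrak{P}|_V$ alone that $HF^*((\bm{L},v),(\bm{L},v)) \cong \Cl_n$; even the surjection of Proposition \ref{proposition:hess} is unavailable. The paper circumvents this by carrying out the Cho-type computation in the quotient/orbifold setting $\mathcal{F}(\phi)$ for the single immersed Lagrangian $L \subset X/\Gamma$, where $A = CF^*(L,L)$ is the exterior algebra of rank $2^n$ and the hypotheses hold (Lemma \ref{lemma:hypsat}, Corollary \ref{corollary:hessL}), and then transfers upstairs via the Fourier-type embedding $j: A \hookrightarrow A^\theta$ of Section \ref{subsec:fabg}. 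The essential extra input is Proposition \ref{proposition:pullwbc}: the Fourier components $A^\theta_\chi$ for $\chi \neq 1$ become acyclic under the $\alpha$-twisted differential precisely because $\Gamma^*$ acts freely on the small critical points (so $d_{v,\chi \cdot v}$ is contractible). Without this, you cannot pass from the $n$-dimensional Hessian to the endomorphism algebra of $(\bm{L},v)$.

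Two further remarks. First, your justification of $\mathcal{CO}^0$-unitality of $(\bm{L},v)$ is too brisk: the curvature being $w \cdot e_{\bm{L}}$ does not by itself make $\mathcal{CO}^0$ unital on $QH^*(X)_w$ (see Remark \ref{remark:nounits}); the paper's proof uses the $\Z$-grading available in the relative Fukaya category and the criticality of $v$ (Lemma \ref{lemma:cowbcc2}, Corollary \ref{corollary:cowbcc3}, Lemma \ref{lemma:lthetacou}). Second, your B-side argument via Kn\"orrer periodicity is a correct route that differs from the paper's, which instead computes the endomorphism algebra of the sum of skyscraper sheaves at the small critical points using \cite[Section 4.4]{Dyckerhoff2009} (giving $\Cl_n$ directly) and then invokes Corollary \ref{corollary:cliffcat}; the two approaches give the same answer and the paper's has the advantage of producing a literal match with the A-side endomorphism algebra $\Cl_n$ rather than only matching the split-closed derived categories.
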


\begin{remark}
The ambiguity of DG enhancements mentioned in Remark \ref{remark:dgen} is irrelevant for the small categories, as they are intrinsically formal.
\end{remark}

Let us start by explaining the category of singularities.
For each of the small critical values of $W$, the corresponding critical points are non-degenerate, and $\Gamma^*$ acts freely and transitively on them: so when we quotient by $\Gamma^*$, they all get identified to a single non-degenerate critical point.
We recall that the triangulated category of singularities associated to a non-degenerate singular point is particularly simple: it is equivalent to a category of finitely-generated modules over a Clifford algebra (see \cite[\S 4.4]{Dyckerhoff2009}, \cite{Buchweitz1986} and \cite[Chapter 14]{Yoshino1990}).

We recall that Clifford algebras are intrinsically formal.
Hence, to prove Theorem \ref{theorem:small}, it suffices to find a single object of the Fukaya category whose endomorphism algebra is a Clifford algebra: such an object will automatically split-generate by (the weak bounding cochain analogue of) Corollary \ref{corollary:isemisimpgen}.

To find this object, we consider the \emph{same} direct sum of Lagrangian spheres $\bm{L}$ as in the proof of Theorem \ref{theorem:big}.
We prove that there is an embedding of $\C^n$ into the Maurer--Cartan moduli space $\hcM_{weak}(L)$ of $L$ in $\cF(X/\Gamma)$, and that the resulting disc potential
\begin{equation} \mathfrak{P}: \C^n \To \C \end{equation}
is given precisely by the superpotential $W$ of the mirror.

We then observe that there is an embedding of $\hcM_{weak}(L)$ (which lives in $\cF(X/\Gamma)$) into $\hcM_{weak}(\bm{L})$ (which lives in $\cF(X)$), which respects the disc potentials: essentially, we take $\alpha \in CF^*(L,L)$ to the sum of all its lifts to $CF^*(\bm{L},\bm{L})$.
Therefore, we have an embedding of $\C^n$ into $\hcM_{weak}(\bm{L})$, such that the resulting disc potential is given by $W$.
This is a partial analogue of the result proven in the case of toric varieties by Cho and Oh \cite{Cho2006} (see also \cite{Fukaya2010d,Fukaya2011,Fukaya2010c}): the mirror is equal to the Maurer--Cartan moduli space, with the superpotential given by the disc potential.
However, there is a proviso: the mirror is actually equal to the \emph{quotient} of $\C^n$ by the action of $\Gamma^*$ (see Definition \ref{definition:mir}).

We show that, for any $\chi \in \Gamma^*$ and $v \in \C^n$, the objects $(\bm{L},v)$ and $(\bm{L},\chi \cdot v)$ are quasi-isomorphic in the Fukaya category of $X$ (we remark that $(L,v)$ and $(L,\chi \cdot v)$ are \emph{not} always quasi-isomorphic objects in the Fukaya category of $X/\Gamma$).
The reader may imagine that this quotient is simply the quotient by gauge equivalence, but that is \emph{not} the case: although we have not given a definition of gauge equivalence in our setup, any sensible definition should have the property that gauge equivalent weak bounding cochains are connected by a continuous family of such, with the same value of the disc potential $\mathfrak{P}$.
But the disc potential $\mathfrak{P}$ that we have computed does not admit any continuous automorphisms, so the gauge equivalence relation on $\hcM_{weak}(\bm{L})$ (whatever the abstract definition) must be trivial.
The moral is that, in order to obtain the true mirror, we need to further quotient our `moduli space of objects' in the Fukaya category by an equivalence relation which identifies quasi-isomorphic objects, not only gauge equivalent ones.
However, this has a further proviso: the majority of the points in our moduli space represent objects quasi-isomorphic to the zero object, so the equivalence relation we want is not simply quasi-isomorphism of objects (probably it is related to quasi-isomorphism of objects in some different version of the Fukaya category involving the Novikov ring).

\begin{remark}
The preprint \cite{Cho2013} also considered the situation that $\C^n$ embeds into  $\hcM_{weak}(L)$, so that the disc potential $\mathfrak{P}|_{\C^n}$ is given by a polynomial $W$.
The authors explained how this already implies the existence of an $A_\infty$ functor from $\cF(X/\Gamma)_w$ to $MF(W-w)$, using a version of the Yoneda embedding.
They also considered the case of a finite group action: the results of \cite[\S 5]{Cho2013} show furtermore that there exists an $A_\infty$ functor from $\cF(X)_w$ to $MF^{\Gamma^*}(W-w)$.
In the present setting, we expect that these functors are quasi-equivalences, and give natural realizations of the functors appearing in Theorems \ref{theorem:big} and \ref{theorem:small}: this ought to follow from our result that the objects $(\bm{L},v)$ split-generate the corresponding components of the Fukaya category.
\end{remark}

Aside from these philosophical remarks, these computations provide us with the non-trivial objects of the associated component of the Fukaya category which are required to complete the proof of Theorem \ref{theorem:small}.
Namely, the weak bounding cochain corresponding to a small critical point of $W$ is a non-trivial object of the Fukaya category, and its endomorphism algebra is the Clifford algebra associated to the Hessian of $W$, which is non-degenerate by Lemma \ref{lemma:wnacrit}.

\begin{remark}
\label{remark:othereig}
It would be interesting to find monotone Lagrangians living in the small components of the Fukaya category, and having non-trivial Floer cohomology, so that we could remove the need for weak bounding cochains in Theorem \ref{theorem:small}.
We construct such a monotone Lagrangian in Appendix \ref{sec:cubsurf}, in the Fano index $1$ case $a = n-1$, and give a heuristic argument that it has the right $w(L)$ and non-trivial Floer cohomology in the case of the cubic surface ($a= 3$, $n=4$), using tropical curve counting.
The exact Lagrangian tori in the affine cubic surface constructed in \cite{Keating2014}, with appropriate local systems, also appear to be natural candidates.
\end{remark}

Notwithstanding Remark \ref{remark:othereig}, the fact that we can split-generate the small components of the Fukaya category by putting weak bounding cochains on our collection of Lagrangian spheres $\bm{L}$ has interesting consequences:

\begin{corollary}
\label{corollary:lagemb}
If $2 \le a \le n-1$ and $K \subset X^n_a$ is a monotone Lagrangian submanifold equipped with a grading, spin structure and $\C^*$-local system such that $HF^*(K,K) \neq 0$, then $K$ intersects at least one of the Lagrangian spheres $L_\gamma$ described above.
\end{corollary}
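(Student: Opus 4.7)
The plan is to argue by contradiction: assume $K$ is disjoint from every $L_\gamma$, and use the split-generation statements of Theorems \ref{theorem:big} and \ref{theorem:small} to derive a contradiction.

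First, since $HF^*(K,K) \neq 0$, the object $K$ is not quasi-isomorphic to the zero object in $\mathcal{F}(X)$. By Corollary \ref{corollary:co0van} (applied to the unit $e_K \in HF^*(K,K)$), the value $w := w(K)$ must be an eigenvalue of $c_1 \star$, so that $K$ is a non-trivial object of $\mathcal{F}(X)_w \subset \mathcal{F}^{wbc,u}(X)_w$ for some $w$ appearing in Corollary \ref{corollary:c1eval}.

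Second, by either Theorem \ref{theorem:big} (if $w$ is the big eigenvalue, where $\bm{L}$ itself split-generates without any weak bounding cochain) or Theorem \ref{theorem:small} (if $w$ is a small eigenvalue, where an object of the form $(\bm{L},v)$ with $v$ corresponding to a small critical point of $W$ split-generates), the non-triviality of $K$ forces the existence of some weak bounding cochain $\alpha$ on $\bm{L}$ with
\begin{equation}
HF^*((\bm{L},\alpha), K) \neq 0.
\end{equation}
Otherwise, the Yoneda functor $\mathrm{Hom}(-,K)$ would vanish on the split-generating collection and $K$ would be quasi-isomorphic to zero, contradicting the previous paragraph.

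Third, I would observe that the chain complex underlying $HF^*((\bm{L},\alpha), K)$ has underlying vector space $CF^*(\bm{L},K) = \bigoplus_\gamma CF^*(L_\gamma, K)$, with a differential modified by insertions of $\alpha$; no new generators are introduced by turning on $\alpha$. If $K$ were disjoint from every $L_\gamma$, then by compactness we could choose Hamiltonian perturbation data defining $CF^*(\bm{L},K)$ small enough to preserve disjointness, so that the chain complex is identically zero, and $HF^*((\bm{L},\alpha),K) = 0$ for every $\alpha$, contradicting the second paragraph.

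The main obstacle I anticipate is the last step: one must verify that Hamiltonian-isotopy invariance of Floer cohomology continues to hold at the level of objects equipped with weak bounding cochains, so that the vanishing of $CF^*$ for an admissible perturbation keeping $\bm{L}$ and $K$ disjoint transfers to the perturbation data actually used to define the Fukaya category, and further that the inclusion $\mathcal{F}(X)_w \hookrightarrow \mathcal{F}^{wbc,u}(X)_w$ identifies $K$ with a non-trivial object in the enlarged category (which follows from cohomological fully-faithfulness of this inclusion at the level of $HF^*(K,K)$, since the weak bounding cochain on $K$ itself is zero). Both points are essentially formal consequences of the framework built in Sections \ref{sec:wbcalg} and \ref{sec:wbcgeom}, but the argument needs to be written out carefully.
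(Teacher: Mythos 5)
Your proposal is correct and follows essentially the same route as the paper's own proof: non-vanishing of $HF^*(K,K)$ places $K$ in some eigenvalue component, split-generation (Theorem~\ref{theorem:big} or \ref{theorem:small}) forces $HF^*((\bm{L},\alpha),K) \neq 0$ for some weak bounding cochain $\alpha$, and since the underlying Floer complex is built from intersection points (and adding $\alpha$ or homotopy units does not change $CF^*(\bm{L},K)$ for $K \neq L_\gamma$), geometric disjointness would force this complex to vanish. You have merely spelled out more carefully the technical points (invariance under small Hamiltonian perturbation, the inclusion $\mathcal{F}(X)_w \subset \mathcal{F}^{wbc,u}(X)_w$) that the paper's two-sentence proof leaves implicit.
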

\begin{proof}
As $HF^*(K,K) \neq 0$, $K$ must lie in one of the components of the Fukaya category, and hence be split-generated by $\bm{L}$, or $\bm{L}$ with a weak bounding cochain.
In particular, $CF^*(K,\bm{L}) \neq 0$, so $K$ and $\bm{L}$ must intersect.
 \end{proof}

Let us briefly remark on possible extensions of Theorems \ref{theorem:big} and \ref{theorem:small} that could be proved using the results in this paper.

\begin{remark}
Firstly, we could use different coefficient rings.
We need to work over a field for the homological algebra associated with the split-generation arguments to go through.
We could, for example, work over the Novikov field $\Lambda$, in which case the mirror would be $\Lambda^n$ with superpotential
\begin{equation} W^n_a = -u_1 \ldots u_n + r \sum_{j=1}^n u_j^a + \bm{w}^n_a r^n.\end{equation}
This would allow us to include non-monotone weak bounding cochains in our definition of the Fukaya category, as convergence of various infinite sums would hold by completeness of the energy filtration.
It would require some other changes of perspective, see in particular Remark \ref{remark:energyfilt}.

Alternatively, we could keep the monotone weak bounding cochains, and work over the ring of Novikov \emph{polynomials}, without completing with respect to the energy filtration; then, in order to obtain split-generation results, we would need to base change to a field.
\end{remark}

\begin{remark}
The Fukaya category $\cF(X^n_a)$ can be equipped with a $\Z/2(n-a)$-grading (if all Lagrangians are assumed graded), because $n-a$ is the minimal Chern number of spheres in $X^n_a$.
In Theorem \ref{theorem:big}, we simply work with the $\Z/2\Z$-graded version of the Fukaya category, as the triangulated category of singularities is naturally $\Z/2\Z$-graded.
One could also define a category of matrix factorizations (again with certain restrictions on the objects, c.f. \cite[\S 7.3]{Sheridan2015}) with a $\Z/2(n-a)$-grading, and prove a $\Z/2(n-a)$-graded version of Theorem \ref{theorem:big}.
However, it appears that our methods do not extend to prove a $\Z/2(n-a)$-graded analogue of Theorem \ref{theorem:small}: the weak bounding cochains we use can not be chosen to have degree $1$ modulo $2(n-a)$ (compare Remark \ref{remark:wbcgrad}), unless of course $a = n-1$.
In light of this, we might add a followup to Remark \ref{remark:othereig}: if indeed there do exist monotone Lagrangians sitting in the small components of the Fukaya category, it would be interesting to know if they are graded.
\end{remark}

\subsection{Quantum cohomology of Fano hypersurfaces from the Fukaya category}
\label{subsec:qhfromhh}

An interesting feature of our calculations in the Fukaya category is that they allow us to compute non-trivial information about Gromov--Witten invariants.
Namely, we prove (in Proposition \ref{proposition:qhfromhh}) that Proposition \ref{proposition:givent} is implied by our computation of the endomorphism algebra of $\bm{L}$ in the relative Fukaya category $\cF(X,D)$, with the exception that in the Fano index $1$ case (i.e., when $a=n-1$), we are only able to prove that the subalgebra generated by the hyperplane class $P$ is $\C[P]/q^n_a(P+w)$, where $w = w(L_\gamma)$ is equal to the count of $J$-holomorphic discs with boundary on one of the monotone Lagrangian spheres $L_\gamma \subset X^n_a$ constructed in the proof of Theorem \ref{theorem:big}.
Note that this number is the same for all $\gamma \in \Gamma$, by symmetry.

In fact, in order to determine the value $w = \bm{w}^n_a$, we are forced to use Givental's proof of Proposition \ref{proposition:givent} (see Corollary \ref{corollary:w}).
We remark that, in Givental's work on closed string mirror symmetry for Fano hypersurfaces in toric varieties \cite[\S 10]{Givental1996}, the extra factor $\bm{w}^n_a$ that appears in Proposition \ref{proposition:givent} in the case $a=n-1$ corresponds to an additional term $e^{-a!Q}$ which has to multiply the correlators in this case, which did not appear for $a \le n-2$.
So in the course of our alternative proof of Proposition \ref{proposition:givent}, we equate this additional term with the open Gromov--Witten invariant $w$ mentioned above (even though we are not able to re-compute its value).

Our re-proof of Proposition \ref{proposition:givent} uses a version of the closed--open string map from quantum cohomology to Hochschild cohomology (in fact, the $\bm{G}$-graded Hochschild cohomology of the relative Fukaya category).
These relations are well-known and have been computed multiple times by other methods, but nevertheless it is interesting that this information can be extracted from the Fukaya category.

\begin{remark}
 We confess that there are two undetermined signs in our computations; these are irrelevant for the proofs of Theorems \ref{theorem:big} and \ref{theorem:small}, but become necessary when computing relations in quantum cohomology, so our re-proof of Proposition \ref{proposition:givent} is not complete.
 Nevertheless, we feel it is interesting as a proof of the concept that non-trivial information about Gromov--Witten invariants can be extracted from the Fukaya category.
\end{remark}

It is well-known, but interesting to remark, that information about genus-zero Gromov--Witten invariants can be extracted from these relations in quantum cohomology.
For example, one can show that the number of genus-zero, degree-one curves on a cubic hypersurface $X^n_3 \subset \CP{n-1}$ (with $n \ge 5$), which send $n+1$ fixed marked points to $n$ hyperplanes and one $2$-dimensional linear subspace, is equal to $81$.
When $n=4$, $X^4_3$ is the cubic surface, and one can show that the number of lines on the cubic surface is equal to $27$ if and only if the open Gromov--Witten invariant $w$ mentioned above is equal to $-6$.
We collect some other interesting facts about the case of the cubic surface in Appendix \ref{sec:cubsurf}.

\begin{remark}
Our re-proof of Proposition \ref{proposition:givent} relies on computations in the relative Fukaya category, $\cF(X,D)$; it does not work for the monotone Fukaya category $\cF(X)$.
The reason is that the closed--open map for the relative Fukaya category (see \S \ref{subsec:relco})
\begin{equation} \CO_{X,D}: QH^*(X,D) \To HH^*(CF^*(\bm{L},\bm{L}))\end{equation}
is injective, hence relations in $QH^*(X,D)$ can be deduced from those in $HH^*$, but
\begin{equation} \CO: QH^*(X) \To HH^*(CF^*(\bm{L},\bm{L}))\end{equation}
is only injective when restricted to $QH^*(X)_{\bm{w}}$; it vanishes when restricted to $QH^*(X)_w$ if $w$ is a small eigenvalue.

This can be understood via mirror symmetry as follows: the big singular fibre $W^{-1}(\bm{w})$ is disjoint from the small singular fibres $W^{-1}(w)$.
Since $\bm{L}$ is mirror to a sheaf supported at the big critical point, one can not expect it to `know' anything about sheaves supported on the small singular fibres.
On the other hand, when one incorporates a Novikov parameter $r$ as in the relative Fukaya category, the situation changes: the mirror superpotential now depends on the parameter $r$, and as $r \To 0$, the small critical points converge to the big critical point.
So as long as one does not invert $r$ in one's coefficient ring, we \emph{can} expect the mirror to $\bm{L}$ to `know' about objects supported in the small singular fibres.
This observation was the inspiration for the idea that $\bm{L}$ could be `pushed' into the small eigenvalues by putting a weak bounding cochain on it, which is how Theorem \ref{theorem:small} is proved.
\end{remark}

\subsection{Organization of the paper}

In \S \ref{sec:monfuk}, we prove the basic results about the monotone Fukaya category, as already discussed in \S \ref{subsec:imonfuk}.
In \S \ref{sec:relfuk}, we establish the relationship between the relative Fukaya category and the monotone Fukaya category, as already discussed in \S \ref{subsec:irelfuk}.
In \S \ref{sec:wbcalg} and \S \ref{sec:wbcgeom}, we give the framework for including monotone weak bounding cochains in the monotone Fukaya category, as already discussed in \S \ref{subsec:iwbc}.
In \S \ref{sec:deftheory}, we collect the algebraic computations that are necessary for the proof of homological mirror symmetry: classifications of $A_{\infty}$ structures and computations of various versions of Hochschild cohomology.
In \S \ref{sec:fermats} (Fukaya category) and \S \ref{sec:bmodel} (matrix factorizations), we give the proof of homological mirror symmetry for $X^n_a$, as summarized in \S \ref{subsec:ihms}.

We also include two appendices. 
Appendix \ref{sec:ainf} summarizes the basic algebraic results about $A_\infty$ categories and bimodules (and in particular, weak proper Calabi--Yau structures) that we use.
Finally, Appendix \ref{sec:cubsurf} collects a few results which are specific to the interesting special case of the cubic surface in $\CP{3}$.

\emph{Acknowledgements:} I would like to thank the following people: Paul Seidel, who was my thesis advisor while most of this work was carried out and whose influence is all-pervasive; Mohammed Abouzaid, for many useful discussions relating to this work, and for showing me a preliminary version of \cite{Abouzaid2012}; Grisha Mikhalkin, for helping me find the construction of the immersed Lagrangian sphere in the pair of pants on which this work is based; Kenji Fukaya, for helpful comments and for showing me the construction of the monotone Lagrangian torus in the cubic surface which appears in Appendix \ref{sec:cubsurf}; Octav Cornea, for drawing my attention to the account of the construction of the monotone Fukaya category in \cite{Biran2013}, and pointing out a subtlety in the construction that I had missed; Cedric Membrez, for helpful conversations concerning Corollary \ref{corollary:ocLe} and Lemma \ref{lemma:ocLf}; Ivan Smith, for suggesting Lemma \ref{lemma:cowbcc2} to me, and for other useful comments on a draft version of the paper; and Alex Ritter and Ivan Smith, for explanations of their work \cite{Ritter2012}, in particular their Theorems 9.4 and 9.6, on which our Proposition \ref{proposition:coeigsplit} and Corollary \ref{corollary:oceigsplit} are based (the proof of Corollary \ref{corollary:oceigsplit} was pointed out by Ritter).

\section{The monotone Fukaya category}
\label{sec:monfuk}

This section gives a survey of the construction of the (small) quantum cohomology ring (following \cite{Ruan1995,mcduffsalamon}), the monotone Fukaya category (following \cite{Oh1993,Oh1995} and \cite{Seidel2008}), and the split-generation result of Abouzaid, Fukaya, Oh, Ohta and Ono \cite{Abouzaid2012} in the monotone setting.
By restricting ourselves to the monotone setting, we are able to give a reasonably complete account using only `classical' pseudoholomorphic curve techniques.
Most of the results in this section are not essentially new: they have appeared in the literature in slightly different settings (e.g., under the assumption of exactness rather than monotonicity), using an identical analytic setup for pseudoholomorphic curve theory.
For that reason, we do not repeat the foundational material, but rather focus on the new issues that arise when extending results to the monotone case (for example, we do not discuss signs).

We should also note that many of the results in this section are known to hold in vastly more general settings (see \cite{fooo} and \cite{Abouzaid2012}), where one must use Kuranishi structures to deal with the analytic issues.
However, there are some structures on the Fukaya category of a monotone symplectic manifold (for example, the decomposition according to eigenvalues of $c_1$) which do not hold in more general settings, so we feel that it is useful to give a unified account in the monotone case, with an emphasis on the aspects that are peculiar to it.
Furthermore, despite the limitations of classical pseudoholomorphic curve theory, it is useful to give the simplest construction possible, because it makes it easier to build new results on top of it.
For example, we will prove several new structural results about the monotone Fukaya category in \S \ref{sec:relfuk} and \S \ref{sec:wbcgeom}.

\subsection{Monotonicity}
\label{subsec:monot}

Let us start by making our notion of monotonicity precise.

\begin{definition}
\label{definition:strmon}
We say that $X$ is a (positively) monotone symplectic manifold if
\begin{equation} [\omega] =  2\tau c_1 \end{equation}
for some $\tau > 0$ (the factor of $2$ allows us to avoid a factor of $1/2$ in Definition \ref{definition:lmon}).
We will always assume that $X$ is compact.
\end{definition}

\begin{definition}
\label{definition:lmon}
We say that a closed Lagrangian submanifold $L \subset X$ is \emph{monotone} if
\begin{itemize}
\item The image of $\pi_1(L)$ in $\pi_1(X)$ is trivial;
\item The homomorphisms given by symplectic area and Maslov class
\begin{equation} [\omega]: H_2(X,L) \To \R, \,\,\, \mu: H_2(X,L) \To \Z\end{equation}
are proportional: $[\omega] = \tau \mu$.
\end{itemize}
\end{definition}

We remark that Definitions \ref{definition:strmon} and \ref{definition:lmon} are stronger than the usual ones; we hope this does not cause confusion.

Let $\cG X$ be the Grassmannian of Lagrangian subspaces of $TX$, and let $\widetilde{\cG}X$ be its universal abelian cover, with covering group $H_1(\cG X)$.
There is an associated grading datum $\bm{G}(X)$, given by the map
\begin{equation} \Z \cong H_1(\cG_xX) \To H_1(\cG X)\end{equation}
(compare \cite[\S 3]{Sheridan2015}).
For the purposes of this section, we will abbreviate $\bm{G}:= \bm{G}(X)$.

\subsection{Quantum cohomology, $QH^*(X)$}

Let $X$ be a compact real $2n$-dimensional monotone symplectic manifold. 
One can define the (small) quantum cohomology ring $QH^*(X)$ as in \cite[Chapter 11]{mcduffsalamon} (to which we refer for all technical details).
We very briefly summarize the definition.

We set
\begin{equation} QH^*(X) := H^*(X;\C)\end{equation}
as a $\bm{G}$-graded complex vector space (classes are equipped with the image of their cohomological degree in $\bm{G}$).
We equip it with the intersection pairing
\begin{equation} \langle -,- \rangle : QH^*(X) \otimes QH^{2n-*}(X) \To \C.\end{equation}

To define the quantum cup product, one first defines the three-point Gromov--Witten invariant
\begin{equation} GW^X_3: QH^*(X) ^{\otimes 3} \To \C,\end{equation}
as follows.
Consider the moduli space of $J_z$-holomorphic spheres in homology class $\beta \in H_2(X;\Z)$, with three marked points, where $J_z$ is a domain-dependent $\omega$-tame almost-complex structure.
For generic $\omega$-compatible $J_z$, it is a smooth manifold of dimension $d(\beta) = 2n + 2c_1(\beta)$.

Let $\alpha_i \in H^{d_i}(X)$ be cohomology classes, for $i=1,2,3$, and suppose that they are Poincar\'{e} dual to pseudocycles $f_i: A_i \To X$.
For generic choice of $J_z$ and the pseudocycles $f_i$, the moduli space of $J_z$-holomorphic spheres with marked points constrained to lie on the pseudocycles $f_1,f_2,f_3$ respectively, is an oriented smooth manifold of dimension
\begin{equation} d:=d(\beta) - d_1 - d_2 - d_3.\end{equation}
If $d=0$, then for generic $J_z$ and $f_i$, this manifold is also compact, and we define $GW^X_{3,\beta}(\alpha_1,\alpha_2,\alpha_3)$ to be the signed count of its points (if $d \neq 0$ we define it to be $0$).
It is independent of $J_z$ and the choice of pseudocycles representing the $\alpha_i$, by a cobordism argument.
We then define
\begin{equation} GW^X_3 := \sum_\beta GW^X_{3,\beta};\end{equation}
the sum converges by the monotonicity assumption.

We now define the (small) quantum cup product
\begin{align}
\star:QH^*(X) \otimes QH^*(X) &\To QH^*(X).\\
 \alpha \otimes \beta & \mapsto  \alpha \star \beta,
\end{align}
by the formula
\begin{equation} \langle \alpha \star \beta , \gamma \rangle := GW^X_3(\alpha,\beta,\gamma).\end{equation}

\begin{remark}
\label{remark:qcupeval}
Another way to define the quantum cup product is to consider the moduli space of holomorphic spheres with three marked points, where two of the marked points are constrained to lie on pseudocycles $f_1,f_2$, representing $PD(\alpha), PD(\beta)$ respectively.
There is an evaluation map at the third marked point; and it can be arranged (in the monotone case) that this evaluation map is a pseudocycle, which represents $PD(\alpha \star \beta)$.
\end{remark}

We refer to \cite[Proposition 11.1.9]{mcduffsalamon} for the proof that the quantum cup product is associative, supercommutative, the unit $e \in H^0(X;\C)$ is also a unit for the quantum cup product, and the quantum cohomology algebra is also a Frobenius algebra:
\begin{equation} \langle \alpha \star \beta, \gamma \rangle = \langle \alpha, \beta \star \gamma \rangle.\end{equation}
Standard index theory of Cauchy--Riemann operators shows that $QH^*(X)$ is a $\bm{G}$-graded $\C$-algebra (compare \cite[\S 4.4]{Sheridan2015}).

\subsection{The monotone Fukaya category}
\label{subsec:monfuk}

It has been known since the work of Oh \cite{Oh1993,Oh1995} that the definition of Lagrangian Floer cohomology for monotone Lagrangian submanifolds with minimal Maslov number $\ge 2$ is significantly simpler than the fully general version.
In this section we outline the construction of the monotone Fukaya category, following \cite{Seidel2008} (see \cite{Biran2013} for a treatment very similar to that given in this section, as well as \cite{Ritter2012}).

To each $w \in \C$, we will associate a $\C$-linear, $\bm{G}$-graded (non-curved) $A_{\infty}$ category $\cF(X)_w$.
Objects of the categories $\cF(X)_w$ are oriented monotone Lagrangian submanifolds $L \subset X$ together with a spin structure, a lift of $L$ to $\widetilde{\cG}(X)$ (called a \emph{grading}, see \cite{Seidel1999}) and a flat $\C^*$-local system.
Because our Lagrangians are orientable, they have minimal Maslov number $\ge 2$.
For simplicity, we will choose a finite set $\mathcal{L}$ of such Lagrangians, and define the subcategory of the monotone Fukaya category with those objects.

\begin{remark}
For any abelian cover $\widetilde{\cG}'(X)$ of $\cG X$, we can define a $\bm{G}'(X)$-graded monotone Fukaya category, where $\bm{G}'X$ is given by the composition
\begin{equation} \Z \cong H_1(\cG_xX) \To H_1(\cG X) \To Y,\end{equation}
where $Y$ is the covering group.
Objects of this category are Lagrangian submanifolds of $X$ equipped with a lift to $\widetilde{\cG}'X$.
For the purposes of this paper, we will always consider the universal abelian cover.
\end{remark}

We define $\mathscr{H} := C^{\infty}(X,\R)$, the space of Hamiltonian functions on $X$.
For each pair of objects $L_0, L_1 \in \mathcal{L}$, we choose a one-parameter family of Hamiltonians:
\begin{equation} H \in C^{\infty}([0;1],\mathscr{H})\end{equation}
such that the time-$1$ flow of the Hamiltonian vector field associated to $H$ makes $L_0$ transverse to $L_1$ (this is one half of a Floer datum, in the terminology of \cite[\S 8e]{Seidel2008}).
We then define the morphism space $CF^*(L_0,L_1)$ to be the $\C$-vector space generated by length-$1$ Hamiltonian chords from $L_0$ to $L_1$ (or, if the local system is non-trivial, the direct sum of hom-spaces between the fibres of the local systems at the start- and end-points of the chords).

Now let $\mathscr{J}$ denote the space of almost-complex structures on $X$ compatible with $\omega$.
For every object $L$, we choose an almost-complex structure $J_L \in \mathscr{J}$, and consider the moduli space $\cM(L)$ of Maslov index $2$ $J_L$-holomorphic discs with boundary on $L$, with a single marked boundary point.
For generic $J_L$, the moduli space of somewhere-injective $J_L$-holomorphic discs of Maslov index $2$ is regular, by standard transversality results \`{a} la \cite[\S 3]{mcduffsalamon}.
It follows from \cite{Oh2000,Lazzarini2011} that any $J_L$-holomorphic disc $u$ with boundary on $L$ contains a somewhere-injective $J_L$-holomorphic disc $v$ with boundary on $L$ in its image.
In particular, if $u$ has Maslov index $2$, then $v$ has Maslov index $\le 2$ by monotonicity.
Because $L$ has minimal Maslov number $\ge 2$, this means we must have $u=v$, and $u$ is somewhere-injective.
Therefore, $\cM(L)$ is regular for generic $J_L$.

Standard index theory of Cauchy--Riemann operators shows that the moduli space $\cM(L)$ is an $n$-dimensional manifold (recall $n$ is half the dimension of $X$), and it is compact by Gromov compactness (because the homology class of a Maslov index $2$ disc cannot be expressed as a sum of two homology classes in $H_2(X,L)$ with positive energy).
There is an evaluation map at the boundary marked point:
\begin{equation} \mathrm{ev}: \cM(L) \To L.\end{equation}
If the $\C^*$-local system on $L$ is trivial, then we define $w(L) \in \Z$ by
\begin{equation} \mathrm{ev}_* [\cM(L)] := w(L) [L] \in H_n(L;\Z).\end{equation}
If it is non-trivial, we weight $\mathrm{ev}_*$ by the monodromy of the local system around the boundary of the disc, so in general $w(L) \in \C$.
The complex number $w(L)$ is independent of the choice of $J_L$.
We furthermore require that the evaluation map is transverse to the finite set of start-points and end-points of time-$1$ Hamiltonian chords between $L$ and the other Lagrangians $L' \in \mathcal{L}$.

Now for each pair of objects $L_0, L_1 \in \mathcal{L}$, we choose a one-parameter family of almost-complex structures:
\begin{equation} J \in C^{\infty}([0;1],\mathscr{J})\end{equation}
such that $J(i) = J_{L_i}$ for $i = 0,1$.
We consider the moduli space $\cM_0(L_0,L_1)$ of spheres of Chern number $1$ which are $J_t$-holomorphic for some $t \in [0;1]$, and are equipped with a marked point.
Any $J_t$-holomorphic sphere is necessarily a branched cover of a somewhere-injective one by \cite[\S 2.5]{mcduffsalamon}, and hence any $J_t$-holomorphic sphere of Chern number $1$ must be somewhere-injective, so this moduli space is regular for generic choice of $J_t$.
Standard index theory of Cauchy--Riemann operators shows it has dimension $2d-1$.

Note that the parameter $t$ can be thought of as a map
\begin{equation}t: \cM_0(L_0,L_1) \To [0;1].\end{equation}
We also have the evaluation map at the marked point,
\begin{equation} \mathrm{ev}: \cM_0(L_0,L_1) \To X.\end{equation}
For any time-$1$ Hamiltonian chord $\gamma: [0;1] \To X$ which starts on $L_0$ and ends on $L_1$, we consider the map
\begin{equation} (\gamma \circ t, \mathrm{ev}): \cM_0(L_0,L_1) \To X \times X.\end{equation}
We require that, for any such $\gamma$ (there are finitely many, by assumption), the image of this map avoids the diagonal in $X \times X$.
This is true for generic $J_t$, because the domain has dimension $2d-1$ and the diagonal has codimension $2d$.

Now, the pairs $(H,J)$ constitute Floer data for the pairs of objects in our category.
For any generators $x,y$ of $CF^*(L_0,L_1)$, we can define the corresponding moduli space $\cM(x,y)$ of pseudoholomorphic strips (following \cite[\S 8f]{Seidel2008}), with translation-invariant perturbation data given by the Floer data.
For a generic choice of $J_t$, these moduli spaces are regular by \cite{Floer1995}.
In particular, all moduli spaces of negative virtual dimension are empty.
Furthermore, the moduli spaces admit an $\R$-action, by translation of the domain.
It follows that the $\R$-action on a moduli space of virtual dimension $0$ must be trivial.
That means the only moduli spaces of virtual dimension $0$ which are non-empty, are those in $\cM(x,x)$ which are constant along their length.

We now define the differential $\mu^1: CF^*(L_0,L_1) \To CF^*(L_0,L_1)$, by counting elements of the moduli spaces $\cM(x,y)/\R$, where $\cM(x,y)$ has dimension $1$ (we weight the count by the monodromy of the local systems around the boundary).
Using monotonicity of the $L_i$, one can show that these moduli spaces have fixed energy, since they have fixed index, and hence are compact.
We omit the argument here, and refer to the more sophisticated version of the argument which is required in \S \ref{subsec:conv}.
By considering the Gromov compactification of the moduli spaces $\cM(x,z)/\R$, where $\cM(x,z)$ has dimension $2$, the argument of \cite{Oh1995} shows that
\begin{equation}
\label{eqn:mu1sq}
 \mu^1(\mu^1(x)) = (w(L_0) - w(L_1))x.
\end{equation}
To see why, suppose we have a nodal strip $u_1 \# u_2$ in the compactification of this moduli space.
If $u_1$ and $u_2$ are Maslov-index $1$ strips, we have a broken strip contributing to the left-hand side.
If $u_1$ is a holomorphic disc of Maslov index $>2$ or a holomorphic sphere of Chern number $>1$, then $u_2$ is a strip of virtual dimension $<0$, hence can't exist by regularity.
If $u_1$ is a holomorphic sphere of Chern number $1$, then $u_2$ is a strip of virtual dimension $0$, hence must be a strip which is constant along its length.
Our regularity assumptions for $J_t$-holomorphic spheres of Chern number $1$ ensure that they can not bubble off a constant holomorphic strip.
If $u_1$ is a holomorphic disc of Maslov index $2$, then $u_2$ is a strip of virtual dimension $0$, hence is constant along its length.
The signed count of such configurations with $u_1$ a disc on $L_0$ is $w(L_0)$, and the signed count with $u_1$ a disc on $L_1$ is $- w(L_1)$.
These are regular boundary points, by our regularity assumptions on the almost-complex structures $J_L$ and a gluing theorem.
The signed count of boundary points of this compact $1$-manifold is $0$, which gives the result.

In particular, if $w(L_0) = w(L_1)$, then $(\mu^1)^2 = 0$ on $CF^*(L_0,L_1)$, so we can define $HF^*(L_0,L_1)$ to be the cohomology of the differential $\mu^1$.

Now we define the $A_\infty$ structure on $\cF$. 
We define $hom^*_\cF(L_0,L_1) := CF^*(L_0,L_1)$, and we use the convenient abbreviation
\begin{equation} \cF(L_s, \ldots, L_0) := hom_\cF^*(L_{s-1},L_s)[1] \otimes \ldots \otimes hom_\cF^*(L_0,L_1)[1].\end{equation}
from \S \ref{subsec:ainfcat} (note that $\cF(L_s,L_0) := CF^*(L_0,L_s)[1]$).
Now we make a consistent choice of strip-like ends and a consistent choice of perturbation data for all moduli spaces of holomorphic discs with $s \ge 2$ incoming and $1$ outgoing boundary punctures, precisely as in  \cite[\S 9i]{Seidel2008}.
We recall that this entails a choice of $(K,J)$ for each disc $S$, where $K \in \Omega^1(S,\mathscr{H})$ is a domain-dependent Hamiltonian function and $J \in C^\infty(S,\mathscr{J})$ is a domain-dependent almost-complex structure.
We require that $J=J_L$ over boundary components labeled by $L$, and that $(K,J)$ coincides with the already-chosen Floer data $(H_t \otimes dt, J_t)$ over the strip-like ends.

We then consider the moduli spaces of smooth maps $u: S \to X$, with Lagrangian boundary conditions, satisfying the pseudoholomorphic curve equation $(du-Y_K)^{0,1}_J = 0$ given by our choice of perturbation data $(K,J)$.
For a generic choice of perturbation data these moduli spaces are regular: the argument in \cite[\S 9k]{Seidel2008} goes through unaltered.
In particular, moduli spaces of negative virtual dimension are empty.
As before, these moduli spaces have fixed energy, hence are compact, and we can define the $A_{\infty}$ structure maps $\mu^* \in CC^2(\cF)$ by signed counts of the zero-dimensional components (weighted by monodromy of the local systems around the boundary).
More explicitly, they are maps
\begin{equation} \mu^s: \cF(L_s,\ldots,L_0) \To \cF(L_s,L_0)[1].\end{equation}

By considering the boundary of one-dimensional moduli spaces as usual, we find that the $A_{\infty}$ relations $(\mu^* \circ \mu^*)^s = 0$ are satisfied for all $s \ge 2$.
In particular, if a nodal pseudoholomorphic disc $u_1 \# u_2$ appears, and $u_1$ is a disc of Maslov index $\ge 2$ or a sphere of Chern number $\ge 1$, then $u_2$ is a disc of negative virtual dimension, hence does not exist for generic choice of perturbation data.

Therefore, the $A_\infty$ relations are satisfied, with the sole exception that the differential does not square to zero, but rather satisfies \eqref{eqn:mu1sq}.
The result is that, for each $w \in \C$, we have a $\bm{G}$-graded, $\C$-linear, non-curved $A_{\infty}$ category $\cF(X)_w$, whose objects are exactly those $L$ such that $w(L) = w \in \C$.

\subsection{The cohomological unit}

We recall the Lagrangian version \cite{Albers2007} of the Piunikhin--Salamon--Schwarz morphism \cite{Piunikhin1996}.
It is a morphism of chain complexes between $C^*(L)$ and $CF^*(L,L)$, defined in degrees up to and including the minimal Maslov number of $L$ minus two.
We will only be interested in the degree-zero part of this map, and in particular the element $e_L \in CF^0(L,L)$ which is the image of the identity.
To define it, we consider the moduli space of pseudoholomorphic discs illustrated in Figure \ref{subfig:Fig1a}.
There is a single outgoing boundary puncture, a single internal marked point which is unconstrained, and a single boundary marked point which is unconstrained (unconstrained marked points serve to stabilize the moduli space).
Counting the zero-dimensional component of this moduli space defines $e_L$.
Counting the boundary points of the one-dimensional component shows that $\mu^1(e_L) = 0$.

\begin{figure}
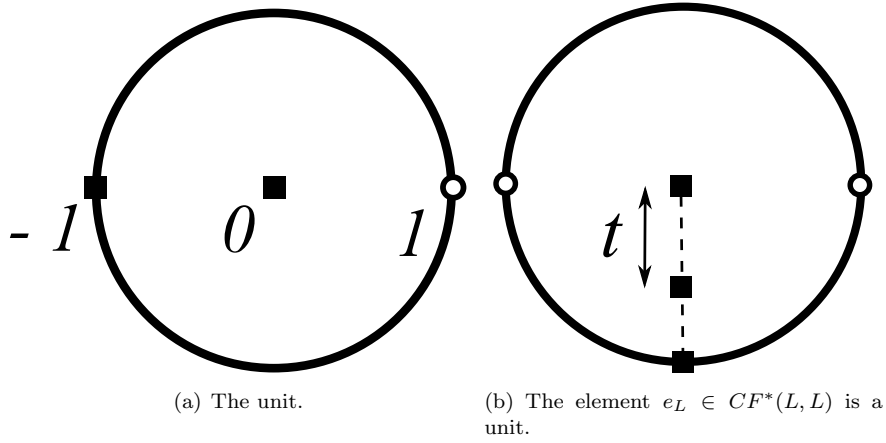

\centering
\subfigure[The unit.]{
\includegraphics[width=0.48\textwidth]{./Fig1a}
\label{subfig:Fig1a}}
\subfigure[The element $e_L \in CF^*(L,L)$ is a unit.]{
\includegraphics[width=0.4\textwidth]{./Fig1b}
\label{subfig:unit}}
\caption{Our conventions for diagrams of moduli spaces of pseudoholomorphic discs are as follows:
We will use a solid round dot to denote a marked point (interior or boundary) with some constraint, a solid square dot to denote a marked point (interior or boundary) without constraint, and an open round dot to denote a boundary puncture.
\label{fig:Fig1a}}
\end{figure}

To see that $e_L$ is a unit, consider the one-parameter family of holomorphic discs in Figure \ref{subfig:unit}, parametrized by $t \in [0,1]$.
We make a consistent choice of perturbation data on this family, so that at $t=0$ the perturbation datum is invariant under $\R$-translation, just given by the Floer data on the strip, and as $t \To 1$ the perturbation datum converges to that defining $\mu^2$, glued to that defining the unit.
Counting the zero-dimensional component of the corresponding moduli space of pseudoholomorphic discs defines a map
\begin{equation} H: CF^*(L,L) \To CF^{*-1}(L,L).\end{equation}
Counting the boundary points of the one-dimensional component of the moduli space shows that
\begin{equation} \mu^2( x,e_L) = x + \mu^1(H(x)) + H(\mu^1(x)).\end{equation}
The boundary points at $t=1$ contribute the left-hand side, the boundary points at $t=0$ contribute the first term on the right-hand side (because the perturbation data are invariant under translation, the moduli space admits an $\R$-action, but the moduli space is zero-dimensional so the $\R$-action must be trivial, hence the strip must be constant along its length), and the remaining terms correspond to strip breaking.
Unstable disc and sphere bubbling are ruled out exactly as in the definition of the Fukaya category.

It follows that right-multiplication with $e_L$ is homotopic to the identity.
It follows similarly that left multiplication with $e_L$ is homotopic to the identity, and therefore that $e_L$ is a cohomological unit in $\cF(X)_w$.

\subsection{The closed--open string map, $\CO$}
\label{subsec:co}

In this section, we consider the closed--open string map, which relates quantum cohomology of $X$ to Hochschild cohomology of the Fukaya category of $X$ (compare, e.g., \cite[\S 3.8.4]{fooo}, \cite[\S 6]{Fukaya2010d}, or in the case of open manifolds, \cite[\S 5.4]{Ganatra2012}).

Let us fix $w \in \C$, and denote $\cF:=\cF(X)_w$ to avoid notational clutter.
The \emph{closed--open string map} is a $\bm{G}$-graded $\C$-algebra homomorphism
\begin{equation} \CO: QH^*(X) \To HH^*(\cF).\end{equation}

To define $\CO$, we consider moduli spaces of holomorphic discs with $s \ge 0$ incoming boundary punctures, $1$ outgoing boundary puncture, and a single internal marked point.
We choose strip-like ends and perturbation data for these moduli spaces, and require them to be consistent with the Deligne--Mumford compactification.
Boundary conditions correspond to generators of the Hochschild cochain complex of $\cF$; if $\varphi$ is such a generator, and $\beta$ a homology class of discs, we denote the resulting moduli space by $\cM(\varphi,\beta)$.
It has a Gromov compactification, which we denote by $\overline{\cM}(\varphi,\beta)$, and a continuous evaluation map at the internal marked point:
\begin{equation} ev: \overline{\cM}(\varphi,\beta) \To X.\end{equation}

We define $\cM^0 := \cM$, and $\cM^1$ to be the stratum of the Gromov compactification of virtual codimension $1$.
Its elements are pairs of discs breaking along a strip-like end, one of which contains the interior marked point.
For a generic choice of perturbation data, the Gromov compactification admits a decomposition
\begin{equation} \overline{\cM} = \bigsqcup_{i = 0}^\infty \cM^i, \end{equation}
where (if we denote $ev^i := ev|_{\cM^i}$)
\begin{itemize}
\item $\cM^0$ is regular, hence a smooth, oriented manifold of dimension $d(\varphi,\beta)$, and $ev^0$ is smooth;
\item $\cM^1$ is regular, hence a smooth, oriented manifold of dimension $d(\varphi,\beta) - 1$, and $ev^1$ is smooth;
\item For all $i \ge 2$, the evaluation map $ev^i$ factors through a smooth map from a smooth manifold of dimension $d(\varphi,\beta)-i$,
\begin{equation} \tilde{ev}^i: \widetilde{\cM}^i \To X.\end{equation}
\end{itemize}
Explicitly, $\widetilde{\cM}^i$ is the union of all moduli spaces of nodal discs which have no disc or sphere components with a single special point (a special marked point is a node or a marked point), and such that any sphere with only two marked points is simple.
The map $\cM^i \To \widetilde{\cM}^i$ is defined by forgetting trees of unstable discs and spheres, and replacing multiply-covered spheres with two marked points (which may appear in a chain connecting the disc component to a sphere containing the internal marked point) by the sphere they cover.
This process can only decrease the virtual dimension, by monotonicity.
Furthermore, $\widetilde{\cM}^i$ is regular for generic choice of perturbation data, because all of its components are simple spheres or discs.

Now let $f: A \To X$ be a pseudocycle, representing a homology class which is Poincar\'{e} dual to $\alpha \in H^d(X)$.
We denote the moduli space of discs, with the marked point constrained to lie on the pseudocycle $f$, by $\cM^0(\varphi,\beta,f)$.
Similarly, we define $\cM^1(\varphi,\beta,f)$ and $\widetilde{\cM}^i(\varphi,\beta,f)$.
For generic choice of perturbation data, these moduli spaces are regular, of dimension $d(\varphi,\beta)-d-i$.

If $d(\varphi,\beta) = d$, then for generic choice of perturbation data, the moduli space $\cM^0(\varphi,\beta,f)$ is regular, hence an oriented $0$-manifold; and furthermore, the images of $ev^1,\tilde{ev}^2,\ldots$ are disjoint from the closure of the image of $f$, and the images of $ev^0, ev^1,\tilde{ev}^2,\ldots$ are disjoint from $\Omega_f$, the limit set of $f$ (see \cite[Definition 6.5.1]{mcduffsalamon}).
It follows that
\begin{equation} \Omega_{ev^0} \cap \overline{im(f)} = \Omega_f \cap \overline{im(ev^0)} = \emptyset,\end{equation}
and hence that $\cM^0(\varphi,\beta,f)$ is compact (compare \cite[p. 161]{mcduffsalamon}).
We define the coefficient of $\varphi$ in $\CO(\alpha;f)$ to be the signed count of its points, summed over homology classes $\beta$ such that $d(\varphi,\beta) = d$ (this sum converges by our monotonicity assumptions).

Now consider a moduli space such that $d(\varphi,\beta) = d+1$.
By a similar argument to above, for generic perturbation data, $\cM^0(\varphi,\beta,f)$ is an oriented $1$-manifold, $\cM^1(\varphi,\beta,f)$ is an oriented $0$-manifold, and their union is compact.
By a gluing theorem, their union has the structure of a compact oriented $1$-manifold with boundary points $\cM^1(\varphi,\beta,f)$, so the signed count of points in the latter is $0$; it follows that $\delta(\CO(\alpha;f)) = 0$, where $\delta$ is the Hochschild differential.
Hence, $\CO(\alpha;f)$ defines a class in $HH^*(\cF)$.

If the pseudocycle $f$ is bordant to another pseudocycle $g$, we choose a bordism $h$ between them, and consider the zero-dimensional component of the moduli space $\cM^0(\varphi,\beta,h)$: as before, it is a compact, oriented $0$-manifold, and counting its points defines an element $H(h) \in CC^*(\cF)$.
Next we consider the one-dimensional component of $\cM^0(\varphi,\beta,h)$: as before, it is an oriented, compact $1$-manifold with boundary, and counting its boundary points shows that
\begin{equation} \CO(\alpha;f) - \CO(\alpha;g) = \delta(H(h)).\end{equation}
Therefore, the class of $\CO(\alpha;f)$ in $HH^*(\cF)$ does not depend on the choice of pseudocycle $f$ representing $PD(\alpha)$, so we have a well-defined map
\begin{equation} \CO:QH^*(X) \To HH^*(\cF).\end{equation}
Standard index theory of Cauchy--Riemann operators shows that $\CO$ respects the $\bm{G}$-grading.

To show that $\CO$ is independent of the choice of perturbation data used to define it, one uses a `double category' trick as in \cite[\S 10a]{Seidel2008}; we omit the details.

\begin{proposition}
\label{proposition:coalg} (compare \cite[Proposition 5.3]{Ganatra2012})
$\CO$ is a homomorphism of $\C$-algebras.
\end{proposition}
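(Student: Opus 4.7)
The plan is to build a TQFT-style chain homotopy via a one-parameter family of moduli spaces of disks carrying two interior marked points, whose codimension-one boundary couples the quantum product on $QH^*(X)$ to the Yoneda product on $HH^*(\mathcal{F})$. Fix pseudocycles $f_1,f_2$ representing $PD(\alpha_1),PD(\alpha_2)$. For each Hochschild-cochain datum $\varphi$ and each disk class $\beta$, I would form the moduli space $\mathcal{M}_2(\varphi,\beta;f_1,f_2)$ of disks with boundary conditions from $\varphi$, in class $\beta$, carrying two ordered interior marked points $z_1,z_2$ constrained by $f_1$ and $f_2$. Choose consistent perturbation data on this family which degenerates, near the locus $z_1 = z_2$, to the sphere-bubble perturbation data used to define the three-point Gromov-Witten invariant in the evaluation form of Remark \ref{remark:qcupeval}, and, near the locus where $z_1,z_2$ are separated by a long internal strip, to a gluing of the data defining $\mathcal{CO}(\alpha_1)$ and $\mathcal{CO}(\alpha_2)$ on the two resulting disk components.

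I would then analyse the Gromov compactification of the one-dimensional components, i.e.\ those with $d(\varphi,\beta)+1 = \deg(\alpha_1)+\deg(\alpha_2)$. The pseudocycle-transversality and monotonicity arguments used in Section \ref{subsec:co} to suppress the strata $\widetilde{\mathcal{M}}^i$ for $i\geq 2$ guarantee that, for a generic choice of data, only codimension-one strata meet the constraints. These strata are of four types: (i) strip breaking at the incoming or outgoing boundary punctures, which assembles into $\delta H + H\circ \delta$ for the chain homotopy $H \in CC^*(\mathcal{F})$ defined by counting isolated points of $\mathcal{M}_2$; (ii) the collision $z_1 \to z_2$, which by the choice of perturbation data and Remark \ref{remark:qcupeval} contributes precisely $\mathcal{CO}(\alpha_1 \star \alpha_2)$; (iii) the separation of $z_1$ and $z_2$ onto two disk components joined along a strip-like end, which by construction contributes the Yoneda product $\mathcal{CO}(\alpha_1) \cup \mathcal{CO}(\alpha_2)$; and (iv) additional disk or sphere bubbles without an interior marked point, which are ruled out as in the construction of $\mathcal{F}$ and $\mathcal{CO}$ by forcing a companion component of negative virtual dimension to pass through a fixed point constraint. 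Equating the signed count of boundary points to zero gives the chain-level identity
\begin{equation}
\mathcal{CO}(\alpha_1 \star \alpha_2) \;-\; \mathcal{CO}(\alpha_1) \cup \mathcal{CO}(\alpha_2) \;=\; \delta H + H\circ \delta,
\end{equation}
so multiplicativity holds in $HH^*(\mathcal{F})$. Independence from the choices of $f_i$ and perturbation data is handled by the same pseudocycle-bordism and double-category arguments used for $\mathcal{CO}$ itself, and unitality $\mathcal{CO}(e)=1_{HH^*(\mathcal{F})}$ follows from the analogous one-parameter-family argument, taking $f$ to be the identity pseudocycle $X\to X$ and interpolating to the family of diagrams defining the cohomological unit $e_L$.

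The main obstacle is the collision stratum $z_1 \to z_2$: a priori it can be approached by deeper nodal configurations of the $\widetilde{\mathcal{M}}^i$ type, with extra spheres bubbling off either the distinguished sphere bubble or the disk. The key point, as in Section \ref{subsec:co}, is that monotonicity forces any such bubble, which can absorb at most one of the two point constraints, to lie in a stratum whose constrained virtual dimension drops by at least two, so such configurations do not appear in the closure of the one-dimensional family for generic perturbation data and pseudocycles. A secondary technical point is patching the perturbation data on the collision and separation strata to coincide with the data defining the three-point Gromov-Witten invariant and the Yoneda product respectively; this is arranged by gluing in fixed model data near those strata, and verifying by the usual continuation/double-category argument that the resulting Hochschild cocycles lie in the same $HH^*$-class as the cocycle $\mathcal{CO}(\alpha_1\star\alpha_2;f)$ built from an arbitrary pseudocycle representative of $\alpha_1 \star \alpha_2$.
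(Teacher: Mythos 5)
Your proposal is correct and takes essentially the same approach as the paper: it uses a one-parameter family of disks with two interior marked points (the paper pins them at $\pm t$ on the diameter, $t\in[0,1]$), reads off the quantum cup product from the collision boundary via the sphere-bubble/evaluation description of Remark \ref{remark:qcupeval}, reads off the Yoneda product from the separation boundary, and absorbs all remaining codimension-one degenerations into a Hochschild coboundary. The only minor imprecision is writing the error term as $\delta H + H\circ\delta$: since $H$ is just an element of $CC^*(\mathcal{F})$ (not a chain map between two complexes), the strip-breaking boundary contributes simply $\delta(H)$.
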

\begin{proof}
We consider a certain subset of the moduli space of discs with two internal marked points, $s \ge 0$ incoming boundary punctures, and one outgoing boundary puncture.
Namely, parametrizing our disc by the unit disc in $\C$, we require that the outgoing boundary puncture lies at $-i$, and the internal marked points lie at $\pm t$, where $t \in [0,1]$ (see Figure \ref{subfig:Fig2a}).
We choose consistent perturbation data for these moduli spaces.

\begin{figure}
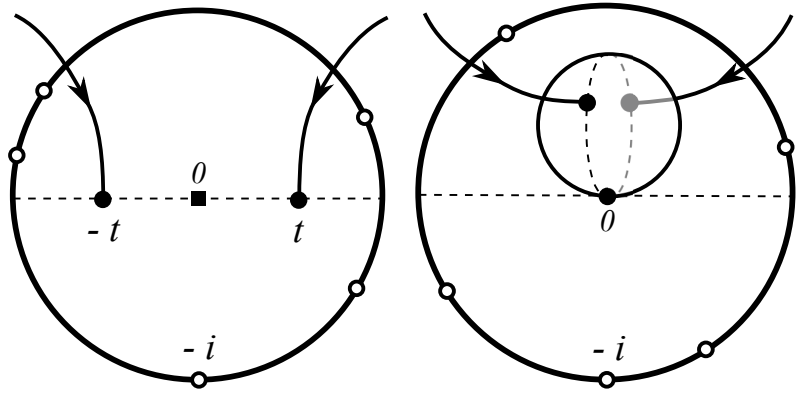
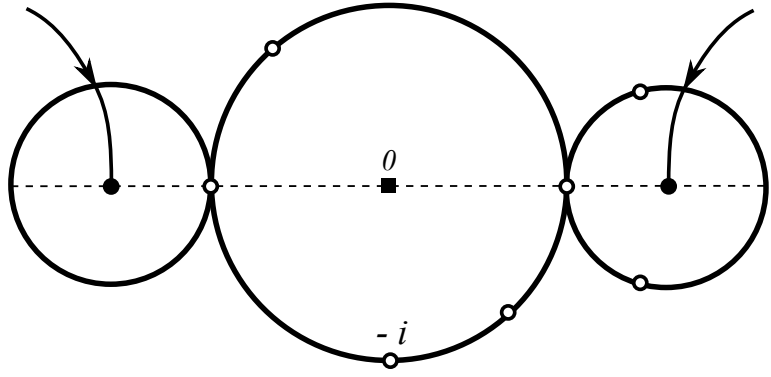

\centering
\subfigure[One part of the moduli space defining {$H:QH^*(X)^{ \otimes 2} \To CC^*(\cF).$}]{
\includegraphics[width=0.4\textwidth]{./Fig2a}
\label{subfig:Fig2a}}
\subfigure[The part of the moduli space at $t=0$.]{
\includegraphics[width=0.4\textwidth]{./Fig2b}
\label{subfig:Fig2b}}
\subfigure[The boundary component at $t=1$.]{
\includegraphics[width=0.8\textwidth]{./Fig2c}
\label{subfig:Fig2c}}
\caption{Proving the closed--open string map is an algebra homomorphism.
\label{fig:2l1discs}}
\end{figure}

Given cohomology classes $\alpha,\beta \in QH^*(X)$ which are Poincar\'{e} dual to pseudocycles $f,g$, we consider the corresponding moduli space of pseudoholomorphic discs with the internal marked points constrained to lie on the pseudocycles.
Counting the zero-dimensional component of the moduli space defines an element $H(f,g) \in CC^*(\cF)$.
Now we consider the one-dimensional component of the moduli space; the count of its boundary points is $0$.
Boundary points at $t=1$ are illustrated in Figure \ref{subfig:Fig2c}, and correspond to terms in $\CO(\alpha;f) \cup \CO(\beta;g)$, where `$\cup$' denotes the Yoneda product.
Boundary points for $0<t<1$ corresponds to a disc bubbling off one of the the discs defining $H(f,g)$; they correspond to terms in $\delta (H(f,g))$.
Boundary points at $t=0$ are illustrated in Figure \ref{subfig:Fig2b}; they correspond to terms in $\CO(\alpha \star \beta;ev)$, where `$\star$' denotes the quantum cup product and `$ev$' is the evaluation map of Remark \ref{remark:qcupeval}.

Therefore, we have
\begin{equation} \CO(\alpha \star \beta;ev) = \CO(\alpha;f) \cup \CO(\beta;g) + \delta(H(f,g)).\end{equation}
It follows that $\CO$ is an algebra homomorphism on the level of cohomology.
 \end{proof}

\begin{definition}
\label{definition:co0}
Given an object $L$ of some $\cF(X)_w$, we can consider the composition of the closed--open map $\CO$ with the projection to the length-zero part of Hochschild cohomology,
\begin{equation} HH^*(CF^*(L,L)) \To HF^*(L,L)\end{equation}
(see \eqref{eqn:hhproj}).
As this projection and $\CO$ are both algebra homomorphisms, their composition is an algebra homomorphism, which we denote by
\begin{equation} \CO^0: QH^*(X) \To HF^*(L,L).\end{equation}
\end{definition}

\begin{remark}
\label{remark:co0un}
The homomorphism $\CO^0$ is obviously unital, because the moduli spaces defining $\CO^0(e)$ and $e_L$ count the same objects.
\end{remark}

\subsection{The open--closed string map, $\OC$}
\label{subsec:oc}

In this section, we consider the open--closed string map, which relates quantum cohomology to Hochschild homology (see e.g. \cite[\S 3.8.1]{fooo}, \cite[\S 5.3]{Abouzaid2010a}).

The open--closed string map is a map of $\bm{G}$-graded $\C$-vector spaces
\begin{equation} \OC: HH_*(\cF) \To QH^{*+n}(X)\end{equation}
(where $X$ has real dimension $2n$).
It is defined by considering moduli spaces of pseudoholomorphic discs with $s \ge 1$ incoming boundary punctures, and an internal marked point.
We choose consistent strip-like ends and perturbation data for these moduli spaces.

Boundary conditions for these moduli spaces are given by generators of the Hochschild chain complex.
For a generator $\varphi$ and a cohomology class $\alpha$, whose Poincar\'{e} dual is represented by a pseudocycle $f$, we consider the corresponding moduli space of pseudoholomorphic discs, with the internal marked point constrained to lie on $f$.
Counting the zero-dimensional component of this moduli spaces gives a number, which we define to be
\begin{equation} \langle \OC(\varphi),\alpha;f \rangle.\end{equation}
As in the definition of $\CO$, counting the boundary points of the one-dimensional component of the moduli space shows that
\begin{equation} \langle \OC(b(\varphi)),\alpha;f \rangle = 0,\end{equation}
where $b$ denotes the Hochschild differential, so this number depends only on the class of $\varphi$ in $HH_*(\cF)$.
Furthermore, the number is independent of the choice of pseudocycle $f$ representing $PD(\alpha)$, by an argument analogous to the one we gave for $\CO$.
Therefore, we have a well-defined map
\begin{equation} \langle \OC(-),-\rangle: HH_*(\cF) \otimes QH^*(X) \To \C; \end{equation}
dualizing in the $QH^*(X)$ factor gives $\OC$.
Standard index theory of Cauchy--Riemann operators shows it is $\bm{G}$-graded, of degree $n$.

\begin{remark}
\label{remark:dualbas}
Explicitly, if $\{e_i\}$ is a basis for $QH^*(X)$, with dual basis $\{e^j\}$, in the sense that $\langle e_i,e^j \rangle = \delta^j_i$, then
\begin{equation}
\OC(\varphi) = \sum_i \langle \OC(\varphi), e_i \rangle e^i.
\end{equation}
\end{remark}

\begin{remark}
\label{remark:nopseudo}
Note that, in contrast to the quantum cup product (see Remark \ref{remark:qcupeval}), we can not represent $\OC(\varphi)$ as a pseudocycle by the evaluation map from our moduli space, because the moduli space has codimension-$1$ boundary (of course the codimension-$1$ boundary still `cancels' if $\varphi$ is a Hochschild cycle).
\end{remark}

Now we recall that $HH_*(\cF)$ is naturally a $HH^*(\cF)$-module (see \S \ref{subsec:hhalgmod}); hence it is naturally a $QH^*(X)$-module, via the algebra map $\CO$.
The following result is due to \cite{Ritter2012} in the monotone case, and \cite{Ganatra2012} in the exact case.

\begin{proposition}
\label{proposition:ocmod}
$\OC$ is a homomorphism of $QH^*(X)$-modules.
\end{proposition}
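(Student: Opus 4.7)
The plan is to prove the identity $\mathcal{OC}(\mathcal{CO}(\alpha) \cap \varphi) = \alpha \star \mathcal{OC}(\varphi)$ for $\alpha \in QH^*(X)$ and $\varphi \in HH_*(\mathcal{F})$ by constructing a one-parameter family of moduli spaces that interpolates between the two sides. Since $QH^*(X)$ is Frobenius, it suffices to prove the dual pairing identity $\langle \alpha \star \mathcal{OC}(\varphi), \gamma \rangle = \langle \mathcal{OC}(\mathcal{CO}(\alpha) \cap \varphi), \gamma \rangle$ for all $\gamma \in QH^*(X)$, and by the Frobenius property this is equivalent to $\langle \mathcal{OC}(\varphi), \alpha \star \gamma \rangle = \langle \mathcal{OC}(\mathcal{CO}(\alpha) \cap \varphi), \gamma \rangle$.

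First I would pick pseudocycles $f, g$ representing $PD(\alpha)$ and $PD(\gamma)$ respectively. Consider the moduli space of pseudoholomorphic disks with $s \ge 1$ incoming strip-like ends (with boundary conditions given by a Hochschild chain generator $\varphi$) and two interior marked points $z_0, z_1$; constrain $z_0$ to lie on $g$ and $z_1$ to lie on $f$. Fix $z_0$ at the origin of the disk, and parametrize the position of $z_1$ by a path $t \in [0,1]$ that begins close to $z_0$ at $t=0$ and approaches a generic boundary point at $t=1$. Choose the perturbation data consistently so that in the limit $t \to 0$ the datum agrees with the one used to define $\mathcal{OC}$ combined with the evaluation map of Remark \ref{remark:qcupeval}, and in the limit $t \to 1$ it agrees with the datum used to define $\mathcal{CO}$ glued to that defining $\mathcal{OC}$. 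Using the same stratification argument used in Section \ref{subsec:co} to build $\overline{\mathcal{M}}$, this parametrized moduli space is regular of the expected dimension with the higher strata appearing in virtual codimension at least $2$; consequently the one-dimensional component is a compact oriented $1$-manifold with boundary.

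Next I would analyse the boundary contributions of this $1$-manifold. At $t=0$, the marked points $z_0$ and $z_1$ collide and a sphere bubble forms carrying both points and meeting the disk at one node; by Remark \ref{remark:qcupeval} the signed count of such configurations is $\langle \mathcal{OC}(\varphi), \alpha \star \gamma \rangle$. At $t=1$, the point $z_1$ escapes to the boundary and bubbles off as a separate disk with one interior marked point on $f$ and one boundary output, which is precisely a disk counted by $\mathcal{CO}(\alpha)$, while the remaining configuration contributes to $\mathcal{OC}$ evaluated on the Hochschild chain obtained by inserting $\mathcal{CO}(\alpha)$ into $\varphi$ via the cap product (see Section \ref{subsec:hhalgmod}). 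The remaining codimension-$1$ strata are strip breaking, which contributes the term $\langle \mathcal{OC}(b\varphi), \gamma \rangle$ that vanishes on cycles, together with bubbling of Maslov index $2$ disks or Chern number $1$ spheres off the disk or the strip-like ends, which is ruled out for generic perturbation data by monotonicity, exactly as in the construction of $\mathcal{OC}$. Independence of the choice of pseudocycles $f, g$ and of the interpolating path follows by the same cobordism arguments already used to establish well-definedness of $\mathcal{CO}$ and $\mathcal{OC}$.

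The main obstacle will be the analytical treatment of the $t=0$ stratum, where the two interior marked points collide and a sphere bubble forms. One must verify that in the relevant dimension the only configurations appearing come from simple spheres carrying both pseudocycle constraints (after the trees-of-bubbles and multiple-cover reductions used to build the map $\widetilde{ev}^i$ in Section \ref{subsec:co}), and that no additional sphere bubble containing only one of the two marked points, or only sitting on the disk component, contributes in codimension one. This is handled by the same stratification-plus-monotonicity argument that controls $\overline{\mathcal{M}}(\varphi,\beta)$ in the construction of $\mathcal{CO}$, now with one extra interior marked point; the bookkeeping of orientations and of the chain-level formula for the cap product at $t=1$ is delicate but not a fundamental obstacle once the moduli spaces are set up consistently.
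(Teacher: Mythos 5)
Your approach is correct and is essentially the paper's proof: a one-parameter family of moduli spaces of disks with two interior marked points, degenerating at one end into a sphere bubble carrying the $\alpha \star \gamma$ constraint (via Remark \ref{remark:qcupeval}) and at the other into a $\mathcal{CO}(\alpha)$ disk bubble inserted into the Hochschild chain, followed by an appeal to the Frobenius property of $QH^*(X)$. The only difference is notational — the paper reuses the moduli space from the proof of Proposition \ref{proposition:coalg} (marked points at $\pm t$, with one strip-like end reoriented) rather than fixing $z_0$ and moving $z_1$ — and your description of the strip-breaking boundary term as $\langle \mathcal{OC}(b\varphi),\gamma\rangle$ is slightly imprecise (it is really a homotopy term $\langle \mathcal{OC}(b\varphi),K(f,g)\rangle$), but this does not affect the conclusion since both vanish on Hochschild cycles.
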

\begin{proof}
We consider the same moduli space of discs as in the proof of Lemma \ref{proposition:coalg}, but with some boundary punctures oriented in the opposite direction, and corresponding changes to the perturbation data to achieve consistency.
A virtually identical argument to the proof of Lemma \ref{proposition:coalg} shows that given pseudocycles $f,g$, there exists a map $K(-;f,g): CC_*(\cF) \to \C$ such that
\begin{equation} \langle \OC(\varphi),\alpha \star \beta;ev \rangle = K(b(\varphi);f,g) + \langle \OC(\CO(\alpha;f) \cap \varphi),\beta ;g\rangle\end{equation}
(see the proof of \cite[Proposition 5.4]{Ganatra2012} for more details).
It now follows from the fact that $QH^*(X)$ is a Frobenius algebra that
\begin{equation} \alpha \star \OC(\varphi) = \OC(\CO(\alpha) \cap \varphi)\end{equation}
on the level of cohomology, and hence that $\OC$ is a homomorphism of $QH^*(X)$-modules.
 \end{proof}

\begin{definition}
\label{definition:oc0}
If $L$ is an object of $\cF(X)_w$, we can consider the composition of the inclusion
\begin{equation} HF^*(L,L) \To HH_*(CF^*(L,L))\end{equation}
(see \eqref{eqn:hhinj}) with the open--closed map $\OC$.
We denote the result by
\begin{equation} \OC^0: HF^*(L,L) \To QH^{*+n}(X) .\end{equation}
Because $\OC$ is a $QH^*(X)$-module homomorphism, $\OC^0$ is too.
\end{definition}

\subsection{Two-pointed closed--open and open--closed maps}
\label{subsec:2point}

We now recall (from \cite[\S 5.6]{Ganatra2012}) the construction of the \emph{two-pointed} closed--open and open--closed maps, $_2\CO$ and $_2 \OC$.

To define $_2\CO$, we consider a subset of the moduli space of discs with $k+l+2$ boundary punctures, and an internal marked point.
We label the boundary punctures $p_{out}, q_1, \ldots, q_k, p_{in}, q_{k+1}, \ldots, q_{k+l}$ in order around the boundary, and consider the moduli space of discs such that, if we parametrize the disc as the unit disc in $\C$, then $p_{in}$ lies at $-1$, $p_{out}$ lies at $+1$, and the internal marked point lies on the real axis (see Figure \ref{fig:2CO}).
We define the boundary puncture $p_{out}$ to be outgoing, and all other punctures to be incoming.
We make a consistent choice of strip-like ends and perturbation data for this moduli space, and consider the corresponding moduli space of pseudoholomorphic discs.

Boundary conditions for this moduli space correspond to generators of $_2CC^*(\cA)$ (see \S \ref{subsec:hhi}).
Counting rigid pseudoholomorphic discs in this moduli space, with the marked point constrained to lie on a pseudocycle, defines a map
\begin{equation} _2 \CO: QH^*(X) \To HH^*(\cA).\end{equation}
The by-now-familiar arguments show that it is well-defined and independent of the choices made in its construction.
The argument of \cite[Proposition 5.6]{Ganatra2012}, adapted to the present setting, shows that it coincides with $\CO$.
More precisely, for any $A_\infty$ category $\cA$ there exists an explicit quasi-isomorphism $\Psi: CC^*(\cA) \to {}_2 CC^*(\cA)$  \cite[Equation (2.200)]{Ganatra2012}, and in the case of the Fukaya category, there is an explicit homotopy between $\Psi \circ \CO$ and $_2\CO$, given by counting a moduli space of pseudoholomorphic discs with boundary conditions analogous to those defining $_2\CO$, but the marked point $p_{in}$ is allowed to vary between $-1$ and $+1$ along the lower boundary of the disc (see \cite[Figure 8]{Ganatra2012} for a picture).

One defines $_2 \OC$ using the same moduli space of domains, but with $p_{out}$ now regarded as an incoming boundary puncture, and the perturbation data modified accordingly.
Similar arguments show that it is well-defined, independent of choices made in its construction, and coincides with $\OC$.

\begin{remark}
To show that $\CO$ is an algebra homomorphism, and $\OC$ is a $QH^*(X)$-module homomorphism, we consider the same moduli space, except with two internal marked points, constrained to lie on the real axis in a prescribed order.
\end{remark}

\begin{lemma}
\label{lemma:counital}
$\CO$ is a \emph{unital} algebra homomorphism.
\end{lemma}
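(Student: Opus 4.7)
The plan is to show that $\mathcal{CO}([e])$ equals the unit class in $HH^\ast(\mathcal{F})$. Assuming strict units for notational convenience (the homotopy-unital case used elsewhere in the paper being structurally identical), the unit of $HH^\ast(\mathcal{F})$ is represented by the cocycle $\mathbf{1}$ with $\mathbf{1}^0(L)=e_L$ and $\mathbf{1}^s=0$ for $s\geq 1$. I represent $\mathrm{PD}(e)=[X]$ by the trivial pseudocycle $\mathrm{id}_X:X\to X$; with this choice, $\mathcal{CO}(e)^s$ is simply the signed count of rigid pseudoholomorphic disks with $s$ incoming strip-like ends, one outgoing strip-like end, and one free (unconstrained) interior marked point. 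The length-zero part $\mathcal{CO}(e)^0$ coincides with $\mathbf{1}^0=e_L$ at the cochain level by Remark \ref{remark:co0un}, so the content of the lemma is the vanishing of the higher-length parts in cohomology.

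For $s\geq 1$, I plan to produce an explicit primitive $K\in CC^{\ast-1}(\mathcal{F})$ with $\delta K=\mathcal{CO}(e)-\mathbf{1}$ out of a one-parameter family of moduli spaces in which the free interior marked point is dragged to the boundary of the disk. Fix a path $\gamma:[0,1]\to\overline{D}$ from a generic interior point $\gamma(0)$ to a generic boundary point $\gamma(1)$ lying on one of the boundary arcs of the disk; make a consistent generic choice of perturbation data over this family; and define $K^s$ by the signed count of rigid pseudoholomorphic disks over the parametrized moduli space. The codimension-one boundary of the parametrized family decomposes into four types of strata: $(i)$ the $t=0$ fibre, reproducing $\mathcal{CO}(e)^s$; $(ii)$ the $t=1$ fibre, in which the marked point has reached the boundary arc; $(iii)$ strip-breaking at the incoming or outgoing ends, which assembles into $\delta K^s$; and $(iv)$ disk and sphere bubbling, which vanishes by the same regularity hypotheses used in Section \ref{subsec:co} to define $\mathcal{CO}$.

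The hard part is the analysis of stratum $(ii)$. The unconstrained free boundary marked point should be reinterpreted as an insertion of $[L]=\mathrm{PD}(e_L)$ via the pseudocycle $\mathrm{id}_L:L\to L$, so the $t=1$ count decomposes into a $\mu^{s+1}$-term with an $e_L$ inserted in a specified boundary slot, together with further pieces arising when the marked point approaches a corner of the disk and a small disk bubble carrying the marked point and the adjacent puncture splits off. Strict unitality then forces all $\mu^{s+1}$-contributions to vanish for $s\geq 2$; the $s=1$ case requires more delicate bookkeeping of the bubble-splitting strata, and one may need either to sweep $\gamma(1)$ across a larger portion of $\partial D$, or to pair $\gamma$ with a second path on the opposite arc, in order that the two orderings $\mu^2(-,e_L)$ and $\mu^2(e_L,-)$ cancel with the correct signs $\mu^2(e,x)=x$, $\mu^2(x,e)=(-1)^{|x|}x$. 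What survives after these cancellations is precisely the length-zero contribution $\mathbf{1}^0=e_L$, completing the identity $\delta K=\mathcal{CO}(e)-\mathbf{1}$. In the homotopy-unital setting actually used in the paper, the same argument runs on the extended Hochschild complex, with the homotopy-unit cochains providing the analogous book-keeping.
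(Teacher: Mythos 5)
The paper's proof and yours take genuinely different routes, and yours has a conceptual problem at the very first step. The paper passes to the two-pointed closed-open map $_2\mathcal{CO}$ (shown to agree with $\mathcal{CO}$ in Section \ref{subsec:2point}), chooses perturbation data invariant under the $\mathbb{R}$-action sliding the interior marked point along the axis between $p_{in}$ and $p_{out}$, and observes that since $PD(e)$ imposes no constraint, this $\mathbb{R}$-action acts freely on the moduli space whenever $k+l>0$, killing all contributions except the constant strip at $k=l=0$. Nothing is dragged anywhere and no explicit chain homotopy is constructed.

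Your approach does not correctly reproduce $\mathcal{CO}(e)$ at the $t=0$ end. In the definition of $\mathcal{CO}$, the interior marked point is part of the \emph{domain moduli}: the moduli space $\mathcal{M}^0(\varphi,\beta,\mathrm{id}_X)$ already varies the position of the marked point over the whole disk, cut down only by the (here trivial) pseudocycle constraint on its \emph{image}. Pinning the marked point to $\gamma(0)$ is an additional codimension-$2$ constraint on the domain, so the $t=0$ fibre of your parametrized family is a different (generically empty, or at least wrong-dimensional) moduli space, not the one computing $\mathcal{CO}(e)^s$. Consequently $\delta K \neq \mathcal{CO}(e) - \mathbf{1}$ even before one reaches stratum $(ii)$. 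If instead you want to free the marked point but constrain it to a $1$-dimensional subset of the domain, you are essentially reinventing the two-pointed construction with the marked point on the real axis; at that point the clean argument is the paper's $\mathbb{R}$-invariance trick rather than an explicit analysis of the $t=1$ degeneration.

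There are two further gaps you would still face even after fixing the $t=0$ end. First, the $t=1$ degeneration in the Gromov--Floer compactification is a \emph{disk bubble} carrying the interior marked point, attached at a boundary node, not a boundary marked point \emph{per se}; converting such a bubble into an $e_L$-insertion requires either a degenerate (forgetful-pullback) choice of perturbation data or the homotopy-unit machinery of Section \ref{subsec:homungeom}, neither of which is available at the point in the paper where this lemma is proven. Second, the cocycle $\mathbf{1}$ with $\mathbf{1}^0 = e_L$ and $\mathbf{1}^{\geq 1}=0$ represents the unit of $HH^*(\mathcal{F})$ only when $\mathcal{F}$ is strictly unital; here $\mathcal{F}$ is merely cohomologically unital, and the unit is most naturally represented in the two-pointed model as the identity bimodule endomorphism, which is one more reason the paper works there. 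You flag the $s=1$ case as needing ``more delicate bookkeeping,'' which is correct, but the deeper problem is the $t=0$ mismatch above.
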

\begin{proof}
We make a special choice of perturbation data for the moduli space defining $_2 \CO$: we require the perturbation data to be independent of the position of the internal marked point $q$.
In other words, we require the perturbation data to be independent of the $\R$-action corresponding to moving $q$ along the line connecting $-1$ and $1$.
In particular, when $k=l=0$, we choose translation-invariant perturbation data coming from the corresponding Floer datum: this is compatible with the strip-like ends at $p_{in}$ and $p_{out}$, because one is incoming and one is outgoing.
Consistency with this choice of perturbation data for $k=l=0$ requires us to impose an additional condition on the strip-like ends at $p_{in}$ and $p_{out}$: namely, the dotted line should go down the centre of these strip-like ends.
It is clear that we can always choose strip-like ends and consistent perturbation data in this fashion.
Furthermore, it remains possible to achieve transversality for perturbation data chosen in this special class.

The unit $e \in QH^*(X)$ is Poincar\'{e} dual to the fundamental cycle of the manifold; so in the moduli space defining $_2\CO(e)$, there is no constraint on the internal marked point.
With our choice of $\R$-invariant perturbation data, this means that there is an action of $\R$ on the moduli space of pseudoholomorphic discs.
If one of $k,l$ is non-zero, this action is free, so the moduli space can't be $0$-dimensional, hence can't contribute to $_2 \CO(e)$; when $k=l=0$, the action is free unless the strip is constant along its length.
Therefore, the only contribution to $_2\CO(e)$ is the identity endomorphism of the diagonal bimodule in $\fmodf{\cF}{\cF}$.
 \end{proof}

\begin{figure}
\centering
\includegraphics[width=0.6\textwidth]{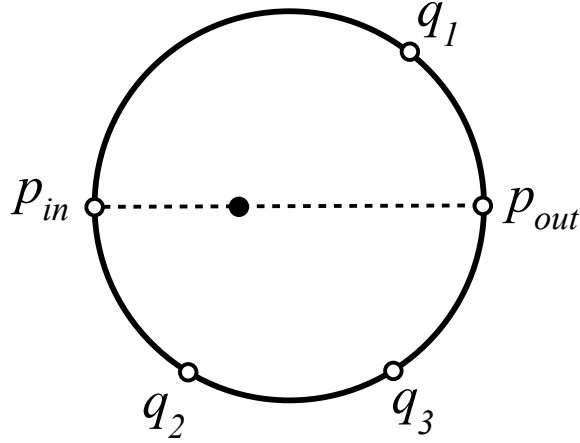}
\caption{The two-pointed closed--open map.
\label{fig:2CO}}
\end{figure}

\subsection{Weak proper Calabi--Yau structures}
\label{subsec:infin}

We now explain how the monotone Fukaya category of $X$ can be equipped with an $n$-dimensional weak proper Calabi--Yau structure, in the sense of Definition \ref{definition:weakcy}.
The idea was outlined in \cite[\S 12j]{Seidel2008} and \cite[Proof of Proposition 5.1]{Seidel2010c}.

\begin{lemma}
\label{lemma:weakcy}
The class $[\phi] \in HH_n(\cF)^\vee$ given by
\begin{equation} [\phi](b) := \langle \OC(b), e\rangle\end{equation}
is an $n$-dimensional weak proper Calabi--Yau structure on $\cF$, in the sense of Definition \ref{definition:weakcy}.
\end{lemma}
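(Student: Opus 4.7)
The goal is to verify the two conditions encoded in Definition \ref{definition:weakcy}: that $[\phi]$ is a well-defined element of $HH_n(\mathcal{F})^\vee$, and that the induced bimodule map $\mathcal{F} \to \mathcal{F}^\vee[-n]$ is a cohomological quasi-isomorphism. Well-definedness and the degree count are automatic from the existence and degree shift of $\mathcal{OC}: HH_*(\mathcal{F}) \to QH^{*+n}(X)$ together with the $\C$-linearity of pairing with $e$; so the real content is the quasi-isomorphism statement, which on the length-$2$ Hochschild chains amounts to non-degeneracy of the induced pairing on Floer cohomology.

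My first step would be to unpack the defining moduli space at the chain level. Because $e \in QH^*(X)$ is Poincar\'{e} dual to the fundamental class of $X$, one may choose the representing pseudocycle to be the identity map $X \to X$. Hence $\phi(x_s \otimes \ldots \otimes x_0) = \langle \mathcal{OC}(x_s \otimes \ldots \otimes x_0), e\rangle$ is the count of rigid pseudoholomorphic disks with $s+1$ cyclically arranged incoming boundary punctures labeled by the $x_i$, together with one \emph{unconstrained} interior marked point (present to stabilize the moduli space, as in Remark \ref{remark:nopseudo}). The core geometric step is then to identify the length-$2$ part $\phi^{0|2}: CF^*(L_0,L_1) \otimes CF^*(L_1,L_0) \to \C$ with the classical Floer pairing. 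One constructs, via a one-parameter family of perturbation data, a chain homotopy between $\phi^{0|2}$ and the standard pairing defined by counting rigid strips with two incoming punctures: geometrically, the free interior marked point may be pushed to the boundary and absorbed using (homotopy) unitality, analogously to the argument verifying that $e_L$ is a cohomological unit. On the cohomology level this identifies $\phi^{0|2}$ with the classical Poincar\'{e} duality pairing, whose non-degeneracy $HF^*(L_0,L_1) \cong HF^{n-*}(L_1,L_0)^\vee$ in the monotone setting is established by Oh.

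The bimodule structure of the induced map, i.e.\ the compatibility of $\phi$ with $\mu^*$ on both sides, follows formally from the fact that $\mathcal{OC}$ is a map of $QH^*(X)$-modules (Proposition \ref{proposition:ocmod}) combined with the Frobenius property of the intersection pairing on $QH^*(X)$: writing out the $A_\infty$-bimodule structure equations and contracting with $e$ reduces precisely to the module property of $\mathcal{OC}$ applied to $e$. Combined with the non-degeneracy of $\phi^{0|2}$ on cohomology, this shows that the length-$2$ component of $\phi$ exhibits the quasi-isomorphism $\mathcal{F} \to \mathcal{F}^\vee[-n]$ required by weak cyclicity. The main obstacle I anticipate is the sign bookkeeping involved in the chain-level identification of $\phi^{0|2}$ with the classical Floer pairing, together with coherently choosing perturbation data so that the forgetful relation between the two moduli spaces is compatible with the orientations; however, since weak cyclicity is a cohomology-level condition, it is enough to compare the two on cohomology via the continuation argument sketched above, bypassing any need to realize cyclic symmetry or the Floer pairing identification strictly on the chain level.
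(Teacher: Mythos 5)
Your proposal is a fleshed-out version of what the paper itself describes as the ``reason'' Lemma \ref{lemma:weakcy} is true, and then explicitly declines to pursue: namely, that the pairing $\langle \mathcal{OC}^0(\mu^2(-,-)),e\rangle$ is cohomologous, via the homotopy between $\mathcal{OC}$ and $_2\mathcal{OC}$ and a continuation argument, to a ``classical'' Poincar\'{e} duality pairing on Floer cohomology, whose non-degeneracy is known. The paper's actual proof is structured differently on purpose: it constructs a candidate inverse to the map $p \mapsto \langle \mathcal{OC}^0(\mu^2(p,-)),e\rangle$ explicitly, as contraction with the coproduct $\Delta(\mathcal{CO}^0(e))$, and then proves by a gluing/homotopy argument that the compositions in both orders equal $\mu^2(\mathcal{CO}^0(e\star e),-) = \mathrm{id}$, using the unitality of $\mathcal{CO}^0$ from Remark \ref{remark:co0un}. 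The stated reason for preferring this formulation is that it transports verbatim to the weak bounding cochain case (Lemma \ref{lemma:weakcywbc}), where one only knows unitality for $\mathcal{CO}^0$-unital objects and a direct comparison with a classical pairing would be awkward. So: your route is the more elementary one and would suffice for the present lemma, but what the abstract route buys is a proof that works unchanged once weak bounding cochains are in play.

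Two local issues worth flagging. First, your second paragraph is chasing something that isn't there: Definition \ref{definition:weakcy} asks only for homological non-degeneracy of $[\phi]$. The ``compatibility of $\phi$ with $\mu^*$ on both sides'' that you set out to verify via Proposition \ref{proposition:ocmod} is automatic --- any class in $HH_n(\mathcal{F})^\vee$ corresponds (by Lemma \ref{lemma:hhdual}, or rather by the identification preceding it) to a bimodule homomorphism $\mathcal{F}_\Delta \to \mathcal{F}_\Delta^\vee[-n]$ up to homotopy, so there is no separate bimodule compatibility condition to check. Second, the geometric step you describe as ``the free interior marked point may be pushed to the boundary and absorbed using (homotopy) unitality'' is not quite the right mechanism. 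What actually makes the interior marked point benign is (i) passing to the two-pointed model $_2\mathcal{OC}$ as in Section \ref{subsec:2point}, and (ii) then choosing perturbation data that are independent of the marked point's position along the line between $p_{in}$ and $p_{out}$, exactly as in the proof of Lemma \ref{lemma:counital}. That gives a residual $\R$-action which trivializes the contribution of non-constant strips. The leftover count is then a continuation map from $CF^*(K,L)$ (with one Floer datum) to $CF^*(L,K)^\vee$ (whose datum is the time-reversal of another, independently chosen one), not literally the ``classical'' pairing; the perfectness still follows from the standard fact that continuation maps are quasi-isomorphisms, but your write-up skips exactly the step (the change of Floer data) that makes that identification correct.
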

\begin{proof}
The class $[\phi]$ is clearly $n$-dimensional, because $\OC$ has degree $n$.
To prove that it is homologically non-degenerate, we must show that the pairing
\begin{align}
HF^*(K,L) \otimes HF^{n-*}(L,K) & \To  \C \\
\label{eqn:comp0} p \otimes q & \mapsto  \langle \OC^0(\mu^2(p,q)),e \rangle
\end{align}
is perfect (see Definition \ref{definition:homnond}).
Equivalently, we must show that the corresponding map
\begin{equation} HF^*(K,L) \To HF^{n-*}(L,K)^\vee\end{equation}
is an isomorphism.

The `reason' this map is an isomorphism is as follows: the pairing is homotopic to the pairing $\langle _2 \OC(- \otimes -), e \rangle$, via the homotopy between $\OC$ and $_2 \OC$.
The corresponding map can be regarded as a continuation map from $CF^*(K,L)$ (defined using Floer datum $(H,J)$) to $CF^*(L,K)^\vee \cong CF^*(K,L)$ (defined using Floer datum $(-H,J)$); so one can apply the standard argument to prove that continuation maps are quasi-isomorphisms.

For the purpose of generalizing this result later (see Lemma \ref{lemma:weakcywbc}), we give a more abstract formulation of the proof.
Firstly, for any $\alpha \in QH^*(X)$, we can consider the map
\begin{align}
 \langle \OC^0(\mu^2(-,-)),\alpha \rangle & \in  (HF^*(K,L) \otimes HF^*(L,K))^\vee \\
\label{eqn:comp1}& \cong  \mathrm{Hom}(HF^*(K,L),HF^*(L,K)^\vee).
\end{align}
Now we define the coproduct
\begin{equation} \Delta: HF^*(L,L) \To HF^*(K,L) \otimes HF^*(L,K)[n]\end{equation}
by counting pseudoholomorphic discs with one incoming and two outgoing boundary punctures.
For any $\beta \in QH^*(X)$, we can consider the element
\begin{align}
\Delta(\CO^0(\beta)) & \in  HF^*(K,L) \otimes HF^*(L,K) \\
\label{eqn:comp2}& \cong  \mathrm{Hom}(HF^*(L,K)^\vee,HF^*(K,L)).
\end{align}
We claim that the composition of the homomorphisms \eqref{eqn:comp1}, \eqref{eqn:comp2} is equal to the map
\begin{align}
HF^*(K,L) & \To  HF^*(K,L) \\
p & \mapsto  \mu^2(\CO^0(\alpha \star \beta),p).
\end{align}
The proof that the two maps are homotopic follows familiar lines, and we omit it.

In particular, if $\alpha = \beta = e$, then the composition is equal to $\mu^2(\CO^0(e),-)$, and hence is the identity, because $\CO^0$ is unital by Remark \ref{remark:co0un}.
A similar argument shows that the composition in the other order is also equal to the identity.
Therefore, the map \eqref{eqn:comp1} is an isomorphism, so the pairing \eqref{eqn:comp0} is perfect, as required.
 \end{proof}

\begin{remark}
The weak proper Calabi--Yau structure introduced in Lemma \ref{lemma:weakcy} coincides with that outlined in  \cite[\S 12j]{Seidel2008} and \cite[proof of Proposition 5.1]{Seidel2010c}, via the identification of $\OC$ with $_2\OC$.
It can be thought of as an expression of Poincar\'{e} duality for the Donaldson--Fukaya category.
\end{remark}

It follows by Lemma \ref{lemma:hhdual} that:

\begin{corollary}
The map
\begin{equation}- \cap [\phi]:HH^*(\cF) \To  HH_*(\cF)^\vee[-n]\end{equation}
is an isomorphism of $HH^*(\cF)$-modules.
Here `$\cap$' denotes the $HH^*(\cF)$-module structure on $HH_*(\cF)^\vee$ which is dual to the cap product on $HH_*(\cF)$, by slight abuse of notation.
\end{corollary}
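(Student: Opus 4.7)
The plan is to derive this corollary as a direct formal consequence of two already-established ingredients, leaving no real geometric content for the proof itself. First, I would observe that by Lemma \ref{lemma:weakcy}, the class $[\phi] \in HH_n(\mathcal{F})^\vee$ defined by $[\phi](b) = \langle \mathcal{OC}(b), e\rangle$ is an $n$-dimensional weakly cyclic structure on $\mathcal{F}$, in the sense of Definition \ref{definition:weakcy}. Thus the only remaining task is to invoke the general algebraic result, Lemma \ref{lemma:hhdual}, which asserts that for any $A_\infty$ category equipped with a weakly cyclic structure $[\phi]$, the cap product
\begin{equation}
- \cap [\phi]: HH^*(\mathcal{F}) \To HH_*(\mathcal{F})^\vee[-n]
\end{equation}
is an isomorphism of $HH^*(\mathcal{F})$-modules.

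The module-homomorphism property is essentially formal: $HH_*(\mathcal{F})^\vee$ inherits its $HH^*(\mathcal{F})$-module structure as the dual of the cap-product module structure on $HH_*(\mathcal{F})$, and for any $\alpha, \beta \in HH^*(\mathcal{F})$ and $b \in HH_*(\mathcal{F})$ one has $\langle(\alpha \star \beta) \cap [\phi], b\rangle = [\phi]((\alpha \star \beta) \cap b) = [\phi](\alpha \cap (\beta \cap b)) = \langle \alpha \cap [\phi], \beta \cap b \rangle$, which by the definition of the dual module structure says that $(\alpha \star \beta) \cap [\phi] = \alpha \cdot (\beta \cap [\phi])$. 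So the only non-formal assertion is bijectivity.

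For the bijectivity, the heart of the matter lies inside Lemma \ref{lemma:hhdual} rather than here. The typical argument uses the length filtration on the Hochschild (co)chain complexes: $- \cap [\phi]$ is filtration-preserving, and on the associated graded pieces it reduces to a tensor product of copies of the pairings
\begin{equation}
HF^*(K,L) \otimes HF^{n-*}(L,K) \To \C, \quad p \otimes q \mapsto [\phi](\mu^2(p,q)),
\end{equation}
each of which is perfect precisely because $[\phi]$ is homologically non-degenerate (established already in Lemma \ref{lemma:weakcy}). A spectral sequence comparison then upgrades this to an isomorphism of the total complexes.

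The hard part has therefore already been dispatched in the proof of Lemma \ref{lemma:weakcy}, where the non-degeneracy of the Poincaré duality pairing on morphism spaces was verified using the relation with $_2\mathcal{OC}$ and the unitality of $\mathcal{CO}^0$. The corollary itself is a one-line deduction from Lemmas \ref{lemma:weakcy} and \ref{lemma:hhdual}, so the proof I would write consists simply of pointing to these two facts and noting that the module-homomorphism property is immediate from the definitions.
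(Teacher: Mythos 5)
Your proof of the corollary itself is exactly the paper's: cite Lemma~\ref{lemma:weakcy} to establish that $[\phi]$ is an $n$-dimensional weakly cyclic structure, and then apply Lemma~\ref{lemma:hhdual}. Your verification that $-\cap[\phi]$ is a module homomorphism is also correct (and is implicit in the paper's framing, where the module structure on $HH_*(\mathcal{F})^\vee$ is by definition the one making this hold).

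One small caution about the aside: your sketch of what happens inside Lemma~\ref{lemma:hhdual} — a length-filtration argument comparing associated gradeds via a spectral sequence — is not the argument the paper gives. The paper's proof of Lemma~\ref{lemma:hhdual} instead identifies $HH^*(\mathcal{A}) \cong \mathrm{Hom}^*_{\fmodf{\mathcal{A}}{\mathcal{A}}}(\mathcal{A}_\Delta,\mathcal{A}_\Delta)$ and $HH_*(\mathcal{A})^\vee \cong \mathrm{Hom}^*_{\fmodf{\mathcal{A}}{\mathcal{A}}}(\mathcal{A}_\Delta,\mathcal{A}_\Delta^\vee)$, notes that $-\cap[\phi]$ becomes post-composition with the $\infty$-inner product $\phi$, and uses Lemma~\ref{lemma:homnond} to conclude that homological non-degeneracy of $[\phi]$ makes $\phi$ a quasi-isomorphism, hence an isomorphism in $H^*(\fmodf{\mathcal{A}}{\mathcal{A}})$; post-composition with an isomorphism is then automatically bijective. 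This bimodule-categorical route sidesteps any convergence issues with the length filtration on the (product-indexed) Hochschild cochain complex and is what actually appears in the appendix. Since you are invoking the lemma as a black box this does not affect the correctness of your deduction of the corollary, but the description of its internals should not be taken as the paper's argument.
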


\begin{proposition}
\label{proposition:phico}
The following diagram commutes:
\begin{equation}
\xymatrixcolsep{5pc}\xymatrix{QH^*(X) \ar[r]^-{\alpha \mapsto \langle \alpha,-\rangle}_-{\cong} \ar[d]^-{\CO}& QH^*(X)^\vee[-2n] \ar[d]^-{\OC^\vee} \\
HH^*(\cF) \ar[r]^-{-\cap [\phi]}_-{\cong} & HH_*(\cF)^\vee [-n].}
\end{equation}
Thus, $\CO$ and $\OC$ are dual, up to natural identifications of the respective domains and targets.
\end{proposition}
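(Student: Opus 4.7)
The plan is to prove commutativity by a direct diagram chase, evaluating both compositions applied to an arbitrary $\alpha \in QH^*(X)$ against an arbitrary Hochschild cycle $b \in HH_*(\mathcal{F})$. Going clockwise from the top-left, the top horizontal arrow sends $\alpha$ to the linear functional $\langle \alpha, - \rangle$, and then $\mathcal{OC}^\vee$ sends this to the functional
\begin{equation*}
b \;\mapsto\; \langle \alpha, \mathcal{OC}(b) \rangle.
\end{equation*}
Going counterclockwise, $\mathcal{CO}(\alpha) \in HH^*(\mathcal{F})$ is cap-acted on $[\phi]$, producing (by the definition of the dual cap action on $HH_*(\mathcal{F})^\vee$) the functional $b \mapsto [\phi](\mathcal{CO}(\alpha) \cap b)$, which by the definition of $[\phi]$ from Proposition \ref{proposition:cy} equals
\begin{equation*}
b \;\mapsto\; \langle \mathcal{OC}(\mathcal{CO}(\alpha) \cap b),\, e \rangle.
\end{equation*}

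The crucial input is Proposition \ref{proposition:ocmod}: $\mathcal{OC}$ is a $QH^*(X)$-module map, with the $QH^*(X)$-action on $HH_*(\mathcal{F})$ given by composing $\mathcal{CO}$ with the cap product. This yields the identity $\mathcal{OC}(\mathcal{CO}(\alpha) \cap b) = \alpha \star \mathcal{OC}(b)$. Substituting and using that $QH^*(X)$ is a Frobenius algebra together with unitality of $e$,
\begin{equation*}
\langle \mathcal{OC}(\mathcal{CO}(\alpha) \cap b),\, e \rangle \;=\; \langle \alpha \star \mathcal{OC}(b),\, e \rangle \;=\; \langle \alpha,\, \mathcal{OC}(b) \star e \rangle \;=\; \langle \alpha,\, \mathcal{OC}(b) \rangle,
\end{equation*}
which matches the clockwise composition. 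This proves commutativity of the square on the nose on cohomology.

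There is no real obstacle once these ingredients are assembled: all of the substantive content has been packaged into Proposition \ref{proposition:ocmod} (the module-map property of $\mathcal{OC}$), Proposition \ref{proposition:cy} / Lemma \ref{lemma:weakcy} (definition of $[\phi]$ and its weak cyclicity), and the Frobenius structure on $QH^*(X)$; what remains for Proposition \ref{proposition:phico} is a short formal calculation. The subsequent claim that $\mathcal{CO}$ and $\mathcal{OC}$ are dual follows immediately, since both horizontal maps are isomorphisms: the top one by Poincar\'{e} duality on $X$, and the bottom one by Lemma \ref{lemma:hhdual} applied to the weakly cyclic structure $[\phi]$ of Lemma \ref{lemma:weakcy}. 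Thus the commutative square exhibits $\mathcal{CO}$ as the transpose of $\mathcal{OC}$ under these identifications.
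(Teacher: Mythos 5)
Your proof is correct and is essentially the same as the one in the paper: both proceed by evaluating the two compositions against a Hochschild class, applying the definition of $[\phi]$, the $QH^*(X)$-module property of $\mathcal{OC}$ (Proposition \ref{proposition:ocmod}), and the Frobenius identity together with unitality of $e$. The calculation, the order of steps, and the invoked ingredients match the paper's proof line-for-line.
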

\begin{proof}
For any $\alpha \in QH^*(X)$ and $\psi \in HH_*(\cF)$, we have
\begin{align}
\left(\CO(\alpha) \cap [\phi] \right)(\psi) &= \langle \OC(\CO(\alpha) \cap \psi), e \rangle \\
&= \langle \alpha \star \OC(\psi), e \rangle\\
&= \langle \alpha , \OC(\psi) \rangle \\
&= \OC^\vee(\langle \alpha, - \rangle)(\psi).
\end{align}
The first line is the definition of $[\phi]$.
The second line follows because $\OC$ is a $QH^*(X)$-module homomorphism by Proposition \ref{proposition:ocmod}.
The third line follows because $QH^*(X)$ is a Frobenius algebra.
Hence, the diagram commutes.
 \end{proof}

\subsection{Eigenvalues of $c_1 \star$}
\label{subsec:c1egval}

\begin{lemma}
\label{lemma:c1u0}
(due to Auroux, Kontsevich and Seidel, see \cite[\S 6]{Auroux2007})
If $c_1 \in QH^*(X)$ is the first Chern class of $TX$, then we have
\begin{equation} \CO^0(c_1) = w(L) \cdot e_L \in HF^*(L,L).\end{equation}
\end{lemma}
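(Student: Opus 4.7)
The plan is to apply the Auroux--Kontsevich--Seidel argument, representing $c_1$ by a pseudocycle adapted to $L$. Since $L$ is oriented, the real line bundle $\det_\R(TL)$ is trivial, hence its complexification $\det_\C(TX)|_L = K_X^{-1}|_L$ is trivial. I would pick a smooth section $\sigma$ of the anticanonical bundle $K_X^{-1}$ over $X$ that is nonvanishing on $L$ and transverse to the zero section elsewhere. Then $V := \sigma^{-1}(0) \subset X \setminus L$ is a smooth codimension-$2$ pseudocycle representing $PD(c_1)$, disjoint from $L$. Using $V$, one computes $\mathcal{CO}^0(c_1)$ via disks with an interior marked-point constraint that does not interact with the Lagrangian boundary condition.

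The crucial ingredient is the Riemann--Hilbert identity: for any $J$-holomorphic disk $u : (D,\partial D) \to (X,L)$,
\begin{equation*}
u \cdot V \;=\; \mu(u)/2.
\end{equation*}
Indeed, $\sigma \circ u$ is a section of $u^* K_X^{-1}$ nonvanishing on $\partial D$, so the signed count of its interior zeros equals the first Chern number of $u^* K_X^{-1}$ relative to the boundary trivialization given by $\sigma \circ u|_{\partial D}$. Comparing to the boundary trivialization induced by $\det_\R(u^*TL)$ (available since $L$ is oriented), the difference is $\mu(u)/2$ by the definition of the Maslov index.

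With this in hand, I would compute a chain-level representative of $\mathcal{CO}^0(c_1)$ using $V$, and compare it to $w(L)\cdot e_L$ via a cobordism. For the relevant moduli space to be rigid, dimension counting together with monotonicity and minimal Maslov $\ge 2$ restricts the contributing disks to Maslov index exactly $2$, and by the identity each such disk contributes $u \cdot V = 1$. To match the two stabilizing marked points in the definition of $e_L$ (one interior, one boundary), I would introduce an auxiliary boundary marked point on the moduli space for $\mathcal{CO}^0(c_1)$ and consider a one-parameter family that moves the interior marked-point constraint off the cycle $V$. The only new codimension-$1$ boundary configuration is a broken disk consisting of a Maslov-$2$ bubble carrying the interior point (which passes through $V$ exactly once by the identity), attached at the boundary marked point of a disk contributing to $e_L$. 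Summing over Maslov-$2$ bubbles with free boundary evaluation gives a factor of $w(L)$ by the defining identity $\mathrm{ev}_*[\mathcal{M}(L)] = w(L)[L]$. The signed count of the boundary of the resulting compact $1$-manifold yields $\mathcal{CO}^0(c_1) = w(L)\cdot e_L$ on cohomology.

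The main obstacle is this final cobordism. One must use transversality of the chosen perturbation data (as in Section~\ref{subsec:monfuk}) to rule out sphere bubbles of Chern number $\ge 1$ and disk bubbles of Maslov index $\ne 2$ at the codimension-$1$ boundary of the one-parameter moduli space, so that the Maslov-$2$ bubble described above is the only new contribution beyond the two expected ends of the family. Once this transversality input is in place, the algebraic identity follows in the standard way from the gluing theorem applied to the boundary strata.
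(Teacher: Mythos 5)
Your proof is correct and takes essentially the same approach as the paper's (both following Auroux--Kontsevich--Seidel): represent the relevant Chern class by a pseudocycle disjoint from $L$, then deform the perturbation data in a one-parameter family so that the constrained interior marked point bubbles off in a Maslov-$2$ disk attached to the $e_L$ disk. The one small difference is bookkeeping: the paper's pseudocycle is Lefschetz dual to the Maslov class $\mu \in H^2(X,L)$, hence represents $PD(2c_1)$ and meets each Maslov-$2$ disk in $\mu(u)=2$ points (giving $\mathcal{CO}^0(2c_1)=2w(L)e_L$, which one halves), whereas your anticanonical divisor $V$ represents $PD(c_1)$ and meets each Maslov-$2$ disk in $\mu(u)/2=1$ point, yielding the stated identity directly.
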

\begin{proof}
Consider a pseudocycle $f: A \To X \setminus L$ which represents a homology class Poincar\'{e} dual to the Maslov class $\mu \in H^2(X,L)$.
Because the diagram
\begin{equation}
\xymatrix{H^2(X,L) \ar[r]^-{\cong} \ar[d] & H_{2n-2}(X \setminus L) \ar[d]\\
H^2(X) \ar[r]^-{\cong} & H_{2n-2}(X)}
\end{equation}
commutes, and the left vertical arrow sends $\mu$ to $2c_1$, the pseudocycle $f$ is Poincar\'{e} dual to $2c_1$ in $X$.

Then $\CO^0(2  c_1)$ is obtained by counting pseudoholomorphic discs as in Figure \ref{subfig:Fig4a}, with the internal marked point constrained to lie on $f$.
We now consider a one-parameter family of holomorphic discs as in Figure \ref{subfig:Fig4b}, parametrized by $t \in [0,1]$.
We choose perturbation data on this family which coincide with those used to define $\CO^0$ at $t=0$, and which coincide with those used to define $e_L$ (with the constant almost-complex structure $J = J_L$ on the disc bubble) at $t=1$.
We consider the corresponding moduli space of pseudoholomorphic discs.

\begin{figure}
\centering
\subfigure[The moduli space defining $\CO^0(2c_1)$.]{
\includegraphics[width=0.4\textwidth]{./Fig4a}
\label{subfig:Fig4a}}
\subfigure[The moduli space defining $H \in CF^*(L,L)$.]{
\includegraphics[width=0.45\textwidth]{./Fig4b}
\label{subfig:Fig4b}}
\subfigure[Boundary points at $t=1$.]{
\includegraphics[width=0.8\textwidth]{./Fig4c}
\label{subfig:Fig4c}}
\caption{Proving Lemma \ref{lemma:c1u0}.
\label{fig:Fig4a}}
\end{figure}

Counting the zero-dimensional component defines an element
\begin{equation} H \in CF^*(L,L).\end{equation}
Counting the boundary points of the one-dimensional component shows that
\begin{equation}
\label{eqn:78}
\CO^0(2c_1) = 2 w(L) \cdot e + \mu^1(H).
\end{equation}
The boundary points at $t=0$ contribute the left-hand side (Figure \ref{subfig:Fig4a}).
The boundary points at $0<t<1$ correspond to breaking off a strip on the strip-like end, and contribute the last term on the right-hand side.
The remaining boundary points at $t=1$ contribute the first term on the right-hand side (Figure \ref{subfig:Fig4c}).
To see why, observe that these boundary points consist of a $J_L$-holomorphic disc bubble $u_1$, together with an internal marked point of $u_1$ constrained to lie on $f$, together with a pseudoholomorphic disc $u_2$ which is an element of the moduli space used to define $e_L$.

Now $u_1$ can not be a constant bubble, because $im(f)$ does not intersect $L$.
Therefore it must have Maslov index $\ge 2$.
If it has Maslov index $>2$ then $u_2$ would generically not exist, so $u_1$ must have Maslov index $2$.
It follows that $u_2$ is rigid, with output $e_L$, and we must count the number of $J_L$-holomorphic Maslov index $2$ discs $u_1$ with an internal marked point lying on $f$, whose boundary marked point coincides with the boundary marked point of the disc $u_2$.
By definition, there are $w(L)$ such discs $u_1$, and for each we have a signed count of $u_1 \cdot f = \mu(u_1) = 2$ choices of internal marked point lying on $f$.
So the contribution of the boundary points at $t=1$ is exactly $2w(L) \cdot e_L$.
Equation \eqref{eqn:78} implies the result.
 \end{proof}

Now let us consider the map
\begin{equation}c_1\star: QH^*(X) \To QH^*(X)\end{equation}
given by quantum cup product with $c_1 := c_1(TX)$.
Denote the set of eigenvalues of $c_1\star$ by $\Lambda$.
Let
\begin{equation}
\label{eqn:eigdec}
QH^*(X) \cong \bigoplus_{w \in \Lambda} QH^*(X)_{w}\end{equation}
be the decomposition of $QH^*(X)$ into generalized eigenspaces of $c_1\star$, and let
\begin{equation} e = \sum_{w \in \Lambda} e_w\end{equation}
be the corresponding decomposition of the identity.
Observe that \eqref{eqn:eigdec} is a direct sum as algebras, i.e., that elements in different components $QH^*(X)_w$ multiply to zero.
Observe also that $e_w \in QH^*(X)_w$ is the identity element, and that
\begin{equation} e_w\star : QH^*(X) \To QH^*(X)\end{equation}
is the projection map onto the generalized eigenspace $QH^*(X)_{w}$.

We now explain that the Fukaya category and the closed--open map split up into components indexed by the eigenvalues $w \in \Lambda$: compare \cite[\S 6]{Auroux2007}).
The following results are heavily based on the work of Alex Ritter and Ivan Smith \cite{Ritter2012}, whom I thank for many explanations on these points.

The first step is an elementary lemma:

\begin{lemma}
\label{lemma:comparison}
Suppose that $\cF$ is an $A_\infty$ category, $\varphi \in HH^*(\cF)$, and $\varphi^0_L = c \cdot e_L \in \mathrm{Hom}^0(L,L)$ for a fixed $c \neq 0$, for all objects $L$ of $\cF$ (here $\varphi^0_L$ is the projection of $\varphi$ to its length-zero component, see equation \eqref{eqn:hhproj}). Then 
\begin{equation} \varphi \cup -: HH^*(\cF) \to HH^*(\cF)\end{equation}
is an isomorphism.
\end{lemma}
\begin{proof}
We equip the Hochschild cochain complex $CC^*(\cF)$ with the length filtration, and consider the map 
\begin{equation}
\varphi \cup-: CC^*(\cF) \to CC^*(\cF),
\end{equation} 
where $\varphi$ is now a cochain-level representative by abuse of notation. 
This clearly preserves the length filtration (as one sees from the cochain-level formula for the Yoneda product, equation \eqref{eqn:yon1pt}). 
The $E_1$ page of the associated spectral sequence is $CC^*(H^*(\cF))$, the Hochschild cochain complex of the cohomological category of $\cF$. 
The endomorphism of $E_1$ induced by $\varphi \cup-$ is simply multiplication by $c$, by the hypothesis: hence it is an isomorphism.
The conclusion now follows by the Eilenberg--Moore comparison theorem \cite[Theorem 5.5.11]{Weibel1994}, as the length filtration is complete and exhaustive.
\end{proof}

\begin{proposition}
\label{proposition:coeigsplit}
The map
\begin{equation}
\label{eqn:coww'} 
\CO: QH^*(X)_{w'} \to HH^*(\cF(X)_{w})
\end{equation}
vanishes if $w' \neq w$, and is a unital homomorphism of $\C$-algebras if $w' = w$.
\end{proposition}
\begin{proof}
Suppose $w' \neq w$. We apply Lemma \ref{lemma:comparison}, with
\begin{equation}
\cF := \cF(X)_{w}\mbox{, and } \varphi := \CO(c_1 - w' \cdot e)^{\cup k}.
\end{equation} 
We have $\varphi^0_L = (w-w')^k \cdot e_L$ for all objects $L$ of $\cF$, by Lemma \ref{lemma:c1u0}: so the hypothesis of Lemma \ref{lemma:comparison} holds, with $c=(w-w')^k$. 
Hence the endomorphism
\begin{equation} \CO(c_1-w' \cdot e)^{\cup k} \cup -: HH^*(\cF(X)_{w}) \to HH^*(\cF(X)_{w})\end{equation}
is an isomorphism.
 
In particular, if  $\alpha \in QH^*(X)_{w'}$, then $(c_1 - w'\cdot e)^{\star k}\star \alpha = 0$ for some $k$; so by Proposition \ref{proposition:coalg}, 
\begin{equation} \CO(c_1 - w' \cdot e)^{\cup k} \cup \CO(\alpha) = 0,\end{equation}
from which it follows by the preceding argument that $\CO(\alpha) = 0$. 
This proves the first part of the statement: the map \eqref{eqn:coww'} vanishes if $w' \neq w$. 

For the second part, we observe that the map $\CO: QH^*(X) \to HH^*(\cF(X)_{w})$ is unital by Lemma \ref{lemma:counital}, and kills all $e_{w'}$ for $w' \neq w$: it follows that the restriction to $QH^*(X)_w$ is unital.
\end{proof}

\begin{corollary}
\label{corollary:eigentriv}
$\cF(X)_w$ is trivial unless $w$ is an eigenvalue of $c_1\star$.
\end{corollary}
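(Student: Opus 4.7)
The plan is to deduce the corollary directly from the unitality of $\mathcal{CO}^0$ together with Lemma \ref{lemma:coeigen} and the eigenspace decomposition \eqref{eqn:eigdec}. Suppose $L$ is any object of $\mathcal{F}(X)_w$, so by definition $w(L) = w$. I want to show that if $w$ is not an eigenvalue of $c_1\star$, then the cohomological unit $e_L \in HF^*(L,L)$ vanishes, which forces $L$ to be a zero object (any morphism $HF^*(K,L) \to HF^*(K,L)$ is, up to homotopy, multiplication by $e_L$, so it becomes the zero map on cohomology and hence $HF^*(K,L) = 0$ for every $K$).

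First I would use the decomposition $e = \sum_{w' \in \Lambda} e_{w'}$ from \eqref{eqn:eigdec}. By the unitality of $\mathcal{CO}^0$ recorded in Remark \ref{remark:co0un},
\begin{equation}
e_L \;=\; \mathcal{CO}^0(e) \;=\; \sum_{w' \in \Lambda} \mathcal{CO}^0(e_{w'}).
\end{equation}
Now I invoke Lemma \ref{lemma:coeigen}: since $w(L) = w$, each restriction $\mathcal{CO}^0|_{QH^*(X)_{w'}}$ vanishes whenever $w' \neq w$. If $w \notin \Lambda$, then every index $w'$ appearing in the sum differs from $w(L)$, so every term vanishes and we conclude $e_L = 0$. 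Since the choice of $L$ was arbitrary, every object of $\mathcal{F}(X)_w$ is isomorphic to the zero object in the cohomological category, so $\mathcal{F}(X)_w$ is trivial.

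There is no real obstacle here; the only point that deserves a moment's thought is the implication $e_L = 0 \Rightarrow L \cong 0$, but this follows from the standard fact that in a cohomologically unital $A_\infty$ category, vanishing of the cohomological unit of $L$ forces $HF^*(K,L) = 0$ for all $K$ (because the identity on $HF^*(K,L)$ factors through multiplication by $e_L$). So the proof is essentially a one-line deduction, given the machinery already set up.
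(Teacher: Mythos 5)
Your argument is correct, and it supplies precisely the deduction the paper leaves implicit when it states the corollary immediately after Lemma \ref{lemma:coeigen}: decompose $e = \sum_{w'\in\Lambda} e_{w'}$, apply unitality of $\mathcal{CO}^0$ and Lemma \ref{lemma:coeigen} to conclude $e_L = 0$ when $w(L)\notin\Lambda$, and then use cohomological unitality to conclude $HF^*(K,L)=0$ for every $K$. This matches the intended route; an equally quick alternative is to observe that if $w\notin\Lambda$ then $c_1 - w\cdot e$ is invertible in $QH^*(X)$, while Lemma \ref{lemma:c1u0} gives $\mathcal{CO}^0(c_1-w\cdot e)=0$, which again forces $e_L = \mathcal{CO}^0(e) = 0$ since $\mathcal{CO}^0$ is a unital algebra homomorphism.
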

\begin{proof}
Suppose $L$ is an object of $\cF(X)_w$. 
It follows from Proposition \ref{proposition:coeigsplit} that $\CO^0: QH^*(X)_w \to HF^*(L,L)$ is a unital algebra homomorphism.
If $w$ is not an eigenvalue of $c_1 \star$, then $QH^*(X)_w \cong 0$, hence $HF^*(L,L) \cong 0$ (by unitality), so $L$ is quasi-isomorphic to the zero object.
\end{proof}

\subsection{Eigenvalues and duality}

We now prove a result that is dual to Proposition \ref{proposition:coeigsplit}. 
The proof of this result was explained to the author by Alex Ritter.

\begin{corollary}
\label{corollary:oceigsplit}
The image of the map
\begin{equation}
\OC: HH_*(\cF_w) \to QH^{*+n}(X)
\end{equation}
lands in $QH^{*+n}(X)_w$.
\end{corollary}
\begin{proof}
Because $QH^*(X)$ is a Frobenius algebra, and $c_1$ an even element in it,
\begin{equation} \langle c_1 \star \alpha, \beta \rangle = \langle \alpha, c_1 \star \beta \rangle,\end{equation}
so $c_1 \star$ is symmetric with respect to $\langle -,- \rangle$.
Therefore, the decomposition into generalized eigenspaces,
\begin{equation} QH^*(X) = \bigoplus_w QH^*(X)_w\end{equation}
is orthogonal with respect to the pairing $\langle -,- \rangle$.
It follows that the top map in the commutative diagram of Proposition \ref{proposition:phico}:
\begin{align}
QH^*(X) & \To  QH^*(X)^\vee \\
\alpha & \mapsto  \langle \alpha, - \rangle
\end{align}
decomposes as a direct sum of maps
\begin{align}
QH^*(X)_w & \To  QH^*(X)_w^\vee \\
\alpha & \mapsto  \langle \alpha, - \rangle.
\end{align}
The result now follows by combining Proposition \ref{proposition:phico} with Proposition \ref{proposition:coeigsplit}.
\end{proof}

According to Proposition \ref{proposition:coeigsplit} and Corollary \ref{corollary:oceigsplit}, the only non-zero components of $\CO$ and $\OC$ are
\begin{align}
\CO_w: QH^*(X)_w & \to  HH^*(\cF(X)_w) \mbox{, and}\\
\OC_w: HH_*(\cF(X)_w) & \to  QH^{*+n}(X)_w.
\end{align}
Furthermore, it is immediately apparent from the proof of Corollary \ref{corollary:oceigsplit} that $\CO_w$ and $\OC_w$ are dual:

\begin{corollary}
\label{corollary:cowdual}
The maps
\begin{equation} \CO_w: QH^*(X)_w \To HH^*(\cF(X)_w)\end{equation}
and
\begin{equation} \OC_w^\vee: QH^*(X)_w^\vee \To HH_*(\cF(X)_w)^\vee\end{equation}
coincide, under the natural identification of their respective domains and targets.
\end{corollary}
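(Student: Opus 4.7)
The plan is to deduce the corollary directly from Proposition \ref{proposition:phico} by restricting its commutative square to the generalized eigenspace $QH^*(X)_w$ and exploiting the orthogonality of the eigenspace decomposition just observed. Both horizontal arrows in the diagram of Proposition \ref{proposition:phico} are isomorphisms: the top one because $\langle -,-\rangle$ is a perfect pairing on $QH^*(X)$, and the bottom one by Lemma \ref{lemma:weakcy} together with Lemma \ref{lemma:hhdual}. So it suffices to track how the top isomorphism behaves with respect to the eigenspace decomposition.

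The key observation is already in hand: because $QH^*(X)$ is a Frobenius algebra and $c_1$ has even degree, $c_1\star$ is symmetric with respect to $\langle -,-\rangle$, so the generalized eigenspaces $QH^*(X)_w$ for distinct eigenvalues are mutually orthogonal. Hence $\alpha \mapsto \langle \alpha, -\rangle$ restricts to an isomorphism $QH^*(X)_w \cong QH^*(X)_w^\vee$ (that is, the identification of $QH^*(X)^\vee$ with $QH^*(X)$ via the Frobenius pairing carries the eigenspace decomposition to itself). This supplies the natural identification of domains claimed in the statement; the natural identification of targets is the cap-product isomorphism $-\cap[\phi]$ of the bottom row.

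Now I would simply read off the restriction of the square to the summand $QH^*(X)_w$. The commutativity asserts that
\begin{equation*}
(-\cap[\phi]) \circ \mathcal{CO}|_{QH^*(X)_w} \;=\; \mathcal{OC}^\vee \circ \bigl(\alpha \mapsto \langle \alpha,-\rangle\bigr)\bigr|_{QH^*(X)_w},
\end{equation*}
and since both $-\cap[\phi]$ and $\alpha \mapsto \langle \alpha,-\rangle|_{QH^*(X)_w}$ are isomorphisms, this is exactly the statement that $\mathcal{CO}|_{QH^*(X)_w}$ and $\mathcal{OC}^\vee|_{QH^*(X)_w^\vee}$ agree under the indicated natural identifications. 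There is no real obstacle: every ingredient has already been established, and the proof is essentially bookkeeping, verifying that the commutative square of isomorphisms restricts compatibly to the direct summand picked out by the eigenvalue $w$.
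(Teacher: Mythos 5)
Your argument is correct and is essentially the same as the paper's: both observe that $c_1\star$ is self-adjoint for the Frobenius pairing, hence the generalized eigenspace decomposition is orthogonal, hence the top isomorphism $\alpha\mapsto\langle\alpha,-\rangle$ of Proposition \ref{proposition:phico} decomposes as a direct sum of isomorphisms $QH^*(X)_w\cong QH^*(X)_w^\vee$, and then simply restrict the commutative square to that summand.
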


The next two results will be crucial to the proof of Proposition \ref{proposition:smallw}.
I thank Cedric Membrez for drawing my attention to \cite[Proposition 2.4.A]{Biran2014}, of which they are a weaker version.

\begin{corollary}
\label{corollary:ocLe}
For any monotone Lagrangian $L$, $\OC^0(e_L)$ is a generalized eigenvector of $c_1 \star$ with eigenvalue $w(L)$.
\end{corollary}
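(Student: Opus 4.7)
The plan is to combine the module-homomorphism property of $\mathcal{OC}$ (Proposition \ref{proposition:ocmod}) with the computation of $\mathcal{CO}^0(c_1)$ from Lemma \ref{lemma:c1u0}. Recall that $\mathcal{OC}^0 = \mathcal{OC} \circ \iota$, where $\iota \colon HF^*(L,L) \hookrightarrow HH_*(CF^*(L,L))$ is the inclusion of length-zero Hochschild chains (Definition \ref{definition:oc0}), and that the $QH^*(X)$-module structure on $HH_*(\mathcal{F})$ is induced by the cap product of Hochschild cochains on Hochschild chains via the algebra map $\mathcal{CO}$.

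The key algebraic observation is that, on cohomology, the cap-product action of a Hochschild cochain on a length-zero Hochschild chain factors through the length-zero truncation and reduces to the Yoneda product, so that for any $\alpha \in QH^*(X)$,
\[ \mathcal{CO}(\alpha) \cap \iota(x) \;=\; \iota\bigl(\mu^2(\mathcal{CO}^0(\alpha),\, x)\bigr). \]
Specializing to $\alpha = c_1$ and $x = e_L$, Lemma \ref{lemma:c1u0} gives $\mathcal{CO}^0(c_1) = w(L)\cdot e_L$, and cohomological unitality of $e_L$ collapses the right-hand side to $w(L)\cdot \iota(e_L)$. Applying $\mathcal{OC}$ and invoking the module-homomorphism property of Proposition \ref{proposition:ocmod} then transforms the left-hand side into $c_1 \star \mathcal{OC}^0(e_L)$ and the right-hand side into $w(L)\cdot \mathcal{OC}^0(e_L)$, yielding the desired identity
\[ c_1 \star \mathcal{OC}^0(e_L) \;=\; w(L)\cdot \mathcal{OC}^0(e_L). \]

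The only non-trivial step is the compatibility identity displayed above, which is a standard but slightly awkward fact about how cap product interacts with Hochschild length truncations: a length-$s$ cochain lowers chain-length by $s$, so only the length-$0$ part of $\mathcal{CO}(\alpha)$ can act non-trivially on $\iota(x)$, and that action is by $\mu^2$. If one wishes to avoid invoking this identity abstractly, it can be bypassed by a direct geometric argument mirroring the proof of Lemma \ref{lemma:c1u0}: one studies a moduli space of disks with one outgoing strip-like end (producing $\mathcal{OC}^0$), one internal marked point constrained to a pseudocycle Poincar\'{e} dual to $2c_1$, and an additional boundary marked point producing $e_L$, and reads off the eigenvalue equation from the boundary-degeneration of an associated one-parameter family.
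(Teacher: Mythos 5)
Your proof is correct and follows essentially the same route as the paper's: compute $c_1 \star \mathcal{OC}^0(e_L) = \mathcal{OC}^0(\mathcal{CO}^0(c_1)\cdot e_L)$ via the module-homomorphism property of $\mathcal{OC}^0$ (Definition \ref{definition:oc0}, backed by Proposition \ref{proposition:ocmod}), then substitute $\mathcal{CO}^0(c_1) = w(L)\, e_L$ from Lemma \ref{lemma:c1u0} and use cohomological unitality. The only difference is that you explicitly unwind the compatibility between the cap product, the length-zero inclusion, and the Yoneda product (which the paper cites implicitly from Section \ref{subsec:hhalgmod}); your optional geometric alternative is not needed and is not what the paper does.
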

\begin{proof}
This is immediate from Corollary \ref{corollary:oceigsplit}: we remark that one can easily show that $\OC^0(e_L)$ is in fact an eigenvector, but we will not need that. 
\end{proof}

\begin{lemma}
\label{lemma:ocLf}
We have
\begin{equation} 
\label{eqn:oc0el}
\OC^0(e_L) = PD(L) + \mbox{ lower-degree terms}.\end{equation}
\end{lemma}
\begin{proof}
This follows by deforming the perturbation data defining $\OC^0(\CO^0(e))$ to a $J_z$-holomorphic disc with boundary on $L$, and an internal marked point (compare the dual argument in \cite[\S 5a]{Seidel2008b}). 
The discs of Maslov index $0$ are constant, so the evaluation map at the internal marked point sweeps out a copy of $L$: this gives the term $PD(L)$ in \eqref{eqn:oc0el}. 
All other discs have Maslov index $> 0$ by monotonicity, hence contribute lower-degree terms.
\end{proof}

\begin{remark}
\label{remark:pdl}
One ought to be able to apply results similar to those of \cite{Oh2000,Lazzarini2011} to show that the terms of lower degree vanish. I.e., one should have
\begin{equation} \OC^0(e_L) = PD(L)\end{equation}
(see \cite{Biran2012}). 
In combination with Corollary \ref{corollary:ocLe}, this yields the useful result that $PD(L)$ is an eigenvector of $c_1 \star$ with eigenvalue $w(L)$ (compare \cite[Proposition 2.4.A]{Biran2014}).
We do not need this result in the present work.

Indeed, suppose that the $J_z$-holomorphic discs in the proof of Lemma \ref{lemma:ocLf} can be made regular with a \emph{domain-independent} almost-complex structure $J$.
Then the moduli space admits an $S^1$-action by rotation about the interior marked point, and therefore factors through a moduli space of lower dimension (unless the disc is constant), hence its contribution vanishes. 
However, we can only guarantee regularity for moduli spaces of $J$-holomorphic discs of Maslov index $\le$ the minimal Maslov number, using the theorem of Lazzarini \cite{Lazzarini2011} and Kwon-Oh \cite{Oh2000}: otherwise, we need to choose a domain-dependent almost-complex structure $J$, which destroys the $S^1$-action used to prove vanishing.
\end{remark}

\subsection{The split-generation criterion}
\label{subsec:splitgen}

In this section, we give a criterion for split-generating the Fukaya category, which is due to Abouzaid in the setting of the wrapped Fukaya category \cite{Abouzaid2010a}, and Abouzaid, Fukaya, Oh, Ohta and Ono in the general case \cite{Abouzaid2012}.
We follow \cite{Abouzaid2010a} closely.
We remark that the proof is particularly simple in the case of a closed monotone symplectic manifold, because one does not have to deal with weights on the strip-like ends (as in the wrapped Fukaya category), and it is easier to ensure transversality of our moduli spaces in the monotone setting.

For any object $K$ of $\cF$, we define a map of $\fmod{\cF}{\cF}$ bimodules
\begin{equation} \Delta: \cF_{\Delta} \To \mathcal{Y}^l_K \otimes \mathcal{Y}^r_K[n]\end{equation}
of degree $n$.
To define $\Delta$, we consider moduli spaces of holomorphic discs with $k+l+3$ boundary punctures, labelled $p_{in}, q_1, \ldots, q_k, p^l_{out}, p^r_{out}, q_{k+1}, \ldots, q_{k+l}$ as in Figure \ref{subfig:Fig5a}.
We make a consistent choice of perturbation data and consider the corresponding moduli spaces of pseudoholomorphic discs (the boundary component between $p^l_{out}$ and $p^r_{out}$ is labelled $K$).

\begin{figure}
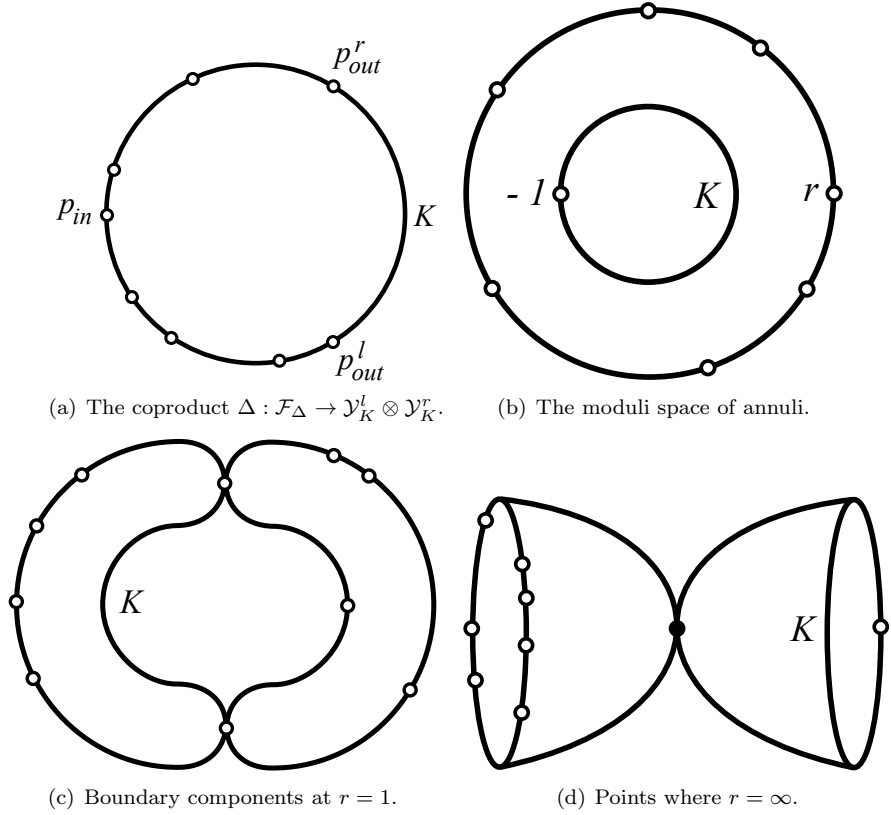

\centering
\subfigure[The coproduct $\Delta:\cF_{\Delta} \To \mathcal{Y}^l_K \otimes \mathcal{Y}^r_K$.]{
\includegraphics[width=0.4\textwidth]{./Fig5a}
\label{subfig:Fig5a}}
\subfigure[The moduli space of annuli.]{
\includegraphics[width=0.4\textwidth]{./Fig5b}
\label{subfig:Fig5b}}
\subfigure[Boundary components at $r=1$.]{
\includegraphics[width=0.45\textwidth]{./Fig5c}
\label{subfig:Fig5c}}
\subfigure[Points where $r=\infty$.]{
\includegraphics[width=0.45\textwidth]{./Fig5d}
\label{subfig:Fig5d}}
\caption{The proof of Lemma \ref{lemma:cardy}.
\label{fig:prods}}
\end{figure}

Counting the zero-dimensional component of the moduli space defines the map
\begin{equation} \Delta^{k|1|l}: \cF(K_k, \ldots, K_0) \otimes \cF(K_0,L_0) \otimes \cF(L_0,\ldots, L_l) \To \cF(K,L_l) \otimes \cF(K_k,K).\end{equation}
Counting the boundary points of the one-dimensional component of the moduli space shows that $\Delta$ is an $A_{\infty}$ bimodule homomorphism.

\begin{remark}
The component $\Delta^{0|1|0}$ coincides with the coproduct introduced in the proof of Lemma \ref{lemma:weakcy}.
\end{remark}

By functoriality of Hochschild homology (see \S \ref{subsec:hhi}), $\Delta$ defines a map
\begin{equation} HH_*(\Delta): HH_*(\cF,\cF_{\Delta}) \To HH_*(\cF,\mathcal{Y}^l_K \otimes \mathcal{Y}^r_K)[n].\end{equation}

\begin{lemma}
\label{lemma:cardy}
The following diagram commutes up to a sign $(-1)^{\frac{n(n+1)}{2}}$:
\begin{equation}
\xymatrixcolsep{5pc}\xymatrix{
HH_*(\cF)[-n] \ar[r]^-{\OC} \ar[d]_{HH_*(\Delta)} & QH^*(X) \ar[d]^{\CO^0} \\
HH_*(\cF,\mathcal{Y}^l_K \otimes \mathcal{Y}^r_K) \ar[r]^-{H^*(\mu)} & HF^*(K,K).}
\end{equation}
\end{lemma}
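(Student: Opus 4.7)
The plan is to interpolate between the two compositions in the diagram by a moduli space of pseudoholomorphic annuli, as in Abouzaid's original proof of the generation criterion for the wrapped Fukaya category \cite{Abouzaid2010a}, adapted to the monotone closed setting (where the analysis is simpler because all pseudoholomorphic curves of fixed topological type live in finitely many homotopy classes). Concretely, I consider the moduli space of annuli with modular parameter $r \in (0,\infty)$, one boundary component carrying the label $K$ and the other carrying a cyclic sequence of strip-like ends (corresponding to a Hochschild chain input), together with one interior marked point constrained to lie on a pseudocycle $f$ representing $PD(\alpha)$ for some $\alpha \in QH^*(X)$. Pick consistent perturbation data on this family, compatible with the Deligne-Mumford compactification at the two ends $r \to 0$ and $r \to \infty$, and also with all disk-breaking strata in the interior.

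The next step is to analyse the boundary of the one-dimensional component of this moduli space when the Hochschild chain input is a cycle $\varphi$ and the total virtual dimension is $1$. There are three kinds of boundary strata. First, boundary points in the interior $r \in (0,\infty)$ come from breaking a pseudoholomorphic strip off one of the strip-like ends; after taking homology these contribute nothing, because $\varphi$ is a Hochschild cycle and $\mathcal{CO}^0(\alpha), \mathcal{OC}(\varphi)$ are cohomology classes. Second, as $r \to \infty$ the annulus degenerates by pinching a circle in the middle into a nodal curve whose two components are: a disk with one interior marked point and the cyclic collection of strip-like ends, glued at an interior node to a sphere with two marked points, the second of which is constrained to the pseudocycle $f$ (see Figure \ref{subfig:Fig5d}). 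After a standard neck-stretching/gluing argument this stratum is identified with the count computing $\langle \mathcal{CO}^0(\alpha), \mathcal{OC}(\varphi) \rangle_{HF^*(K,K) \times QH^*(X)}$-type pairing, and more precisely with $\langle \mathcal{CO}^0(\mathcal{OC}(\varphi)), - \rangle$ evaluated on the Poincar\'{e} dual of the $K$-boundary, giving $\mathcal{CO}^0 \circ \mathcal{OC}$ applied to the cycle. Third, as $r \to 0$ the annulus opens up along the $K$-boundary and degenerates into a disk with two disk bubbles attached to the $K$-component: this reproduces precisely the composition in the Floer cohomology of $K$ of the two outputs of the coproduct $\Delta$, weighted by the interior marked point, which by definition is $H^*(\mu) \circ HH_*(\Delta)(\varphi) \cap \alpha$. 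Matching signed boundary counts yields the commutativity of the diagram, once the sign comparison is done.

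A subsidiary step, carried out in parallel, is to rule out unwanted bubbling in the family of annuli. Disk and sphere bubbles of Maslov index (resp. Chern number) $\ge 2$ (resp. $\ge 1$) have virtual codimension $\ge 2$ and hence do not appear in the compactification of the one-dimensional stratum for generic perturbation data, essentially for the same reasons as in Sections \ref{subsec:monfuk} and \ref{subsec:co}. Maslov index $2$ disk bubbles on the $K$-boundary contribute terms proportional to $w(K) \cdot e_K$ on the $r \to 0$ end, but the same analysis at $r \to \infty$ produces matching terms proportional to $\mathcal{CO}^0(c_1)$ evaluated against $\mathcal{OC}(\varphi)$, and these cancel by Lemma \ref{lemma:c1u0}. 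The required transversality for all such strata, including the codimension-one nodal degenerations at $r = 0, \infty$, follows from the monotonicity argument and the Lazzarini-Kwon-Oh theorem used in Section \ref{subsec:monfuk}.

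The main obstacle I expect is the sign $(-1)^{n(n+1)/2}$, which arises from comparing the two different orientations induced on the boundary strata of the moduli space of annuli: one must compare the orientation induced from the interior pseudocycle constraint (of real codimension $2n-*$) and the orientation on the cut circle in the two degenerations, and the Koszul sign from reordering the outputs of $\Delta$ against the Floer product produces exactly $(-1)^{n(n+1)/2}$. The transversality and compactness arguments are essentially routine in the monotone setting; the sign bookkeeping is the delicate point, and will be handled by carefully fixing orientations on all relevant moduli spaces of disks, annuli, and spheres in advance and tracking the Koszul signs through gluing.
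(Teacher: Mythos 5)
Your setup departs from the paper's in a way that creates a genuine gap. The paper's annulus moduli space (Figure~\ref{subfig:Fig5b}) carries $s+1$ incoming boundary punctures on the outer boundary and a single \emph{outgoing} puncture on the inner ($K$-labelled) boundary, and it has \emph{no} interior marked point. Your proposal instead places an interior marked point on the annulus constrained to a pseudocycle $f$ dual to ``some $\alpha \in QH^*(X)$,'' and gives the moduli space no outgoing puncture at all. That does not match the statement being proved: both sides of the Cardy identity, $\mathcal{CO}^0 \circ \mathcal{OC}(\varphi)$ and $H^*(\mu)\circ HH_*(\Delta)(\varphi)$, are well-defined elements of $HF^*(K,K)$ depending on a Hochschild cycle $\varphi$ alone, and no free closed-string class $\alpha$ appears anywhere.

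The consequence is visible in your $r\to\infty$ analysis, which is the crux and which you get wrong. The actual codimension-one degeneration pinches the core circle of the annulus into a node joining \emph{two disks}: one with the Hochschild-chain boundary inputs and one with the $K$-boundary and the outgoing end. Your description (one disk with the $\varphi$-inputs, glued at an interior node to a \emph{sphere} carrying the $f$-constrained marked point) drops the $K$-boundary component entirely and invents a sphere that is not part of the generic codimension-one stratum. Moreover you skip the real subtlety the paper has to confront at this stage: by Remark~\ref{remark:nopseudo}, $\mathcal{OC}(\varphi)$ is not represented by a pseudocycle (the defining disk moduli spaces have codimension-one boundary), so one cannot simply constrain the node to land on ``the image of $\mathcal{OC}$.'' The paper circumvents this by choosing dual bases $\{e_i\},\{e^i\}$ of $QH^*(X)$ and a bordism from $\sum_i e_i \times e^i$ to the diagonal in $X \times X$, which produces a chain homotopy between the nodal-disk configuration and $\sum_i \langle \mathcal{OC}(-), e_i\rangle\,\mathcal{CO}^0(e^i) = \mathcal{CO}^0\circ\mathcal{OC}$; this homotopy term is then absorbed into the chain homotopy $H$. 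That ``dual basis'' device is the essential new idea in the proof, and it is absent from your argument.

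Two smaller issues: your appeal to Lemma~\ref{lemma:c1u0} to cancel Maslov-index-$2$ disk bubbles is not how the proof works --- because all objects in $\mathcal{F}(X)_w$ share the same $w$, those bubbles cancel in the same way they do in the proof that $(\mu^1)^2 = 0$ restricted to $\mathcal{F}(X)_w$, as part of the strip-breaking/coboundary terms $H\circ\delta + \mu^1\circ H$, with no need to invoke $\mathcal{CO}^0(c_1)$. And the sign $(-1)^{n(n+1)/2}$ is asserted rather than derived; that is forgivable as a sketch, but it is the part the lemma actually pins down.
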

\begin{proof}
We consider a moduli space of holomorphic annuli which can be pa\-ram\-e\-trized as the region between the unit circle and the circle of radius $r \in [1,\infty)$ in $\C$, with $s+1 \ge 1$ incoming boundary punctures $p_0, \ldots, p_s$ on the outer boundary component, with $p_0$ sitting at $-r$, and one outgoing boundary puncture on the inner boundary component, sitting at $1$ (see Figure \ref{subfig:Fig5b}).
We make a consistent choice of perturbation data, and consider the corresponding moduli space of pseudoholomorphic annuli.
As $r \To \infty$, the annulus degenerates to two discs connected at a node (see Figure \ref{subfig:Fig5d})

We would like to say that these configurations correspond to the map $\CO^0 \circ \OC$, but a little care is required: recall (Remark \ref{remark:nopseudo}) that there is no natural choice of pseudocycle representing the image of $\OC$, because the moduli space of pseudoholomorphic discs can have codimension-$1$ boundary.
Instead, we choose dual bases $\{e_i\}$ and $\{e^i\}$ for $QH^*(X)$, as in Remark \ref{remark:dualbas}, and a bordism between $\sum_i e_i \times e^i$ and the diagonal $\Delta$ in $X \times X$.
Then counting configurations of two discs with internal marked points constrained to lie on this bordism defines a homotopy between
\begin{equation} \CO^0 \circ \OC(-) = \sum_i \langle \OC(-) , e_i \rangle \cdot \CO^0(e^i)\end{equation}
and the count of configurations of discs with the nodes connected.
So indeed, codimension-$1$ boundary components of the moduli space at $r = \infty$ contribute a term homotopic to $\CO^0 \circ \OC$.

Now, counting the zero-dimensional component of the moduli space defines a map
\begin{equation} H: CC_*(\cF) \To \cF(K,K).\end{equation}
Counting boundary points of the one-dimensional component of the moduli space shows that
\begin{equation} \CO^0 \circ \OC = (-1)^{\frac{n(n+1)}{2}} \mu \circ CC_*(\Delta) + H \circ \delta + (-1)^n \mu^1 \circ H.\end{equation}
By the preceding argument, the boundary points at $r = \infty$ contribute the left-hand side, up to a homotopy which can be incorporated into $H$ (see Figure \ref{subfig:Fig5d}).
The boundary points at $r=1$ contribute the first term of the right-hand side (see Figure \ref{subfig:Fig5c}).
The remaining terms on the right-hand side correspond to disc bubbling in the moduli space.
This equation shows that the diagram commutes (up to the sign) on the level of cohomology.
 \end{proof}

\begin{corollary}
\label{corollary:gen1}
(\cite{Abouzaid2012})
If $\cG_w \subset \cF(X)_w$ is a full subcategory, $K$ is another object of $\cF(X)_w$, and if the map
\begin{equation} \CO_w^0 \circ \OC_w: HH_*(\cG_w) \To HF^{*+n}(K,K)\end{equation}
contains the identity $e_K$ in its image, then $K$ is split-generated by $\cG_w$.
\end{corollary}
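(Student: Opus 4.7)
The plan is to combine Lemma \ref{lemma:cardy} with a standard algebraic argument from the theory of $A_\infty$ modules, essentially Abouzaid's from \cite{Abouzaid2010a}. First, suppose $e_K = (\mathcal{CO}^0 \circ \mathcal{OC})(\psi)$ for some Hochschild cycle $\psi \in HH_*(\mathcal{G})$. The construction of Lemma \ref{lemma:cardy} goes through verbatim with $\mathcal{G}$ in place of $\mathcal{F}$ (only the boundary conditions for the relevant moduli spaces change, not the argument), yielding
\begin{equation*}
e_K \;=\; (-1)^{n(n+1)/2}\, H^*(\mu)\bigl(HH_*(\Delta)(\psi)\bigr) \quad\text{in } HF^*(K,K).
\end{equation*}
Consequently $e_K$ lies in the image of the map $H^*(\mu): HH_*(\mathcal{G}, \mathcal{Y}^l_K \otimes \mathcal{Y}^r_K) \To HF^*(K,K)$, and it suffices to prove the purely algebraic statement: if $e_K$ lies in the image of this map, then $K$ is split-generated by $\mathcal{G}$.

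For the algebraic step, I would unpack what a representative of $HH_*(\mathcal{G}, \mathcal{Y}^l_K \otimes \mathcal{Y}^r_K)$ looks like. A length-$k$ bar chain has the form $q \otimes a_k \otimes \cdots \otimes a_1 \otimes p$ with $p \in CF^*(K,L_0)$, $q \in CF^*(L_k,K)$, and $a_i \in \mathcal{G}(L_{i-1},L_i)$; the map $\mu$ is given (up to signs) by the composition $\mu^{k+2}(q, a_k, \ldots, a_1, p) \in CF^*(K,K)$. The key observation, which is the heart of the module-theoretic manipulation, is that a cycle $\varphi$ of this form naturally encodes, via the two-sided bar resolution, an $A_\infty$ right-module morphism $\beta: \mathcal{Y}_K \To \mathcal{Y}_{\bm{M}}$ together with a morphism $\gamma: \mathcal{Y}_{\bm{M}} \To \mathcal{Y}_K$, where $\bm{M}$ is an explicit twisted complex built from the Yoneda modules of the $L_i$; and the cohomological composition $[\gamma] \circ [\beta]$ equals multiplication by $H^*(\mu)(\varphi)$ on the endomorphism algebra of $\mathcal{Y}_K$.

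Granting this, the hypothesis $H^*(\mu)(\varphi) = e_K$ means that $\mathrm{id}_{\mathcal{Y}_K}$ factors (in the derived category of $A_\infty$-modules over $\mathcal{F}(X)_w$) through the twisted complex $\mathcal{Y}_{\bm{M}}$. By Yoneda, $K$ is therefore a direct summand of $\bm{M}$ in $D^\pi \mathcal{F}(X)_w$, and since $\bm{M}$ is assembled from objects of $\mathcal{G}$, this is precisely the statement that $\mathcal{G}$ split-generates $K$.

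The main obstacle, and the technical core of the argument, is the construction of the morphisms $\beta, \gamma$ and the twisted complex $\bm{M}$ from the bar-chain $\varphi$, together with the verification that $[\gamma]\circ[\beta]$ acts by $H^*(\mu)(\varphi)$. This is precisely the content of \cite[Section 5]{Abouzaid2010a}, which I would cite rather than reproduce; the only thing to check is that monotonicity does not disrupt any of those purely algebraic manipulations (it does not, since all of them occur at the level of modules over the cohomological category $H^*(\mathcal{F}(X)_w)$).
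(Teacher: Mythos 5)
Your proposal takes essentially the same route as the paper: the paper proves the corollary by combining Lemma \ref{lemma:cardy} (to move the hypothesis from $\mathcal{CO}^0\circ\mathcal{OC}$ to the image of $H^*(\mu)$) with Lemma \ref{lemma:husplgen}, which is precisely the "purely algebraic statement" you isolate and then sketch via Abouzaid's module-theoretic argument. The only point worth noting is that you are correct that Lemma \ref{lemma:cardy}, as stated for $\mathcal{F}$, needs to be rerun with boundary conditions in the subcategory $\mathcal{G}$ -- you flag this explicitly where the paper leaves it implicit -- and your closing parenthetical is slightly imprecise (the Yoneda modules $\mathcal{Y}_K$, $\mathcal{Y}_{\bm{M}}$ and the factorization of $\mathrm{id}_{\mathcal{Y}_K}$ live in the category of $A_\infty$-modules over $\mathcal{G}$, not merely over its cohomological category), but this does not affect the substance of the argument.
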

\begin{proof}
Follows immediately from Lemmas \ref{lemma:husplgen} and \ref{lemma:cardy}.
 \end{proof}

\begin{corollary}
\label{corollary:gen2}
(\cite{Abouzaid2012})
If $\cG_w \subset \cF(X)_w$ is a full subcategory, and if the map
\begin{equation} \OC_w: HH_*(\cG_w) \To QH^{*+n}(X)_w\end{equation}
contains the identity $e_w$ in its image, then $\cG_w$ split-generates $\cF(X)_w$.
\end{corollary}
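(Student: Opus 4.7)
The plan is to deduce this from Corollary \ref{corollary:gen1} combined with the unitality of $\mathcal{CO}^0$ on the appropriate eigenspace. Specifically, by Corollary \ref{corollary:gen1}, to show that $\mathcal{G}$ split-generates any object $K$ of $\mathcal{F}(X)_w$, it suffices to produce a class $\psi \in HH_*(\mathcal{G})$ such that
\begin{equation}
\mathcal{CO}^0 \circ \mathcal{OC}(\psi) = e_K \in HF^*(K,K).
\end{equation}

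First I would fix an arbitrary object $K$ of $\mathcal{F}(X)_w$ and, using the hypothesis, pick $\psi \in HH_*(\mathcal{G})$ with $\mathcal{OC}(\psi) = e_w \in QH^*(X)_w \subset QH^*(X)$. Then I would apply $\mathcal{CO}^0$ (associated to $K$) to both sides. Since $w(K) = w$ by assumption on $K$, Corollary \ref{corollary:co0van} (equivalently, Lemma \ref{lemma:coeigen}) tells us that the restriction $\mathcal{CO}^0 : QH^*(X)_w \To HF^*(K,K)$ is a unital algebra homomorphism; in particular it sends the unit $e_w$ of $QH^*(X)_w$ to the unit $e_K$ of $HF^*(K,K)$. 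Thus $\mathcal{CO}^0 \circ \mathcal{OC}(\psi) = \mathcal{CO}^0(e_w) = e_K$, as required.

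Applying Corollary \ref{corollary:gen1} then shows that $K$ is split-generated by $\mathcal{G}$, and since $K$ was an arbitrary object of $\mathcal{F}(X)_w$, we conclude that $\mathcal{G}$ split-generates $\mathcal{F}(X)_w$. There is no real obstacle here: the argument is essentially a one-line reduction, whose entire content is the observation that the decomposition $e = \sum_w e_w$ of the quantum cohomology unit into the eigenspace idempotents, together with the vanishing of $\mathcal{CO}^0$ on the off-diagonal eigenspaces, lets us replace the hypothesis ``$e$ lies in the image of $\mathcal{OC}$'' (which is typically false in the monotone setting, as already emphasized in the remark following Proposition \ref{proposition:gencrit}) with the weaker and genuinely verifiable hypothesis ``$e_w$ lies in the image of $\mathcal{OC}$.'' All the substantive work has already been done in establishing Corollary \ref{corollary:gen1} and the eigenvalue decomposition of $\mathcal{CO}^0$.
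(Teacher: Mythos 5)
Your proof is correct and matches the paper's own argument, which is exactly "Follows from Corollary \ref{corollary:gen1} and Lemma \ref{lemma:coeigen}": you fix an arbitrary $K$ with $w(K)=w$, pick $\psi$ with $\mathcal{OC}(\psi)=e_w$, and use the unitality of the restricted $\mathcal{CO}^0\colon QH^*(X)_w\To HF^*(K,K)$ to get $\mathcal{CO}^0(e_w)=e_K$, then invoke the split-generation criterion. You have simply written out the one-line reduction that the paper leaves implicit.
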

\begin{proof}
Follows from Corollary \ref{corollary:gen1} and Proposition \ref{proposition:coeigsplit}.
 \end{proof}

\begin{corollary}
\label{corollary:splitgen}
(\cite{Abouzaid2012})
If $\cG_w \subset \cF(X)_w$ is a full subcategory, and if the map
\begin{equation}
\label{eqn:cospl}\CO_w: QH^*(X)_w \To HH^*(\cG_w)\end{equation}
is injective, then $\cG_w$ split-generates $\cF(X)_w$.
\end{corollary}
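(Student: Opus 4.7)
The plan is to reduce to Corollary \ref{corollary:gen2} by showing that $e_w$ lies in the image of $\mathcal{OC}: HH_*(\mathcal{G}) \to QH^*(X)$, via the duality between $\mathcal{CO}$ and $\mathcal{OC}$ recorded in Proposition \ref{proposition:phico} and Corollary \ref{corollary:cowdual}. This is exactly the argument sketched in the paragraph immediately preceding the corollary; my job is to fill it in cleanly.

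First I would note that Proposition \ref{proposition:phico}, and hence Corollary \ref{corollary:cowdual}, applies to any full subcategory $\mathcal{G} \subset \mathcal{F}(X)_w$ and not only to the whole of $\mathcal{F}(X)_w$: its proof uses only $\mathcal{CO}$, $\mathcal{OC}$, the $QH^*(X)$-module property of the latter, and the weakly cyclic structure, all of which restrict along $\mathcal{G} \hookrightarrow \mathcal{F}(X)_w$. Combined with the observation that the intersection pairing is non-degenerate on the finite-dimensional summand $QH^*(X)_w$ (a consequence of the Frobenius property together with self-adjointness of $c_1 \star$, which makes the eigenspace decomposition orthogonal), this gives a canonical identification of $\mathcal{CO}: QH^*(X)_w \to HH^*(\mathcal{G})$ with the $\C$-linear dual of
\begin{equation*}
\pi_w \circ \mathcal{OC}: HH_*(\mathcal{G}) \xrightarrow{\mathcal{OC}} QH^*(X) \xrightarrow{\pi_w} QH^*(X)_w,
\end{equation*}
where $\pi_w$ denotes the orthogonal projection onto the generalized eigenspace.

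Next, because $QH^*(X)_w$ is finite-dimensional, injectivity of a linear map $QH^*(X)_w \to HH^*(\mathcal{G})$ is equivalent to surjectivity of its $\C$-linear dual. By hypothesis $\mathcal{CO}|_{QH^*(X)_w}$ is injective, so $\pi_w \circ \mathcal{OC}$ is surjective; pick $\psi \in HH_*(\mathcal{G})$ with $e_w \star \mathcal{OC}(\psi) = e_w$. Invoking the $QH^*(X)$-module property of $\mathcal{OC}$ (Proposition \ref{proposition:ocmod}), this rewrites as
\begin{equation*}
e_w \;=\; \mathcal{OC}\bigl(\mathcal{CO}(e_w) \cap \psi\bigr),
\end{equation*}
exhibiting $e_w$ in the image of $\mathcal{OC}: HH_*(\mathcal{G}) \to QH^*(X)$. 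Corollary \ref{corollary:gen2} then yields split-generation of $\mathcal{F}(X)_w$ by $\mathcal{G}$.

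The main obstacle is essentially none: the substantive content has already been established in Proposition \ref{proposition:phico}, Proposition \ref{proposition:ocmod}, and Corollary \ref{corollary:gen2}. The only technical checks required are that Proposition \ref{proposition:phico} restricts to full subcategories (immediate from its proof) and that the eigenspace decomposition of $QH^*(X)$ is orthogonal for the Frobenius pairing (standard, since $c_1 \star$ is self-adjoint on a Frobenius algebra).
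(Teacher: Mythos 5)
Your proof is correct and follows the same route as the paper: use Corollary \ref{corollary:cowdual} to dualize, conclude surjectivity of $\pi_w \circ \mathcal{OC}$, and apply Corollary \ref{corollary:gen2}. The only addition is your explicit bridging step via the $QH^*(X)$-module property to exhibit $e_w$ literally in the image of $\mathcal{OC}$; the paper leaves this implicit (one could alternatively observe, unpacking the proof of Corollary \ref{corollary:gen2}, that $\pi_w(\mathcal{OC}(\psi)) = e_w$ already gives $\mathcal{CO}^0(\mathcal{OC}(\psi)) = e_K$ via Lemma \ref{lemma:coeigen}), so your version is slightly more self-contained but not a genuinely different argument.
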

\begin{proof}
By Corollary \ref{corollary:gen2}, it suffices to prove that the map
\begin{equation} \OC_w: HH_*(\cG_w) \To QH^{*+n}(X)_w\end{equation}
is surjective.
This is equivalent to injectivity of the dual map, which is identified with \eqref{eqn:cospl} by Corollary \ref{corollary:cowdual}.
 \end{proof}

\begin{corollary}
\label{corollary:semisimpgen}
Suppose that $QH^*(X)_w$ is one-dimensional.
Then any object $L$ of $\cF(X)_w$ with $HF^*(L,L) \neq 0$ split-generates it.
\end{corollary}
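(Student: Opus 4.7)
The plan is to deduce this corollary directly from Corollary \ref{corollary:splitgen} applied to the full subcategory $\mathcal{G}$ with the single object $L$. By that criterion, it suffices to show that
\begin{equation}
\mathcal{CO}: QH^*(X)_w \To HH^*(CF^*(L,L))
\end{equation}
is injective. Since $QH^*(X)_w$ is one-dimensional, it is spanned by its unit $e_w$, so injectivity is equivalent to the single statement $\mathcal{CO}(e_w) \neq 0$.

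To verify this non-vanishing, I would compose $\mathcal{CO}$ with the length-zero projection $HH^*(CF^*(L,L)) \To HF^*(L,L)$ to land in $\mathcal{CO}^0(e_w)$. Since $L$ is an object of $\mathcal{F}(X)_w$, we have $w(L) = w$, so Lemma \ref{lemma:coeigen} tells us that $\mathcal{CO}^0: QH^*(X)_w \To HF^*(L,L)$ is a \emph{unital} algebra homomorphism; hence $\mathcal{CO}^0(e_w) = e_L$.

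Finally, the hypothesis $HF^*(L,L) \neq 0$ forces $e_L \neq 0$: the cohomological unit satisfies $e_L \cdot x = x$ for every $x \in HF^*(L,L)$, so if $e_L$ vanished, every element would vanish. Thus $\mathcal{CO}(e_w)$ projects to the non-zero class $e_L$, so $\mathcal{CO}(e_w) \neq 0$, and Corollary \ref{corollary:splitgen} concludes that $L$ split-generates $\mathcal{F}(X)_w$. There is no real obstacle here: once the one-dimensionality of $QH^*(X)_w$ has been packaged into the split-generation criterion via Corollaries \ref{corollary:cowdual} and \ref{corollary:splitgen}, the proof reduces to the unitality of $\mathcal{CO}^0$ that was already established in Lemma \ref{lemma:coeigen}.
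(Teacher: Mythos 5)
Your proof is correct and follows essentially the same route as the paper's: reduce to the injectivity criterion of Corollary \ref{corollary:splitgen}, then show $\mathcal{CO}(e_w) \neq 0$ by projecting to the length-zero component $\mathcal{CO}^0(e_w) = e_L$ via Lemma \ref{lemma:coeigen} and observing that $e_L \neq 0$ since $HF^*(L,L) \neq 0$. The only cosmetic difference is that you spell out why the cohomological unit is nonzero, which the paper leaves implicit.
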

\begin{proof}
The algebra homomorphism
\begin{equation}\CO_w^0: QH^*(X)_w \To HF^*(L,L)\end{equation}
is unital by Proposition \ref{proposition:coeigsplit}, hence $\CO_w^0(e_w) = e_L \neq 0$ by the hypothesis that $HF^*(L,L) \neq 0$.
Let $\cG_w$ be the full subcategory with object $L$; by definition, the map $\CO_w^0$ factors through
\begin{equation}
\label{eqn:cowin}
 \CO_w: QH^*(X)_w \To HH^*(\cG_w)
\end{equation}
via the projection to the length-zero component (see \eqref{eqn:hhproj}), hence $\CO_w(e_w) \neq 0$.

Because $QH^*(X)_w$ is one-dimensional by hypothesis, it is spanned by $e_w$, so the map \eqref{eqn:cowin} is injective.
It follows by Corollary \ref{corollary:splitgen} that $\cG_w$ split-generates $\cF(X)_w$.
 \end{proof}

\section{The relative Fukaya category}
\label{sec:relfuk}

We recall (from \cite[\S 5]{Sheridan2015}) the definition of $\cF(X,D)$, the Fukaya category of a monotone K\"{a}hler manifold $X$ relative to a smooth normal-crossings divisor $D$, each irreducible component of which is Poincar\'{e} dual to some multiple of the symplectic form.
In this section, we explain the relationship between the monotone Fukaya category and the relative Fukaya category.
We use the relationship to prove a result about the closed--open string map, analogous to the divisor axiom for Gromov--Witten invariants.

\subsection{Relating the monotone and relative Fukaya categories}

Let $X$ be monotone, and suppose that $D \subset X$ is a simple normal-crossings divisor that makes $(X,D)$ into a \emph{K\"{a}hler pair}, in the sense of \cite[\S 3.5]{Sheridan2015}. 
We recall that this means there exists a cohomology class $c \in H^2(X)$, so that each component $D_j$ of $D$ is Poincar\'{e} dual to $d_j c$ for some $d_j >0$, there exists a Liouville form $\alpha$ on $X \setminus D$ (i.e., a one-form such that $d \alpha = \omega$) which has `linking number' $\ell_j>0$ with component $D_j$ of $D$. 
This has the consequence that the cohomology class of $\omega$ is
\begin{equation}
\label{eqn:divmon}
 [\omega] = \sum_{j=1}^k d_j \ell_j c.\end{equation}
Recall that we associate a grading datum to the K\"{a}hler pair:  $\bm{G}(X,D) := \{\Z \to H_1(\cG(X \setminus D)) \}$ is the same as the grading datum associated to the non-compact symplectic manifold $X \setminus D$ (see \S \ref{subsec:monot}).

We recall that the coefficient ring $R$ of $\cF(X,D)$ is the completion of a polynomial ring
\begin{equation} \widetilde{R} := \C[ r_1, \ldots, r_k]\end{equation}
in the category of $\bm{G}(X,D)$-graded algebras.

\begin{lemma}
\label{lemma:moncoeff}
If $X$ is monotone, then the completion is unnecessary: $R \cong \widetilde{R}$.
\end{lemma}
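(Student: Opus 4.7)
The plan is to show that the $\bm{G}(X,D)$-grading on $\widetilde{R}$ already makes every graded piece finite-dimensional, so the completion in the category of $\bm{G}(X,D)$-graded algebras is trivial. Concretely, in the conventions of \cite{Sheridan2014} the variable $r_j$ has a non-trivial degree in $\bm{G}(X,D)$, which projects to the Chern number $2d_j$ of a small meridional disk linking $D_j$. My aim is to turn the monotonicity hypothesis into the statement that this projected degree is strictly positive for every $j$, which forces the graded pieces to be finite-dimensional, and hence the completion to collapse.

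First I would record the positivity of the gradings. Monotonicity says $[\omega] = 2\tau c_1(TX)$ with $\tau > 0$, while \eqref{eqn:divmon} gives $[\omega] = \sum_j d_j \ell_j c$ with $d_j, \ell_j > 0$. Hence $c$ is a positive rational multiple of $c_1(TX)$, and composing the $\bm{G}(X,D)$-grading with the canonical map $\bm{G}(X,D)\to\Z$ induced by $c_1(TX)$ gives a $\Z_{\ge 0}$-valued grading on $\widetilde{R}$ under which each $r_j$ has strictly positive degree proportional to $d_j$. In this coarser grading, the degree-$N$ piece of $\widetilde{R}$ is spanned by those monomials $r_1^{a_1}\cdots r_k^{a_k}$ satisfying a linear equation with strictly positive coefficients, which has only finitely many non-negative integer solutions. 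A fortiori the same finiteness holds for each individual $\bm{G}(X,D)$-graded piece of $\widetilde{R}$.

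Finally I would invoke the definition of $R$ from \cite{Sheridan2014}: $R$ is the product over $\bm{G}(X,D)$-graded components of the corresponding pieces of $\widetilde{R}$, where one is allowed formal sums within each component. Once each graded piece is finite-dimensional, no new elements are introduced, so the natural map $\widetilde{R}\to R$ is an isomorphism. The only genuine point to verify is the first claim, namely that the $\bm{G}(X,D)$-degree of $r_j$ projects under the monotonicity map to a positive multiple of $d_j$; this is a bookkeeping check using the sign and linking-number conventions of \cite[\S3.5, \S5]{Sheridan2014}, and is the only place where any real content enters the argument.
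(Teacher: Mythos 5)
Your overall strategy is sound in spirit---reduce triviality of the completion to finite-dimensionality of the $\bm{G}(X,D)$-graded pieces, and obtain this from a $\Z_{\ge 0}$-valued (or $\Q_{\ge 0}$-valued) grading under which each $r_j$ has strictly positive degree---but the crucial step, which you defer to ``a bookkeeping check,'' is actually where all of the content lives, and as written the argument has a gap.

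The issue is that the ``canonical map $\bm{G}(X,D)\to\Z$ induced by $c_1(TX)$'' is never constructed, and its existence is not at all automatic. The degree of $r_j$ is a class in $H_1(\mathcal{G}(X \setminus D))$, namely the boundary of a lift $\tilde{u}_j$ to $\mathcal{G}X$ of a small disk $u_j$ meeting $D_j$ transversally once. To ``pair'' this class with $c_1$, you must cap it off by a relative $2$-cycle in $(\mathcal{G}X, \mathcal{G}(X\setminus D))$ and integrate $\pi^*c_1$. The resulting number is well-defined only if $\pi^*c_1$ vanishes on the ambiguity, i.e.\ on the image of $H_2(\mathcal{G}X)$. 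But $c_1$ itself pairs non-trivially with $H_2(X)$ when $X$ is Fano, so this requires a genuine observation: the bundle $\pi^* TX$ over $\mathcal{G}X$ carries a tautological Lagrangian (hence totally real) subbundle, so it is the complexification of a real bundle and $c_1(\pi^*TX) = 0$. This is exactly the topological fact on which the paper's proof hinges, and it is not a mere bookkeeping matter about signs or linking-number conventions. Similarly, your asserted formula ``projects to the Chern number $2d_j$'' is not derived, and indeed the ``Chern number of a meridional disk'' is not a priori well-defined without the trivialization data encoded in $\tilde{u}_j$; a naive attempt to use linking numbers with $D_j$ directly on $H_1(X \setminus D)$ runs into the same well-definedness problem (linking numbers are only defined modulo intersection of $H_2(X)$ with $[D_j]$, which is non-zero here).

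For comparison, the paper avoids the need to produce an explicit positive functional: it shows directly that the map $\Z^k \to H_1(\mathcal{G}(X\setminus D))$, $e_j \mapsto \deg(r_j)$, is injective, so that each graded piece has dimension at most $1$ rather than merely being finite-dimensional. The mechanism is the diagram comparing $H_*(\mathcal{G}X, \mathcal{G}(X\setminus D))$ with $H_*(X, X\setminus D)$: if two monomials $r^{\bm{c}_1}, r^{\bm{c}_2}$ have the same degree, then $\bm{c}_1 - \bm{c}_2$ lies in the image of $H_2(\mathcal{G}X)$ in $H_2(X, X\setminus D)$, whereupon the Lagrangian-subbundle observation, together with monotonicity (which lets one replace $c_1$ by $[\omega]$ and hence by each $[D_j]$), forces $\bm{c}_1 = \bm{c}_2$. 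Your positivity argument can in principle be made rigorous, but only after establishing the same vanishing of $c_1(\pi^*TX)$; so the two routes ultimately share the same key input, and the paper's formulation is both cleaner to carry out and yields the sharper conclusion.
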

\begin{proof}
Taking the completion in the category of graded algebras is equivalent to taking the completion separately in each graded piece, so it suffices to show that each graded piece of $\widetilde{R}$ is finite-dimensional.
In fact we will prove that each graded piece has dimension $\le 1$: i.e., no two monomials $r^{\bm{c}_1}, r^{\bm{c}_2}$ of $\widetilde{R}$ can have the same degree in $\bm{G}(X,D)$.

It is natural to regard the multi-indices $\bm{c}_i$ as living in $H_2(X,X \setminus D)$, via the isomorphism
\begin{equation}
\label{eqn:intnum}
 H_2(X, X \setminus D) \To \Z^k\end{equation}
given by taking intersection numbers with the divisors.
We recall (from \cite[Definition 5.1]{Sheridan2015}) that the grading of the generator $r_j$ corresponding to divisor $D_j$ is defined by choosing a disc
\begin{equation} u:(D^2,\partial D^2) \To (X,X \setminus D)\end{equation}
with intersection number $+1$ with $D_j$ and $0$ with all other divisors, then choosing a lift $\tilde{u}$ of $u$ to $\cG X$; the grading of $r_j$ is then $\tilde{u}|_{\partial D^2} \in H_1(\cG(X \setminus D))$.

Now consider the commutative diagram
\begin{equation}
\xymatrix{
H_2(\cG X) \ar[r] \ar[d] & H_2(\cG X,\cG(X \setminus D)) \ar[r]\ar[d] & H_1(\cG(X \setminus D)) \ar[d]\\
H_2(X) \ar[r] & H_2(X,X \setminus D) \ar[r] & H_1(X \setminus D),
}
\end{equation}
where both top and bottom rows are part of the long exact sequence for a pair in homology.
Suppose now that the degree of $\bm{c}_1 - \bm{c}_2 \in H_2(X,X \setminus D)$ is $0 \in H_1(\cG(X \setminus D))$.
By definition, that means that the class $\bm{c}_1 - \bm{c}_2 \in H_2(X,X \setminus D)$ lifts to a class $\tilde{\bm{c}} \in H_2(\cG X,\cG(X \setminus D))$, whose image under the boundary map to $H_1(\cG(X \setminus D))$ vanishes.
By exactness of the top row, that means $\tilde{\bm{c}}$ lies in the image of $H_2(\cG X)$, and hence that $\bm{c}_1 - \bm{c}_2$ lies in the image of the composition
\begin{equation} H_2(\cG X) \To H_2(X) \To H_2(X,X \setminus D).\end{equation}

Now we observe that the composition
\begin{equation}
\label{eqn:c1map}
H_2(\cG X) \To H_2(X) \xrightarrow{c_1} \C\end{equation}
vanishes: if $\pi: \cG X \To X$ denotes the projection, then the map \eqref{eqn:c1map} is given by $\pi^* c_1(TX) = c_1(\pi^*TX)$, but $\pi^*TX$ has an obvious canonical totally real (in fact, Lagrangian) subbundle, hence is the complexification of a real bundle, so its first Chern class vanishes.
By monotonicity, if we replace $c_1$ by the symplectic class $[\omega]$ in \eqref{eqn:c1map}, the composition also vanishes; hence, since the components of the divisor $D$ are Poincar\'{e} dual to a multiple of the symplectic form (by the definition of a K\"{a}hler pair), the intersection number of the class $\bm{c}_1 - \bm{c}_2$ with each component of the divisor $D$ vanishes.
But the intersection numbers with components of $D$ are precisely the components of the isomorphism \eqref{eqn:intnum}, so it follows that $\bm{c}_1 - \bm{c}_2 = 0$ and the proof is complete.
 \end{proof}

Now we examine the relationship between the gradings of the relative Fukaya category and the monotone Fukaya category.
The relative Fukaya category $\cF(X,D)$ is $\bm{G}(X,D)$-graded, where we recall that $\bm{G}(X,D)$ is the grading datum
\begin{equation} \Z \cong H_1(\cG_x(X \setminus D)) \To H_1(\cG(X \setminus D)).\end{equation}
The monotone Fukaya category is $\bm{G}(X)$-graded, where $\bm{G}(X)$ is the grading datum
\begin{equation} \Z \cong H_1(\cG_xX) \To H_1(\cG X).\end{equation}

\begin{definition}
\label{definition:qxd}
Let $(X,D)$ be a K\"{a}hler pair.
There is an obvious inclusion of fibrations
\begin{equation} \cG(X \setminus D) \hookrightarrow \cG X.\end{equation}
We denote the resulting morphism of grading data by
\begin{equation} \bm{q}_{X,D}: \bm{G}(X,D) \To \bm{G}(X).\end{equation}
We will write $\bm{q}$ when no confusion is possible.
\end{definition}

\begin{lemma}
\label{lemma:relgradcoeff}
If $R$ is the coefficient ring of the relative Fukaya category, then $\bm{q}_* R$ is concentrated in degree $0$.
\end{lemma}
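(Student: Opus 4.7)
The plan is to reduce to verifying the claim on the ring generators, and then to exhibit an explicit nullhomology in $\mathcal{G}X$ of the loop that represents the grading of each $r_j$.

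First, invoking Lemma \ref{lemma:moncoeff}, we may replace $R$ with the polynomial ring $\widetilde{R} = \C[r_1,\ldots,r_k]$. Since $\widetilde{R}$ is generated as a $\C$-algebra by the $r_j$, and scalars sit in degree $0$, the ring $\bm{q}_* R$ is concentrated in degree $0$ if and only if $\bm{q}_*(\deg r_j) = 0 \in H_1(\mathcal{G}X)$ for each $j=1,\ldots,k$. By the definition of $\bm{q}_{X,D}$, this amounts to showing that for each $j$ the class $\deg r_j \in H_1(\mathcal{G}(X\setminus D))$ maps to zero under the homomorphism $H_1(\mathcal{G}(X\setminus D)) \To H_1(\mathcal{G}X)$ induced by the inclusion $\mathcal{G}(X\setminus D) \hookrightarrow \mathcal{G}X$.

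Next I would unpack the construction recalled in the proof of Lemma \ref{lemma:moncoeff}: the grading $\deg r_j$ is obtained by choosing a disk
\begin{equation}
u:(D^2,\partial D^2) \To (X, X\setminus D)
\end{equation}
of intersection $+1$ with $D_j$ and $0$ with the other components, lifting it to a map $\tilde{u}: D^2 \To \mathcal{G}X$, and taking $\deg r_j := [\tilde{u}|_{\partial D^2}] \in H_1(\mathcal{G}(X\setminus D))$. The image of this class under the inclusion $\mathcal{G}(X\setminus D) \hookrightarrow \mathcal{G}X$ is represented by the same loop $\tilde{u}|_{\partial D^2}$, now considered as a cycle in $\mathcal{G}X$. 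But this loop bounds the $2$-chain $\tilde{u}:D^2 \To \mathcal{G}X$, so its class in $H_1(\mathcal{G}X)$ is zero. This gives the desired vanishing for each $j$ and completes the proof.

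There is no real obstacle; the only subtlety is the conceptual one of verifying that the construction of $\deg r_j$ involves a lift $\tilde{u}$ that is defined on the whole disk $D^2$ into $\mathcal{G}X$ (not merely on $\partial D^2$ into $\mathcal{G}(X\setminus D)$), which is precisely why passing from the relative grading datum $\bm{G}(X,D)$ to the absolute $\bm{G}(X)$ kills the classes of the $r_j$.
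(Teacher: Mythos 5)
Your proof is correct and follows the same line of reasoning as the paper's: the paper's one-line proof simply observes that, by construction, the loop representing $\deg r_j$ bounds the disk $\tilde{u}$ in $\mathcal{G}X$, which is exactly the nullhomology you exhibit. You have merely spelled out the reduction to ring generators and made the bounding $2$-chain explicit.
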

\begin{proof}
As we saw in the proof of Lemma \ref{lemma:moncoeff}, the grading of class $r_j$ is a boundary in $\cG X$, by definition.
 \end{proof}

We would now like to relate the relative Fukaya category, $\cF(X,D)$, to the monotone Fukaya category $\cF(X)_w$.
First we relate the objects.

\begin{lemma}
\label{lemma:c1disc}
(See \cite[Lemma 3.1]{Auroux2007} and comments immediately after).
Let $(X,D)$ be a monotone K\"{a}hler pair, with $[\omega] = 2 \tau c_1$.
If $L \subset X \setminus D$ is an exact, anchored Lagrangian brane, then for any $[u] \in H_2(X,L)$,
\begin{align} \tau \mu(u) &=  \omega(u) \\
&=  \frac{\sum_i d_i \ell_i}{d_j} u \cdot D_j,
\end{align}
where $\mu$ denotes the Maslov class.
\end{lemma}
\begin{proof}
Recall that an anchored Lagrangian brane $L$ is equipped with a lift to the universal abelian cover of the Lagrangian Grassmannian $\cG(X \setminus D)$.
Hence, the boundary of $u$ admits a lift to the universal abelian cover: so there is a surface $u' \subset X \setminus D$ with the same boundary as $u$, which lifts to $\cG(X \setminus D)$.
Then the closed surface $\tilde{u} = u \cup u'$ in $X$ satisfies
\begin{align}
\label{eqn:c1muI}2 c_1(\tilde{u}) &= \mu(u), \\
\label{eqn:omegatild} \omega(\tilde{u}) &= \omega(u), \mbox{ and} \\
\label{eqn:tildu}\tilde{u} \cdot D_j &=u \cdot D_j.
\end{align}
\eqref{eqn:c1muI} follows because $u'$ lifts to the Lagrangian Grassmannian, hence has vanishing Maslov class.
\eqref{eqn:omegatild} follows by exactness of $L$ in $X \setminus D$, which implies that $\omega(u') = 0$ by Stokes' theorem.
\eqref{eqn:tildu} follows because $u'$ lies in the complement of $D$.
The result follows, because
\begin{equation} 2 \tau c_1(\tilde{u}) = \omega(\tilde{u}) =  \frac{\sum_i d_i \ell_i}{d_j} \tilde{u} \cdot D_j\end{equation}
by monotonicity and \eqref{eqn:divmon}.
 \end{proof}

It follows, in particular, that objects of $\cF(X,D)$ are monotone Lagrangians in $X$.
Let $\cF(X,D)_w$ denote the full subcategory of $\cF(X,D)$ whose objects are Lagrangians $L$ with $w(L) = w$.

\begin{corollary}
\label{corollary:tauobj}
If $w \neq 0$, then $\cF(X,D)_w$ has an empty set of objects unless $2\tau d_j$ is an integer multiple of $\sum_i d_i \ell_i$, for all $j$.
\end{corollary}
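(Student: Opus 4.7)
The plan is to derive the integrality condition directly from Lemma \ref{lemma:c1disk} applied to a Maslov index $2$ disk whose existence is forced by $w(L)\ne 0$.

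First I would observe that, by definition, $w(L)$ is a signed count (weighted by local system monodromy) of $J_L$-holomorphic disks of Maslov index $2$ with boundary on $L$ whose boundary marked point passes through a generic point of $L$. If this count is nonzero, then in particular the underlying unsigned set of such disks is nonempty, so there exists at least one class $[u]\in H_2(X,L)$ representing a Maslov index $2$ disk, i.e.\ with $\mu(u)=2$.

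Next I would plug $\mu(u)=2$ into Lemma \ref{lemma:c1disk}. That lemma applies because any object of $\mathcal{F}(X,D)$ is by construction an exact, anchored Lagrangian brane in $X\setminus D$. It gives, for each $j$,
\begin{equation}
2\tau \;=\; \tau\mu(u) \;=\; \frac{\sum_i d_i\ell_i}{d_j}\,(u\cdot D_j),
\end{equation}
so that
\begin{equation}
u\cdot D_j \;=\; \frac{2\tau d_j}{\sum_i d_i\ell_i}.
\end{equation}
Since $u\cdot D_j$ is an integer (geometric intersection number of a smooth disk with a smooth divisor component, with $\partial u\subset L\subset X\setminus D$), the right hand side must be an integer for every $j$. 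This is exactly the assertion that $2\tau d_j$ is an integer multiple of $\sum_i d_i\ell_i$ for all $j$. Contrapositively, if this integrality fails for some $j$, no Maslov index $2$ disk can exist, hence $w(L)=0$ for every object $L$, so $\mathcal{F}(X,D)_w$ is empty of objects whenever $w\ne 0$.

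There is no real obstacle here; the only subtlety worth flagging is making sure the hypothesis of Lemma \ref{lemma:c1disk} is met (exactness and the anchoring used to lift $\partial u$ to the Lagrangian Grassmannian), which is automatic for objects of the relative Fukaya category as set up in Section \ref{sec:relfuk}. The argument uses neither transversality nor the precise sign conventions, only the existence of a single Maslov $2$ disk in the homology class contributing to the count $w(L)$.
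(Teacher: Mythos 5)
Your proof is correct and follows essentially the same approach as the paper: both note that $w(L)\neq 0$ forces the existence of a Maslov index $2$ disk $u$, apply Lemma \ref{lemma:c1disk} to obtain $2\tau d_j = \left(\sum_i d_i\ell_i\right)(u\cdot D_j)$, and conclude from the integrality of $u\cdot D_j$. You simply spell out the step from "signed count nonzero" to "set nonempty," which the paper leaves implicit.
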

\begin{proof}
If $L$ is an object of $\cF(X,D)_w$ and $w \neq 0$, then $L$ must bound a Maslov index $2$ disc $u$.
It follows by Lemma \ref{lemma:c1disc} that
\begin{equation} 2 \tau d_j = \left(\sum_i d_i \ell_i \right) ( u \cdot D_j)\end{equation}
for all $j$.
 \end{proof}

\begin{corollary}
\label{corollary:objmap}
Assume that $X$ is simply-connected.
Then a choice of map $\tilde{i}$ that makes the diagram
\begin{equation} \xymatrix{
\widetilde{\cG}(X \setminus D) \ar@{^{(}->}[r]^-{\tilde{i}} \ar[d]& \widetilde{\cG}X \ar[d] \\
\cG(X \setminus D) \ar@{^{(}->}[r]^-{i} & \cG X
} \end{equation}
commute induces a natural map from objects of $\cF(X,D)_w$ to objects of $\cF(X)_w$.
\end{corollary}
\begin{proof}
The assumption that $X$ is simply-connected ensures that the image of $\pi_1(L)$ in $\pi_1(X)$ is always trivial; the condition on $\omega$ and $\mu$ then follows from Lemma \ref{lemma:c1disc}.
 \end{proof}

We would like to extend this to an $A_{\infty}$ functor.
We do this by first introducing a new category.

\begin{definition}
For each $w \in \C$, we define the \emph{monotone relative Fukaya category} $\cF_m(X,D)_w$.
It is a $\bm{G}(X,D)$-graded, $R$-linear $A_{\infty}$ category.
Its objects are the anchored Lagrangian branes $L \subset X \setminus D$ such that $w(L) = w$.
The morphism spaces and $A_{\infty}$ structure maps are defined as for $\cF(X)_w$, except that:
\begin{itemize}
\item All Floer and perturbation data are chosen so that the almost-complex structure makes each divisor $D_j$ into an almost-complex submanifold, and the Hamiltonian part vanishes with its first derivative along each divisor; then intersection numbers of pseudoholomorphic discs with divisors are non-negative.
\item Each pseudoholomorphic disc is counted with a coefficient $r^{u \cdot D} \in R$.
\end{itemize}
This category is $\bm{G}(X,D)$-graded, for the same reason that the relative Fukaya category is.
\end{definition}

\begin{remark}
It is still possible to achieve transversality with this restricted set of Floer and perturbation data: moduli spaces of discs and spheres transverse to the divisors are still generically regular, but moduli spaces of holomorphic spheres inside the divisor may not be.
However, the only place where we needed regularity of a moduli space of holomorphic spheres in the construction of the monotone Fukaya category was when we ruled out spheres bubbling off a pseudoholomorphic strip that is constant along its length, its image coinciding with a Hamiltonian chord between two Lagrangians.
The Hamiltonian chords lie in the complement of the divisors, so this type of sphere bubbling is still ruled out, without the need for regularity of moduli spaces of holomorphic spheres inside the divisors.
\end{remark}

\begin{lemma}
\label{lemma:reltomon}
The map on objects defined in Corollary \ref{corollary:objmap} extends to a strict full embedding of $\bm{G}(X)$-graded, $\C$-linear $A_{\infty}$ categories
\begin{equation} \bm{q}_* \cF_m(X,D)_w \otimes_R \C  \hookrightarrow \cF(X)_w,\end{equation}
where $\bm{q}$ is the morphism of grading data of Definition \ref{definition:qxd}, and we regard $\C$ as an $R$-algebra via the map $R \To \C$ sending each $r_j \mapsto 1$: this map is well-defined by Lemma \ref{lemma:moncoeff} and respects the $\bm{G}(X)$-grading by Lemma \ref{lemma:relgradcoeff}.
\end{lemma}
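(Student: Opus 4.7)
The plan is to produce the functor by essentially the identity construction on objects, morphism spaces, and structure maps, once one observes that the two categories can be set up using the \emph{same} Floer and perturbation data, and that after base change $\otimes_R \C$ with $r_j \mapsto 1$ the weighted disk counts of $\mathcal{F}_m(X,D)_w$ become precisely the unweighted disk counts of $\mathcal{F}(X)_w$.

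First, on objects: an object $L$ of $\mathcal{F}_m(X,D)_w$ is an exact, anchored Lagrangian brane in $X\setminus D$ with $w(L)=w$. By Lemma \ref{lemma:c1disk}, it satisfies $[\omega]=\tau\mu$ on $H_2(X,L)$, and since $X$ is simply connected the image of $\pi_1(L)\to\pi_1(X)$ is automatically trivial; a choice of compatible lift $\tilde i$ as in Corollary \ref{corollary:objmap} turns the anchoring into a $\widetilde{\mathcal{G}}X$-grading, while the spin structure and local system are carried over unchanged. Hence $L$ gives a legitimate object of $\mathcal{F}(X)_w$. Second, on morphism spaces: both $\mathrm{hom}_{\mathcal{F}_m(X,D)_w}(L_0,L_1)$ and $\mathrm{hom}_{\mathcal{F}(X)_w}(L_0,L_1)$ are free modules on the same set of Hamiltonian chords (once we agree to use the same Floer datum, see below), so the former $\otimes_R \C$ is literally equal to the latter, and the $\bm{G}(X)$-grading matches because $\bm{q}$ is induced by the inclusion $\mathcal{G}(X\setminus D)\hookrightarrow \mathcal{G}X$.

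The core of the argument is the third step: choosing Floer and perturbation data. I would select, once and for all, data from the restricted class allowed in $\mathcal{F}_m(X,D)_w$ (i.e.\ those making each $D_j$ an almost-complex submanifold and with Hamiltonian part vanishing to first order along each $D_j$), and use the \emph{same} data to define $\mathcal{F}(X)_w$. The question is then twofold:
\begin{enumerate}
\item one can still achieve generic transversality for all moduli spaces of pseudoholomorphic disks with boundary on the $L_i$, within this restricted class of data; and
\item for each rigid disk $u$, its contribution $r^{u\cdot D}$ to $\mu^*_{\mathcal{F}_m(X,D)_w}$ specializes under $r_j\mapsto 1$ to its contribution $1$ to $\mu^*_{\mathcal{F}(X)_w}$, yielding an identical count.
\end{enumerate}
Point (ii) is immediate from the definition once (i) is in hand. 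For (i) the only subtle issue is that restricted data need not make holomorphic spheres \emph{contained in} the divisors regular. But exactly as noted in the remark following the definition of $\mathcal{F}_m(X,D)_w$, regularity of such spheres was only used in one place in the construction of $\mathcal{F}(X)_w$, namely to exclude a sphere bubble attached to a Floer strip that is constant along its length; such a strip would have to coincide with a Hamiltonian chord, and by exactness of the $L_i$ in $X\setminus D$ together with our choice of Hamiltonians all Hamiltonian chords between objects lie in $X\setminus D$, so no sphere lying inside $D$ can ever bubble off them. Positivity of intersection with almost-complex submanifolds also guarantees $u\cdot D_j\ge 0$ for every pseudoholomorphic disk, so $r^{u\cdot D}$ is a genuine element of $R$.

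Putting these pieces together gives a strict (as opposed to up to homotopy) $A_\infty$ functor: it is the identity on objects in the sense of Corollary \ref{corollary:objmap}, the identity on generators of morphism spaces after base change, and the structure maps agree term-by-term. Because different objects of $\mathcal{F}_m(X,D)_w$ go to different objects of $\mathcal{F}(X)_w$ and the morphism spaces (with the same Floer data) coincide, the functor is fully faithful, i.e.\ a strict full embedding. The step I expect to require the most care is verifying that transversality can be achieved for \emph{all} moduli spaces simultaneously within the restricted class of data while keeping the same data on both sides; this is a standard but slightly delicate Sard--Smale argument relying crucially on the fact that any non-constant holomorphic component entering the argument either has positive intersection with $D$ (hence is cut out generically by perturbing the almost-complex structure transversely to $D$) or is the kind of sphere-in-$D$ that the geometric argument above already excludes.
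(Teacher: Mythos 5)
Your proposal is correct and follows essentially the same route as the paper: the paper's own proof is a one-liner precisely because the definition of $\mathcal{F}_m(X,D)_w$ and the remark immediately after it already do the work of fixing a common set of (restricted) Floer/perturbation data and disposing of the spheres-in-$D$ transversality issue, so that the embedding becomes tautological once $r_j\mapsto 1$. You re-derive those preliminary points explicitly and then make the same tautology observation, so the content matches.
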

\begin{proof}
The embedding is tautologous: the $A_{\infty}$ structure maps on both sides count the same moduli spaces of pseudoholomorphic discs.
The coefficient $r^{u \cdot D}$ with which discs contribute to an $A_{\infty}$ structure map in $\cF_m(X,D)_w$ reduces to $1$ after tensoring with $\C$.
 \end{proof}

Now we would like to compare $\cF_m(X,D)_w$ with $\cF(X,D)_w$.
The fact that $\cF(X,D)$ may be curved makes it a bit complicated to prove the categories are quasi-equivalent, because we would first need to construct a non-curved model for $\cF(X,D)_w$.
Anyway this turns out to be unnecessary for our purposes.
Namely, it turns out to be sufficient to prove this result to first order, so we need only a first-order quasi-equivalence (in the sense of \cite[Definition 2.110]{Sheridan2015}) of  $R/\mathfrak{m}^2$-linear $A_\infty$ categories, where $\mathfrak{m} \subset R$ denotes the maximal ideal generated by $r_1, \ldots, r_n$.
For this purpose, we need only consider moduli spaces of pseudoholomorphic discs with $\le 1$ intersection points with the divisors.
As long as $D$ has $\ge 2$ irreducible components, the problem of curvature does not arise, because any non-constant pseudoholomorphic disc with boundary on a single Lagrangian $L$ must necessarily intersect all of the divisors $D_j$ (by Lemma \ref{lemma:c1disc}).

\begin{proposition}
\label{proposition:monrel}
Suppose that $D$ has $\ge 2$ irreducible components.
Then there is a first-order quasi-equivalence of $\bm{G}(X,D)$-graded, $R/\mathfrak{m}^2$-linear $A_{\infty}$ categories
\begin{equation}
\label{eqn:monreldoub} \cF_m(X,D)_w/\mathfrak{m}^2 \cong \cF(X,D)_w /\mathfrak{m}^2.\end{equation}
\end{proposition}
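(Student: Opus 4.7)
The plan is to build a strict, identity-on-objects $A_{\infty}$ functor
\begin{equation}
F: \mathcal{F}_m(X,D)_w/\mathfrak{m}^2 \To \mathcal{F}(X,D)_w/\mathfrak{m}^2
\end{equation}
by interpolating between the two classes of perturbation data, and to verify it is a first-order quasi-equivalence. With a common choice of Floer datum for each pair of Lagrangians (which can be arranged, since the strict constraints required of $\mathcal{F}_m(X,D)_w$ are also legitimate choices for $\mathcal{F}(X,D)_w$), the morphism complexes $CF^*(L_0,L_1)$ are literally equal on the two sides, and the discrepancy between the $A_{\infty}$ structures is confined to the higher operations, which in both categories count pseudoholomorphic disks in $X$ weighted by $r^{u \cdot D}$ but with respect to different admissible classes of almost-complex structures.

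The essential simplification at first order is that the potential curvature $\mu^0$ of $\mathcal{F}(X,D)_w$ vanishes modulo $\mathfrak{m}^2$. By Lemma \ref{lemma:c1disk}, every non-constant pseudoholomorphic disk $u$ with boundary on a single Lagrangian satisfies $u \cdot D_j = d_j \omega(u)/(\sum_i d_i \ell_i) > 0$ for each $j$. With $D$ having at least two irreducible components, its weight $r^{u \cdot D}$ therefore lies in $\mathfrak{m}^2$. Consequently both sides of (\ref{eqn:monreldoub}) are honestly non-curved $A_{\infty}$ categories at first order, and the only disks that can contribute to the $A_{\infty}$ structures modulo $\mathfrak{m}^2$ are those with total intersection multiplicity zero or one with $D$.

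To construct $F$, I would choose, consistently with the Deligne-Mumford compactification, a generic one-parameter family of perturbation data on each relevant moduli space connecting the choice defining $\mathcal{F}_m(X,D)_w$ at one endpoint to that defining $\mathcal{F}(X,D)_w$ at the other. The components $F^s$ count rigid pseudoholomorphic disks for the interpolated data, weighted by $r^{u \cdot D}$. Reducing mod $\mathfrak{m}$, only disks disjoint from $D$ survive, and the construction specializes to the standard argument that different admissible choices of Floer and perturbation data yield quasi-isomorphic Fukaya categories; so $F \otimes_{R/\mathfrak{m}^2} \C$ is a quasi-equivalence, which is what is required of a first-order quasi-equivalence. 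Modulo $\mathfrak{m}^2$ the additional contributions come from disks with $u \cdot D_j = 1$ for a single $j$ and weight $r_j$; matching their counts is precisely what the $A_{\infty}$ functor relations encode, and they are verified by the usual analysis of the codimension-one boundary of the one-parameter moduli spaces.

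The main obstacle will be controlling the disk and sphere bubbling that can occur as the interpolation moves from the ``$D$-preserving'' class of perturbation data into the wider one: along such a homotopy, disks may acquire transverse intersections with $D$, and sphere bubbles may form on or escape the divisor in ways precluded at either endpoint. I would handle this by checking that each such bubble is either invisible modulo $\mathfrak{m}^2$ (its weight lies in $\mathfrak{m}^2$ by the same Lemma \ref{lemma:c1disk} argument applied to any sub-disk with boundary on a single Lagrangian) or matches a legitimate boundary contribution to the $A_{\infty}$ functor equations. The $\bm{G}(X,D)$-grading is preserved throughout because the grading of $r_j$ is determined by intersection numbers with $D_j$, which are constant along the interpolation.
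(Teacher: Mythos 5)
Your proposal constructs the equivalence as an explicit $A_\infty$ functor $F$ by interpolating between the two classes of perturbation data. The paper takes a different route: it does not build a functor directly, but follows the ``double category'' trick of \cite[Section 10a]{Seidel2008}, assembling a total category $\mathcal{F}^{tot}$ with two copies of each object, into which both $\mathcal{F}_m(X,D)_w/\mathfrak{m}^2$ and $\mathcal{F}(X,D)_w/\mathfrak{m}^2$ strictly embed, and then invoking \cite[Lemma 2.111]{Sheridan2014} to extract the first-order quasi-equivalence. The doubling trick internalizes the continuation moduli spaces you would use but does so without having to package them into an $A_\infty$ functor, so it is cleaner; your version trades that economy for explicitness and is, in principle, viable.

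As written, though, there is a gap. You ask the one-parameter family of perturbation data to be ``consistent with the Deligne-Mumford compactification'' while also limiting, at one endpoint, to the data defining $\mathcal{F}_m(X,D)_w$; but those data are pulled back along the map forgetting the interior marked point, and that pullback is \emph{not} Deligne-Mumford consistent at the boundary stratum where the lone marked point bubbles off onto a disk. No family can be uniformly consistent and yet limit to inconsistent data. The hypothesis that $D$ has $\ge 2$ irreducible components enters precisely to escape this tension: by Lemma \ref{lemma:c1disk}, any non-constant disk with boundary on a single Lagrangian intersects every component $D_j$, hence carries at least two interior marked points, so the offending boundary stratum is \emph{empty} and the consistency condition can simply be dropped there. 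Your ``weight in $\mathfrak{m}^2$'' observation is the same geometric fact, but you deploy it only to bound contributions to the disk count --- which presupposes that the interpolated moduli spaces are already regular and compact. It does not, on its own, justify the existence of a consistent interpolating family; you need the stratum-emptiness formulation first, before weights can be discussed. A minor further slip: you describe $F$ as a ``strict'' $A_\infty$ functor while also saying its components $F^s$ count rigid pseudoholomorphic disks; these are incompatible, since nonvanishing $F^{\ge 2}$ means $F$ is not strict.
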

\begin{proof}
The two categories clearly have the same set of objects.
We use the usual trick of `doubling' the category \cite[\S 10a]{Seidel2008}: consider a category $\cF^{tot}$ which contains two copies of each object, and so that there are strict $\bm{G}(X,D)$-graded, $R/\mathfrak{m}^2$-linear embeddings
\begin{equation} \cF_m(X,D)_w/\mathfrak{m}^2 \hookrightarrow \cF^{tot} \hookleftarrow \cF(X,D)_w/\mathfrak{m}^2.\end{equation}
The $A_{\infty}$ structure maps on the left are defined by choosing perturbation data on moduli spaces of discs with $\le 1$ marked point which are pulled back via the forgetful map forgetting the marked point.
The $A_{\infty}$ structure maps on the right are defined by choosing perturbation data which are consistent with respect to the Deligne--Mumford compactification, including when the internal marked point bubbles off at the boundary.
We extend these to choices of perturbation data for all other moduli spaces of pseudoholomorphic discs with $\le 1$ marked points and boundary conditions on the objects of our category.
We require consistency of these perturbation data with respect to the Deligne--Mumford compactification, except when the marked point bubbles off in a disc on its own, with boundary labelled by a Lagrangian coming from the left-hand side of \eqref{eqn:monreldoub}.
This type of degeneration can not happen in the corresponding moduli spaces of pseudoholomorphic discs, because any pseudoholomorphic disc with boundary on a single Lagrangian has to intersect each divisor $D_j$ at least once (by Lemma \ref{lemma:c1disc}), and in particular has $\ge 2$ internal marked points.

We use the corresponding moduli spaces of pseudoholomorphic discs to define an $A_{\infty}$ structure on $\cF^{tot}$.
It follows as in \cite[\S 10a]{Seidel2008} that the order-zero components of these categories are quasi-equivalent, and by \cite[Lemma 2.111]{Sheridan2015} that the $R/\mathfrak{m}^2$-linear categories are first-order quasi-equivalent.
 \end{proof}

\subsection{The relative Fukaya category and the closed--open string map}
\label{subsec:relco}

If $(X,D)$ is a K\"{a}hler pair, then we can define a $\bm{G}(X,D)$-graded $R$-algebra $QH^*(X,D)$, by analogy with $QH^*(X)$.
The underlying $\bm{G}(X,D)$-graded $R$-module is the cohomology $H^*(X;\C) \otimes R$.
Each holomorphic sphere $u$ contributes to the quantum cup product with a coefficient $r^{u \cdot D} \in R$.

Furthermore, there is a $\bm{G}(X,D)$-graded homomorphism of $R$-algebras,
\begin{equation}\CO_{X,D}: QH^*(X,D) \To HH^*_{\bm{G}(X,D)}(\cF_m(X,D)_w),\end{equation}
for every $w \in \C$.
It is defined by counting the same pseudoholomorphic discs as are used to define $\CO$, but every pseudoholomorphic disc $u$ is counted with a coefficient $r^{u \cdot D} \in R$.
Thus, we have
\begin{equation}\CO_{X,D} \otimes_R \C = \CO.\end{equation}
$\CO_{X,D}$ is a $\bm{G}(X,D)$-graded map by standard index theory of Cauchy--Riemann operators.

\begin{proposition}
\label{proposition:cod}
Let $(X,D)$ be a K\"{a}hler pair.
Denote by $[D_j]$ the cohomology class Poincar\'{e} dual to $D_j$.
For a given $w \in \C$, we have
\begin{equation} \CO_{X,D}([D_j]) =  \left[ r_j \del{\mu^*}{r_j}\right] +r_j \del{(w \cdot T)}{r_j} \cdot e  \in HH^*_{\bm{G}}(\cF_m(X,D)_w)\end{equation}
where $e \in HH^*_{\bm{G}}(\cF_m(X,D)_w)$ is the unit of Hochschild cohomology, and
\begin{align}
T &:= r_1^{ad_1} \ldots r_k^{a d_k} \in R \mbox{, where }\\
a & :=  \frac{2\tau}{\sum_i d_i \ell_i}.
\end{align}
We remark that the exponents $2\tau d_j/\sum_i d_i \ell_i$ are not always integral, so $T$ does not always lie in $R$; however, when that happens, $\cF(X,D)_w$ has an empty set of objects unless $w=0$ by Corollary \ref{corollary:tauobj}, in which case the term involving $T$ vanishes.
\end{proposition}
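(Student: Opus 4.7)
The plan is to prove this by a ``relative divisor axiom'' argument: replace the interior marked point constraint by an actual intersection with $D_j$, and account carefully for the bubbling corrections that arise because we have replaced a pseudocycle by a singular object.

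First, I would choose a pseudocycle $f: A \to X$ representing $PD([D_j])$ obtained by pushing the inclusion $D_j \hookrightarrow X$ slightly in a generic direction of the normal bundle. For generic such $f$, the image is disjoint from $D_j$ itself, but for any pseudoholomorphic disk $u$ appearing in our moduli spaces (which by the choice of Floer and perturbation data intersects each $D_j$ transversely in $u \cdot D_j$ positive points), a tubular-neighbourhood argument shows that the count of intersections $u \cap f$ equals the topological intersection number $u \cdot D_j$. Consequently, for each rigid disk $u$ contributing a term of weight $r^{u \cdot D}$ to some $A_\infty$ structure map $\mu^s$, the number of ways to place its internal marked point on $f$ is $u \cdot D_j$, and summing yields
\begin{equation}
\sum_u (u \cdot D_j)\, r^{u \cdot D} \cdot \{\text{contribution of } u \text{ to } \mu^s\} = \left[ r_j \frac{\partial \mu^s}{\partial r_j}\right],
\end{equation}
exactly the first term claimed.

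Next, I would analyse the Gromov compactification of these moduli spaces to isolate the boundary contributions not captured by this count. The only new broken configurations that can carry the interior marked point are Maslov-index $2$ disk bubbles $v$ attached along a boundary node; higher Maslov index bubbles are ruled out by dimension and monotonicity, and sphere bubbles are excluded by the same transversality arguments used in the construction of $\mathcal{F}_m(X,D)_w$. For a Maslov-index $2$ disk $v$ on an object $L$, Lemma \ref{lemma:c1disk} gives $v \cdot D_j = 2\tau d_j/\sum_i d_i \ell_i = ad_j$, and the remaining piece must be the constant ``strip along its length'' configuration producing $e_L$ (exactly as in the proof of Lemma \ref{lemma:c1u0}). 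The total signed count of such bubbles is $w$, and there are $(v \cdot D_j) = ad_j$ choices of marked point; since $r^{v \cdot D} = T$, the correction is
\begin{equation}
w \cdot (ad_j) \cdot T \cdot e_L \;=\; r_j \frac{\partial (w \cdot T)}{\partial r_j} \cdot e,
\end{equation}
which is precisely the second term in the statement.

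Finally, I would verify that the sum of these two expressions is a Hochschild cocycle (as it must be, if the argument is correct): differentiating the curved $A_\infty$ relations for $\mathcal{F}_m(X,D)_w$, whose curvature in the relevant sense is $w \cdot T \cdot e_L$, by $r_j \partial/\partial r_j$ gives precisely the cocycle identity
\begin{equation}
\delta\!\left( r_j \frac{\partial \mu^*}{\partial r_j} + r_j \frac{\partial (w T)}{\partial r_j} \cdot e\right) = 0.
\end{equation}
The main obstacle I anticipate is the bubbling analysis in the second step: one needs a precise gluing/compactness statement guaranteeing that these broken configurations are exactly the ``missing'' codimension-zero contributions, i.e.\ that they correspond bijectively (with signs) to the endpoints of the one-parameter family comparing the count via $f$ to the count via a nearby pseudocycle actually disjoint from $D_j$ in a neighbourhood of each disk. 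This is essentially a relative version of Auroux's computation of $\mathcal{CO}^0(c_1)$ (Lemma \ref{lemma:c1u0}), and the main new issue is keeping track of the $R$-coefficient weightings $r^{u \cdot D}$ and $T$ consistently through the degeneration; since $D$ has $\ge 2$ components by the implicit standing hypothesis of Corollary \ref{corollary:tauobj}, no problematic curvature-free degenerations interfere.
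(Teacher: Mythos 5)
Your geometric intuition is exactly right, and you correctly identify both terms: the divisor contribution $r_j\,\partial\mu^*/\partial r_j$ from placing the marked point on the intersections with $D_j$, and the Maslov-index-$2$ bubble contribution $r_j\,\partial(w\cdot T)/\partial r_j\cdot e$, with the correct exponent $v\cdot D_j = ad_j$ extracted from Lemma~\ref{lemma:c1disk}. This is genuinely the heart of the computation, and it matches the paper's conclusions.

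However, there is a gap in the setup, and your last paragraph misdiagnoses what that gap is. You assert that the count of $\mathcal{CO}$-disks with the internal marked point on $f$ equals (up to bubbling) $r_j\,\partial\mu^*/\partial r_j$. But these two counts live on \emph{different moduli spaces that use different perturbation data}. The moduli space defining $\mathcal{CO}_{X,D}([D_j])$ uses perturbation data chosen consistently with the Deligne--Mumford compactification (the marked point is a genuine point of the domain, and the perturbation data depend on it); the moduli space computing $r_j\,\partial\mu^*/\partial r_j$ is the $\mu^*$-moduli space with an added unconstrained marked point, i.e.\ it uses perturbation data \emph{pulled back} via the forgetful map that forgets the marked point. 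The latter choice is precisely \emph{not} consistent with Deligne--Mumford: when the marked point approaches the boundary, it bubbles off a constant $J_L$-holomorphic disk rather than developing a thin strip region. So even for the same pseudocycle, the two counts need not agree, and the extra contributions arise not from your tubular-neighbourhood step but from comparing the two perturbation schemes. Your proposed ``one-parameter family comparing the count via $f$ to the count via a nearby pseudocycle'' is not the right cobordism; the $\mathcal{CO}$ count is already independent of the bordism class of the pseudocycle, by the well-definedness of $\mathcal{CO}$. The comparison must be between perturbation data, not pseudocycles. The paper carries this out by re-using the moduli space of disks with \emph{two} internal marked points on the real axis (from the proof that $\mathcal{CO}$ is an algebra homomorphism, Figure~\ref{fig:2l1disks}), now with perturbation data pulled back via forgetting the left marked point and with the left marked point constrained to $D_j$. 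Degenerating $t\to 0$ recovers $\mathcal{CO}_{X,D}([D_j])$; degenerating $t\to 1$ produces the Yoneda product of $r_j\,\partial\mu^*/\partial r_j$ with $\mathcal{CO}_{X,D}(e)$, plus the curvature/bubble correction you found; $\mathcal{CO}_{X,D}(e)$ is then removed using unitality (Lemma~\ref{lemma:counital}). Without some such one-parameter family explicitly interpolating between the two perturbation schemes, your argument does not establish the identity. (Your final cocycle check is a useful sanity test, but it only shows the right-hand side is closed; it is not a substitute for the moduli-theoretic comparison.)
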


\begin{remark}
The length-$0$ component of this equation was proven in the course of the proof of Lemma \ref{lemma:c1u0}.
\end{remark}

\begin{remark}
If we had instead followed \cite{fooo} and defined the Fukaya category as a curved $A_{\infty}$ category, then the term involving $e$ could be absorbed into the first term, where it would correspond to the curvature term $\mu^0$.
However we have chosen to use a different definition of the Fukaya category (taking advantage of the monotonicity of our manifolds), in which $\mu^0$ is set equal to zero.
\end{remark}

\begin{proof}
The left-hand side is given by the count of pseudoholomorphic discs with a single internal marked point constrained to lie on $D_j$ (taking $D_j$ as the pseudocycle representing its homology class).
On the other hand, the first term of the right-hand side is given by the count of pseudoholomorphic discs defining the $A_{\infty}$ structure map, multiplied by their intersection number with $D_j$.
For generic choice of perturbation data, all such pseudoholomorphic discs intersect $D_j$ transversely.
Then this term is given by the signed count of pseudoholomorphic discs together with a choice of internal marked point which lies on divisor $D_j$.

So we have two moduli spaces of pseudoholomorphic discs with an internal marked point: on the first, the perturbation data are chosen to be consistent with the Deligne--Mumford compactification, whereas on the second, the perturbation data are pulled back from the perturbation data used to define the $A_{\infty}$ structure maps, via the map forgetting the internal marked point.
We observe that the latter choice is \emph{not} consistent with the Deligne--Mumford compactification: consistency would require that, when the marked point approaches the boundary, a `thin' region modeled on a strip would develop separating the marked point from the rest of the disc, and the perturbation data along the thin region should coincide with the Floer data.
In the pulled-back perturbation data, when the marked point approaches the boundary, a holomorphic disc bubbles off on which the perturbation data has vanishing Hamiltonian part and constant almost-complex structure part $J = J_L$.

To compare the two choices, we consider the same moduli space of holomorphic discs used in the proof that $\CO$ is a homomorphism of $\C$-algebras (see \S \ref{subsec:co}, in particular Figure \ref{fig:2l1discs}).
However, we define a new choice of perturbation data on this moduli space: the perturbation data are pulled back from the moduli space used to define $\CO$, via the map which forgets the left-hand marked point.

Now we consider the moduli space of pseudoholomorphic discs with these perturbation data, where the left-hand marked point is constrained to lie on $D_j$ and the right-hand marked point is unconstrained.
For generic choice of perturbation data defining $\CO$, this moduli space is regular.
Counting the boundary points of the one-dimensional component shows that
\begin{equation}
\label{eqn:coxd}
\CO_{X,D}([D_j]) = \CO_{X,D}(e) \cup \left(r_j \del{\mu^*}{r_j} \right) + r_j \del{(w \cdot T)}{r_j} \cdot \CO_{X,D}(e) + \delta(H),
\end{equation}
where $H$ is defined by counting the zero-dimensional components of the moduli space.

\begin{figure}
\centering
\subfigure[The first term on the right-hand side of \eqref{eqn:coxd}. The perturbation data for the disc on the left are pulled back from the perturbation data for $\mu^s$, via the map which forgets the internal marked point. In particular they are not equal to the perturbation data for $\CO_{X,D}({[}D_j{]})$.]{
\includegraphics[width=0.8\textwidth]{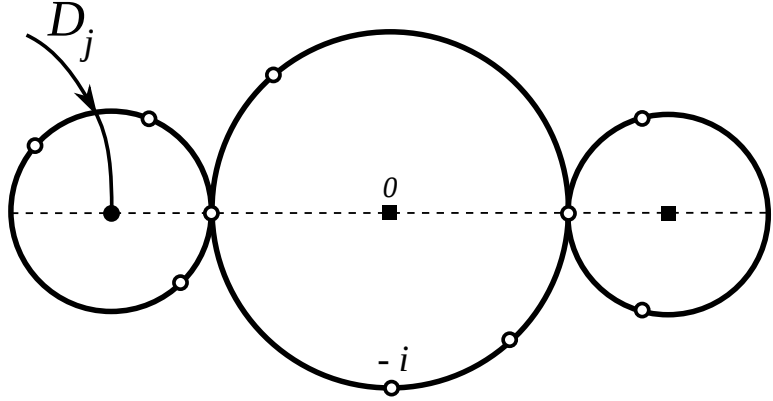}
\label{subfig:Fig6a}}
\subfigure[The second term on the right-hand side of \eqref{eqn:coxd}. The disc on the left is $J_L$-holomorphic, where $J_L$ is the fixed almost-complex structure associated to the Lagrangian $L$. In particular it is attached at a boundary node, rather than along a strip-like end as in the previous picture.]{
\includegraphics[width=0.8\textwidth]{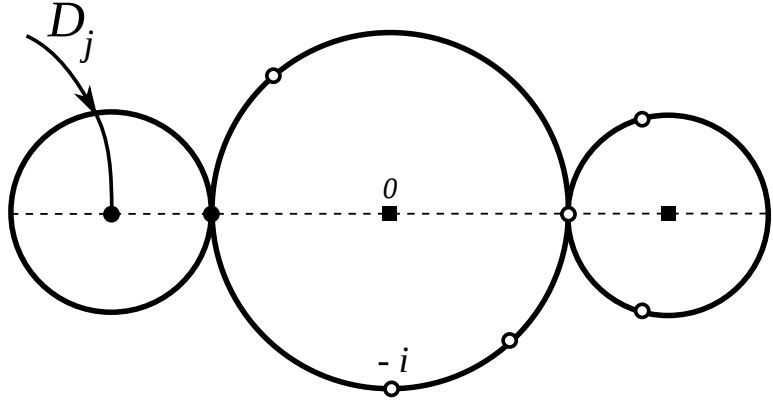}
\label{subfig:Fig6b}}
\caption{The first and second terms on the right-hand side of \eqref{eqn:coxd}.
\label{fig:Fig6}}
\end{figure}

The left-hand side of \eqref{eqn:coxd}  corresponds to the boundary component at $t=0$: compare Figure \ref{subfig:Fig2b} (the holomorphic sphere that bubbles off is necessarily constant, so it is equivalent to constraining the centre marked point to lie on $D_j$).

The first term on the right-hand side of \eqref{eqn:coxd}  corresponds to the boundary component at $t=1$, when the disc that bubbles off on the left has $\ge 1$ incoming marked boundary points (see Figure \ref{subfig:Fig6a}).
By the argument given at the start of the proof, the count of such discs contributes exactly $r_j \partial \mu^*/\partial r_j$.
The right-hand disc corresponds to $\CO_{X,D}(e)$, and the middle disc gives the Yoneda product.

The second term on the right-hand side of \eqref{eqn:coxd} corresponds to the boundary component at $t=1$, when the disc that bubbles off on the left has no incoming marked boundary points (see Figure \ref{subfig:Fig6b}).
Because the perturbation data on this disc are pulled back via the forgetful map, the disc must have constant perturbation data given by the almost-complex structure $J = J_L$. The count of such discs is $w$ by definition.
We must also choose an internal marked point lying on $D_j$; the disc has  intersection number $a d_j$ with divisor $D_j$ by Lemma \ref{lemma:c1disc}, and hence there are $a d_j$ possible choices for the internal marked point lying on divisor $D_j$.
Each such disc contributes with a coefficient $r^{u \cdot D} = T$, so the total count of these discs is $r_j \partial (w \cdot T)/\partial r_j$.

The central disc in such a configuration has a boundary marked point where it meets the $J_L$-holomorphic disc; the perturbation data are independent of the position of this boundary marked point, so it can be varied freely.
It follows that the moduli space factors through a moduli space of lower (hence negative) dimension, and is therefore empty, unless the central disc is actually a constant strip meeting the $J_L$-holomorphic disc on one boundary component (i.e., there are no open dots on the central disc in Figure \ref{subfig:Fig6b} except for those at $-i$ and $+1$, and the strip is constant along its length).
The disc on the right is precisely an element of the moduli space used to define $\CO_{X,D}(e)$.
It follows that this boundary component contributes the second term on the right-hand side.

Finally, disc bubbling for $t \in (0,1)$ contributes the final term of \eqref{eqn:coxd} (compare Figure \ref{subfig:Fig2a}).
The result now follows from \eqref{eqn:coxd}, as $\CO_{X,D}$ is a unital algebra homomorphism by Lemma \ref{lemma:counital}.
 \end{proof}

\section{Weak bounding cochains: algebra}
\label{sec:wbcalg}

\subsection{Weak bounding cochains}
\label{subsec:wbc}

Let $\cA$ be a \emph{curved, strictly unital}, $\bm{G}$-graded $A_\infty$ category, defined over some coefficient ring $R$.
In \cite{fooo} a procedure for formally enlarging $\cA$ is described, by introducing weak bounding cochains.
We will denote the result by $\cA^{wbc}$.
The construction of $\cA^{wbc}$ is formally analogous to the construction of the category of twisted complexes of an $A_\infty$ category \cite[\S 3l]{Seidel2008}.

Let $\cA^{\oplus}$ denote the additive enlargement of $\cA$ by introducing arbitrary finite direct sums of objects (this was denoted $\Sigma \cA$ in \cite[\S 3k]{Seidel2008}).
$\cA^\oplus$ is also a curved, strictly unital $A_\infty$ category.

An object of $\cA^{wbc}$ is a pair $(\bm{L},\alpha)$, where $\bm{L}$ is an object of $\cA^{\oplus}$, and
\begin{equation} \alpha \in hom^*_{\cA^{\oplus}}(\bm{L},\bm{L})\end{equation}
is an element of odd degree (not necessarily degree $1$) which is a \emph{weak bounding cochain}, i.e., a solution of the \emph{Maurer--Cartan equation:}
\begin{equation}
\mu^0_{\bm{L}} + \mu^1(\alpha) + \mu^2(\alpha,\alpha) + \ldots = \mathfrak{P}(\alpha) \cdot e_{\bm{L}} \label{eqn:mc}
\end{equation}
for some $\mathfrak{P}(\alpha) \in R$ (the `disc potential').

We remark that the Maurer--Cartan equation \eqref{eqn:mc}  does not make sense as written, because it is an infinite sum which has no reason to converge.
There are various ways of dealing with this, which we have summarized in Remark \ref{remark:conv}.
In the next section we will show that, for a specific class of weak bounding cochains on the monotone Fukaya category (which we call \emph{monotone} weak bounding cochains), the Maurer--Cartan equation \eqref{eqn:mc} has only finitely many terms for degree reasons.

Let us continue recalling the construction of $\cA^{wbc}$, with the understanding that certain restrictions have been imposed on our weak bounding cochains $\alpha$ to ensure convergence of \eqref{eqn:mc}, but without specifying their nature.

The hom-spaces of $\cA^{wbc}$ coincide with those in $\cA^{\oplus}$:
\begin{equation} hom^*_{\cA^{wbc}}((\bm{L}_0,\alpha_0),(\bm{L}_1,\alpha_1)) := hom^*_{\cA^{\oplus}}(\bm{L}_0,\bm{L}_1).\end{equation}
The $A_\infty$ structure maps of $\cA^{wbc}$ are defined by inserting the weak bounding cochains in all possible ways into the $A_\infty$ structure maps of $\cA^{\oplus}$: if
\begin{equation} a_i \in hom^*((\bm{L}_{i-1},\alpha_{i-1}),(\bm{L}_i,\alpha_i)) \mbox{  for $i=1,\ldots,s$},\end{equation}
then we define

\begin{multline} \mu^s_{\cA^{wbc}}(a_s,\ldots,a_1) :=  \\
\sum_{i_0,\ldots,i_s} \mu^*_{\cA^{\oplus}}(\underbrace{\alpha_s,\ldots}_{i_s},a_s, \underbrace{\alpha_{s-1},\ldots}_{i_{s-1}}, a_{s-1}, \ldots,a_1,\underbrace{\alpha_0,\ldots}_{i_0}). \label{eqn:defainf}
\end{multline}
Once again, convergence of \eqref{eqn:defainf}  is an issue to be dealt with, which we set aside for the purposes of this section.

These structure maps satisfy the $A_\infty$ associativity relations, making $\cA^{wbc}$ a curved, strictly unital $A_\infty$ category, where $(\bm{L},\alpha)$ has curvature
\begin{equation} \mu^0 = \mathfrak{P}(\alpha) \cdot e_{\bm{L}}.\end{equation}

It follows that for all $w \in R$, there is a non-curved, strictly unital $A_\infty$ category
\begin{equation} \cA^{wbc}_w \subset \cA^{wbc},\end{equation}
whose objects are those $(\bm{L},\alpha)$ with
\begin{equation} \mathfrak{P}(\alpha) = w\end{equation}
(compare \S \ref{subsec:curved}).

\begin{remark}
\label{remark:wbcgrad}
If $\cA$ is $\bm{G}$-graded, and all weak bounding cochains are chosen to have degree $1$, then $\cA^{wbc}$ is also $\bm{G}$-graded.
On the other hand, if the weak bounding cochains have degree $1+y$, for $y$ in some subgroup $U$ of the grading datum $\bm{G}$, then $\cA^{wbc}$ need only be $\bm{G}/U$-graded.
\end{remark}

\subsection{Hochschild invariants}
\label{subsec:hhwbc}

See Appendix \ref{sec:ainf} for our conventions on Hochschild homology and cohomology of $A_\infty$ categories.
Because $\cA$ sits inside $\cA^{wbc}$ as a full subcategory (the subcategory whose objects are those with weak bounding cochains equal to $0$), there are obvious morphisms of chain complexes
\begin{equation} \phi: CC_*(\cA) \hookrightarrow CC_*(\cA^{wbc})\end{equation}
(given by inclusion) and
\begin{equation} \psi: CC^*(\cA^{wbc}) \To CC^*(\cA)\end{equation}
(given by restriction).

There are also morphisms of chain complexes in the respective opposite directions, but once again they are only defined if there is a reason for them to converge.
The formula for Hochschild cohomology is:
\begin{equation}
\label{eqn:Psi} \Psi: CC^*(\cA)  \To  CC^*(\cA^{wbc})
\end{equation}
\begin{multline}
\Psi(\eta)(a_s, \ldots , a_1) := \\
\sum_{i_0,\ldots,i_s} \eta(\underbrace{\alpha_s,\ldots,\alpha_s}_{i_s},a_s, \underbrace{\alpha_{s-1},\ldots,\alpha_{s-1}}_{i_{s-1}}, a_{s-1}, \ldots,a_1,\underbrace{\alpha_0,\ldots,\alpha_0}_{i_0}),
\end{multline}
which  is easily verified to be a chain map, and to respect the Yoneda product \eqref{eqn:yon1pt} on the cochain level.
It obviously satisfies $\psi \circ \Psi = \mathrm{Id}$.

The formula for Hochschild homology is:
\begin{equation}
\Phi: CC_*(\cA^{wbc}) \To  CC_*(\cA)
\end{equation}
\begin{multline}
\Phi(a_s \otimes \ldots \otimes a_0) := \\
 \sum_{i_0,\ldots,i_s} a_s \otimes  \underbrace{\alpha_s \otimes \ldots \otimes \alpha_s}_{i_s} \otimes  a_{s-1} \otimes  \ldots \otimes a_0 \otimes \underbrace{\alpha_0 \otimes \ldots \otimes \alpha_0}_{i_0},
\end{multline}
which is easily verified to be a chain map, and to respect the cap product \eqref{eqn:cap1pt} on the cochain level.
It obviously satisfies $\Phi \circ \phi = \mathrm{Id}$.

When $\alpha$ is lower-triangular, as in the definition of twisted complexes, convergence holds, and these morphisms can be used to show the Morita invariance of Hochschild homology and cohomology (taking twisted complexes does not change the Hochschild invariants).

When $\alpha$ is required to have positive energy, a modification in the definition of the Hochschild homology and cohomology chain complexes is required to make convergence hold: namely, one has to take `completed' tensor products in the definition of Hochschild chains, and `filtered' Hochschild cochains (see \cite[\S 3.8]{fooo}).

In our intended application (which is to define the closed--open and open--closed string maps in the presence of weak bounding cochains, see \S \ref{subsec:cowbc}), we will not actually prove that these maps are well-defined: we will cook up the closed--open and open--closed maps using the formulae for $\Phi$ and $\Psi$ in a formal way, then show that the resulting maps are well-defined, i.e., that convergence holds for geometric reasons.

\subsection{Homotopy units}
\label{subsec:homun}

We recall that there are several notions of unitality for an $A_\infty$ category (see, e.g., \cite[\S 2a]{Seidel2008}).
We will need to work with homotopy unital $A_\infty$ categories, so we recall the definition here for reference, and to clarify conventions.

Let $\cA$ be a cohomologically unital, $R$-linear $A_\infty$ category, with cochain-level (cohomological) units
\begin{equation} e_L \in \cA(L,L).\end{equation}
A \emph{homotopy unit structure} (see \cite[\S 3.3]{fooo}) on $\cA$ is an $A_\infty$ structure on $\cA^+$, where
\begin{align}
\cA^+(L,L) &:= \cA(L,L) \oplus R \cdot f_L[1] \oplus R \cdot e^+_L,\\
\cA^+(K,L) &:= \cA(K,L) \mbox{ for $K \neq L$}
\end{align}
so that the $A_\infty$ structure coincides with the $A_\infty$ structure on $\cA$, and furthermore satisfies:
\begin{align}
\mu^1(f_L) &= e^+_L - e_L;\\
\mu^1(e^+_L) &= 0; \\
(-1)^{\sigma(a)}\mu^2(e^+_L,a) &= \mu^2(a,e^+_L) = a;\\
\mu^s(\ldots,e^+_L,\ldots) &= 0 \mbox{ for $s \ge 3$.}
\end{align}

In particular, $\cA^+$ is a strictly unital $A_\infty$ category, with cochain level units $e^+_L$.

If $\cA$ is equipped with homotopy units, then we will denote
\begin{equation} \cA^{wbc} := \left(\cA^+\right)^{wbc}\end{equation}
for brevity.

\subsection{The disc potential}
\label{subsec:dp}

Following \cite[\S 3.6.3]{fooo}, we make the following:

\begin{definition}
\label{definition:mweak}
Suppose that $\bm{L}$ is an object of $\cA^\oplus$.
We define $\hcM_{weak}(\bm{L})$ to be the set of solutions $\alpha$ to \eqref{eqn:mc}.
\end{definition}

\begin{remark}
There is a natural equivalence relation on $\hcM_{weak}(\bm{L})$, called `gauge equivalence'.
Gauge equivalent weak bounding cochains give rise to quasi-isomorphic objects of the Fukaya category.
The quotient of $\hcM_{weak}(\bm{L})$ by gauge equivalence is called the `Maurer--Cartan moduli space' or `Maurer--Cartan scheme' $\cM_{weak}(\bm{L})$ \cite[\S 4.3.1]{fooo}.
In this paper we will be concerned with a certain subspace of a certain $\hcM_{weak}(\bm{L})$, on which the gauge equivalence relation turns out to be trivial.
For that reason, and because the technical machinery is a bit involved, we will bypass the notion of gauge equivalence and work directly with $\hcM_{weak}(\bm{L})$.
\end{remark}

The `disc potential' defines a function
\begin{equation} \mathfrak{P}: \hcM_{weak}(\bm{L}) \To R\end{equation}
(compare \eqref{eqn:mc}).

\begin{remark}
\label{remark:dpnat}
It is natural with respect to change of coefficients, in the following sense: if
\begin{equation} \Phi: R \To S\end{equation}
is an algebra homomorphism, and $\cA$ an $R$-linear $A_\infty$ category, we can form the $S$-linear $A_\infty$ category $\cA \otimes_R S$.
Then there is a commutative diagram
\begin{equation}
\xymatrixcolsep{5pc} \xymatrix{
\hcM_{weak}(\bm{L}; \cA) \ar[r]^-{v \mapsto v \otimes_R 1} \ar[d]^{\mathfrak{P}}& \hcM_{weak}(\bm{L}; \cA\otimes_R S) \ar[d]^{\mathfrak{P}} \\
R \ar[r]^-{\Phi} & S.
} \end{equation}
\end{remark}

In \cite[Proposition 4.3]{Fukaya2010d}, it is proven that for any Lagrangian torus fibre $L$ of a symplectic toric manifold, there is an embedding
\begin{equation} H^1(L;\Lambda_+) \hookrightarrow \hcM_{weak}(L).\end{equation}
We will now prove an analogue of that result.
For the purposes of the following results, the reader should have in mind the case that $A = CF^*(L,L)$ is the exterior algebra on the vector space $V$, e.g., $L$ is a Lagrangian torus and $V = H^1(L) \subset A$.

\begin{lemma}
\label{lemma:prediscdisc}
Let $\cA$ be a cohomologically unital $A_\infty$ category over $R$, with cochain-level cohomological units $e_L \in hom^0_\cA(L,L)$.
Let $L$ be an object of $\cA$, and denote
\begin{equation} A := hom^*_\cA(L,L).\end{equation}
Suppose that
\begin{equation} V \subset A\end{equation}
is a subspace, concentrated in odd degree, such that
\begin{enumerate}
\item For any $v \in V$, $\mu^s(v,v,\ldots,v)$ is a multiple of $e_L$;
\item The sum
\begin{equation} \sum_{s=1}^\infty \mu^s(v,\ldots,v) =: \mathfrak{P}'(v) \cdot e_{L}\end{equation}
converges, for all $v \in V$  (c.f. Remark \ref{remark:conv} ; observe that each term in this sum is a multiple of $e_L$, by the preceding assumption);
\item \label{itm:3} $CC^{\le 0}(V,A) \cong \C \cdot e_L$ (as a $\C$-vector space).
\end{enumerate}
We call the resulting map
\begin{equation} \mathfrak{P}': V \To R\end{equation}
the \emph{pre-disc potential}.
Now suppose $\cA$ is equipped with homotopy units, so that $\cA^+$ is a strictly unital $A_\infty$ category with cochain-level strict units $e_L^+$, and define
\begin{equation} \hcM_{weak}(L),\end{equation}
the space of weak bounding cochains for L, computed in $\cA^+$.
Then there is an embedding
\begin{equation} \iota: V \hookrightarrow \hcM_{weak}(L),\end{equation}
which makes the following diagram commute:
\begin{equation} \xymatrix{
V \ar@{^{(}->}[rr]^-{\iota} \ar[rd]_{\mathfrak{P}'} & & \hcM_{weak}(L) \ar[ld]^{\mathfrak{P}}  \\
& R &}
\end{equation}
\end{lemma}
\begin{proof}
The embedding  $\iota$ is defined by
\begin{equation} \iota(v) := v + \mathfrak{P}'(v) f_L.\end{equation}
To prove that $\iota(v)$ does lie in $\hcM_{weak}(L)$, and does make the diagram commute, we first observe that whenever
\begin{equation} s = \sum_{j=0}^k i_j,\end{equation}
the expression
\begin{equation}
\mu^*(\underbrace{-, \ldots,-}_{i_k},f,\underbrace{-, \ldots,-}_{i_{k-1}},f,\ldots,f,\underbrace{-,\ldots,-}_{i_0}): V^{\otimes s} \To A^+ \label{eqn:muccf}
\end{equation}
defines an element of $CC^{2-2k}(V,A^+)$.
By hypothesis \eqref{itm:3}, $CC^{2-2k}(V,A)$ vanishes for $k \ge 2$, and is generated by the single element $e_L$ if $k=1$.
It follows that
\begin{itemize}
\item If $k \ge 2$, then the expression \eqref{eqn:muccf}  vanishes;
\item If $k=1$, it vanishes except for the map
\begin{equation} \mu^1(f_L) =  e_L^+ - e_L;\end{equation}
\item If $k = 0$, it coincides with $\mu^s$.
\end{itemize}
Hence,
\begin{align}
\mathfrak{P}(\iota(v)) &= \sum_{s=1}^\infty \mu^s\left(v + \mathfrak{P}'(v) \cdot f_L, \ldots, v + \mathfrak{P}'(v) \cdot f_L\right)\\
 &=  \sum_{s=1}^\infty \mu^s(v,\ldots,v) + \mathfrak{P}'(v) \mu^1(f_L) \\
&= \mathfrak{P}'(v) e_L + \mathfrak{P}'(v) (e_L^+-e_L) \\
&= \mathfrak{P}'(v) e_L^+,
\end{align}
from which the result follows.
 \end{proof}

\begin{remark}
In \cite[\S 3.6.3]{fooo}, the authors explicitly caution that one should work with $\mathfrak{P}$, rather than with $\mathfrak{P}'$.
However, Lemma \ref{lemma:prediscdisc} shows that, under certain hypotheses, the two can be related.
\end{remark}

We now prove an analogue, in the setting of Lemma \ref{lemma:prediscdisc}, of the well-known theorem that critical points of $\mathfrak{P}$ correspond to objects with vanishing differential, and therefore non-trivial cohomology (compare \cite{Cho2006}).
Suppose that $V$ is a free $R$-module with basis $\{\theta_1,\ldots,\theta_k\}$.
So if
\begin{equation} v = \sum_{j=1}^k v_j \theta_j \in V,\end{equation}
then
\begin{equation} \mathfrak{P} \circ \iota(v) = \mathfrak{P}'(v) = \sum_{s=1}^\infty \mu^s(v_1\theta_1 + \ldots + v_k \theta_k,\ldots,v_1\theta_1 + \ldots + v_k \theta_k).\end{equation}
We observe that $\mathfrak{P} \circ \iota$ is a power series in the $v_i$ (in fact, in our application, it will be a polynomial).

\begin{proposition}
\label{proposition:critpd}
Consider the setup of Lemma \ref{lemma:prediscdisc}.
Suppose furthermore that there is a decomposition as free $R$-modules
\begin{equation} A \cong \bigoplus_{k=0}^N A_k,\end{equation}
and that
\begin{equation} \mu^* = \mu^*_0 + \mu^*_1,\end{equation}
such that:
\begin{enumerate}
\item $A_0 = R \cdot e_L $;
\item $A_1 = V$;
\item $\mu^*_0$ has length $2$ and sends
\begin{equation} \mu^2_0 : A_k \otimes A_l \To A_{k+l}; \end{equation}
\item $\mu^*_1$ sends
\begin{equation} \mu^*_1: V^{\otimes b} \otimes A_k \otimes V^{\otimes c} \otimes A_l \otimes V^{\otimes d} \To \bigoplus_{m < k+l} A_m.\end{equation}
\item \label{itm:gens} It follows that $\mu^2_0$ defines an associative algebra structure on $A$; we suppose that $V$ generates $A$ as an associative algebra, with respect to $\mu^2_0$, and that $e_L$ is an identity element for this algebra.
\end{enumerate}
Then, if $v \in V$ is a critical point of $\mathfrak{P}'$, and $\alpha = \iota(v)$ is the corresponding Maurer--Cartan element, then the differential $\mu^1_\alpha$ vanishes on the endomorphism algebra of $(L,\alpha)$ in $\cA^{wbc}$, with the exception that $\mu^1_\alpha(f_L) = e^+_L - e_L$.
In particular, the cohomology is
\begin{equation} \mathrm{Hom}^*((L,\alpha),(L,\alpha)) \cong A,\end{equation}
as an $R$-module.
\end{proposition}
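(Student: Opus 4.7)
The plan is to substitute $\alpha = v + \mathfrak{P}'(v) f_L$ into $\mu^1_\alpha$ and establish the vanishing case by case, paralleling the proof of Lemma \ref{lemma:prediskdisk}. The case $x = e_L^+$ is immediate from the strict unitality of $\mathcal{A}^{wbc}$.

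For $x \in V$, I would give a variational argument. The map $\iota \colon V \hookrightarrow \widehat{\mathcal{M}}_{weak}(L)$ provided by Lemma \ref{lemma:prediskdisk} is a family of weak bounding cochains, with $\mathfrak{P} \circ \iota = \mathfrak{P}'$ and tangent vector $d\iota_v(x) = x + (\partial_x \mathfrak{P}'(v)) f_L$. Formally differentiating the Maurer-Cartan equation $\sum_s \mu^s(\iota(v)^{\otimes s}) = \mathfrak{P}'(v) \, e_L^+$ in the direction of $x$ gives
\begin{equation}
\mu^1_\alpha(d\iota_v(x)) = (\partial_x \mathfrak{P}'(v)) \cdot e_L^+.
\end{equation}
At a critical point of $\mathfrak{P}'$, the right hand side vanishes and $d\iota_v(x) = x$, yielding $\mu^1_\alpha(x) = 0$.

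For $x = f_L$, I would expand $\mu^1_\alpha(f_L) = \sum \mu^{s+1}(\alpha, \ldots, \alpha, f_L, \alpha, \ldots, \alpha)$ and group the terms by the total number $k$ of $f_L$ insertions (counting the central one). Viewed as multilinear functions of the $v$'s, each grouping defines a Hochschild cochain of degree $2 - 2k$ in $CC^*(V, A^+)$. Hypothesis (\ref{itm:3}), combined with a scalar-degree count that forces the $f_L$ and $e_L^+$ components of such multilinear maps to vanish unless the relevant coefficient is a degree-zero scalar, forces all groupings with $k \geq 2$ to vanish, while the single $k = 1$ term reduces to $\mu^1(f_L) = e_L^+ - e_L$ (plus corrections in $A$ of degree $\leq -1$ that again vanish by hypothesis (\ref{itm:3})).

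Finally, for $x \in A_k$ with $k \geq 2$, I would proceed by induction on $k$. Since $V$ generates $A$ as a $\mu^2_0$-algebra (hypothesis (\ref{itm:gens})), write $x = \mu^2_0(y_1, y_2)$ with $y_1 \in V$ and $y_2 \in A_{k-1}$. Using $\mu^2_0 = \mu^2 - \mu^2_1$ and the $A_\infty$ Leibniz relation for the deformed operations $\mu^*_\alpha$ on the curved object $(L,\alpha)$, together with the centrality of $\mu^0_\alpha = \mathfrak{P}(\alpha) \, e_L^+$, the identity $\mu^1_\alpha(x)$ is expressed in terms of $\mu^1_\alpha(y_i)$'s (which vanish by the inductive hypothesis) plus correction terms living in lower filtration, which vanish by a nested induction using the filtration-decreasing property of $\mu^*_1$. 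The main obstacle is precisely this last step: making the $A_\infty$ Leibniz argument rigorous in the presence of the higher operations $\mu^{\geq 3}_\alpha$ and the contributions coming from $f_L$ insertions within $\alpha$, all of which must be controlled by the Hochschild cochain degree bookkeeping developed in the proof of Lemma \ref{lemma:prediskdisk}.
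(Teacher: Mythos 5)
Your overall strategy matches the paper's, and your variational reformulation of the $x \in V$ case — differentiating the Maurer-Cartan equation $\sum_s \mu^s(\iota(v)^{\otimes s}) = \mathfrak{P}'(v) e_L^+$ in the direction of $x$ to get $\mu^1_\alpha(d\iota_v(x)) = (\partial_x \mathfrak{P}')(v) e_L^+$ — is a clean repackaging of the paper's explicit computation $\mu^1_\alpha(\theta_i) = \partial\mathfrak{P}'/\partial v_i$.

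However, there is a genuine gap: you never establish $\mu^1_\alpha(e_L) = 0$, and this is needed. In the inductive step for $x \in A_k$, the correction terms $\mu^s_1(\alpha,\ldots,y,\ldots,z,\ldots)$ land in $\bigoplus_{m<k} A_m$, which includes $A_0 = R \cdot e_L$; so the induction requires $\mu^1_\alpha(e_L) = 0$ as a base case, and this does not follow from strict unitality (only $e_L^+$, not $e_L$, is the strict unit). The paper's fix is quick but does not appear in your sketch: since $\mathcal{A}^{wbc}_{\mathfrak{P}(\alpha)}$ is non-curved, $\mu^1_\alpha$ squares to zero, so applying it twice to $f_L$ gives $\mu^1_\alpha(e_L^+ - e_L) = 0$, and then $\mu^1_\alpha(e_L^+) = 0$ by strict unitality yields $\mu^1_\alpha(e_L) = 0$. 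Separately, the worry you flag as ``the main obstacle'' in the final paragraph is overstated: the relevant $A_\infty$ relation $\mu^1_\alpha(\mu^2_\alpha(y,z)) = -\mu^2_\alpha(y,\mu^1_\alpha(z)) + (-1)^{\sigma(z)}\mu^2_\alpha(\mu^1_\alpha(y),z)$ involves only $\mu^1_\alpha$ and $\mu^2_\alpha$, and all $f_L$-insertions inside $\alpha$ are already killed by hypothesis (3) of Lemma \ref{lemma:prediskdisk} exactly as in that lemma's proof; no nested induction is required beyond the ordinary one on $k$.
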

\begin{proof}
The proof is a modification of that of \cite[Lemma 13.1]{Fukaya2010d}.
We extend the decomposition of $A$ to be defined on
\begin{equation} A^+ = A \oplus R \cdot f_L \oplus R \cdot e_L^+,\end{equation}
by putting $f_L$ in $A^+_{-1}$ and $e^+_L$ in $A^+_0$.
It follows as in the proof of Lemma \ref{lemma:prediscdisc} that
\begin{equation} \mu^1_\alpha(f_L) = e_L^+ - e_L.\end{equation}
Because $\mu^1_\alpha(\mu^1_\alpha(f_L)) = 0$, it follows that
\begin{equation} \mu^1_\alpha(e_L) = \mu^1_\alpha(e_L^+) = 0,\end{equation}
by strict unitality.
This proves that $\mu^1_\alpha$ has the stated form when applied to $f_L,e^+_L, e_L$.

Next, we observe that, for any generator $\theta_i$ of $V$, we have
\begin{align}
\label{eqn:mu11}\mu^1_\alpha(\theta_i) &:= \sum_{i_0,i_1} \mu^*(\underbrace{\alpha,\ldots,\alpha}_{i_0}, \theta_i, \underbrace{\alpha,\ldots,\alpha}_{i_1}) \\
\label{eqn:mu12}&= \sum_{i_0,i_1} \mu^*(\underbrace{v,\ldots,v}_{i_0}, \theta_i, \underbrace{v,\ldots,v}_{i_1})  \\
&= \del{}{v_i} \mathfrak{P}'(v) \\
&= 0,
\end{align}
by assumption (in the passage from \eqref{eqn:mu11} to \eqref{eqn:mu12}, we must check that other terms vanish, c.f. proof of Lemma \ref{lemma:prediscdisc}).
Hence, $\mu^1_\alpha(V) = 0$.

We now prove that $\mu^1_\alpha$ vanishes on $A^+_k$ for all $k \ge 0$, by induction on $k$.
Above, we have proven the result for all $k \le 1$.
Now suppose that we have proven the result for all $k \le l-1$.
Let $x$ be a generator of $A_l$.
By assumption \eqref{itm:gens}, we can choose $y \in V$ and $z \in  A_{l-1}$, so that
\begin{equation} \mu^2_0(y,z) = x.\end{equation}
We then apply the second $A_\infty$ relation for $\mu^*_\alpha$:
\begin{align}
\mu^1_\alpha(\mu^2_\alpha(y,z)) &=  -\mu^2_\alpha(y,\mu^1_\alpha(z)) + (-1)^{\sigma(z)} \mu^2_\alpha(\mu^1_\alpha(y),z) \\
&= 0,
\end{align}
by the inductive assumption.
We now have
\begin{equation} \mu^2_\alpha(y,z) = \mu^2_0(y,z) + \sum_{s} \mu^s_1(\alpha,\ldots,\alpha,y, \alpha, \ldots,\alpha,z,\alpha,\ldots,\alpha)\end{equation}
By hypothesis, the second term is a sum of elements of $A_k$ for $k<l$.
Hence, $\mu^1_\alpha$ vanishes on it, by the inductive assumption.
It follows that
\begin{equation} \mu^1_\alpha(x) = \mu^1_\alpha(\mu^2_0(y,z)) = 0,\end{equation}
which completes the inductive step.
 \end{proof}

\begin{remark}
It may be that the function $\mathfrak{P}$ has no critical points over $R$, but that if we change coefficients from $R$ to $S$, then the function $\mathfrak{P} \otimes_R S$ does have critical points.
Proposition \ref{proposition:critpd} applies equally in that case.
Namely, to each $v \in V \otimes_R S$, there corresponds a weak bounding cochain $\alpha$ in the category $\cA \otimes_R S$ (compare Remark \ref{remark:dpnat}).
Moreover, if $v$ is a critical point of $\mathfrak{P} \otimes_R S$, then the differential $\mu^1_\alpha$ on the endomorphism algebra of $(L,\alpha)$ in $(\cA \otimes_R S)^{wbc}$ vanishes, except for $\mu^1_\alpha(f_L) = e^+_L - e_L$: the argument goes through unchanged.
\end{remark}

The following is an analogue of Cho's result \cite[Theorem 5.6]{Cho2005} (see also \cite{Hori2001,Kapustin2004}).

\begin{proposition}
\label{proposition:hess}
In the setting of Proposition \ref{proposition:critpd}, suppose that $v \in V$ is a critical point of $\mathfrak{P}'$, and $\alpha = \iota(v)$ is the corresponding Maurer--Cartan element, so that $(L,\alpha)$ is a non-zero object of $\cA^{wbc}$.
Then there is a surjective map of $R$-algebras
\begin{equation} \Cl \left(-Hess_v(\mathfrak{P}') \right) \twoheadrightarrow \mathrm{Hom}^*((L,\alpha),(L,\alpha)) \end{equation}
sending $V$ to $V$.
Here, the left-hand side denotes the Clifford algebra on the free $R$-module $V$, with the quadratic form $-Hess_v(\mathfrak{P}')$, which has matrix
\begin{equation} -(Hess_v(\mathfrak{P}'))_{ij} := -\del{^2\mathfrak{P}'}{v_i \partial v_j} (v)\end{equation}
with respect to the basis $\{\theta_1,\ldots,\theta_k\}$ (see \S \ref{subsec:cliff} for reminders on Clifford algebras).
\end{proposition}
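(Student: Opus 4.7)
The plan is to construct a surjective $R$-algebra homomorphism
$\Cl(-\mathrm{Hess}_v(\mathfrak{P}')) \to \mathrm{Hom}^*((L,\alpha),(L,\alpha))$
by defining it on generators via $\theta_i \mapsto [\theta_i]$ and then checking that (i) the Clifford anticommutation relations hold on cohomology, and (ii) the $V$-subalgebra is all of the endomorphism algebra.

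For surjectivity, I would use Proposition \ref{proposition:critpd} to identify $\mathrm{Hom}^*((L,\alpha),(L,\alpha))$ with $A$ as an $R$-module. Hypothesis (5) of that proposition says $V$ generates $A$ as an algebra under $\mu^2_0$. Since hypothesis (4) guarantees that $\mu^2_\alpha - \mu^2_0$ strictly decreases the filtration degree on $A = \bigoplus_k A_k$, an induction on $l$ shows that $V$ also generates the cohomology algebra under $\mu^2_\alpha$: given $x \in A_l$, write $x = \mu^2_0(\theta, y)$ with $\theta \in V$ and $y \in A_{l-1}$; then $x - \mu^2_\alpha(\theta, y) \in \bigoplus_{m<l} A_m$ already lies in the $V$-subalgebra by the inductive hypothesis, so $x$ does too.

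The heart of the proof is to establish, on the level of cohomology, the identity
\begin{equation*}
\mu^2_\alpha(\theta_i,\theta_j) + \mu^2_\alpha(\theta_j,\theta_i) \;=\; -\mathrm{Hess}_v(\mathfrak{P}')(\theta_i,\theta_j)\cdot [e_L].
\end{equation*}
I would prove this by direct expansion. The anticommutator on the left unpacks, by the definition of $\mu^2_\alpha$ in \eqref{eqn:defainf}, to the sum $\sum_{s \ge 2} \sum_{k \neq l} \mu^s(\alpha,\ldots,\alpha,\theta_i \text{ at }k,\alpha,\ldots,\theta_j \text{ at }l,\alpha,\ldots,\alpha)$, the inner sum running over ordered pairs of distinct positions. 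On the other hand, differentiating the pre-disk potential $\mathfrak{P}'(v)\cdot e_L = \sum_s \mu^s(v,\ldots,v)$ twice in the even parameters $v_i, v_j$ produces exactly the same expression with each $\alpha$ replaced by $v$, yielding $(\partial^2 \mathfrak{P}'/\partial v_i \partial v_j)\cdot e_L$. Substituting $\alpha = v + \mathfrak{P}'(v) f_L$ into the first expression and expanding in powers of $f_L$, the pure-$v$ contributions match; the remaining terms contain at least one $f_L$ and must be shown to be coboundaries. For this, I would extend the vanishing argument of Lemma \ref{lemma:prediskdisk} (which uses hypothesis (3) there to handle cochains in $V$ alone) to the slightly larger setting of cochains with two extra $\theta$-insertions, using the critical-point condition $(\partial\mathfrak{P}'/\partial v_i)(v) = 0$ and the identity $\mu^1_\alpha(f_L) = e_L^+ - e_L$. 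The overall sign then comes from the convention of Definition \ref{definition:cliff}.

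Given the Clifford identity above, the linear map $V \to \mathrm{Hom}^*((L,\alpha),(L,\alpha))$ satisfies the defining relations of $\Cl(-\mathrm{Hess}_v(\mathfrak{P}'))$, so by the universal property it extends to an $R$-algebra homomorphism, surjective by the first step. The main obstacle is the bookkeeping in the Clifford identity: the $f_L$-insertions that arise upon expanding $\alpha = v + \mathfrak{P}'(v) f_L$ are numerous, and showing that they assemble into an exact cochain will require a careful analogue of the vanishing argument already used in Lemma \ref{lemma:prediskdisk} and Proposition \ref{proposition:critpd}, now accommodating two additional $\theta$-arguments among the inputs.
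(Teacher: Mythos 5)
Your proposal follows the paper's proof of Proposition~\ref{proposition:hess} quite closely; the only thing to flag is that the bookkeeping you identify as ``the main obstacle'' is easier than you anticipate. The terms containing $f_L$ vanish identically on the chain level, not merely as coboundaries: each such term, with $k\ge 1$ copies of $f_L$ inserted among the $v$'s, $\theta_i$ and $\theta_j$, is an element of $CC^{2-2k}(V,A^+)$ of length $\ge 2$ (since $\theta_i,\theta_j$ occupy two slots and the output lies in $A$ because $\mu^{\ge 2}$ never outputs $e^+_L$ or $f_L$), and hence vanishes by hypothesis~(3) of Lemma~\ref{lemma:prediskdisk} exactly as in that lemma's proof — so neither the critical-point condition nor $\mu^1_\alpha(f_L)=e^+_L-e_L$ is needed for this step. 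Also note that the chain-level identity is $\mu^2_\alpha(\theta_i,\theta_j)+\mu^2_\alpha(\theta_j,\theta_i)=+\tfrac{\partial^2\mathfrak{P}'}{\partial v_i\partial v_j}(v)\cdot e_L$; the minus sign in $\Cl(-\mathrm{Hess}_v(\mathfrak{P}'))$ comes from the Koszul sign in the cohomology product \eqref{eqn:assoc} applied to the odd elements $\theta_i$, not from the convention in Definition~\ref{definition:cliff}.
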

\begin{proof}
By Proposition \ref{proposition:critpd}, we have
\begin{equation}\mathrm{Hom}^*((L,\alpha),(L,\alpha)) \cong A\end{equation}
as an $R$-module; it remains to identify the algebra structure $\mu^2_\alpha$.
By the proof of Lemma \ref{lemma:prediscdisc}, we have
\begin{align}
\mu^2_\alpha(\theta_i,\theta_j) + \mu^2_\alpha(\theta_j,\theta_i) &= \sum \mu^*(\alpha,\ldots,\theta_i,\alpha,\ldots,\theta_j, \alpha,\ldots) \\
&\phantom{{}=\sum} + \sum \mu^*(\alpha,\ldots,\theta_j,\alpha,\ldots,\theta_i, \alpha,\ldots) \\
&= \sum \mu^*(v,\ldots,\theta_i,v,\ldots,\theta_j, v,\ldots) \\
&\phantom{{}=\sum}  + \sum \mu^*(v,\ldots,\theta_j,v,\ldots,\theta_i, v,\ldots) \\
&= \del{^2}{v_i \partial v_j} \sum \mu^*(v,\ldots,v) \\
&= \del{^2\mathfrak{P}'}{v_i \partial v_j} (v) \cdot e_L.
\end{align}
It follows that, for any $u \in V$,
\begin{equation} \mu^2_\alpha(u,u) = Hess_v(\mathfrak{P}')(u,u) \cdot e_L.\end{equation}
Hence, in the cohomology category we have (see  \eqref{eqn:assoc})
\begin{equation} u \cdot u = -Hess_v(\mathfrak{P}')(u,u) \cdot e_L,\end{equation}
as all $u \in V$ are odd by definition of a weak bounding cochain.
Now $V$ generates $A$ as an algebra with respect to the product $\mu^2_0$, by assumption (c.f. Proposition \ref{proposition:critpd}); because $\mu^2_0$ is the leading term of $\mu^2_\alpha$, it follows that $V$ also generates $A$ as an algebra with respect to the product $\mu^2_\alpha$.
This completes the proof.
 \end{proof}

Now let us consider the setting of Proposition \ref{proposition:critpd} further.
Observe that $\mu^2_0(V,V) \subset A_2$, but $\mu^2_0(v,v)$ must be a multiple of $e_L \in A_0$; hence $\mu^2_0(v,v) = 0$ for any $v \in V$.
It follows that, for any $v_1,v_2 \in V$, there is a differential
\begin{align}
d_{v_1,v_2}: A & \To  A;\\
d_{v_1,v_2}(x) &:= \mu^2_0(v_2,x) + \mu^2_0(x,v_1).
\end{align}

\begin{proposition}
\label{proposition:diffcrit}
In the setting of Proposition \ref{proposition:critpd}, suppose that $v_1, v_2 \in V$ are distinct critical points of $\mathfrak{P}'$, but with the same critical value $w := \mathfrak{P}'(v_1) = \mathfrak{P}'(v_2)$; and suppose that the differential $d_{v_1,v_2}$ admits a contracting homotopy, i.e., there exists a map
\begin{equation} h: A_* \To A_{*-1}\end{equation}
so that
\begin{equation} [d_{v_1,v_2},h] = \mathrm{Id}.\end{equation}

Now let $\alpha_i = \iota(v_i)$ for $i = 1,2$ be the corresponding weak bounding cochains, giving objects $(L,\alpha_1)$ and $(L,\alpha_2)$ of $\cA^{wbc}_w$.
Consider the chain complex
\begin{equation} hom^*((L,\alpha_1),(L,\alpha_2))\end{equation}
in $\cA^{wbc}_w$; denote the differential by
\begin{align}
\mu^1_{\alpha_1,\alpha_2}: A^+ & \To  A^+,\\
\mu^1_{\alpha_1,\alpha_2}(x) & :=  \sum \mu^*(\alpha_2,\ldots,\alpha_2,x,\alpha_1,\ldots,\alpha_1).
\end{align}
Then the differential $\mu^1_{\alpha_1,\alpha_2}$ also admits a contracting homotopy: i.e., there exists a map
\begin{equation} H: A^+ \To A^+\end{equation}
(of odd degree, but no longer necessarily $\Z$-graded), such that
\begin{equation} [\mu^1_{\alpha_1,\alpha_2},H] = \mathrm{Id}.\end{equation}
In particular, the differential $\mu^1_{\alpha_1,\alpha_2}$ is acyclic:
\begin{equation} \mathrm{Hom}^*((L,\alpha_1),(L,\alpha_2)) \cong 0.\end{equation}
\end{proposition}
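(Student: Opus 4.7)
The plan is to deform the given contracting homotopy $h$ for $d_{v_1,v_2}$ into one for $\mu^1_{\alpha_1,\alpha_2}$ via a homological perturbation argument, using the decomposition $A = \bigoplus_{k=0}^{N} A_k$ as a bounded filtration. First I would extend the decomposition to $A^+$ by placing $f_L$ in degree $-1$ and $e^+_L$ in degree $0$, and set $F^{\leq j} A^+ := \bigoplus_{k \leq j} A^+_k$.

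Next, I would decompose $\mu^1_{\alpha_1,\alpha_2} = D_0 + \epsilon$ as follows. For $x \in A_k$, expand using $\alpha_i = v_i + w \cdot f_L$ and $\mu^* = \mu^*_0 + \mu^*_1$. The only contribution landing in the top graded piece $A_{k+1}$ comes from the length-two $\mu^2_0$-terms applied to $x$ together with the $V$-part of a single $\alpha_i$, namely $\mu^2_0(v_2,x) + \mu^2_0(x,v_1) = d_{v_1,v_2}(x)$. All remaining contributions---those with at least one $f_L$-insertion (which drop the $A_*$-degree by one per insertion by hypothesis~(3) of Proposition~\ref{proposition:critpd}), those from $\mu^*_1$ (strictly filtration-decreasing by hypothesis~(4)), and those of length $s \neq 2$ (which can only use $\mu^*_1$, since $\mu^*_0$ is concentrated in length two)---lie strictly below the filtration index $k+1$. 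Let $D_0$ collect the leading-order data: on $A$ it restricts to $d_{v_1,v_2}$, and on $R \cdot f_L \oplus R \cdot e^+_L$ it incorporates the homotopy-unit relation $\mu^1(f_L) = e^+_L - e_L$. Then $\epsilon := \mu^1_{\alpha_1,\alpha_2} - D_0$ strictly decreases the filtration.

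Then I would extend $h$ to a contracting homotopy $\tilde h$ for $D_0$ on all of $A^+$: on $A$ it equals $h$, and on the subspace $R \cdot e_L \oplus R \cdot f_L \oplus R \cdot e^+_L$ one builds $\tilde h$ by hand using the homotopy-unit relation ($[e_L]$ and $[e^+_L]$ become cohomologous via $f_L$, so after absorbing the $e_L$-summand into the contracting piece coming from $h$, this subspace splits off as a trivially contractible piece of $D_0$). One arranges that $\tilde h$ does not increase the filtration.

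Finally, I would invoke the homological perturbation lemma. Because $\epsilon$ strictly decreases the bounded filtration and $\tilde h$ does not increase it, the composite $\epsilon \tilde h$ is nilpotent on $A^+$, so $\mathrm{Id} - \epsilon \tilde h$ is invertible with inverse the terminating geometric series $\sum_{k \geq 0} (\epsilon \tilde h)^k$. Setting
\[
H := \tilde h \cdot (\mathrm{Id} - \epsilon \tilde h)^{-1} = \sum_{k \geq 0} \tilde h \, (\epsilon \tilde h)^{k},
\]
a direct computation using $D_0 \tilde h + \tilde h D_0 = \mathrm{Id}$ yields $\mu^1_{\alpha_1,\alpha_2} H + H \mu^1_{\alpha_1,\alpha_2} = \mathrm{Id}$, giving the required contracting homotopy; acyclicity is immediate. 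The hard part will be the combinatorial bookkeeping in the decomposition step---verifying that every term arising from higher-length $\mu^s_1$ combined with $f_L$-insertions genuinely lies strictly below the leading filtration level, and arranging $\tilde h$ to be filtration-non-increasing so that the perturbation series terminates.
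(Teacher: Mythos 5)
Your approach coincides in essence with the paper's proof: you decompose $\mu^1_{\alpha_1,\alpha_2}$ by the $\Z$-filtration degree, extend $h$ to a contracting homotopy $\tilde h$ for the degree-$(+1)$ leading part $D_0$, and transfer contractibility across the filtration-decreasing perturbation $\epsilon$. The paper runs this order by order, nullhomotoping the degree-$(-2i)$ error at each stage, whereas you write the closed-form homological-perturbation series --- the same argument, differently packaged. Two details in your sketch are worth correcting. First, the leading piece $D_0$ also satisfies $D_0(e^+_L) = v_2 - v_1 \in A_1$ (strict unitality gives $\mu^1_{\alpha_1,\alpha_2}(e^+_L) = \alpha_2 - \alpha_1$, whose $V$-component is $v_2 - v_1$ since the critical values agree), so the subspace $R\cdot e_L \oplus R\cdot f_L \oplus R\cdot e^+_L$ does \emph{not} split off from $(A^+, D_0)$ as you suggest; instead, verifying $[D_0,\tilde h] = \mathrm{Id}$ on $e^+_L$ uses the identity $h(v_2 - v_1) = e_L$, obtained by applying $[d_{v_1,v_2},h] = \mathrm{Id}$ to $e_L$ together with $h(e_L) \in A_{-1} = 0$. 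Second, with the convention $[D_0,\tilde h] = \mathrm{Id}$ and $\epsilon := \mu^1_{\alpha_1,\alpha_2} - D_0$, the perturbed contracting homotopy is $H = \tilde h(\mathrm{Id} + \epsilon\tilde h)^{-1}$, not $\tilde h(\mathrm{Id} - \epsilon\tilde h)^{-1}$: the sign you wrote already fails at first order (try $A^+ = R\langle a,b,c,d\rangle$ in filtration degrees $0,1,2,3$ with $D_0 a = b$, $D_0 c = d$, $\epsilon(c) = \lambda b$). Once those two points are corrected your formula does close without side conditions on $\tilde h$, so the proof goes through.
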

\begin{proof}
Consider the $\Z$-grading on $\mathrm{End}(A)$ induced by the $\Z$-grading on $A$.
Write
\begin{equation} \mu^1_{\alpha_1,\alpha_2} =: d= \sum_{i \ge 0} d_{1-2i},\end{equation}
where $d_j$ has degree $j$.
We can do this because $d = \mu^1_{\alpha_1,\alpha_2}$ has odd degree, and furthermore has degree $\le 1$ by the hypotheses of Proposition \ref{proposition:critpd}.
Furthermore, we can identify $d_1$ as the part of $\mu^1_{\alpha_1,\alpha_2}$ coming from $\mu^2_0$:
\begin{align}
d_1(x) &= d_{v_1,v_2}(x) \mbox{ for $x \in A$} \\
d_1(f_L) &= e_L^+ - e_L \\
d_1(e_L^+) &= v_2 - v_1.
\end{align}
We construct $H$ order-by-order, as
\begin{equation} H = \sum_{i \ge 0} H_{-1-2i}.\end{equation}

The first step is to define $H_{-1}$: we set
\begin{align}
H_{-1}(x) &= h(x) \mbox{ for $x \in A$} \\
H_{-1}(f_L) &= 0 \\
H_{-1}(e_L^+) &= f_L.
\end{align}
One easily checks that $[d_1,H_{-1}] = \mathrm{Id}$ (one must use the fact that $e_L$ is a unit for the associative product $\mu^2_0$, which was part of hypothesis \eqref{itm:gens} of Proposition \ref{proposition:critpd}).

Now define
\begin{equation} F:= [d,H] - \mathrm{Id} \in \mathrm{End}(A);\end{equation}
we prove inductively that it is possible to choose $H_{-1}, \ldots,H_{-1-2i}$ so that $F_{\ge -2i} = 0$.
The only terms which can contribute to $F_{\ge 0}$ are $[d_1,H_{-1}] -\mathrm{Id}$, which we have shown to be zero; so the base case $i=0$ of the induction is established.

Now suppose that the hypothesis has been established to order $i-1$.
To establish it to order $i$, we must show it is possible to choose $H_{-1-2i}$ so that $F_{-2i}$ vanishes.
Note that
\begin{equation}F_{-2i} =  \left([d_{\ge 1-2i},H_{\ge 1-2i}]\right)_{-2i} + [d_1,H_{-1-2i}].\end{equation}
Now we have
\begin{equation} [d,[d,H_{\ge 1-2i}]] = 0;\end{equation}
combining this with $[d,H_{\ge 1-2i}]_{\ge -2(i-1)} = \mathrm{Id}$ (the inductive hypothesis), we have
\begin{equation} [d_1,[d,H_{\ge 1-2i}]_{-2i}] = 0,\end{equation}
so $[d,H_{\ge 1-2i}]_{-2i}$ is a chain map from $(A^+,d_1)$ to itself.
But the latter chain complex admits a contracting homotopy, by hypothesis, so the endomorphism $[d,H_{\ge 1-2i}]_{-2i}$ is nullhomotopic.
It now suffices to set $H_{-1-2i}$ equal to the nullhomotopy.
This completes the inductive step and hence the proof.
 \end{proof}

\subsection{Finite abelian group actions}
\label{subsec:fabg}

In this section, we describe how the disc potential behaves under finite covers  (compare \cite[\S 5]{Cho2013}).
We use the terminology of \cite[\S 4]{Seidel2003} and \cite[\S 2]{Sheridan2015}.

Let $\bm{G}_i = \{ \Z \To Y_i\}$ be grading data, for $i=1,2$, and suppose that
\begin{equation} \bm{p}: \bm{G}_1 \hookrightarrow \bm{G}_2\end{equation}
is an injective morphism of grading data, with finite cokernel $\Gamma \cong Y_2/Y_1$.
Suppose that $R$ is a $\bm{G}_1$-graded $\C$-algebra, and that $\cA$ is a $\bm{G}_2$-graded, $\bm{p}_* R$-linear $A_\infty$ category.
Recall that we can then form the $\bm{G}_1$-graded, $R$-linear $A_\infty$ category $\bm{p}^* \cA$.

There is an action of the character group of $\Gamma$,
\begin{equation} \Gamma^* := \mathrm{Hom}(\Gamma,\C^*),\end{equation}
on the morphism spaces of $\cA$: namely, if $a \in hom^*(K,L)$ has pure degree $y \in Y_2$, then $\chi$ acts on $a$ by
\begin{equation} \chi \cdot a := \chi([y]) a.\end{equation}
Because $\cA$ is $\bm{G}_2$-graded, this action strictly commutes with the $A_\infty$ maps $\mu^*$.

Let $L$ be an object of $\cA$, and $A := hom^*(L,L)$ its endomorphism algebra.
Then $\Gamma^*$ acts (strictly) on $A$, and we can form the semidirect product
\begin{equation} A \rtimes \Gamma^* := A \otimes \C\left[\Gamma^*\right],\end{equation}
with the $A_\infty$ structure maps
\begin{multline}
\label{eqn:semidir}
\mu^s(a_s \otimes \chi_s,\ldots, a_1 \otimes \chi_1) := \\
\mu^s(a_s,\chi_s \cdot a_{s-1}, \chi_s \chi_{s-1} \cdot  a_{s-2},\ldots,\chi_s \ldots\chi_2 \cdot a_1 ) \otimes \chi_s \ldots \chi_1
\end{multline}
(see \cite[Formula (4.8)]{Seidel2003}).
The semidirect product is also sometimes denoted `$\#$' rather than `$\rtimes$'.

The semidirect product is related to $\bm{p}^* \cA$ by a Fourier transform.
To see this, we choose a set-theoretic splitting $\theta$ of the following short exact sequence of abelian groups:
\begin{equation} 0 \To Y_1 \To Y_2 \overset{\overset{\theta}{\dashleftarrow}}{\To} \Gamma \To 0.\end{equation}
We suppose, furthermore, that $\sigma \circ \theta = 0$, where $\sigma: Y_2 \To \Z/2\Z$ is the sign morphism of $\bm{G}_2$ (one can always choose such a splitting).
We now consider the object
\begin{equation} L^\theta := \bigoplus_{\gamma \in \Gamma} L[\theta(\gamma)]\end{equation}
(it can be regarded as an object of $\cA$ or $\bm{p}^*\cA$),
and we denote
\begin{equation} A^\theta:= hom^*_{\bm{p}^* \cA}(L^\theta,L^\theta).\end{equation}

The objects making up $L^\theta$ are indexed by elements of $\Gamma$, and there is an action of $\Gamma$ on $hom^*_{\cA}(L^\theta,L^\theta)$ by permuting the objects: explicitly, $\gamma \in \Gamma$ takes
\begin{equation} \gamma: hom^*(L[\theta(\gamma_1)],L[\theta(\gamma_2)]) \To hom^*(L[\theta(\gamma_1+\gamma)],L[\theta(\gamma_2+\gamma)])\end{equation}
by a shift isomorphism.
This action lifts to an action of $\Gamma$ on $A^\theta$ (using the fact that $\theta$ is a splitting).

\begin{lemma}
\label{lemma:semidirpullback}
There is a strict isomorphism of $\Z/2\Z$-graded $A_\infty$ algebras,
\begin{equation} A \rtimes \Gamma^* \cong A^\theta.\end{equation}
\end{lemma}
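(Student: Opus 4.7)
The plan is to exhibit the isomorphism as a Fourier transform, using the duality between $\C[\Gamma^*]$ and the algebra $\C^\Gamma$ of functions on the finite abelian group $\Gamma$, whose orthogonal idempotents $\{e_\gamma\}$ satisfy $\chi = \sum_\gamma \chi(\gamma) e_\gamma$.

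First I would unpack both sides. Since $\mathcal{A}$ is $\bm{G}_2$-graded, the endomorphism space $A$ decomposes as $A = \bigoplus_{\gamma \in \Gamma} A_\gamma$, where $A_\gamma$ is the subspace of morphisms whose $\bm{G}_2$-degree projects to $\gamma \in Y_2/Y_1 = \Gamma$; this decomposition is compatible with $\mu^s$ in the sense that the $A_\infty$ maps are additive with respect to $\Gamma$-degree, and it is precisely this decomposition that defines the $\Gamma^*$-action used in forming $A \rtimes \Gamma^*$. On the other side, an element of the hom-space $hom^*_{\bm{p}^*\mathcal{A}}(L[\theta(\gamma_1)], L[\theta(\gamma_2)])$ is an element of $A$ whose $\bm{G}_2$-degree, after subtracting $\theta(\gamma_2) - \theta(\gamma_1)$, lies in $Y_1$; this identifies it canonically with $A_{\gamma_2 - \gamma_1}$. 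Hence $A^\theta \cong \bigoplus_{\gamma_1,\gamma_2 \in \Gamma} A_{\gamma_2 - \gamma_1}$ as a $\C$-module, and the $A_\infty$ maps act by ``matrix multiplication'': $\mu^s_{A^\theta}$ applied to morphisms indexed by $(\gamma_0 \to \gamma_1'), (\gamma_1 \to \gamma_2'), \ldots$ vanishes unless $\gamma_i = \gamma_i'$ for all $i$, in which case it coincides with $\mu^s_A$ on the underlying elements, placed in the slot $A_{\gamma_s - \gamma_0}$.

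Next I would define $\Phi: A \rtimes \Gamma^* \to A^\theta$ in the idempotent basis by
\begin{equation*}
\Phi(a \otimes e_{\gamma_1}) \;=\; \sum_{\gamma_2 \in \Gamma} a_{\gamma_2-\gamma_1} \;\in\; \bigoplus_{\gamma_2} A_{\gamma_2-\gamma_1},
\end{equation*}
where $a = \sum_\gamma a_\gamma$ is the $\Gamma$-graded decomposition. Equivalently, $\Phi(a \otimes \chi) = \sum_{\gamma_1,\gamma} \chi(\gamma_1) a_\gamma$, with $a_\gamma$ placed in the $(\gamma_1, \gamma_1+\gamma)$ slot. Bijectivity is a straightforward dimension count: each fixed source $\gamma_1$ contributes a copy of $A$ on both sides.

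The heart of the proof is verifying that $\Phi$ intertwines the $A_\infty$ structure maps. Here I would take tuples of elements $a_j \otimes e_{\gamma_j}$ on the left and apply formula \eqref{eqn:semidir}, noting that $e_{\gamma_j} = \frac{1}{|\Gamma|}\sum_{\chi_j} \chi_j(\gamma_j)^{-1}\chi_j$; the composition of characters $\chi_s \cdots \chi_1$ forces an orthogonality condition that collapses the sum, leaving exactly the matrix-element composition law on $A^\theta$. The conceptual point is that the ``action of $\chi_s$ on $a_{s-1}$'' in \eqref{eqn:semidir} corresponds, on the Fourier side, to the constraint that the target of one morphism must match the source of the next when composing in $A^\theta$. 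Finally, the $\Z/2\Z$-grading is preserved because $\sigma \circ \theta = 0$, so the $\Z/2\Z$-degree of a morphism $L[\theta(\gamma_1)] \to L[\theta(\gamma_2)]$ in $A^\theta$ coincides with $\sigma$ of its underlying $\bm{G}_2$-degree, matching $A \rtimes \Gamma^*$ where $\C[\Gamma^*]$ sits in even degree.

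The main obstacle will be bookkeeping in the $A_\infty$ compatibility check: carefully tracking how the character factors in \eqref{eqn:semidir} redistribute under the Fourier transform into source/target constraints in the matrix algebra. No sign subtleties should appear beyond those baked into the hypothesis $\sigma \circ \theta = 0$, so once one writes out a single $\mu^s$ carefully the general case follows by the same pattern.
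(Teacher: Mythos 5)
Your proposal takes essentially the same route as the paper's proof: both realize the isomorphism as a Fourier transform on $\Gamma^*$, and your formula $\Phi(a\otimes\chi)=\sum_{\gamma_1,\gamma}\chi(\gamma_1)\,a_\gamma$ (placed in the $(\gamma_1,\gamma_1+\gamma)$ slot) is precisely the paper's $F(a\otimes\chi)=\sum_\gamma \chi(\gamma)\,\gamma\cdot f(a)$ after relabeling, with the same use of $\sigma\circ\theta=0$ for the $\Z/2\Z$-grading. The paper carries out the $A_\infty$-compatibility check by eliminating summation variables via the composability constraint in $A^\theta$ and matching against \eqref{eqn:semidir}, which is exactly the collapse-by-source/target-matching you describe; the step you call the "main obstacle" is the only part left to be written out explicitly, and your conceptual description of it is accurate.
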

\begin{proof}
We first define a map of vector spaces
\begin{equation} f: A \To A^\theta\end{equation}
by sending an element $a$ of pure degree $y \in Y_2$ to the unique element
\begin{equation} f(a) \in hom^*(L[\theta(0)],L[\theta(y)])\end{equation}
which corresponds to $a$ under a shift isomorphism (this map respects the $\Z/2\Z$-grading, by the condition that $\sigma \circ \theta = 0$).
We then define the isomorphism
\begin{equation}F: A \rtimes \Gamma^* \To A^\theta\end{equation}
by
\begin{equation} F(a \otimes \chi) := \sum_{\gamma \in \Gamma} \chi(\gamma) \gamma \cdot f(a)\end{equation}
(it is easy to check that $F$ is an isomorphism of vector spaces).

We now prove that $F$ is a strict isomorphism of $A_\infty$ algebras: suppose that $a_s, \ldots, a_1 \in A$ are elements of pure degree $y_s,\ldots, y_1$ respectively, then
\begin{multline}
\label{eqn:semipull}
\mu^s(F(a_s \otimes \chi_s),\ldots,F(a_1 \otimes \chi_1)) =\\
\sum_{\gamma_s,\ldots,\gamma_1} \mu^s( \chi_s(\gamma_s) \gamma_s \cdot f(a_s), \ldots, \chi_1(\gamma_1) \gamma_1 \cdot f(a_1)).
\end{multline}
Now the $A_\infty$ product on the right-hand side vanishes unless the morphisms are composable in $\bm{p}^* \cA$, which means we have
\begin{align}
\gamma_{2} &= \gamma_1 + y_1 \\
\vdots&  \phantom{{}= =} \vdots \\
\gamma_s &= \gamma_{s-1} + y_{s-1}.
\end{align}
Using these to write the $\gamma_i$'s in terms of $\gamma := \gamma_1$ and the $y_i$'s, the right-hand side of \eqref{eqn:semipull}  becomes:
\begin{equation}
\sum_{\gamma} \mu^s\left(\chi_s\left(\gamma+\sum_{i=1}^{s-1}y_i\right) \left(\gamma+\sum_{i=1}^{s-1}y_i\right) \cdot f(a_s), \ldots, \chi_1(\gamma) \gamma \cdot f(a_1)\right)
\end{equation}
\begin{align}
&= \sum_{\gamma} \gamma \cdot f(\mu^s(a_s,\ldots,a_1)) \prod_{j=1}^s \chi_j\left(\gamma + \sum_{i=1}^{j-1}y_{i} \right)\\
&= \sum_\gamma \gamma \cdot f(\mu^s(a_s,\ldots,a_1)) \prod_{j=1}^s \chi_j(\gamma) \prod_{i=1}^{s-1} \chi_s \ldots \chi_{i+1}(y_i) \\
&= F\left(\mu^s(a_s,\chi_s \cdot a_{s-1}  ,\ldots,\chi_s \ldots\chi_2 \cdot a_1) \otimes \chi_s \ldots \chi_1\right) \\
&= F\left( \mu^s(a_s \otimes \chi_s, \ldots, a_1 \otimes \chi_1) \right).
\end{align}
This completes the proof that $F$ is a strict isomorphism of $A_\infty$ algebras.
 \end{proof}

\begin{remark}
Lemma \ref{lemma:semidirpullback} only holds if $\Gamma$ is finite.
If $\Gamma$ is infinite, then one should regard the $\Gamma$-grading of $A$ as a $\Gamma$-\emph{coaction}, and replace $A \rtimes \Gamma^*$ by the smash product $A \# \C[\Gamma]^*$, in the notation of \cite{Cohen1984}.
\end{remark}

Another way of saying things is that the action of $\Gamma$ on $A^\theta$ gives rise to a Fourier decomposition:
\begin{equation}
\label{eqn:fourier}
 A^\theta \cong \bigoplus_{\chi \in \Gamma^*} A^\theta_\chi,
\end{equation}
where
\begin{equation} A^\theta_\chi := \left\{a \in A^\theta: \gamma \cdot a = \chi(\gamma) a\right\}.\end{equation}
The isomorphism of Lemma \ref{lemma:semidirpullback} sends
\begin{equation}
\label{eqn:fouriest} A \otimes \chi \overset{\cong}{\To} A^\theta_{\chi^{-1}}.
\end{equation}

In particular, we have an isomorphism of $A$ with the $\Gamma$-invariant part of $A^\theta$:
\begin{equation} A \cong A^\theta_1 \cong(A^\theta)^\Gamma \subset A^\theta.\end{equation}
We denote the resulting inclusion by
\begin{align}
j: A &\hookrightarrow  A^\theta,\\
j(a) & :=  a \otimes 1.
\end{align}

If $\cA$ is strictly unital, then $j$ induces an inclusion
\begin{equation}
\label{eqn:jinc}
 j: \hcM_{weak}(L;\cA) \hookrightarrow \hcM_{weak}(L^\theta;\bm{p}^*\cA),\end{equation}
such that $\mathfrak{P} \circ j = \mathfrak{P}$.
Furthermore, we have an action of $\Gamma^*$ on $\hcM_{weak}(L)$, coming from the action of $\Gamma^*$ on $A$, and $\mathfrak{P}(\chi \cdot \alpha) = \mathfrak{P}(\alpha)$.

\begin{proposition}
\label{proposition:characts}
The objects $(L^\theta,j(\alpha))$ and $(L^\theta,j(\chi \cdot \alpha))$ of $(\bm{p}^* \cA)^{wbc}_{\mathfrak{P}(\alpha)}$ are quasi-isomorphic.
So $\Gamma^*$ acts by quasi-isomorphisms on the image of $j$ in $\hcM_{weak}(L^\theta)$.
\end{proposition}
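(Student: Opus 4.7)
The plan is to construct, for each $\chi \in \Gamma^*$, an explicit strict isomorphism in $(\bm{p}^*\mathcal{A})^{wbc}_{\mathfrak{P}(\alpha)}$ implemented by the diagonal automorphism of $L^\theta$ that realises $\chi$. Specifically, I would set
\[
\phi_\chi := \sum_{\gamma \in \Gamma} \chi(\gamma)\,e_{L[\theta(\gamma)]}\ \in\ hom^0_{\bm{p}^*\mathcal{A}^\oplus}(L^\theta,L^\theta),
\]
where $e_{L[\theta(\gamma)]}$ is the strict unit of the $\gamma$-th summand of $L^\theta$. This element is manifestly invertible in $\mathcal{A}^\oplus$, with two-sided inverse $\phi_{\chi^{-1}}$, and has degree $0$. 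It is a cocycle for the undeformed differential since each $e_{L[\theta(\gamma)]}$ is.

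The key computation is the intertwining identity
\[
\mu^2\bigl(\phi_\chi,\,j(\alpha)\bigr) + \mu^2\bigl(j(\chi\cdot\alpha),\,\phi_\chi\bigr) = 0.
\]
To verify it, I would decompose $\alpha = \sum_y \alpha_y$ into pure-degree components and recall from the construction of $j$ in Lemma \ref{lemma:semidirpullback} that $j(\alpha_y) = \sum_\gamma \gamma\cdot f(\alpha_y)$, with $\gamma\cdot f(\alpha_y) \in hom^*(L[\theta(\gamma)],L[\theta(\gamma+[y])])$, while $\chi\cdot\alpha_y = \chi([y])\,\alpha_y$. Using strict unitality to evaluate $\mu^2$ with a unit in one slot, the $\gamma$-th component of $\mu^2(\phi_\chi, j(\alpha_y))$ equals $\chi(\gamma+[y])(-1)^{|\alpha_y|}\gamma\cdot f(\alpha_y) = -\chi(\gamma)\chi([y])\,\gamma\cdot f(\alpha_y)$ (using that $\alpha_y$ is odd), while the corresponding component of $\mu^2(j(\chi\cdot\alpha_y),\phi_\chi)$ equals $+\chi(\gamma)\chi([y])\,\gamma\cdot f(\alpha_y)$. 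The sum over $\gamma$ and $y$ vanishes.

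Next I would verify that $\phi_\chi$ is a cocycle in the deformed hom-complex $hom^*_{(\bm{p}^*\mathcal{A})^{wbc}}((L^\theta,j(\alpha)),(L^\theta,j(\chi\cdot\alpha)))$; that is,
\[
\mu^1_{(\bm{p}^*\mathcal{A})^{wbc}}(\phi_\chi) \;=\; \sum_{i_0,i_1\ge 0} \mu^{i_0+i_1+1}\bigl(j(\chi\cdot\alpha)^{\otimes i_1},\,\phi_\chi,\, j(\alpha)^{\otimes i_0}\bigr) \;=\; 0.
\]
Since $\phi_\chi$ is a $\C$-linear combination of strict units, the strict unitality axiom kills every term with $i_0 + i_1 + 1 \ge 3$. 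The three surviving terms are $\mu^1(\phi_\chi) = 0$ together with $\mu^2(j(\chi\cdot\alpha),\phi_\chi) + \mu^2(\phi_\chi,j(\alpha))$, which vanishes by the previous step. The same strict unitality argument computes $\mu^s_{(\bm{p}^*\mathcal{A})^{wbc}}(\phi_{\chi^{-1}},\phi_\chi)$: all terms with weak bounding cochain insertions vanish, so the composition reduces to $\mu^2(\phi_{\chi^{-1}},\phi_\chi) = e_{L^\theta}$, and likewise in the opposite order. Hence $\phi_\chi$ is a strict, and in particular quasi-, isomorphism between the two objects. The main technical obstacle is the sign bookkeeping in the intertwining step; everything else is a formal consequence of strict unitality and the explicit formula for $j$.
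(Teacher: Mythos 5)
Your morphism $\phi_\chi = \sum_{\gamma\in\Gamma}\chi(\gamma)\,e_{L[\theta(\gamma)]}$ is exactly the paper's cochain $e\otimes\chi$ transported through the Fourier isomorphism $F$ of Lemma \ref{lemma:semidirpullback}, and the two computations — strict unitality to kill all terms of arity $\ge 3$, then cancellation of the two surviving $\mu^2$ terms — are the same calculation expressed on the two sides of that isomorphism. The proposal is correct and takes essentially the paper's approach.
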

\begin{proof}
Let $e \in A$ be the strict unit, and $e \otimes \chi$ be the corresponding endomorphism of $L^\theta$ under Lemma \ref{lemma:semidirpullback}.
Consider the morphism
\begin{equation} e \otimes \chi \in hom^*((L^\theta,j(\alpha)),(L^\theta,j(\chi  \cdot \alpha))).\end{equation}
Then $e \otimes \chi$ is closed: its differential is equal to
\begin{align}
&  \sum \mu^*(\chi \cdot \alpha \otimes 1,\ldots, \chi \cdot \alpha \otimes 1, e \otimes \chi, \alpha \otimes 1, \ldots, \alpha \otimes 1) \\
&= \sum \mu^*(\chi \cdot \alpha,\ldots,\chi \cdot \alpha,e, \chi \cdot \alpha,\ldots,\chi \cdot \alpha) \otimes \chi \\
&= (\chi \cdot \alpha - \chi \cdot \alpha) \otimes \chi \\
&= 0,
\end{align}
using \eqref{eqn:semidir} and strict unitality.
Similarly, $e \otimes \chi^{-1}$ is a closed morphism in the opposite direction.

Furthermore, their composition $\mu^2(e\otimes \chi,e \otimes \chi^{-1})$ is equal to
\begin{align}
 & \sum \mu^*(\chi \cdot \alpha \otimes 1, \ldots, e \otimes \chi, \alpha \otimes 1, \ldots, e \otimes \chi^{-1}, \chi \cdot \alpha \otimes 1,\ldots) \\
&= \sum \mu^*( \chi \cdot \alpha,\ldots,e, \chi \cdot \alpha,\ldots,e,\chi \chi^{-1} \chi \cdot \alpha,\ldots) \otimes \chi \chi^{-1} \\
&= e \otimes 1,
\end{align}
which is the (strict) unit.
Similarly, their composition in the other direction is the unit: therefore, the objects are quasi-isomorphic.
 \end{proof}

\begin{proposition}
\label{proposition:pullwbc}
Suppose that $\cA$ is equipped with homotopy units to give the strictly unital $\bm{G}_2$-graded $A_\infty$ category $\cA^+$.
Suppose that $L$ is an object of $\cA$, and
\begin{equation} V \subset A := hom^*_\cA(L,L)\end{equation}
satisfies the hypotheses of Proposition \ref{proposition:critpd}, and is invariant under the action of $\Gamma^*$.
Now suppose that $v \in V$ is a critical point of $\mathfrak{P}'$, $\alpha = \iota(v)$ is the corresponding weak bounding cochain in $\cA^+$, and $j(\alpha)$ the corresponding weak bounding cochain in $\bm{p}^* \cA^+$.
Suppose furthermore that the differential $d_{v,\chi \cdot v}$ admits a contracting homotopy (as in Proposition \ref{proposition:diffcrit}), for all $\chi \in \Gamma^* \setminus \{1\}$.
Then there is a quasi-isomorphism of $A_\infty$ algebras:
\begin{equation} hom^*_{\bm{p}^* \cA^{wbc}}\left((L^\theta,j(\alpha)),(L^\theta,j(\alpha))\right) \cong hom^*_{\cA^{wbc}}((L,\alpha),(L,\alpha)).\end{equation}
\end{proposition}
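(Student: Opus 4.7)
My plan is to identify the left-hand side with a deformation of the semidirect product $A^+ \rtimes \Gamma^*$, to isolate the right-hand side as the trivial-character summand under the Fourier decomposition, and to prove that all other summands are acyclic.

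First I would apply Lemma \ref{lemma:semidirpullback} to the strictly unital $\bm{G}_2$-graded $A_\infty$ category $\mathcal{A}^+$ to obtain a strict isomorphism $hom^*_{\bm{p}^* \mathcal{A}^+}(L^\theta, L^\theta) \cong A^+ \rtimes \Gamma^*$, under which the weak bounding cochain $j(\alpha)$ corresponds to $\alpha \otimes 1$. Since $\alpha \otimes 1$ has trivial character, and the multiplication rule \eqref{eqn:semidir} takes the product of the input characters to produce the output character, the deformed $A_\infty$ structure $\mu^*_{\alpha \otimes 1}$ respects the Fourier decomposition
\begin{equation}
A^+ \rtimes \Gamma^* \;=\; \bigoplus_{\chi \in \Gamma^*} A^+ \otimes \chi.
\end{equation}
In particular the $\chi = 1$ summand $A^+ \otimes 1$ is a strict $A_\infty$ subalgebra of $(A^+ \rtimes \Gamma^*, \mu^*_{\alpha \otimes 1})$, and the inclusion
\begin{equation}
\iota_1 : hom^*_{\mathcal{A}^{wbc}}((L,\alpha),(L,\alpha)) \hookrightarrow hom^*_{\bm{p}^*\mathcal{A}^{wbc}}((L^\theta,j(\alpha)),(L^\theta,j(\alpha))), \quad a \mapsto a \otimes 1,
\end{equation}
is manifestly a strict $A_\infty$ morphism by \eqref{eqn:semidir}. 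Thus it suffices to prove that $\iota_1$ is a quasi-isomorphism, i.e.\ that all the Fourier summands with $\chi \neq 1$ are acyclic.

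Second, I would compute $\mu^1_{\alpha \otimes 1}$ explicitly on each Fourier summand. A direct bookkeeping using \eqref{eqn:semidir}, noting that all inserted characters are trivial, shows that the only $\alpha$'s whose character gets twisted are those inserted to the right of $a \otimes \chi$, and they each get twisted by $\chi$; hence
\begin{equation}
\mu^1_{\alpha \otimes 1}(a \otimes \chi) = \sum_{i,j \geq 0} \mu^{i+1+j}(\underbrace{\alpha, \ldots, \alpha}_{i}, a, \underbrace{\chi \cdot \alpha, \ldots, \chi \cdot \alpha}_{j}) \otimes \chi.
\end{equation}
Thus $A^+ \otimes \chi$, with its induced differential, is precisely the chain complex $hom^*_{\mathcal{A}^{wbc}}((L,\chi \cdot \alpha),(L,\alpha))$ in the notation of Proposition \ref{proposition:diffcrit}. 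The leading-order part of this differential (with respect to the filtration in the hypotheses of Proposition \ref{proposition:critpd}) is $d_{\chi \cdot v, v}$. Note that $\chi \cdot v$ is itself a critical point of $\mathfrak{P}'$ with the same critical value $w$ as $v$, because $\chi$ acts by a strict $A_\infty$ automorphism of $A^+$ and fixes $e_L$ (which has trivial character).

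Third, I would invoke Proposition \ref{proposition:diffcrit} to conclude. Its hypothesis for the complex $hom^*((L,\chi \cdot \alpha),(L,\alpha))$ requires a contracting homotopy for $d_{\chi \cdot v, v}$, whereas the assumption in the present proposition furnishes a contracting homotopy for $d_{v, \chi^{-1} \cdot v}$ (where $\chi^{-1} \neq 1$). These are equivalent data: conjugating by the strict automorphism $\chi$ converts the identity $\chi \cdot d_{v_1,v_2}(x) = d_{\chi \cdot v_1, \chi \cdot v_2}(\chi \cdot x)$ into a dictionary between contracting homotopies for the two differentials. Hence Proposition \ref{proposition:diffcrit} applies to each $\chi \neq 1$ summand, rendering it acyclic, so $\iota_1$ is the desired quasi-isomorphism. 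The only real subtlety is the bookkeeping in the second step, verifying that the Fourier grading is preserved and that the induced differential on the $\chi$-summand is exactly the Floer-theoretic differential between $(L, \chi \cdot \alpha)$ and $(L, \alpha)$; everything else is then a direct appeal to the algebraic machinery already established.
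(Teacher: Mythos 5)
Your proof is correct and follows the paper's route exactly: decompose $hom^*_{\bm{p}^*\mathcal{A}^{wbc}}((L^\theta,j(\alpha)),(L^\theta,j(\alpha)))$ via the $\Gamma$-action (i.e.\ the Fourier decomposition of $A^+\rtimes\Gamma^*$), identify the trivial-character summand as the image of the strict $A_\infty$ embedding $j$, and invoke Proposition \ref{proposition:diffcrit} to render the $\chi\neq 1$ summands acyclic. You are in fact a touch more careful than the paper in the last step, where you conjugate by the automorphism $\chi$ (and note that $\chi\cdot v$ is again a critical point with the same critical value) to reconcile the hypothesis's contracting homotopy for $d_{v,\chi^{-1}\cdot v}$ with the one actually required for the leading term $d_{\chi\cdot v,v}$ on the $\chi$-summand, a piece of bookkeeping the paper elides.
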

\begin{proof}
The inclusion $j: A \hookrightarrow A^\theta$ is a strict homomorphism of $A_\infty$ algebras, and sends $\alpha$ to $j(\alpha)$; it follows that it induces a strict embedding of $A_\infty$ algebras,
\begin{equation} j: hom^*((L,\alpha),(L,\alpha)) \hookrightarrow hom^*((L^\theta,j(\alpha)),(L^\theta,j(\alpha))).\end{equation}
We will show that this embedding is in fact a quasi-isomorphism.

Because $j(\alpha)$ is $\Gamma$-invariant, and the $A_\infty$ structure is strictly $\Gamma$-equivariant, the differential $\mu^1_{j(\alpha)}$ is $\Gamma$-equivariant.
Hence, it preserves the Fourier decomposition \eqref{eqn:fourier}.
Furthermore, for any element $a \otimes \chi \in A^\theta_\chi$ (under the isomorphism \eqref{eqn:fouriest}), we have
\begin{align}
\mu^1_{j(\alpha)}(a \otimes \chi) &= \sum \mu^*(\alpha \otimes 1,\ldots,\alpha \otimes 1, a \otimes \chi, \alpha \otimes 1,\ldots,\alpha \otimes 1) \\
&= \mu^*( \alpha, \ldots, \alpha, a, \chi \cdot \alpha, \ldots, \chi \cdot \alpha) \otimes \chi  \label{eqn:foury}\\
&= \mu^1_{\alpha,\chi \cdot \alpha}(a) \otimes \chi,
\end{align}
where \eqref{eqn:foury} follows from \eqref{eqn:semidir}.

Now, by Proposition \ref{proposition:diffcrit}, using the hypothesis that $d_{v,\chi \cdot v}$ admits a contracting homotopy, the differential $\mu^1_{\alpha,\chi \cdot \alpha}$ admits a contracting homotopy, for all $\chi \neq 1$.
Therefore, the cohomology of the direct summand cochain complex $A^\theta_\chi$ is zero, for all $\chi \neq 1$.
On the other hand, for $\chi = 1$, the differential on the remaining direct summand $A^\theta_1$ is given by $\mu^1_{\alpha}$, whose comology is $A$, by Proposition \ref{proposition:critpd}.
It follows that $j$ is a quasi-isomorphism of $A_\infty$ algebras, as required.
 \end{proof}

\section{Weak bounding cochains in the monotone Fukaya category}
\label{sec:wbcgeom}

In this section, we sketch the procedure for including weak bounding cochains in our definition of the monotone and monotone relative Fukaya categories.
Our formulation is slightly more general than that which has appeared in the literature before: we consider weak bounding cochains on multiple Lagrangians, and we continue to work over $\C$ (for the monotone Fukaya category), and over $R$ (for the monotone relative Fukaya category), rather than over a Novikov ring.
In particular, we do \emph{not} require our coefficient ring to be complete with respect to the energy filtration; in the various places where convergence of some Maurer--Cartan equation is required for the theory to make sense, the convergence will occur for geometric reasons to do with monotonicity.

\subsection{Homotopy units}
\label{subsec:homungeom}

Following \cite[\S 10]{Ganatra2012} (which is based on \cite[\S 7.3]{fooo}, but whose formalism is more closely aligned with our own), we briefly recall how to define a homotopy unit structure on the monotone Fukaya category $\cF(X)$.

The $A_\infty$ structure maps which have $e_L^+$ as an entry are completely prescribed (see \S \ref{subsec:homun}).
Thus, to give a homotopy unit structure on $\cF(X)$, we need to define $\mu^*(\ldots,f,\ldots,f,\ldots)$ which satisfy the $A_\infty$ relations (when combined with $\mu^1(f) = e^+ - e$).
These structure maps are defined by counting pseudoholomorphic discs, but where some boundary marked points are now labelled by $f$, rather than a Hamiltonian chord.
The boundary marked points labelled by $f$ are treated in a different way from the others.
Namely, they are not punctured, and rather than a strip-like end, they come equipped with an embedding of the upper half-disc, sending boundary components to boundary components, and $0$ to the corresponding boundary marked point.
They furthermore come equipped with an additional parameter $\rho \in (0,1]$, corresponding to the position of an unconstrained marked point lying on the line connecting $0$ to $i$ in the upper half-disc.
Perturbation data are chosen so that:
\begin{itemize}
\item As $\rho \To 0$, the perturbation data converge to a strip-like end with the cohomological unit moduli space glued on.
\item At $\rho = 1$, the perturbation data are pulled back via the forgetful map which forgets the unconstrained marked point.
\end{itemize}

\begin{remark}
There is one technical point to be aware of: perturbation data can only be defined in this way if, after one forgets all points labelled $f$, the resulting moduli space is semistable (otherwise, the perturbation data can not be pulled back via the forgetful map after one forgets all of the $f$'s).
These are called `\textbf{f}-semistable' domains, in the language of \cite[\S 10]{Ganatra2012}.
As a result, we are forced to define $\mu^s$ to equal zero if all inputs are $e^+$ or $f$, with the exceptions
\begin{equation}
\label{eqn:mu1f}
\mu^1(f) = e^+ - e
\end{equation}
and
\begin{equation}
\label{eqn:mu2eplus}
 \mu^2(e^+,a) = (-1)^{\sigma(a)}\mu^2(a,e^+) = a \mbox{ for $a = e^+, f$,}
\end{equation}
then prove that this is compatible with the proof of the $A_\infty$ associativity equations for $\cF^+(X)$ (see \cite[Remark 10.3]{Ganatra2012}).
I.e., we must prove that no disc can bubble off a one-dimensional moduli space, carrying only forgotten points and homotopy units and nothing else with it, except for those corresponding to Equations \eqref{eqn:mu1f}, \eqref{eqn:mu2eplus}.
This is true by the monotonicity assumption: indeed, since $f$ has degree $-1$ and $e^+$ has degree $0$, the output of $\mu^s$ with $k$ inputs equal to $f$ and $s-k$ inputs equal to $e^+$, must lie in degree $2-s - k - \mu \le 2-s-k$, where $\mu$ is the Maslov index of the corresponding disc (which is $\ge 0$ by monotonicity).
Because $CF^*(L,L) \cong C^*(L)$ is concentrated in degrees $\ge 0$ (for appropriately chosen Floer data), there is no degree for the output to live in, unless we have one of the situations enumerated in \eqref{eqn:mu1f}, \eqref{eqn:mu2eplus}.
It follows that for generic Floer and perturbation data, such a disc can not bubble off in a one-dimensional family.
\end{remark}

Counting the resulting moduli spaces of pseudoholomorphic discs defines the $A_\infty$ structure maps.
Note that $e^+$ and $f$ never appear as the output, by definition.
Considering the boundary points of the one-dimensional moduli spaces proves that the resulting structure maps satisfy the $A_\infty$ relations (c.f. \cite[\S 10]{Ganatra2012}).
In particular, we have new kinds of boundary components, where $\rho \To 0$ or $\rho \To 1$ for the parameter $\rho$ associated to a marked point labelled by $f$.
When $\rho \To 0$, the disc bubbles off a cohomological unit $e$.
The boundary component at $\rho = 1$ is generically empty, because the moduli space factors through a moduli space of lower dimension (by forgetting the boundary marked point labelled $f$), unless forgetting $f$ yields a strip, in which case the strip is necessarily constant and corresponds to the identity map $\mu^2(e^+,-)$ or $\mu^2(-,e^+)$.

We denote the resulting strictly unital $A_\infty$ category by $\cF^+(X)$.
One can similarly define a homotopy unit structure on the monotone relative Fukaya category, and we denote the resulting strictly unital category by $\cF^+_m(X,D)$.

We recall that the $A_\infty$ structure maps of the monotone Fukaya category do not quite satisfy the $A_\infty$ associativity equations:
\begin{equation} \mu^1: CF^*(L_0,L_1) \To CF^*(L_0,L_1)\end{equation}
satisfies
\begin{equation} \mu^1(\mu^1(x)) = (w(L_0) - w(L_1))x.\end{equation}
Introducing `curvature' terms
\begin{equation} \mu^0_L := w(L) \cdot e^+_L\end{equation}
into the category $\cF^+(X)$ by hand, one obtains a curved, strictly-unital $A_\infty$ category $\cF^{c}(X)$ (respectively $\cF^c_m(X,D)$).
Here $c$ stands for `curved'.

For each $w \in \C$, there is an embedding
\begin{equation} \cF(X)_w \hookrightarrow \cF^{c}(X),\end{equation}
respectively
\begin{equation} \cF_m(X,D)_w \hookrightarrow \cF^{c}_m(X,D),\end{equation}
whose image consists of all objects $L$ with curvature $w \cdot e^+_L$.

\subsection{Convergence}
\label{subsec:conv}

Because $\cF^c(X)$ (respectively $\cF^c_m(X,D)$) is a curved, strictly unital $A_\infty$ category, we can define the formal enlargement $\cF^{wbc}(X)$ (respectively $\cF^{wbc}_m(X,D)$), in accordance with the procedure outlined in \S \ref{subsec:wbc}.
However, we must explain the geometric restrictions we place on the weak bounding cochains in the monotone Fukaya category, and how they ensure convergence in the equations \eqref{eqn:mc}, \eqref{eqn:defainf}.
Recall that our symplectic manifold $X$ is monotone, and the objects of the monotone Fukaya category are required to be monotone Lagrangian submanifolds $L \subset X$, in the sense of Definition \ref{definition:lmon}.

We recall the Lagrangian Grassmannian of $X$, $\pi: \cG X \To X$, and that any Lagrangian immersion $\iota : L \To X$ comes equipped with a canonical lift
\begin{equation} \iota_*: L \To \cG X.\end{equation}
We also recall that, for any map of a surface with boundary into $X$,
\begin{equation} u: \Sigma \To X,\end{equation}
together with a lift of $u|_{\partial \Sigma}$ to $\cG X$,
\begin{equation} \tilde{u}: \partial \Sigma \To \cG X,\end{equation}
we can define the boundary Maslov index $\mu(u,\tilde{u}) \in \Z$ (see \cite[Appendix C.3]{mcduffsalamon}).

\begin{figure}
\centering
\includegraphics[width=0.6\textwidth]{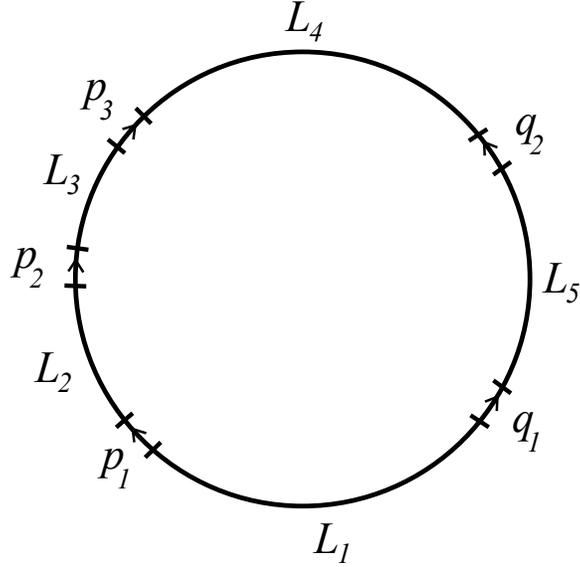}
\caption{Monotonicity. In this case, $k=3, l=2$. The path associated to $p_1$ goes from the lift of $L_1$ to the lift of $L_2$; the path associated to $q_1$ goes from the lift of $L_1$ to the lift of $L_5$.
\label{fig:Fig7}}
\end{figure}

\label{definition:strmonpaths}
Let $\mathcal{L}$ be a set of smooth manifolds $L$, together with Lagrangian immersions $\iota_L: L \To X$, having lifts $\iota_{L,*}$.
Let $\mathcal{P}$ be a set of paths
\begin{equation} \gamma: [0,1] \To \cG X,\end{equation}
connecting the lifts of the $L \in \mathcal{L}$, in the sense that to each $\gamma \in \mathcal{P}$, there are associated points $p_0 \in L_0$, $p_1 \in L_1$, for some $L_0,L_1 \in \mathcal{L}$, such that
\begin{equation} \gamma(i) = \iota_{L_i,*}(p_i)\end{equation}
for $i = 0,1$.

Let $\mathbb{D}$ be a disc, with disjoint intervals labelled $p_1,\ldots,p_k, q_l, \ldots,q_1$ in \emph{clockwise} order on the boundary, and the boundary components between the intervals labelled by elements $L \in \mathcal{L}$.
Now label the intervals by elements $\gamma \in \mathcal{P}$, where $\gamma$ is a path between the Lagrangians labelling the boundary components on either side of the interval, oriented clockwise for boundary components corresponding to $p_i$, and anti-clockwise for boundary components corresponding to $q_i$ (see Figure \ref{fig:Fig7}).
Now let
\begin{equation} \tilde{u}: \partial \mathbb{D} \To \cG X\end{equation}
be a map, coinciding with the paths $\gamma$ on boundary components corresponding to boundary marked points, and mapping to the lift of $L$ on boundary components labelled by $L$ (in the case that $L$ is immersed, we do not allow $\tilde{u}$ to change `sheets' of $L$, unless along a path $\gamma$).

\begin{definition}
We say that the collection $(\mathcal{L},\mathcal{P})$ is \emph{monotone} if
\begin{itemize}
\item Each $L \in \mathcal{L}$ is monotone;
\item For any such $\tilde{u}$, the map $\pi \circ \tilde{u} : \partial \mathbb{D} \To X$ extends to a continuous map $u: \mathbb{D} \To X$;
\item If we fix $k$, then there exists $\tau_k < \tau$ such that, for sufficiently large $l$, any such extension $u$ satisfies
\begin{equation}
\label{eqn:Fkl}
 \omega(u) \le \tau \mu(u,\tilde{u}) + \tau_k l.
\end{equation}
The numbers $\tau_k$ may depend on $k$ and on $(\mathcal{L},\mathcal{P})$, but should not depend on $\tilde{u}$ or $u$.
\end{itemize}
\end{definition}

We can make a parallel definition, which applies to the monotone relative Fukaya category.

\begin{definition}
If $(X,D)$ is a K\"{a}hler pair, we say that the collection $(\mathcal{L},\mathcal{P})$ is \emph{relatively monotone} if
\begin{itemize}
\item Each $L \in \mathcal{L}$ is exact in $X \setminus D$;
\item Each path in $\mathcal{P}$ is contained inside $\cG(X \setminus D) \subset \cG X$;
\item For any $\tilde{u}: \partial \mathbb{D} \To \cG(X \setminus D)$ as in Definition \ref{definition:strmonpaths}, the map $\pi \circ \tilde{u} : \partial \mathbb{D} \To X \setminus D$ extends to a continuous map $u: \mathbb{D} \To X$;
\item  If we fix $k$, then there exists $\tau_k < \tau$ such that, for sufficiently large $l$, any such extension $u$ satisfies
\begin{equation}
 \sum_j \ell_j (u \cdot D_j) \le \tau \mu(u,\tilde{u}) + \tau_k l,\label{eqn:Fklrel}
\end{equation}
where $\ell_j$ is the linking number of $\alpha$ with component $D_j$ of $D$ (see \cite[Definition 3.11]{Sheridan2015}).
\end{itemize}
\end{definition}

Now observe that, for any objects $L_0,L_1$ of $\cF(X)$, a generator $p$ of $CF^*(L_0,L_1)$ is a Hamiltonian chord from $L_0$ to $L_1$.
Furthermore, this chord comes with a canonical homotopy class of lifts to $\cG X$, with endpoints on the respective lifts of $L_0$ and $L_1$, and such that the corresponding orientation operator has Maslov index $0$ (see \cite[\S 11l]{Seidel2008}).
We choose a lift in this homotopy class and denote it by $\gamma_p$.

\begin{definition}
\label{definition:strmonwbc}
Let $\mathcal{L}$ be a set of objects of $\cF^c(X)$, and consider the object
\begin{equation} \bm{L} := \bigoplus_{L \in \mathcal{L}} L \end{equation}
of $\cF^c(X)^\oplus$.
Let
\begin{equation} \alpha \in hom^*(\bm{L},\bm{L})\end{equation}
be an element of odd degree, and let $\mathcal{P}$ be the set of classes $\gamma_p$, for all generators $p$ which appear in $\alpha$ with a non-zero coefficient.
We say that $\alpha$ is \emph{monotone} if the pair $(\mathcal{L},\mathcal{P})$ is monotone.
We similarly define the notion of a \emph{relatively monotone} element $\alpha \in hom^*(\bm{L},\bm{L})$, where $\bm{L}$ is an object of $\cF^c_m(X,D)^\oplus$.
\end{definition}

\begin{lemma}
\label{lemma:conv}
Let us abbreviate $\cA := \cF^c(X)^\oplus$ (respectively, $\cF^c_m(X,D)^\oplus$).
If $\bm{L}_i$ are objects of $\cA$ for $i=0,\ldots,s$, and
\begin{equation} \alpha_i \in hom^*(\bm{L}_{i},\bm{L}_i)\end{equation}
are monotone in the sense of Definition \ref{definition:strmonwbc} (respectively, relatively monotone), and
\begin{equation} a_i \in hom^*(\bm{L}_{i-1}, \bm{L}_i),\end{equation}
then
\begin{equation}
\mu^*(\underbrace{\alpha_s,\ldots,\alpha_s}_{i_s},a_s, \underbrace{\alpha_{s-1},\ldots,\alpha_{s-1}}_{i_{s-1}}, a_{s-1}, \ldots,a_1,\underbrace{\alpha_0,\ldots,\alpha_0}_{i_0}) = 0 \label{eqn:term}
\end{equation}
for $i_0,\ldots,i_s$ sufficiently large; in particular, Equations \eqref{eqn:mc}  and \eqref{eqn:defainf}  converge.
\end{lemma}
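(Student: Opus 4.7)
The plan is to show that the moduli space of pseudoholomorphic disks contributing to \eqref{eqn:term} is empty once $l := i_0 + \ldots + i_s$ exceeds a threshold depending only on $s$ and on the fixed Floer/perturbation data. Since only finitely many multi-indices $(i_0, \ldots, i_s)$ with $l$ bounded exist, this immediately implies that both \eqref{eqn:mc} and \eqref{eqn:defainf} are finite sums.

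I would set up the moduli space as follows. The term \eqref{eqn:term} counts rigid pseudoholomorphic disks $u$ with $m := s + l + 1$ boundary punctures (one outgoing and $s+l$ incoming), asymptotic conditions at each puncture given by one of the finitely many generators appearing in the $a_i$ or $\alpha_j$, and boundary conditions on the Lagrangian components of the $\bm L_j$. Equipping each puncture with its canonical Maslov-index-$0$ lift $\gamma_p$ produces a boundary lift $\tilde u$ of exactly the form covered by Definition~\ref{definition:strmonwbc}, relative to the finite path collection $\mathcal{P}$ associated to the $\alpha_j$ (harmlessly enlarged by the lifts of chords appearing in the fixed morphisms $a_i$). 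Standard index theory gives the virtual dimension as $\mu(u, \tilde u) + m - 3 = \mu(u, \tilde u) + s + l - 2$, and for transverse perturbation data only virtual-dimension-zero components contribute, forcing $\mu(u, \tilde u) = 2 - s - l$.

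Plugging this Maslov index into the monotonicity bound \eqref{eqn:Fkl} with $k = s$ gives, for $l$ sufficiently large,
\begin{equation}
\omega(u) \;\le\; \tau(2-s-l) + \tau_s l \;=\; (2-s)\tau + (\tau_s - \tau) l,
\end{equation}
whose right-hand side tends to $-\infty$ as $l \to \infty$ since $\tau_s < \tau$. On the other hand, the standard action-energy identity for the perturbed Cauchy-Riemann equation, combined with non-negativity of the geometric energy, yields a lower bound of the form $\omega(u) \ge -C_H(s+l)$, where $C_H$ is controlled by the sup-norm of the Hamiltonian perturbations (supported on the strip-like ends) and is independent of the homotopy class of $u$. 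Choosing Floer data with sufficiently small Hamiltonian norm that $C_H < \tau - \tau_s$, which can always be arranged while preserving transversality in the monotone setting, the two bounds contradict for $l$ large, so the moduli space must be empty. The relative case is identical, with $\omega(u)$ replaced by $\sum_j \ell_j(u \cdot D_j) \ge 0$ and \eqref{eqn:Fkl} by \eqref{eqn:Fklrel}, using that disks in the monotone relative Fukaya category meet $D$ transversely and with non-negative intersection numbers.

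The main obstacle is calibrating the Hamiltonian norm $C_H$ against the monotonicity gap $\tau - \tau_s$; this is a mild constraint which can be met by rescaling the perturbation data. Once this is done, the convergence argument reduces to the two linear inequalities above.
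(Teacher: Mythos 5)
Your proof takes a genuinely different route from the paper's, and that route has a gap. You apply the monotonicity bound \eqref{eqn:Fkl} \emph{directly} to a single holomorphic disk $u$ contributing to \eqref{eqn:term}. But the boundary lift $\tilde u$ of such a disk changes sheet not only along the $\alpha_j$-chords (whose lifts lie in $\mathcal{P}$ by hypothesis) but also along the chords underlying $a_0,\ldots,a_s$, whose lifts are \emph{not} in $\mathcal{P}$. So $\tilde u$ is not ``of exactly the form covered by Definition \ref{definition:strmonwbc}'' as you claim, and \eqref{eqn:Fkl} does not apply until you enlarge $\mathcal{P}$ to include those chords. You wave this away as a ``harmless enlargement,'' but that step needs a proof: monotonicity of $(\mathcal{L},\mathcal{P})$ is a \emph{postulate} in the non-relative case (and in the relative case a property derived from action estimates specific to $\mathcal{P}$), and there is no reason it should persist for a larger path collection. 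The lemma's hypotheses constrain only the $\alpha_j$'s, not the $a_i$'s, so a proof that requires $(\mathcal{L},\mathcal{P}\cup\{\gamma_{a_i}\})$ to be monotone is proving a weaker statement.

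The paper's proof avoids this entirely via a comparison/gluing trick that you should be aware of: it takes two disks $v_1$, $v_2$ contributing to the same coefficient of $a_0$ (with insertion counts $k_j$ and $l_j$ respectively), glues $\overline{v}_2$ to $v_1$ along the chords $a_0,\ldots,a_s$ to form a genus-zero surface with $s+1$ holes, and fills in each hole topologically. The $a_i$-chords become interior after gluing, so the boundaries of the holes involve only $\alpha_j$-intervals, which are in $\mathcal{P}$ — this is exactly the form required by Definition \ref{definition:strmonpaths}, with $k=k_j$, $l=l_j$. Applying \eqref{eqn:Fkl} to the topological fillings (not to the holomorphic disk itself) plus the global monotonicity identity $\omega(v)=2\tau c_1(v)$ for the resulting closed surface then gives $\omega(v_2) \le \omega(v_1) + \tau\sum k_j + \sum(\tau_{k_j}-\tau)l_j$, which tends to $-\infty$ once $v_1$ is fixed and $l_j\to\infty$, contradicting the energy lower bound. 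The gluing step is the essential idea you are missing; without it, the $a_i$-chords contaminate the boundary condition. A secondary weakness in your writeup is the rescaling argument for making $C_H < \tau-\tau_s$: shrinking Hamiltonian perturbations is not a free move (transversality, compatibility of Floer data across pairs, and the fact that $\tau-\tau_s$ is a property of $(\mathcal{L},\mathcal{P})$ rather than a tuning knob all need to be addressed), whereas the paper reduces to the cited a priori energy estimate of Biran--Cornea without introducing a new calibration.
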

\begin{proof}
We give the proof in the monotone case; the relatively monotone case is analogous.
Suppose that $v_1$ and $v_2$ are two pseudoholomorphic discs which contribute to the coefficient of $a_0$ in \eqref{eqn:term}, with values of $i_j$ given by $k_j$ for disc $v_1$, and $l_j$ for disc $v_2$.
Define lifts $\tilde{v}_1,\tilde{v}_2$ of the boundaries to $\cG X$, by gluing the index-$0$ orientation operators onto the strip-like ends of $v_1, v_2$.
Because these $v_1$ and $v_2$ contribute to $A_\infty$ products, they are rigid, which implies that
\begin{equation}
 \mu(v_1,\tilde{v}_1) = 2-s- \sum_{j=0}^s k_j\label{eqn:v1rig}
\end{equation}
and
\begin{equation}
 \mu(v_2,\tilde{v}_2) = 2-s-\sum_{j=0}^s l_j. \label{eqn:v2rig}
\end{equation}

Now the disc $\overline{v}_2$ can be glued to $v_1$ along the Hamiltonian chords $a_0,\ldots,a_s$; the result is a genus-0 surface in $X$, with $s+1$ holes.
The boundary of the $j$th hole has the form $\tilde{u}_j$ described in Definition \ref{definition:strmonpaths}, with $k=k_j$ and $l=l_j$.
Therefore, by the monotonicity condition, the $j$th hole can be filled in by a disc $u_j$.
Filling in the holes gives us a closed surface
\begin{equation} v = v_1 \cup \overline{v}_2 \cup u_1 \cup \ldots \cup u_j.\end{equation}
We now have
\begin{align}
\omega(v) &=  2\tau c_1(v)\\
\Rightarrow \omega(v_1) - \omega(v_2) + \sum_{j=0}^s \omega(u_j) &= 
 \tau \left( \mu(v_1,\tilde{v}_1) - \mu(v_2,\tilde{v}_2) + \sum_{j=0}^s \mu(u_j) \right),
\end{align}
where the right-hand side follows by the `Composition' property of the Maslov index \cite[Theorem C.3.5]{mcduffsalamon}.
Substituting in \eqref{eqn:v1rig}  and \eqref{eqn:v2rig}  and rearranging, we obtain
\begin{align}
\omega(v_2) &= \omega(v_1) + \sum_{j =0}^s \omega(u_j) - \tau\left(\mu(u_j) + l_j - k_j\right) \\
\label{eqn:omv2} &\le  \omega(v_1) + \tau \left(\sum_{j=0}^s k_j \right) + \sum_{j=0}^s (\tau_{k_j} - \tau) l_j
\end{align}
by the monotonicity property \eqref{eqn:Fkl}.

In particular, if we fix disc $v_1$, then the first two terms of \eqref{eqn:omv2} are fixed, and $\tau_{k_j} - \tau <0$ by definition, so for sufficiently large $l_j$, $\omega(v_2)$ is arbitrarily negative.
This contradicts the fact that pseudoholomorphic discs have positive energy (the necessary estimate relating symplectic area to energy is proven in \cite[Lemma 3.3.3]{Biran2013} -- note that the curvature term, which accounts for the difference between symplectic area and energy, is compactly supported and hence bounded).
It follows that for each $a_0$, the coefficient of $a_0$ in \eqref{eqn:term}  vanishes for sufficiently large $i_0,\ldots,i_s$; since the $hom$-spaces have a finite number of generators, the result follows.
 \end{proof}

\begin{definition}
We can now define the generalization of the monotone Fukaya category, $\cF^{wbc}(X)$, which includes weak bounding cochains.
The objects are pairs $(\bm{L},\alpha)$, where $\bm{L}$ is an object of $\cF^c(X)^\oplus$ and
\begin{equation} \alpha \in CF^*(\bm{L},\bm{L})\end{equation}
is monotone, and is a weak bounding cochain, i.e., has odd degree and satisfies the Maurer--Cartan equation \eqref{eqn:mc}  (which converges by Lemma \ref{lemma:conv}, because $\alpha$ is monotone).
$\cF^{wbc}(X)$ becomes a strictly unital, curved $A_\infty$ category, with $A_\infty$ structure maps given by equation \eqref{eqn:defainf}  (which again converges by Lemma \ref{lemma:conv}).
In particular, for each $w \in \C$, $\cF^{wbc}(X)_w$ is a strictly unital, non-curved $A_\infty$ category, whose objects are those $(\bm{L},\alpha)$ with $\mathfrak{P}(\alpha) = w$.
If $(\bm{L}_0,\alpha_0)$ and $(\bm{L}_1,\alpha_1)$ are objects of $\cF^{wbc}(X)_w$, then we denote by
\begin{equation} HF^*((\bm{L}_0,\alpha_0),(\bm{L}_1,\alpha_1)) := \mathrm{Hom}^*((\bm{L}_0,\alpha_0),(\bm{L}_1,\alpha_1))\end{equation}
the corresponding Floer cohomology group.

We similarly define $\cF^{wbc}_m(X,D)$, requiring our weak bounding cochains to be relatively monotone.
\end{definition}

\begin{remark}
\label{remark:wbcrel}
We would like to relate $\cF^{wbc}(X)$ to $\cF^{wbc}_m(X,D)$.
Firstly, following Lemma \ref{lemma:reltomon} (enhanced to include homotopy units), there is a strict embedding
\begin{equation} \cF^c_m(X,D) \otimes _R \C \hookrightarrow \cF^c(X).\end{equation}
This would induce a map from weak bounding cochains on $\cF^c_m(X,D)$ to weak bounding cochains on $\cF^c(X)$, given by
\begin{equation} \alpha \mapsto \alpha \otimes_R 1,\end{equation}
except for the fact that
\begin{equation} \alpha \mbox{ relatively monotone} \nRightarrow \alpha \otimes_R 1 \mbox{ monotone.}\end{equation}
\end{remark}

Nevertheless, we have the following:

\begin{lemma}
\label{lemma:actioncontraction}
Let $(X,D)$ be a K\"{a}hler pair, and suppose that $X$ is simply-connected.
Let $\varphi_t: X \setminus D \To X \setminus D$ denote the time-$t$ reverse Liouville flow.
It is defined for all $t \geq 0$.
Now suppose that $\mathcal{L}$ is a collection of exact Lagrangians in $X \setminus D$, and $\mathcal{P}$ a finite set of paths in $\cG(X \setminus D)$, as in Definition \ref{definition:strmonpaths}, so that $(\mathcal{L},\mathcal{P})$ is relatively monotone.
Then, for sufficiently large $t$, $(\varphi_t(\mathcal{L}),\varphi_t(\mathcal{P}))$ is monotone.
Furthermore, $\varphi_t(L)$ is Hamiltonian isotopic to $L$, for all $L \in \mathcal{L}$.
\end{lemma}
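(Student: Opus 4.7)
The Hamiltonian isotopy claim is standard: the Lagrangian isotopy $s\mapsto\varphi_s(L)$ is generated along $\varphi_s(L)$ by $-Z$, where $Z$ is the Liouville vector field satisfying $\iota_Z\omega=\alpha$. The corresponding generating 1-form is $-\alpha|_{\varphi_s(L)}$. Exactness of $L$ in $X\setminus D$ is preserved by the flow (a consequence of the Liouville rescaling below), so $\alpha|_{\varphi_s(L)}$ is exact, making the isotopy exact, hence Hamiltonian. Since the track $\bigcup_{s\in[0,t]}\varphi_s(L)$ is compact in $X\setminus D$, the primitives can be extended via a cutoff to a compactly-supported time-dependent Hamiltonian on $X$ whose time-$t$ flow carries $L$ to $\varphi_t(L)$.

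For monotonicity, the key computation is the Liouville rescaling
\begin{equation}
\varphi_t^*\alpha \;=\; e^{-t}\alpha,
\end{equation}
which follows from $\mathcal{L}_Z\alpha = d\iota_Z\alpha + \iota_Z d\alpha = 0 + \iota_Z\omega = \alpha$ (using $\iota_Z\alpha = \omega(Z,Z)=0$). Consequently, if $h_L$ is a primitive for $\alpha|_L$ then $e^{-t}h_L\circ\varphi_t^{-1}$ is a primitive for $\alpha|_{\varphi_t(L)}$, and $\int_{\varphi_t(\gamma)}\alpha=e^{-t}\int_{\gamma}\alpha$ for each $\gamma\in\mathcal{P}$.

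Next, for any continuous extension $u:\mathbb{D}\to X$ of a boundary datum $\tilde{u}$ (with the flowed Lagrangians and paths), I would establish the identity
\begin{equation}
\omega(u) \;=\; \sum_j \ell_j\,(u\cdot D_j) + \int_{\partial\mathbb{D}}u^*\alpha,
\end{equation}
by excising small neighbourhoods of each $D_j$ and applying Stokes' theorem, using $[\omega]=\sum_j d_j\ell_j c$ and $PD(D_j)=d_jc$ (so that closed spheres satisfy $\omega(u)=\sum_j\ell_j(u\cdot D_j)$). The boundary integral decomposes into $k+l$ Lagrangian arcs on the $\varphi_t(L_i)$ (each bounded by $2e^{-t}\|h_{L_i}\|_\infty$ via the rescaled primitive) and $k+l$ path arcs of the form $\int_{\varphi_t(\gamma_p)}\alpha=e^{-t}\int_{\gamma_p}\alpha$; hence
\begin{equation}
\left|\int_{\partial\mathbb{D}}u^*\alpha\right| \;\le\; C\,e^{-t}(k+l),
\end{equation}
for a constant $C=C(\mathcal{L},\mathcal{P})$ independent of $t,k,l,u$.

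To conclude, the invariants $u\cdot D_j$ and $\mu(u,\tilde{u})$ are topological and preserved under the Hamiltonian isotopy of the first paragraph, so any extension $u$ with boundary on $(\varphi_t(\mathcal{L}),\varphi_t(\mathcal{P}))$ pulls back to an extension $u'$ with boundary on $(\mathcal{L},\mathcal{P})$ having the same intersection numbers and Maslov index (the isotopy stays in $X\setminus D$). Relative monotonicity applied to $u'$ then gives $\sum_j\ell_j(u\cdot D_j)\le\tau\mu(u,\tilde{u})+\tau_k l$, whence
\begin{equation}
\omega(u) \;\le\; \tau\,\mu(u,\tilde{u}) + (\tau_k+Ce^{-t})\,l + Ce^{-t}k.
\end{equation}
For fixed $k$ and $l$ large, $Ce^{-t}k$ is absorbed into an arbitrarily small multiple of $l$, so it suffices to pick $t$ with $\tau_k+Ce^{-t}<\tau$. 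The main obstacle is to arrange this uniformly in $k$, i.e.\ to secure $\sup_k\tau_k<\tau$; in our setting this is automatic because the $\tau_k$ arise from boundary-term bounds of the form $2\|h_{L_i}\|_\infty$ and $\int_{\gamma_p}\alpha$, which depend only on the finite data $(\mathcal{L},\mathcal{P})$ and not on $k$.
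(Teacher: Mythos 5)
Your proof follows essentially the same route as the paper's: the key identities are the Liouville rescaling $\varphi_t^*\alpha = e^{-t}\alpha$, the area decomposition $\omega(u) = \sum_j\ell_j(u\cdot D_j) + \int_{\partial\mathbb{D}}u^*\alpha$ (which the paper invokes as a cited lemma rather than re-deriving, and which it packages via the ``symplectic action'' $A(\gamma)$ of a path, combining the Lagrangian-arc and path-arc contributions into one quantity), the observation that $u\cdot D_j$ and $\mu(u,\tilde u)$ are invariant under the Hamiltonian isotopy so relative monotonicity for $(\mathcal{L},\mathcal{P})$ transfers to bounds for $(\varphi_t(\mathcal{L}),\varphi_t(\mathcal{P}))$, and the final estimate $\omega(u)\le \tau\mu(u,\tilde u) + (\tau_k + \epsilon)l + \epsilon k$ with $\epsilon = O(e^{-t})$, absorbing $\epsilon k$ into a small multiple of $l$ for $l\gg 0$. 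The Hamiltonian isotopy argument is also identical. All of this matches.

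The one place your write-up goes astray is the final sentence. You correctly notice that to obtain a single $t$ working uniformly in $k$ one would like $\sup_k \tau_k < \tau$, but your justification --- that ``the $\tau_k$ arise from boundary-term bounds of the form $2\|h_{L_i}\|_\infty$ and $\int_{\gamma_p}\alpha$'' --- conflates two different constants. The $\tau_k$ are not derived from boundary terms: they are hypothesis data furnished by the definition of relative monotonicity, quantified as ``for each $k$ there exists $\tau_k<\tau$,'' and nothing in the definition prevents $\tau_k\to\tau$ as $k\to\infty$. The boundary terms control the separate correction $\epsilon=Ce^{-t}$, which is what your Liouville-flow estimate actually bounds. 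To be fair, the paper's own proof is equally terse here (it fixes $\epsilon$ with $\tau_k+\epsilon<\tau$ and then picks $t$, without remarking that $\epsilon$ may need to shrink with $k$); but since you went to the trouble of flagging the uniformity issue, the explanation you offer for why it is harmless does not hold up and should be replaced either by an explicit uniformity hypothesis on $(\mathcal{L},\mathcal{P})$ or by a remark that in the relevant geometric examples the $\tau_k$ are in fact uniform.
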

\begin{proof}
First, observe that all $L \in \mathcal{L}$ are monotone, by Corollary \ref{corollary:objmap}.

Let $\alpha = d^ch$ be the Liouville form on $X \setminus D$, following the notation of \cite[Definition 3.11]{Sheridan2015}.
For each $L \in \mathcal{L}$, fix $f_L \in C^\infty(L;\R)$ so that
\begin{equation} \alpha|_L = d f_L.\end{equation}
We define the \emph{symplectic action} of a path $\gamma \in \mathcal{P}$ to be
\begin{equation} A(\gamma) := f_{L_1}(\pi \circ \gamma(1)) - f_{L_0}(\pi \circ \gamma(0))  -\int_{\pi \circ \gamma} \alpha \end{equation}
where $\pi: \cG(X \setminus D) \To X \setminus D$ is the obvious projection.

The reverse Liouville flow $\varphi_t$ is, by definition, the time-$t$ flow of the negative Liouville vector field $-Z$, where
\begin{equation} \iota_Z \omega = \alpha.\end{equation}
By Cartan's magic formula,
\begin{equation} \mathcal{L}_Z \alpha = d \iota_Z \alpha + \iota_Z d \alpha = \alpha\end{equation}
(using $\omega = d\alpha$ and $\omega(Z,Z) = 0$).
It follows that
\begin{equation} \varphi_t^*\alpha = e^{-t}\alpha.\end{equation}
Hence, for each $L \in \mathcal{L}$, we have
\begin{equation} \alpha|_{\varphi_t(L)} = d(e^{-t}f_L),\end{equation}
so $\varphi_t(L)$ is exact Lagrangian isotopic to $L$, and hence Hamiltonian isotopic to $L$.
It also follows that
\begin{equation}A(\varphi_t(\gamma)) = e^{-t} A(\gamma).\end{equation}
In particular, as $\mathcal{P}$ is finite, for any $\epsilon > 0$ we may choose $t \gg 0$ sufficiently large that
\begin{equation}
\label{eqn:actionphi}
|A(\varphi_t(\gamma))| < \epsilon \mbox{  for all $\gamma \in \mathcal{P}$.}
\end{equation}

Now let $\tilde{u}: \partial \mathbb{D} \To \cG (X \setminus D)$ be a map of the type considered in Definition \ref{definition:strmonpaths}, changing between elements of $\varphi_t(\mathcal{L})$ along generators $\varphi_t(\gamma_{p_1})$,\ldots, $\varphi_t(\gamma_{p_k})$, $\varphi_t(\gamma_{q_l})$, \ldots, $\varphi_t(\gamma_{q_1})$ as in Figure \ref{fig:Fig7}, with extension $u: \mathbb{D} \To X$ as in Definition \ref{definition:strmonpaths}.
Applying Stokes' theorem to the surface $u$, with small balls surrounding the points $u^{-1}(D)$ removed, yields the following formula for the symplectic area of $u$:
\begin{equation} \omega(u) = \sum_{j=1}^k A(\varphi_t(\gamma_{p_j})) - \sum_{j=1}^l A(\varphi_t(\gamma_{q_j})) + \sum_j \ell_j (u \cdot D_j),\end{equation}
where $\ell_j$ is the linking number of $\alpha$ with component $D_j$ of $D$ (compare \cite[Lemma 3.12]{Sheridan2015}).
Applying \eqref{eqn:actionphi}, then the definition of relative monotonicity, we obtain
\begin{align}
\omega(u) & \le  \epsilon(k+l) + \sum_j \ell_j u \cdot D_j  \\
& \le  \epsilon k + \tau \mu(\tilde{u}) + (\tau_k + \epsilon) l,
\end{align}
where $\tau_k < \tau$.
It now suffices to choose $\epsilon>0$ small enough that $\tau_k + \epsilon < \tau$, then choose $t$ large enough that $|A(\varphi_t(\gamma))| < \epsilon$ for all $\gamma \in \mathcal{P}$ (the term $\epsilon k$ is dominated by $ \epsilon' l$ as $l \To \infty$, for any $\epsilon' > 0$).
 \end{proof}

\begin{remark}
\label{remark:nonquasi}
Because $L$ and $\varphi_t(L)$ are Hamiltonian isotopic, their moduli spaces of weak bounding cochains ought to be isomorphic by \cite{fooo}; however, for our purposes in this paper, it suffices simply to know that $L$ and $\varphi_t(L)$ are quasi-isomorphic objects in the exact Fukaya category $\cF(X \setminus D)$ (which is true by \cite{Seidel2008}), essentially by \cite[Lemma 5.3]{Sheridan2015}.
\end{remark}

\begin{remark}
\label{remark:energyfilt}
Lemma \ref{lemma:actioncontraction} would also be important if we wanted to relate the relative Fukaya category to the Fukaya category, and define the latter over a Novikov ring, with weak bounding cochains required to have positive energy (as we must do if our symplectic manifold is not monotone).
That is because the natural embedding
\begin{equation} \cF(X,D) \otimes_R \Lambda \hookrightarrow \cF(X)\end{equation}
is defined by mapping
\begin{align}
 CF^*_{\cF(X,D)} (L_0,L_1) \otimes_R \Lambda &\To CF^*_{\cF(X,D)}(L_0,L_1) \\
p & \mapsto  r^{A(p)} p
\end{align}
(see \cite[\S 8.1]{Sheridan2015}).
This map does not send positive-energy weak bounding cochains to positive-energy weak bounding cochains, because $A(p)$ may be negative.
Nevertheless, Lemma \ref{lemma:actioncontraction} shows that the reverse Liouville flow makes $A(p)$ arbitrarily small; so if we flow a weak bounding cochain in $\cF(X,D)$ sufficiently far, then its image in $\cF(X)$ will have positive energy (of course  the objections of Remark \ref{remark:nonquasi} must also be dealt with).
\end{remark}

\subsection{The disc potential}
\label{subsec:fdp}

\begin{definition}
If $\bm{L}$ is an object of $\cF^c(X)^\oplus$, we will write $\hcM_{weak}(\bm{L})$ for the space defined in Definition \ref{definition:mweak}, and
\begin{equation} \mathfrak{P}: \hcM_{weak}(\bm{L}) \To \C\end{equation}
for the disc potential.
If $\bm{L}$ is instead an object of $\cF^c_m(X,D)^\oplus$, we will write $\hcM_{weak,rel}(\bm{L})$ for this space, and
\begin{equation} \mathfrak{P}_{rel}: \hcM_{weak,rel}(\bm{L}) \To R\end{equation}
for the disc potential.
\end{definition}

\begin{lemma}
\label{lemma:relpredisc}
Let $(X,D)$ be a K\"{a}hler pair, $L$ an object of $\cF_m(X,D)_w$, and $\mathcal{P}$ be a set of generators of $CF^*(L,L)$, such that $(\{L\},\mathcal{P})$ is relatively monotone.
Denote
\begin{equation} A := CF^*_{\cF_m(X,D)_w}(L,L),\end{equation}
and $V \subset A$ the subspace spanned by the generators in $\mathcal{P}$.
Suppose that the conditions of Lemma \ref{lemma:prediscdisc} are satisfied, for $V \subset A$, so that we can define the pre-disc potential
\begin{equation} \mathfrak{P}'_{rel}: V \To R\end{equation}
in $\cF_m(X,D)_w$.

Then there is an embedding $\iota_{rel}: V \hookrightarrow \hcM_{weak,rel}(L)$, so that the diagram
\begin{equation} \xymatrix{
V \ar@{^{(}->}[rr]^-{\iota_{rel}} \ar[rd]_{\mathfrak{P}'_{rel} + w \cdot T}& & \hcM_{weak,rel}(L) \ar[ld]^{\mathfrak{P}_{rel}}\\
& R &}
\end{equation}
commutes, where $T$ is as in Proposition \ref{proposition:cod}.
\end{lemma}
\begin{proof}
This is immediate from Lemma \ref{lemma:prediscdisc}; the only remark to make is that, when we pass from $\cF_m(X,D)_w$ to $\cF^{wbc}_m(X,D)$, we introduce a curvature term $\mu^0 = w \cdot T \cdot e^+$, so
\begin{equation} \mathfrak{P}_{rel} = \mathfrak{P}'_{rel} + w \cdot T.\end{equation}
 \end{proof}

Now let us relate this result about $\cF_m^{wbc}(X,D)$ to $\cF^{wbc}(X)$, following Remark \ref{remark:wbcrel}.
Recall that monotonicity is not a consequence of relative monotonicity, so we impose it as an additional assumption.

\begin{corollary}
\label{corollary:relpredisc2}
Let $(X,D)$ be a K\"{a}hler pair, $L$ an object of $\cF_m(X,D)_w$, and $\mathcal{P}$ be a set of generators of $CF^*(L,L)$, such that $(\{L\},\mathcal{P})$ is both relatively monotone \emph{and} monotone.
Denote
\begin{equation} A := CF^*_{\cF_m(X,D)_w}(L,L),\end{equation}
and $V \subset A$ the subspace spanned by the generators in $\mathcal{P}$.
Suppose that the conditions of Lemma \ref{lemma:prediscdisc} are satisfied, for $V \subset A$.

Now consider $L$ as an object of $\cF^c(X)$: this is possible because $\mathcal{P}$ is monotone (c.f. Remark \ref{remark:wbcrel}).
Denote $A_\C := A \otimes_R \C$ and $V_\C := V \otimes_R \C$.
Then there is an embedding $\iota: V_\C \hookrightarrow \hcM_{weak}(L)$, so that the diagram
\begin{equation} \xymatrix{
V_\C  \ar@{^{(}->}[rr]^-{\iota} \ar[rd]_{\mathfrak{P}' + w} & & \hcM_{weak}(L) \ar[ld]^{\mathfrak{P}}  \\
& \C &.
}
\end{equation}
commutes (and $\mathfrak{P}'$ and $\mathfrak{P}$ converge, i.e., the diagram makes sense).
\end{corollary}
\begin{proof}
Follows from Lemma \ref{lemma:relpredisc}, by setting $\iota:=\iota_{rel} \otimes_R \C$.
Note that
\begin{align}
 \mathfrak{P}'&= \mathfrak{P}'_{rel} \otimes_R 1 \mbox{ and} \\
 \mathfrak{P}&= \mathfrak{P}_{rel} \otimes_R 1.
\end{align}
 \end{proof}

\begin{remark}
The only reason that Corollary \ref{corollary:relpredisc2} makes any reference to the relative Fukaya category is that, in our intended application, the grading hypothesis of Lemma \ref{lemma:prediscdisc} is not satisfied in the monotone Fukaya category, which is only $\Z/2N \Z$-graded.
So we need to use the stronger grading on the relative Fukaya category to prove the existence of $\iota_{rel}$, then tensor with $\C$ to obtain $\iota$.
\end{remark}

\begin{remark}
Let us remark on the relationship between Lemma \ref{lemma:relpredisc} and \cite[Proposition 4.3]{Fukaya2010d}.
One could apply Lemma \ref{lemma:relpredisc} to $V = C^1(L) \subset C^*(L) = A$, where $L$ is a Lagrangian torus fibre inside a symplectic toric manifold, which we consider relative to the boundary toric divisor.
Then the output of $\mu^s: V^{\otimes s} \To A$ has degree $2-\mu$, where $\mu$ is the Maslov index of the homotopy class of the holomorphic disc.
The holomorphic discs with $\mu=0$ correspond to the standard Morse $A_\infty$ structure on $C^*(L)$ \cite{Fukaya1997}; $\mu^2(v,v)$ vanishes by antisymmetry, and the higher products can be made to vanish by applying a formal diffeomorphism, because the cohomology of the torus is formal.
The holomorphic discs with $\mu=2$ contribute an output in degree $2-2=0$, which must be a multiple of $e_L$ (because $CF^0(L,L)$ is generated by $e_L$).
One easily checks that $CC^{\le 0}(V,A) = \C \cdot e_L$.
The only hypothesis of Lemma \ref{lemma:relpredisc} which is not satisfied is that $V \subset A$ is not monotone: in fact, if we tried to prove monotonicity, we would be forced to choose $\tau_k > \tau$ for \eqref{eqn:Fkl}  to be satisfied.
In some sense, $V \subset A$ only `just' fails to be monotone.
This forces us to work over a Novikov ring, so that convergence of the Maurer--Cartan equation follows from Gromov compactness.
If we do that, then all of the hypotheses of Lemma \ref{lemma:relpredisc} are satisfied, and we obtain the analogue of \cite[Proposition 4.3]{Fukaya2010d} (albeit under significantly more restrictive technical hypotheses).
\end{remark}

\subsection{Closed--open and open--closed string maps}
\label{subsec:cowbc}

The closed--open map extends to the homotopy-unital version of the monotone Fukaya category: namely, we have a homomorphism of $\C$-algebras
\begin{equation} \CO: QH^*(X) \To HH^*(\cF^c(X)).\end{equation}
It is defined by counting the same kind of pseudoholomorphic discs as before, but modifying to allow $f$ and $e^+$ to be inputs as in \S \ref{subsec:homungeom}.
Here, $HH^*(\cF^c(X))$ is defined via the one-pointed Hochschild cochain complex (see \S \ref{subsec:curved}).

Furthermore, the usual argument using forgetful maps (as in \S \ref{subsec:homungeom}) shows that
\begin{equation} \CO(\beta)(\ldots, e_L^+, \ldots) = 0,\end{equation}
so in fact $\CO$ factors through the normalized Hochschild cochain complex $\overline{HH}^*(\cF^c(X))$ (see \S \ref{subsec:curved}).

It follows that there is a homomorphism
\begin{equation} \CO^{wbc}:=\Psi \circ \CO: QH^*(X) \To \overline{HH}^*(\cF^{wbc}(X)),\end{equation}
where $\Psi$ is defined as in \S \ref{subsec:hhwbc}.
This composition converges by monotonicity of the weak bounding cochain, by the same argument as in Lemma \ref{lemma:conv} (even though the map $\Psi$ need not converge).
$\CO^{wbc}$ is also a homomorphism of $\C$-algebras, as both $\CO$ and $\Psi$ are (the argument for $\CO$ in the presence of homotopy units is a minor extension of that given in the proof of Proposition \ref{proposition:coalg}).

It follows that, for every $w \in \C$, we obtain a homomorphism of $\C$-algebras
\begin{equation} \CO^{wbc,w}: QH^*(X)  \To HH^*(\cF^{wbc}(X)_w),\end{equation}
by composing $\CO^{wbc}$ with the restriction map
\begin{equation} \overline{HH}^*(\cF^{wbc}(X)) \To \overline{HH}^*(\cF^{wbc}(X)_w) \cong HH^*(\cF^{wbc}(X)_w)\end{equation}
(see Lemma \ref{lemma:curved}).
We will abbreviate $\CO^{wbc,w}$ by $\CO$ where we feel no confusion is possible, and we introduce the notation
\begin{equation} \CO^0: QH^*(X) \To HF^*((\bm{L},\alpha),(\bm{L},\alpha))\end{equation}
for the algebra homomorphism given by the length-zero component of $\CO^{wbc,w}$ as in Definition \ref{definition:co0}, for any object $(\bm{L},\alpha)$ of $\cF^{wbc}(X)_w$.

\begin{remark}
\label{remark:nounits}
We have not checked that $\CO^{wbc,w}$ is a \emph{unital} algebra homomorphism.
The argument of Lemma \ref{lemma:counital} does not go through immediately in the presence of homotopy units, because it requires making a choice of translation-invariant perturbation data, which makes no sense when the input at $p_{in}$ is $f_L$.
\end{remark}

The fact that $\CO^{wbc,w}$ need not be unital leads to some inconveniences.
We circumvent them by restricting to a certain class of objects:

\begin{definition}
\label{definition:cow0u}
An object $(\bm{L},\alpha)$ of $\cF^{wbc}(X)_w$ will be called \emph{$\CO^0$-unital} if the map
\begin{equation} \CO^0: QH^*(X)_{w'} \To HF^*((\bm{L},\alpha),(\bm{L},\alpha))\end{equation}
is a \emph{unital} algebra homomorphism for $w' = w$, and vanishes for $w' \neq w$.
We denote by $\cF^{wbc,u}(X)_w$ the full subcategory of $\CO^0$-unital objects of $\cF^{wbc}(X)_w$.
Note that if $L$ is a monotone Lagrangian, then the object $(L,0)$ is $\CO^0$-unital by Proposition \ref{proposition:coeigsplit}: thus we have
\begin{equation} \cF(X)_w \subset \cF^{wbc,u}(X)_w \subset \cF^{wbc}(X)_w.\end{equation}
\end{definition}

By similar arguments to those just given for the closed--open map, the open--closed map extends to the homotopy-unital version of the monotone Fukaya category, and satisfies
\begin{equation} \OC(a_s \otimes \ldots \otimes e_L^+ \otimes \ldots) = 0,\end{equation}
hence factors through the normalized Hochschild chain complex $\overline{HH}_*(\cF^c(X))$ (see \S \ref{subsec:curved}).

\begin{remark}
The incoming marked point corresponding to the first `slot' in the Hochschild chain is treated slightly differently from the others: when that input is labelled $f$, the perturbation data at $\rho = 1$ need not be pulled back via a forgetful map, and their count need not be zero.
Indeed, counting the pseudoholomorphic discs at $\rho = 1$ gives the coefficient of $\OC(e_L^+ \otimes \ldots)$.
\end{remark}

It follows that we can define a homomorphism
\begin{equation} \OC^{wbc}:=\OC \circ \Phi: \overline{HH}_*(\cF^{wbc}(X)) \To QH^{*+n}(X).\end{equation}
This composition converges by the same argument as in Lemma \ref{lemma:conv} (even though the map $\Phi$ need not converge).
$\OC^{wbc}$ is also a homomorphism of $QH^*(X)$-modules, using the fact that $\OC$ is a homomorphism of $QH^*(X)$-modules (the argument in the presence of homotopy units is a minor extension of that given in the proof of Proposition \ref{proposition:ocmod}), and that $\Phi$ is a homomorphism of $\overline{HH}^*(\cF^c(X))$-modules.

It follows that, for every $w \in \C$, we obtain a homomorphism of $QH^*(X)$-modules,
\begin{equation} \OC^{wbc,w}: HH_*(\cF^{wbc}(X)_w) \To QH^{*+n}(X),\end{equation}
by composing $\OC^{wbc}$ with the inclusion map
\begin{equation} HH_*(\cF^{wbc}(X)_w) \cong \overline{HH}_*(\cF^{wbc}(X)_w) \To \overline{HH}_*(\cF^{wbc}(X)) \end{equation}
(see Lemma \ref{lemma:curved}).
We will abbreviate $\OC^{wbc,w}$ by $\OC$ where we feel no confusion is possible.

\subsection{The split-generation criterion}

For the purposes of this section, we fix $w \in \C$, and abbreviate
\begin{equation} \cF_w := \cF^{wbc}(X)_w\end{equation}
and
\begin{equation} \cF^u_w := \cF^{wbc,u}(X)_w\end{equation}
 (see Definition \ref{definition:cow0u}).
We will also abbreviate the notation for objects of $\cF_w$, writing $K := (\bm{K},\alpha)$ and $L := (\bm{L},\beta)$.

In this section, we prove analogues of the results in \S \ref{subsec:splitgen}, incorporating weak bounding cochains into the picture.
The proofs are largely the same: first one incorporates homotopy units into all operations, then one incorporates weak bounding cochains by inserting arbitrarily many copies of the appropriate weak bounding cochain along every boundary component in every diagram in \S \ref{sec:monfuk}.
We will draw attention to the places where the proofs differ: in particular, we are only able to prove some of the results for $\cF^u_w$, and do not know how to prove them for $\cF_w$.

First we give an analogue of Lemma \ref{lemma:cardy}:

\begin{lemma}
\label{lemma:cardywbc}
For any object $K$ of $\cF_w$, there exists a (strictly unital) bimodule homomorphism
\begin{equation} \Delta: (\cF_w)_\Delta \To \mathcal{Y}^l_K \otimes \mathcal{Y}^r_K[n],\end{equation}
such that the following diagram commutes up to a sign $(-1)^{\frac{n(n+1)}{2}}$:
\begin{equation}
\xymatrixcolsep{5pc} \xymatrix{
HH_*(\cF_w)[-n] \ar[r]^-{\OC} \ar[d]^{HH_*(\Delta)} & QH^*(X) \ar[d]^{\CO^0} \\
HH_*(\cF_w,\mathcal{Y}^l_K \otimes \mathcal{Y}^r_K) \ar[r]^-{H^*(\mu)} & HF^*(K,K).
} \end{equation}
\end{lemma}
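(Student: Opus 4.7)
The strategy is to follow the argument of Lemma \ref{lemma:cardy} essentially verbatim, with two modifications: first, incorporate homotopy units following the discussion in Section \ref{subsec:homungeom}; second, insert arbitrarily many copies of the weak bounding cochains $\alpha$, $\beta$, and $\gamma$ (where $K = (\bm{K},\gamma)$, and similarly for the other objects labelling boundary components) along every boundary arc appearing in each moduli space. The $A_\infty$ structure of $\mathcal{F}$ is precisely defined by \eqref{eqn:defainf} with such insertions, so this is compatible with what we need.

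First, I would define $\Delta$ by counting the same moduli spaces of disks depicted in Figure \ref{subfig:Fig5a}, but now with an arbitrary number of copies of the bounding cochain labelling the corresponding object inserted along each boundary arc (including the arc labelled by $K$, which receives insertions of $\gamma$). The resulting formula is structurally identical to \eqref{eqn:defainf}. That $\Delta$ is a bimodule pre-homomorphism follows from the usual cobordism argument applied to one-dimensional components of the moduli space: disk bubbling along boundary arcs and strip-breaking at strip-like ends combine to give the bimodule homomorphism equation, once one uses that each $\alpha_i$, $\beta_i$, $\gamma$ satisfies the Maurer-Cartan equation \eqref{eqn:mc} to absorb the curvature contributions of $\mathcal{F}^c(X)$. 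Strict unitality of $\Delta$ (if desired) follows from the pulled-back perturbation data argument of Section \ref{subsec:homungeom}.

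Second, for the commutative diagram I would reuse the moduli space of annuli in Figure \ref{subfig:Fig5b}, again with weak bounding cochain insertions along every boundary arc of both boundary circles. Counting boundary points of the one-dimensional component yields the same identity as in the proof of Lemma \ref{lemma:cardy}: the $r = 1$ boundary contributes $(-1)^{n(n+1)/2} \mu \circ CC_*(\Delta)$; the $r = \infty$ degeneration (see Figure \ref{subfig:Fig5d}) contributes $\mathcal{CO}^0 \circ \mathcal{OC}$, after applying the standard homotopy between $\sum_i e_i \times e^i$ and the diagonal in $X \times X$ (as in the original proof, the $\mathcal{CO}$ and $\mathcal{OC}$ that appear here are automatically the weak-bounding-cochain versions $\mathcal{CO}^{wbc}_w$ and $\mathcal{OC}^{wbc}_w$ of Section \ref{subsec:cowbc}, since the insertion pattern on the annulus boundary is exactly what \eqref{eqn:Psi} and $\Phi$ produce on the closed-open/open-closed moduli spaces); and interior disk bubbling together with strip-breaking at the outgoing end contributes $H \circ \delta + (-1)^n \mu^1 \circ H$, where $H$ counts rigid annuli.

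The main technical obstacle is convergence: each moduli space above involves an infinite sum over the number of weak bounding cochain insertions on each boundary arc. This is precisely the setting of Lemma \ref{lemma:conv}: because each $\alpha_i$, $\beta$, $\gamma$ is monotone in the sense of Definition \ref{definition:strmonwbc}, and because the strip-like ends in the $\Delta$-moduli space and the strip-like ends and interior marked point in the annulus moduli space contribute only finitely many possible lifts, the argument of Lemma \ref{lemma:conv} applies verbatim to show that for each fixed output only finitely many choices of insertion numbers yield a nonempty rigid moduli space. Once convergence is established, the remainder of the argument is formally identical to the proof of Lemma \ref{lemma:cardy}, and in particular the sign $(-1)^{n(n+1)/2}$ arises in exactly the same way.
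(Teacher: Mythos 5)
Your proposal is correct and takes essentially the same approach the paper intends: the paper's proof is simply the one-line indication "analogous to the proof of Lemma~\ref{lemma:cardy}, modified to include homotopy units and weak bounding cochains," and you have elaborated exactly that modification, including the necessary convergence check via monotonicity and Lemma~\ref{lemma:conv}. Your observation that the insertion pattern on the annulus boundary automatically produces $\mathcal{CO}^{wbc}_w$ and $\mathcal{OC}^{wbc}_w$ at the $r=\infty$ degeneration is the right way to see why the diagram matches the one stated.
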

\begin{proof}
The proof is analogous to the proof of Lemma \ref{lemma:cardy}, modified to include homotopy units and weak bounding cochains.
 \end{proof}

Now we give an analogue of Corollary \ref{corollary:gen2}:

\begin{corollary}
\label{corollary:gen2wbc}
Suppose that $\cG_w \subset \cF_w$ is a full subcategory such that the composition
\begin{equation} 
\label{eqn:gprojw}
HH_*(\cG_w) \xrightarrow{\OC} QH^*(X) \xrightarrow{proj_w} QH^*(X)_w \end{equation}
contains $e_w$ in its image.
Then any $\CO^0$-unital object $K$ of $\cF^u_w$ is split-generated by $\cG_w$.
\end{corollary}

Now we give an analogue of Lemma \ref{lemma:weakcy}:

\begin{lemma}
\label{lemma:weakcywbc}
The class $[\phi] \in HH_n(\cF^u_w)^\vee$ given by
\begin{equation} [\phi](\psi) = \langle \OC(\psi),e \rangle \end{equation}
is an $n$-dimensional weak proper Calabi--Yau structure on $\cF^u_w$.
\end{lemma}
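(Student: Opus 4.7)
The plan is to adapt the proof of Lemma \ref{lemma:weakcy} to the homotopy-unital, weak-bounding-cochain setting, noting that the restriction to $\mathcal{F}^u$ is precisely what makes the crucial final step go through. That $[\phi]$ has degree $n$ is immediate from the degree of $\mathcal{OC}^{wbc}_w$, so the substance is homological non-degeneracy: for any objects $K = (\bm{K},\alpha_K)$ and $L = (\bm{L},\alpha_L)$ of $\mathcal{F}^u$, I must show that the pairing
\begin{equation}
HF^*(K,L) \otimes HF^{n-*}(L,K) \To \C, \quad p \otimes q \mapsto \langle \mathcal{OC}^0(\mu^2(p,q)), e \rangle
\end{equation}
is perfect.

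First I would construct a coproduct $\Delta : HF^*(L,L) \To HF^*(K,L) \otimes HF^*(L,K)[n]$, defined by counting pseudoholomorphic disks with one incoming and two outgoing boundary punctures and one internal marked point representing $\mathcal{CO}^0$, with homotopy-unit insertions accommodated as in Section \ref{subsec:homungeom} and with the weak bounding cochains $\alpha_K, \alpha_L$ inserted along the appropriate boundary components in all possible ways. Convergence of these insertions is guaranteed by monotonicity of the weak bounding cochains, via the same energy estimate as in Lemma \ref{lemma:conv}.

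Next, by the standard bubbling argument applied to an appropriate one-dimensional moduli space (exactly the one used in Lemma \ref{lemma:weakcy}, but now decorated with homotopy units and weak bounding cochains), I would prove the key identity: for any $\alpha, \beta \in QH^*(X)$, the composition
\begin{equation}
HF^*(K,L) \overset{\langle \mathcal{OC}^0(\mu^2(-,-)),\alpha\rangle}{\To} HF^*(L,K)^\vee \overset{\Delta(\mathcal{CO}^0(\beta))}{\To} HF^*(K,L)
\end{equation}
equals the map $p \mapsto \mu^2(\mathcal{CO}^0(\alpha \star \beta), p)$. Specializing to $\alpha = \beta = e_w$ and using that $K, L$ are $\mathcal{CO}^0$-unital (the defining property of $\mathcal{F}^u$), we have $\mathcal{CO}^0(e_w) = e_K, e_L$, so the composition is the identity. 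The symmetric argument handles the opposite composition, giving the required perfect pairing.

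The main obstacle, and the reason we cannot work in all of $\mathcal{F}^{wbc}(X)_w$, is that $\mathcal{CO}^0$ need not be unital in the weak bounding cochain setting (see Remark \ref{remark:nounits}): without unitality, the final step collapsing the composition to the identity collapses instead to multiplication by $\mathcal{CO}^0(e_w)$, which is no longer guaranteed to be $e_K$ or $e_L$. Restricting to $\mathcal{CO}^0$-unital objects is exactly the hypothesis that rescues this step, and indeed was the motivation for introducing the subcategory $\mathcal{F}^u$ in Definition \ref{definition:cow0u}.
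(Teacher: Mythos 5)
Your proposal takes essentially the same approach as the paper, and correctly identifies $\mathcal{CO}^0$-unitality as the crucial hypothesis that rescues the argument in the weak-bounding-cochain setting. However, there is one small slip in the final specialization that is worth flagging. The class $[\phi]$ is defined by pairing $\mathcal{OC}$ with $e$, so the pairing whose perfectness you must establish is $p \otimes q \mapsto \langle \mathcal{OC}^0(\mu^2(p,q)), e \rangle$; the paper therefore specializes your key identity to $\alpha = e$, $\beta = e_w$, giving $e \star e_w = e_w$ and then $\mathcal{CO}^0(e_w) = e_K$ by $\mathcal{CO}^0$-unitality. You instead specialize $\alpha = \beta = e_w$, which also evaluates to $e_w$ via $e_w \star e_w = e_w$, but this shows perfectness of the pairing $\langle \mathcal{OC}^0(\mu^2(p,q)), e_w \rangle$, not the one in the statement. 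These two pairings need not coincide a priori: that would require knowing $\mathcal{OC}^0(\mu^2(p,q))$ lands in $QH^*(X)_w$, which in the presence of weak bounding cochains has not been established (no analogue of Lemma \ref{lemma:coeigen} is available there — indeed Remark \ref{remark:nounits} makes clear that even unitality of $\mathcal{CO}$ is not known in this setting). Since you prove your intermediate identity for arbitrary $\alpha, \beta$, the repair is immediate: take $\alpha = e$, $\beta = e_w$ and nothing else changes. With that one-word fix the argument is sound and matches the paper's proof.
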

\begin{proof}
We must prove that $[\phi]$ is homologically non-degenerate.
The proof follows that of Lemma \ref{lemma:weakcy} with slight modifications.
In particular, the quasi-inverse to the map
\begin{align}
HF^*(K,L) & \To  HF^{n-*}(K,L)^\vee \\
p & \mapsto  \langle \OC(\mu^2(p, -)), e\rangle
\end{align}
is given by contraction with the element
\begin{equation}
\Delta^{wbc}(\CO^0(e_w))  \in  HF^*(K,L) \otimes HF^*(L,K).
\end{equation}
The argument given in the proof of Lemma \ref{lemma:weakcy} shows that the composition of these two maps is equal to the map
\begin{align}
HF^*(K,L) & \To HF^*(K,L) \\
p & \mapsto  \mu^2(\CO^0(e \star e_w),p).
\end{align}
In particular, as $e \star e_w = e_w$, and $L$ is $\CO^0$-unital by the definition of $\cF^u_w$, the composition of the two maps is the identity.
Similarly, the composition in the opposite order is the identity, so the map is an isomorphism, and $[\phi]$ is homologically non-degenerate.
 \end{proof}

Now we give an analogue of Proposition \ref{proposition:phico} and Corollary \ref{corollary:cowdual}:

\begin{lemma}
\label{lemma:phicowbc}
The following diagrams commute:
\begin{equation}
\xymatrixcolsep{5pc}\xymatrix{
QH^*(X) \ar[r]^-{\alpha \mapsto \langle \alpha,-\rangle}_-{\cong} \ar[d]^{\CO} & QH^*(X)^\vee[-2n] \ar[d]^{\OC^\vee} \\
HH^*(\cF^u_w) \ar[r]^-{-\cap [\phi]}_-{\cong} & HH_*(\cF^u_w)^\vee [-n].
} \end{equation}
and
\begin{equation}
\xymatrixcolsep{5pc}\xymatrix{
QH^*(X)_w \ar[r]^-{\alpha \mapsto \langle \alpha,-\rangle}_-{\cong} \ar[d]^{\CO}& QH^*(X)_w^\vee[-2n] \ar[d]^{(proj_w \circ \OC)^\vee} \\
HH^*(\cF^u_w) \ar[r]^-{-\cap [\phi]}_-{\cong} & HH_*(\cF^u_w)^\vee [-n].
} \end{equation}
\end{lemma}
\begin{proof}
The proof of the first commutative diagram follows that of Proposition \ref{proposition:phico}: in particular, the only input to that proof was the fact that $\CO$ is an algebra homomorphism, $\OC$ is a $QH^*(X)$-module homomorphism, $QH^*(X)$ is a Frobenius algebra, and that $[\phi]$ is homologically non-degenerate, all of which remain true in the present setting. 
The second commutative diagram is immediate from the first, by restricting to the subspace $QH^*(X)_w \subset QH^*(X)$ of the top left-hand element of the square.
 \end{proof}

Now we give an analogue of Corollary \ref{corollary:splitgen}:

\begin{corollary}
\label{corollary:splitgenwbc}
If $\cG_w \subset \cF^u_w$ is a full subcategory, and the map
\begin{equation} \CO: QH^*(X)_w \To HH^*(\cG_w) \end{equation}
is injective, then $\cG_w$ split-generates $\cF^u_w$.
\end{corollary}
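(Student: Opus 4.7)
The plan is to mimic the proof of Corollary \ref{corollary:splitgen} essentially verbatim, simply substituting the weak-bounding-cochain analogues of the tools that were used there. Specifically, I would reduce the statement to an application of Corollary \ref{corollary:gen2wbc} by dualizing the hypothesis via Lemma \ref{lemma:phicowbc}.

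First I would show that the hypothesis on $\mathcal{CO}$ forces $e_w$ to lie in the image of $\mathcal{OC}$, restricted to $HH_*(\mathcal{G})$. By the second commutative diagram of Lemma \ref{lemma:phicowbc} (applied to $\mathcal{G}$ in place of $\mathcal{F}^u$; note that $\mathcal{G} \subset \mathcal{F}^u$ and all the structural ingredients descend to a full subcategory), the map
\begin{equation}
\mathcal{CO}: QH^*(X)_w \To HH^*(\mathcal{G})
\end{equation}
is identified with the $\C$-linear dual of
\begin{equation}
\mathcal{OC}: HH_*(\mathcal{G}) \To QH^*(X) \overset{\mathrm{proj}_w}{\To} QH^*(X)_w,
\end{equation}
under the Frobenius identification $QH^*(X)_w \cong QH^*(X)_w^\vee[-2n]$ (using that the eigenspace decomposition of $QH^*(X)$ is orthogonal with respect to $\langle -,-\rangle$, cf.\ the derivation of Corollary \ref{corollary:cowdual}). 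Hence injectivity of the former is equivalent to surjectivity of the latter, which implies that the image of $\mathcal{OC}: HH_*(\mathcal{G}) \to QH^*(X)$ contains the idempotent $e_w \in QH^*(X)_w$.

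Next I would invoke Corollary \ref{corollary:gen2wbc}: since the hypothesis on the image of $\mathcal{OC}$ is exactly what that corollary requires, it follows that every $\mathcal{CO}^0$-unital object $K$ of $\mathcal{F}^{wbc}(X)_w$ is split-generated by $\mathcal{G}$. By the definition of $\mathcal{F}^u = \mathcal{F}^{wbc,u}(X)_w$ (Definition \ref{definition:cow0u}), every object of $\mathcal{F}^u$ is $\mathcal{CO}^0$-unital, so $\mathcal{G}$ split-generates $\mathcal{F}^u$, as claimed.

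There is no real obstacle here beyond bookkeeping: the only point worth double-checking is that all of Lemma \ref{lemma:phicowbc} remains valid when $\mathcal{F}^u$ is replaced by the subcategory $\mathcal{G}$ (the duality diagram is functorial in the subcategory because $\mathcal{CO}$ and $\mathcal{OC}$ are natural with respect to full inclusions, and the weakly cyclic structure restricts to $\mathcal{G}$ since all its objects are $\mathcal{CO}^0$-unital, so Lemma \ref{lemma:weakcywbc} applies). Once that naturality is verified, the argument is a purely formal transcription of the proof of Corollary \ref{corollary:splitgen}.
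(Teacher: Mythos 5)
Your proof is correct and follows essentially the same route as the paper's (which is simply "Follows from Lemma \ref{lemma:phicowbc} and Corollary \ref{corollary:gen2wbc}, by the proof of Corollary \ref{corollary:splitgen}"). Your extra observation that Lemma \ref{lemma:phicowbc} (and hence Lemma \ref{lemma:weakcywbc}) remains valid when $\mathcal{F}^u$ is replaced by a full subcategory $\mathcal{G}$ of $\mathcal{CO}^0$-unital objects is exactly the point that makes the terse reference in the paper rigorous, and your justification — that homological non-degeneracy of $[\phi]$ is a pairwise condition relying only on $\mathcal{CO}^0$-unitality of each object — is the right one.
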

\begin{proof}
Follows from Lemma \ref{lemma:phicowbc} and Corollary \ref{corollary:gen2wbc}, by the proof of Corollary \ref{corollary:splitgen}.
 \end{proof}

Finally we give an analogue of Corollary \ref{corollary:semisimpgen}:

\begin{corollary}
\label{corollary:semisimpgenwbc}
Suppose that $QH^*(X)_w$ is one-dimensional, that $L$ is an object of $\cF^u_w$, and that $HF^*(L,L) \neq 0$.
Then $L$ split-generates $\cF^u_w$.
\end{corollary}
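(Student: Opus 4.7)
The plan is to mimic the proof of Corollary \ref{corollary:semisimpgen} essentially verbatim, substituting the weak bounding cochain analogues at each step. First I would invoke the definition of $\mathcal{F}^u = \mathcal{F}^{wbc,u}(X)_w$: by Definition \ref{definition:cow0u}, the hypothesis $L \in \mathcal{F}^u$ is precisely the statement that
\begin{equation}
\mathcal{CO}^0: QH^*(X)_w \To HF^*(L,L)
\end{equation}
is a unital algebra homomorphism. (This is the only place unitality of $\mathcal{CO}^0$ enters; recall that without restricting to $\mathcal{F}^u$, unitality of the closed-open map in the presence of weak bounding cochains is not guaranteed, c.f.\ Remark \ref{remark:nounits}.)

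Next I would use unitality together with $HF^*(L,L) \neq 0$ to conclude $\mathcal{CO}^0(e_w) = e_L \neq 0$. Letting $\mathcal{G} \subset \mathcal{F}^u$ be the full subcategory whose single object is $L$, the map $\mathcal{CO}^0$ factors as $\mathcal{CO}: QH^*(X)_w \To HH^*(\mathcal{G})$ composed with the length-zero projection to $HF^*(L,L)$, so $\mathcal{CO}(e_w) \neq 0$. Since $QH^*(X)_w$ is one-dimensional by hypothesis, it is spanned by $e_w$, so $\mathcal{CO}$ is injective on $QH^*(X)_w$.

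Finally, I would apply Corollary \ref{corollary:splitgenwbc} (the weak bounding cochain analogue of the dual split-generation criterion) to deduce that $\mathcal{G}$, and hence $L$, split-generates $\mathcal{F}^u$. The argument is essentially formal given all the machinery already in place; I anticipate no serious obstacle, since the only non-trivial inputs --- namely that $\mathcal{CO}$ is a (not necessarily unital) algebra homomorphism, that $[\phi]$ is a weakly cyclic structure on $\mathcal{F}^u$, and the resulting duality between $\mathcal{CO}$ and $\mathcal{OC}$ --- have already been established in Lemmata \ref{lemma:weakcywbc} and \ref{lemma:phicowbc} and Corollary \ref{corollary:splitgenwbc}. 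The restriction to $\mathcal{F}^u$ rather than $\mathcal{F}^{wbc}(X)_w$ is essential: without it we cannot guarantee $\mathcal{CO}^0(e_w) = e_L$, which is what turns non-vanishing of $HF^*(L,L)$ into non-vanishing of $\mathcal{CO}(e_w)$.
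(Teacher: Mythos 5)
Your proof is correct and follows the paper's approach exactly: the paper proves this corollary by citing Corollary \ref{corollary:splitgenwbc} and the proof of Corollary \ref{corollary:semisimpgen}, which is precisely the argument you spell out (with the unitality of $\mathcal{CO}^0$ now supplied by the definition of $\mathcal{F}^u$ rather than by Lemma \ref{lemma:coeigen}). Your added remarks on why the restriction to $\mathcal{F}^u$ is necessary are accurate and match the paper's discussion in Remark \ref{remark:nounits}.
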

\begin{proof}
Follows from Corollary \ref{corollary:splitgenwbc}, by the proof of Corollary \ref{corollary:semisimpgen}.
 \end{proof}

\subsection{Proving $\CO^0$-unitality}

In this section we establish the results needed to prove that an object $(L,\alpha)$ of $\cF^{wbc}(X)_w$ is $\CO^0$-unital, in the sense of Definition \ref{definition:cow0u}.

\begin{lemma}
\label{lemma:cowbc0un}
Suppose that we are in the situation of Corollary \ref{corollary:relpredisc2}, with $v \in V_\C$ and $\alpha = \iota(v) \in \hcM_{weak}(L)$, such that $\mathfrak{P}(\alpha) = w$.
Then the length-$0$ part of the closed--open string map
\begin{equation} \CO^0: QH^*(X) \To HF^*((L,\alpha),(L,\alpha)) \end{equation}
is a \emph{unital} algebra homomorphism.
\end{lemma}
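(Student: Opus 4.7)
The plan is to compute $\mathcal{CO}^{wbc,0}(e)$ at the cochain level and identify its class with $e_{(L,\alpha)}=[e^+_L]$. The key ingredients are the explicit shape $\alpha=\iota(v)=v+\mathfrak{P}'(v) f_L$ supplied by Corollary~\ref{corollary:relpredisk2}, the grading hypothesis $CC^{\le 0}(V,A)\cong\C\cdot e_L$ from Lemma~\ref{lemma:prediskdisk}, and the argument used to prove Lemma~\ref{lemma:prediskdisk} itself. From the definition $\mathcal{CO}^{wbc}=\Psi\circ\mathcal{CO}$ (see \eqref{eqn:Psi}), the length-$0$ piece reads
\[
\mathcal{CO}^{wbc,0}(e)=\sum_{k\ge 0}\mathcal{CO}^k(e)(\alpha,\ldots,\alpha),
\]
where $\mathcal{CO}^k$ is the length-$k$ component of the ordinary (homotopy-unital) closed-open map $QH^*(X)\to HH^*(\mathcal{F}^c(X))$.

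The $k=0$ term $\mathcal{CO}^0(e)$ is equal to $e^+_L$ at the cochain level, as the homotopy-unital extension of Remark~\ref{remark:co0un}: both sides are defined by counts of disks with one outgoing strip-like end and one unconstrained interior marked point, and the two stabilization conventions are related by the continuation argument of Lemma~\ref{lemma:counital}. For $k\ge 1$, I would expand $\alpha=v+\mathfrak{P}'(v) f_L$ multilinearly and rerun, with $\mathcal{CO}^k(e)$ in place of $\mu^*$, the degree-counting argument from the proof of Lemma~\ref{lemma:prediskdisk}. After substituting $j$ copies of $f_L$, the resulting length-$(k-j)$ multilinear expression on $V$ defines, by the bookkeeping in that proof, an element of $CC^{-2j}(V,A^+)$; the hypothesis $CC^{\le 0}(V,A)\cong\C\cdot e_L$ then forces every term with $j\ge 2$ to vanish, every term with $j=0$ and $k\ge 1$ to vanish, and every $j=1$ term with $k\ge 2$ to project to zero in $A$, leaving only the distinguished contribution $\mathcal{CO}^1(e)(f_L)$, whose values are confined to $R\cdot e_L\oplus R\cdot e^+_L$.

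A $\rho\to 1$ analysis of the $f_L$-insertion moduli space, parallel to the proof of Lemma~\ref{lemma:c1u0}, then identifies $\mathcal{CO}^1(e)(f_L)$ with $e^+_L-e_L$ up to a $\mu^1_\alpha$-exact correction from the forgetful map. Collecting all contributions,
\[
\mathcal{CO}^{wbc,0}(e)=e^+_L+\mathfrak{P}'(v)(e^+_L-e_L)+\mu^1_\alpha(\,\cdot\,),
\]
and the Maurer-Cartan equation for $\alpha$ shows $e^+_L-e_L$ itself to be $\mu^1_\alpha$-exact modulo further terms that vanish by the same grading argument. Hence $[\mathcal{CO}^{wbc,0}(e)]=[e^+_L]=e_{(L,\alpha)}$, which is unitality; the algebra homomorphism property is already inherited from $\mathcal{CO}$ and $\Psi$. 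The whole discussion should be carried out in the $\bm{G}(X,D)$-graded relative Fukaya category, where the degree count is sharp enough to rule out the unwanted terms, and then transferred to $\mathcal{F}^{wbc}(X)$ along the inclusion of Lemma~\ref{lemma:reltomon}.

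The main obstacle is the bookkeeping around the homotopy unit $f_L$: unlike the strict unit $e^+_L$, whose insertion into any higher $\mathcal{CO}^k$ vanishes automatically by a forgetful-map argument, the $f_L$-insertion moduli spaces carry an auxiliary parameter $\rho\in(0,1]$ with nontrivial $\rho\to 1$ boundary contributions, so the surviving pieces of $\mathcal{CO}^k(e)(\ldots,f_L,\ldots)$ need not vanish outright. Verifying that these pieces combine with the Maurer-Cartan equation to produce only $\mu^1_\alpha$-exact terms is the closed-open analogue of the $f$-insertion handling in the proof of Lemma~\ref{lemma:prediskdisk}, and is the delicate technical step of the argument.
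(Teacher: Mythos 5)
Your overall strategy matches the paper's: pass to the $\bm{G}(X,D)$-graded relative Fukaya category, expand $\mathcal{CO}^{wbc,0}_{X,D}(e)$ as a sum over insertions of $\alpha_{rel}=u+\mathfrak{P}'(u)f_L$, and kill terms by the grading hypothesis $CC^{\le 0}(V,A)\cong\C\cdot e_L$. However, two of your cochain-level identifications are wrong, and the $f_L$-insertion analysis you flag as delicate is in fact moot because those terms vanish.

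First, the $k=0$ term is $e_L$, not $e^+_L$. Remark~\ref{remark:co0un} is explicit that the moduli spaces defining $\mathcal{CO}^0(e)$ and $e_L$ are the same; this does not change in the homotopy-unital enhancement, since the moduli space for the $k=0$ component has no $f$- or $e^+$-inputs. Second, the claimed identification $\mathcal{CO}^1(e)(f_L)\sim e^+_L-e_L$ cannot hold: $\mathcal{CO}^1_{X,D}(e)$ is a length-$1$ Hochschild cochain of $\bm{G}(X,D)$-degree $0$, while $f_L$ has degree $-1$, so the output must sit in degree $-2$ in $A$; but $e^+_L-e_L$ has degree $0$. In the relative grading there is in fact no nonzero element of $A$ of degree $(-2,0)$, so $\mathcal{CO}^1_{X,D}(e)(f_L)=0$ outright. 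There is no surviving ``distinguished contribution,'' and the $\rho\to 1$ analysis you sketch is not needed. The same grading count kills all terms with $i\ge 1$ (this is what the paper's appeal to $\mathcal{CO}_{X,D}(e)\in CC^0(A)$ and $CC^{\le 0}(V,A)\cong\C\cdot e_L$ accomplishes), so the cochain-level answer is exactly $e_L$ — no $\mathfrak{P}'(v)(e^+_L-e_L)$ correction appears. The relation $\mu^1_\alpha(f_L)=e^+_L-e_L$ (from the proof of Lemma~\ref{lemma:prediskdisk}) is then invoked once, at the very end, to pass from $[e_L]$ to $[e^+_L]$ on cohomology; it is not a correction term that must cancel inside the sum. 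Your final display happens to be cohomologous to $e^+_L$, so the conclusion would survive, but only because the two errors partially compensate; the derivation itself does not go through.
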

\begin{proof}
We first prove the corresponding result for the relative Fukaya category.
Suppose $v = u \otimes_R 1$, with $u \in V$, and set
\begin{equation} \alpha_{rel} := \iota_{rel}(u) := u + \mathfrak{P}'_{rel}(u) \cdot f_L\end{equation}
(compare the proof of Lemma \ref{lemma:prediscdisc}), so that
\begin{equation} \alpha = \alpha_{rel} \otimes_R 1.\end{equation}
Then by definition,
\begin{equation}
\label{eqn:coxde}
\CO^0_{X,D}(e) := \sum_{i \ge 0} \CO_{X,D}(e)(\underbrace{\alpha_{rel},\ldots,\alpha_{rel}}_i).
\end{equation}
Now because $\CO_{X,D}$ is $\bm{G}(X,D)$-graded, we have
\begin{equation}\CO_{X,D}(e) \in CC^0(A) \end{equation}
in the notation of Corollary \ref{corollary:relpredisc2}.
Now by the condition
\begin{equation} CC^{\le 0}(V,A) \cong \C \cdot e_L\end{equation}
of Lemma \ref{lemma:prediscdisc}, all of the terms in \eqref{eqn:coxde} vanish except for the $i=0$ term, which is $e_L$ by Remark \ref{remark:co0un}.
We also have
\begin{equation} \mu^1_\alpha(f_L) = e_L^+ - e_L,\end{equation}
again using the condition $CC^{\le 0}(V,A) = \C \cdot e_L$ (see the proof of Lemma \ref{lemma:prediscdisc}).
Therefore, $\CO^0_{X,D}(e)$ is cohomologous to $e_L^+$, so the map is unital.
It follows that
\begin{equation} \CO^0 = \CO^0_{X,D} \otimes_R 1\end{equation}
is too.
 \end{proof}

\begin{remark}
Note that Lemma \ref{lemma:cowbc0un} does not establish $\CO^0$-unitality of $(L,\alpha)$: it establishes unitality of the map
\begin{equation} \CO^0: QH^*(X) \To HF^*((L,\alpha),(L,\alpha)),\end{equation}
but not of the map from $QH^*(X)_w$, nor does it prove vanishing of the map when restricted to the other eigenspaces.
\end{remark}

Next we will prove an analogue of Lemma \ref{lemma:c1u0}.
First we need a preliminary result, which was explained to the author by Ivan Smith:

\begin{lemma}
\label{lemma:cowbcc2}
In the situation of Corollary \ref{corollary:relpredisc2}, suppose furthermore that $2c_1(X \setminus D) = 0$ in $H^2(X \setminus D;\Z)$.
Then if  $v \in V_\C$ and $\alpha = \iota(v) \in \hcM_{weak}(L)$ is the corresponding weak bounding cochain, the map
\begin{equation} \CO^0: QH^*(X) \To HF^*((L,\alpha),(L,\alpha))\end{equation}
satisfies
\begin{equation}
\label{eqn:co0c1wbc} \CO^0(2c_1) = \left( \sum_{s \ge 0} (2-s) \cdot \mathfrak{P}^s(\alpha) \right) \cdot e_L^+,
\end{equation}
where $\mathfrak{P}^s$ denotes the length-$s$ component of $\mathfrak{P}$ (i.e., the part corresponding to $\mu^s$).
\end{lemma}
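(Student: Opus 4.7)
The plan is to derive the identity as an Euler-vector-field statement in the relative Fukaya category $\mathcal{F}^+_m(X,D)$, and then transfer it to the monotone setting via Corollary \ref{corollary:relpredisk2}.

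First, I would use the hypothesis $2c_1|_{X\setminus D} = 0$ in integer cohomology, combined with the monotonicity identity $[\omega] = 2\tau c_1$ and the linking-number formula $[\omega] = \sum_j \ell_j[D_j]$ from \cite[Lemma 3.12]{Sheridan2014}, to realize $2c_1 \in H^2(X;\Z)$ as Poincar\'e dual to the integer class $\tau^{-1}\sum_j \ell_j [D_j]$ supported on $D$. Applying Proposition \ref{proposition:cod} termwise then yields
\begin{equation*}
\mathcal{CO}_{X,D}(2c_1) = \bigl[\mathcal{E}(\mu^*)\bigr] + \mathcal{E}(wT)\cdot e
\end{equation*}
in the $\bm{G}(X,D)$-graded Hochschild cohomology of $\mathcal{F}^+_m(X,D)_w$, where $\mathcal{E} := \tau^{-1}\sum_j \ell_j\, r_j\, \partial/\partial r_j$. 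By Lemma \ref{lemma:c1disk}, this derivation weights every pseudoholomorphic disk $u$ by its Maslov index: $\mathcal{E}(r^{u\cdot D}) = \mu(u)\,r^{u\cdot D}$. A short computation with the exponent structure $T = r_1^{ad_1}\cdots r_k^{ad_k}$, $a = 2\tau/\sum_i d_i\ell_i$, gives $\mathcal{E}(T) = 2T$.

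Next I would evaluate the length-zero component of $\mathcal{CO}_{X,D}^{wbc,0}(2c_1)$ on the relative weak bounding cochain $\alpha = \iota_{rel}(v)$ supplied by Lemma \ref{lemma:relpredisk}, using the formula $\Psi(\eta)(-) = \sum \eta(\alpha,\dots,-,\dots,\alpha)$ for the length-zero restriction. The $\mathcal{E}(wT)\cdot e$ summand specializes at $r_j = 1$ to $2w\cdot e_L^+ = (2{-}0)\,\mathfrak{P}^0(\alpha)\cdot e_L^+$, accounting exactly for the $s=0$ term on the right-hand side. For the remaining contributions, the Leibniz rule gives
\begin{equation*}
\sum_{s\geq 1}[\mathcal{E}(\mu^s)](\alpha,\dots,\alpha) = \mathcal{E}\!\Bigl(\sum_{s\geq 1}\mu^s(\alpha,\dots,\alpha)\Bigr) - \sum_{s\geq 1}\sum_i \mu^s(\alpha,\dots,\mathcal{E}(\alpha),\dots,\alpha),
\end{equation*}
and since $\mathcal{E}(v) = 0$, we have $\mathcal{E}(\alpha) = \mathcal{E}(\mathfrak{P}'(v))\,f_L$, so the correction terms involve insertions of $f_L$ into the $A_\infty$ products. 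Tensoring with $\C$ via Corollary \ref{corollary:relpredisk2} and combining with the cohomological contribution from $\mu^1_\alpha$-exact pieces (which vanish in $HF^*$), I aim to reassemble everything into $\sum_{s\geq 1}(2-s)\mathfrak{P}^s(\alpha)\cdot e_L^+$.

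The main obstacle will be the correction terms involving $\mathcal{E}(\alpha)$: the $f_L$-insertions they produce must be controlled using the hypothesis $CC^{\leq 0}(V,A) = \C\cdot e_L$ from Lemma \ref{lemma:prediskdisk} together with the defining $A_\infty$ relations of the homotopy unit $f_L$, and one must verify via a careful $\bm{G}(X,D)$-grading analysis of the monomials in $\mathfrak{P}^s_{rel}(\alpha)$ that these reorganize into precisely the arity-dependent $-s\cdot\mathfrak{P}^s(\alpha)$ piece. This bookkeeping — converting the raw Maslov-weighted count $\mathcal{E}(\mathfrak{P}^s_{rel}(\alpha))$ into the clean scalar $(2-s)\mathfrak{P}^s(\alpha)$ at $r_j = 1$ — is where the technical heart of the proof lies.
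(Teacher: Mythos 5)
Your opening moves are sound and essentially parallel the paper's: realizing $2c_1$ (using the hypothesis $2c_1(X\setminus D)=0$) as a class supported on $D$, applying Proposition \ref{proposition:cod} divisor-by-divisor, recognizing the result as an Euler-type operator $\mathcal{E}$ acting on the $r_j$ (the paper writes it as $\sum_j p_j r_j\partial/\partial r_j$ where the integer $p_j$ is the order of pole of the quadratic volume form about $D_j$), and verifying $\mathcal{E}(T)=2T$ to account for the curvature term $s=0$. The observation that $\mathcal{E}(r^{u\cdot D})=\mu(u)\,r^{u\cdot D}$ is correct and is the same identity the paper exploits, written as $d(r^{u\cdot D})=\sum_j p_j(u\cdot D_j)$.

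Where the proposal falls short is precisely the step you flag as ``the technical heart of the proof''. After applying Leibniz you are left with $\mathcal{E}(\mathfrak{P}^s_{rel})$ for each arity $s$, and you hope a ``careful $\bm{G}(X,D)$-grading analysis of the monomials'' will show this reorganizes into $(2-s)\,\mathfrak{P}^s$. It will not, at least not monomial-by-monomial: for a rigid disk $u$ contributing to $\mu^s(v,\dots,v)=\mathfrak{P}^s\,e_L$, the grading relation gives
\begin{equation*}
\mathcal{E}(r^{u\cdot D}) \;=\; d(r^{u\cdot D})\,r^{u\cdot D} \;=\; \Bigl(2-s + \sum_{i} d(\theta_{j_i}) - d(e_L)\Bigr)\,r^{u\cdot D},
\end{equation*}
so $\mathcal{E}(\mathfrak{P}^s) = (2-s)\mathfrak{P}^s + \sum_j d(\theta_j)\,v_j\,\partial\mathfrak{P}^s/\partial v_j$, and the second piece is \emph{not} identically zero. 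The paper handles exactly this residue by introducing the Euler \emph{Hochschild cochain} $\tau\in\overline{CC}^*(A)$, $\tau(a):=d(a)\,a$: as Hochschild cochains one has $\mathcal{E}(\mu^*) - \sum_s(2-s)\mu^s = \delta(\tau)$, and since $\Psi$ is formally a chain map, $\Psi(\delta(\tau))$ evaluated on $\alpha$'s is $\mu^1_\alpha$-exact and dies in $HF^*$. Without this cancellation mechanism your Leibniz computation leaves an uncontrolled grading-dependent term, so the proof is incomplete. (A secondary issue: the correction terms $\sum_j\mu^s(\alpha,\dots,\mathcal{E}\alpha,\dots,\alpha)$ do simplify to $\mathcal{E}(\mathfrak{P}')(e^+_L-e_L)$ using $CC^{\le 0}(V,A)=\C\cdot e_L$, but this only accounts for the discrepancy between $e_L$ and $e^+_L$, not for the $\sum d(\theta_{j_i})$ terms above.) To repair the argument you would need to add the $\delta(\tau)$ step, at which point you recover the paper's proof.
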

\begin{proof}
The hypothesis that $2c_1(X \setminus D) = 0$ implies that $X\setminus D$ admits a quadratic complex volume form $\eta$ (i.e., a nowhere-vanishing section of $\lambda^{top}_\C(T(X \setminus D))^{\otimes 2}$, see \cite[\S 12a]{Seidel2008}).
Such an $\eta$ induces a map
\begin{align}
\cG(X \setminus D) & \To  S^1 \\
L & \mapsto  \eta(e_1 \wedge \ldots \wedge e_n \otimes e_1 \wedge \ldots \wedge e_n),
\end{align}
where $\{ e_1,\ldots,e_n \}$ is an $\R$-basis for $L$.
The induced map on $H_1$ defines a morphism of grading data
\begin{equation} \bm{q}_\eta: \bm{G}(X,D) \To \Z\end{equation}
(see \cite[Definition 3.6]{Sheridan2015}).
Hence, we can define the $\Z$-graded, curved, $\bm{q}_{\eta,*} R$-linear $A_\infty$ category $\bm{q}_{\eta,*}\cF^c_m(X,D)$.

We define $p_j \in \Z$ to be the degree of the generator $r_j$ in $\bm{q}_{\eta,*}R$ (it can be thought of as the `order of pole of $\eta$ about divisor $D_j$').
It follows from \cite[Lemma 3.19]{Sheridan2015} that
\begin{equation} 2c_1 = \sum_j p_j [D_j].
\end{equation}
As a consequence,  $\CO^0_{X,D}(2c_1) $ is equal to
\begin{align}
\label{eqn:2lcow} \CO^0_{X,D}\left( \sum_{j} p_j [D_j] \right)  &= \sum_{j,i} p_j \cdot \CO_{X,D}([D_j])(\underbrace{\alpha_{rel},\ldots,\alpha_{rel}}_i) \\
\label{eqn:3lcow}&= \sum_{j,i} p_j \cdot \left(\CO_{X,D}(e) \cup r_j \del{\mu^*}{r_j} \right.+ \\ 
& \phantom{{}=\sum} \left. r_j \del{ (w \cdot T)}{r_j} \cdot \CO_{X,D}(e)  + \delta(H) \right)(\underbrace{\alpha_{rel},\ldots}_i).
\end{align}
Here, \eqref{eqn:2lcow} is the definition of $\CO^0$ in the presence of weak bounding cochains, and \eqref{eqn:3lcow} follows as in the proof of Proposition \ref{proposition:cod}, modified to take into account the homotopy units (compare Remark \ref{remark:nounits}).

We observe that the final term of \eqref{eqn:3lcow} (i.e., the term $\delta(H)(\alpha_{rel},\ldots)$) is exact, so we may ignore it.
This follows from the fact that the map $\Psi$ defined in \eqref{eqn:Psi} is (formally) a map of chain complexes, and that the moduli spaces in the proof of Proposition \ref{proposition:cod} can be arranged so that $H$ lies in the normalized Hochschild cochain complex $\overline{CC}^*(\cF^u)$.

Now, consider a single pseudoholomorphic disc $u$, contributing a map
\begin{align}
 \mu_u: A^{\otimes s} & \To  A \,\,\,\,\,\,\,\mbox{ which sends} \\
y_s \otimes \ldots \otimes y_1 & \mapsto  \pm r^{u \cdot D} y_0,
\end{align}
so that the $A_\infty$ structure maps are given by a sum over all discs:
\begin{equation}
\mu^* = \sum_u \mu_u.
\end{equation}
By the definition of a $\Z$-graded $A_\infty$ category, we now have
\begin{equation}
\label{eqn:zgrads} d\left(r^{u \cdot D}\right) + d(y_0) = 2-s + \sum_{i=1}^s d(y_i),\end{equation}
where $d$ denotes the $\Z$-grading induced by $\bm{q}_\eta$.

It follows that the contribution of $u$ to the first and second terms in \eqref{eqn:3lcow} (before inputting $\alpha_{rel}$) is
\begin{align}
\label{eqn:4lcow} \sum_j p_j (u \cdot D_j) \mu_u &= d(r^{u \cdot D}) \mu_u \\
\label{eqn:5lcow}&=  \left( 2-s -d(y_0) + \sum_{i=1}^s d(y_i) \right) \mu_u
\end{align}
by \eqref{eqn:zgrads}.

Now we consider the Euler element $\tau \in \overline{CC}^*(A)$, which is the element of length $1$ such that
\begin{equation} \tau(a) = d(a) a\end{equation}
for $a \in A$ pure of degree $d(a)$ (note that $e^+_L$ has degree $0$, so this is a normalized Hochschild cochain).
We observe that the contribution of $\mu_u$ to $\delta(\tau) = [\mu^*,\tau]$ is precisely
\begin{equation} \left( -d(y_0) + \sum_{i=1}^s d(y_i) \right) \mu_u\end{equation}
(compare \cite[Equation (3.14)]{Seidel2003}).
It follows by the same argument as we gave for $\delta(H)$ that
\begin{equation}
\label{eqn:deleuler} \delta(\tau)(\alpha_{rel},\ldots,\alpha_{rel})\end{equation}
is exact in $CF^*((L,\alpha_{rel}),(L,\alpha_{rel}))$.

Now we substitute \eqref{eqn:5lcow} into \eqref{eqn:3lcow}; cancelling the exact term $\delta(H)(\ldots)$, and the exact term $\delta(\tau)(\ldots)$ in \eqref{eqn:5lcow}, and substituting in $e_L^+$ for $\CO_{X,D}(e)(\alpha_{rel},\ldots)$ by Lemma \ref{lemma:cowbc0un}, we obtain \eqref{eqn:co0c1wbc}.
 \end{proof}

\begin{corollary}
\label{corollary:cowbcc3}
In the situation of Lemma \ref{lemma:cowbcc2}, suppose furthermore that $v \in V_\C$ is a critical point of the disc potential $\mathfrak{P} \circ \iota: V_\C \To \C$, and $\alpha = \iota(v) \in \hcM_{weak}(L)$ is the corresponding weak bounding cochain, then the map
\begin{equation} \CO^0: QH^*(X) \To HF^*((L,\alpha),(L,\alpha))\end{equation}
satisfies
\begin{equation}
\label{eqn:co0c1crit} \CO^0(c_1) = \mathfrak{P}(\alpha) \cdot e_L^+,
\end{equation}
and $(L,\alpha)$ is $\CO^0$-unital.
\end{corollary}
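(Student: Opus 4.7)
The plan is to combine Lemma \ref{lemma:cowbcc2} with the homogeneity of the pre-disk potential $\mathfrak{P}'$ at its critical points, and then bootstrap from \eqref{eqn:co0c1crit} to $\mathcal{CO}^0$-unitality via the argument of Lemma \ref{lemma:coeigen}. First I would unpack the meaning of the terms $\mathfrak{P}^s(\alpha)$ appearing in Lemma \ref{lemma:cowbcc2}. By the proof of Lemma \ref{lemma:prediskdisk}, substituting $\alpha = v + \mathfrak{P}'(v)\cdot f_L$ into $\mu^s$ kills all terms with two or more $f_L$ insertions (by the grading hypothesis $CC^{\le 0}(V,A) \cong \C \cdot e_L$), leaving $\mu^s(v,\ldots,v) = \mathfrak{P}'^s(v)\cdot e_L$ for $s \geq 2$, and $\mu^1(\alpha) = \mathfrak{P}'^1(v)\cdot e_L + \mathfrak{P}'(v)(e_L^+-e_L)$ for $s=1$. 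Since $[e_L] = [e_L^+]$ in $HF^*((L,\alpha),(L,\alpha))$ (as $\mu^1(f_L) = e_L^+-e_L$), we get $\mathfrak{P}^s(\alpha) = \mathfrak{P}'^s(v)$ for $s \geq 1$, while $\mathfrak{P}^0(\alpha) = w = \mathfrak{P}(\alpha) - \mathfrak{P}'(v)$ from the curvature $\mu^0 = w\cdot e_L^+$.

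Next I would exploit the criticality of $v$ via the Euler identity. Each $\mathfrak{P}'^s$ is homogeneous of degree $s$ as a polynomial on $V_\C$, so
\begin{equation*}
\sum_{s \ge 1} s\,\mathfrak{P}'^s(v) \;=\; \sum_i v_i\,\partial_{v_i}\mathfrak{P}'(v) \;=\; 0
\end{equation*}
at the critical point $v$. Substituting into the formula of Lemma \ref{lemma:cowbcc2} gives
\begin{equation*}
\sum_{s\ge 0}(2-s)\,\mathfrak{P}^s(\alpha) \;=\; 2w \,+\, 2\mathfrak{P}'(v) \,-\, \sum_{s\ge 1} s\,\mathfrak{P}'^s(v) \;=\; 2\mathfrak{P}(\alpha),
\end{equation*}
so $\mathcal{CO}^0(2c_1) = 2\mathfrak{P}(\alpha)\cdot e_L^+$, which yields \eqref{eqn:co0c1crit}.

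For the $\mathcal{CO}^0$-unitality assertion I would recycle the eigenvalue argument of Lemma \ref{lemma:coeigen}. For any $\beta \in QH^*(X)_{w'}$ there exists $k$ with $(c_1-w')^{\star k}\star \beta = 0$; applying the algebra homomorphism $\mathcal{CO}^0$ and using \eqref{eqn:co0c1crit} with $w = \mathfrak{P}(\alpha)$ gives $(w-w')^k \mathcal{CO}^0(\beta) = 0$ in $HF^*((L,\alpha),(L,\alpha))$, forcing $\mathcal{CO}^0(\beta) = 0$ for $w'\neq w$. Lemma \ref{lemma:cowbc0un} provides $\mathcal{CO}^0(e) = e_L^+$ on the full $QH^*(X)$; decomposing $e = \sum_{w'} e_{w'}$ and using the preceding vanishing then yields $\mathcal{CO}^0(e_w) = e_L^+$, i.e.\ $(L,\alpha)$ is $\mathcal{CO}^0$-unital.

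The main subtlety is the chain-level-to-cohomology passage in the identification $\mathfrak{P}^s(\alpha) = \mathfrak{P}'^s(v)$: one must separate the genuine $\mu^s(v,\ldots,v)$ contributions from the $f_L$-corrections and verify that the resulting $e_L$ versus $e_L^+$ discrepancies are exactly killed by $\mu^1(f_L) = e_L^+ - e_L$. Once this bookkeeping is in place, the rest of the proof is a formal consequence of Euler's identity for homogeneous polynomials and the generalized-eigenspace argument already present in the paper.
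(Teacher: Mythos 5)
Your proposal is correct and follows essentially the same route as the paper: Euler's identity for homogeneous polynomials at the critical point gives $\sum_s s\,\mathfrak{P}^s(\alpha) = 0$, which reduces the coefficient in Lemma~\ref{lemma:cowbcc2} to $2\mathfrak{P}(\alpha)$, and then $\mathcal{CO}^0$-unitality follows from Lemma~\ref{lemma:cowbc0un} combined with the generalized-eigenspace argument of Lemma~\ref{lemma:coeigen}. The extra care you take in identifying $\mathfrak{P}^s(\alpha)$ with $\mathfrak{P}'^s(v)$ for $s\ge 1$ and isolating $\mathfrak{P}^0(\alpha) = w$ as the curvature term, tracking the $e_L$ versus $e_L^+$ bookkeeping via $\mu^1(f_L) = e_L^+ - e_L$, is a welcome clarification the paper leaves implicit, but does not constitute a genuinely different argument.
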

\begin{proof}
When $v$ is a critical point of $\mathfrak{P} \circ \iota$, we have
\begin{align}
\sum_{s} s \mathfrak{P}^s \circ \iota (v) &= \sum_i v_i \del{(\mathfrak{P} \circ \iota)}{v_i} \\
&= 0,
\end{align}
so \eqref{eqn:co0c1wbc} follows immediately from Lemma \ref{lemma:cowbcc2}.

Combining with Lemma \ref{lemma:cowbc0un}, we have that $\CO^0(c_1-w' \cdot e) = (w-w') \cdot e_{(L,\alpha)}$ for any $w'$. A simplified version of the proof of Proposition \ref{proposition:coeigsplit} now applies: if $\beta \in QH^*(X)_{w'}$, then $(c_1 - w' \cdot e)^{\star k} \star \beta = 0$ for some $k$, from which it follows that $(w-w')^k \cdot \CO^0(\beta) = 0$ in $HF^*((L,\alpha),(L,\alpha))$, so $\CO^0(\beta) = 0$. 
It follows that $\CO^0$ vanishes when restricted to $QH^*(X)_{w'}$ for all $w' \neq w$, and (combining with Lemma \ref{lemma:cowbc0un}) that it is unital when restricted to $QH^*(X)_w$: so $(L,\alpha)$ is $\CO^0$-unital.
 \end{proof}

\begin{remark}
Note that we need both the relative and non-relative Fukaya categories to get the full strength of Corollary \ref{corollary:cowbcc3}: $\mathfrak{P} \circ \iota$ may have critical points when $\mathfrak{P}_{rel} \circ \iota_{rel}$ has none, whereas the grading arguments used to prove Lemma \ref{lemma:cowbcc2} only work for the relative Fukaya category.
\end{remark}

\section{Deformation theory}
\label{sec:deftheory}

In this section, we establish some of the algebraic results needed to prove homological mirror symmetry.
In \S \ref{subsec:cliff}, we recall some basic results about Clifford algebras and modules over them; these are used to prove homological mirror symmetry over the small eigenvalues (Theorem \ref{theorem:small}).
\S \ref{subsec:class} and \S \ref{subsec:ana} establish a `recognition theorem' for the $A_\infty$ algebra that appears on both sides of homological mirror symmetry over the big eigenvalue (Theorem \ref{theorem:big}).
\S \ref{subsec:comphh} computes various versions of Hochschild cohomology for the type of $A_\infty$ algebra that is `recognized' by the results of the preceding two sections: this is used to establish the generation criterion for the big component of the Fukaya category, and also to extract relations in quantum cohomology from the Hochschild cohomology of the relative Fukaya category.

\subsection{Clifford algebras}
\label{subsec:cliff}

The following material is standard, but we wish to make some points clear.

\begin{definition}
\label{definition:cliff}
Let $R$ be a ring, $V$ a free $R$-module, $Q: V^{\otimes 2} \To R$ an $R$-linear quadratic form on $V$.
Define the corresponding \emph{Clifford algebra} $\Cl(Q)$ to be the quotient of the $R$-linear tensor algebra $T(V)$ by the two-sided ideal generated by the elements $v \otimes v - Q(v) \cdot 1$, for $v \in V$.
We will regard $\Cl(Q)$ as a $\Z/2\Z$-graded $R$-algebra, where $V$ has odd degree.
When $R = \C$, $V$ is $n$-dimensional, and the quadratic form $Q$ is non-degenerate, we will write $\Cl_n$ for the isomorphism class of $\Cl(Q)$.
\end{definition}

\begin{lemma}
\label{lemma:cliff} (see, e.g., \cite[Theorem 4.3]{Lawson1989})
We introduce the $\Z/2\Z$-graded vector space $U := \C[0] \oplus \C[1]$.
There are isomorphisms of $\Z/2\Z$-graded  algebras
\begin{align}
\Cl_{2k} &\cong \mathrm{End}\left( U^{\otimes k}\right),\\
\Cl_{2k+1} &\cong \mathrm{End} \left(U^{\otimes k} \right) \otimes \Cl_1
\end{align}
(there is also an isomorphism $\Cl_1 \cong \C \oplus \C$, but it is not $\Z/2\Z$-graded).
\end{lemma}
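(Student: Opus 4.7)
The plan is to prove the base case $\Cl_2 \cong \mathrm{End}(U)$ explicitly, then bootstrap to general $k$ using the standard ``periodicity'' isomorphism $\Cl(V_1 \oplus V_2, Q_1 \oplus Q_2) \cong \Cl(V_1,Q_1) \hat{\otimes} \Cl(V_2,Q_2)$, where $\hat{\otimes}$ denotes the $\Z/2\Z$-graded tensor product (with the Koszul sign rule on multiplication). The odd case $\Cl_{2k+1}$ follows by splitting off a one-dimensional summand in the quadratic space.

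For the base case, fix orthogonal generators $e_1, e_2 \in \Cl_2$ with $e_i^2 = 1$, $e_1 e_2 = -e_2 e_1$, both odd. Writing $U = \C\cdot u_0 \oplus \C\cdot u_1$ with $u_i$ in degree $i$, I would map
\begin{equation}
e_1 \mapsto \begin{pmatrix} 0 & 1 \\ 1 & 0 \end{pmatrix}, \qquad e_2 \mapsto \begin{pmatrix} 0 & -1 \\ 1 & 0 \end{pmatrix},
\end{equation}
where these endomorphisms are read as swapping $u_0 \leftrightarrow \pm u_1$. Both matrices are odd (they reverse the $\Z/2\Z$-grading on $U$), the Clifford relations are immediate, and a dimension count $\dim \Cl_2 = 4 = \dim \mathrm{End}(U)$ together with the absence of two-sided graded ideals in $\mathrm{End}(U)$ forces the map to be an isomorphism of $\Z/2\Z$-graded algebras. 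The analogous calculation for $\Cl_1$ is trivial: $\Cl_1$ itself is the output.

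For the inductive step, I would verify that for any two $\Z/2\Z$-graded vector spaces $W_1, W_2$, there is a natural isomorphism of $\Z/2\Z$-graded algebras $\mathrm{End}(W_1) \hat{\otimes} \mathrm{End}(W_2) \cong \mathrm{End}(W_1 \otimes W_2)$, sending $\varphi \otimes \psi$ to the operator $(w_1 \otimes w_2) \mapsto (-1)^{|\psi|\cdot|w_1|} \varphi(w_1) \otimes \psi(w_2)$. Combined with the graded periodicity of Clifford algebras, one obtains
\begin{equation}
\Cl_{2k} \cong \Cl_2^{\hat{\otimes} k} \cong \mathrm{End}(U)^{\hat{\otimes} k} \cong \mathrm{End}(U^{\otimes k}),
\end{equation}
and similarly $\Cl_{2k+1} \cong \Cl_{2k} \hat{\otimes} \Cl_1 \cong \mathrm{End}(U^{\otimes k}) \otimes \Cl_1$ (the last $\hat{\otimes}$ becomes an ordinary tensor product because $\mathrm{End}(U^{\otimes k})$ is a superalgebra whose even part commutes with everything in the usual sense, but in fact one must phrase this carefully — see below).

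The main obstacle is the bookkeeping of signs: the graded tensor product $\hat{\otimes}$ is not the same as the ordinary tensor product of algebras, and the statement in the lemma appears to use plain $\otimes$ in the odd case. The resolution is that $\mathrm{End}(U^{\otimes k})$ contains an invertible odd element (the parity operator) only when $k\ge 1$, which allows one to ``untwist'' the graded tensor product with $\Cl_1$ into an ordinary one; for $k=0$ the statement $\Cl_1 \cong \Cl_1$ is tautological. I would verify this untwisting explicitly by exhibiting the isomorphism $\mathrm{End}(U^{\otimes k}) \hat{\otimes} \Cl_1 \cong \mathrm{End}(U^{\otimes k}) \otimes \Cl_1$ for $k \geq 1$, which reduces to checking that the odd parity operator on $U^{\otimes k}$ intertwines the two multiplications correctly. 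Once the sign bookkeeping is settled, the proof assembles itself from the base case and the periodicity isomorphism.
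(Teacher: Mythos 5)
The paper does not prove this lemma; it simply cites \cite[Theorem 4.3]{Lawson1989}. Your self-contained argument via the graded periodicity isomorphism $\Cl(V_1 \oplus V_2) \cong \Cl(V_1) \hat{\otimes} \Cl(V_2)$ and the identification $\mathrm{End}(W_1) \hat{\otimes} \mathrm{End}(W_2) \cong \mathrm{End}(W_1 \otimes W_2)$ is the standard route, and the overall structure (base case, bootstrap, untwist) is sound. However, there are two slips you should fix.

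First, the explicit base-case matrices are inconsistent with the stated relations $e_i^2 = 1$. You have $e_2 \mapsto \left(\begin{smallmatrix} 0 & -1 \\ 1 & 0 \end{smallmatrix}\right)$, which squares to $-I$, not $+I$. Over $\C$ this is harmless up to rescaling the generators (all non-degenerate quadratic forms of a fixed rank are equivalent), but as written the verification of the Clifford relations fails. Replace $e_2$ by $\left(\begin{smallmatrix} 0 & -i \\ i & 0 \end{smallmatrix}\right)$, say, which is odd, squares to $I$, and anti-commutes with $e_1$.

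Second, the ``odd parity operator'' you invoke for the untwisting in the odd case does not exist: the grading involution $\alpha$ on $U^{\otimes k}$ (equal to $+1$ on the even part and $-1$ on the odd part) is an \emph{even} element of $\mathrm{End}(U^{\otimes k})$, not an odd one. What it does satisfy, for all homogeneous $a$, is $\alpha a \alpha^{-1} = (-1)^{|a|}a$, and this is exactly what makes the untwisting work: setting $\tilde\theta := \alpha \otimes \theta \in \mathrm{End}(U^{\otimes k}) \,\hat\otimes\, \Cl_1$, one checks $\tilde\theta$ is odd, commutes (in the plain sense) with every $a \otimes 1$, and $\tilde\theta^2 = 1$; so the map sending $a \otimes 1 \mapsto a \otimes 1$ and $1 \otimes \theta \mapsto \tilde\theta$ defines the desired isomorphism from the ordinary tensor product $\mathrm{End}(U^{\otimes k}) \otimes \Cl_1$ to the graded tensor product. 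With these two corrections the proof is complete.
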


\begin{corollary}
\label{corollary:morita}
$\Cl_{2k}$ is Morita-equivalent to $\C$ as a $\Z/2\Z$-graded algebra, and $\Cl_{2k+1}$ is Morita-equivalent to $\Cl_1$ as a $\Z/2\Z$-graded algebra.
\end{corollary}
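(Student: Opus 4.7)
The plan is to leverage Lemma \ref{lemma:cliff} directly, reducing the claim to the observation that for a finite-dimensional $\Z/2\Z$-graded vector space $V$ over a $\Z/2\Z$-graded algebra $A$, the graded endomorphism algebra $\mathrm{End}(V)$ is graded Morita-equivalent to $A$ itself, with $V$ providing the equivalence bimodule.

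Concretely, for the even case, I would set $V:=U^{\otimes k}$, regarded as a $\Z/2\Z$-graded $(\mathrm{End}(V),\C)$-bimodule with its dual $V^\vee$ as the reverse bimodule. The evaluation map $V \otimes_\C V^\vee \to \mathrm{End}(V)$ and the trace pairing $V^\vee \otimes_{\mathrm{End}(V)} V \to \C$ are isomorphisms of graded bimodules, which is the standard Morita equivalence between $\mathrm{End}(V)$ and the ground field; crucially, because $V$ has homogeneous components and the maps involved are the tautological ones, everything is compatible with the $\Z/2\Z$-grading. Combined with the isomorphism $\Cl_{2k} \cong \mathrm{End}(U^{\otimes k})$ from Lemma \ref{lemma:cliff}, this gives graded Morita equivalence $\Cl_{2k} \sim \C$.

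For the odd case, I would tensor the preceding equivalence with $\Cl_1$ over $\C$. Setting $W:=U^{\otimes k} \otimes \Cl_1$, regarded as a graded $(\mathrm{End}(U^{\otimes k}) \otimes \Cl_1,\Cl_1)$-bimodule, the same evaluation/trace maps (now tensored with the identity on $\Cl_1$) realize a graded Morita equivalence between $\mathrm{End}(U^{\otimes k}) \otimes \Cl_1$ and $\Cl_1$. Applying Lemma \ref{lemma:cliff} again yields the claim.

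The only subtlety is ensuring that every step respects the $\Z/2\Z$-grading; since $U$ is $\Z/2\Z$-graded by definition and tensor products, duals, and the evaluation/trace pairings of finite-dimensional graded vector spaces are manifestly graded, there is no real obstruction. Thus I expect the proof to be essentially a two-line invocation of Lemma \ref{lemma:cliff} together with the remark that $\mathrm{End}(V) \sim \C$ (and hence $\mathrm{End}(V) \otimes B \sim B$) as $\Z/2\Z$-graded algebras for any finite-dimensional graded $V$ and any graded $\C$-algebra $B$.
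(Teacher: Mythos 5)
Your proposal is correct, and it supplies precisely the standard argument the paper leaves implicit (the paper states this as an immediate corollary of Lemma \ref{lemma:cliff} without giving a proof): a graded endomorphism algebra $\mathrm{End}(V)$ of a finite-dimensional $\Z/2\Z$-graded vector space is Morita equivalent to $\C$ via the bimodule $V$ and its dual, and tensoring that equivalence with $\Cl_1$ handles the odd case.
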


\begin{corollary}
\label{corollary:hhvans}
The $\Z/2\Z$-graded Hochschild cohomology of $\Cl_{n}$ is
\begin{equation} HH^{s+t}(\Cl_n)^s \cong \left\{ \begin{array}{ll}
					\C & \mbox{ if $s=0$ and $n$ is even (in degree $t=0$)}\\
					\Cl_1^t & \mbox{ if $s=0$ and $n$ is odd} \\
					0 & \mbox{ if $s>0$.}
				\end{array}\right.
\end{equation}
\end{corollary}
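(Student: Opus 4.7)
The plan is to invoke Morita invariance of Hochschild cohomology in the $\Z/2\Z$-graded setting and thereby reduce the computation to two explicit calculations. By Corollary \ref{corollary:morita}, $\Cl_{2k}$ is Morita equivalent to $\C$ and $\Cl_{2k+1}$ is Morita equivalent to $\Cl_1$, both as $\Z/2\Z$-graded algebras. Since Hochschild cohomology is a Morita invariant in the $\Z/2\Z$-graded setting (the Morita equivalences of Lemma \ref{lemma:cliff} are implemented by the bimodule $U^{\otimes k}$ with its natural $\Z/2\Z$-grading, which yields an equivalence of the respective $\Z/2\Z$-graded DG categories of perfect bimodules), it suffices to compute $HH^*(\C)$ and $HH^*(\Cl_1)$ directly.

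For $\C$ the calculation is trivial: the reduced bar complex is zero in positive length, so $HH^0(\C) = \C$ (in bidegree $(s,t) = (0,0)$) and $HH^s(\C) = 0$ for $s > 0$. This gives the first case of the statement after transporting via Morita equivalence back to $\Cl_{2k}$.

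For $\Cl_1$ the key input is semisimplicity. As an ungraded $\C$-algebra, $\Cl_1 = \C[\theta]/(\theta^2-1) \cong \C \oplus \C$ via the orthogonal idempotents $\tfrac{1}{2}(1 \pm \theta)$. In particular $\Cl_1$ is separable over $\C$, which implies that its reduced bar complex is contractible as a bimodule resolution, and hence $HH^s(\Cl_1) = 0$ for all $s > 0$. The length-zero component $HH^0(\Cl_1)$ is computed directly from the algebra, and in the conventions of the paper it recovers $\Cl_1$ as a $\Z/2\Z$-graded vector space, i.e. $\Cl_1^0 = \C$ (total degree $0$) and $\Cl_1^1 = \C\theta$ (total degree $1$). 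Transporting back via Morita equivalence gives the second case of the statement for $\Cl_{2k+1}$.

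I expect the only point that requires care is the first step, namely verifying that the explicit $\Z/2\Z$-graded isomorphisms of Lemma \ref{lemma:cliff} give honest Morita equivalences in the $\Z/2\Z$-graded sense, so that $\Z/2\Z$-graded Hochschild cohomology is indeed preserved with all gradings intact. This amounts to checking that $U^{\otimes k}$ is a progenerator in the $\Z/2\Z$-graded category of modules, which is immediate from the fact that it realises the matrix-algebra isomorphism of Lemma \ref{lemma:cliff}. Once this is in place, the remaining steps are routine and purely formal.
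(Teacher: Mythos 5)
Your proposal is correct and follows the same overall route as the paper: reduce to $\C$ and $\Cl_1$ by $\Z/2\Z$-graded Morita invariance (Corollary \ref{corollary:morita}), and then show that $\Cl_1$ is projective as a $\Z/2\Z$-graded bimodule over itself so that $\mathrm{Ext}^{>0}$ vanishes while $HH^0 = Z(\Cl_1) = \Cl_1$. The difference is in how you establish projectivity: you invoke the ungraded separability of $\Cl_1$ via $\Cl_1 \cong \C \oplus \C$, whereas the paper directly exhibits the $\Z/2\Z$-graded bimodule splitting $1 \mapsto 1 \otimes 1 + \theta \otimes \theta$. Your version has a small logical gap: ungraded separability gives a separating idempotent in $\Cl_1 \otimes \Cl_1^{op}$, but you need this idempotent to be homogeneous of even degree in order for it to provide a splitting in the $\Z/2\Z$-graded bimodule category (otherwise you cannot conclude that the graded $\mathrm{Ext}$ groups vanish). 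In this case it works: the separating idempotent is $\tfrac{1}{2}(1 \otimes 1 + \theta \otimes \theta)$, which is manifestly of degree $0$ — and this is precisely the element the paper writes down directly. So your argument is fine once you add that one sentence, but the paper's formulation sidesteps the issue by never leaving the graded category. As a minor stylistic point, "the reduced bar complex is contractible" is a slightly imprecise way to say that $\Cl_1$ is projective as a bimodule; what you need is that the augmentation $\Cl_1 \otimes \Cl_1^{op} \to \Cl_1$ splits in the graded bimodule category.
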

\begin{proof}
By Corollary \ref{corollary:morita} and Morita-invariance of Hochschild cohomology for $\Z/2\Z$-graded algebras (compare \cite[1.5.6]{Loday1998}; the only difference is the signs), it suffices to check the cases $n=0$ and $n=1$.
$n=0$ is trivial, as $\Cl_0 \cong \C$.
When $n=1$, we observe that $\Cl_1$ is projective as a $\Z/2\Z$-graded $\fmod{\Cl_1}{\Cl_1}$ bimodule.
This follows as the map
\begin{align}
\Cl_1 & \To  \Cl_1 \otimes \Cl_1^{op} \\
1 & \mapsto 1 \otimes 1 + \theta \otimes \theta
\end{align}
is a $\Z/2\Z$-graded inclusion of $\Cl_1$ as a direct summand of $\Cl_1 \otimes \Cl_1^{op}$.
Therefore,
\begin{align}
HH^{s+t}(\Cl_1)^s &\cong \mathrm{Ext}^s_{\fmod{\Cl_1}{\Cl_1}}\left(\Cl_1,\Cl_1[t]\right)\\
& \cong \left\{ \begin{array}{ll}
														\mathrm{Hom}^t_{\fmod{\Cl_1}{\Cl_1}}(\Cl_1,\Cl_1) & \mbox{ if $s=0$,}\\
														0 & \mbox{ if $s>0$} \end{array} \right.\\
&\cong \left\{ \begin{array}{ll}
														Z(\Cl_1)^t & \mbox{ if $s=0$,}\\
														0 & \mbox{ if $s>0$.}\\
													\end{array} \right.
\end{align}
Because $\Cl_1$ is commutative, the result follows.
 \end{proof}

We now recall some terminology (compare \cite{Thomas2001}):

\begin{definition}
A $\Z/2\Z$-graded $A_\infty$ algebra $\cA$ is called \emph{formal} if it is $A_\infty$ quasi-isomorphic to its cohomology algebra.
A $\Z/2\Z$-graded associative algebra $A$ is called \emph{intrinsically formal} if any $\Z/2\Z$-graded $A_\infty$ algebra with cohomology algebra $A$ is formal.
\end{definition}

\begin{corollary}
\label{corollary:intform}
$\Cl_n$ is intrinsically formal.
\end{corollary}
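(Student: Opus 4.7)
The plan is to deduce intrinsic formality from the vanishing of the relevant Hochschild cohomology groups computed in Corollary \ref{corollary:hhvans}, via the standard obstruction-theoretic argument in the spirit of Kadeishvili.

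First, by the minimal model theorem for $A_\infty$ algebras, any $\Z/2\Z$-graded $A_\infty$ algebra $\mathcal{A}$ with cohomology algebra $\Cl_n$ is $A_\infty$ quasi-isomorphic to a minimal $A_\infty$ algebra whose underlying bigraded vector space is $\Cl_n$, with $\mu^1 = 0$ and $\mu^2$ equal to the Clifford product. It therefore suffices to prove that any such minimal $A_\infty$ structure on $\Cl_n$ is $A_\infty$ quasi-isomorphic to the one with $\mu^{\geq 3} = 0$.

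I would argue this by induction on the length at which the structure first differs from the trivial (formal) one. Suppose we have a minimal $A_\infty$ structure $\mu^*$ with $\mu^s = 0$ for $3 \leq s < k$; the $A_\infty$ relation of length $k+1$ then says that $\mu^k$ is a Hochschild cocycle in $CC^*(\Cl_n,\Cl_n)$ of length $k$ and of internal degree $2-k$, i.e.\ a class in the part $HH^{k+(2-k)}(\Cl_n)^{k} = HH^{2}(\Cl_n)^{k}$ in the bigrading of Corollary \ref{corollary:hhvans}. Since $k \geq 3 > 0$, this group vanishes by Corollary \ref{corollary:hhvans}, so $\mu^k = \delta \eta$ for some $\eta \in CC^*(\Cl_n,\Cl_n)$ of length $k-1$ and internal degree $2-k$. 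The standard construction then produces a strict $A_\infty$ quasi-isomorphism from $(\Cl_n,\mu^*)$ to a new minimal $A_\infty$ structure whose first non-trivial higher operation lies in length $> k$: take $f^1 = \mathrm{id}$, $f^{k-1} = \eta$, and define the remaining $f^j$ inductively to satisfy the $A_\infty$ functor equations, again using the vanishing of $HH^{s+t}(\Cl_n)^s$ for $s > 0$ to solve the successive obstruction equations.

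Iterating this procedure and passing to a suitable limit (the inductive step only modifies operations of length $\geq k+1$, so the formal limit is well-defined order by order), we obtain an $A_\infty$ quasi-isomorphism from $(\Cl_n, \mu^*)$ to $(\Cl_n, \mu^2)$, proving intrinsic formality. The main conceptual point is simply that Corollary \ref{corollary:hhvans} makes every obstruction and every step in the construction of the trivializing quasi-isomorphism automatic; the main bookkeeping issue is checking that the length-$k$ obstruction really lies in the bigraded piece covered by Corollary \ref{corollary:hhvans}, which is immediate from the fact that $\mu^k$ has total degree $2-k$ as a Hochschild cochain of length $k$.
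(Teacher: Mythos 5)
Your argument is correct and is essentially the paper's argument: the paper simply cites \cite[Lemma 1.9]{Seidel2008} as a black box for the statement that vanishing of $HH^2(A)^s$ for $s \geq 3$ implies intrinsic formality, whereas you have unpacked the standard inductive obstruction-theoretic proof of that lemma. The degree bookkeeping you carry out (that $\mu^k$ defines a class in $HH^{2}(\Cl_n)^k$, which vanishes for $k\geq 3$ by Corollary \ref{corollary:hhvans}) matches the paper's conventions and is the same vanishing the paper invokes.
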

\begin{proof}
By \cite[Lemma 1.9]{Seidel2008}, $A$ is intrinsically formal if $HH^2(A)^{s} = 0$ for all $s\ge 3$ (note that our grading conventions differ from \cite{Seidel2008}: our $HH^{s+t}(A)^s$ is isomorphic to Seidel's $HH^{s+t}(A)^t$).
Therefore, $\Cl_n$ is intrinsically formal by Corollary \ref{corollary:hhvans}.
 \end{proof}

\begin{corollary}
\label{corollary:cliffcat}
If $\cA$ is a $\C$-linear, $\Z/2\Z$-graded $A_\infty$ category which is split-generated by an object $X$, and there is a $\Z/2\Z$-graded isomorphism on the level of cohomology:
\begin{equation} \mathrm{Hom}^*(X,X) \cong \Cl_n,\end{equation}
then there is an $A_\infty$ quasi-equivalence
\begin{equation}
\label{eqn:dclmod}
 D^\pi \cA \cong \left\{ \begin{array}{ll}
						D^b(\C) & \mbox{ if $n$ is even} \\
						D^\pi(\Cl_1) & \mbox{ if $n$ is odd}.
					\end{array} \right.
\end{equation}
\end{corollary}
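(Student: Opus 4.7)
The plan is to reduce the statement to a purely algebraic assertion about the endomorphism algebra of a single object, and then invoke intrinsic formality and Morita equivalence. First, I would use the standard fact from $A_\infty$ theory that split-generation by $X$ produces a quasi-equivalence
\begin{equation}
D^\pi \mathcal{A} \simeq D^\pi(B), \quad B := \mathrm{hom}^*_\mathcal{A}(X,X),
\end{equation}
where $B$ is regarded as a one-object $\Z/2\Z$-graded $A_\infty$ category: indeed the Yoneda functor gives a cohomologically full and faithful embedding of the full subcategory on $X$ into $D^\pi \mathcal{A}$, and split-generation means that the image of this embedding split-generates $D^\pi \mathcal{A}$, so the induced map on split-closures of twisted complexes is a quasi-equivalence.

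Second, by hypothesis the cohomology algebra $H^*(B) \cong \Cl_n$, and by Corollary \ref{corollary:intform} $\Cl_n$ is intrinsically formal. Thus $B$ is $A_\infty$ quasi-isomorphic to $\Cl_n$ equipped with the trivial $A_\infty$ structure ($\mu^s = 0$ for $s \neq 2$), and quasi-isomorphisms of $A_\infty$ algebras induce quasi-equivalences of their split-closed derived categories. This gives
\begin{equation}
D^\pi \mathcal{A} \simeq D^\pi(\Cl_n)
\end{equation}
as $\C$-linear, $\Z/2\Z$-graded, triangulated, split-closed $A_\infty$ categories.

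Third, I would translate the $\Z/2\Z$-graded Morita equivalences of Corollary \ref{corollary:morita} into equivalences on split-closed derived categories. When $n=2k$, the isomorphism $\Cl_{2k} \cong \mathrm{End}(U^{\otimes k})$ from Lemma \ref{lemma:cliff} realizes $\Cl_{2k}$ as a $\Z/2\Z$-graded matrix algebra over $\C$, with Morita bimodule $U^{\otimes k}$; tensoring with this bimodule gives a quasi-equivalence $D^\pi(\Cl_{2k}) \simeq D^\pi(\C) = D^b(\C)$. When $n=2k+1$ the analogous bimodule yields $D^\pi(\Cl_{2k+1}) \simeq D^\pi(\Cl_1)$, producing exactly the dichotomy \eqref{eqn:dclmod}.

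The main technical obstacle is making the first reduction rigorous in the split-closed $A_\infty$ setting, i.e.\ verifying that a single object which split-generates $\mathcal{A}$ determines $D^\pi \mathcal{A}$ up to quasi-equivalence through its endomorphism $A_\infty$ algebra; this is essentially a restatement of the defining property of $D^\pi$, but one must be careful to keep track of the $\Z/2\Z$-grading and $\C$-linearity at each stage. The formality step and the Morita step are then routine given the results already established in this section.
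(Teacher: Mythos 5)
Your proposal is correct, and it arrives at the result by a route that is logically equivalent but organized differently from the paper's. You reduce first to $D^\pi(B)$ for $B := \mathrm{hom}^*(X,X)$ (the ``technical obstacle'' you flag is precisely what \cite[Corollary 4.9]{Seidel2008} provides), then replace $B$ by $\Cl_n$ via intrinsic formality (Corollary~\ref{corollary:intform}), and only then apply the Morita equivalence of Corollary~\ref{corollary:morita} at the level of split-closed derived categories. The paper instead performs the ``Morita step'' geometrically inside $D^\pi\mathcal{A}$ before ever leaving the category: using Lemma~\ref{lemma:cliff}, one extracts from the idempotents of $\Cl_n \cong \mathrm{End}(U^{\otimes k})$ (or $\mathrm{End}(U^{\otimes k}) \otimes \Cl_1$) a direct summand $Y$ of $X$ in $D^\pi\mathcal{A}$ with $\mathrm{Hom}^*(Y,Y) \cong \C$ or $\Cl_1$, observes that $Y$ still split-generates (since $X$ is a direct sum of copies of $Y$), and then applies Seidel's Corollary~4.9 once to $Y$. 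That version implicitly uses intrinsic formality as well, but only of the small algebras $\C$ and $\Cl_1$, rather than of $\Cl_n$ itself. Your version is arguably cleaner conceptually (each step is a self-contained algebraic reduction), while the paper's is shorter because it avoids formalizing the Morita equivalence in the $A_\infty$-bimodule language and instead lets the geometric split-generation criterion do that work. Either route is fine; no gap.
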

\begin{proof}
By Lemma \ref{lemma:cliff}, $X$ admits a ($\Z/2\Z$-graded) formal direct summand $Y$, whose endomorphism algebra is
\begin{equation} \mathrm{Hom}^*(Y,Y) \cong \left\{ \begin{array}{ll}
						\C & \mbox{ if $n$ is even} \\
						\Cl_1 & \mbox{ if $n$ is odd}.
						\end{array} \right. \end{equation}
Furthermore, $X$ is quasi-isomorphic to $Y^{\oplus n}$: so $Y$ also split-generates $\cA$.
The result follows, by \cite[Corollary 4.9]{Seidel2008}.
 \end{proof}

To clarify the notation: on the right-hand side of \eqref{eqn:dclmod}, `$\C$' and `$\Cl_1$' denote the $A_\infty$ categories with a single object, with the indicated endomorphism algebra (which has $\mu^s = 0$ for $s \neq 2$).
`$D^b(\cA)$' and `$D^\pi(\cA)$' denote the derived and split-closed derived $A_\infty$ categories of a $\Z/2\Z$-graded $A_\infty$ category $\cA$, in the sense of \cite{Seidel2008} (in whose notation they would be written `$Tw(\cA)$' and `$\Pi(Tw(\cA))$' respectively).
In particular, we note that $D^b(\C)$ is obviously split-closed (it is quasi-equivalent to the category of $\Z/2\Z$-graded finite-dimensional complex vector spaces), so $D^\pi(\C) \cong D^b(\C)$.

\subsection{A classification result for $A_{\infty}$ algebras}
\label{subsec:class}

Let $\bm{G}$ be a grading datum, let
\begin{equation} \widetilde{R} \cong \C [ r_1, \ldots , r_n ]\end{equation}
be a polynomial ring equipped with a $\bm{G}$-grading, and let $R$ be its $\bm{G}$-graded completion with respect to the order filtration.
We denote by $R^j$ the part of $R$ of order $j \in \Z_{\ge 0}$.

Let $\cA = (A,\mu_0)$ be a $\bm{G}$-graded minimal $A_{\infty}$ algebra over $\C$.
A \emph{$\bm{G}$-graded minimal deformation of $\cA$ over $R$} is a minimal $\bm{G}$-graded $R$-linear $A_\infty$ algebra $\scrA = (A \otimes R,\mu)$ that specialises to $\cA$ when we set all $r_j=0$ (see \cite[Definition 2.74]{Sheridan2015}).
In other words, it is an element $\mu \in CC^2(A,A \otimes R)$ satisfying the $A_\infty$ relations $\mu \circ \mu = 0$; minimality means that its length-$0$ and length-$1$ components $\mu^0$ (the curvature) and $\mu^1$ (the differential) vanish; and if we expand order-by-order in $R$, $\mu = \mu_0+\mu_1+\ldots$ where $\mu_j \in CC^2(A,A \otimes R^j)$, then $\mu_0$ coincides with the already-prescribed $\C$-linear $A_\infty$ structure on $A$.

The $0$th-order part of the $A_\infty$ relations says simply that $\mu_0$ satisfies the $A_\infty$ relations.
The first-order part says that $\mu_1 \in CC^2(A,A \otimes R^1)$ is a Hochschild cocycle: the corresponding class $[\mu_1] \in HH^2(\cA,\cA \otimes R^1)$ is called the \emph{first-order deformation class} of the deformation (see \cite[Definition 2.80]{Sheridan2015}).
In fact, because $\mu^0=\mu^1=0$, the Hochschild cochain $\mu$ lies in the \emph{truncated} Hochschild cochain complex $TrCC^*(A,A \otimes R)$: this is a $\Z$-graded complex, with $TrCC^p(A) \subset CC^p(A)$ equal to the subspace of Hochschild cochains of length $s \ge p$ (see \cite[Definition 2.31]{Sheridan2015}).
Thus, the first-order deformation class defines a class
\begin{equation} [\mu_1] \in TrHH^2_{\bm{G}}(\cA,\cA \otimes R^1).\end{equation}

Now suppose that $\scrA_1$ and $\scrA_2$ are $\bm{G}$-graded minimal deformations of $\cA$ over $R$: we recall that a \emph{formal diffeomorphism} between the two is an $A_\infty$ homomorphism $F: \scrA_1 \dashrightarrow \scrA_2$ whose linear term $F^1: \scrA_1 \to \scrA_2$ is an isomorphism.
We can regard $F$ as an element of $CC^1(A,A \otimes R)$: in fact, because the `curvature' term $F^0$ vanishes, we can regard $F$ as an element of $TrCC^1(A,A \otimes R)$.
We recall that formal diffeomorphisms have the following convenient property: given $\scrA_1=(A \otimes R,\mu)$ and $F \in TrCC^1(A,A \otimes R)$ such that $F^1$ is an isomorphism, there is a \emph{unique} $A_\infty$ structure $F_* \mu$ on $A \otimes R$ such that $F$ is an $A_\infty$ homomorphism from $(A \otimes R,\mu)$ to $(A \otimes R,F_* \mu)$ (see \cite[\S 1c]{Seidel2008}).

\begin{proposition}
\label{proposition:verseasy}
Suppose that
\begin{equation} TrHH^2_{\bm{G}}(\cA,\cA \otimes R^{j}) \cong 0\end{equation}
for $j \ge 2$.
Then any two $\bm{G}$-graded minimal deformations of $\cA$ over $R$, whose first-order deformation classes are equal, are related by a formal diffeomorphism.
\end{proposition}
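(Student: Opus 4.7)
\textbf{Proof proposal for Proposition~\ref{proposition:verseasy}.}

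The plan is the standard order-by-order obstruction-theoretic argument, carried out in the $\bm{G}$-graded truncated Hochschild complex. Let $\mathcal{A} = (A,\mu_0)$ and let $\mu = \mu_0 + \mu_1 + \mu_2 + \ldots$ and $\mu' = \mu_0 + \mu_1' + \mu_2' + \ldots$ be two $\bm{G}$-graded minimal deformations over $R$, with $\mu_j,\mu_j' \in CC^2_{\bm{G}}(\mathcal{A},\mathcal{A}\otimes R^j)$, and suppose $[\mu_1] = [\mu_1']$ in $THH^2_{\bm{G}}(\mathcal{A},\mathcal{A}\otimes R^1)$. I will construct a $\bm{G}$-graded formal diffeomorphism $\Phi$ with $\Phi_*(\mu) = \mu'$ by building it up order by order: $\Phi = \lim \Phi^{(N)}$, where $\Phi^{(N)}$ agrees with $\Phi^{(N-1)}$ modulo $R^N$ and has been chosen so that $(\Phi^{(N)})_*(\mu) \equiv \mu' \pmod{R^{N+1}}$. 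Convergence is automatic because $R$ is complete with respect to the order filtration and each successive $\Phi^{(N)}$ modifies only terms of order $\ge N$.

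The base case $N=1$ is the definition of the first-order deformation class: by hypothesis there exists $h_1 \in CC^1_{\bm{G}}(\mathcal{A},\mathcal{A}\otimes R^1)$ with $d h_1 = \mu_1 - \mu_1'$ in the truncated complex, and one applies the formal diffeomorphism whose linear term at order $1$ is $\mathrm{id} + h_1$ (using the standard $A_\infty$ gauge action; c.f.\ \cite[Section 2]{Seidel2008}). For the inductive step, assume $(\Phi^{(N-1)})_*(\mu)$ and $\mu'$ agree modulo $R^{N+1}$, where $N \ge 2$, so that their components $\tilde{\mu}_j$ and $\mu_j'$ coincide for all $j \le N$. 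The order-$(N+1)$ piece of the $A_\infty$ relation for each of them gives
\begin{equation}
 d(\tilde{\mu}_{N+1}) = -\!\!\sum_{\substack{i+k=N+1 \\ 0<i,k \le N}}\!\! \tilde{\mu}_i \circ \tilde{\mu}_k, \qquad
 d(\mu_{N+1}') = -\!\!\sum_{\substack{i+k=N+1 \\ 0<i,k \le N}}\!\! \mu_i' \circ \mu_k',
\end{equation}
and since the right-hand sides coincide, the difference $\tilde{\mu}_{N+1} - \mu_{N+1}'$ is a $\bm{G}$-graded Hochschild cocycle in $CC^2_{\bm{G}}(\mathcal{A},\mathcal{A}\otimes R^{N+1})$. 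By the vanishing hypothesis $THH^2_{\bm{G}}(\mathcal{A},\mathcal{A}\otimes R^{N+1}) = 0$ (which applies since $N+1\ge 2$), there exists $h_{N+1} \in CC^1_{\bm{G}}(\mathcal{A},\mathcal{A}\otimes R^{N+1})$ with $d h_{N+1} = \tilde{\mu}_{N+1} - \mu_{N+1}'$. One then sets $\Phi^{(N)}$ to be the composition of $\Phi^{(N-1)}$ with the order-$(N+1)$ formal diffeomorphism determined by $h_{N+1}$; by construction this modifies $\tilde{\mu}_{N+1}$ by exactly $-d h_{N+1}$ and leaves lower-order terms unchanged, completing the induction.

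The main obstacle, modulo the standard verifications, is the bookkeeping to confirm that (i) the order-$(N+1)$ formal diffeomorphism generated by an element of $CC^1_{\bm{G}}(\mathcal{A},\mathcal{A}\otimes R^{N+1})$ really does act as $-d$ on the order-$(N+1)$ term while preserving everything in lower orders, and (ii) everything stays within the truncated/$\bm{G}$-graded subcomplexes so that the vanishing hypothesis for $THH^2$ (rather than ordinary $HH^2$) suffices. Both are essentially formal once one writes out the action of the gauge group on the $A_\infty$ structure in coordinates, and the $\bm{G}$-grading and truncation conditions are automatically preserved because every $h_j$ produced is taken in the appropriate graded subcomplex.
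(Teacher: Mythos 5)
Your proposal is correct and follows essentially the same order-by-order obstruction-theoretic argument as the paper's proof: both construct the formal diffeomorphism inductively, use the hypothesis $[\mu_1]=[\mu_1']$ to handle the first order, observe that the order-$k$ discrepancy is a truncated Hochschild cocycle for $k\ge 2$, and kill it using the vanishing of $THH^2_{\bm{G}}(\mathcal{A},\mathcal{A}\otimes R^k)$. The paper phrases the cocycle observation slightly differently — by noting $[\mu - F_*\eta,\mu+F_*\eta]=0$ and extracting the order-$k$ piece — and builds a single $F=\sum F_j$ rather than composing successive gauge transformations, but these are cosmetic differences and your argument establishes the same result.
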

\begin{proof}
Let $(A \otimes R,\mu)$ and $(A\otimes R,\eta)$ be two such deformations.
We construct, order-by-order, a $\bm{G}$-graded formal diffeomorphism $F \in TrCC^1_{\bm{G}}(A,A \otimes R)$ so that $\mu = F_* \eta$ (we call this the $A_{\infty}$ relation for the purposes of the proof).
We set
\begin{equation} F = F_0 + F_1 + \ldots,\end{equation}
where $F_j \in TrCC^1_{\bm{G}}(\cA,\cA \otimes R^j)$.

We start with $F_0 = \mathrm{id}$.
The order-$0$ $A_{\infty}$ relation holds by definition.
Now suppose, inductively, that we have constructed $F_j$ for all $j \le k-1$, so that
\begin{equation} (\mu - F_* \eta)_j = 0 \mbox{ for all $j \le k-1$.}\end{equation}
We show it is possible to construct $F_k$ so that $(\mu-F_* \eta)_k = 0$.

First, note that
\begin{equation} [ \mu - F_* \eta, \mu + F_* \eta] = 0\end{equation}
by expanding brackets: cross-terms vanish by symmetry and the other terms vanish because $\mu$ and $F_* \eta$ are $A_{\infty}$ structures.
Now note that $(\mu + F_* \eta)_0 = 2\mu_0$ by definition, so the order-$k$ part of this equation says that $(\mu - F_* \eta)_k$ is a Hochschild cochain.

Now we observe that
\begin{equation} (\mu - F_* \eta)_k = \delta(F_k) + D_k,\end{equation}
where $D_k$ are the terms not involving $F_k$.
Our previous argument says that $D_k$ is a Hochschild cochain; in fact the corresponding class
\begin{equation} [D_k] \in TrHH^2_{\bm{G}}(\cA,\cA \otimes R^k)\end{equation}
vanishes.
When $k = 1$, this is true by assumption (the first-order deformation classes of $\mu$ and $\eta$ coincide); when $k \ge 2$, it is true because the Hochschild cohomology group vanishes by assumption.

Therefore, $D_k$ is a Hochschild coboundary, so we can choose $F_k$ to make the order-$k$ $A_{\infty}$ relation hold.
This completes the proof, by induction.
 \end{proof}

\subsection{$A_{\infty}$ algebras of type A$^n_a$}
\label{subsec:ana}

For any $a,n \in \Z$, we define a grading datum
\begin{align}
\label{eqn:gna}
\Z & \To  Y := (\Z \oplus \Z \langle y_1, \ldots, y_n \rangle)/(2(a-n), y_1 + \ldots + y_n), \\
j & \mapsto  j \oplus 0
\end{align}
(the sign of $j \oplus y$ is the sign of $j$).
We denote this grading datum by $\bm{G}^n_a$, following \cite[Example 2.14]{Sheridan2015}.
In this section we will abbreviate $\bm{G}:= \bm{G}^n_1$.
We introduce the morphism of grading data
\begin{align}
\label{eqn:morphp}
\bm{p}: \bm{G}^n_a & \To  \bm{G}, \\
p(y_j) &= 2(1-a) \oplus a y_j.
\end{align}

We define a $\bm{G}$-graded vector space
\begin{equation}U \cong \C \langle u_1, \ldots, u_n \rangle,\end{equation}
where
\begin{equation} deg(u_j) = (-1,y_j)\end{equation}
(compare \cite[Example 2.24]{Sheridan2015}).
We consider the $\bm{G}$-graded exterior algebra
\begin{equation}
\label{eqn:Aext}
A := \Lambda^* U
\end{equation} (compare \cite[Definition 2.27]{Sheridan2015}).

We consider the $\bm{G}$-graded vector space
\begin{equation}V_a := \C \langle r_1, \ldots, r_n \rangle,\end{equation}
where
\begin{equation} deg(r_j) = (2-2a, ay_j)\end{equation}
(compare \cite[Example 2.25]{Sheridan2015}).
We define the $\bm{G}$-graded polynomial ring
\begin{equation}
\label{eqn:ra}
R_a := \C [ V_a ],\end{equation}
(compare \cite[Definition 2.28]{Sheridan2015}).
We remark that the filtration of $R_a$ by order is complete in the category of $\bm{G}$-graded algebras, essentially by Lemma \ref{lemma:moncoeff} applied to $(X^n_a,D)$.
Therefore there is no need for the completion referred to at the start of \S \ref{subsec:class}.

We define the polynomial ring
\begin{equation}
\label{eqn:sa}
 S_a := R_a[U],
\end{equation}
and set
\begin{equation}
\label{eqn:ztild}
\tilde{Z}^n_a := - u_1 \ldots u_{n} + \sum_{j=1}^{n} r_j u_j^{a} \in S_a.
\end{equation}

We recall the Hochschild--Kostant--Rosenberg map \cite{Hochschild1962}, which gives an explicit quasi-isomorphism
\begin{equation}
\Phi: CC^*(A \otimes R_a  | R_a) \To S_a \otimes A.
\end{equation}
To define it, let $\{u_i\}$ be a basis for $U \subset A$, and $\{v^i\}$ the dual basis for $U^\vee \subset \C[U]$; then
\begin{align}
\Phi(\alpha)&:=  \sum_{s=0}^\infty \alpha^s(\bm{u},\ldots,\bm{u}),\mbox{ where}\\
\bm{u} & :=  \sum_{i=1}^n v^i u_i,
\end{align}
 (see \cite[Definition 2.89]{Sheridan2015}).

We will denote monomials in $R_a$ by $r^{\bm{c}}$, where $\bm{c} \in \Z_{\ge 0}^n$, monomials in $S_a$ by $u^{\bm{b}}$ where $\bm{b} \in \Z_{\ge 0}^n$, and generators in $A$ by $\theta^K$, where $K \subset \{1,\ldots,n\}$.
Thus, we have a $\C$-basis for $S_a \otimes A$ consisting of elements of the form $r^{\bm{c}} u^{\bm{b}} \theta^K$.
The results in this section require a detailed understanding of the $\bm{G}$-grading on $S_a \otimes A$.
This is afforded by \cite[Lemma 2.93]{Sheridan2015}, which we will not reproduce here.

\begin{definition}
\label{definition:typean}
We say that a (non-curved) $\bm{G}$-graded $A_{\infty}$ algebra $\scrA$ over $R_a$ has \emph{type A$^n_a$} if it satisfies the following properties:
\begin{itemize}
\item Its underlying $R_a$-module and order-$0$ cohomology algebra is
\begin{equation} (\scrA,\mu^2_0) \cong A \otimes R_a;\end{equation}
\item It satisfies
\begin{equation}\label{eqn:Phimu} \Phi(\mu^* ) = \tilde{Z}^n_a + \mathcal{O}(r^2);\end{equation}
\item $\mu^0_0 = 0$.
\end{itemize}
\end{definition}

Now we will show that the differential $\mu^1$ vanishes, for any $A_{\infty}$ algebra of type A$^n_a$, as long as $n-1 \ge a \ge 2$ and $n \ge 4$.

\begin{lemma}
\label{lemma:mu1}
Suppose that $n-1 \ge a \ge 2$ and $n \ge 4$, and $\scrA = (A \otimes R,\mu^*)$ is an $A_{\infty}$ algebra of type A$^n_a$.
Then for any
\begin{align}
\alpha & \in  CC^2_{c,\bm{G}}(A \otimes R)^1 \\
&\cong \mathrm{Hom}^1_R(A \otimes R,A \otimes R),
\end{align}
and any $K \neq \{1,\ldots,n\}$, we have
\begin{equation} \alpha(\theta^K) = 0.\end{equation}
\end{lemma}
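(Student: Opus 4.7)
The plan is to prove Lemma \ref{lemma:mu1} by a pure degree-counting argument in the grading datum $\bm{G} = \bm{G}^n_1$, using only the underlying $\bm{G}$-graded $R_a$-module isomorphism $\mathscr{A} \cong A \otimes R_a$; the $A_\infty$ structure on $\mathscr{A}$ will play no role. Since $\alpha$ is $R_a$-linear of total $\bm{G}$-degree corresponding to $(1, 0) \in \Z \oplus Y$, the image $\alpha(\theta^K)$ must lie in the graded summand of $A \otimes R_a$ of degree $\deg(\theta^K) + (1, 0)$. I would therefore enumerate the monomials $\theta^L r^{c'}$ of this degree, where $L \subseteq [n]$ and $c' \in \Z^n_{\ge 0}$, and show that no such monomials exist unless $K = [n]$.

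Unwinding the definition of $\bm{G}^n_1$, matching degrees amounts to finding an integer $k$ (the multiple of the single defining relation $(2(1-n), y_1 + \ldots + y_n)$ used to reduce to a canonical form) such that, in the $Y$-component, $a c'_j = k + [j \in K] - [j \in L]$ for each $j \in [n]$, and in the $\Z$-component, $-|L| + (2-2a)|c'| = 1 - |K| + 2k(1-n)$. The plan is then to break into cases according to the residue of $k$ modulo $a$: since $[j \in K] - [j \in L] \in \{-1, 0, 1\}$, each $j$ must satisfy one of three mutually exclusive divisibility conditions depending on whether $a$ divides $k-1$, $k$, or $k+1$. For $a \ge 3$ at most one of these conditions can hold, forcing either $K = L$, or $K = [n]$ and $L = \emptyset$, or $K = \emptyset$ and $L = [n]$. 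For $a = 2$, both $k-1$ and $k+1$ can be even, so the possibilities collapse to $K = L$ (if $k$ is even) or $L = [n] \setminus K$ (if $k$ is odd).

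Substituting each surviving configuration back into the $\Z$-component equation reduces the problem to a linear Diophantine condition on the single integer parameter $j = k/a$ or $j = (k \pm 1)/a$; the resulting equations all take the form $2j(n - a) = 1$ or $2j(n-a) = 3 - n$, neither of which admits a non-negative integer solution when $n \ge 4$. The only configuration that survives is $K = L = [n]$ with $c' = 0$, which corresponds to the excluded case $K = [n]$. Hence for $K \neq [n]$ the graded piece of $A \otimes R_a$ in which $\alpha(\theta^K)$ must lie contains no generators, and $\alpha(\theta^K) = 0$.

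The main obstacle is purely bookkeeping rather than conceptual: one must handle $a = 2$ separately from $a \ge 3$ (since in characteristic $2$ two of $k-1, k, k+1$ can both be divisible), and keep careful track of the sign and non-negativity constraints on the $c'_j$ so as not to double-count or miss configurations. The hypothesis $n \ge 4$ enters exactly to kill the borderline solutions that appear in both computations at $n = 3$; the hypothesis $a \ge 2$ is used in the divisibility arguments that rule out fractional $c'_j$.
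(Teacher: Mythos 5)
Your argument is correct and matches the paper's proof essentially step for step: both reduce to the two grading constraints that you write as the $Y$- and $\Z$-component equations and the paper derives from [Lemma 2.93] (your two equations combine, after summing the $Y$-component over $j$, to the paper's single relation $(2-a)|c'| + (n-2)k = 1$). The only stylistic difference is that the paper dispatches $a = 2$ in one line, since $(2-a)|c'|$ then vanishes and $(n-2)k = 1$ is impossible for $n \ge 4$, whereas you run the full case split for $a=2$ as well.

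One bookkeeping slip at the end: the configuration $K = L = [n]$, $c' = 0$ does \emph{not} satisfy the $\Z$-equation (it gives $0 = 1$), and neither does the $\epsilon = 0$ case $K = L$ generally, since it yields $2m(n-a) = 1$. The actual survivor is the $\epsilon = 1$ case $K = [n]$, $L = \emptyset$, for which the Diophantine condition is $2m(n-a) = n-1$ (a third form you did not list); this does have nonnegative solutions for certain $n, a$, but since $K = [n]$ is excluded by hypothesis, the conclusion $\alpha(\theta^K) = 0$ for $K \neq [n]$ still follows.
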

\begin{proof}
Suppose that $\alpha$ sends
\begin{equation} \theta^{K_1} \mapsto r^{\bm{c}} \theta^{K_0}.\end{equation}
By \cite[Lemma 2.93]{Sheridan2015}, we have
\begin{equation} 1 = t = (n-2)q + (2-a)j.\end{equation}
If $a = 2$ then we have $1 = (n-2)q$, which is impossible as $n \ge 4$.
So we may assume $a \ge 3$.

Applying \cite[Lemma 2.93]{Sheridan2015} again yields
\begin{equation} a \bm{c} + y_{K_0} = y_{K_1} + q y_{\{1,\ldots,n\}}.\end{equation}
Thus, for each $k \in \{1,\ldots,n\}$, we have
\begin{equation} a c_k = q + (\mbox{ $-1$ or $0$ or $1$}).\end{equation}
Because $a \ge 3$, this implies that all $c_k$ are equal.
Suppose they are all equal to $c$, so we have $\bm{c} = c y_{\{1,\ldots,n\}}$ and $j = nc$.
Therefore, we have
\begin{equation} y_{K_0} - y_{K_1} = (q-ac) y_{\{1,\ldots,n\}}.\end{equation}
Now observe that we have
\begin{align}
1 = t &= (n-2)q + (2-a)cn \\
&= (n-2)(q - ac) + 2c(n-a).
\end{align}
We saw earlier that $q-ac$ is equal to $-1$, $0$ or $1$.
If $q-ac = 1$, the right-hand side is $\ge 2$, so we have a contradiction.
If $q-ac = 0$, the right-hand side is even, so we have a contradiction.

If $q-ac = -1$, then we have
\begin{equation} y_{K_1} = y_{K_0} + y_{\{1,\ldots,n\}},\end{equation}
so we must have $K_0 = \phi$ and $K_1 = \{1,\ldots,n\}$; in particular, the only input generator on which $\alpha$ can be non-zero is $\theta^{\{1,\ldots,n\}}$, as required.
 \end{proof}

\begin{corollary}
\label{corollary:min}
Suppose that $n-1 \ge a \ge 2$ and $n \ge 4$, and $\scrA$ is a non-curved $A_{\infty}$ algebra of type A$^n_a$.
Then the differential $\mu^1$ vanishes.
\end{corollary}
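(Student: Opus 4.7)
The plan is to combine Lemma \ref{lemma:mu1} with a single two-input $A_\infty$ relation. Lemma \ref{lemma:mu1} already tells us that $\mu^1(\theta^K) = 0$ whenever $K \neq [n]$, so the only piece of $\mu^1$ left to pin down is $\mu^1(\theta^{[n]})$. A careful reading of the proof of Lemma \ref{lemma:mu1} determines its form completely: it must equal $\lambda \, r_1^c \cdots r_n^c \cdot 1$ for a single scalar $\lambda \in \C$, where $c = (n-1)/(2(n-a))$, and it is automatically zero unless this $c$ is a positive integer. Since $n \ge 4$ forces $c > 0$, this also tells us that $\mu^1$ is concentrated at the single order $\bm{c} = c \cdot y_{[n]}$ in the $r$-variables: in particular $\mu^1_0 = 0$, and the order-$j$ component $\mu^1_j$ vanishes for all other $j$.

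To force $\lambda = 0$ I would apply the two-input $A_\infty$ relation. Because $\mu^0 = 0$, this reduces to $\mu^1(\mu^2(b,a)) = \pm \mu^2(\mu^1(b),a) \pm \mu^2(b,\mu^1(a))$, with explicit Koszul signs whose precise form is irrelevant. Evaluate at $b = \theta^{\{1\}}$ and $a = \theta^{\{2,\ldots,n\}}$; these are proper non-empty subsets of $[n]$ because $n \ge 4 \ge 2$, and $\mu^2_0(b,a) = \pm \theta^{[n]}$ in the exterior algebra $A = \Lambda^* U$. Extracting the order-$\bm{c}$ component of the relation, both right-hand terms vanish because $\mu^1(b) = 0 = \mu^1(a)$ (neither $\{1\}$ nor $\{2,\ldots,n\}$ equals $[n]$), and the only surviving contribution on the left is $\mu^1_{\bm{c}}(\mu^2_0(b,a)) = \pm \lambda \, r_1^c \cdots r_n^c \cdot 1$. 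This must then vanish, so $\lambda = 0$ and $\mu^1 \equiv 0$.

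The main obstacle I anticipate is the bookkeeping in expanding the $A_\infty$ relation order-by-order: a priori one might worry about cross-terms of the form $\mu^1_i \circ \mu^2_j$ or $\mu^2_j(\mu^1_i \otimes \mathrm{id})$ at intermediate orders $0 < i < \bm{c}$. These are exactly ruled out by the rigidity of Lemma \ref{lemma:mu1}, which concentrates $\mu^1$ entirely at the single order $\bm{c}$; in particular $\mu^1_0 = 0$ is needed to eliminate the apparent cross-term $\mu^1_0 \circ \mu^2_{\bm{c}}$, and it follows from the grading computation in Lemma \ref{lemma:mu1} because $c > 0$. Beyond this everything is formal, using only the simplest non-trivial $A_\infty$ relation together with the non-curvature hypothesis $\mu^0 = 0$ to drop the curvature corrections.
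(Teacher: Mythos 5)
Your proof is correct and follows essentially the same route as the paper: both deduce $\mu^1(\theta^K)=0$ for $K\ne[n]$ from Lemma \ref{lemma:mu1}, then apply a two-input $A_\infty$ relation to a complementary pair $\theta^K,\theta^{\bar K}$ (using $\mu^0 = 0$) to kill $\mu^1(\theta^{[n]})$. The only difference is in how the last step is closed: the paper simply notes $\mu^1(\mu^2(\theta^K,\theta^{\bar K}))=(1+\mathcal{O}(r))\,\mu^1(\theta^{[n]})$ and cancels the non-zero-divisor $1+\mathcal{O}(r)$ in the domain $R=\C[r_1,\dots,r_n]$, while you instead extract from the grading computation inside the proof of Lemma \ref{lemma:mu1} that any surviving term must be concentrated at the single order $\bm{c}=c\,y_{[n]}$ with $c=(n-1)/(2(n-a))$ and then compare that order on both sides; both closings are valid, and your extra precision (in particular $\mu^1_0=0$) is correct, just not strictly needed.
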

\begin{proof}
Note that
\begin{equation} \mu^1 \in CC_{c,\bm{G}}^2(A \otimes R)^1.\end{equation}
So, by Lemma \ref{lemma:mu1}, $\mu^1(\theta^K) = 0$ for all $K \neq \{1,\ldots,n\}$.
Now choose some $K \subset \{1,\ldots,n\}$ with $K \neq \phi, \{1,\ldots,n\}$, and apply the $A_{\infty}$ equation:
\begin{align}
\mu^1\left(\mu^2\left(\theta^K, \theta^{\bar{K}}\right) \right) &= \mu^2(\mu^1(\theta^K), \theta^{\bar{K}}) + \mu^2(\theta^K, \mu^1(\theta^{\bar{K}})) \\
\Rightarrow \mu^1 \left( \theta^{\{1,\ldots,n\}} + \mathcal{O}(r) \right) &= 0 \\
\Rightarrow \mu^1 \left( (1 + \mathcal{O}(r)) \theta^{\{1,\ldots,n\}} \right) &= 0 \mbox{ (because $\mu^1(\theta^K) = 0$ for $K \neq \{1,\ldots,n\}$)}\\
\Rightarrow \mu^1(\theta^{\{1,\ldots,n\}}) &= 0.
\end{align}
Therefore, $\mu^1$ vanishes.
 \end{proof}

\begin{corollary}
\label{corollary:cohalg}
Suppose that $\scrA$ is an $A_\infty$ algebra of type A$^n_a$.
If $n-1 \ge a \ge 3$, then we have an isomorphism of $\C$-algebras
\begin{equation} H^*(\scrA \otimes_R \C) \cong 	\wedgestar(\C^n). \end{equation}
If $a=2$ and $n \ge 4$, then we have an isomorphism of $\C$-algebras
\begin{equation}H^*(\scrA \otimes_R \C) \cong \Cl_n.\end{equation}
\end{corollary}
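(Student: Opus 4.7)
The plan is to compute $H^*(\mathscr{A} \otimes_R \C)$ as a $\C$-algebra by analyzing the $A_\infty$-structure maps after the specialization $r_j = 1$, using Corollary \ref{corollary:min} to eliminate the differential. In both cases the hypothesis $n \ge 4$ is satisfied (explicitly when $a = 2$, and via $n - 1 \ge a \ge 3$ in the other case), so Corollary \ref{corollary:min} gives $\mu^1 = 0$ on $\mathscr{A}$, hence on $\mathscr{A} \otimes_R \C$. Therefore $H^*(\mathscr{A} \otimes_R \C) \cong A \otimes_R \C \cong \wedgestar(\C^n)$ as a graded $\C$-vector space, and the induced cohomology product is simply $\mu^2$ evaluated after the specialization; the task reduces to identifying this associative product.

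The condition $\Phi(\mu^*) = \tilde{Z}^n_a + \mathcal{O}(r^2)$ pins down the $\mathcal{O}(r)$ part of every $\mu^s$ via the Hochschild--Kostant--Rosenberg identification: a monomial $r^{\bm{c}} u^J$ in the potential corresponds to a length-$|J|$ Hochschild cochain contributing to $\mu^{|J|}$ with coefficient $r^{\bm{c}}$. For $a \ge 3$, the only monomials appearing in $\tilde{Z}^n_a = -u_1 \cdots u_n + \sum_j r_j u_j^a$ have $u$-degrees $n$ and $a$, both at least $3$; so the first-order correction to $\mu^2$ vanishes. A grading argument directly parallel to the proof of Lemma \ref{lemma:mu1}, now applied to length-$2$ cochains $\alpha \in CC^2_{c, \bm{G}}(A \otimes R)^0$ of internal degree zero, then rules out higher-order corrections: the $\bm{G}^n_a$-grading equations forced on a hypothetical non-trivial contribution $\alpha(\theta^{K_1}, \theta^{K_2}) = r^{\bm{c}} \theta^{K_0}$ with $\bm{c} \ne 0$ admit no solutions when $a \ge 3$ other than those already captured by the exterior product $\mu^2_0$. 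Hence $\mu^2 = \mu^2_0$ on $\mathscr{A} \otimes_R \C$, and $H^*(\mathscr{A} \otimes_R \C) \cong \wedgestar(\C^n)$ as algebras.

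For $a = 2$, the monomials $r_j u_j^2$ in $\tilde{Z}^n_2$ have $u$-degree exactly $2$ and therefore feed directly into $\mu^2$ at first order: they force $\mu^2(\theta_j, \theta_j) = \pm r_j \cdot 1 + \mathcal{O}(r^2)$, while for $i \ne j$ the absence of a $u_i u_j$ monomial in the potential leaves the anticommutation $\mu^2(\theta_i, \theta_j) + \mu^2(\theta_j, \theta_i) = 0 + \mathcal{O}(r^2)$ unchanged at first order. Setting $r_j = 1$, we obtain a well-defined associative product in which the $\theta_j$ satisfy Clifford-type relations with non-zero squares; combined with the grading analysis to constrain higher-order corrections as above, this realizes $H^*(\mathscr{A} \otimes_R \C)$ as a Clifford algebra $\Cl(Q)$ on $\C^n$ for some non-degenerate quadratic form $Q$. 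Over $\C$, any such algebra is isomorphic as a $\Z/2\Z$-graded algebra to $\Cl_n$ (see Lemma \ref{lemma:cliff}), giving the stated isomorphism.

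The main obstacle I foresee is the grading analysis for length-$2$ corrections to $\mu^2$: because $\mu^2$ has internal degree $0$ (unlike $\mu^1$, treated in Lemma \ref{lemma:mu1}), the constraint $0 = (n-2)q + (2-a)j$ admits more potential solutions, and one must combine it carefully with the $Y$-component of the grading to rule out spurious contributions in the $a \ge 3$ case and to verify non-degeneracy of the Clifford form in the $a = 2$ case. An alternative, should the direct analysis prove too delicate, is to invoke Proposition \ref{proposition:verseasy}---provided $THH^2_{\bm{G}}(A, A \otimes R^j)$ vanishes for $j \ge 2$---to conclude that any two type A$^n_a$ $A_\infty$ algebras are related by a formal diffeomorphism, reducing the computation to a single explicit model (the exterior algebra for $a \ge 3$, and the Clifford algebra for $a = 2$).
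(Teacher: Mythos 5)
Your proposal is correct and follows essentially the same route as the paper's proof: invoke Corollary \ref{corollary:min} to kill $\mu^1$, then use the length-$2$ part of the defining condition $\Phi(\mu^*) = \tilde{Z}^n_a + \mathcal{O}(r^2)$ together with the $\bm{G}$-grading constraints to pin down $\mu^2$ on generators (exterior product for $a \ge 3$, Clifford relations for $a = 2$). The paper states this very tersely (``the length-$2$ part of \eqref{eqn:Phimu} implies\ldots''), leaving the higher-order-in-$r$ grading analysis implicit; you correctly identify this as the delicate step, and the computation you need is close in spirit to the degree bookkeeping carried out in Lemma \ref{lemma:hypsat2}.
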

\begin{proof}
By Corollary \ref{corollary:min}, the differential $\mu^1$ vanishes, which implies the result on the level of vector spaces.
On the other hand, the length-2 part of \eqref{eqn:Phimu} implies that, for any
\begin{equation} v = \sum_i v_i \theta_i \in \scrA \otimes_R \C,\end{equation}
we have
\begin{equation}
\mu^2(v,v) = \left\{ \begin{array}{ll}
				0 & \mbox{ if $a \ge 3$} \\
				Q(v) & \mbox{ if $a = 2$},
			\end{array} \right.
\end{equation}
where $Q$ is the quadratic form $Q(\theta_i,\theta_j) = \delta_{ij}$.
The result follows.
 \end{proof}

\begin{theorem}
\label{theorem:typean}
Suppose that $\scrA_1 = (A \otimes R, \mu)$ and $\scrA_2 = (A \otimes R,\eta)$ are two $A_{\infty}$ algebras of type A$^n_a$, where $n-1 \ge a \ge 2$ and $n \ge 4$.
Then there exists a $\bm{G}$-graded formal diffeomorphism $F$ such that
\begin{equation} \scrA_1 = F_* \scrA_2.\end{equation}
\end{theorem}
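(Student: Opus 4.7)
The plan is to invoke Proposition \ref{proposition:verseasy} directly. To apply it I need three ingredients: (a) both $\mathscr{A}_1$ and $\mathscr{A}_2$ are $\bm{G}$-graded minimal deformations over $R_a$ of a common order-$0$ $A_\infty$ algebra $\mathcal{A} = (A,\mu_0)$; (b) their first-order deformation classes in $THH^2_{\bm{G}}(\mathcal{A},\mathcal{A} \otimes R_a^1)$ coincide; and (c) the higher obstruction groups $THH^2_{\bm{G}}(\mathcal{A},\mathcal{A} \otimes R_a^j)$ vanish for all $j \ge 2$.

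For (a), I would identify the base: since Definition \ref{definition:typean} prescribes the order-$0$ cohomology algebra to be $A = \Lambda^* U$ and $\mu^0_0 = 0$, and $\Phi(\mu^*) = \tilde Z^n_a + \mathcal{O}(r^2)$ has trivial order-$0$ part, the order-$0$ structure on both sides is the bare exterior algebra $\mu_0 = \wedge$. Minimality is also automatic: $\mu^1_0 = 0$ since the exterior algebra has no differential, and Corollary \ref{corollary:min} gives the full vanishing of $\mu^1$ (which is in fact more than we need here).

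For (b), the first-order class is read off directly from the HKR map $\Phi$. By hypothesis both $\Phi(\mu^*)$ and $\Phi(\eta^*)$ agree with $\tilde Z^n_a = -u_1 \cdots u_n + \sum_j r_j u_j^a$ modulo $\mathcal{O}(r^2)$; the part linear in $r$ is $\sum_j r_j u_j^a \in S_a \otimes A$, so via the HKR identification of $THH^2_{\bm{G}}(A,A \otimes R_a^1)$ with a suitable bigraded piece of $S_a \otimes A$, the two first-order classes are represented by the same polyvector and hence coincide.

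Step (c) is the main obstacle and is precisely the content I would draw from Section \ref{subsec:comphh}. Using the HKR isomorphism to identify $THH^*_{\bm{G}}(A, A \otimes R^j_a)$ with polyvector fields on $U$ of the appropriate $\bm{G}$-bidegree, the desired vanishing reduces to a combinatorial count on monomials $r^{\bm{c}} \theta^{K_0}$ mapping to $\theta^{K_1}$ with prescribed $\bm{G}$-grade, entirely parallel to the argument in Lemma \ref{lemma:mu1} but carried out in the target $\mathcal{A} \otimes R^j_a$ for every $j \ge 2$. The grading constraint from $\bm{G}^n_1$ forces the only possible contribution to be $K_0 = \emptyset$, $K_1 = [n]$, which in the truncated (normalized) Hochschild complex is killed; alternatively the relevant piece of $S_a \otimes A$ is empty in those bidegrees once $j \ge 2$, given the hypothesis $n \ge 4$ and $2 \le a \le n-1$. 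Once this vanishing is in hand, Proposition \ref{proposition:verseasy} produces the required $F \in \mathfrak{G}_R(A)$ with $\mathscr{A}_1 = F_* \mathscr{A}_2$.
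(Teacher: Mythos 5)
Your overall plan — reduce to Proposition~\ref{proposition:verseasy} by checking agreement of first-order classes and vanishing of the higher obstruction groups — is exactly the paper's strategy, and steps~(b) and~(c) are in the right spirit, with Lemmata~\ref{lemma:defclasshh} and~\ref{lemma:j2} supplying the $THH^2$ computations you gesture at. However, step~(a) contains a genuine error that breaks the argument as written.

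You claim that the order-$0$ part of $\Phi(\mu^*)=\tilde Z^n_a + \mathcal{O}(r^2)$ is trivial and conclude that the order-$0$ structure is ``the bare exterior algebra $\mu_0=\wedge$.'' This is not true: $\tilde Z^n_a = -u_1\cdots u_n + \sum_j r_j u_j^a$, so the order-$0$ part is $-u_1\cdots u_n$, a \emph{nonzero} class in the HKR model of $HH^*(A)$. Thus the order-$0$ $A_\infty$ structure on $A$ necessarily has a non-vanishing length-$n$ product $\mu^n_0$ — it is a non-formal deformation of the strict exterior algebra, not the exterior algebra itself. Consequently the order-$0$ $A_\infty$ algebras $\mathscr{A}_1\otimes_R\C$ and $\mathscr{A}_2\otimes_R\C$ need not be \emph{equal}; they are only known to have the same HKR leading term. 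Before Proposition~\ref{proposition:verseasy} can be invoked (it presupposes both are deformations of one and the same $\mathcal{A}$), one must first apply a formal diffeomorphism identifying the two order-$0$ structures. This is precisely the content of Lemma~\ref{lemma:ord0}, which rests on the non-trivial classification result \cite[Corollary~2.97]{Sheridan2014}; it cannot be bypassed.

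A secondary point: Proposition~\ref{proposition:verseasy} requires the vanishing of $THH^2_{\bm{G}}(\mathcal{A},\mathcal{A}\otimes R^j)$ for the $A_\infty$ algebra $\mathcal{A}$ (with its non-trivial $\mu^n_0$), whereas Lemmata~\ref{lemma:defclasshh} and~\ref{lemma:j2} compute $THH^2_{\bm{G}}(A,A\otimes R^j)$ for the underlying \emph{associative} algebra $A$. Passing from the associative to the $A_\infty$ Hochschild cohomology — both for the obstruction groups ($j\ge 2$) and to identify the first-order deformation classes ($j=1$) up to lower-order corrections in the length filtration — uses convergence of the length spectral sequence (\cite[Lemma~2.98]{Sheridan2014}). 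Your proposal collapses this distinction; in particular the first-order classes live in $THH^2_{\bm{G}}(\mathcal{A},\mathcal{A}\otimes R^1)$, and one must argue (via the spectral sequence) that agreement of their HKR leading terms $\sum_j r_j u_j^a$ already forces agreement of the full classes.
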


We will prove Theorem \ref{theorem:typean} by applying Proposition \ref{proposition:verseasy}.

\begin{lemma}
\label{lemma:ord0}
The $0$th-order parts $\scrA_j \otimes_R \C$ (where the map $R \To \C$ sends each $r_j$ to $0$) are necessarily minimal and related by a formal diffeomorphism.
\end{lemma}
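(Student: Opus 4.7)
The lemma splits into two assertions: minimality of each $\mathscr{A}_j \otimes_R \C$, and existence of a formal diffeomorphism relating the two of them.

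\emph{Minimality.} The first bullet of Definition \ref{definition:typean} identifies $\mathscr{A}_j$ with $A \otimes R_a$ both as an underlying $R_a$-module and on the level of order-$0$ cohomology. Since the order-$0$ cohomology has the same underlying module as the chain complex, the order-$0$ differential $\mu^1_0$ is forced to vanish, so each $\mathscr{A}_j \otimes_R \C$ is a minimal $A_\infty$ structure on $A$, necessarily with $\mu^2_0 = \wedge$ (the exterior product on $A = \Lambda^* U$).

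\emph{Formal diffeomorphism.} Write $\mu_0, \eta_0$ for the order-$0$ reductions of $\mu, \eta$. Both are minimal $\bm{G}$-graded $A_\infty$ structures on $A$ extending the exterior product, and by specializing \eqref{eqn:Phimu} to order $0$ both satisfy $\Phi(\mu_0) = \Phi(\eta_0) = -u_1\ldots u_n$. I would construct $F_0 \in \mathfrak{G}(A)$ intertwining them by the same length-by-length induction used in the proof of Proposition \ref{proposition:verseasy}, except that the filtration is now by length of operation rather than by order in $r$. Start with $F_0^1 = \mathrm{id}$, $F_0^2 = 0$, which handle the length-$1$ relation (both differentials vanish) and the length-$2$ relation (both binary products equal $\wedge$). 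Inductively, assume $(\mu_0 - F_{0,*}\eta_0)_{<k} = 0$; then the same bracket argument as in Proposition \ref{proposition:verseasy} shows that $(\mu_0 - F_{0,*}\eta_0)_k$ is a Hochschild cocycle in $CC^2_{\bm{G}}(A)$ of integer degree $2-k$, and we aim to express it as $\delta(F_0^k)$.

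\emph{Main obstacle.} The content lies in showing that the corresponding obstruction class vanishes in $HH^2_{\bm{G}}(A)$ for each $k \geq 3$. The equality $\Phi(\mu_0) = \Phi(\eta_0)$ already forces the obstruction to have zero image under the HKR map $\Phi$, so it suffices to check that no nonzero $\bm{G}$-graded Hochschild cohomology class of integer degree $2-k$ lies in the kernel of $\Phi$. This is a bidegree calculation in the spirit of Lemma \ref{lemma:mu1}, simplified by working over $\C$ rather than over $R_a$ and with the grading datum $\bm{G} = \bm{G}^n_1$ corresponding to $a=1$: the allowable degrees of input and output generators $\theta^K$ are severely constrained, and combining these constraints with the HKR-type decomposition of the Hochschild cohomology of the exterior algebra rules out any extra classes beyond those detected by $\Phi$. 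Granting this vanishing, $F_0^k$ can be chosen at each step, producing the desired formal diffeomorphism.
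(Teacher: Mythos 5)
Your overall strategy (dispose of minimality directly, then build a formal diffeomorphism length-by-length against the HKR symbol, using bidegree constraints on $HH^2_{\bm{G}}(A)$ to kill obstructions) is plausible and in the spirit of the surrounding deformation-theory arguments. The paper, however, does not give a direct proof: it cites \cite[Corollary 2.97]{Sheridan2014}, which is exactly the classification statement for minimal $\bm{G}^n_1$-graded $A_\infty$ structures on $\Lambda^* U$ with prescribed HKR image. So you are reconstructing an external proof rather than reproducing one in this paper.

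The reconstruction has a genuine gap at the step ``the equality $\Phi(\mu_0) = \Phi(\eta_0)$ already forces the obstruction to have zero image under the HKR map $\Phi$.'' The obstruction $D_k$ is built out of $\mu_0$ and $F_{0,*}\eta_0$ together with the already-chosen $F_0^1,\ldots,F_0^{k-1}$, and $\Phi$ is not a chain map, nor compatible with the pushforward $F_{0,*}$ on the cochain level, so $\Phi(D_k)$ does not reduce to $\Phi(\mu_0) - \Phi(\eta_0)$. Moreover, since the HKR map is an isomorphism on cohomology, saying $\Phi([D_k])=0$ is literally equivalent to $[D_k]=0$, so the claim is not a reduction at all unless you can actually compute $\Phi(D_k)$ on the cochain level. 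A cleaner version of your own idea is: show via the bidegree calculation that $HH^2_{\bm{G}}(A)$ concentrated in length (appropriately interpreted) is supported only in length $n$, with rank $1$ generated by $u_1\cdots u_n$; then the obstructions for $3\le k\le n-1$ vanish for purely degree reasons with no HKR comparison needed, and at $k=n$ the matching of the coefficient of $u_1\cdots u_n$ is exactly what the normalization $\Phi(\mu_0)=\Phi(\eta_0)=-u_1\cdots u_n$ in the definition of type A$^n_a$ provides (modulo checking that the gauge corrections from $F_0^{<n}$ do not shift this coefficient). Neither the concentration statement nor the invariance of the length-$n$ coefficient under the partially-constructed gauge is actually established in your write-up; both are nontrivial and both are implicitly what \cite[Corollary 2.97]{Sheridan2014} supplies.
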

\begin{proof}
This is identical to \cite[Corollary 2.97]{Sheridan2015}.
 \end{proof}

The next step is to determine the first-order deformation space.

\begin{lemma}
\label{lemma:defclasshh}
If $n \ge 4$, then the vector space
\begin{equation} HH_{\bm{G}}^2(A, A \otimes R^1_a)\end{equation}
is generated by the elements
\begin{equation} r_j u_j^a,\end{equation}
for $j = 1,\ldots,n$.
\end{lemma}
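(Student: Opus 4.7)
The plan is to apply the Hochschild--Kostant--Rosenberg (HKR) map $\Phi: CC^*(A \otimes R_a \mid R_a) \to S_a \otimes A$ of \cite[Section 2]{Sheridan2014}. Since $A = \Lambda^* U$ is a formally smooth graded-commutative $\C$-algebra, $\Phi$ induces an isomorphism
\begin{equation}
HH^*_{\bm{G}}(A \otimes R_a \mid R_a) \;\cong\; S_a \otimes A
\end{equation}
of $\bm{G}$-graded $R_a$-modules, with vanishing differential on the right. Taking the part with coefficients linear in the $r_j$'s then yields
\begin{equation}
HH^*_{\bm{G}}(A, A \otimes R^1_a) \;\cong\; \bigl(\C[u_1,\ldots,u_n] \otimes A\bigr) \otimes_\C R^1_a,
\end{equation}
and the elements $r_j u_j^a$ in the statement of the lemma correspond under $\Phi^{-1}$ to the evident length-$a$ Hochschild cochains.

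The remaining work is a grading enumeration. A general $\bm{G}$-homogeneous monomial in the right-hand side has the form $r_k \, u_1^{q_1} \cdots u_n^{q_n} \, \theta^K$ with $k \in [n]$, $\bm{q} \in \Z^n_{\ge 0}$, and $K \subseteq [n]$. Requiring its total $\bm{G}$-degree to equal that of $\mu^2_0$ (which is the $\bm{G}$-degree of $u_1 \cdots u_n$, as recorded in $\Phi(\mu^*_0) = -u_1 \cdots u_n$) imposes two $\Z$-linear constraints on $(\bm{q}, |K|, k)$, directly analogous to the constraints in \cite[Lemma 2.93]{Sheridan2014} used in the proof of Lemma \ref{lemma:mu1}. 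I would carry out a case analysis parallel to that of Lemma \ref{lemma:mu1}, using the hypothesis $n \ge 4$ to eliminate degenerate solutions, concluding that $K = \emptyset$ and $q_j = a \delta_{jk}$, so that the only surviving monomial is $r_k u_k^a$.

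The main obstacle is this grading case analysis: one must carefully use the relations defining $\bm{G}^n_1$ together with $n \ge 4$ and $a \le n-1$ to rule out the possibility that a monomial with $K \neq \emptyset$ or with exponents $q_j$ supported on more than one index matches the required $\bm{G}$-degree. Once this is done, the classes $r_j u_j^a$ manifestly project to linearly independent monomials in $S_a \otimes A$, hence remain linearly independent (and spanning) in $HH^2_{\bm{G}}$, completing the proof.
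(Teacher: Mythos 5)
Your approach is correct and reconstructs what the paper (and its cited source) does: the paper's proof is simply a citation to \cite[Lemma 2.101]{Sheridan2014}, and that lemma's argument is exactly the HKR identification $HH^*_{\bm{G}}(A,A\otimes R_a)\cong S_a\otimes A$ followed by the kind of $\bm{G}$-grading enumeration carried out in the neighbouring Lemmata \ref{lemma:mu1} and \ref{lemma:j2}. One small note: the case analysis needs only $n\ge 4$ (together with non-negativity of exponents), not $a\le n-1$, so your appeal to the latter is harmless but superfluous.
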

\begin{proof}
This is identical to \cite[Lemma 2.101]{Sheridan2015}.
 \end{proof}

The next step is to show that the higher-order deformation spaces vanish.

\begin{lemma}
\label{lemma:j2}
Suppose that $n-1 \ge a \ge 2$ and $j \ge 2$.
Then we have
\begin{equation} TrHH^2_{\bm{G}}(A,A \otimes R^j) \cong 0.\end{equation}
\end{lemma}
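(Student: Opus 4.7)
The plan is to mimic the approach taken in the Calabi--Yau case treated in \cite{Sheridan2014}, carrying out an explicit bigraded calculation after applying the Hochschild--Kostant--Rosenberg isomorphism $\Phi$ of \cite[Definition 2.89]{Sheridan2014}. That map identifies (the relevant part of) the truncated Hochschild complex on $A = \Lambda^*U$ with coefficients in $A\otimes R_a$ with a $\bm{G}$-graded piece of $S_a\otimes A = R_a[U]\otimes \Lambda^*U$. Every class in $THH^2_{\bm{G}}(A,A\otimes R^j)$ can then be represented by a $\C$-linear combination of monomials
\[
r^{\bm{c}}\, u^{\bm{k}}\, \theta^K, \qquad |\bm{c}|=j, \quad |\bm{k}|\ge 1,
\]
with total Hochschild bidegree equal to $2$ in $\bm{G}$.

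Next, I would extract the grading constraints exactly as in the proof of Lemma \ref{lemma:mu1}. Reading off the $y$-component of the $\bm{G}$-grading coordinate-by-coordinate forces the existence of a single $\lambda\in\Z$ with
\[
k_i = a c_i + [i\in K] - \lambda \qquad \text{for every } i,
\]
so in particular $|\bm{k}| = aj + |K| - n\lambda$. Substituting this into the first-component constraint
\[
2|\bm{k}| + (2-2a)j - |K| \;\equiv\; 2 \pmod{2(n-1)}
\]
and simplifying yields the single modular equation
\[
|K| \;\equiv\; 2(\lambda + 1 - j) \pmod{2(n-1)}.
\]

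The core of the argument is then a positivity case analysis on $\bm{c}$, using that $|\bm{c}|=j\ge 2$. If some $c_{i_0}=0$, the inequality $k_{i_0}\ge 0$ forces $\lambda \le 1$; combined with $k_i\ge 0$ at every other index, with $\sum c_i = j$, and with $a\ge 2$, one checks that no admissible tuple $(\bm{c},K,\lambda)$ exists with $0\le |K|\le n$. If instead $c_i\ge 1$ for all $i$, then $\sum c_i = j$ forces $n\le j$, and the bound $a\le n-1$ combined with the above modular $|K|$-constraint again eliminates every candidate. Finally, the borderline ``wrapped'' solutions, in which the reduction modulo $2(n-1)$ permits unexpectedly large $|\bm{k}|$ or $|K|$, are shown to arise as coboundaries in the truncated HKR complex, which is where the $T$-truncation of $HH^*$ enters.

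The main obstacle, as I see it, is precisely handling these wrapped solutions: the grading datum $\bm{G}^n_1$ identifies Hochschild degrees only modulo $2(n-1)$, so a purely additive argument (as in the Calabi--Yau setting, where $\bm{G}^n_n$ has no such identification) does not suffice. The tightest case is Fano index $1$, $a = n-1$, where $2(n-1) = 2a$ and the modular reduction is strongest; there the extra wrapped solutions are most numerous and must be killed one-by-one, parallelling the corresponding step in \cite[Lemma 2.102]{Sheridan2014}. Once this bookkeeping is in place, the lemma follows.
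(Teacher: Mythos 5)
Your plan shares the broad strategy with the paper's proof (apply the HKR identification, extract the grading constraints, show no generators survive), but two of your structural decisions diverge from what actually makes the proof work, and one of them is a genuine gap.

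The main gap concerns your treatment of the ``wrapped'' solutions. You propose that the borderline classes coming from the $\bmod\ 2(n-1)$ identification ``are shown to arise as coboundaries in the truncated HKR complex, which is where the $T$-truncation of $HH^*$ enters.'' That is not how truncation enters, and no coboundary argument appears in the actual proof. Truncated Hochschild cohomology is defined by imposing $t \le 0$ on the $(s,t)$-bigrading, so for a degree-$2$ cochain this forces $s = 2 - t \ge 2$, i.e.\ $|\bm{b}| \ge 2$; that inequality is the precise place where truncation is used in the estimate. Meanwhile, the apparent mod-$2(n-1)$ ambiguity is already parametrized by the integer $q$ in the lifted grading equations of \cite[Lemma~2.94]{Sheridan2014}, and an explicit case analysis on the value of $q - j$ (namely $q - j = -1$, forcing $|K| = 0$, versus $q - j \ge 0$, where one uses $|K| \le n$) yields $n \ge (n-a)j + 2 \ge n + 2$ in both cases --- a contradiction. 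So there simply are no truncated generators, and nothing needs to be killed as a coboundary. The one ``extra'' class you are worried about, $r_1 \cdots r_n$ in the Fano-index-$1$ case $a = n-1$, is indeed a generator of $HH^2_{\bm{G}}(A, A \otimes R^j)$, but it sits in degree $t = 2 > 0$ and hence never enters the truncated complex; it is excluded by degree, not by exhibiting a primitive.

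A smaller issue: the key input $j \ge n$ comes from the fact $\bm{c} \ge y_{[n]}$ (every $c_i \ge 1$), which the paper cites from \cite[Lemma~102]{Sheridan2014}. Your proposal recovers $j \ge n$ only in the ``all $c_i \ge 1$'' branch and offers a hand-wave (``one checks that no admissible tuple exists'') for the $c_{i_0} = 0$ branch, so that part is not yet a proof. Also note that you quote the constraint $|\bm{k}| \ge 1$; the truncation actually gives $|\bm{b}| \ge 2$, and while the weaker bound may still close the argument arithmetically once you use $j \ge n$ and $n - a \ge 1$, you should verify this rather than assume it.
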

\begin{proof}
If $r^{\bm{c}} u^{\bm{b}} \theta^K$ is a generator of $TrHH^2_{\bm{G}}(A, A \otimes R^j)$, and $j = |\bm{c}| \ge 2$, then we have $\bm{c} \ge y_{\{1,\ldots,n\}}$ (see \cite[Lemma 102]{Sheridan2015}), and hence $j \ge |y_{\{1,\ldots,n\}}| = n$.

Now, applying \cite[Lemma 2.94, Equation (2.8)]{Sheridan2015}, we have $2(1+q-j) = |K| \ge 0$, so $q-j \ge -1$.
We also have $|\bm{b}| = s = 2-t \ge 2$, because $t \le 0$ by definition for a truncated Hochschild cochain.
Applying \cite[Lemma 2.94, Equation (2.6)]{Sheridan2015}, we have
\begin{align}
y_K + a\bm{c} &= q y_{\{1,\ldots,n\}} + \bm{b} \\
\Rightarrow |K| + a j & \ge  nq + 2 \\
\Rightarrow |K| & \ge  (n-a)j + n(q-j) + 2.
\end{align}

We split into two cases: if $q -  j =-1$, then $|K| = 0$, so we obtain
\begin{align}
0 & \ge  (n-a)j - n + 2\\
\Rightarrow n &\ge (n-a)j + 2.
\end{align}
If, on the other hand, $q - j \ge 0$, then because $|K| \le n$, we obtain
\begin{equation}n \ge |K| \ge (n-a)j + n(q-j) + 2 \ge (n-a)j + 2.\end{equation}

In either case, we have
\begin{equation}n \ge (n-a)j  + 2 \ge n + 2,\end{equation}
where the second inequality follows because $n-a \ge 1$ by assumption, and we have shown that $j =|\bm{c}| \ge n$.
This is a contradiction, so the result follows.

We remark that $HH^2_{\bm{G}}(A,A \otimes R^j)$ has an extra generator $r^{y_{\{1,\ldots,n\}}}$ in the case $a = n-1$; but this does not correspond to a generator of truncated Hochschild cohomology because it has degree $t = 2 > 0$.
 \end{proof}

\begin{proof} \textit{(of Theorem \ref{theorem:typean})}. Suppose we are given $\scrA_1$ and $\scrA_2$ of type A$^n_a$, both non-curved.
First, by Lemma \ref{lemma:ord0}, we can apply a formal diffeomorphism $F$ to $\scrA_2$ so that
$F_* \scrA_2 \cong \scrA_1$ to order $0$.
Thus, we may assume without loss of generality that $\scrA_1$ and $\scrA_2$ are both deformations of the same $A_{\infty}$ algebra $\cA := \scrA_1 \otimes_R \C$, over the ring $R$.

By Corollary \ref{corollary:min}, these deformations are minimal.
Furthermore, by Lemma \ref{lemma:j2}, we have
\begin{equation} TrHH^2_{\bm{G}}(A,A \otimes R^j) \cong 0\end{equation}
for $j \ge 2$.
We recall that the spectral sequence induced by the length filtration converges:
\begin{equation} TrHH^2_{\bm{G}}(A, A \otimes R) \Rightarrow TrHH^2_{\bm{G}}(\cA,\cA \otimes R)\end{equation}
(see \cite[Lemma 2.98]{Sheridan2015}).
Therefore,
\begin{equation} TrHH^2_{\bm{G}}(\cA,\cA \otimes R^j) \cong 0\end{equation}
for $j \ge 2$.
It follows by Proposition \ref{proposition:verseasy} that, if the first-order deformation classes of $\scrA_1$ and $\scrA_2$ coincide, then there is a formal diffeomorphism $F$ such that $\scrA_1 \cong F_* \scrA_2$.

By Lemma \ref{lemma:defclasshh},
\begin{equation} TrHH^2_{\bm{G}}(A,A \otimes R^1) \cong \C \langle r_1 u_1^a , \ldots , r_n u_n^a \rangle.\end{equation}
It follows from the convergence of the length spectral sequence as above, that $TrHH^2_{\bm{G}}(\cA,\cA \otimes R^1)$ is generated by elements of the form
\begin{equation} r_j u_j^a + \mbox{ (lower-order in length filtration),}\end{equation}
for $j = 1,\ldots , n$.
Because $\scrA_1$ and $\scrA_2$ are both of type A$^n_a$, by definition their first-order deformation classes are both of the form
\begin{equation} \sum_{j=1}^n r_j u_j^a + \mbox{ (lower-order in length filtration).}\end{equation}
Therefore, their first-order deformation classes coincide.
The claim now follows from Proposition \ref{proposition:verseasy}.
 \end{proof}

\subsection{Computation of Hochschild cohomology}
\label{subsec:comphh}

For the purposes of this section, let $\scrA$ be a non-curved $A_{\infty}$ algebra of type A$^n_a$, where $n-1 \ge a \ge 2$ and $n \ge 4$.
We will compute two versions of the Hochschild cohomology $HH^*(\scrA)$.
Our strategy is suggested by \cite[\S 3.6]{Smith2012} (compare also \cite{Seidel2008a}).
It relies on a slightly modified version of \cite[Proposition 1]{Dolgushev2005}.

Let us start by stating this modified version explicitly.
Let $\bm{G} = \{f: \Z \To Y\}$ be a grading datum.
A $\bm{G}$-graded $L_{\infty}$ algebra is a $\bm{G}$-graded vector space $\mathfrak{g}$ together with $L_{\infty}$ structure maps
\begin{equation} \ell^k: \mathfrak{g}^{\otimes k} \To \mathfrak{g}\end{equation}
for $k \ge 1$, of degree $f(2-k)$, satisfying a system of relations (see \cite{Lada1995}).
In fact, we will only consider differential graded Lie algebras, with $\ell^{\ge 3} = 0$; nevertheless, working with $L_{\infty}$ algebras helps one see why the techniques we employ make sense.

If $\mathfrak{g}$ and $\mathfrak{h}$ are $\bm{G}$-graded $L_{\infty}$ algebras, then a $\bm{G}$-graded $L_{\infty}$ morphism $\Phi$ from $\mathfrak{g}$ to $\mathfrak{h}$ consists of maps
\begin{equation} \Phi^k: \mathfrak{g}^{\otimes k} \To \mathfrak{h}\end{equation}
for $k \ge 1$, of degree $f(1-k) \in Y$, satisfying a system of relations (see \cite[\S 4]{Kontsevich2003}).

Now let $\bm{G} \oplus \Z$ denote the grading datum $\{f \oplus \mathrm{id}: \Z \To Y \oplus \Z\}$.
Let $\mathfrak{g}$ be a $\bm{G} \oplus \Z$-graded $L_{\infty}$ algebra, where the $\Z$-grading is bounded below.
The $\Z$-grading on $\mathfrak{g}$ then induces a bounded-above, decreasing filtration:
\begin{equation} F_r\mathfrak{g} := \bigoplus_{y \oplus s \in Y \oplus \Z_{\ge r}} \mathfrak{g}_{y \oplus s}.\end{equation}
Let $\hat{\mathfrak{g}}$ denote the completion of $\mathfrak{g}$ with respect to this filtration, in the category of $\bm{G}$-graded modules.
It is a $\bm{G}$-graded $L_{\infty}$ algebra.
It has a filtration $F_r \hat{\mathfrak{g}}$, but this is \emph{not} a filtration by $L_{\infty}$ subalgebras: the $L_{\infty}$ products send
\begin{equation} \ell^k: (F_r \hat{\mathfrak{g}})^{\otimes k} \mapsto F_{2+k(r-1)} \hat{\mathfrak{g}}.\end{equation}
This means that the Maurer--Cartan equation for $\alpha \in \hat{\mathfrak{g}}_{f(1)}$:
\begin{equation} \sum_{j \ge 1} \frac{1}{j!} \ell^j\left(\alpha, \ldots, \alpha\right) = 0,\end{equation}
does not make sense because the infinite sum may not converge.
However, if we assume that $\alpha \in F_2 \hat{\mathfrak{g}}_{f(1)}$ then the sum \emph{does} make sense, because the $k$th term in the Maurer--Cartan equation lies in $F_{2+k}\hat{\mathfrak{g}}$, so the terms in the sum are of successively higher and higher orders in the filtration.

If $\alpha \in F_2 \hat{\mathfrak{g}}_{f(1)}$ is a Maurer--Cartan element, we define the $L_{\infty}$ structure on $\hat{\mathfrak{g}}$ \emph{twisted by} $\alpha$:
\begin{align}
\ell_{\alpha}: \hat{\mathfrak{g}}^{\otimes k} & \To  \hat{\mathfrak{g}} \\
\ell_{\alpha}^k(x_1, \ldots, x_k) & :=  \sum_{j \ge 0} \frac{1}{j!} \ell_{j+k}(\underbrace{\alpha,\ldots,\alpha}_j,  x_1 , \ldots, x_k).
\end{align}
This converges, and defines a new $\bm{G}$-graded $L_{\infty}$ structure on $\hat{\mathfrak{g}}$ (see \cite[Proposition 4.4]{Getzler2009}).

Now suppose that $\mathfrak{g}$ and $\mathfrak{h}$ are $\bm{G} \oplus \Z$-graded $L_{\infty}$ algebras, and $\Phi$ is a $\bm{G} \oplus \Z$-graded $L_{\infty}$ morphism from $\mathfrak{g}$ to $\mathfrak{h}$.
Then $\Phi$ induces a $\bm{G}$-graded $L_{\infty}$ morphism $\hat{\Phi}$ from $\hat{\mathfrak{g}}$ to $\hat{\mathfrak{h}}$.
If $\alpha \in F_2 \hat{\mathfrak{g}}_{f(1)}$ is a Maurer--Cartan element, then
\begin{equation} \hat{\Phi}_* \alpha := \sum_{j \ge 1} \frac{1}{j!} \hat{\Phi}^j(\alpha, \ldots, \alpha) \end{equation}
is also a Maurer--Cartan element in $F_2 \hat{\mathfrak{h}}_{f(1)}$.
Furthermore, there is a $\bm{G}$-graded $L_{\infty}$ morphism $\hat{\Phi}_{\alpha}$ from $(\hat{\mathfrak{g}},\ell_{\alpha})$ to $(\hat{\mathfrak{h}},\ell_{\hat{\Phi}_*\alpha})$, defined by
\begin{align}
\hat{\Phi}_{\alpha}^k: \hat{\mathfrak{g}}^{\otimes k} & \To  \hat{\mathfrak{h}} \\
\hat{\Phi}_{\alpha}^k(x_1, \ldots, x_k) & :=  \sum_{j \ge 0} \frac{1}{j!} \hat{\Phi}^{j+k}(\underbrace{\alpha,\ldots,\alpha}_j,  x_1 , \ldots, x_k).
\end{align}
These last two claims are proven by the same argument as \cite[Proposition 1]{Dolgushev2005}.
There are two differences in our case.
Firstly, we are dealing with $L_{\infty}$ algebras rather than dg Lie algebras: however the necessary alterations to the proofs are obvious.
Secondly, we have made different assumptions on the filtrations from those in \cite{Dolgushev2005} (we do not have a filtration by dg Lie subalgebras).
Nevertheless, because we restrict ourselves to Maurer--Cartan elements in $F_2\hat{\mathfrak{g}}$, it is easy to check that all of the infinite sums we have written down converge.

Finally, suppose that $\Phi$ is a quasi-isomorphism, i.e., the chain map
\begin{equation} \Phi^1: (\mathfrak{g},\ell^1) \To (\mathfrak{h},\ell^1)\end{equation}
induces an isomorphism on cohomology.
Then we claim that $\hat{\Phi}_{\alpha}$ is also a quasi-isomorphism, i.e., the chain map
\begin{equation} \hat{\Phi}^1_{\alpha}: (\hat{\mathfrak{g}},\ell^1_{\alpha}) \To (\hat{\mathfrak{h}},\ell^1_{\hat{\Phi}_*\alpha})\end{equation}
induces an isomorphism on cohomology (again, compare \cite[Proposition 1]{Dolgushev2005}).

To prove it, consider the spectral sequences induced by the filtrations on the complexes $(\hat{\mathfrak{g}},\ell^1_{\alpha})$ and $(\hat{\mathfrak{h}},\ell^1_{\hat{\Phi}_*\alpha})$, and the morphism between them induced by $\hat{\Phi}^1_{\alpha}$.
The $E_1$ pages of the induced spectral sequences are $\mathfrak{g}$ and $\mathfrak{h}$ respectively, with the differentials given by $\ell^1$ on both sides, and the chain map $\Phi^1$ between them.
Therefore, the $E_2$ pages are the cohomologies $H^*(\mathfrak{g},\ell^1)$ and $H^*(\mathfrak{h},\ell^1)$, with the map between them induced by $\Phi^1$.
By assumption, this map is an isomorphism.
Because these filtrations are complete and bounded above, hence exhaustive, the Eilenberg--Moore comparison theorem \cite[Theorem 5.5.11]{Weibel1994} shows that $\hat{\Phi}^1_{\alpha}$ is a quasi-isomorphism.

Now let us apply this to compute the Hochschild cohomology of an $A_{\infty}$ algebra $\scrA$ of type A$^n_a$, in the case $a \ge 3$ (the computation for $a=2$ will be slightly different, see the proof of Proposition \ref{proposition:hhana}).
Let $\mathfrak{g}$ be the $\bm{G} \oplus \Z$-graded vector space $CC^*_{c,\bm{G}}(A \otimes R)[1]$, the compactly-supported $\bm{G}$-graded Hochschild cohomology of $A$.
The Gerstenhaber bracket satisfies the graded Jacobi relation, and makes $\mathfrak{g}$ into a $\bm{G} \oplus \Z$-graded Lie algebra.
The completion $\hat{\mathfrak{g}}$ is the $\bm{G}$-graded Hochschild cohomology $CC^*_{\bm{G}}(A \otimes R)[1]$; the filtration $F_r \hat{\mathfrak{g}}$ is the length filtration (shifted by $1$).

The $A_{\infty}$ structure maps define a Maurer--Cartan element $\mu^* \in \hat{\mathfrak{g}}_{f(1)}$.
For a general $A_{\infty}$ structure, we only have $\mu^* \in F_0 \hat{\mathfrak{g}}_{f(1)}$, so this does not fit into the setup outlined above; however, because we are only dealing with a Lie algebra at the moment, there are no higher products and the Maurer--Cartan equation makes sense.

In our case, by Corollary \ref{corollary:min}, $\scrA$ is minimal, so $\mu^2$ defines an associative product.
In particular, $\mu^2$ is itself a Maurer--Cartan element.
Twisting our Lie algebra structure by this element defines a new $L_{\infty}$ (in fact dg Lie) algebra with the differential $\ell^1 = [\mu^2,-]$ and $\ell^2$ given by the Gerstenhaber bracket.
The cohomology of $\ell^1$ is the Hochschild cohomology of the associative algebra $A \otimes R$.
The remainder of the $A_{\infty}$ products define a Maurer--Cartan element
\begin{equation} \alpha:= \mu^{\ge 3} \in F_2 \hat{\mathfrak{g}}_{f(1)},\end{equation}
which fits into the previously-described setup.
Twisting by this Maurer--Cartan element defines a new $L_{\infty}$ (in fact dg Lie) algebra structure, with differential equal to the Hochschild differential:
\begin{align}
 \ell^1_{\alpha}(x) &= \ell^1(x) + \ell^2(\alpha,x) \\
&= [\mu^*,x].
\end{align}
Therefore, the cohomology of $\ell^1_{\alpha}$ is the Hochschild cohomology of the $A_{\infty}$ algebra $\scrA$.

Now observe that, by Lemmas \ref{lemma:defclasshh} and \ref{lemma:j2}, $TrHH^2_{c,\bm{G}}(A \otimes R)^2 \cong 0$ (here we use the assumption $a \ge 2$).
It follows that one can construct an isomorphism of algebras
\begin{equation}F: (A \otimes R, \mu^2) \cong (A \otimes R, \mu^2_0).\end{equation}
Pushing forward $\mu^*$ by $F$ gives us a quasi-isomorphic $A_{\infty}$ algebra of type A$^n_a$, where the product $\mu^2$ is simply the exterior product $\mu^2_0$.
So we may assume without loss of generality that $\mu^2$ is the exterior product.

Now let $\mathfrak{h}$ be the $\bm{G} \oplus \Z$-graded dg Lie algebra $R[u_1, \ldots, u_n] \otimes A$, with vanishing differential and the Schouten-Nijenhuis bracket.
The HKR isomorphism \cite{Hochschild1962} defines a $\bm{G} \oplus \Z$-graded quasi-isomorphism
\begin{equation} \Phi^1: \mathfrak{g} \To \mathfrak{h}.\end{equation}
By the $R$-linear extension of Kontsevich's formality theorem \cite{Kontsevich2003}, this extends to a $\bm{G} \oplus \Z$-graded $L_{\infty}$ quasi-isomorphism from $\mathfrak{g}$ to $\mathfrak{h}$.
Kontsevich's original paper actually considered the case of a polynomial algebra, but the case of an exterior algebra is parallel.
It also considered an $L_{\infty}$ morphism in the other direction, but this implies the existence of such a $\Phi$ because $L_{\infty}$ quasi-isomorphisms can be inverted.

We should explain why Kontsevich's $L_{\infty}$ morphism is $\bm{G} \oplus \Z$-graded.
The morphism is $\Z$-graded by construction, where the grading on $\mathfrak{g}$ is by length (shifted by $1$) and the grading on $\mathfrak{h}$ is by the degree of the polynomial in $R[u_1, \ldots, u_n]$ (shifted by $1$, where $R$ and $A$ are equipped with the zero grading).
So the Taylor coefficient $\Phi^k$ of Kontsevich's $L_{\infty}$ morphism $\Phi$  has degree $1-k$.
Furthermore, the morphism is $GL(\C^n)$-equivariant, and in particular $(\C^*)^n$-equivariant, where $(\C^*)^n$ is the subgroup of invertible diagonal $n \times n$ matrices.
Equivalently, if we equip $\C^n$ with the natural $\Z^n$ grading, then the maps $\Phi^k$ have degree $0 \in \Z^n$.
We define the $\bm{G}$-grading of $A \cong \Lambda(\C^n)$ by pushing forward the grading coming from the obvious $\Z^n$-grading along a morphism from $\Z^n$ to $\bm{G}$.
This defines a $\bm{G}$-grading on $CC_{c,\bm{G}}(A)$ and hence on $\mathfrak{g}:=CC_{c,\bm{G}}(A \otimes R)$ (where we recall that $R$ has its own $\bm{G}$-grading), and similarly on $\mathfrak{h}$.
The formality morphisms $\Phi^k$ have degree $0$ with respect to this grading.

However, we recall from \cite[Definition 2.30]{Sheridan2015} that, if a Hochschild cochain of length $s$ changes $\bm{G}$-degree by $y \in \bm{G}$, then we equip it with the grading $y+f(s) \in \bm{G}$.
It follows that, after the shift by $1$, the Taylor coefficient $\Phi^k$ of the $L_{\infty}$ morphism $\Phi$ has degree $f(1-k) \oplus (1-k) \in \bm{G} \oplus \Z$, with respect to the standard grading.
Therefore, $\Phi$ is a $\bm{G} \oplus \Z$-graded $L_{\infty}$ quasi-isomorphism.

It follows from our preceding discussion that $\hat{\Phi}^1_{\alpha}$ induces an isomorphism
\begin{align}
 HH^*_{\bm{G}}(\scrA) &\cong  H^*(\hat{\mathfrak{g}},\ell^1_{\alpha}) \\
&\cong  H^*(\hat{\mathfrak{h}},\ell^1_{\hat{\Phi}_*\alpha}).
\end{align}
We will now compute $\hat{\mathfrak{h}}$ and the differential $\ell^1_{\hat{\Phi}_* \alpha}$.
The first step is to show that $\hat{\mathfrak{h}} \cong \mathfrak{h}$.

\begin{lemma}
\label{lemma:filtcomp}
The filtration $F_r \mathfrak{h}$ is complete in the category of $\bm{G}$-graded vector spaces; in particular, $\hat{\mathfrak{h}} \cong \mathfrak{h}$.
\end{lemma}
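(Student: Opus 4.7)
The statement reduces, by unwinding the definition of completion in the category of $\bm{G}$-graded modules, to showing that for each $y \in Y$ the $\bm{G}$-graded piece $\mathfrak{h}_y$ involves only finitely many $\Z$-degrees. Indeed, $\hat{\mathfrak{h}}_y$ is by construction $\prod_s \mathfrak{h}_{y,s}$, while $\mathfrak{h}_y = \bigoplus_s \mathfrak{h}_{y,s}$; these agree precisely when only finitely many summands are nonzero.

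To verify this, I would describe an explicit basis of $\mathfrak{h} = R[u_1,\ldots,u_n] \otimes A$ by monomials $r^{\bm{c}} u^{\bm{b}} \theta^K$, and compute the $\bm{G}$-degree of each such monomial in terms of $\bm{c}, \bm{b}, K$. The generators $r_j$ carry their (pushed forward) $\bm{G}^n_a$-degrees as in Definition~\ref{definition:typean}; the polynomial variables $u_j$ acquire $\bm{G}$-degrees from the HKR identification (opposite in sign to the exterior generators of $A$); and the $\theta_j \in A$ have the prescribed degrees from the construction of $A$. The $\Z$-degree of such a monomial is just the polyvector degree $|\bm{b}|$.

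The key point is then a Diophantine calculation, closely parallel to the proof of Lemma~\ref{lemma:moncoeff}: distinct monomials $r^{\bm{c}} u^{\bm{b}} \theta^K$ have distinct $\bm{G}$-degrees, modulo a finite ambiguity coming from the single relation $(2(1-n), y_1+\cdots+y_n)$ in $Y(\bm{G}^n_1)$. Concretely, equating the $\bm{G}$-degrees of two monomials and analyzing the $y$-component and the $\Z$-component of the resulting equation in $Y$ (as in the proof of Lemma~\ref{lemma:moncoeff}, applied to the combined contributions of $r$, $u$, and $\theta$) forces the multi-indices to agree. Hence the preimage of any fixed $y \in Y$ under the monomial-degree map is finite, so $\mathfrak{h}_y$ is finite-dimensional; in particular only finitely many values of $|\bm{b}|$ can occur, giving the required boundedness of the $\Z$-grading on each $\bm{G}$-graded piece.

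The main obstacle is purely bookkeeping: one must carry the pushforward $\bm{p}_*$ through carefully, and keep track of the ambiguities allowed by the relation in $Y(\bm{G}^n_1)$, to ensure that fixing the $\bm{G}$-degree really does force $|\bm{b}|$ to be bounded and not merely constrained modulo some periodicity. Once this injectivity (up to finite ambiguity) of the monomial-to-degree map is verified along the lines of Lemma~\ref{lemma:moncoeff}, the completeness of the filtration and the identification $\hat{\mathfrak{h}} \cong \mathfrak{h}$ are immediate.
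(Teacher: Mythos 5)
Your reduction is right: completeness in the category of $\bm{G}$-graded vector spaces is equivalent to completeness of each graded piece $\mathfrak{h}_y$ in the category of vector spaces, and the way the paper actually establishes this is by showing each $\mathfrak{h}_y$ is finite-dimensional. So your first paragraph is in line with the paper.

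However, the mechanism you propose for finite-dimensionality does not work. You claim that equating $\bm{G}$-degrees of two monomials $r^{\bm{c}} u^{\bm{b}} \theta^K$, $r^{\bm{c}'} u^{\bm{b}'} \theta^{K'}$ "forces the multi-indices to agree," modulo ambiguity coming only from the single relation in $Y(\bm{G}^n_1)$. This is false. Reading the degree bookkeeping off the formula in the proof, the monomial $r^{\bm{c}} u^{\bm{b}} \theta^K$ has $\bm{G}$-degree $(s,0)+(0,y_K)-(0,\bm{b})+((2-a)j,a\bm{c})$ with $s=|\bm{b}|$, $j=|\bm{c}|$, up to multiples of the relation $(2-n,y_{[n]})$. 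For instance, $u_1\theta_1$ and $u_2\theta_2$ both have degree $(1,0)$, with $q=0$ (no relation needed), yet the multi-indices $\bm{b}$ and $K$ differ. So the monomial-to-degree map is not injective, and the extra ambiguity has nothing to do with the relation in $Y$. The analogy to Lemma~\ref{lemma:moncoeff} is therefore misleading: that lemma really is an injectivity statement about the degree map on the much smaller ring $R$ alone, proved geometrically from monotonicity of $(X,D)$; here the degree map is genuinely many-to-one.

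What the paper does instead is a boundedness argument rather than an injectivity argument. Writing out the $\Z$-component and the (summed) $y$-component of the degree equation gives two affine relations in $s$, $j$, $q$, $|K|$. Eliminating $q$, and using $|K|\ge 0$, yields an inequality of the form $s+(n-a)j\le \mathrm{const}$. Since $n>a$ and $s,j\ge 0$, this bounds $s$ and $j$, and $|K|\le n$ always; for each bounded choice of $(s,j,K)$ there are only finitely many monomials. That gives finite-dimensionality of $\mathfrak{h}_y$ without any injectivity, and indeed injectivity fails. Your proof proposal as written would not reach this conclusion, because the step "forces the multi-indices to agree" has no valid justification.
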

\begin{proof}
The filtration is complete if and only if it is complete on each graded piece $\mathfrak{h}_y$, in the category of vector spaces.
To prove this is true, we prove that each graded piece $\mathfrak{h}_y$, for $y = (y_1 ,y_2) \in Y$, is finite-dimensional.
Recalling the proof of \cite[Lemma 2.93]{Sheridan2015}, if $r^{\bm{c}}u^{\bm{b}}\theta^K$ is a generator of $R[U]\otimes A$ of degree $(y_1,y_2 )\in Y$, then we have
\begin{equation} (0 , y_{K}) -( 0, \bm{b}) + ((2-a)|\bm{c}| , a \bm{c}) + (s, 0) = (y_1 , y_2 )+ q((2-n), y_{\{1,\ldots,n\}})\end{equation}
for some $q \in \Z$.
Hence we have, setting $|\bm{c}| = j$ and $|\bm{b}|=s$,
\begin{align}
(2-a)j + (n-2)q + s = const \mbox{ and}\\
|K| - s + aj -nq = const.
\end{align}
Eliminating $q$, and observing that $|K| \ge 0$, gives an equation of the form
\begin{equation} s + (n-a)j \le const.\end{equation}
Observing that $n > a$ by assumption, and both $s$ and $j$ are non-negative, shows that $\mathfrak{h}_y$ is finite-dimensional.
Hence, the length filtration on each graded piece of $\mathfrak{h}$ is complete, so the length filtration is complete in the category of $\bm{G}$-graded vector spaces.
 \end{proof}

Now we compute the Maurer--Cartan element $\hat{\Phi}_* \alpha$.
Because $\scrA$ is of type A$^n_a$, we know the leading-order term
\begin{equation} \Phi^1(\alpha) = \tilde{Z}^n_a \in R[U] \subset R[U] \otimes A.\end{equation}
We claim that the higher-order terms vanish for grading reasons.
To see this, we introduce the grading
\begin{equation} d:= s+(n-a)j\end{equation}
on $CC_{\bm{G}}(A \otimes R)$ and $R[U] \otimes A$.
We denote by $\mu^*_d$ the part of $\mu^*$ with $s+(n-a)j = d$.
Note that $d \ge s$.
It follows from \cite[Lemma 2.93, Equation (2.3)]{Sheridan2015}, with $s+t = 2$, that if $\mu^*_d \neq 0$, then
\begin{equation} d = s+(n-a)j = 2 + (n-2)(q-j).\end{equation}
In particular, $d$ must be congruent to $2$ modulo $n-2$.
In particular, $\mu^{\ge 3}_d = 0$ for $d<n$.

Now we recall that $\Phi^k$ has degree $1-k$, which after the shifts by $1$ means it has degree $2-2k$ with respect to the length grading $s$, and it clearly preserves $j$, the degree in $R$, because $\Phi^k$ is $R$-multilinear.
It follows that the $d$-grading of the $k$th term in $\hat{\Phi}_* \alpha$,
\begin{equation} \Phi^k(\alpha,\ldots, \alpha),\end{equation}
is $2 + (n-2)k$.
In particular, if $k \ge 2$ then the $d$-grading is $>n$.
But, by Lemmas \ref{lemma:defclasshh} and \ref{lemma:j2}, and \cite[Lemma 2.96]{Sheridan2015}, $HH^2_{\bm{G}}(A \otimes R)$ is generated by monomials $u_1 \ldots u_n$ (with $d = s+(n-a)j =n$), $r_j u_j^a$ (with $d = s+(n-a)j = a+(n-a) = n$), and (if $a=n-1$), $r_1 \ldots r_n$, which has length $0$ and therefore does not lie in $F_2 \hat{\mathfrak{h}}$.
Therefore, the higher-order terms of $\hat{\Phi}_*\alpha$ necessarily vanish for degree reasons, and we have
\begin{equation}
\label{eqn:pushedmc} \hat{\Phi}_* \alpha = \Phi^1(\alpha) = \tilde{Z}^n_a.\end{equation}

The differential $\ell^1_{\hat{\Phi}_*\alpha}$ is therefore $[\tilde{Z}^n_a,-]$, the Schouten-Nijenhuis bracket with $\tilde{Z}^n_a$.
This is exactly the Koszul differential associated with the sequence
\begin{equation}\label{eqn:grobels} \del{\tilde{Z}^n_a}{u_j} = -u_1 \ldots \hat{u}_j \ldots u_n + a r_j u_j^{a-1}\end{equation}
for $j=1, \ldots, n$, in the ring
\begin{equation} \label{eqn:grobring}
R[U] \cong \C[r_1, \ldots, r_n,u_1, \ldots, u_n].\end{equation}
Using an elimination order with respect to $r_1, \ldots, r_n$, one easily verifies that the elements \eqref{eqn:grobels} of the polynomial ring \eqref{eqn:grobring} form a Gr\"{o}bner basis, whose initial terms $ar_j u_j^{a-1}$ are coprime; Schreyer's theorem (see, e.g., \cite[Theorem 15.10]{Eisenbud1995}) then shows that the only syzygies between them are the trivial ones, so they form a regular sequence.
Therefore, the cohomology of the Koszul complex is simply the Jacobian ring
\begin{equation} \frac{R[U]}{\left(\del{\tilde{Z}^n_a}{u_1}, \ldots, \del{\tilde{Z}^n_a}{u_n} \right)}.\end{equation}

\begin{proposition}
\label{proposition:hhana}
Let $\scrA$ be a non-curved $A_{\infty}$ algebra of type A$^n_a$, with $n-1 \ge a \ge 2$ and $n \ge 4$.
We introduce the element
\begin{equation} \beta := \left[ r_j \del{}{r_j}\mu^* \right] \in HH^*_{\bm{G}}(\scrA|R)\end{equation}
(note that it is indeed a Hochschild cochain, as can be seen by applying $r_j \partial/\partial r_j$ to the $A_{\infty}$ equation $\mu^* \circ \mu^* = 0$).
The $R$-subalgebra of $HH^*_{\bm{G}}(\scrA|R)$ generated by $\beta$ is isomorphic to
\begin{equation} R[\beta]/\tilde{q}^n_a(\beta),\end{equation}
where we define
\begin{equation}
\label{eqn:tildeq} \tilde{q}^n_a(\beta) := \beta^{n-1} - a^a T \beta^{a-1},\end{equation}
where we recall
\begin{equation} T := r_1 \ldots r_n.\end{equation}
\end{proposition}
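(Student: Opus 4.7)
The plan is to transport the computation across the HKR-type isomorphism established in the paragraphs preceding the statement, and then work out the minimal polynomial of a specific class in a Jacobian ring. Concretely, the twisted $L_\infty$ morphism $\hat\Phi_\alpha$ identifies $HH^*_{\bm G}(\mathscr A\mid R)$ with the Jacobian ring
\begin{equation}
J \;:=\; \frac{R[u_1,\dots,u_n]}{\bigl(\partial\tilde Z^n_a/\partial u_1,\dots,\partial\tilde Z^n_a/\partial u_n\bigr)},
\end{equation}
under which $\mu^*$ corresponds to $\tilde Z^n_a$. The first step is to locate the class $\beta$ in $J$: by $R$-multilinearity of the Taylor coefficients $\hat\Phi^k$, and by differentiating the identity $\hat\Phi_*\mu^*=\tilde Z^n_a$ in the Maurer--Cartan sense with respect to $r_j$ (exactly as in \eqref{eqn:pushedmc}, where all higher-order corrections were shown to vanish for degree reasons), the class $[r_j\,\partial\mu^*/\partial r_j]$ is represented by $r_j\,\partial\tilde Z^n_a/\partial r_j = r_j u_j^a \in R[u_1,\dots,u_n]$.

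The second step is to show that $\beta=r_j u_j^a$ is independent of $j$ in $J$, and to produce the promised relation. Multiplying the defining relation $u_1\cdots\hat u_j\cdots u_n = a r_j u_j^{a-1}$ by $u_j$ yields $\sigma = a r_j u_j^a$ in $J$, where $\sigma:=u_1\cdots u_n$; hence every $r_j u_j^a$ equals $\sigma/a$, so $\beta=\sigma/a$ is well defined. Now multiply the $n$ relations $u_1\cdots\hat u_j\cdots u_n = a r_j u_j^{a-1}$ over all $j=1,\dots,n$: the left-hand side is $\prod_j u_1\cdots\hat u_j\cdots u_n = u_1^{n-1}\cdots u_n^{n-1} = \sigma^{n-1}$, while the right-hand side is $\prod_j a r_j u_j^{a-1} = a^n T\sigma^{a-1}$. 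Thus $\sigma^{n-1}=a^n T\sigma^{a-1}$ in $J$. Substituting $\sigma=a\beta$ gives $a^{n-1}\beta^{n-1}=a^n T a^{a-1}\beta^{a-1}$, i.e.\ $\beta^{n-1}=a^a T\beta^{a-1}$, which is precisely $\tilde q^n_a(\beta)=0$.

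The remaining, and principal, task is to verify that $\tilde q^n_a(\beta)$ is the \emph{minimal} $R$-polynomial relation, i.e.\ that the natural surjection $R[x]/\tilde q^n_a(x)\twoheadrightarrow R[\beta]\subset J$ is injective, equivalently that $1,\beta,\beta^2,\dots,\beta^{n-2}$ are $R$-linearly independent in $J$. The approach will be to pass to the field of fractions $K=\operatorname{Frac}(R)$ and use that $J\otimes_R K$ is a finite \'etale product of local Artinian $K$-algebras supported on the (now isolated) critical locus of $\tilde Z^n_a$ viewed as a family of functions in the $u_j$. At a generic point of $\operatorname{Spec}R$ the analysis parallel to Lemma \ref{lemma:wnacrit} gives one ``big'' critical point at the origin (at which $\beta=0$) and $n-a$ ``small'' critical values on which $\beta$ specialises to the $n-a$ distinct roots of $\beta^{n-a}=a^a T$. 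The small contributions already force any annihilator of $\beta$ to be divisible by $\beta^{n-a}-a^a T$, while the local ring at the origin contributes a factor of $\beta^{a-1}$: the local Milnor algebra at $u=0$ has its Jacobian ideal generated to leading order by $(u_j^{a-1})_{j=1}^n$, from which a direct check shows that $\beta^{a-2}=\sigma^{a-2}/a^{a-2}$ is nonzero but $\beta^{a-1}$ lies in the socle, so that $\beta^{a-1}$ is the exact exponent needed. Combining these two local computations shows the minimal polynomial of $\beta$ in $J\otimes_R K$ is exactly $\tilde q^n_a$, which gives injectivity over $K$ and hence the desired $R$-linear independence (since $J$ is $R$-torsion-free on the locus where the critical points are non-degenerate, and the coefficients $c_i$ of any putative smaller relation would have to lie in a proper ideal of $R$, but any such relation remains a relation after inverting an appropriate element). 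The most delicate point, and the one I expect to require the most care, is this last local computation at the big critical point: verifying that the local Milnor ring at $u=0$ supports the full factor $\beta^{a-1}$ (and no smaller power annihilates) requires either a direct Gr\"obner/filtration argument on the partials $-\prod_{k\ne j}u_k+ar_j u_j^{a-1}$ or a deformation argument sliding between $r=0$ and generic $r$.
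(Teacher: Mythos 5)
Your first three steps (transporting through the HKR-type identification $\hat\Phi^1_\alpha$, locating $\beta$ as $r_j u_j^a$ in the Jacobian ring, and deriving $\tilde q^n_a(\beta)=0$ by multiplying the $n$ Jacobian relations) coincide with the paper's proof. Your fourth step is a genuinely different route. The paper stays over $R=\C[r_1,\dots,r_n]$ and computes an explicit Gr\"obner basis \eqref{eqn:grob1}--\eqref{eqn:grob3} for the Jacobian ideal with an elimination order, then observes directly that the leading monomial $r^{\bm{c}}(r_1u_1^a)^{n-2}$ of any candidate $R$-linear relation of degree $n-2$ is divisible by none of the leading terms. You instead pass to $K=\operatorname{Frac}(R)$, decompose $J\otimes_R K$ over the critical locus of $\tilde Z^n_a$ into a product of local Artinian $K$-algebras, and read the minimal polynomial of $\beta$ off the local factors; $R$-linear independence then follows because any relation with coefficients in $R$ persists in $J\otimes_R K$. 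This is more structural than the paper's argument, and it nicely anticipates the big/small dichotomy used throughout the rest of the paper, at the cost of requiring the decomposition and a case-by-case analysis; the Gr\"obner route is concrete and stays in one ring, but as the paper itself says is a ``tedious exercise'' to verify.

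Your proposal does have a genuine gap, which you flag yourself, and it sits at the heart of the argument: you need $\beta^{a-2}\neq 0$ in the local factor $J_0$ at $u=0$ (that $\beta^{a-1}=0$ there is immediate from the global relation, since $\beta^{n-a}-a^aT$ is a unit in $J_0$, being $-a^aT$ plus a nilpotent). Without this, the minimal polynomial in $J_0$ could a priori be a smaller power of $\beta$, and the conclusion would fail. The gap can be closed as follows: give each $u_j$ weight one; the lowest-weight parts of the partials $\partial_j\tilde Z^n_a$ are $a r_j u_j^{a-1}$, which form a regular sequence, so (by the standard-basis criterion, or equivalently by the Arnold--Gusein-Zade--Varchenko theorem that a semi-quasi-homogeneous germ has the same Milnor number as its quasi-homogeneous part) the initial ideal of the Jacobian ideal in $K[[u]]$ is exactly $(u_1^{a-1},\dots,u_n^{a-1})$. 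The associated graded of $J_0$ is therefore $K[u]/(u_j^{a-1})_j$, in which $(u_1\cdots u_n)^{a-2}$ is the nonzero socle generator; hence $\beta^{a-2}=(u_1\cdots u_n)^{a-2}/a^{a-2}\neq 0$ in $J_0$, as required. (It is interesting to note that the paper never makes this local argument; even in Corollary \ref{corollary:hhanoveq}, where a power-series Jacobian ring appears, the nonvanishing of $\hat\beta^{a-2}$ is deduced by reducing back to the polynomial Gr\"obner computation.) Finally, the parenthetical appeal to $J$ being ``$R$-torsion-free'' is unnecessary for your logic: if $\sum_i c_i\beta^i=0$ in $J$ with $c_i\in R$, the same relation holds in $J\otimes_R K$, and $K$-linear independence of $1,\beta,\dots,\beta^{n-2}$ forces $c_i=0$ in $K$ and hence in $R$, with no torsion hypothesis.
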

\begin{proof}
We first give the proof in the case $a \ge 3$.
We recall the quasi-isomorphism
\begin{equation} \hat{\Phi}^1_{\alpha}: (CC^*_{\bm{G}}(\scrA),[\mu^*,-]) \To (R[U] \otimes A, [\tilde{Z}^n_a,-]),\end{equation}
and that
\begin{equation} \hat{\Phi}^1_{\alpha}(\mu^{\ge 3}) = \tilde{Z}^n_a.\end{equation}
It follows that
\begin{align}
\hat{\Phi}^1_{\alpha}(\beta) &= \hat{\Phi}^1_{\alpha}\left(r_j \del{\mu^*}{r_j} \right) \\
&= r_j \del{\tilde{Z}^n_a}{r_j} \\
&= r_j u_j^a,
\end{align}
because we arranged that $\mu^2 = \mu^2_0$ is independent of $r_j$.
Because $\hat{\Phi}^1_{\alpha}$ is a quasi-isomorphism, it suffices for us to compute the $R$-subalgebra of the Jacobian ring generated by
\begin{equation} \tilde{\beta}:= r_j u_j^a.\end{equation}

We observe that, in the Jacobian ring, we have relations
\begin{equation} \label{eqn:jacrels}
u_1 \ldots \hat{u}_j \ldots u_n = ar_j u_j^{a-1}.\end{equation}
In particular, we have
\begin{equation} \tilde{\beta} = \frac{1}{a} u_1 \ldots u_n.\end{equation}
Now if we take the product of the relations \eqref{eqn:jacrels}, we obtain
\begin{equation} (u_1 \ldots u_n)^{n-1} = a^n r_1 \ldots r_n (u_1 \ldots u_n)^{a-1}.\end{equation}
Plugging our expressing for $\tilde{\beta}$ into this gives the relation
\begin{align}
 (a \tilde{\beta})^{n-1} &= a^n T (a \tilde{\beta})^{a-1}\\
\label{eqn:jacrel}\Rightarrow \tilde{\beta}^{n-1} &= a^a T \tilde{\beta}^{a-1}.
\end{align}

It remains to check that there are no $R$-linear relations between the elements $1,\tilde{\beta},\ldots,\tilde{\beta}^{n-2}$ in the Jacobian ring.
To do this, we compute a Gr\"{o}bner basis for the Jacobian ideal, with respect to an elimination order with respect to $u_1,\ldots,u_n$, and which is given by the homogeneous lexicographic order in the variables $u_j$, such that $u_i > u_j$ iff $i > j$.
If $K \subset \{1,\ldots,n\}$, we denote
\begin{equation} u_K := \prod_{k \in K} u_k,\end{equation}
and similarly for $r_K$.
It is convenient to make a change of variables so that the Jacobian ideal is generated by elements
\begin{equation} u_{\overline{\{j\}}} - u_j^{a-1}.\end{equation}
Our Gr\"{o}bner basis now consists of three kinds of elements:
\begin{align}
\label{eqn:grob1} u_{\overline{\{1\}}} - r_1 u_1^{a-1} &&\\
r_ju_j^a - r_1 u_1^a && \mbox{ for $j  \neq 1$} \\
\label{eqn:grob3} (r_1u_1^a)^{|\overline{K}| - 1} u_K - r_{\overline{K}} u_{\overline{K}}^{a-1} && \mbox{for all $1 \in K \varsubsetneqq \{1,\ldots,n\}$}
\end{align}
(the initial term is written first).
It is a tedious exercise to check that this does indeed form a Gr\"{o}bner basis, by Buchberger's criterion.

Now suppose that some $R$-linear polynomial in $\tilde{\beta} = r_1u_1^a$ of degree $n-2$ lay in the Jacobian ideal.
Then its initial term would have the form $r^{\bm{c}}(r_1u_1^a)^{n-2}$, and must lie in the initial ideal of the Jacobian ideal; but one easily checks that this monomial is not divisible by any of the initial terms of the Gr\"{o}bner basis elements \eqref{eqn:grob1}-\eqref{eqn:grob3}.
Hence there are no further relations, and the proof is complete in the case $a \ge 3$.

In the case $a = 2$, one might be concerned that the terms $r_j u_j^2$ in $\tilde{Z}^n_2$ correspond to deformations of the product $\mu^2$ on $\cA$, so this doesn't fit into our framework (specifically, the Maurer--Cartan element $\mu^* - \mu^2_0$ lies in $F_1 \hat{\mathfrak{g}}_{f(1)}$, but not $F_2\hat{\mathfrak{g}}_{f(1)}$, so we have convergence issues).
In this case, instead of using the filtration by length $s$, we can use the filtration by $d = s+(n-a)j$.
This filtration is still complete (by the proof of Lemma \ref{lemma:filtcomp}), the leading-order term of $\mu^*$ is the exterior algebra product $\mu^2_0$, and the Maurer--Cartan element $\mu^* - \mu^2_0$ lies in $F_n \hat{\mathfrak{g}}_{f(1)}$ by the computations immediately preceding Proposition \ref{proposition:hhana}, from which it follows that the expressions for $\ell^1_\alpha$ and $\hat{\Phi}^1_\alpha$ converge.
The rest of the argument is identical to the case $a \ge 3$.
 \end{proof}

Now, let $\bm{G}^n_a$ be the grading datum of equation \eqref{eqn:gna}, and
\begin{equation} \bm{p}: \bm{G}^n_a \To \bm{G}^n_1 =:\bm{G}\end{equation}
the morphism of grading data of equation \eqref{eqn:morphp}.

Let $\scrA$ be a non-curved $A_{\infty}$ algebra of type A$^n_a$, where $n-1 \ge a \ge 2$ and $n \ge 4$, and $\underline{\scrA}$ be its extension to a $\bm{G}^n_1$-graded $A_{\infty}$ category.
Consider the $\bm{G}^n_a$-graded $A_{\infty}$ category
\begin{equation}\widetilde{\scrA} := \bm{p}^* \underline{\scrA}.\end{equation}
It is $R$-linear, where $R \cong \bm{p}^* R$ is now considered as a $\bm{G}^n_a$-graded ring.
In light of \cite[Remark 2.68]{Sheridan2015}, we have

\begin{corollary}
\label{corollary:hhanatild}
Let
\begin{equation} \gamma := \left[ r_j \del{\mu^*}{r_j} \right] \in HH^*_{\bm{G}^n_a}\left(\widetilde{\scrA}|R\right).\end{equation}
Then the $R$-subalgebra generated by $\gamma$ is isomorphic to
\begin{equation} R[\gamma]/\tilde{q}^n_a(\gamma).\end{equation}
\end{corollary}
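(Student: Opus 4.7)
The plan is to deduce Corollary \ref{corollary:hhanatild} from Proposition \ref{proposition:hhana} by transferring the computation of Hochschild cohomology across the morphism of grading data $\bm{p}: \bm{G}^n_a \to \bm{G}^n_1 = \bm{G}$, exactly as indicated by the reference to Remark 2.68 in the statement.

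First, I would observe that the category $\widetilde{\mathscr{A}} = \bm{p}^* \underline{\mathscr{A}}$ has exactly the same underlying $R$-module structure and the same $A_\infty$ structure maps $\mu^*$ as $\underline{\mathscr{A}}$; only the labelling of degrees changes, from elements of $\bm{G}^n_1$ to elements of $\bm{G}^n_a$ via $\bm{p}$. Consequently, the Hochschild cochain complexes are canonically identified as chain complexes (with their cup product and $R$-action), and only the grading group differs. Remark 2.68 makes this precise: there is an identification of $HH^*_{\bm{G}^n_a}(\widetilde{\mathscr{A}}|R)$ with $\bm{p}^* HH^*_{\bm{G}^n_1}(\underline{\mathscr{A}}|R)$ as $\bm{G}^n_a$-graded $R$-algebras, and since $\underline{\mathscr{A}}$ is the $\bm{G}^n_1$-graded extension of the $\bm{G}$-graded algebra $\mathscr{A}$, the right-hand side agrees with $\bm{p}^* HH^*_{\bm{G}}(\mathscr{A}|R)$.

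Second, under this identification the element $\gamma \in HH^*_{\bm{G}^n_a}(\widetilde{\mathscr{A}}|R)$ corresponds precisely to the element $\beta \in HH^*_{\bm{G}}(\mathscr{A}|R)$ of Proposition \ref{proposition:hhana}, since both are defined by the same formula $r_j \partial \mu^*/\partial r_j$ using the same $A_\infty$ structure. Therefore the $R$-subalgebra of $HH^*_{\bm{G}^n_a}(\widetilde{\mathscr{A}}|R)$ generated by $\gamma$ is the $\bm{p}^*$-pullback of the $R$-subalgebra of $HH^*_{\bm{G}}(\mathscr{A}|R)$ generated by $\beta$. Applying Proposition \ref{proposition:hhana}, the latter is isomorphic as an $R$-algebra to $R[\beta]/\tilde{q}^n_a(\beta)$, and hence the former is isomorphic to $R[\gamma]/\tilde{q}^n_a(\gamma)$, as required.

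There is no real obstacle: once the compatibility of Hochschild cohomology with pullback of gradings supplied by Remark 2.68 is in hand, the rest is a matter of tracking the element $\beta$ through the identification. The only point that deserves verification is that the polynomial $\tilde{q}^n_a$ involves only the coefficients $T = r_1 \cdots r_n \in R$ (which are insensitive to the grading group, since $R$ is identified with $\bm{p}^* R$ as a ring), so that the relation $\tilde{q}^n_a(\beta) = 0$ transports verbatim to the relation $\tilde{q}^n_a(\gamma) = 0$ on the $\bm{G}^n_a$-graded side.
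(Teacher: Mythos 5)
Your overall strategy --- reduce Corollary \ref{corollary:hhanatild} to Proposition \ref{proposition:hhana} via the behaviour of graded Hochschild cohomology under $\bm{p}^*$, as supplied by \cite[Remark 2.68]{Sheridan2014} --- matches the paper's intent (the paper offers no proof beyond that citation), and the conclusion you reach is correct. However, the observation on which you base the argument is false: $\widetilde{\mathscr{A}} = \bm{p}^*\underline{\mathscr{A}}$ does \emph{not} have the same underlying $R$-module structure and $A_\infty$ maps as $\underline{\mathscr{A}}$, with only degree labels changed. The morphism $\bm{p}\colon \bm{G}^n_a \to \bm{G}^n_1$ is a proper injection with finite cokernel $\Gamma^n_a$, not an isomorphism, so $\bm{p}^*$ is not a relabelling: it effectively multiplies the number of non-quasi-isomorphic objects by $|\Gamma^n_a|$ (geometrically, these are the $|\Gamma^n_a|$ lifts of $L$ appearing in Proposition \ref{proposition:atild}), and each morphism space is cut down to the graded piece supported in the image of $\bm{p}$. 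This is visible directly in Lemma \ref{lemma:semidirpullback}: the endomorphism algebra of the direct sum $L^\theta$ of shift-copies in the pullback is the semidirect product $A\rtimes\Gamma^*$, which is manifestly not $A$. Consequently your next assertion, that ``the Hochschild cochain complexes are canonically identified as chain complexes,'' does not hold either.

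The correct comparison is between \emph{invariants}: as recalled in the proof of Corollary \ref{corollary:hh2na} from \cite[Remark 2.66]{Sheridan2014}, one has $CC^*_{\bm{G}^n_a}(\bm{p}^*\underline{\mathscr{A}})^{\Gamma^n_a} \cong CC^*_{\bm{G}^n_1}(\mathscr{A})^{(\Gamma^n_a)^*}$, not an identity of the full cochain complexes. Your conclusion is rescued because $\gamma$ is $\Gamma^n_a$-invariant (the formula $r_j\,\partial\mu^*/\partial r_j$ is equivariant for the shift action, and $R$ sits in $\Gamma^n_a$-degree zero), so the $R$-subalgebra it generates lies entirely in the invariant part, where the comparison identifies it with the $R$-subalgebra of $HH^*_{\bm{G}}(\mathscr{A}|R)^{(\Gamma^n_a)^*}$ generated by $\beta$; since $\beta$ and all of $R[\beta]$ are likewise $(\Gamma^n_a)^*$-invariant, this is precisely the subalgebra computed in Proposition \ref{proposition:hhana}. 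So the answer is right, but the argument must pass through the invariant subalgebras rather than a spurious identity of cochain complexes, and the initial claim that $\bm{p}^*$ leaves the underlying data untouched should be dropped.
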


Now we define
\begin{equation} \scrA_{\C} := \left(\bm{\sigma}_*\scrA \right) \otimes_{R} \C,\end{equation}
where $\C$ is an $R$-algebra via the map
\begin{align}
\C [ r_1, \ldots, r_n] & \To  \C,\\
r_j & \mapsto  1 \mbox{ for all $j$,}
\end{align}
and $\bm{\sigma}: \bm{G}^n_1 \To \bm{G}_{\sigma}$ is the sign morphism (recall $\bm{G}_{\sigma} := \{\Z \To \Z/2\Z\}$, so $\bm{G}_{\sigma}$-graded algebra is the same as $\Z/2\Z$-graded algebra).

\begin{lemma}
\label{lemma:hhanov}
When $n-1 \ge a \ge 3$, there is an isomorphism of algebras,
\begin{equation} HH^*_{\bm{G}_{\sigma}}(\scrA_{\C}) \cong \frac{\C \llbracket u_1, \ldots, u_n \rrbracket}{\left(\del{Z^n_a}{u_1}, \ldots, \del{\tilde{W}^n_a}{u_n} \right)},\end{equation}
where
\begin{equation} Z^n_a := -u_1 \ldots u_n + \sum_{j=1}^n u_j^a \in \C \llbracket u_1, \ldots, u_n \rrbracket\end{equation}
as in \eqref{eqn:zna}.
\end{lemma}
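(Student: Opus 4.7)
The plan is to adapt the formality-plus-twisting argument used in the proof of Proposition \ref{proposition:hhana} to the specialized, $\Z/2\Z$-graded setting. The main change is that after setting $r_j = 1$ in $\tilde Z^n_a$ to obtain $Z^n_a$, and after collapsing the $\bm{G}^n_1$-grading via $\bm{\sigma}$, the graded pieces of the relevant polyvector-field algebra are no longer finite-dimensional, so all constructions must be taken in appropriately completed form; this is the source of the power-series ring $\C\llbracket u_1,\ldots,u_n\rrbracket$ appearing in the statement.

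First, I would view $CC^*_{\bm{G}_\sigma}(\mathscr{A}_\C)[1]$ as a $\Z/2\Z$-graded dg Lie algebra under the Gerstenhaber bracket, with differential $[\mu^*_\C,-]$. Kontsevich's formality theorem (in its $\C$-linear form, pushed forward along $\bm{\sigma}$) provides an $L_\infty$ quasi-isomorphism $\hat{\Phi}$ from this dg Lie algebra to $\C\llbracket u_1,\ldots,u_n\rrbracket \otimes A$ with the Schouten--Nijenhuis bracket and zero differential, the power-series completion being taken with respect to the $u$-adic filtration. This filtration corresponds to the length filtration on the Hochschild side precisely because, by Lemmata \ref{lemma:defclasshh} and \ref{lemma:j2}, the generators of $HH^2$ all have $u$-degree $\ge 2$, so after reducing modulo $(r_1-1,\ldots,r_n-1)$ the filtrations line up.

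As in Proposition \ref{proposition:hhana}, I would first apply a formal diffeomorphism to arrange $\mu^2_\C = \mu^2_0$, so that $\alpha := \mu^{\ge 3}_\C$ becomes a Maurer--Cartan element lying in the second step of the length filtration. Under $\hat{\Phi}$, this element pushes forward to an element of $\C\llbracket u_1,\ldots,u_n\rrbracket \otimes A$ whose leading term, by the definition of type A$^n_a$ and the specialization $r_j = 1$, is exactly $Z^n_a$; higher Taylor coefficients of $\hat{\Phi}$ contribute zero by the same degree-counting argument given in the paragraph preceding \eqref{eqn:pushedmc}, using the hypothesis $a \ge 3$ to ensure that no $\mu^2$-deformation terms appear at leading order. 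The twisted differential is then $[Z^n_a,-]$, namely the Koszul differential for the sequence $\partial Z^n_a/\partial u_j$ in $\C\llbracket u_1,\ldots,u_n\rrbracket$. The same Gr\"{o}bner-basis / Schreyer argument used in the proof of Proposition \ref{proposition:hhana}, applied now at the origin of $\C^n$, shows that these partial derivatives form a regular sequence in $\C[u_1,\ldots,u_n]$; regularity passes to the completion by flatness, so the Koszul cohomology is the Jacobian ring. The fact that this yields an isomorphism of algebras (not merely of vector spaces) follows because the cup product on $HH^*$ is computed by the associative product on polyvector fields via HKR after twisting, compatibly with $\hat{\Phi}$.

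The main obstacle will be to justify the completion step carefully: one must verify that the $\Z/2\Z$-graded length filtration on $CC^*_{\bm{G}_\sigma}(\mathscr{A}_\C)$ is complete in the appropriate sense, so that the quasi-isomorphism from formality survives the passage from polynomial to power-series coefficients, and that the target power-series polyvector algebra is indeed the correct completion. The analogue of Lemma \ref{lemma:filtcomp} no longer holds with finite-dimensional graded pieces, so one cannot invoke the $\bm{G}^n_1$-grading directly; instead completeness must be proved from the length filtration alone. Once this foundational matter is settled, the remainder of the argument is a direct transcription of the Koszul calculation of Proposition \ref{proposition:hhana} to the specialized setting.
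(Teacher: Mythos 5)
Your overall plan---transfer via Kontsevich formality to the polyvector-field side, twist by $\mu^{\ge 3}_{\C}$, and identify the resulting complex as the Koszul complex of $(\partial Z^n_a/\partial u_1,\ldots,\partial Z^n_a/\partial u_n)$ in $\C\llbracket U\rrbracket$---is the paper's own approach, and your identification of the pushed-forward Maurer--Cartan element with $Z^n_a$ is correct. The gap is in the regularity claim. You assert that ``the same Gr\"obner-basis / Schreyer argument, applied now at the origin'' shows that $\partial Z^n_a/\partial u_j$ is a regular sequence in $\C[u_1,\ldots,u_n]$, but that Gr\"obner computation relied on the elimination order in $r_1,\ldots,r_n$ to make the initial terms $ar_ju_j^{a-1}$ pairwise coprime; once $r_j = 1$ and since $a-1 < n-1$, the initial term of $\partial Z^n_a/\partial u_j$ under any degree-compatible order becomes $u_1\cdots\hat{u}_j\cdots u_n$, and these share common factors, so Schreyer's coprimality criterion no longer applies. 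Regularity in $\C[U]$ is nevertheless true (the critical locus of $Z^n_a$ is finite, so the Jacobian ideal has codimension $n$ in the Cohen--Macaulay ring $\C[U]$), and your subsequent flatness argument to pass to $\C\llbracket U\rrbracket$ would then be sound; but the route you actually offer to establish the prerequisite does not work. The paper avoids the detour entirely: it proves regularity directly in $\C\llbracket U\rrbracket$ by noting that the Jacobian ideal contains $u_j^{a(a-1)}$ for every $j$ (obtained from \eqref{eqn:jacrel} by inverting the power-series unit $(u_1\cdots u_n)^{n-a} - a^a$ and substituting $u_1\cdots u_n = au_j^a$), so the ideal contains a power of the maximal ideal, and regularity then follows from the local-ring criterion \cite[Corollary 17.7]{Eisenbud1995}.

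Your closing concern about completeness of the $\Z/2\Z$-graded length filtration is not actually an obstruction. The framework established before Proposition \ref{proposition:hhana} defines $\hat{\mathfrak{g}}$ and $\hat{\mathfrak{h}}$ as completions of the compactly-supported complexes with respect to the filtration from the auxiliary $\Z$-grading, and these are complete by construction regardless of whether the $\bm{G}$-graded pieces are finite-dimensional; the Eilenberg--Moore comparison theorem then carries the formality quasi-isomorphism across the twist without further input. Lemma \ref{lemma:filtcomp} was needed only to show that, in the $\bm{G}^n_1$-graded $R$-linear setting of Proposition \ref{proposition:hhana}, the completion was trivial ($\hat{\mathfrak{h}}\cong\mathfrak{h}$); here the completion genuinely enlarges the target to $\C\llbracket U\rrbracket\otimes A$, as you correctly anticipate, but no analogue of that lemma is required, since the completeness you worry about is automatic from the $\hat{\cdot}$ construction.
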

\begin{proof}
We use the same strategy as we did to make computations in $HH^*_{\bm{G}}(\bm{p}^*\scrA|R)$.
Let
\begin{equation} \mathfrak{g} := CC^*_{c,\bm{G}_{\sigma}}(A)\end{equation}
be the $\bm{G}_{\sigma} \oplus \Z$-graded dg Lie algebra, where $\ell^1 = [\mu^2_{\C},-]$, where $\mu^2_{\C}$ is the exterior product, and $\ell^2$ is the Gerstenhaber bracket.
We then have, by definition,
\begin{equation} \hat{\mathfrak{g}} := CC^*_{\bm{G}_{\sigma}}(A),\end{equation}
and the Maurer--Cartan element $\alpha := \mu^{\ge 3}_{\C}  \in F_2\hat{\mathfrak{g}}_1$.
The twisted differential is, as before, the Hochschild differential:
\begin{equation} HH^*_{\bm{G}_{\sigma}}(\scrA_{\C}) \cong H^*(\hat{\mathfrak{g}},\ell^1_{\alpha}).\end{equation}

Let $\mathfrak{h} := \C [U] \otimes A$ be the $\bm{G}_{\sigma} \oplus \Z$-graded dg Lie algebra of polyvector fields, with zero differential and the Schouten-Nijenhuis bracket.
Kontsevich's formality theorem gives a $\bm{G}_{\sigma} \oplus \Z$-graded $L_{\infty}$ quasi-isomorphism
\begin{equation} \Phi: \mathfrak{g} \To \mathfrak{h}.\end{equation}

We now have
\begin{equation} \hat{\mathfrak{h}} := \C \llbracket U \rrbracket \otimes A,\end{equation}
and the corresponding $\bm{G}_{\sigma}$-graded $L_{\infty}$ quasi-isomorphism $\hat{\Phi}$ from $\hat{\mathfrak{g}}$ to $\hat{\mathfrak{h}}$.
The pushed-forward Maurer--Cartan element is given by
\begin{equation} \hat{\Phi}_* \alpha = \tilde{Z}^n_a \otimes 1 \in (R[U] \otimes A) \otimes_R \C\end{equation}
by \eqref{eqn:pushedmc}, because the $A_{\infty}$ structure on $\scrA_{\C}$ is given by $\mu^*_{\C} := \mu^* \otimes 1$ by definition.
It follows that
\begin{equation} \hat{\Phi}_* \alpha = Z^n_a \in \C \llbracket U \rrbracket.\end{equation}

It follows as before that there is an isomorphism
\begin{equation} \hat{\Phi}^1_{\alpha}: HH^*_{\bm{G}_{\sigma}}(\scrA_{\C}) \cong H^*(\hat{\mathfrak{h}},\ell^1_{Z^n_a}).\end{equation}
The right-hand side is given by the Koszul complex for the sequence $\partial Z^n_a/\partial u_j$, for $j=1,\ldots, n$.
This sequence is regular, because $\C \llbracket U \rrbracket$ is a local ring, and the ideal generated by the $n$ derivatives of $Z^n_a$ contains powers of $u_j$ for all $j$, which form a regular sequence of length $n$ (see \cite[Corollary 17.7]{Eisenbud1995}).
To see this, we use the same arguments as in the proof of Proposition \ref{proposition:hhana} to show that the element
\begin{equation}
(u_1 \ldots u_n)^{a-1}((u_1 \ldots u_n)^{n-a} - a^a)
\end{equation}
lies in the Jacobian ideal (see in particular \eqref{eqn:jacrel}).
The term $(u_1 \ldots u_n)^{n-a} - a^a$ is invertible in the power series ring; it follows that $(u_1 \ldots u_n)^{a-1}$
lies in the ideal.
Substituting in $u_1 \ldots u_n = au_j^a$, we obtain that
\begin{equation} \label{eqn:upower0} (u_j^a)^{a-1} =0,\end{equation}
and hence $u_j^{a(a-1)}$ lies in the ideal generated by our sequence, for all $j$.

It follows that the sequence is regular, and hence that its cohomology is exactly the Jacobian ring, as required.
 \end{proof}

We observe that the cokernel of the morphism $\bm{p}$ of grading data is the group $\Gamma^n_a$ introduced in Definition \ref{definition:mir}.
The $\bm{G}$-grading on $\scrA$ induces a $\Gamma^n_a$-grading, with respect to which $R$ has degree $0$; therefore, $\scrA_{\C}$ comes with a $\Gamma^n_a$-grading.
This equips $HH^*_{\bm{G}_{\sigma}}(\scrA_{\C})$ with a $\Gamma^n_a$-grading (because $\Gamma^n_a$ is a finite group, the completion in the category of $\Gamma^n_a$-graded modules coincides with the completion in the category of $\bm{G}_{\sigma}$-graded modules).
A $\Gamma^n_a$-grading induces a $(\Gamma^n_a)^*$-action.
We will be particularly interested in computing the $(\Gamma^n_a)^*$-equivariant part of $HH^*_{\bm{G}_{\sigma}}(\scrA_{\C})$, or equivalently the part of degree $0 \in \Gamma^n_a$.

The isomorphism of Lemma \ref{lemma:hhanov} respects the $\Gamma^n_a$-grading, where the degree of the variable $u_j$ in the Jacobian ring is $y_j$, the image of the $j$th generator in $(\Z/a\Z)^n/(\Z/a\Z)$.
In order to make computations in the Jacobian ring of the formal power series ring, we first need to understand the Jacobian ring of the polynomial ring:

\begin{lemma}
\label{lemma:novpoly}
Let
\begin{equation} \overline{\beta} := \beta \otimes 1 \in \C[u_1, \ldots, u_n]\end{equation}
denote the image of $\beta$ under the identification
\begin{equation} R[U] \otimes_R \C \cong \C[U].\end{equation}
Then we have an isomorphism of $\C$-algebras,
\begin{equation} \left(\frac{\C [ u_1, \ldots, u_n ]}{\del{Z^n_a}{u_1}, \ldots, \del{Z^n_a}{u_n}}\right)
^{(\Gamma^n_a)^*} \cong \C[\overline{\beta}]/q^n_a(\overline{\beta}),\end{equation}
where $q^n_a$ is as in \eqref{eqn:qna}.
\end{lemma}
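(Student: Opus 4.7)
The proof will proceed in three main steps. First, I will identify the $(\Gamma^n_a)^*$-invariant subspace of the polynomial ring: since $(\Gamma^n_a)^*$ acts on $u_j$ via multiplication by roots of unity $\omega^{\zeta_j}$ (with $\omega=e^{2\pi i/a}$) subject to $\sum \zeta_j \equiv 0 \pmod{a}$, a monomial $u_1^{c_1}\cdots u_n^{c_n}$ is invariant iff $\sum c_j\zeta_j\equiv 0\pmod{a}$ for every such $\zeta$, which is equivalent to $c_1 \equiv c_2 \equiv \cdots \equiv c_n \pmod{a}$.

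Second, I will use the Jacobian relations to reduce each invariant monomial to a scalar multiple of a power of $\overline{\beta}$. The relation $\del{Z^n_a}{u_j}=0$ reads $u_1\cdots \hat u_j\cdots u_n = a u_j^{a-1}$; multiplying by $u_j$ gives $u_1\cdots u_n = a u_j^a$ for every $j$, so $u_j^a$ is independent of $j$ in the Jacobian ring and equals $\overline{\beta}$. Writing $c_i = a d_i + e$ with $0\le e < a$, an invariant monomial factors as
\begin{equation*}
u^{\bm{c}} = \prod_i (u_i^a)^{d_i}\cdot (u_1\cdots u_n)^e = a^e\,\overline{\beta}^{\,\sum d_i + e}.
\end{equation*}
Thus the invariant subring equals the $\C$-subalgebra of $\C[u_1,\ldots,u_n]/\mathrm{Jac}(Z^n_a)$ generated by $\overline{\beta}$.

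Third, I will identify this subalgebra with $\C[\overline{\beta}]/q^n_a(\overline{\beta})$. Taking the product of all $n$ relations yields $(u_1\cdots u_n)^{n-1} = a^n(u_1\cdots u_n)^{a-1}$; substituting $u_1\cdots u_n = a\overline{\beta}$ and simplifying gives $\overline{\beta}^{\,n-1} = a^a\,\overline{\beta}^{\,a-1}$, i.e.\ $q^n_a(\overline{\beta})=0$. To rule out lower-degree relations, I specialize Proposition~\ref{proposition:hhana}: its $R$-subalgebra $R[\beta]/\tilde{q}^n_a(\beta) \subset R[U]/\mathrm{Jac}(\tilde{Z}^n_a)$ is a free $R$-module with basis $1,\beta,\ldots,\beta^{n-2}$, and the specialization $R\to\C$ sending $r_j\mapsto 1$ sends $\beta\mapsto \overline{\beta}$, $T\mapsto 1$, and hence $\tilde{q}^n_a \mapsto q^n_a$.

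The main obstacle is that the specialization $R\to\C$, $r_j\mapsto 1$, is not flat, so $\C$-linear independence of $1,\overline{\beta},\ldots,\overline{\beta}^{\,n-2}$ does not follow formally from the $R$-linear independence of $1,\beta,\ldots,\beta^{n-2}$. This is where the explicit Gröbner basis from the proof of Proposition~\ref{proposition:hhana} is essential: with respect to the elimination order used there, each Gröbner basis element has leading coefficient $\pm 1$ (times a monomial in the $r_j$ only), so after setting $r_j = 1$ the leading coefficients remain nonzero units, the same polynomials still form a Gröbner basis of $\mathrm{Jac}(Z^n_a)$, and the standard monomials are unchanged. Since the monomials $u_1^{am}$ for $0\le m\le n-2$ were shown in that proof to be standard (hence represent $\beta^m$ uniquely), they remain standard after specialization and thus $1,\overline{\beta},\ldots,\overline{\beta}^{\,n-2}$ are $\C$-linearly independent in $\C[u_1,\ldots,u_n]/\mathrm{Jac}(Z^n_a)$. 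Combined with the preceding steps this yields the asserted isomorphism of $\C$-algebras.
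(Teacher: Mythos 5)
Your proof is correct and follows essentially the same three-step strategy as the paper's: identify the $(\Gamma^n_a)^*$-invariant part of the Jacobian ring as the subalgebra generated by $\overline{\beta}$, derive $q^n_a(\overline{\beta})=0$ by multiplying the Jacobian relations, and invoke the Gr\"{o}bner basis from Proposition~\ref{proposition:hhana} (specialized at $r_j=1$) to rule out lower-degree relations. You are more explicit than the paper about why the specialization preserves the Gr\"{o}bner basis and the standard monomials, but this is precisely the argument the paper gestures at.
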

\begin{proof}
It is easy to check that the part of $\C[U]$ of degree $0 \in \Gamma^n_a$ is generated (as an algebra) by the elements $u_1 \ldots u_n$ and $u_j^a$.
Furthermore, these are all identified with a multiple of $\overline{\beta}$ in the Jacobian ring; so the part of the Jacobian ring of degree $0$ has the form $\C[\overline{\beta}]/p(\overline{\beta})$.
By the same arguments as in the proof of Proposition \ref{proposition:hhana}, $q^n_a(\overline{\beta}) = 0$.
By setting all $r_j$ equal to $1$, the Gr\"{o}bner basis computed in Proposition \ref{proposition:hhana} becomes a Gr\"{o}bner basis for the Jacobian ideal here, with respect to the homogeneous lexicographic order; and it follows as in the proof of Proposition \ref{proposition:hhana} that $\overline{\beta}$ satisfies no other polynomial relations.
 \end{proof}

\begin{corollary}
\label{corollary:hhanoveq}
Let us denote by $\hat{\beta}$ the image of $\beta \otimes 1$ in $HH^*_{\bm{G}_{\sigma}}(\scrA_{\C})$, where $\beta \in HH^*_{\bm{G}}(\scrA|R)$ is the element defined in Proposition \ref{proposition:hhana}.
Then for $n-1 \ge a \ge 2$, we have
\begin{equation} HH^*_{\bm{G}_{\sigma}}(\scrA_{\C})^{(\Gamma^n_a)^*} \cong \C[\hat{\beta}]/\hat{\beta}^{a-1}.\end{equation}
\end{corollary}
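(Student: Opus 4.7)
The plan is to combine Lemma~\ref{lemma:hhanov} and Lemma~\ref{lemma:novpoly} by passing from the polynomial Jacobian ring to its completion, with the factorization of $q^n_a$ doing the main work. First I would identify $HH^*_{\bm{G}_\sigma}(\mathscr{A}_\C)$ with the Jacobian ring $\C\llbracket u_1,\ldots,u_n\rrbracket/J$, where $J=(\partial Z^n_a/\partial u_1,\ldots,\partial Z^n_a/\partial u_n)$; for $a\ge 3$ this is Lemma~\ref{lemma:hhanov}, and for $a=2$ the same argument applies after replacing the length filtration by the filtration $d=s+(n-a)j$ used in the proof of Proposition~\ref{proposition:hhana}, so I would remark at the outset that Lemma~\ref{lemma:hhanov} extends to $a=2$ by this substitution.

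Next I would take $(\Gamma^n_a)^*$-invariants. Because $(\Gamma^n_a)^*$ is a finite group acting linearly on $\C\llbracket u_1,\ldots,u_n\rrbracket$, taking invariants is an exact functor that commutes with $\mathfrak{m}$-adic completion (where $\mathfrak{m}=(u_1,\ldots,u_n)$). Moreover, the proof of Lemma~\ref{lemma:hhanov} shows $u_j^{a(a-1)}\in J$ for every $j$, so $J$ is $\mathfrak{m}$-primary, the Jacobian ring $\C[u_1,\ldots,u_n]/J$ is Artinian, and its $\mathfrak{m}$-adic completion agrees with the local factor of the CRT decomposition at the origin. Hence
\begin{equation}
HH^*_{\bm{G}_\sigma}(\mathscr{A}_\C)^{(\Gamma^n_a)^*}\cong \widehat{(\C[u_1,\ldots,u_n]/J)^{(\Gamma^n_a)^*}}
\end{equation}
where the completion is at the image of $\mathfrak{m}$. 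By Lemma~\ref{lemma:novpoly}, the right-hand side (before completion) is $\C[\overline\beta]/q^n_a(\overline\beta)$, so the task reduces to computing the completion of this one-variable Artinian ring at its unique positive-degree maximal ideal $(\overline\beta)$.

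The computation then hinges on the factorization
\begin{equation}
q^n_a(x)=x^{n-1}-a^a x^{a-1}=x^{a-1}\bigl(x^{n-a}-a^a\bigr).
\end{equation}
Since $x^{a-1}$ and $x^{n-a}-a^a$ are coprime in $\C[x]$ (the latter has non-vanishing constant term $-a^a\neq 0$), the Chinese Remainder Theorem gives
\begin{equation}
\C[\overline\beta]/q^n_a(\overline\beta)\;\cong\;\C[\overline\beta]/\overline\beta^{a-1}\;\times\;\C[\overline\beta]/(\overline\beta^{n-a}-a^a).
\end{equation}
The second factor has $\overline\beta$ invertible, so it becomes zero upon localizing at $(\overline\beta)$; the first factor is already local with maximal ideal $(\overline\beta)$ and is finite-dimensional, hence equals its own completion. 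Therefore the completion picks out exactly $\C[\hat\beta]/\hat\beta^{a-1}$, which is the claimed answer.

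The only remaining bookkeeping is to verify that $\hat\beta\in HH^*_{\bm{G}_\sigma}(\mathscr{A}_\C)$ is sent to the generator $\overline\beta$ of Lemma~\ref{lemma:novpoly} (up to a non-zero scalar, which does not affect the quotient). Tracing the HKR-type quasi-isomorphism $\hat\Phi^1_\alpha$, the element $\beta=[r_j\,\partial\mu^*/\partial r_j]$ is sent to $r_j u_j^a$, which specializes to $u_j^a$ in $\mathscr{A}_\C$; under the Jacobian relations this equals $\tfrac1a\,u_1\cdots u_n$, matching Lemma~\ref{lemma:novpoly}'s generator. The main potential obstacle is ensuring the extension of Lemma~\ref{lemma:hhanov} to $a=2$, but as noted this is a direct adaptation using the $d$-filtration already exploited in Proposition~\ref{proposition:hhana}, and completes the argument.
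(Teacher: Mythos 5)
Your treatment of the case $a \ge 3$ is correct and takes a genuinely different route from the paper. Where the paper verifies $\hat\beta^{a-1} = 0$ from the proof of Lemma~\ref{lemma:hhanov} and then rules out $\hat\beta^{a-2} = 0$ by a truncation argument against the Gr\"{o}bner basis of Lemma~\ref{lemma:novpoly}, you instead identify the power series Jacobian ring with the local factor at the origin of the Artinian polynomial Jacobian ring, commute invariants past completion, and apply CRT to the factorization $q^n_a(x) = x^{a-1}(x^{n-a}-a^a)$ to isolate $\C[\overline\beta]/\overline\beta^{a-1}$. This is conceptually cleaner: it makes explicit that the discrepancy between $\C[\overline\beta]/q^n_a(\overline\beta)$ and the answer $\C[\hat\beta]/\hat\beta^{a-1}$ is exactly the CRT factor supported at the small critical points, which die upon localizing at the origin. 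One slip in the justification: the proof of Lemma~\ref{lemma:hhanov} only shows $u_j^{a(a-1)}$ lies in the \emph{power series} Jacobian ideal $\hat J$ --- the argument uses invertibility of $(u_1\cdots u_n)^{n-a}-a^a$ in $\C\llbracket u\rrbracket$, which fails over $\C[u]$. So $J$ itself is not $\mathfrak{m}$-primary (it has zeros at the small critical points), and your intermediate claim is false. What you actually need is that $\C[u]/J$ is Artinian (the critical locus of $Z^n_a$ is finite), which already gives that completing at $\mathfrak{m}$ picks out the local factor at the origin; you should cite that, not the erroneous containment.

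The genuine gap is the case $a=2$. You claim that Lemma~\ref{lemma:hhanov} extends to $a=2$ by trading the length filtration for $d = s + (n-a)j$, as in the proof of Proposition~\ref{proposition:hhana}. But that $d$-filtration lives on $CC^*_{\bm{G}}(A\otimes R)$ --- the index $j$ tracks the degree in $R$, and that grading is destroyed by $\otimes_R \C$. There is no corresponding filtration on $CC^*_{\bm{G}_\sigma}(\mathscr{A}_\C)$, so the convergence argument does not carry over. The structural reason is that for $a=2$ the terms $r_j u_j^2$ deform the product $\mu^2$; after setting $r_j = 1$ the underlying associative algebra is the Clifford algebra rather than the exterior algebra, and the HKR/Kontsevich formality framework of Lemma~\ref{lemma:hhanov} (which is keyed to $\mu^2_0$ being the exterior product and $\mu^* - \mu^2_0$ lying in $F_2$) no longer applies. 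The paper handles $a=2$ by an independent argument: $H^*(\mathscr{A}_\C) \cong \Cl_n$ by Corollary~\ref{corollary:cohalg}, $\Cl_n$ is intrinsically formal by Corollary~\ref{corollary:intform}, hence $\mathscr{A}_\C$ is $A_\infty$-quasi-isomorphic to $\Cl_n$, and Corollary~\ref{corollary:hhvans} then gives $HH^*(\mathscr{A}_\C)^{(\Gamma^n_a)^*} \cong \C$, which is the claimed $\C[\hat\beta]/\hat\beta^{a-1}$ since $a-1=1$. You should replace your $a=2$ remark with this argument; as written, the proof is incomplete for $a=2$.
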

\begin{proof}
First we give the proof in the case $a=2$.
In this case, $H^*(\scrA_\C) \cong \Cl_n$ by Corollary \ref{corollary:cohalg}, hence there is an $A_\infty$ quasi-isomorphism $\scrA_\C \cong \Cl_n$ by Corollary \ref{corollary:intform}.
It follows that
\begin{equation} HH^*(\scrA_\C)^{(\Gamma^n_a)^*} \cong \C \cong \C[\hat{\beta}]/\hat{\beta}^{a-1}\end{equation}
by Corollary \ref{corollary:hhvans}.

Now we give the proof for $a \ge 3$.
By Lemma \ref{lemma:hhanov}, we can make computations in the equivariant part of the power series Jacobian ring.
As in Lemma \ref{lemma:novpoly}, we can show that $\hat{\beta}$ generates the equivariant part.
We showed in the proof of Lemma \ref{lemma:hhanov} that $\hat{\beta}^{a-1} = 0$ (see \eqref{eqn:upower0}).
Suppose that a smaller power of $\hat{\beta}$ vanished.
Then we would have
\begin{equation} \hat{\beta}^{a-2} = \sum_{j=1}^n f_j \del{Z^n_a}{u_j},\end{equation}
where $f_j \in \C \llbracket u_1, \ldots, u_n \rrbracket$.
By removing all of the terms in each $f_j$ of length $\ge N$, this shows that $\beta^{a-2}$ is equivalent to an element of length $\ge N$ in the polynomial Jacobian ring, for arbitrarily large $N$; this contradicts Lemma \ref{lemma:novpoly}.
Therefore $\hat{\beta}^{a-2} \neq 0$, and the proof is complete.
 \end{proof}

We now define the $\bm{G}_{\sigma}$-graded $A_{\infty}$ category
\begin{equation}\widetilde{\scrA}_{\C} := \bm{\sigma}_* \bm{p}^* \underline{\scrA} \otimes_R \C.\end{equation}

\begin{corollary}
\label{corollary:hh2na}
Let
\begin{equation} \hat{\gamma} := \gamma \otimes 1 \in HH^*_{\bm{G}_{\sigma}}(\widetilde{\scrA}_{\C}),\end{equation}
where $\gamma$ is as in Corollary \ref{corollary:hhanatild}.
When $n-1 \ge a \ge 2$, we have an isomorphism of $\Z/2\Z$-graded $\C$-algebras
\begin{equation} HH^*_{\bm{G}_{\sigma}}\left(\widetilde{\scrA}_{\C}\right)^{\Gamma^n_a} \cong \C[\hat{\gamma}]/\hat{\gamma}^{a-1},\end{equation}
 where $\hat{\gamma}$ has degree $0\in \Z/2\Z$.
\end{corollary}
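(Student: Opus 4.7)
The plan is to derive Corollary \ref{corollary:hh2na} from Corollary \ref{corollary:hhanoveq} by means of Fourier duality for the finite abelian group $\Gamma^n_a$, which is the cokernel of the injective morphism of grading data $\bm{p}:\bm{G}^n_a \hookrightarrow \bm{G}^n_1$.

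First I would apply Lemma \ref{lemma:semidirpullback} to identify the endomorphism algebra of the object $L^\theta := \bigoplus_{\gamma \in \Gamma^n_a} L[\theta(\gamma)]$ in $\widetilde{\mathscr{A}}_\C$ with the semidirect product $\mathscr{A}_\C \rtimes (\Gamma^n_a)^*$, as $\Z/2\Z$-graded $A_\infty$ algebras. Since $L^\theta$ is a direct sum of shifted copies of the unique object of $\widetilde{\mathscr{A}}_\C$, Morita invariance of Hochschild cohomology yields
\begin{equation*}
HH^*_{\bm{G}_\sigma}(\widetilde{\mathscr{A}}_\C) \;\cong\; HH^*(\mathscr{A}_\C \rtimes (\Gamma^n_a)^*),
\end{equation*}
and I would verify that this isomorphism intertwines the natural $\Gamma^n_a$-actions on the two sides --- on the left, by permutation of the summands of $L^\theta$; on the right, by the canonical action of the dual group on a semidirect product.

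Next I would invoke the classical Fourier-duality identity for a finite abelian group $H$ acting on a $\Z/2\Z$-graded $A_\infty$ algebra $B$: combining the decomposition $HH^*(B \rtimes H) \cong \bigoplus_{h \in H} HH^*(B,{}_hB)^H$ with the observation that $H^*$ acts on the $h$-th summand by the character value $h^*(h)$, one obtains
\begin{equation*}
HH^*(B \rtimes H)^{H^*} \;\cong\; HH^*(B)^H.
\end{equation*}
Taking $B = \mathscr{A}_\C$ and $H = (\Gamma^n_a)^*$ (so $H^* = \Gamma^n_a$) and then quoting Corollary \ref{corollary:hhanoveq}, this chain of isomorphisms produces
\begin{equation*}
HH^*_{\bm{G}_\sigma}(\widetilde{\mathscr{A}}_\C)^{\Gamma^n_a} \;\cong\; HH^*_{\bm{G}_\sigma}(\mathscr{A}_\C)^{(\Gamma^n_a)^*} \;\cong\; \C[\hat{\beta}]/\hat{\beta}^{a-1}.
\end{equation*}

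To conclude, I would match $\hat{\gamma}$ with $\hat{\beta}$ under the composite isomorphism: both are the images of $\left[r_j \del{\mu^*}{r_j}\right]$ under the functorial base change $-\otimes_R \C$, and both are preserved by the Morita equivalence and by the Fourier-duality identification, by naturality. The $\Z/2\Z$-degree of $\hat{\gamma}$ is zero because $r_j \,\partial/\partial r_j$ preserves the $\bm{G}^n_a$-grading and $\mu^*$ has even cohomological degree. The main obstacle will be making the Fourier-duality step precise in the $\Z/2\Z$-graded $A_\infty$ setting and confirming that the $\Gamma^n_a$-action implicit in the statement of the corollary is indeed the permutation action on $L^\theta$ (equivalently, the dual-group action on the semidirect product). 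As a sanity check, specializing Corollary \ref{corollary:hhanatild} at $T=1$ gives only the weaker factored relation $\hat{\gamma}^{a-1}(\hat{\gamma}^{n-a} - a^a) = 0$; the role of passing to $\Gamma^n_a$-invariants is precisely to kill the small-critical-point factor $\hat{\gamma}^{n-a} - a^a$ --- on which $\Gamma^n_a$ acts freely and transitively by Lemma \ref{lemma:wnacrit} --- leaving the big-critical-point relation $\hat{\gamma}^{a-1} = 0$.
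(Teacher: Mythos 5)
Your approach reaches the right conclusion but takes a different route from the paper's. The paper's proof is almost immediate: it quotes \cite[Remark 2.66]{Sheridan2014} for a direct chain-level identification $CC^*_{\bm{G}^n_a}(\bm{p}^*\underline{\mathscr{A}})^{\Gamma^n_a} \cong CC^*_{\bm{G}^n_1}(\mathscr{A})^{(\Gamma^n_a)^*}$ (a $\Gamma^n_a$-invariant cochain on the pulled-back category, which has objects indexed by the cokernel of $\bm{p}$, is literally the same data as a $\Gamma^n_a$-degree-zero, i.e.\ $(\Gamma^n_a)^*$-invariant, cochain on $\mathscr{A}$), pushes it through $\bm{\sigma}_*$ and $\otimes_R\C$, and then applies Corollary \ref{corollary:hhanoveq}. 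You instead propose to re-derive this identification via Lemma \ref{lemma:semidirpullback} (semidirect-product presentation of $A^\theta$), Morita invariance of $HH^*$, and the Fourier-duality decomposition $HH^*(B \rtimes H)^{H^*} \cong HH^*(B)^H$. This is the same underlying duality, but you pay for it by needing the $A_\infty$ version of the smash-product decomposition $HH^*(B\rtimes H)\cong\bigoplus_{h\in H}HH^*(B,{}_hB)^H$ together with the identification of the $H^*$-action on each summand --- you correctly flag this as the main gap, and it is a nontrivial amount of bookkeeping that the paper sidesteps entirely by working at the cochain level. Two small cautions: (i) the composite isomorphism depends on the splitting $\theta$, so one must check that $\hat\gamma$ is insensitive to that choice (it is, since $[r_j\,\partial\mu^*/\partial r_j]$ is $\Gamma^n_a$-invariant and the different splittings differ by a $\Gamma^n_a$-conjugation), and (ii) your sanity check is suggestive but not logically part of the argument --- specializing Corollary \ref{corollary:hhanatild} at $T=1$ does not commute with passing to Hochschild cohomology over $\C$, so the factored relation $\hat\gamma^{a-1}(\hat\gamma^{n-a}-a^a)=0$ is a consequence, not a starting point; the actual killing of the small-critical-point factor after taking invariants is exactly what Corollary \ref{corollary:hhanoveq} (via the power-series Jacobian ring computation in Lemma \ref{lemma:hhanov}) establishes, and both you and the paper rely on it for the same purpose.
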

\begin{proof}
We recall from \cite[Remark 2.66]{Sheridan2015} that there is an action of $\Gamma^n_a$ on $CC^*_{\bm{G}^n_a}(\bm{p}^*\underline{\scrA})$ (by shifts on the objects), and we have
\begin{equation} CC^*_{\bm{G}^n_a}(\bm{p}^*\underline{\scrA})^{\Gamma^n_a} \cong CC^*_{\bm{G}^n_1}(\scrA)^{(\Gamma^n_a)^*}.\end{equation}
Using the fact that taking invariants and taking direct products commute, it follows that
\begin{equation}CC^*_{\bm{G}_{\sigma}}(\bm{\sigma}_* \bm{p}^* \underline{\scrA} \otimes_R \C)^{\Gamma^n_a} \cong  CC^*_{\bm{G}_{\sigma}}(\bm{\sigma}_* \scrA \otimes_R \C)^{(\Gamma^n_a)^*}. \end{equation}
It follows that
\begin{align}
HH^*_{\bm{G}_{\sigma}}(\widetilde{\scrA}_{\C})^{\Gamma^n_a} &\cong  HH^*_{\bm{G}_{\sigma}}(\scrA_{\C})^{(\Gamma^n_a)^*} \\
&\cong \C[\hat{\beta}]/\hat{\beta}^{a-1}
\end{align}
by Corollary \ref{corollary:hhanoveq}; $\hat{\beta}$ corresponds to $\hat{\gamma}$ under this isomorphism, so the proof is complete.
 \end{proof}

We remark that the $\Z/2\Z$-grading in Corollary \ref{corollary:hh2na} can be enhanced to a $\Z/2(n-a)$-grading.
Let $\bm{G}_{2(n-a)}$ denote the grading datum $\{\Z \To \Z/2(n-a)\}$.
Let
\begin{equation} 
\label{eqn:qgna}
\bm{q}: \bm{G}^n_a \To \bm{G}_{2(n-a)}\end{equation}
be a morphism of grading data, sending all $y_j$ to $0$.
Because $2(a-n) \oplus y_{\{1,\ldots,n\}}$ maps to $0$ in $\bm{G}_{2(n-a)}$, this map is well-defined.

Then one can check that $\bm{q}_* \bm{p}^* R$ is concentrated in degree $0$; therefore it makes sense to define the $\bm{G}_{2(n-a)}$-graded $\C$-linear category
\begin{equation} \widetilde{\scrA}'_{\C} := \bm{q}_* \bm{p}^* \underline{\scrA} \otimes_R \C.\end{equation}
Then the result of Corollary \ref{corollary:hh2na} can be upgraded to:

\begin{corollary}
\label{corollary:hhg2}
When $n-1 \ge a \ge 2$, there is an isomorphism of $\Z/2(n-a)$-graded $\C$-algebras\begin{equation} HH^*_{\bm{G}_{2(n-a)}}\left(\widetilde{\scrA}'_{\C}\right)^{\Gamma^n_a} \cong \C[\hat{\gamma}]/\hat{\gamma}^{a-1},\end{equation}
 where $\hat{\gamma}$ has degree $2$.
\end{corollary}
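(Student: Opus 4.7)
The plan is to revisit the proof of Corollary~\ref{corollary:hh2na}, keeping track of the finer $\bm{G}_{2(n-a)}$-grading throughout. First I would verify that $\widetilde{\mathscr{A}}'_{\C}$ is well-defined in the $\bm{G}_{2(n-a)}$-graded, $\C$-linear sense: each generator $r_j$ has $\bm{G}^n_1$-degree $(2-2a, ay_j) = \bm{p}(y_j)$, so in $\bm{p}^*R$ it acquires $\bm{G}^n_a$-degree $y_j$; then $\bm{q}$ sends every $y_j$ to $0 \in \Z/2(n-a)$, confirming that $\bm{q}_*\bm{p}^*R$ sits in degree $0$. One also checks that $\bm{q}$ is well-defined on grading data, since the defining relation $(2(a-n), y_{[n]})$ of $\bm{G}^n_a$ maps to $2(a-n) \equiv 0 \pmod{2(n-a)}$.

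Next I would observe that there is a morphism $\bm{\sigma}'\colon \bm{G}_{2(n-a)} \To \bm{G}_\sigma$ (reduction mod $2$) factoring the sign morphism of $\bm{G}^n_a$ as $\bm{\sigma}' \circ \bm{q}$, so that $\widetilde{\mathscr{A}}_\C \cong \bm{\sigma}'_* \widetilde{\mathscr{A}}'_\C$. Consequently $CC^*_{\bm{G}_{2(n-a)}}(\widetilde{\mathscr{A}}'_\C)$ has the same underlying chain complex as $CC^*_{\bm{G}_\sigma}(\widetilde{\mathscr{A}}_\C)$, only retaining a finer grading. The core of the argument is then to check that every step in the proof of Corollary~\ref{corollary:hh2na} respects this finer grading. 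The only nontrivial ingredient is Kontsevich's $L_\infty$ formality quasi-isomorphism from Hochschild cochains to polyvector fields; but the proof of Lemma~\ref{lemma:hhanov} has already verified, using the $GL(\C^n)$-equivariance of Kontsevich's construction, that it is $\bm{G}^n_1 \oplus \Z$-graded, and in fact it is equivariant for the natural $\Z^n$-grading, so after pulling back along $\bm{p}$ and pushing forward along $\bm{q}$ it yields a $\bm{G}_{2(n-a)} \oplus \Z$-graded quasi-isomorphism. The Maurer-Cartan twist by $\mu^{\ge 3}$ also preserves the grading, since $\mu^{\ge 3}$ has the same cochain degree as $\mu^*$. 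This yields an isomorphism of $\bm{G}_{2(n-a)}$-graded algebras
\begin{equation*}
HH^*_{\bm{G}_{2(n-a)}}(\widetilde{\mathscr{A}}'_\C)^{\Gamma^n_a} \cong \C[\hat\gamma]/\hat\gamma^{a-1},
\end{equation*}
exactly paralleling Corollary~\ref{corollary:hh2na}.

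It remains to determine the $\bm{G}_{2(n-a)}$-degree of $\hat\gamma$. Since $r_j \partial/\partial r_j$ is an Euler operator preserving the $\bm{G}^n_1$-grading, $\gamma = [r_j \partial \mu^*/\partial r_j]$ inherits the cochain degree of $\mu^*$, namely $f(2) = (2,0) \in \bm{G}^n_1$. Pulling back along $\bm{p}$ leaves this degree as $(2,0) \in \bm{G}^n_a$, and pushing forward along $\bm{q}$ yields $2 \in \Z/2(n-a)$. The main obstacle in carrying out this plan is to make sure the grading refinement really is preserved at every stage, most importantly that Kontsevich's formality is genuinely $\bm{G}^n_a \oplus \Z$-graded rather than merely $\Z \oplus \Z$-graded with respect to polynomial degree and length. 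Since this refinement was already established in the proof of Lemma~\ref{lemma:hhanov}, the remaining work is essentially bookkeeping.
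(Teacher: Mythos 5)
Your proposal is correct and fills in exactly the argument the paper leaves implicit — the paper states the corollary with only "the result of Corollary \ref{corollary:hh2na} can be upgraded to…," and your sketch is that upgrade. The only imprecision is in the phrasing of the Kontsevich-formality step: the formality quasi-isomorphism is applied to $\mathscr{A}$ (or $\mathscr{A}_\C$), not literally to $\bm{q}_*\bm{p}^*\underline{\mathscr{A}}$, so one doesn't directly "pull back along $\bm{p}$ and push forward along $\bm{q}$" the formality morphism; rather, one uses that the formality is $\bm{G}^n_1\oplus\Z$-graded (established in the proof of Proposition \ref{proposition:hhana}), that the $(\Gamma^n_a)^*$-invariant part of a $\bm{G}^n_1$-graded object is canonically $\bm{G}^n_a$-graded via the injection $\bm{p}$, and then pushes forward that induced $\bm{G}^n_a$-grading along $\bm{q}$. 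This is cosmetic; the mathematical content of your argument, including the factorization $\bm{\sigma}=\bm{\sigma}'\circ\bm{q}$, the degree-$0$ check for $\bm{q}_*\bm{p}^*R$, and the computation that $\hat\gamma$ has degree $2$ because $\gamma=[r_j\partial\mu^*/\partial r_j]$ inherits the cochain degree $f(2)$ of $\mu^*$, is all correct.
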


\section{Computations in the Fukaya category of a Fermat hypersurface}
\label{sec:fermats}

\subsection{Fermat hypersurfaces}

We consider the Fermat hypersurfaces
\begin{equation} X^n_a:= \left\{ \sum_{j=1}^n z_j^a = 0 \right\} \subset \CP{n-1},\end{equation}
with the smooth normal-crossings divisor
\begin{align}
D & :=  \bigcup_{j=1}^n D_j, \mbox{ where} \\
D_j & := \{z_j = 0\}.
\end{align}
There is a branched cover
\begin{equation}
\label{eqn:brcov} \phi: (X^n_a,D) \To (X^n_1,D),\end{equation}
given by
\begin{align}
\phi: \left\{ \sum_{j=1}^n z_j^a = 0\right\} &\To \left\{ \sum_{j=1}^n z_j = 0 \right\}, \\
{[} z_1: \ldots : z_n {]} & \mapsto  {[} z_1^a : \ldots : z_n^a {]}.
\end{align}
It has branching of order $a$ about each component $D_j$ of $D$.

A structure of K\"{a}hler pair on $(X^n_a,D)$ was specified in \cite[Example 3.14]{Sheridan2015}: in particular, the linking number of the Liouville one-form with divisor $D_j$ is
\begin{equation}
\label{eqn:link}
\ell_j = a,
\end{equation}
hence the cohomology class of the symplectic form is
\begin{equation} [\omega] = na c_1(\mathcal{O}(1)).\end{equation}
The first Chern class is
\begin{equation} c_1 = (n-a) c_1(\mathcal{O}(1)).\end{equation}
As a result, $X^n_a$ is monotone in the sense of Definition \ref{definition:strmon}, with
\begin{equation}
\label{eqn:xnatau}
\tau = \frac{na}{2(n-a)}.
\end{equation}

\subsection{The immersed Lagrangian sphere}
\label{subsec:immLag}

We recall the construction of an immersed Lagrangian sphere $L: S^{n-2} \To X^n_1 \setminus D$ from \cite[\S 2.2]{Sheridan2011} .
To start, we observe that $X^n_1 \cong \{z_1+\ldots+z_n = 0\} \subset \CP{n-1}$: so in fact $X^n_1 \cong \CP{n-2}$.
The immersion $L$ is constructed by starting with the immersion
\begin{equation} L':S^{n-2} \xrightarrow{2:1} \RP{n-2} \hookrightarrow \CP{n-2} \cong X^n_1\end{equation}
(which intersects $D$), and pushing it off itself using a certain Morse function $f: S^{n-2} \to \R$.

To describe the construction of the Morse function $f$, it is easiest to represent $S^{n-2}$ as
\begin{equation} S^{n-2} = \left\{(x_1,\ldots,x_n) \in \R^n: \sum_{j=1}^n x_i = 0, \sum_{j=1}^n x_i^2=1\right\},\end{equation}
so that the immersion $L'$ sends $(x_1,\ldots,x_n) \mapsto [x_1:\ldots:x_n]$.
We then define
\begin{equation} f(x_1,\ldots,x_{n}) = \sum_{j=1}^{n} g(x_j),\end{equation}
where $g: \R \To \R$ is a monotone, odd function with the property that $g(x) = x$ inside a (small) neighbourhood of $0$, and $g'(x)$ is small for $x$ outside a (slightly larger) neighbourhood of $0$.
The construction ensures that  $\nabla f$ is positively transverse to the hypersurfaces $\{x_j = 0\}$.
A picture of the Morse flow of $f$ in the $2$-dimensional case can be found in \cite[Figure 5]{Sheridan2011}.

We use the Weinstein neighbourhood theorem to extend the immersion $L'$ to an immersion from a sufficiently small cotangent disc bundle of $S^{n-2}$ to $X^n_1$.
The immersion $L$ is then constructed as the graph of the one-form $\epsilon df$, mapped into $X^n_1$ via this immersion (for $\epsilon >0$ sufficiently small).
The fact that $\nabla f$ is transverse to the hypersurfaces $\{x_j = 0\}$ ensures that $L$ avoids the divisors $D_j$: so indeed the image of $L$ lies in $X^n_1 \setminus D$.
The fact that $f$ can be arranged to be a Morse function ensures that $L$ has transverse self-intersections at the critical points of $f$ (where it intersects the other branch of the double cover).

The hypersurfaces $\{x_j = 0\}$ split $S^{n-2}$ into $2^{n}-2$ regions, indexed by the subsets $K \subset \{1,\ldots,n\}$ of coordinates $x_j$ that are positive in the region: all subsets $K$ are realized except for $K=\emptyset, \{1,\ldots,n\}$, because we can't have $\sum_j x_j= 0$ if the coordinates $x_j$ are either all positive or all negative.
Each region contains a unique critical point $p_K$ of $f$.
The Floer endomorphism algebra of $L$ can be defined, despite it being immersed, using the fact that $L$ admits an embedded lift to a cover of $X^n_1 \setminus D$ (see \cite[\S 3.1]{Sheridan2011}).
It is generated by the self-intersection points of $L$ (which are indexed by proper non-empty sets $K \subset \{1,\ldots,n\}$), together with the Morse cohomology of $S^n$ (which we choose to have generators $p_{\phi}$ and $p_{\{1,\ldots,n\}}$, corresponding to the identity and top class respectively).
Thus, $CF^*(L,L)$ has generators $p_K$ indexed by the subsets $K \subset \{1,\ldots,n\}$.

Now recall the branched cover $\phi: (X^n_a,D) \to (X^n_1,D)$ of \eqref{eqn:brcov}.
It restricts to an unbranched cover $\phi: X^n_a \setminus D \to X^n_1 \setminus D$, and $L$ admits a lift to $X^n_a \setminus D$ by the homotopy lifting criterion, because the image $\pi_1(L) \to \pi_1(X^n_1 \setminus D)$ is trivial.

\begin{lemma}
\label{lem:Lliftemb}
If $a \ge 2$, then any lift of $L$ to $X^n_a$ is embedded.
\end{lemma}
\begin{proof}
We have $H_1(X^n_1 \setminus D) \cong \Z\langle e_1,\ldots,e_n \rangle/(e_1+\ldots+e_n) \cong H_1(X^n_a \setminus D)$ (the generators correspond to meridian loops about the components $D_j$ of $D$).
The map
\begin{equation} \phi_*: H_1(X^n_a\setminus D) \to H_1(X^n_1 \setminus D)\end{equation}
can be identified with multiplication by $a$ (because $\phi$ has branching of degree $a$ about each $D_j$, compare \cite[Corollary 3.30]{Sheridan2015}).

To any self-intersection $x$ of $L$, we can associate an element $y \in H_1(X^n_1 \setminus D)$: it is the image of the path in $S^{n-2}$ connecting the two antipodal points that intersect at $x$, under the immersion $L$.
The self-intersection $x$ in $X^n_1 \setminus D$ lifts to a self-intersection in $X^n_a \setminus D$ if and only if $y$ lies in the image of $H_1(X^n_a \setminus D)$: otherwise, $x$ lifts to an intersection between two different lifts of $L$.

These classes $y$ were computed in \cite[Proposition 3.3]{Sheridan2011}: the class associated to $p_K$ is $\sum_{j \in K} e_j$.
It is clear that none of these lie in the image of $H_1(X^n_a \setminus D)$, with the exception of $p_\emptyset$ and $p_{\{1,\ldots,n\}}$, which do not correspond to self-intersections.
\end{proof}

\subsection{Fukaya category computations}

The relative Fukaya category $\cF(X^n_a,D)$ can be computed using the branched cover $\phi: X^n_a \to X^n_1$, via techniques developed in \cite{Sheridan2015}. 
We recall the details.

We start by recalling the relationship between $\cF(X^n_a \setminus D)$ and $\cF(X^n_1 \setminus D)$. 
The grading data for these two categories are denoted $\bm{G}(X^n_a,D)$ and $\bm{G}(X^n_1,D)$ respectively, and the branched cover \eqref{eqn:brcov} determines a morphism of grading data,
\begin{equation} \bm{p}:\bm{G}(X^n_a,D) \To \bm{G}(X^n_1,D),\end{equation}
whose cokernel is the covering group of $\phi$. 
The relationship between $\cF(X^n_a \setminus D)$ and $\cF(X^n_1 \setminus D)$ is particularly simple, because pseudoholomorphic discs in $X^n_1 \setminus D$ lift to $X^n_a \setminus D$ as they are contractible: so the difference between the two categories is essentially one of `bookkeeping'. 
This idea was exploited in \cite[\S 8b]{Seidel2003}; we will use the language of \cite[Proposition 3.10]{Sheridan2015}, which says that a choice of identification between the universal abelian covers of $\cG(X^n_a \setminus D)$ and $\cG(X^n_1 \setminus D)$ induces a fully faithful strict embedding
\begin{equation} \bm{p}^* \cF(X^n_1 \setminus D) \hookrightarrow \cF(X^n_a \setminus D)\end{equation}
(recall that applying the operation `$\bm{p}^*$' to a category does not change the objects, but changes the morphisms: it only keeps the morphisms whose grading lies in the image of $\bm{p}$, see \cite[Definition 2.65]{Sheridan2015}). 

In our situation, the grading data associated to the K\"{a}hler pairs $(X^n_a,D)$ are
\begin{equation}
\bm{G}(X^n_a,D)  \cong  \bm{G}^n_a,
\end{equation}
where the definition of the right-hand side was recalled in equation \eqref{eqn:gna} (see \cite[Lemma 3.24]{Sheridan2015}).
The morphism of grading data $\bm{p}$ is given by equation \eqref{eqn:morphp}, by \cite[Lemma 3.29]{Sheridan2015}.
The cokernel of $\bm{p}$ (which we recall is isomorphic to the covering group of $\phi$) is naturally isomorphic to the group $\Gamma^n_a$ of the introduction (see \eqref{eqn:gammana}).

On the other hand, the relationship between the relative Fukaya categories $\cF(X^n_a,D)$ and $\cF(X^n_1,D)$ is more subtle, because pseudoholomorphic discs that pass through $D$ do not admit lifts under $\phi$. 
Instead, we consider the $\bm{a}$-orbifold relative Fukaya category $\cF(X^n_1,D,\bm{a})$, where $\bm{a} := (a,\ldots,a)$. 
This is the Fukaya category whose structure maps count pseudoholomorphic discs which are tangent to $D$ to order $a-1$ wherever they meet. 
These discs do admit lifts under $\phi$, so we should obtain an embedding 
\begin{equation}
\label{eqn:relFlift}
\bm{p}^* \cF(X^n_1,D,\bm{a}) \hookrightarrow \cF(X^n_a,D)
\end{equation}
as above. 
The coefficient ring of $\cF(X^n_1,D,\bm{a})$ is the ring $R_a$ of \eqref{eqn:ra} (see \cite[Example 5.2]{Sheridan2015}).
It follows that the coefficient ring of $\cF(X^n_a,D)$ is $\bm{p}^* R_a$, by \cite[Remark 5.10]{Sheridan2015}.

\begin{remark}
\label{rmk:subtlety}
Actually there are some technical complications that prevent us from proving \eqref{eqn:relFlift} exactly: instead we prove it to first order in $R$. 
See the proof of Proposition \ref{proposition:atild} for details.   
\end{remark}

\begin{proposition}
\label{proposition:atild}
Let $\widetilde{\scrA}$ denote the full subcategory of the $\bm{G}^n_a$-graded monotone relative Fukaya category $\cF_m(X^n_a,D)_w$ whose objects are lifts of the immersed Lagrangian sphere $L \subset X^n_1$ under the branched cover $\phi$.
Then there exists a non-curved $A_{\infty}$ algebra $\scrA$ of type A$^n_a$ (in the sense of Definition \ref{definition:typean}), such that
\begin{equation} \widetilde{\scrA} \cong \bm{p}^* \underline{\scrA}\end{equation}
(where $\underline{\scrA}$ denotes the unique extension of $\scrA$ to a $\bm{G}^n_1$-graded $A_{\infty}$ category by formally adding all shifts, as in \cite[Definition 2.64]{Sheridan2015}).
\end{proposition}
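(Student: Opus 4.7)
The plan is to define $\mathscr{A}$ as the endomorphism algebra of the immersed Lagrangian sphere $L \subset X^n_1 \setminus D$ in the $\bm{G}^n_1$-graded, $R_a$-linear $(a,\ldots,a)$-orbifold relative Fukaya category of $(X^n_1, D)$ constructed in \cite{Sheridan2014}, and then to identify $\widetilde{\mathscr{A}}$ with $\bm{p}^* \underline{\mathscr{A}}$ via the branched cover $\phi$. A pseudoholomorphic disk in $X^n_a$ with boundary on the collection of lifts $\{L_\gamma\}_{\gamma \in \Gamma^n_a}$ of $L$ descends under $\phi$ to a pseudoholomorphic disk in $X^n_1$ with boundary on $L$, with intersection number with $D_j$ scaled appropriately under the covering, and the covering group $\Gamma^n_a = \mathrm{coker}(\bm{p})$ acts on the lifts by shifts. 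This ``orbifold equals equivariant'' identification is exactly what the pullback $\bm{p}^*$ of $\bm{G}^n_1$-graded $A_\infty$ categories records, and in the strictly exact relative setting it is established in \cite[Section 6]{Sheridan2014}; the same disk-counting argument applies here with no essential change.

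It remains to verify the three conditions of Definition~\ref{definition:typean}. The identification of the order-zero algebra $(\mathscr{A},\mu^2_0) \cong A \otimes R_a$ and the vanishing $\mu^0_0 = 0$ both come from the fact that $L$ is exact in $X^n_1 \setminus D$, so at order zero in $r$ only constant disks contribute: the resulting intersection-theoretic computation on the $n$ cyclic self-intersection points of $L$ gives the exterior algebra $A \cong \Lambda^*\C^n$, while no non-constant disk can contribute to $\mu^0_0$. The key calculation is the HKR identity $\Phi(\mu^*) = \tilde{Z}^n_a + O(r^2)$: the $r$-independent term $-u_1 \cdots u_n$ comes from the ``central'' Maslov-index-$n$ disk in the Fermat pair-of-pants used in the construction of $L$, and the order-one-in-$r$ terms $r_j u_j^a$ come from disks with intersection number $1$ with $D_j$. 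The exponent $a$ on $u_j$ is forced by the $\bm{G}^n_1$-grading (it is the only basis element of $A$ of the correct degree), so only non-vanishing of these coefficients must be checked, and this is done by the explicit local model calculation of \cite[Section 6]{Sheridan2014}.

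The main obstacle is to transfer these computations, originally performed in the strictly exact relative Fukaya category of \cite{Sheridan2014}, to the monotone relative Fukaya category $\mathcal{F}_m(X^n_a, D)_w$ of Section~\ref{sec:relfuk}. The tool is Proposition~\ref{proposition:monrel}, which gives a first-order quasi-equivalence $\mathcal{F}_m(X,D)_w/\mathfrak{m}^2 \cong \mathcal{F}(X,D)_w/\mathfrak{m}^2$ (the hypothesis that $D$ has $\ge 2$ components is satisfied since $n \ge 4$); this controls $\mu^*$ modulo $r^2$, which is exactly what is needed for the HKR identity to the stated order. The $r$-independent contributions are computed from disks disjoint from $D$, on which the definitions of $\mathcal{F}_m$ and $\mathcal{F}$ tautologically agree. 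With these compatibilities in place, the $A_\infty$ algebra extracted from the construction of \cite{Sheridan2014} provides the required $\mathscr{A}$ of type A$^n_a$, and the identification $\widetilde{\mathscr{A}} \cong \bm{p}^* \underline{\mathscr{A}}$ from the first paragraph completes the proof.
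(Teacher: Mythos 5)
Your proposal identifies most of the right ingredients, but the logical structure is backwards in a way that leaves a genuine gap. You propose to \emph{define} $\mathscr{A}$ as the endomorphism algebra of $L$ in the exact $(a,\ldots,a)$-orbifold relative Fukaya category of \cite{Sheridan2014}, and then claim $\widetilde{\mathscr{A}} \cong \bm{p}^*\underline{\mathscr{A}}$. But $\widetilde{\mathscr{A}}$ lives in the \emph{monotone} relative Fukaya category $\mathcal{F}_m(X^n_a,D)_w$ of Section~\ref{sec:relfuk}, while your $\mathscr{A}$ comes from the \emph{exact} setup of \cite{Sheridan2014}. These are genuinely different analytic constructions (a point Section~\ref{subsec:irelfuk} emphasizes explicitly), and the only comparison the paper provides between them is Proposition~\ref{proposition:monrel}, which is a \emph{first-order} quasi-equivalence modulo $\mathfrak{m}^2$. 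Your ``orbifold equals equivariant'' descent argument in the first paragraph is valid only when both sides are defined within the \emph{same} analytic setup; it does not let you identify a monotone-setup subcategory with the pullback of an exact-setup algebra, and certainly not strictly. So the final sentence --- ``the identification $\widetilde{\mathscr{A}}\cong\bm{p}^*\underline{\mathscr{A}}$ from the first paragraph completes the proof'' --- does not follow.

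The fix is to reverse the logical order, which is what the paper's proof does. Because $\Gamma^n_a$ acts freely on the lifts of $L$, one chooses $\Gamma^n_a$-equivariant perturbation data for $\widetilde{\mathscr{A}}$ \emph{inside the monotone setup}; equivariant descent then produces a $\bm{G}^n_1$-graded $R_a$-linear $A_\infty$ algebra $\mathscr{A}$ with $\widetilde{\mathscr{A}}\cong\bm{p}^*\underline{\mathscr{A}}$ strictly and tautologically. What remains is to show this descended $\mathscr{A}$ has type A$^n_a$, and since that property depends only on the order-$0$ and order-$1$ coefficients of $\mu^*$, it \emph{can} be transferred across the first-order quasi-equivalence of Proposition~\ref{proposition:monrel} (together with the exact-setup identification $\mathcal{F}(\phi)/\mathfrak{m}^2 \cong \mathcal{F}(X^n_1,D,\bm{a})/\mathfrak{m}^2$ from \cite{Sheridan2014}). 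Your second and third paragraphs correctly sketch the verification of the Definition~\ref{definition:typean} conditions and correctly identify Proposition~\ref{proposition:monrel} as the transfer tool, but you should also separate the two steps of the first-order deformation-class computation as the paper does: the class is $\pm\sum_j r_ju_j$ in the non-orbifold relative Fukaya category of $X^n_1$ by \cite[Proposition 6.2]{Sheridan2014}, and then $\pm\sum_j r_ju_j^a$ in the orbifold category by \cite[Theorem 5.12]{Sheridan2014} --- it does not follow solely from the grading constraint, since the grading constraint alone would not rule out a vanishing coefficient.
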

\begin{proof}
Because $\Gamma^n_a$ acts freely on the set of lifts of $L$, we can choose $\Gamma^n_a$-equivariant perturbation data for $\widetilde{\scrA}$.
It follows that
\begin{equation} \widetilde{\scrA} \cong \bm{p}^* \underline{\scrA}\end{equation}
for some $\bm{G}^n_1$-graded, $R$-linear $A_{\infty}$ algebra $\scrA$.

Now we come to the subtlety alluded to in Remark \ref{rmk:subtlety}: instead of \eqref{eqn:relFlift} holding, we have \cite[Proposition 5.11]{Sheridan2015}. 
This proves the closely-related result that there is a $\bm{G}^n_1$-graded $R_a$-linear $A_{\infty}$ category $\cF(\phi)$, such that there is a strict $R/\mathfrak{m}^2$-linear isomorphism
\begin{equation} \cF(\phi) /\mathfrak{m}^2 \cong \cF(X^n_1,D,\bm{a}) /\mathfrak{m}^2,\end{equation}
where
\begin{equation} \mathfrak{m}:=(r_1, \ldots, r_n)  \subset R_a,\end{equation}
and there is also a fully faithful strict $\bm{p}^* R_a$-linear embedding
\begin{equation} \bm{p}^* \cF(\phi) \hookrightarrow \cF(X^n_a,D).\end{equation}

Modifying the proof of Proposition \ref{proposition:monrel} to take into account the $\Gamma^n_a$-equivariance, we obtain a first-order quasi-equivalence
\begin{equation} \scrA/\mathfrak{m}^2 \cong CF^*_{\cF(\phi)}(L,L)/\mathfrak{m}^2,\end{equation}
and hence a first-order quasi-equivalence
\begin{equation} \scrA/\mathfrak{m}^2 \cong \scrA'/\mathfrak{m}^2,\end{equation}
where
\begin{equation}\scrA' := CF^*_{\cF(X^n_1,D,\bm{a})}(L,L)/\mathfrak{m}^2.\end{equation}
Because being of type A$^n_a$ only depends on the zeroth- and first-order coefficients of the $A_{\infty}$ structure maps, it suffices to prove that $\scrA'$ is of type A$^n_a$.

$\scrA'$ is $\bm{G}^n_1$-graded and $R_a$-linear by definition.
We already saw that it has generators $p_K$ corresponding to the subsets $K \subset \{1,\ldots,n\}$. 
The $\bm{G}^n_1$-grading on these generators is computed in \cite[Proposition 3.3]{Sheridan2011} and \cite[Proposition 3.7]{Sheridan2011}: the result is that there is an isomorphism
\begin{equation} CF^*(L,L) \cong A\end{equation}
as $\bm{G}^n_1$-graded vector spaces, where $A$ is the $\bm{G}^n_1$-graded exterior algebra introduced in \S \ref{subsec:ana} (see equation \eqref{eqn:Aext}).

By \cite[Proposition 6.2]{Sheridan2015}, the order-$0$ cohomology algebra of $\scrA'$ is an exterior algebra and
\begin{equation} \Phi(\mu^* ) = \pm u_1 \ldots u_n + \mathfrak{m}.\end{equation}
\cite[Proposition 6.2]{Sheridan2015} also shows that the first-order deformation class of the endomorphism algebra of $L$ in the non-orbifold relative Fukaya category $\cF(X^n_1,D)$ is
\begin{equation} \pm \sum_{j=1}^n r_j u_j,\end{equation}
i.e., the first-order deformation classes are $u_j$. 
The first-order deformation classes of the orbifold relative Fukaya category $\cF(X^n_1,D,\bm{a})$ are obtained from those of the relative Fukaya category $\cF(X^n_1,D)$ by taking the $a$th power with respect to Yoneda product, by \cite[Theorem 5.12]{Sheridan2015}.
Thus, the first-order deformation class of $\cF(X^n_a,D,\bm{a})$ is
\begin{equation} \pm \sum_{j=1}^nr_ju_j^a.\end{equation}
It follows that (after an appropriate change of variables to fix the signs)
\begin{equation} \Phi(\mu^* ) = \tilde{Z}^n_a + \mathfrak{m}^2,\end{equation}
where $\tilde{Z}^n_a$ is as in \eqref{eqn:ztild}, and hence that $\scrA'$ (and therefore $\scrA$) is of type A$^n_a$, as required.
 \end{proof}

\begin{lemma}
\label{lemma:minchern}
If $X$ is simply-connected, then
\begin{equation} \bm{G}(X) \cong \bm{G}_{2N},\end{equation}
where $N \in \Z$ is the minimal Chern number on spherical classes, i.e., the generator of the image of the map
\begin{equation} c_1(TX): \pi_2(X) \To \Z,\end{equation}
and we recall that $\bm{G}_{2N}$ denotes the grading datum $\{\Z \To \Z/2N\}$.
\end{lemma}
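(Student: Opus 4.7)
The plan is to compute $H_1(\mathcal{G}X)$ directly from the homotopy long exact sequence of the fibration
\begin{equation}
\Lambda(n) \longrightarrow \mathcal{G}X \overset{\pi}{\longrightarrow} X,
\end{equation}
whose fiber $\Lambda(n) = U(n)/O(n)$ is the Lagrangian Grassmannian, and then identify the induced map $\Z \cong H_1(\mathcal{G}_xX) \To H_1(\mathcal{G}X)$ with the canonical projection $\Z \To \Z/2N$. First I would recall that $\pi_1(\Lambda(n)) \cong \Z$ is generated by the Maslov class, which also generates $H_1(\Lambda(n))$; this identifies $H_1(\mathcal{G}_xX)$ with $\Z$ in the desired way.

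Next, I would write out the relevant portion of the homotopy long exact sequence:
\begin{equation}
\pi_2(X) \overset{\partial}{\To} \pi_1(\Lambda(n)) \To \pi_1(\mathcal{G}X) \To \pi_1(X).
\end{equation}
By hypothesis $\pi_1(X) = 0$, so $\pi_1(\mathcal{G}X)$ is a quotient of $\pi_1(\Lambda(n)) \cong \Z$ by the image of $\partial$. The standard identification of $\partial$ with the Maslov index of spherical classes (obtained by pulling back $T X$ to $S^2$ along a representative and choosing a null-homotopy of the underlying map into $X$) shows that $\partial$ coincides with the Maslov index homomorphism $\mu: \pi_2(X) \To \Z$. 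Since $X$ is simply-connected, the Hurewicz map $\pi_2(X) \To H_2(X;\Z)$ is an isomorphism, and on spherical classes $\mu = 2c_1$; hence the image of $\partial$ is exactly $2N \cdot \Z$, where $N$ is the generator of the image of $c_1: \pi_2(X) \To \Z$.

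Consequently $\pi_1(\mathcal{G}X) \cong \Z/2N$, and since this group is already abelian, the Hurewicz theorem gives $H_1(\mathcal{G}X) \cong \Z/2N$. Finally, the map $H_1(\mathcal{G}_xX) \To H_1(\mathcal{G}X)$ is induced by the fiber inclusion and thus, under the identifications above, coincides with the canonical surjection $\Z \To \Z/2N$; this is by definition the grading datum $\bm{G}_{2N}$. The only point that requires care — and which I would single out as the main (though entirely standard) obstacle — is the identification of the transgression $\partial: \pi_2(X) \To \pi_1(\Lambda(n))$ with the Maslov homomorphism $\mu = 2c_1$; this is a classical computation for the Lagrangian Grassmannian bundle of a symplectic manifold and can be found, e.g., in \cite[Appendix C]{mcduffsalamon}.
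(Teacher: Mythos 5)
Your proof is correct and follows essentially the same route as the paper: both use the homotopy long exact sequence of the Lagrangian Grassmannian fibration, identify $\pi_1(\mathcal{G}_xX)\cong\Z$ via the Maslov class, and identify the connecting map $\pi_2(X)\To\pi_1(\mathcal{G}_xX)$ with evaluation of $2c_1(TX)$. Your write-up simply spells out the steps (vanishing of $\pi_1(X)$, Hurewicz to pass from $\pi_1$ to $H_1$, and the classical identification of the transgression with the Maslov homomorphism) that the paper leaves implicit.
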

\begin{proof}
Follows from the long exact sequence for the fibration $\cG X$:
\begin{equation} \pi_2(X) \To \pi_1(\cG_xX) \To \pi_1(\cG X) \To \pi_1(X),\end{equation}
together with the observation that $\pi_1(\cG_xX) \cong \Z$ (given by the Maslov class), and the first map in the exact sequence coincides with evaluation of $2c_1(TX)$.
 \end{proof}

The map
\begin{equation} \bm{q}: \bm{G}(X^n_a,D) \To \bm{G}(X^n_a)\end{equation}
coincides with the map
\begin{equation} \bm{G}^n_a \To \bm{G}_{2(n-a)}\end{equation}
of \eqref{eqn:qgna}.

\begin{corollary}
\label{corollary:anaemb}
If $2 \le a \le n-1$, then there is a fully faithful embedding of $\bm{G}_{2(n-a)}$-graded, $\C$-linear, non-curved $A_{\infty}$ categories
\begin{equation} \bm{q}_* \bm{p}^* \underline{\scrA} \otimes_R \C \hookrightarrow \cF(X^n_a)_{\bm{w}},\end{equation}
where $\scrA$ is an $A_{\infty}$ algebra of type A$^n_a$.
If $a \le n-2$ then $\bm{w} = 0$, but if $a=n-1$ then $\bm{w}$ may be non-zero.
\end{corollary}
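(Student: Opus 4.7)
The plan is to simply combine Proposition \ref{proposition:atild} with Lemma \ref{lemma:reltomon}, and then identify the eigenvalue $\bm{w}$ using Corollary \ref{corollary:tauobj}. First, the action of $\Gamma^n_a$ on $X^n_a$ by multiplying coordinates by $a$-th roots of unity freely permutes the lifts $L_\gamma$ of $L$, so by symmetry all lifts share a common value $w(L_\gamma) = \bm{w} \in \C$. Hence $\widetilde{\mathscr{A}}$ from Proposition \ref{proposition:atild} is genuinely a full subcategory of a single component $\mathcal{F}_m(X^n_a,D)_{\bm{w}}$, and is identified there with $\bm{p}^* \underline{\mathscr{A}}$ for some $\mathscr{A}$ of type A$^n_a$.

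Next, I would apply Lemma \ref{lemma:reltomon} with $X = X^n_a$ and $w = \bm{w}$, which provides a strict full embedding
\begin{equation}
\bm{q}_* \mathcal{F}_m(X^n_a,D)_{\bm{w}} \otimes_R \C \hookrightarrow \mathcal{F}(X^n_a)_{\bm{w}}
\end{equation}
of $\bm{G}(X^n_a)$-graded, $\C$-linear $A_\infty$ categories. Restricting to the full subcategory $\widetilde{\mathscr{A}} \cong \bm{p}^* \underline{\mathscr{A}}$ yields the desired embedding $\bm{q}_* \bm{p}^* \underline{\mathscr{A}} \otimes_R \C \hookrightarrow \mathcal{F}(X^n_a)_{\bm{w}}$. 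The grading datum on the target is $\bm{G}(X^n_a) \cong \bm{G}_{2(n-a)}$ by Lemma \ref{lemma:minchern} (since $X^n_a$ is simply connected and the minimal Chern number on $\pi_2$ is $n-a$), and the composite grading datum morphism $\bm{q} \circ \bm{p}: \bm{G}^n_a \To \bm{G}_{2(n-a)}$ agrees with the morphism defined just before Corollary \ref{corollary:hhg2}, as required. The non-curved property is inherited from $\mathcal{F}(X^n_a)_{\bm{w}}$.

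Finally, to pin down $\bm{w}$, I would invoke Corollary \ref{corollary:tauobj}: if $\bm{w} \neq 0$, then $2\tau d_j$ must be an integer multiple of $\sum_i d_i \ell_i$ for every $j$. Here each $d_j = 1$ and $\ell_j = a$ by \eqref{eqn:link}, so $\sum_i d_i \ell_i = na$, while $\tau = an/(2(n-a))$ by \eqref{eqn:xnatau}. The ratio is therefore $2\tau d_j / \sum_i d_i \ell_i = 1/(n-a)$, which is an integer precisely when $a = n-1$. Hence for $2 \le a \le n-2$ the only possibility is $\bm{w} = 0$, while for $a = n-1$ no constraint is imposed and $\bm{w}$ may be non-zero.

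This proof is essentially an assembly of existing results, so no real obstacle arises; the only points requiring care are (i) verifying that the full subcategory $\widetilde{\mathscr{A}}$ indeed lies in a single eigenvalue component (handled by the $\Gamma^n_a$-symmetry argument above), and (ii) confirming that the grading datum identifications match, which is immediate from Lemma \ref{lemma:minchern} and the conventions fixed before Corollary \ref{corollary:hhg2}.
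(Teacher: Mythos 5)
Your proof is correct and follows essentially the same route as the paper: both assemble Proposition \ref{proposition:atild} with Lemma \ref{lemma:reltomon}, and both pin down $\bm{w}$ via the same monotonicity constraint (you route it through Corollary \ref{corollary:tauobj} while the paper applies Lemma \ref{lemma:c1disk} directly to deduce that Maslov index $2$ disks require $u\cdot D_j = 1/(n-a)\in\Z$, which are equivalent computations). One small slip: there is no composite $\bm{q}\circ\bm{p}$ of grading-datum morphisms (since $\bm{p}$ lands in $\bm{G}^n_1$, not $\bm{G}^n_a$) — what you actually need, and what the paper records, is that the geometric map $\bm{q}_{X^n_a,D}:\bm{G}(X^n_a,D)\To\bm{G}(X^n_a)$ coincides with the abstract $\bm{q}:\bm{G}^n_a\To\bm{G}_{2(n-a)}$ defined before Corollary \ref{corollary:hhg2}.
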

\begin{proof}
By Lemma \ref{lemma:c1disc}, the Maslov index of a disc with boundary on a lift of $L$ is $2(n-a) u \cdot D$.
Therefore, if $a \le n-2$ then all lifts of $L$ cannot bound Maslov index $2$ discs, and therefore have $w(L) = 0$.
If $a=n-1$, then $L$ may bound Maslov index $2$ discs.
All lifts of $L$ have the same value $\bm{w}$ of $w(L)$, by symmetry.

The result then follows from Proposition \ref{proposition:atild} and Lemma \ref{lemma:reltomon}.
 \end{proof}

\begin{remark}
We will prove (in Corollary \ref{corollary:w}) that, when $a=n-1$, we have $\bm{w} = -a!$.
\end{remark}

\subsection{The quantum cohomology of $X^n_a$}
\label{subsec:qhxna}

In this section, we will study $QH^*(X^n_a)$, using the closed--open and open--closed string maps.
First we would like to understand $H^*(X^n_a)$ better.
We recall the Lefschetz decomposition of $H^*(X^n_a)$:
\begin{equation} H^*(X^n_a) \cong H^*_H(X^n_a) \oplus H^*_P(X^n_a),\end{equation}
where `$H$' stands for `Hodge' and `$P$' stands for `primitive'.
By the Lefschetz hyperplane theorem, the primitive cohomology is concentrated in the middle degree $d = n-2$.
So the Hodge part is generated by the hyperplane class $P$:
\begin{equation} H^*_H(X^n_a) \cong \C[P]/P^{n-1},\end{equation}
and the primitive part is
\begin{equation} H^*_P(X^n_a) \cong ker(\wedge P: H^d(X^n_a) \To H^{d+2}(X^n_a)).\end{equation}

\begin{lemma}
\label{lemma:lef}
The image of the map
\begin{equation} H^*_c(X^n_a \setminus D) \To  H^*(X^n_a) \end{equation}
contains the primitive cohomology $H^*_P(X^n_a)$.
\end{lemma}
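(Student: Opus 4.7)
The plan is to reduce the statement to a cohomological vanishing result via the long exact sequence of the pair, then use Lefschetz-type arguments together with a piece of Hodge theory. By the long exact sequence in cohomology for the pair $(X^n_a, D)$, the image of $H^*_c(X^n_a \setminus D) \to H^*(X^n_a)$ equals the kernel of the restriction map $i^* \colon H^*(X^n_a) \to H^*(D)$. So it suffices to prove that every primitive class $\alpha \in H^{n-2}_P(X^n_a)$ satisfies $i^* \alpha = 0$.

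First I would show $i_j^* \alpha = 0$ for each smooth component $i_j \colon D_j \hookrightarrow X^n_a$. Each $D_j = X^n_a \cap \{z_j = 0\}$ is a smooth hyperplane section of $X^n_a$, so $[D_j] = P \in H^2(X^n_a)$. The projection formula gives $(i_j)_* \circ i_j^* = (-) \cup P$, which combined with primitivity of $\alpha$ yields $(i_j)_*(i_j^* \alpha) = P \cup \alpha = 0$. By the weak Lefschetz theorem applied to $D_j \subset X^n_a$ (of complex codimension one, with $\dim_{\C} D_j = n-3$), the pullback $i_j^* \colon H^{n-4}(X^n_a) \to H^{n-4}(D_j)$ is an isomorphism; taking Poincar\'{e}-dual adjoints on both compact oriented manifolds, the Gysin pushforward $(i_j)_* \colon H^{n-2}(D_j) \to H^n(X^n_a)$ is also an isomorphism. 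Hence $i_j^* \alpha = 0$ in $H^{n-2}(D_j)$, and by functoriality $\alpha$ restricts to zero on every non-empty intersection $D_I = \bigcap_{j \in I} D_j$.

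Next I would conclude that $i^* \alpha = 0$ on the singular divisor $D$ itself by invoking Deligne's mixed Hodge structure. Since $X^n_a$ is smooth and projective, $H^{n-2}(X^n_a)$ is pure of weight $n-2$; hence the image of $i^*$ is a pure sub-Hodge structure of weight $n-2$ in $H^{n-2}(D)$ and therefore intersects $W_{n-3} H^{n-2}(D)$ trivially. On the other hand, the descent spectral sequence for the simplicial resolution $D^{\bullet}$ (with $D^p = \bigsqcup_{|I|=p+1} D_I$), whose $E_1$-page is $E_1^{p,q} = \bigoplus_{|I|=p+1} H^q(D_I)$, identifies $\mathrm{Gr}^W_{n-2} H^{n-2}(D)$ with a subquotient of $E_1^{0,n-2} = \bigoplus_j H^{n-2}(D_j)$, via the edge homomorphism given by restriction to each component. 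Consequently the kernel of $H^{n-2}(D) \to \bigoplus_j H^{n-2}(D_j)$ is contained in $W_{n-3} H^{n-2}(D)$. By the previous step the class $i^* \alpha$ lies in this kernel; being also pure of weight $n-2$, it must vanish.

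The main obstacle is this final step, passing from vanishing on each smooth component $D_j$ to vanishing on the singular divisor $D$: a priori the kernel of $H^{n-2}(D) \to \bigoplus_j H^{n-2}(D_j)$ is nontrivial (coming from connecting homomorphisms $H^{n-3}(D_{ij}) \to H^{n-2}(D)$ in the Mayer-Vietoris sequence), and one must rule out the possibility that the global class $i^*\alpha$ falls into this kernel in a nontrivial way. Mixed Hodge theory handles this cleanly through the weight argument above; a Hodge-theory-free alternative would be a direct Mayer-Vietoris argument exploiting that the obstruction \v{C}ech cocycle on overlaps $D_{ij}$, being built from restrictions of a globally-defined class on $X^n_a$, admits compatible choices of primitives.
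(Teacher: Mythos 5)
Your proof is correct but takes a genuinely different route from the paper's, even though both start from the same reduction via the long exact sequence of $(X^n_a, D)$. The paper shows that $\wedge P \colon H^{n-2}(D) \to H^{n}(D)$ is injective (which forces $i^*\alpha = 0$, since primitivity places $i^*\alpha$ in the kernel of $\wedge P$), and proves injectivity by running the Mayer--Vietoris spectral sequence for $D$ and checking stratum by stratum that $\wedge P$ is injective on each relevant $E_1$-term: every stratum $D_K$ is a smooth hypersurface in a projective space, with primitive cohomology concentrated in middle degree $n-2-|K|$, and a short degree count shows the anti-diagonal $p+q=n-2$ never lands on that degree. You instead show directly that $i_j^*\alpha = 0$ on each smooth component, via the projection formula $(i_j)_* i_j^* = (-)\cup P$ combined with injectivity of the Gysin map $(i_j)_* \colon H^{n-2}(D_j) \to H^{n}(X^n_a)$, which you obtain as the Poincar\'e-dual adjoint of the weak Lefschetz isomorphism $i_j^*\colon H^{n-4}(X^n_a) \to H^{n-4}(D_j)$; then you pass from the components to the singular $D$ via Deligne's weight filtration, using strictness of MHS morphisms and the identification of $W_{n-3}H^{n-2}(D)$ with the kernel of restriction to the components, i.e.\ the part of $H^{n-2}(D)$ lying below the $E_2^{0,n-2}$ edge of the weight spectral sequence. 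Both arguments ultimately rest on the same Mayer--Vietoris/weight spectral sequence for $D$, but the paper's stays within classical Lefschetz theory and inspects all columns of the $E_1$-page, whereas yours only uses the edge column and needs mixed Hodge theory (strictness and $E_2$-degeneration) to close the gap from the components to $D$. The paper's version is thus more elementary; yours localizes the Lefschetz input to each smooth component separately and lets Hodge-theoretic weights do the gluing, which is arguably more conceptual and would generalize more readily when the strata are not hypersurfaces in projective space.
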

\begin{proof}
We observe that
\begin{equation} H^*_c(X^n_a \setminus D) \cong H^*(X^n_a,D),\end{equation}
and apply the long exact sequence in cohomology for the pair $(X^n_a,D)$:
\begin{equation} \ldots \To H^d(X^n_a,D) \To H^d(X^n_a) \To H^d(D) \To \ldots\end{equation}
It suffices to prove that the image of the primitive cohomology in $H^d(D)$  vanishes.
To do this, it is sufficient to show that the map
\begin{equation} \wedge P: H^d(D) \To H^{d+2}(D)\end{equation}
is injective   (where $P$ is the restriction of the hyperplane class to $D$).
In other words, $D$ has no `primitive cohomology' in degree $d$.

To understand the cohomology of $D$, we apply the generalized Mayer-Vietoris principle \cite[Proposition 8.8]{Bott1982} to the open cover by neighbourhoods of the components $D_i$.
This yields a bounded double complex, hence a spectral sequence converging to $H^*(D)$, with $E_1$ page
\begin{equation} E_1^{p,q} \cong \bigoplus_{K \subset [k], |K| = p-1} H^q(D_K),\end{equation}
where $D_K$ denotes the intersection of all divisors indexed by $i \in K$.
The map $\wedge P$ defines a homomorphism from this spectral sequence to itself, of degree $2$.

Now observe that $D_K$ is a hypersurface in a projective space $\CP{d+1-|K|}$, so only has primitive cohomology in degree $d-|K|$.
It follows that the map
\begin{equation} \wedge P: H^q(D_K) \To H^{q+2}(D_K)\end{equation}
is injective unless $q=d-|K|$ or $q=2(d-|K|)$.
Note that neither of these conditions can be satisfied when $d=p+q = |K|+1+q$.
Therefore, the map
\begin{equation} \wedge P: E_1^{p,q} \To E_1^{p,q+2}\end{equation}
is injective for all $p+q = d$.
Since the spectral sequence converges, it follows that
\begin{equation} \wedge P: H^d(D) \To H^{d+2}(D)\end{equation}
is injective.
This completes the proof.
 \end{proof}

Now let $\Gamma^n_a$ denote the covering group of the branched cover
\begin{equation}\phi: (X^n_a,D) \To (X^n_1,D).\end{equation}
It clearly acts on $X^n_a$, and hence on the cohomology $H^*(X^n_a)$.

\begin{lemma}
\label{lemma:eqhodg}
The $\Gamma^n_a$-invariant part of $H^*(X^n_a)$ is exactly the Hodge part of cohomology:
\begin{equation} H^*(X^n_a)^{\Gamma^n_a} \cong H^*_H(X^n_a).\end{equation}
\end{lemma}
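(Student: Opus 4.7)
The plan is to show both inclusions. For the inclusion $H^*_H(X^n_a) \subseteq H^*(X^n_a)^{\Gamma^n_a}$, I would simply note that $H^*_H(X^n_a)$ is generated as a $\C$-algebra by the hyperplane class $P = c_1(\mathcal{O}(1))|_{X^n_a}$, and that $\Gamma^n_a$ acts by coordinate rescaling on $\CP{n-1}$, which preserves $\mathcal{O}(1)$ up to isomorphism, hence fixes $P$.

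For the reverse inclusion, the strategy is Galois descent along the branched cover $\phi: X^n_a \to X^n_1$. The key point is that the restriction
\begin{equation}
\phi_0: X^n_a \setminus D \longrightarrow X^n_1 \setminus D
\end{equation}
is an \emph{unramified} Galois cover with deck group $\Gamma^n_a$ (since $\phi$ ramifies only along $D$, and $\phi^{-1}(D) = D$). Standard Galois descent for complex coefficients then yields an isomorphism
\begin{equation}
\phi_0^*: H^*_c(X^n_1 \setminus D) \xrightarrow{\;\cong\;} H^*_c(X^n_a \setminus D)^{\Gamma^n_a}.
\end{equation}

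Now suppose $\alpha \in H^*(X^n_a)^{\Gamma^n_a}$ lies in the primitive part. Since primitive cohomology is concentrated in degree $d = n-2$, Lemma \ref{lemma:lef} provides a lift $\tilde{\alpha} \in H^{n-2}_c(X^n_a \setminus D)$. The extension-by-zero map $H^{n-2}_c(X^n_a \setminus D) \to H^{n-2}(X^n_a)$ is $\Gamma^n_a$-equivariant, so after averaging I may assume $\tilde{\alpha}$ is $\Gamma^n_a$-invariant. By the Galois descent isomorphism, $\tilde{\alpha} = \phi_0^* \beta$ for some $\beta \in H^{n-2}_c(X^n_1 \setminus D)$. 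The naturality square
\begin{equation}
\begin{diagram}
H^{n-2}_c(X^n_1 \setminus D) & \rTo & H^{n-2}(X^n_1) \\
\dTo^{\phi_0^*} && \dTo^{\phi^*} \\
H^{n-2}_c(X^n_a \setminus D) & \rTo & H^{n-2}(X^n_a)
\end{diagram}
\end{equation}
then shows that $\alpha = \phi^*(\bar{\beta})$, where $\bar{\beta} \in H^{n-2}(X^n_1)$ denotes the image of $\beta$.

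The conclusion follows from two observations about $X^n_1$: it is isomorphic to $\CP{n-2}$, and hence has no primitive cohomology, so $H^*(X^n_1) = H^*_H(X^n_1) = \C[P_1]/P_1^{n-1}$; and $\phi^* P_1 = a\cdot P$, so $\phi^* H^*(X^n_1) \subseteq H^*_H(X^n_a)$. Therefore $\alpha$ lies simultaneously in the Hodge and primitive parts of $H^*(X^n_a)$, forcing $\alpha = 0$. Hence $H^*(X^n_a)^{\Gamma^n_a}$ contains no nontrivial primitive class, which completes the proof. The main obstacle I anticipate is the Galois descent step: one needs a clean reference for the isomorphism $H^*_c(\cdot)^{\Gamma^n_a} \cong H^*_c(\cdot/\Gamma^n_a)$ with $\C$-coefficients for an unramified Galois cover of smooth varieties; this is standard but should be cited carefully, either via sheaf cohomology of the constant sheaf on the quotient, or via the transfer map combined with $\phi_0^* (\phi_0)_* = \sum_{g \in \Gamma^n_a} g^*$.
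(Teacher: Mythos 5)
Your proof is correct and follows essentially the same route as the paper: reduce to killing the invariant primitive part, lift a primitive invariant class through $H^*_c(X^n_a \setminus D)$ via Lemma \ref{lemma:lef}, descend along the unramified cover of complements to $H^*_c(X^n_1 \setminus D)$, and observe that $X^n_1 \cong \CP{n-2}$ has only Hodge cohomology. The only cosmetic difference is that you average an explicit lifted cocycle where the paper invokes exactness of $\Gamma$-invariants (equivalently, realizes the descent isomorphism by averaging differential forms), and you add the helpful observation $\phi^* P_1 = a P$ to pin down the image inside $H^*_H$.
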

\begin{proof}
The action of $\Gamma^n_a$ obviously fixes the Hodge part of the cohomology, and preserves the primitive cohomology, so it suffices to prove that the $\Gamma^n_a$-fixed part of $H^*_P(X^n_a)$ is trivial.
By Lemma \ref{lemma:lef}, the map
\begin{equation} H^*_c(X^n_a \setminus D) \To H^*_P(X^n_a)\end{equation}
is surjective.
This map is clearly $\Gamma^n_a$-equivariant, and it follows that the induced map
\begin{equation} H^*_c(X^n_a \setminus  D)^{\Gamma^n_a} \To H^*_P(X^n_a)^{\Gamma^n_a}\end{equation}
is surjective.

The restriction $\phi: X^n_a \setminus D \To X^n_1 \setminus D$ is an unbranched cover, so we have
\begin{equation} H^*_c(X^n_a \setminus D)^{\Gamma^n_a} \cong H^*_c(X^n_1 \setminus D)\end{equation}
(using de Rham cohomology, this can be realized by averaging differential forms).

Therefore, because the diagram
\begin{equation} \xymatrix{
H^*_c(X^n_a \setminus D)^{\Gamma^n_a} \ar[r] & H^*(X^n_a)^{\Gamma^n_a} \\
H^*_c(X^n_1 \setminus D) \ar[u]^{\phi^*} \ar[r] & H^*(X^n_1) \ar[u]^{\phi^*}
} \end{equation}
commutes, the map
\begin{equation} \phi^*: H^*(X^n_1) \To H^*(X^n_a)^{\Gamma^n_a} \end{equation}
contains the equivariant primitive cohomology classes in its image.
However, $X^n_1 \cong \CP{n-2}$, so its cohomology is generated by the hyperplane class $P$; it follows that the only equivariant primitive cohomology class is $0$.
 \end{proof}

\begin{lemma}
\label{lemma:primspheres}
The homology classes of the lifts of the Lagrangian sphere $L$ to $X^n_a$ span the primitive homology of $X^n_a$.
\end{lemma}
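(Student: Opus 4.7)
The plan is to use the $\Gamma^n_a$-equivariant structure, together with the Lefschetz decomposition and the explicit construction of $L$, to pick off each non-trivial isotypic component of the primitive homology. Concretely, let $V \subset H_{n-2}(X^n_a;\C)$ denote the span of the classes $[L_\gamma]$. Since the lifts are freely permuted by $\Gamma^n_a$, $V$ is a $\Gamma^n_a$-submodule, and its $\chi$-isotypic component is generated (for each character $\chi$) by the single weighted sum
\begin{equation}
[L]_\chi := \sum_{\gamma \in \Gamma^n_a} \chi(\gamma)^{-1}[L_\gamma].
\end{equation}
By Lemma \ref{lemma:eqhodg}, together with the fact that the Poincar\'e duality isomorphism is $\Gamma^n_a$-equivariant, the trivial isotypic component of $H_{n-2}(X^n_a;\C)$ is exactly the Hodge part, so the primitive homology equals $\bigoplus_{\chi \neq 1} H_{n-2}(X^n_a;\C)_\chi$. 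Thus the statement reduces to showing that $[L]_\chi \neq 0$ in $H_{n-2}(X^n_a;\C)_\chi$ for every non-trivial character $\chi$, and that the collection of these weighted sums exhausts the primitive homology on dimensional grounds.

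For the non-vanishing, I would compute the intersection pairings $\langle [L_\gamma], [L_{\gamma'}] \rangle$ directly from the construction of $L$ in \cite{Sheridan2011a}, which realizes $L$ as an iterated product-of-arcs inside an affine toric chart of $X^n_1 \setminus D$. In the branched cover $X^n_a$, the preimage of this chart is a disjoint union of analogous toric charts indexed by $\Gamma^n_a$, in each of which a single lift $L_\gamma$ is embedded; away from the ramification divisor $D$, the lifts are disjoint, and intersections occur only in a small neighbourhood of $D$ that can be modeled locally by the standard Picard--Lefschetz picture of vanishing cycles in an $A_{a-1}$-Milnor fibre. The resulting matrix $M_{\gamma,\gamma'} := \langle [L_\gamma],[L_{\gamma'}] \rangle$ is $\Gamma^n_a$-equivariant (i.e.\ depends only on $\gamma - \gamma'$), so it is diagonalized by the characters of $\Gamma^n_a$. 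Its eigenvalue at $\chi$ is exactly $\langle [L]_\chi, [L]_{\chi^{-1}} \rangle$ up to a factor of $|\Gamma^n_a|$, so it suffices to show that this eigenvalue is non-zero for each $\chi \neq 1$. This should follow from an explicit evaluation: the eigenvalues turn out to be products of Gauss-sum-type expressions built from the local Picard--Lefschetz intersection data, and one checks they vanish only at $\chi = 1$.

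Finally, one compares dimensions. Counting characters $\chi \neq 1$ of $\Gamma^n_a = (\Z/a)^n/(\Z/a)$ gives $a^{n-1} - 1$; matching this with the classical formula for the dimension of primitive cohomology of a Fermat hypersurface $X^n_a$ (computable either via Griffiths residues or directly from the decomposition of $H^{n-2}(X^n_a;\C)$ as a $\Gamma^n_a$-module) shows that the nonzero $[L]_\chi$ in fact span all of $H^P_{n-2}(X^n_a;\C)$.

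The main obstacle will be the explicit local intersection computation near the ramification divisor. While it is morally clear that the lifts $L_\gamma$ behave like the vanishing cycles of the degeneration $\phi:X^n_a \to X^n_1$, turning this into a rigorous calculation of the full pairing matrix (with correct signs arising from orientations, gradings and spin structures) requires a careful local model; absent a slicker conceptual argument, this is the step that will take the most work.
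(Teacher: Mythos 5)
Your approach is genuinely different from the paper's, but it has a fatal gap in the final step. You assert that the number of non-trivial characters of $\Gamma^n_a$, namely $a^{n-1}-1$, matches the dimension of the primitive cohomology of $X^n_a$, and use this to conclude that the $[L]_\chi$ span. This is false: for the cubic surface ($n=4$, $a=3$) one has $a^{n-1}-1 = 26$, while the primitive $H^2$ has dimension $6$. More generally, the classical description of the Hodge structure of a Fermat hypersurface (due to Katz, Ogus, and others) shows that only the \emph{admissible} characters contribute a one-dimensional summand to the primitive cohomology; the remaining non-trivial isotypic components vanish. So there are (usually many) non-trivial $\chi$ with $H_{n-2}(X^n_a)_\chi = 0$, and hence with $[L]_\chi = 0$; your reduction to ``show $[L]_\chi \neq 0$ for every non-trivial $\chi$'' is asking for something that is both false and unnecessary. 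Compounding this, your proposed test for non-vanishing, the eigenvalue $\langle [L]_\chi, [L]_{\chi^{-1}}\rangle$ of the intersection matrix, cannot detect non-vanishing when the middle dimension $n-2$ is odd and $\chi^2 = 1$: there the intersection form is skew and the self-pairing vanishes automatically even when $[L]_\chi \neq 0$.

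The isotypic-component idea and the use of Lemma \ref{lemma:eqhodg} to identify the primitive part with the non-trivial isotypic part are fine, and one could imagine repairing the strategy by pinning down exactly which characters are admissible and verifying that the eigenvalues of the intersection matrix are non-zero precisely for those; but this would require the explicit Picard--Lefschetz computation you defer, together with the arithmetic of the Fermat motive. The paper takes a completely different and much lighter route: it works with the affine piece $X^n_a \setminus D$, identifies it up to homotopy with its coamoeba (an $(n-1)$-torus with $a^{n-1}$ points removed) via \cite[Proposition 2.2]{Sheridan2011a}, observes that the $[L_\gamma]$ correspond to spheres around the removed points and that together with the coordinate subtori these span the middle homology of the affine piece, notes that the coordinate subtori die upon compactification, and then invokes Lemma \ref{lemma:lef} and Poincar\'e duality to see that the image of $H_{n-2}(X^n_a\setminus D) \to H_{n-2}(X^n_a)$ contains the primitive homology. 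No intersection pairings or character arithmetic are needed.
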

\begin{proof}
We regard $X^n_a \setminus D$ as a submanifold of
\begin{equation} \CP{n-1} \setminus D \cong (\C^*)^{n-1}.\end{equation}
We observe that the argument map
\begin{equation} \mathrm{Arg}: X^n_a \setminus D \To (S^1)^{n-1}\end{equation}
is a homotopy equivalence onto its image (the `coamoeba').
This follows from \cite[Proposition 2.2]{Sheridan2011}.
The coamoeba of $X^n_a \setminus D$ is a $\Gamma^n_a$-cover of the coamoeba of the pair of pants $X^n_1 \setminus D$, which is homotopy equivalent to $(S^1)^{n-1}$ with one point removed: so $X^n_a \setminus D$ is homotopy-equivalent to an $(n-1)$-torus with $a^{n-1}$ points removed.
The homology classes of the lifts of the Lagrangian $L$ correspond to balls around each removed point by construction (see \cite[Proposition 2.6]{Sheridan2011}).

Therefore the lifts of $L$, together with the $n-1$ coordinate $(n-2)$-tori, span the middle-dimensional homology of $X^n_a \setminus D$.
The coordinate $(n-2)$-tori correspond to small tori near the zero-dimensional strata of the boundary divisor $D$; when we compactify by adding the divisor back in, their homology classes disappear because they are bounded by polydiscs near $D$.

It follows that the homology classes of the Lagrangian spheres span the image of the map
\begin{equation} H_{n-2}(X^n_a \setminus D) \To H_{n-2}(X^n_a).\end{equation}
It follows by Lemma \ref{lemma:lef}, using Poincar\'{e} duality, that the homology classes of the Lagrangian spheres span the primitive homology.
 \end{proof}

Now we consider the quantum cohomology, $QH^*(X^n_a,D)$.
It also admits a $\Gamma^n_a$-action, so we may talk about the invariant part.
The closed--open string map intertwines this $\Gamma^n_a$-action with the $\Gamma^n_a$-action on the relative Fukaya category, so we have a map
\begin{equation} \CO: QH^*(X^n_a,D)^{\Gamma^n_a} \To HH^*_{\bm{G}}(\cF_m(X^n_a,D))^{\Gamma^n_a}\end{equation}
on the invariant parts.

\begin{proposition}
\label{proposition:qhfromhh}
Let $\widetilde{\scrA}$ be the full subcategory of $\cF_m(X^n_a,D)$ generated by lifts of $L$.
If $2 \le a \le n-1$, then the map
\begin{equation} \CO: QH^*(X^n_a,D)^{\Gamma^n_a} \To HH^*_{\bm{G}}\left(\widetilde{\scrA} \right)^{\Gamma^n_a}\end{equation}
is an isomorphism and, in the notation of Corollary \ref{corollary:hhanatild}, sends $P \mapsto \gamma + \bm{w} \cdot T$, where $\bm{w} \in \Z$ is the integer appearing in Corollary \ref{corollary:anaemb}.
In particular, the subalgebra of $QH^*(X^n_a,D)$ generated by $P$ is isomorphic to
\begin{equation} \C[P]/\tilde{q}^n_a(P - \bm{w} \cdot T),\end{equation}
where $\tilde{q}^n_a$ is defined in \eqref{eqn:tildeq}.
\end{proposition}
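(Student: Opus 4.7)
The plan is to compute $\mathcal{CO}(P)$ explicitly using Proposition \ref{proposition:cod}, then leverage the fact that $\mathcal{CO}$ is a $\Gamma^n_a$-equivariant algebra homomorphism together with Corollary \ref{corollary:hhanatild} to pin down both the image and the relations. The identification of the sources and targets as free $R$-modules of the same finite rank will let us upgrade the computation on the subalgebra generated by $P$ to an isomorphism of invariants.

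First I would compute $\mathcal{CO}(P)$. Each component $D_j = \{z_j = 0\} \cap X^n_a$ is Poincar\'{e} dual to the hyperplane class $P$, so $[D_j] = P$ and $d_j = 1$ for all $j$. From \eqref{eqn:link} and \eqref{eqn:xnatau} one computes $2\tau/\sum_i d_i \ell_i = 1/(n-a)$, so the quantity $T$ of Proposition \ref{proposition:cod} has fractional exponents and the curvature correction vanishes unless $a=n-1$; when $a \le n-2$ this is consistent with $\bm{w} = 0$ from Corollary \ref{corollary:anaemb}, and when $a = n-1$ we get $T = r_1\cdots r_n$ as in Corollary \ref{corollary:hhanatild}. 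In either case Proposition \ref{proposition:cod} yields $\mathcal{CO}([D_j]) = [r_j \partial\mu^*/\partial r_j] + \bm{w}\cdot T \cdot e$, and (as is visible from the HKR image $r_j u_j^a$ computed in the proof of Proposition \ref{proposition:hhana}, where the Jacobian relations force $r_1 u_1^a = \cdots = r_n u_n^a$) this class agrees with $\gamma$ of Corollary \ref{corollary:hhanatild} independently of $j$. Hence $\mathcal{CO}(P) = \gamma + \bm{w}\cdot T$.

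Next, because $\mathcal{CO}$ is an algebra homomorphism, $\mathcal{CO}(P^k) = (\gamma+\bm{w} T)^k$ for all $k$, and by Corollary \ref{corollary:hhanatild} the elements $\{1,\gamma,\ldots,\gamma^{n-2}\}$ form an $R$-basis of the subalgebra $R[\gamma]/\tilde{q}^n_a(\gamma) \subset HH^*_{\bm{G}}(\widetilde{\mathscr{A}}|R)^{\Gamma^n_a}$, so the unitriangular change of variables $\tilde\gamma = \gamma + \bm{w} T$ shows that $\{1,\gamma+\bm{w}T,\ldots,(\gamma+\bm{w}T)^{n-2}\}$ is also a basis of the same subalgebra, with unique defining relation $\tilde{q}^n_a((\gamma+\bm{w}T) - \bm{w}T) = 0$. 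To conclude that $\mathcal{CO}$ is an isomorphism of $\Gamma^n_a$-invariants, I would compare ranks: by Lemma \ref{lemma:eqhodg}, $QH^*(X^n_a,D)^{\Gamma^n_a} = (\C[P]/P^{n-1})\otimes R$ is free of rank $n-1$; and via the HKR argument of Section \ref{subsec:comphh}, with $\Gamma^n_a$-invariants corresponding to $(\Gamma^n_a)^*$-invariants on the $\bm{G}^n_1$-graded side (as in Corollary \ref{corollary:hh2na}), the invariant Hochschild cohomology lies in the $(\Gamma^n_a)^*$-invariant Jacobian ring of $\tilde{Z}^n_a$ in $R[U]$; by the argument of Lemma \ref{lemma:novpoly} adapted to keep the $r_j$ formal, this invariant ring is exactly $R[\gamma]/\tilde{q}^n_a(\gamma)$, also free of rank $n-1$. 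The map sends basis to basis, hence is an isomorphism, and the presentation of the subalgebra generated by $P$ follows at once.

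The main obstacle is the rank computation on the Hochschild side: Corollary \ref{corollary:hhanatild} identifies the \emph{subalgebra generated by} $\gamma$, but to get an honest isomorphism of $\Gamma^n_a$-invariants I need to know this subalgebra is the \emph{entirety} of $HH^*_{\bm{G}}(\widetilde{\mathscr{A}}|R)^{\Gamma^n_a}$. This requires promoting the Jacobian-ring computation underlying Proposition \ref{proposition:hhana} to the $R$-linear level and verifying that the only $(\Gamma^n_a)^*$-invariant classes in the full Koszul complex $R[U]\otimes A$ live in polyvector degree zero and are generated by $r_j u_j^a$, the higher polyvector-degree contributions being killed by $\bm{G}^n_1$-grading and parity constraints. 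Granting this (which follows by the same elimination-order Gr\"obner basis argument used in Lemma \ref{lemma:novpoly}), the proof is complete; and even without it, the injectivity of $\mathcal{CO}$ on the rank-$(n-1)$ module $QH^*(X^n_a,D)^{\Gamma^n_a}$ combined with the subalgebra statement of Corollary \ref{corollary:hhanatild} is already enough to deduce the advertised relation $\tilde{q}^n_a(P - \bm{w}\cdot T) = 0$ holding in $QH^*(X^n_a,D)$.
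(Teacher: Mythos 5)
Your proposal is correct and follows essentially the same strategy as the paper's proof: compute $\mathcal{CO}(P)=\gamma+\bm{w}\cdot T$ via Proposition \ref{proposition:cod}, observe via Corollary \ref{corollary:hhanatild} and the Jacobian-ring computation that the equivariant Hochschild cohomology is generated by $\gamma$ (making $\mathcal{CO}$ surjective), and conclude it is an isomorphism by comparing ranks of free $R$-modules on both sides. Two points where you are slightly more careful than the written argument: the correct common rank is $n-1$ (the paper's proof writes ``rank $n-2$'', evidently a typo, since $\C[P]/P^{n-1}$ and $R[\gamma]/\tilde{q}^n_a(\gamma)$ each have a basis $1,\dots,P^{n-2}$ resp.\ $1,\dots,\gamma^{n-2}$); and you explicitly flag that Corollary \ref{corollary:hhanatild} as stated identifies only the subalgebra generated by $\gamma$, whereas the surjectivity step requires knowing that this subalgebra exhausts $HH^*_{\bm{G}}(\widetilde{\mathscr{A}})^{\Gamma^n_a}$ — a fact which follows, as you note, from the identification of the full Hochschild cohomology with the Jacobian ring of $\tilde Z^n_a$ in the proof of Proposition \ref{proposition:hhana} together with the observation (made in Lemma \ref{lemma:novpoly}) that the $\Gamma$-degree-zero part of that ring is generated by $u_1\cdots u_n$ and the $u_j^a$, all of which coincide with multiples of $\gamma$ modulo the Jacobian ideal.
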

\begin{proof}
By Corollary \ref{corollary:anaemb}, we have
\begin{equation} \widetilde{\scrA} \cong \bm{p}^* \underline{\scrA},\end{equation}
where $\scrA$ is an $A_{\infty}$ algebra of type A$^n_a$.
We therefore have
\begin{equation} HH^*_{\bm{G}}\left(\widetilde{\scrA} \right)^{\Gamma^n_a} \cong  R[\gamma]/q^n_a(\gamma)\end{equation}
by Corollary \ref{corollary:hhanatild}.
We have
\begin{equation} \CO(P) = \left[ r_j \del{\mu^*}{r_j} \right] + \bm{w} \cdot T = \gamma + \bm{w} \cdot T\end{equation}
by Proposition \ref{proposition:cod}.

We know that $\CO$ is a unital algebra homomorphism, and the equivariant part of the Hochschild cohomology is generated by $\gamma$.
It follows that $\CO$ is surjective.
By Lemma \ref{lemma:eqhodg}, the equivariant part of the quantum cohomology is a free $R$-module of rank $n-2$, and by Corollary \ref{corollary:hhanatild}, so is the equivariant part of the Hochschild cohomology.
Since $\CO$ is surjective, it follows that it is an isomorphism.
 \end{proof}

The computations of quantum cohomology from Proposition \ref{proposition:qhfromhh} agree with the results of \cite[Corollaries 9.3 and 10.9]{Givental1996}.

\begin{corollary}
\label{corollary:w}
If $2 \le a \le n-1$, and $L$ is a lift of the immersed Lagrangian sphere to $X^n_a$, then $w(L) = \bm{w}^n_a$ (in the notation of \eqref{eqn:bwna}).
\end{corollary}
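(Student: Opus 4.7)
For $2 \le a \le n-2$, the claim $w(L) = \bm{w}^n_a = 0$ is essentially immediate from Corollary~\ref{corollary:anaemb}, where $\bm{w} = 0$ is asserted directly from the Maslov index calculation (any Maslov-index-$2$ disk would violate $a \le n-2$ via Lemma~\ref{lemma:c1disk}). Only the Fano index $1$ case $a = n-1$ requires further work.

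In that case, the plan is to combine Proposition~\ref{proposition:qhfromhh} (our Fukaya-categorical computation in $QH^*(X^n_a,D)$) with Givental's independently established Proposition~\ref{proposition:givent} (his computation in $QH^*(X^n_a)$). I would first specialize the relation coming from Proposition~\ref{proposition:qhfromhh} along the algebra homomorphism $R \to \C$, $r_j \mapsto 1$, which makes sense by Lemmata~\ref{lemma:moncoeff} and~\ref{lemma:relgradcoeff}, and intertwines the relative and monotone theories via Lemma~\ref{lemma:reltomon}. Under this specialization, $T \mapsto 1$ and $\tilde{q}^n_a \mapsto q^n_a$, so we obtain in the subalgebra of $QH^*(X^n_a)$ generated by $P$ the relation $q^n_a(P - \bm{w}) = 0$, where $\bm{w} = w(L)$. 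Independently, Givental gives $q^n_a(P + \bm{w}^n_a) = 0$ in the same subalgebra.

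By the Lefschetz hyperplane theorem, the subalgebra of $QH^*(X^n_a)$ generated by $P$ (which coincides with the Hodge summand $H^*_H(X^n_a)$ as a $\C$-vector space) has dimension $n-1$, so the minimal polynomial of $P$ has degree exactly $n-1$. Both $q^n_a(x - \bm{w})$ and $q^n_a(x + \bm{w}^n_a)$ are monic of degree $n-1$ and annihilate $P$, so they must agree as polynomials in $x$; comparing (say) the coefficient of $x^{n-2}$ forces $\bm{w} = -\bm{w}^n_a = a!$ up to sign, which gives the desired value once the sign conventions are matched.

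The main obstacle is precisely this sign book-keeping: as the author confesses in the remark preceding the corollary, there are two undetermined signs in the Fukaya-categorical side (in particular, in the identification of $\mathcal{CO}(P)$ and in the normalization of $\gamma$), and these must be reconciled with the sign of $\bm{w}^n_a = -a!$. The Fukaya-theoretic argument alone unambiguously determines $|\bm{w}| = a!$; fixing the sign to match $\bm{w}^n_a$ is where Givental's calculation must be imported, which is exactly the content of the author's remark that ``in order to determine the value $w = \bm{w}^n_a$, we are forced to use Givental's proof of Proposition~\ref{proposition:givent}.''
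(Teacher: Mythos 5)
Your proposal follows the paper's own proof exactly: the $a \le n-2$ case is immediate from Corollary~\ref{corollary:anaemb}, and for $a = n-1$ one compares Proposition~\ref{proposition:qhfromhh} (specialized along $r_j \mapsto 1$) with Givental's Proposition~\ref{proposition:givent}, precisely as you describe. Your caveat about sign conventions is well placed and worth making sharper than ``once the sign conventions are matched'': taken at face value, Proposition~\ref{proposition:qhfromhh} gives $q^n_a(P - \bm{w}) = 0$ after specialization, and equating this with Givental's $q^n_a(P + \bm{w}^n_a) = 0$ forces $\bm{w} = -\bm{w}^n_a = a!$, which is the \emph{opposite} sign from the corollary's claim $\bm{w} = \bm{w}^n_a = -a!$; meanwhile Section~\ref{subsec:qhfromhh} of the introduction writes the Fukaya-side relation as $q^n_a(P+w)$ and the cubic-surface computations in Appendix~\ref{sec:cubsurf} use $w(L) = -6$, both consistent with the corollary but not with the literal minus sign in Proposition~\ref{proposition:qhfromhh}. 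So that minus appears to be one of the paper's two confessed undetermined signs, and your sign disclaimer is carrying genuine weight rather than being a throwaway; also, the degree-$(n-1)$ claim is better attributed to $\Gamma^n_a$-equivariance (Lemma~\ref{lemma:eqhodg} places the subalgebra inside $H^*_H$, and $1, P, \ldots, P^{n-2}$ are independent by looking at leading classical terms) than to the Lefschetz hyperplane theorem alone.
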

\begin{proof}
If $a \le n-2$, the result follows as in Corollary \ref{corollary:anaemb}.
If $a = n-1$, the result follows by comparing the relation established in Proposition \ref{proposition:qhfromhh} with  \cite[Corollary 10.9]{Givental1996}.
 \end{proof}

We recall the eigenvalues of $c_1 \star$, as computed in Corollary \ref{corollary:c1eval}.
We will denote the big eigenvalue by $\bm{w}$, and small eigenvalues by $w$.

\begin{corollary}
\label{corollary:evects}
If $2 \le a \le n-1$, then the Hodge part of the big generalized eigenspace has rank $a-1$:
\begin{equation} \mathrm{dim}(QH^*(X^n_a)_{\bm{w}} \cap H^*_H(X^n_a)) = a-1,\end{equation}
with a basis consisting of the generalized eigenvectors
\begin{equation} (P-\bm{w})^{i} ((P-\bm{w})^{(n-a)} - a^a ), \mbox{ for $i = 0, \ldots, a-2$.}\end{equation}
The Hodge part of the $n-a$ small generalized eigenspaces have rank 1:
\begin{equation} \mathrm{dim}(QH^*(X^n_a)_w \cap H^*_H(X^n_a)) = 1,\end{equation}
each spanned by an eigenvector of the form
\begin{equation}
\label{eqn:smallevect}
(P-\bm{w})^{a-1} \frac{(P-\bm{w})^{n-a} - a^a}{P-w}.\end{equation}
\end{corollary}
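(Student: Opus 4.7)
The plan is to reduce the problem to a commutative algebra computation inside the subalgebra of $QH^*(X^n_a)$ generated by the hyperplane class $P$. First I would observe that $H^*_H(X^n_a)$ is precisely the $\Gamma^n_a$-invariant subspace of $H^*(X^n_a)$ by Lemma \ref{lemma:eqhodg}, and because $\Gamma^n_a$ acts on $QH^*(X^n_a)$ by algebra automorphisms of the quantum product, the Hodge part is automatically a subalgebra of $QH^*(X^n_a)$. Since $P \in H^*_H(X^n_a)$ and since the Hodge part has $\C$-dimension $n-1$, a rank comparison with Proposition \ref{proposition:qhfromhh} (or equivalently Proposition \ref{proposition:givent}, combined with Corollary \ref{corollary:w}) shows that $H^*_H(X^n_a)$ coincides with the subalgebra generated by $P$, and is therefore isomorphic to
\begin{equation}
\C[P]/\bigl((P-\bm{w})^{n-1} - a^a (P-\bm{w})^{a-1}\bigr) \cong \C[y]/y^{a-1}(y^{n-a}-a^a),
\end{equation}
where $y := P - \bm{w}$.

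Next I would decompose this subalgebra into generalized eigenspaces of $P\star$ (equivalently, of $c_1\star = (n-a)P\star$). The polynomial $y^{a-1}(y^{n-a}-a^a)$ factors as a product of pairwise coprime factors $y^{a-1}$ and $(y-\xi)$, one for each $\xi$ satisfying $\xi^{n-a}=a^a$; these factors are coprime because $a^a \neq 0$ forces $\xi \neq 0$. The Chinese Remainder Theorem then yields
\begin{equation}
\C[y]/y^{a-1}(y^{n-a}-a^a) \;\cong\; \C[y]/y^{a-1} \;\times\; \prod_{\xi^{n-a}=a^a} \C[y]/(y-\xi),
\end{equation}
and these factors are exactly the generalized eigenspaces of multiplication by $y$, with eigenvalue $0$ on the first factor (dimension $a-1$) and eigenvalue $\xi$ on each of the remaining $n-a$ one-dimensional factors.

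Finally I would exhibit explicit bases by identifying, inside $\C[y]/y^{a-1}(y^{n-a}-a^a)$, the ideals that realize each CRT factor. The big eigenspace is the image of the ideal generated by $(y^{n-a}-a^a)$, which has a basis $\{(y^{n-a}-a^a)y^i\}_{i=0}^{a-2}$; translating back via $y = P-\bm{w}$ gives the claimed basis. For each small eigenvalue corresponding to $\xi$, the one-dimensional eigenspace is the image of the ideal generated by $y^{a-1}\prod_{\xi'\neq \xi}(y-\xi')$, and after multiplying the product out using $y^{n-a}-a^a = \prod_{\xi'}(y-\xi')$, a generator is $y^{a-1}(y^{n-a}-a^a)/(y-\xi)$, which is exactly the stated expression after substituting $y=P-\bm{w}$. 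The only subtlety worth flagging is the bookkeeping in the identification between eigenvalues of $P\star$ and eigenvalues of $c_1\star$ (they differ by a factor of $n-a$, trivial only when $a=n-1$), and the verification that the Hodge subalgebra computed in Proposition \ref{proposition:qhfromhh} really coincides with the $\Gamma^n_a$-invariant subalgebra of $QH^*(X^n_a)$ -- but both are straightforward once one notes that the subalgebra generated by $P$ has the correct rank $n-1$.
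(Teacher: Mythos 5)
Your proof is correct and takes essentially the same route as the paper's. The paper's own proof is terser: it verifies directly that the listed classes are generalized eigenvectors using the relation $(P-\bm{w})^{a-1}((P-\bm{w})^{n-a}-a^a)=0$ from Proposition \ref{proposition:qhfromhh}, notes that any polynomial in $P$ is $\Gamma^n_a$-invariant and hence lies in $H^*_H$ by Lemma \ref{lemma:eqhodg}, and concludes by counting dimensions ($n-1$ linearly independent generalized eigenvectors in the rank-$(n-1)$ space $H^*_H$). Your CRT decomposition of $\C[y]/y^{a-1}(y^{n-a}-a^a)$ is that same dimension count packaged more structurally, with the added benefit of producing the bases mechanically rather than by inspection. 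One small remark where you are in fact more careful than the text: after substituting $y=P-\bm{w}$, your generator for the factor associated to a root $\xi$ with $\xi^{n-a}=a^a$ has denominator $P-\bm{w}-\xi$, whereas the corollary writes $P-w$ with $w$ the small eigenvalue of $c_1\star=(n-a)P\star$; since $w-\bm{w}=(n-a)\xi$, these coincide only when $a=n-1$, so the displayed formula in the corollary has a harmless factor-of-$(n-a)$ slip in its denominator for $a\le n-2$ that your version corrects.
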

\begin{proof}
First, one can check that these are indeed generalized eigenvectors, which follows from the relation $(P-\bm{w})^{a-1}((P-\bm{w})^{n-a} - a^a) = 0$.
Note that any polynomial in $P$ is $\Gamma^n_a$-equivariant, hence lies in $H^*_H(X^n_a)$ by Lemma \ref{lemma:eqhodg}, so the generalized eigenvectors identified do lie in $H^*_H(X^n_a)$.
They also span $H^*_H(X^n_a)$, as it has rank $n-1$.
 \end{proof}

\begin{proposition}
\label{proposition:smallw}
If $2 \le a \le n-1$, and $w$ is one of the small eigenvalues of $c_1\star$ on $QH^*(X^n_a)$, then the generalized eigenspace $QH^*(X^n_a)_w$ has rank $1$.
\end{proposition}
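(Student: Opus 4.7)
The strategy is to split $QH^*(X^n_a) = H^*_H(X^n_a) \oplus H^*_P(X^n_a)$ and analyze the two summands separately. First I would check that the operator $c_1 \star$ preserves this decomposition: since $c_1 = (n-a)P$ is $\Gamma^n_a$-invariant and the quantum product is $\Gamma^n_a$-equivariant (the $\Gamma^n_a$-action permutes $J$-holomorphic spheres compatibly), $c_1\star$ preserves every $\Gamma^n_a$-isotypic summand of $H^*(X^n_a)$. By Lemma \ref{lemma:eqhodg}, $H^*_H$ is precisely the $\Gamma^n_a$-invariant part, and $H^*_P$ is the sum of the non-trivial isotypic components, so both are invariant under $c_1 \star$.

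On the Hodge part, Corollary \ref{corollary:evects} already displays $QH^*(X^n_a)_w \cap H^*_H(X^n_a)$ as one-dimensional, spanned by the vector in \eqref{eqn:smallevect}, so there is nothing more to do for this summand.

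The substantive step is to show that $QH^*(X^n_a)_w \cap H^*_P(X^n_a) = 0$ for every small eigenvalue $w$. For this I would use the lifts $\{L_\gamma\}_{\gamma \in \Gamma^n_a}$ of the immersed Lagrangian sphere $L$ constructed in Section \ref{subsec:fukcomp}. By Lemma \ref{lemma:primspheres}, their Poincar\'{e} duals $PD(L_\gamma)$ span $H^*_P(X^n_a)$, and by Corollary \ref{corollary:w} each satisfies $w(L_\gamma) = \bm{w}^n_a$, the big eigenvalue. Corollary \ref{corollary:ocLe} then shows that $\mathcal{OC}^0(e_{L_\gamma})$ is a generalized eigenvector of $c_1 \star$ with eigenvalue $\bm{w}^n_a$, while Lemma \ref{lemma:ocLf} writes
\begin{equation}
\mathcal{OC}^0(e_{L_\gamma}) = PD(L_\gamma) + (\text{terms of cohomological degree} < n-2).
\end{equation}
Since $H^*_P(X^n_a)$ is concentrated in the middle degree $n-2$, the lower-degree correction automatically lies in $H^*_H$. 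Applying the projection $\pi_P: H^*(X^n_a) \to H^*_P(X^n_a)$, which commutes with $c_1\star$ by the first paragraph, I would conclude that each $PD(L_\gamma)$ is a generalized eigenvector of $c_1 \star |_{H^*_P}$ with eigenvalue $\bm{w}^n_a$. Because these classes span $H^*_P$, the operator $c_1 \star |_{H^*_P}$ has $\bm{w}^n_a$ as its \emph{only} generalized eigenvalue, so its intersection with the generalized $w$-eigenspace vanishes whenever $w \ne \bm{w}^n_a$, in particular for every small eigenvalue.

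Putting the two parts together gives $\dim QH^*(X^n_a)_w = 1$ for each small $w$. I do not expect any serious obstacle here: the argument is essentially an assembly of results already proved, the only conceptual point being the degree observation that decouples the lower-order terms in Lemma \ref{lemma:ocLf} from the primitive cohomology.
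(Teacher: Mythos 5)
Your proposal is correct and rests on exactly the same inputs as the paper's proof (Corollary \ref{corollary:ocLe}, Lemma \ref{lemma:ocLf}, Lemma \ref{lemma:primspheres}, and Corollary \ref{corollary:evects}), but the packaging of the final step is genuinely different. You prove the stronger, more structural statement that $c_1 \star$ is block-diagonal with respect to the Lefschetz decomposition $H^*_H \oplus H^*_P$, and that $c_1 \star|_{H^*_P}$ has the big eigenvalue as its only eigenvalue; the rank-one statement for each small eigenspace then drops out of Corollary \ref{corollary:evects} applied to the $H^*_H$ block. The cost of this is that you need the quantum product on $X^n_a$ to be $\Gamma^n_a$-equivariant, which is true (it follows from invariance of Gromov-Witten invariants under symplectomorphism, noting that $\Gamma^n_a$ acts by polarized automorphisms fixing $P$ and hence $c_1$) but is not stated in the paper, and you are also implicitly using that $H^*_P$ is the complement of $(H^*)^{\Gamma^n_a}$ in the isotypic decomposition (which holds because $\Gamma^n_a$ preserves $H^*_P$ and $(H^*_P)^{\Gamma^n_a} = 0$). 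The paper sidesteps the need for any intertwining claim: it only uses that the eigenvectors $\mathcal{OC}^0(e_{L_\gamma}) \in QH^*(X^n_a)_{\bm{w}}$ \emph{surject} onto the quotient $H^*/H^*_H \cong H^*_P$ (a purely linear-algebraic statement), concludes $\dim QH^*(X^n_a)_{\bm{w}} = (a-1) + \dim H^*_P$, and then deduces by a dimension count that the $n-a$ small eigenspaces, having combined rank $n-a$, must each have rank exactly $1$. Your route buys the cleaner conceptual statement $H^*_P \subset QH^*(X^n_a)_{\bm{w}}$ as a $c_1\star$-invariant subspace; the paper's route is slightly more elementary since it avoids having to justify the equivariance of the quantum product. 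If you write up your version, you should insert a sentence justifying that $g_*(\alpha \star \beta) = g_*\alpha \star g_*\beta$ for $g \in \Gamma^n_a$.
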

\begin{proof}
By Corollary \ref{corollary:ocLe} and Lemma \ref{lemma:ocLf}, the classes
\begin{equation} \OC^0(e_L) = PD(L) + \mbox{ lower-degree terms}\end{equation}
are eigenvectors of $c_1 \star$ with eigenvalue $\bm{w}$, for each of the Lagrangian spheres $L$.

We now observe that the primitive cohomology $H^*_P(X^n_a)$ is concentrated in the middle degree $n-2$; therefore the terms of degree $<n-2$ lie in $H^*_H(X^n_a)$.
So for each lift $L$, we obtain an eigenvector $\OC^0(e_L)$ of $c_1 \star$ with eigenvalue $\bm{w}$, whose primitive component is $PD(L)$.
Because these classes span the primitive cohomology (by Lemma \ref{lemma:primspheres}), the composition
\begin{equation} QH^*(X^n_a)_{\bm{w}} \hookrightarrow H^*(X^n_a) \To H^*(X^n_a)/H^*_H(X^n_a) \cong H^*_P(X^n_a)\end{equation}
is surjective.
Therefore,
\begin{align}
\mathrm{dim}(QH^*(X^n_a)_{\bm{w}}) &= \mathrm{dim}(QH^*(X^n_a)_{\bm{w}} \cap H^*_H(X^n_a)) + \mathrm{dim}(H^*_P(X^n_a)) \\
&= a-1 + \mathrm{dim} (H^*_P(X^n_a)),
\end{align}
by Corollary \ref{corollary:evects}.

It follows that the $n-a$ remaining small generalized eigenspaces have a combined rank of
\begin{align}
\mathrm{dim}(H^*(X^n_a)) - \mathrm{dim}(QH^*(X^n_a)_{\bm{w}}) &= \mathrm{dim}(H^*_H(X^n_a)) - (a-1) \\
&= n-a.
\end{align}
As each has rank at least $1$, it follows that each has rank exactly $1$.
 \end{proof}

\begin{corollary}
If $L$ is any monotone Lagrangian with $w(L)$ not equal to the big eigenvalue $\bm{w}$, then $L$ is nullhomologous.
\end{corollary}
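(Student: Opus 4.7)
The plan is to push the analysis of Proposition \ref{proposition:smallw} one step further. By Corollary \ref{corollary:ocLe} and Lemma \ref{lemma:ocLf}, $\mathcal{OC}^0(e_L) \in QH^*(X^n_a)$ is an eigenvector of $c_1 \star$ with eigenvalue $w(L)$, and takes the form $PD(L)$ plus terms of strictly lower classical cohomological degree. Since $w(L) \neq \bm{w}$ by hypothesis, this eigenvector lies in one of the small generalized eigenspaces $QH^*(X^n_a)_{w(L)}$, which by Proposition \ref{proposition:smallw} is one-dimensional and by Corollary \ref{corollary:evects} is spanned by a polynomial $p(P)$ in the hyperplane class. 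In particular, $\mathcal{OC}^0(e_L) \in H^*_H(X^n_a) = \C[P]/P^{n-1}$.

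I would then decompose by classical cohomological degree. The degree-$(n-2)$ component of $\mathcal{OC}^0(e_L)$ is precisely $PD(L)$, while the degree-$(n-2)$ component of any multiple of $p(P)$ lies in $H^{n-2}_H(X^n_a)$, which (being a polynomial in $P$) is concentrated in even cohomological degrees. Hence $PD(L) \in H^{n-2}_H(X^n_a)$, and when $n$ is odd this space vanishes for parity reasons, closing the argument.

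The only remaining case is $n$ even, where $H^{n-2}_H(X^n_a) = \C \cdot P^{(n-2)/2}$ and therefore $PD(L) = c \cdot P^{(n-2)/2}$ for some $c \in \C$. To force $c = 0$ I would integrate $\omega^{(n-2)/2}$ over $L$: the Lagrangian condition $\omega|_L = 0$ gives $\int_L \omega^{(n-2)/2} = 0$, while the monotonicity computation \eqref{eqn:xnatau} and \eqref{eqn:link} give $[\omega] = an \cdot P$ and $\int_{X^n_a} P^{n-2} = a$ (the degree of the hypersurface in $\CP{n-1}$), so the same integral also evaluates to $c \cdot a \cdot (an)^{(n-2)/2}$, forcing $c = 0$. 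The main (and really only) obstacle is the residual middle-degree Hodge ambiguity when $n$ is even, and integrating against $\omega^{(n-2)/2}$ is the natural way to eliminate it.
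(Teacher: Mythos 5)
Your argument is correct and reaches the right conclusion, but it diverges from the paper's proof after the common opening move: both you and the paper use Corollary~\ref{corollary:ocLe} and Lemma~\ref{lemma:ocLf} to place $\mathcal{OC}^0(e_L) = PD(L) + (\text{lower-degree terms})$ inside the generalized eigenspace $QH^*(X^n_a)_{w(L)}$. The paper then simply looks at the \emph{top} cohomological degree $2(n-2)$: the spanning eigenvector \eqref{eqn:smallevect} has a nonzero component there (its leading term is $P^{n-2}$), whereas $\mathcal{OC}^0(e_L)$ is supported in degrees $\le n-2 < 2(n-2)$, so the proportionality constant is forced to vanish with no case split and no further geometric input. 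You instead compare at the \emph{middle} degree $n-2$, deduce $PD(L)\in H^{n-2}_H(X^n_a)$, and then have to split on the parity of $n$: the odd case is immediate, but the even case leaves the genuine ambiguity $PD(L)=c\,P^{(n-2)/2}$, which you eliminate with a separate argument from the Lagrangian condition ($\int_L \omega^{(n-2)/2}=0$) together with $[\omega]=an\,P$ and $\int_{X^n_a}P^{n-2}=a$. That side argument is correct and is a nice direct use of $\omega|_L = 0$, but it is extra machinery that the paper's top-degree comparison makes unnecessary; the paper's choice is precisely what sidesteps the ``residual middle-degree Hodge ambiguity'' you identify as the obstacle. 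One small gap: your phrasing assumes $w(L)$ is actually a small eigenvalue, but the hypothesis only says $w(L)\neq\bm{w}$; if $w(L)$ is not an eigenvalue at all then $QH^*(X^n_a)_{w(L)}=0$ and $\mathcal{OC}^0(e_L)=0$ trivially --- this case should be dispatched explicitly rather than silently absorbed into the ``spanned by $p(P)$'' language.
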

\begin{proof}
By Corollary \ref{corollary:ocLe} and Lemma \ref{lemma:ocLf}, for any monotone Lagrangian $L$,
\begin{equation}
\label{eqn:oc0small} \OC^0(e_L) = PD(L) + \mbox{ lower-degree terms} \end{equation}
lies in the eigenspace $QH^*(X)_{w(L)}$.
If $w(L)$ is not equal to an eigenvalue of $c_1 \star$, then \eqref{eqn:oc0small} must vanish, so $PD(L)$ vanishes.
If $w(L)$ is equal to a small eigenvalue, it follows by Proposition \ref{proposition:smallw} that \eqref{eqn:oc0small} must be a multiple of \eqref{eqn:smallevect}.
The latter has the form
\begin{equation} PD(pt) + \mbox{ lower-degree terms},\end{equation}
so \eqref{eqn:oc0small} can only be the zero multiple of it, hence $PD(L)$ vanishes.
 \end{proof}

\subsection{The big eigenvalue}

In this section, we examine the component $\cF(X^n_a)_{\bm{w}}$ of the monotone Fukaya category, where $\bm{w}$ is the big eigenvalue.

\begin{proposition}
\label{proposition:comon}
Let $\widetilde{\scrA}_{\C}$ be the full subcategory of $\cF(X^n_a)_{\bm{w}}$ whose objects are lifts of $L$.
If $2 \le a \le n-1$, then the map
\begin{equation} \CO: QH^*(X^n_a)^{\Gamma^n_a} \To HH^*\left(\widetilde{\scrA}_{\C}\right)^{\Gamma^n_a}\end{equation}
restricts to an isomorphism on the big generalized eigenspace $QH^*(X^n_a)_{\bm{w}}$, and vanishes on the small eigenspaces.
\end{proposition}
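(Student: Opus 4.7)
The plan is to reduce the statement to the algebraic computation of Hochschild cohomology carried out in Section \ref{subsec:comphh}, and then to compare it with the description of the Hodge part of the eigenspaces of $c_1 \star$.

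First, I would invoke Corollary \ref{corollary:anaemb} to identify $\widetilde{\mathscr{A}}_\C$, as a $\bm{G}_{2(n-a)}$-graded $\C$-linear $A_\infty$ category, with $\bm{q}_* \bm{p}^* \underline{\mathscr{A}} \otimes_R \C$ for some $A_\infty$ algebra $\mathscr{A}$ of type A$^n_a$. Corollary \ref{corollary:hhg2} then identifies the $\Gamma^n_a$-invariant Hochschild cohomology of $\widetilde{\mathscr{A}}_\C$ with $\C[\hat{\gamma}]/\hat{\gamma}^{a-1}$, which has dimension $a-1$ as a $\C$-vector space. The $\Gamma^n_a$-equivariance of all constructions (the deck group acts on both $X^n_a$ and the set of lifts, compatibly) ensures that $\mathcal{CO}$ is indeed $\Gamma^n_a$-equivariant, so restricting to invariants makes sense.

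Next, I would compute $\mathcal{CO}(P)$ explicitly by combining Proposition \ref{proposition:qhfromhh} with the naturality $\mathcal{CO}_{X,D}\otimes_R\C = \mathcal{CO}$ established at the start of Section \ref{subsec:relco}. Proposition \ref{proposition:qhfromhh} gives $\mathcal{CO}_{X,D}(P) = \gamma + \bm{w}\cdot T$ in the relative setting; tensoring with $\C$ sends $T = r_1\cdots r_n \mapsto 1$ and $\gamma \mapsto \hat{\gamma}$, yielding
\begin{equation}
\mathcal{CO}(P-\bm{w}) = \hat{\gamma} \quad\text{in}\quad HH^*(\widetilde{\mathscr{A}}_\C)^{\Gamma^n_a}.
\end{equation}
Since $\mathcal{CO}$ is a unital $\C$-algebra homomorphism by Proposition \ref{proposition:coalg}, this completely determines $\mathcal{CO}$ on the subalgebra generated by $P$, which by Lemma \ref{lemma:eqhodg} is precisely $QH^*(X^n_a)^{\Gamma^n_a} = H^*_H(X^n_a)$.

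For the vanishing on small eigenspaces, I would use Corollary \ref{corollary:evects}, which shows that the Hodge part of each small eigenspace is spanned by an element divisible by $(P-\bm{w})^{a-1}$. Since $\mathcal{CO}$ is an algebra map and $\mathcal{CO}((P-\bm{w})^{a-1}) = \hat{\gamma}^{a-1} = 0$ in the target ring, any such element maps to zero. For the isomorphism on the big eigenspace, the basis from Corollary \ref{corollary:evects} consisting of the elements $(P-\bm{w})^i((P-\bm{w})^{n-a} - a^a)$ for $i = 0, \ldots, a-2$ maps to
\begin{equation}
\hat{\gamma}^i(\hat{\gamma}^{n-a} - a^a) = \hat{\gamma}^{i+n-a} - a^a \hat{\gamma}^i \in \C[\hat{\gamma}]/\hat{\gamma}^{a-1}.
\end{equation}
Expressed in the basis $\{1, \hat{\gamma}, \ldots, \hat{\gamma}^{a-2}\}$ of the target, the matrix of this linear map is lower triangular with $-a^a \neq 0$ on the diagonal (the $\hat{\gamma}^{i+n-a}$ terms lie in strictly higher powers than $\hat{\gamma}^i$, or vanish when $i+n-a \geq a-1$), hence invertible. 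Dimension counts match: both sides have dimension $a-1$.

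The main conceptual obstacle is ensuring that the naturality $\mathcal{CO}_{X,D}\otimes_R\C = \mathcal{CO}$ genuinely transports Proposition \ref{proposition:qhfromhh} to the monotone setting, given that the relationship between $\mathcal{F}_m(X,D)$ and $\mathcal{F}(X)$ is mediated by the strict embedding of Lemma \ref{lemma:reltomon} rather than by a literal identification. However, since the same moduli spaces of pseudoholomorphic disks define both closed-open maps (and both $A_\infty$ structures) and the only difference is the weight $r^{u\cdot D}$, the compatibility after base change is immediate; this is the key technical point to spell out, but it is already built into the definitions in Section \ref{subsec:relco}.
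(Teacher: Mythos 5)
Your proposal is correct and follows essentially the same route as the paper's proof: identify $\widetilde{\mathscr{A}}_\C$ as $\bm{q}_* \bm{p}^* \underline{\mathscr{A}} \otimes_R \C$ for $\mathscr{A}$ of type A$^n_a$, apply Corollary \ref{corollary:hhg2} to compute the invariant Hochschild cohomology, use Proposition \ref{proposition:qhfromhh} to pin down $\mathcal{CO}(P-\bm{w})$, and then compare with the eigenbasis from Corollary \ref{corollary:evects}. The only difference is cosmetic --- you spell out the triangularity of the change-of-basis matrix to prove injectivity, whereas the paper argues by surjectivity plus a dimension count.
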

\begin{proof}
By Corollary \ref{corollary:evects}, the big generalized eigenspace has rank $a-1$, with basis
\begin{equation} (P-\bm{w})^i((P-\bm{w})^{n-a} - a^a), \mbox{ $i=0,\ldots,a-2$,}\end{equation}
and the other generalized eigenspaces have rank $1$, spanned by eigenvectors of the form $(P-\bm{w})^{a-1}f(P)$.

As in the proof of Proposition \ref{proposition:qhfromhh}, we have
\begin{equation} \widetilde{\scrA}_{\C} \cong \bm{q}_* \bm{p}^* \underline{\scrA} \otimes_R \C,\end{equation}
where $\scrA$ is an $A_{\infty}$ algebra of type A$^n_a$.
By Corollary \ref{corollary:hhg2}, we have
\begin{equation} HH^*\left(\widetilde{\scrA}_{\C}\right)^{\Gamma^n_a} \cong \C[\hat{\beta}]/\hat{\beta}^{a-1},\end{equation}
and as in the proof of Proposition \ref{proposition:qhfromhh}, we have
\begin{equation} \CO(P-\bm{w}) = \hat{\beta}.\end{equation}
It follows immediately that $\CO$ vanishes on elements of the form $(P-\bm{w})^{a-1}f(P)$, and hence on all of the generalized eigenspaces other than that associated to $0$.
Furthermore, on the generalized eigenspace associated to $0$, the image is generated by the elements
\begin{equation} \hat{\beta}^i (\hat{\beta}^{n-a} - a^a), \mbox{ for $i=0,\ldots,a-2$,}\end{equation}
which span $\C[\hat{\beta}]/\hat{\beta}^{a-1}$.
So the restriction to this generalized eigenspace is surjective; because the eigenspace has rank $a-1$, this means it is an isomorphism.
 \end{proof}

\begin{remark}
Note that Proposition \ref{proposition:comon} agrees with Proposition \ref{proposition:coeigsplit}: $\CO$ is unital on the big eigenspace, and vanishes on all the other eigenspaces.
\end{remark}

\begin{corollary}
\label{corollary:asplitgens}
If $2 \le a \le n-1$, then $\widetilde{\scrA}_{\C}$ split-generates $\cF(X^n_a)_{\bm{w}}$, where $\bm{w}$ is the big eigenvalue.
\end{corollary}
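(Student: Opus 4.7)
The strategy is to apply the split-generation criterion of Corollary \ref{corollary:splitgen}, which reduces the claim to injectivity of
$$\mathcal{CO}\colon QH^*(X^n_a)_{\bm{w}} \longrightarrow HH^*(\widetilde{\mathscr{A}}_{\C}).$$
Proposition \ref{proposition:comon} supplies this only after taking $\Gamma^n_a$-invariants, so the real content will be a $\Gamma^n_a$-equivariant enhancement of Corollary \ref{corollary:splitgen}.

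First I would record the equivariant setup. The group $\Gamma^n_a$ acts on $X^n_a$, permutes the lifts $\{L_\gamma\}$ of the immersed sphere $L$ freely and transitively, and hence acts on the full subcategory $\widetilde{\mathscr{A}}_{\C}$ by relabelling objects; correspondingly it acts on $QH^*(X^n_a)$ and on $HH^*(\widetilde{\mathscr{A}}_{\C})$, and $\mathcal{CO}$ is equivariant. The semidirect-product formalism of Section \ref{subsec:fabg}, in particular Lemma \ref{lemma:semidirpullback}, identifies $\widetilde{\mathscr{A}}_{\C}$ with the pullback $\bm{p}^*\mathscr{A}_{\C}$ of the downstairs $A_\infty$ algebra, so the $\Gamma^n_a$-isotypic decomposition of $HH^*$ is clean.

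Next I would formulate and prove an equivariant split-generation criterion: if $\widetilde{\mathcal{G}} \subset \mathcal{F}(X)_w$ is a $\Gamma$-stable full subcategory and the map
$$\mathcal{CO}\colon QH^*(X)_w^{\Gamma} \longrightarrow HH^*(\widetilde{\mathcal{G}})^{\Gamma}$$
is injective, then $\widetilde{\mathcal{G}}$ split-generates $\mathcal{F}(X)_w$. The point is that every construction underlying Corollary \ref{corollary:splitgen} (the coproduct $\Delta$ of Lemma \ref{lemma:cardy}, the weak cyclic structure of Lemma \ref{lemma:weakcy}, the Frobenius identification $\mathcal{OC}^\vee = \mathcal{CO}$ of Corollary \ref{corollary:cowdual}) is built out of $\mathcal{CO}$ and $\mathcal{OC}$ and is therefore automatically $\Gamma$-equivariant. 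Dualising as in Corollary \ref{corollary:splitgen}, injectivity of the invariant $\mathcal{CO}$ becomes surjectivity of the invariant $\mathcal{OC}$ onto $QH^*(X)_w^{\Gamma}$, which suffices to hit the unit $e_w \in QH^*(X)_w^{\Gamma}$ and thereby apply Corollary \ref{corollary:gen2}. Split-generation of an arbitrary object $K$ of $\mathcal{F}(X)_w$ then follows because $K$ is a direct summand, in the Karoubi completion, of the $\Gamma$-equivariant object $\bigoplus_{\gamma \in \Gamma}\gamma^*K$.

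Feeding Proposition \ref{proposition:comon} into this criterion yields the corollary. The principal technical obstacle will be the second step: one must check that $\Gamma^n_a$-equivariant choices of Floer and perturbation data can be made throughout the constructions of $\mathcal{CO}$, $\mathcal{OC}$ and $\Delta$ (which is unproblematic since $\Gamma^n_a$ acts freely on the objects of $\widetilde{\mathscr{A}}_\C$ and generically freely on $X^n_a$), and, more substantively, that the Cardy-style commutative diagram of Lemma \ref{lemma:cardy} and the Frobenius identification of Corollary \ref{corollary:cowdual} pass cleanly to $\Gamma^n_a$-invariants, so that surjectivity of the invariant $\mathcal{OC}$ genuinely places $e_w$ in its image.
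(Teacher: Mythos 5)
Your proposal matches the paper's proof exactly: the paper's entire argument is ``Follows immediately from (a $\Gamma^n_a$-equivariant version of) Corollary \ref{corollary:splitgen}, together with Proposition \ref{proposition:comon}'', and you are simply spelling out the content of that equivariant version. Your sketch of how equivariance propagates through the coproduct, the weak cyclic structure, and the Frobenius duality is the right way to make the one-liner precise; the final remark about $K$ being a summand of $\bigoplus_\gamma \gamma^* K$ is not needed (once $e_w$ lies in the image of $\mathcal{OC}$ restricted to $HH_*(\widetilde{\mathscr{A}}_\C)^{\Gamma} \subset HH_*(\widetilde{\mathscr{A}}_\C)$, Corollary \ref{corollary:gen2} already gives split-generation of every object of $\mathcal{F}(X^n_a)_{\bm{w}}$), but it does no harm.
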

\begin{proof}
Follows immediately from (a $\Gamma^n_a$-equivariant version of) Corollary \ref{corollary:splitgen}, together with Proposition \ref{proposition:comon}.
 \end{proof}

\subsection{The small eigenvalues}
\label{subsec:smalleig}

In this section, we examine the components $\cF(X^n_a)_w$, where $w$ is a small eigenvalue.
In fact, we will need to make use of weak bounding cochains; so in fact, we work in $\cF^{wbc}(X^n_a)_w$.

We aim to apply Proposition \ref{proposition:hess} to the $A_\infty$ algebra of type A$^n_a$ appearing in Proposition \ref{proposition:atild}.
The first remark to make is that $\scrA$ is not, strictly speaking, an endomorphism algebra of a Lagrangian in a monotone relative Fukaya category, so Proposition \ref{proposition:hess} does not hold exactly as stated.
Instead $\scrA$ is first-order quasi-equivalent to the endomorphism algebra of the immersed Lagrangian sphere $L$ in $\cF(\phi)$ (see \cite[\S 5.1]{Sheridan2015} for the definition).
We recall that $\cF(\phi)$ should be thought of as an `orbifold' relative Fukaya category of $X^n_1 = X^n_a/\Gamma^n_a$, which has orbifolding of degree $a$ about the divisors (although it is actually defined by counting pseudoholomorphic maps into $X^n_a$).
The arguments of section \ref{subsec:fdp} must be modified to take into account this slight change in perspective, but the necessary modifications are trivial.

Recalling that $\scrA$ is an exterior algebra on $n$ generators $\theta_1, \ldots, \theta_n$ (Definition \ref{definition:typean}), we define $V \subset \scrA$ to be the subspace spanned by $\{\theta_1,\ldots,\theta_n\}$.
We need to establish that the various hypotheses of Proposition \ref{proposition:hess} are satisfied for $V \subset \scrA$.

\begin{lemma}
Let $\mathcal{L} = \{L\}$ be the set containing the immersed Lagrangian $L \subset X^n_1$, and $\mathcal{P}$ be the set of generators $\theta_i$ of $\scrA$.
Then $(\mathcal{L},\mathcal{P})$ is relatively monotone.
\end{lemma}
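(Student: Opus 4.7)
Plan: The verification of relative monotonicity has three components. First, exactness of $L$ in $X^n_1 \setminus D$ is built into the construction in \cite{Sheridan2011a}, where the immersed sphere is realized as an exact Lagrangian inside the pair of pants $X^n_1 \setminus D \cong (\C^*)^{n-2}$. Second, each $\theta_i$ is a self-intersection of $L$ and therefore lies in $X^n_1 \setminus D$, so the canonical lifts $\gamma_{\theta_i}$ sit in $\mathcal{G}(X^n_1 \setminus D)$. Third, continuous extensions $u:\mathbb{D} \to X^n_1$ of $\pi \circ \tilde u$ exist because $\tilde L = S^{n-2}$ is simply-connected (so the lifted boundary arcs can be capped off on $\tilde L$) and $X^n_1$ is itself simply-connected.

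The substantive step is the inequality \eqref{eqn:Fklrel}. My first move is to observe that $\sum_j \ell_j (u \cdot D_j) - \tau \mu(u,\tilde u)$ depends only on $\tilde u$, not on the choice of extension $u$. Indeed, for any sphere class $S \in H_2(X^n_1)$, the identities $[\omega] = 2\tau c_1$ and $[\omega] = \sum_j \ell_j [D_j]$ (the latter using $d_j = \ell_j = 1$ for $(X^n_1, D)$) give $\sum_j \ell_j (S \cdot D_j) = \omega(S) = \tau \mu(S)$, so both sides of the target inequality transform equally under addition of a sphere class to $u$.

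It therefore suffices to bound the invariant on one convenient extension. By the action-area identity recalled in the proof of Lemma \ref{lemma:actioncontraction} (cf.\ \cite[Lemma 3.12]{Sheridan2014}),
$$\omega(u) - \sum_j \ell_j (u \cdot D_j) = \sum_{j=1}^k A(\gamma_{p_j}) - \sum_{j=1}^l A(\gamma_{q_j}),$$
whose absolute value is bounded by $A_{\max}(k+l)$, where $A_{\max} := \max_i |A(\gamma_{\theta_i})|$ is finite since $\mathcal{P}$ is finite. Choosing $u$ to be a small cap built from a map $v : \mathbb{D} \to \tilde L$ makes $\omega(u) = 0$ and expresses $\mu(u, \tilde u)$ in terms of the (fixed) Maslov degrees of the corners and a trivialization index on $\tilde L$, yielding a parallel linear estimate. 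Combining, the invariant $\sum_j \ell_j (u \cdot D_j) - \tau \mu(u, \tilde u)$ is bounded by $C(k+l)$ for some constant $C$ depending only on $L$ and on the $\theta_i$.

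To achieve $\tau_k < \tau$, I would replace $L$ by $\varphi_t(L)$ for large $t$, where $\varphi_t$ is the reverse Liouville flow on $X^n_1 \setminus D$: this is Hamiltonian-isotopic to $L$ in $X^n_1$, represents the same object in the Fukaya category, leaves $\tau$ unchanged, and contracts every action by $e^{-t}$, eventually forcing $C < \tau$. Rewriting $\sum_j \ell_j (u \cdot D_j) \le \tau \mu(u, \tilde u) + Ck + Cl$ and absorbing $Ck$ into $\epsilon l$ for $l$ large then yields $\tau_k = C + \epsilon < \tau$, as required. The main obstacle is the Maslov half of the linear bound: verifying that capping the lifted boundary arcs on $\tilde L$, together with the index-zero lifts $\gamma_{\theta_i}$ that define the canonical paths, contributes to $\mu(u, \tilde u)$ in a way controlled linearly by $k+l$, which demands a careful corner-by-corner accounting of the trivializations.
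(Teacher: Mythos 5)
There is a genuine gap, and it sits exactly where the paper's proof does its real work. The paper's argument is a pure grading computation: adapting \cite[Lemma 2.93]{Sheridan2014} to the present boundary configuration yields two \emph{exact} linear relations among $j := \sum \ell_i (u\cdot D_i)$, $t := \mu(u,\tilde u)$, $k$, $l$, and an auxiliary integer $q$ (coming from the quotient in the definition of $\bm{G}^n_a$). Eliminating $q$ gives
\begin{equation*}
\sum \ell_i (u \cdot D_i) = \frac{n}{2(n-a)}\,\mu(u,\tilde u) + \frac{n-2}{2(n-a)}\,(l-k),
\end{equation*}
and since the coefficient $\tfrac{n-2}{2(n-a)}$ is strictly less than $\tau = \tfrac{n}{2(n-a)}$, one simply takes $\tau_k$ strictly between them. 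No filling is ever constructed, no actions appear, and no Liouville flow is used.

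Your proposal hinges on two steps, and each one fails to close. First, the Liouville-flow step is not merely circular with respect to Lemma \ref{lemma:actioncontraction}; it is ineffective. You correctly note that $\sum_j \ell_j(u\cdot D_j) - \tau\mu(u,\tilde u)$ is an invariant of $\tilde u$ independent of the filling, but this invariant is \emph{topological}: both $u\cdot D_j$ and $\mu(u,\tilde u)$ are homotopy-theoretic quantities unchanged under the reverse Liouville flow (which is a compactly supported isotopy of $X\setminus D$ carrying admissible $\tilde u$ bijectively to admissible $\tilde u$). The flow contracts only the \emph{actions} $A(\gamma)$, and actions do not appear in inequality \eqref{eqn:Fklrel}. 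They appear in the \emph{monotone} inequality \eqref{eqn:Fkl}, via $\omega(u)$; that is the content of Lemma \ref{lemma:actioncontraction}, which takes relative monotonicity (already established) as a hypothesis and uses the flow to control the action-size discrepancy between $\omega(u)$ and $\sum\ell_j(u\cdot D_j)$. Applying the flow to \emph{prove} relative monotonicity cannot move the constant $C$ at all.

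Second, even setting the Liouville flow aside, the bound $C(k+l)$ is never obtained: you yourself flag the Maslov contribution of the cap as "the main obstacle" and leave it to "careful corner-by-corner accounting." That accounting is precisely the content of the grading computation in the paper's proof, which gives the exact coefficient $\tfrac{n-2}{2(n-a)}$ and shows it is strictly below $\tau$; without it, you have neither a value for $C$ nor a reason to believe $C<\tau$. (There is also a more minor issue: a continuous cap $v:\mathbb{D}\to\tilde L$ compatible with the prescribed sheet data at the corners need not exist for an arbitrary boundary word in the $\theta_i$, since $L$ is immersed and the sheets must match consecutively around $\partial\mathbb{D}$; and the action identity would then force $\sum A(\gamma_{p_j}) = \sum A(\gamma_{q_j})$ for such a cap, which is an equality, not an estimate.) The topological/combinatorial route via the grading datum is the thing to do here, and the analytic route you sketch does not reach the target.
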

\begin{proof}
Suppose that $(u,\tilde{u})$ are as in Definition \ref{definition:strmonpaths}, with $\tilde{u}$ changing `sheets' of $L$ at generators $\theta_{i_m}$, $m=1,\ldots, l$, in positive direction, then at generators $\theta_{j_n}, n = 1,\ldots,k,$ in negative direction (compare Figure \ref{fig:Fig7}).
A straightforward modification of the grading computation of \cite[Lemma 2.93]{Sheridan2015} shows that
\begin{equation} aj + k - l = nq\end{equation}
(where $j = \sum \ell_i (u \cdot D_i)$), and
\begin{equation} t = (n-2)q + (2-a)j\end{equation}
(where $t = \mu(u,\tilde{u})$).
Eliminating $q$, it follows that
\begin{equation} \sum \ell_i (u \cdot D_i) = \frac{n}{2(n-a)} \mu(u,\tilde{u}) + \frac{n-2}{2(n-a)} (l-k).\end{equation}
We have $\tau = n/2(n-a)$, so if we choose $\tau_k$ so that
\begin{equation} \frac{n-2}{2(n-a)} < \tau_k < \frac{n}{2(n-a)} = \tau,\end{equation}
then
\begin{equation} \sum \ell_i (u \cdot D_i) < \tau \mu(u,\tilde{u}) + \tau_k l\end{equation}
for sufficiently large $l$.
It follows that $(\mathcal{L},\mathcal{P})$ is relatively monotone.
 \end{proof}

Now although $(\mathcal{L},\mathcal{P})$ is relatively monotone, it need not be monotone.
To deal with this, we apply Lemma \ref{lemma:actioncontraction}: if $\varphi_t$ denotes the time-$t$ reverse Liouville flow on $X^n_1 \setminus D$, and
\begin{equation} (\mathcal{L}_t,\mathcal{P}_t) := ( \varphi_t(\mathcal{L}), \varphi_t(\mathcal{P})),\end{equation}
then for sufficiently large $t>0$, $(\mathcal{L},\mathcal{P}_t)$ is both relatively monotone and monotone.

Following Proposition \ref{proposition:atild}, let $\widetilde{\scrA}_t$ denote the full subcategory of the $\bm{G}^n_a$-graded monotone relative Fukaya category $\cF_m(X^n_a,D)_{\bm{w}}$ whose objects are lifts of the immersed Lagrangian sphere $L_t \subset X^n_1$ under the branched cover $\phi$.

\begin{lemma}
For all $t>0$, there exists a non-curved $A_{\infty}$ algebra $\scrA_t$ of type A$^n_a$, such that
\begin{equation} \widetilde{\scrA}_t \cong \bm{p}^* \underline{\scrA}_t.\end{equation}
\end{lemma}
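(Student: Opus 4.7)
The plan is to follow the proof of Proposition \ref{proposition:atild} verbatim, with minor bookkeeping to track the effect of the Liouville flow. The key observation is that $L_t := \varphi_t(L)$ is exact Lagrangian isotopic to $L$ in $X^n_1 \setminus D$ by Lemma \ref{lemma:actioncontraction}, so (as emphasized in Remark \ref{remark:nonquasi}) the immersed spheres $L$ and $L_t$ are quasi-isomorphic as objects of the exact Fukaya category $\mathcal{F}(X^n_1 \setminus D)$ by \cite{Seidel2008}. In particular, $L_t$ is again an immersed Lagrangian sphere on which $\Gamma^n_a$ acts freely on the set of lifts to $X^n_a$.

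First, I would exploit this free action to choose $\Gamma^n_a$-equivariant Floer and perturbation data defining $\widetilde{\mathscr{A}}_t$, which exhibits $\widetilde{\mathscr{A}}_t$ in the form $\bm{p}^* \underline{\mathscr{A}}_t$ for a $\bm{G}^n_1$-graded $R_a$-linear $A_\infty$ algebra $\mathscr{A}_t$. The non-curvedness of $\mathscr{A}_t$ (equivalently $w(L_\gamma) = \bm{w}^n_a$ for every lift) is handled exactly as in Corollary \ref{corollary:w}, which only used the branched cover relation together with Givental's computation, and is therefore insensitive to replacing $L$ by the isotopic $L_t$.

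Second, I would check that $\mathscr{A}_t$ satisfies the three conditions of Definition \ref{definition:typean}. Since the conditions only constrain the $A_\infty$ structure to first order in the maximal ideal $\mathfrak{m} \subset R_a$, one can replicate the strategy of Proposition \ref{proposition:atild}: reduce mod $\mathfrak{m}^2$, apply the first-order quasi-equivalence of Proposition \ref{proposition:monrel} (suitably $\Gamma^n_a$-equivariantly) to compare $\mathscr{A}_t/\mathfrak{m}^2$ with the endomorphism algebra of $L_t$ in $\mathcal{F}(\phi)/\mathfrak{m}^2$, and then invoke \cite[Proposition 6.2]{Sheridan2014}. The order-zero part is the cohomology algebra of $L_t$ in the exact Fukaya category $\mathcal{F}(X^n_1 \setminus D)$, which is the exterior algebra by the quasi-isomorphism $L \cong L_t$ noted above; the first-order deformation class is controlled by homology classes of holomorphic disks meeting $D$ exactly once, which is a topological invariant of the quasi-isomorphism class of the pair $(L_t, D)$ together with an identification of $H^2(X^n_1, L_t)$, and is therefore also preserved under the Hamiltonian isotopy from $L$ to $L_t$.

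The main (and only real) obstacle is the second point above: verifying that the first-order deformation class $\Phi(\mu^*) \equiv \tilde{Z}^n_a \pmod{\mathfrak{m}^2}$ transports correctly under the exact Lagrangian isotopy $\varphi_t$. One approach is a direct continuation argument: the parametrized moduli spaces of Maslov-$2$ disks in $X^n_a$ with boundary on the family $\varphi_s(L_\gamma)$, $s \in [0,t]$, and a single intersection with one component $D_j$ of the divisor, form a cobordism between the rigid disks counted for $L$ and those counted for $L_t$, because the isotopy stays within the exact category $\mathcal{F}(X^n_1 \setminus D)$ and so no bubbling into $D$ can occur. This identifies the first-order coefficients on the nose and thereby certifies that $\mathscr{A}_t$ is of type A$^n_a$, completing the proof.
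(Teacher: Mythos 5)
Your proof rests on the same two observations as the paper's --- that $L_t = \varphi_t(L)$ is Hamiltonian isotopic to $L$ (Lemma \ref{lemma:actioncontraction}), and that the type-A$^n_a$ condition only sees first-order data --- but then sets out to re-run the moduli-space analysis of Proposition \ref{proposition:atild} for $L_t$ from scratch, bridging from $L$ to $L_t$ with a parametrized cobordism of disks. The paper's actual proof is far shorter: since the lifts of $L_t$ and $L$ to $X^n_a\setminus D$ are Hamiltonian isotopic, $\widetilde{\mathscr{A}}_t$ and $\widetilde{\mathscr{A}}$ become quasi-equivalent once all $r_j$ are set to $0$ (i.e., in the affine Fukaya category $\mathcal{F}(X^n_a\setminus D)$), and the purely algebraic \cite[Lemma 2.111]{Sheridan2014} upgrades this to a first-order quasi-equivalence of $R/\mathfrak{m}^2$-linear categories; the result is then immediate from Proposition \ref{proposition:atild}, with no new geometric input at all. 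Your route could likely be made rigorous, but it essentially reinvents the content of \cite[Lemma 2.111]{Sheridan2014} geometrically, and your claim that the cobordism ``identifies the first-order coefficients on the nose'' overreaches: a continuation argument produces only a filtered $A_\infty$ homomorphism (hence a first-order quasi-equivalence after truncation), not a chain-level equality of structure maps, and one must then still translate that comparison into a statement about the first-order deformation class in truncated Hochschild cohomology --- exactly the steps the cited lemma packages abstractly. A minor further imprecision: the relevant quasi-isomorphism is between the \emph{lifts} of $L$ and $L_t$ in $\mathcal{F}(X^n_a\setminus D)$, not the immersed spheres downstairs in $\mathcal{F}(X^n_1\setminus D)$; and the non-curvedness of $\mathscr{A}_t$ is automatic because $\widetilde{\mathscr{A}}_t$ sits inside the non-curved category $\mathcal{F}_m(X^n_a,D)_{\bm{w}}$, rather than being something one derives from Corollary \ref{corollary:w} (which computes the value $\bm{w}^n_a$, not curvature).
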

\begin{proof}
By Lemma \ref{lemma:actioncontraction}, $L_t$ is Hamiltonian isotopic to $L$.
It follows that $\widetilde{\scrA}_t$ and $\widetilde{\scrA}$ become quasi-equivalent, when we set all $r_j = 0$ and work in the affine Fukaya category $\cF(X^n_a \setminus D)$.
It now follows by \cite[Lemma 2.111]{Sheridan2015} that $\widetilde{\scrA}_t$ and $\widetilde{\scrA}$ are `first-order quasi-equivalent'.
The result now follows from Proposition \ref{proposition:atild}, because whether an algebra is of type A$^n_a$ or not only depends on first-order data.
 \end{proof}

Henceforth, we will drop the `$t$' and write $\scrA$ for this $A_\infty$ algebra $\scrA_t$: it has all the same properties as before, except the generators $\mathcal{P}$ are now monotone.

\begin{lemma}
\label{lemma:hypsat}
Let $V \subset \scrA$ be the subspace spanned by the generators in $\mathcal{P}$.
Then the conditions of Lemma \ref{lemma:prediscdisc} are satisfied:
\begin{itemize}
\item For any $v \in V$, $\mu^s(v,\ldots,v)$ is a multiple of $e_L$;
\item The sum defining the pre-disc potential $\mathfrak{P}'$ converges;
\item $CC^{\le 0}(V,\scrA) \cong \C \cdot e_L$ (as $\C$-vector space).
\end{itemize}
\end{lemma}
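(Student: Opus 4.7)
The three conditions can be verified by exploiting the fact that $\mathscr{A}$ is an $A_\infty$ algebra of type A$^n_a$, combined with the refined $\bm{G}^n_a$-grading inherited via the identification $\widetilde{\mathscr{A}} \cong \bm{p}^* \underline{\mathscr{A}}$. The first two conditions are relatively direct consequences of the structure already established; the third is a bookkeeping exercise with the $\bm{G}^n_a$-grading, analogous to Lemmas \ref{lemma:mu1} and \ref{lemma:j2}.

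For condition (1), the HKR formula from Definition \ref{definition:typean} gives $\Phi(\mu^*) = \tilde Z^n_a + \mathcal{O}(r^2)$, where
\[ \tilde Z^n_a = -u_1 \cdots u_n + \sum_j r_j u_j^a \]
lies in $S_a \cdot e_L \subset S_a \otimes A$. Thus, to first order in $\mathfrak{m}$, every expression $\mu^s(\theta_{i_1}, \ldots, \theta_{i_s})$ with inputs in $V$ is a scalar multiple of $e_L$. For the higher-order corrections, I would run the same grading argument as in Lemma \ref{lemma:mu1}: any putative summand $r^{\bm c}\theta^K$ of $\mu^s(\theta_{i_1},\ldots,\theta_{i_s})$ with $K \neq \emptyset$ must satisfy the $\bm{G}^n_a$-grading relations (from Lemma 2.93 of \cite{Sheridan2014}) with all inputs of $Y$-degree $y_{i_j}$ and cohomological degree $1$; these relations, combined with $|K| \ge 1$, can be shown to have no integral solutions, forcing $K = \emptyset$. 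For condition (2), recall that we have replaced $\mathcal{P}$ by its reverse-Liouville flow $\mathcal{P}_t$ so that $(\mathcal{L}_t, \mathcal{P}_t)$ is monotone; Lemma \ref{lemma:conv} then guarantees that the infinite sum $\sum_{s \ge 1} \mu^s(v,\ldots,v)$ has only finitely many non-zero terms, so $\mathfrak{P}'(v)$ is in fact a polynomial in $R$.

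For condition (3), I enumerate truncated Hochschild cochains $\phi \in CC^{\le 0}(V,\mathscr{A})$ generator by generator. A generator sends $\theta_{i_1} \otimes \cdots \otimes \theta_{i_s} \mapsto r^{\bm c}\theta^K$, and the two $\bm{G}^n_a$-grading matching equations (in the $\Z$-factor and in the $Y$-factor) become a system on $s$, $|K|$, $|\bm c|$, and an auxiliary integer $q$, with the additional inequality coming from the constraint $\deg(\phi) \le 0$. Because all inputs come from the fixed one-dimensional weight subspaces spanned by the $\theta_i$'s, these constraints are tighter than in the unrestricted setting of \cite[Lemma 2.93]{Sheridan2014}: following the arithmetic of the proof of Lemma \ref{lemma:j2}, one combines the equations to obtain a bound of the form $n \ge (n-a)(\text{something non-negative}) + 2$, and checks case-by-case that the only solution is $s = 0$, $K = \emptyset$, $\bm c = 0$ — corresponding precisely to the unit $e_L$.

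The main obstacle is this last enumeration: the inequality $\deg(\phi) \le 0$ is weaker than the equalities used in Lemma \ref{lemma:mu1} (where $\deg = 1$) and Lemma \ref{lemma:j2} (where $\deg = 2$), so more tuples $(s,|K|,|\bm c|,q)$ a priori need to be ruled out. The saving point is that the inputs are restricted to the one-dimensional subspaces of pure $Y$-degree $y_i$, which rigidifies the $Y$-grading relation and prevents any non-trivial cancellations; once this is set up carefully, the integer programming is routine. The case $a = 2$ will need to be treated separately as in Proposition \ref{proposition:hhana}, replacing the length filtration by the $d = s + (n-a)|\bm c|$ filtration, but the conclusion is the same.
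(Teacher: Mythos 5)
Your approach is essentially the same as the paper's, but there are three imprecisions worth flagging.

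For condition (2), you cite monotonicity of $(\mathcal{L}_t,\mathcal{P}_t)$, but since the pre-disk potential $\mathfrak{P}'$ is $R$-valued (it lives in the orbifold relative Fukaya category $\mathcal{F}(\phi)$), the relevant notion from Lemma \ref{lemma:conv} is relative monotonicity: convergence in $\mathcal{F}^c_m(X,D)^\oplus$ requires the pair to be relatively monotone, whereas monotonicity governs convergence over $\C$. The paper makes exactly this citation. Both hold after the reverse Liouville flow, so your conclusion is fine, but the reason you give is the wrong one for the statement you want.

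For condition (3), the paper's argument is a clean sandwich of inequalities obtained directly from \cite[Lemma 2.94, Equations (2.7), (2.8)]{Sheridan2014}: starting from $0 \ge -s \ge t$, one substitutes $t = (n-2)(q-j) + (n-a)j$, drops the $(n-a)j \ge 0$ term, rewrites $(n-2)(q-j)$ as $\frac{n-2}{2}(|K| - (s+t))$, and observes this is $\ge 0$; equality throughout forces $s = t = |K| = j = 0$. Your sketch aims for a bound of the shape $n \ge (n-a)(\cdot) + 2$, which is the bound from Lemma \ref{lemma:j2} (where the Hochschild degree is pinned to $2$), not the right form here where the constraint is the \emph{inequality} $s+t \le 0$; as you note yourself, the inequality is weaker, and the bookkeeping genuinely needs the above sandwich rather than a rerun of Lemma \ref{lemma:j2}. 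Your claim that $a=2$ requires separate treatment is also unnecessary: the case split in Proposition \ref{proposition:hhana} arose from the formality/filtration argument, not from the grading arithmetic, and the inequalities above work uniformly for $2 \le a \le n-1$, $n \ge 4$. These are not fatal errors --- your plan would reach the right answer with more work --- but the $a = 2$ split and the shape of the bound suggest the integer programming has not actually been carried out, which is where the content of this condition lies.

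For condition (1), your HKR-plus-grading route is a reasonable alternative to the paper's more direct citation of \cite[Lemma 2.96]{Sheridan2014} together with Lemmata \ref{lemma:defclasshh}, \ref{lemma:j2}, $\mu^1 = 0$ (Corollary \ref{corollary:min}), $\mu^0 = 0$, and antisymmetry of $\mu^2_0$ on odd $v$. Note that for $a=2$ the $\mu^2$ term $r_j u_j^2$ is nonzero on $\theta_j \otimes \theta_j$, but it is still proportional to $e_L$, so the conclusion of condition (1) is unaffected --- worth keeping in mind if you were tempted to claim $\mu^2(v,v) = 0$ outright.
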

\begin{proof}
Combining the grading computations of \cite[Lemma 2.96]{Sheridan2015} with Lemmas \ref{lemma:defclasshh} and \ref{lemma:j2} shows that any $A_\infty$ product $\mu^s$ with $s \ge 3$, when input are generators $\theta_i$, outputs a multiple of the identity.
We also have $\mu^1 = 0$ by Corollary \ref{corollary:min}, and $\mu^0 = 0$ by definition.
Finally, the product $\mu^2(v,v)$ vanishes by antisymmetry, as $\mu^2$ is the exterior algebra product.
It follows that $\mu^s(v,\ldots,v)$ is a multiple of $e_L$, for any $v \in V$.

The sum defining the pre-disc potential converges, because $(\mathcal{L},\mathcal{P})$ is relatively monotone.

The fact that $CC^{\le 0}(V,\scrA) \cong \C \cdot e_L$ follows from \cite[Lemma 2.94]{Sheridan2015}, in particular Equations (2.7) and (2.8).
In the notation used there, $s+t \le 0$ means
\begin{align}
0 &\ge  -s \mbox{ (length $s$ is always $\ge 0$)}  \\
& \ge  t \mbox{ (as $s+t \le 0$)}\\
&= (n-2)(q-j) + (n-a)j  \mbox{ (by \cite[Equation (2.7)]{Sheridan2015})} \\
& \ge  (n-2)(q-j) \mbox{ (as $n-a > 0$ and $j \ge 0$)} \\
&= \frac{n-2}{2} (|K| - (s+t)) \mbox{ (by \cite[Equation (2.8)]{Sheridan2015})} \\
& \ge  0 \mbox{ (as $|K| \ge 0$ and $s+t \le 0$)}
\end{align}
Hence we must have equality everywhere, and in particular, $s = t = |K| = j = 0$, so the only generator is the identity $e_L$.
 \end{proof}

\begin{corollary}
\label{corollary:weakemb}
Consider the immersed Lagrangian sphere $L$ as an object of the $\C$-linear category $\cF^c(\phi) \otimes_R \C$ (where the `$c$' indicates that we have enhanced with homotopy units and introduced curvature), and let $V_\C$ be the $n$-dimensional complex vector space with basis $\{\theta_1,\ldots,\theta_n\}$.
Then there is an embedding
\begin{equation} \iota: V_\C \hookrightarrow \hcM_{weak}(L),\end{equation}
so that the disc potential is given by
\begin{equation} \mathfrak{P} \circ \iota(v_1,\ldots,v_n) = W^n_a(v_1,\ldots,v_n).\end{equation}
\end{corollary}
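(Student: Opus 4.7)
The plan is to apply Corollary \ref{corollary:relpredisk2} to the immersed Lagrangian sphere $L$ in the orbifold relative Fukaya category $\mathcal{F}(\phi)$, and then identify the resulting disk potential explicitly with $W^n_a$. By Proposition \ref{proposition:atild}, the endomorphism algebra of $L$ in the relevant category is first-order quasi-isomorphic to a $\bm{G}^n_1$-graded $A_\infty$ algebra $\mathscr{A}$ of type A$^n_a$. I would fix the basis $\{\theta_1,\ldots,\theta_n\}$ to be a basis for the subspace $V\subset\mathscr{A}$ appearing in Lemma \ref{lemma:hypsat}, and, by replacing $L$ with its image under a sufficiently long reverse Liouville flow (Lemma \ref{lemma:actioncontraction}) if necessary, arrange that $(\{L\},\{\theta_1,\ldots,\theta_n\})$ is both relatively monotone and monotone. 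Lemma \ref{lemma:hypsat} verifies the hypotheses of Lemma \ref{lemma:prediskdisk}, so Corollary \ref{corollary:relpredisk2} then yields an embedding $\iota: V_\C \hookrightarrow \widehat{\mathcal{M}}_{weak}(L)$ inside $\mathcal{F}^c(\phi)\otimes_R\C$, satisfying $\mathfrak{P}\circ\iota(v) = \mathfrak{P}'(v) + \bm{w}^n_a$, where the constant is identified via Corollary \ref{corollary:w}.

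It remains to show that the pre-disk potential satisfies $\mathfrak{P}'(v) = Z^n_a(v)$ in a suitable basis of $V_\C$. Because $\mu^1 = 0$ by Corollary \ref{corollary:min}, and because we may apply a formal diffeomorphism to bring $\mu^2$ into the form of the strict exterior product $\mu^2_0$ (justified by $THH^2_{\bm{G}}(A\otimes R)^2 \cong 0$, as used in the proof of Theorem \ref{theorem:typean}), we have $\mu^2(v,v) = 0$ by antisymmetry. Lemma \ref{lemma:hypsat} further ensures that each $\mu^s(v,\ldots,v)$ is a scalar multiple of $e_L$, so $\mathfrak{P}'_{rel}(v)\cdot e_L = \sum_{s\ge 3}\mu^s(v,\ldots,v)$, with the scalar on each term equal to the evaluation at $v$ of the function part of $\Phi^1(\mu^s) \in R[U]\otimes A$ (the summand landing in $R[U]\cdot e_L$) under the Hochschild–Kostant–Rosenberg map.

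The crux is then to establish $\Phi^1(\mu^{\ge 3}) = \tilde{Z}^n_a$ exactly, rather than merely modulo $r^2$ as provided by Definition \ref{definition:typean}. This follows from the degree argument carried out in Section \ref{subsec:comphh}, just before equation \eqref{eqn:pushedmc}: after twisting by $\mu^2_0$, the higher Taylor coefficients $\Phi^k(\alpha,\ldots,\alpha)$ of the $L_\infty$ quasi-isomorphism (applied to $\alpha = \mu^{\ge 3}$) are forced into $d$-grading $2+(n-2)k > n$ for $k\ge 2$, whereas the generators of $HH^2_{\bm{G}}(A\otimes R)$ compatible with the filtration $F_2\hat{\mathfrak{h}}$ have $d$-grading at most $n$ by Lemmata \ref{lemma:defclasshh} and \ref{lemma:j2}. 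Hence the higher Taylor terms vanish, leaving $\hat{\Phi}_*\alpha = \Phi^1(\mu^{\ge 3}) = \tilde{Z}^n_a$. Evaluating the (already pure) function $\tilde{Z}^n_a$ at $v$ gives $\mathfrak{P}'_{rel}(v) = \tilde{Z}^n_a(v)$, and setting all $r_j = 1$ under $\otimes_R \C$ produces $\mathfrak{P}'(v) = Z^n_a(v)$. Combining with the first paragraph, $\mathfrak{P}\circ\iota(v) = Z^n_a(v) + \bm{w}^n_a = W^n_a(v)$, as required.

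The main obstacle will be the third step: pinning down the disk potential \emph{exactly}, since the defining property of type A$^n_a$ only controls the first-order-in-$r$ behaviour. Everything hinges on the bookkeeping with the $d$-grading $s + (n-a)j$ to rule out higher-order corrections both to $\mu^2$ (before applying the formal diffeomorphism) and to the function part of $\Phi^1(\mu^{\ge 3})$, so that the remaining contribution to the disk potential is precisely $\tilde{Z}^n_a$.
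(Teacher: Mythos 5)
Your proof is correct and follows essentially the same path as the paper's: verify the hypotheses of Corollary \ref{corollary:relpredisk2} via Lemma \ref{lemma:hypsat}, then pin down the pre-disk potential exactly using degree constraints, and finally add the curvature constant $\bm{w}^n_a$. You also correctly identify the crux, namely that the type A$^n_a$ axiom only controls $\Phi(\mu^*)$ modulo $r^2$, so one must separately argue that there are no higher-order corrections. The paper disposes of this step in one line: ``by the definition of an algebra of type A$^n_a$ (together with Lemma \ref{lemma:j2}, which shows there can be no corrections to $Z^n_a$ of quadratic or higher order in the $r_j$).''

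Where you part ways with the paper is in invoking the Kontsevich formality $L_\infty$ morphism $\hat{\Phi}$ and the pushed-forward Maurer--Cartan element $\hat{\Phi}_*\alpha$ from Section \ref{subsec:comphh}. That detour is unnecessary and slightly misdirected for this corollary: $\hat{\Phi}_*\alpha$ lives on the polyvector-field side of a quasi-isomorphism, whereas the pre-disk potential $\mathfrak{P}'(v)=\sum_s\mu^s(v,\ldots,v)$ is computed directly from the honest cochains $\mu^s$; showing the higher Taylor coefficients $\Phi^k$ vanish identifies $\hat{\Phi}_*\alpha$ with $\Phi^1(\alpha)$, but does not by itself constrain the cochain $\mu^*$. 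What actually does the work is the grading estimate from \cite[Lemma 2.93]{Sheridan2014} underlying the proof of Lemma \ref{lemma:j2}: it constrains the generators of the cochain complex directly (not merely their cohomology classes), forcing the only contributions to $\mu^{\ge 3}(v,\ldots,v)\in R\cdot e_L$ to have $r$-weight $0$ or $1$, so $\mathfrak{P}'_{rel}=\tilde{Z}^n_a$ on the nose. Since you do cite Lemmata \ref{lemma:defclasshh} and \ref{lemma:j2} and the $d$-grading bookkeeping, your argument effectively reduces to the same degree count as the paper's; the formality map is a red herring here and could be dropped without loss. Likewise the formal diffeomorphism normalising $\mu^2$ is only needed when $a=2$ (for $a\ge 3$ the grading rules out $r$-corrections to $\mu^2$ automatically), and one should be slightly careful that applying such a diffeomorphism preserves the conclusion being sought, but this matches what the paper implicitly does in Lemma \ref{lemma:hypsat}.
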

\begin{proof}
The existence of $\iota$ follows from Lemma \ref{lemma:hypsat}, which verifies the hypotheses of Corollary \ref{corollary:relpredisc2}.
The pre-disc potential is $\mathfrak{P}' = Z^n_a$, by the definition of an algebra of type A$^n_a$ (together with Lemma \ref{lemma:j2}, which shows there can be no corrections to $Z^n_a$ of quadratic or higher order in the $r_j$).
Therefore, the disc potential is precisely
\begin{equation} \mathfrak{P} \circ \iota(v) = \bm{w} + \mathfrak{P}'(v) = \bm{w} + Z^n_a(v_1,\ldots,v_n) = W^n_a(v_1,\ldots,v_n).\end{equation}
 \end{proof}

Now we recall the setup of \S \ref{subsec:fabg}.
We apply it to the object $L$ of $\underline{\scrA}$, the category constructed in the proof of Proposition \ref{proposition:atild}, which had the property that there was an isomorphism
\begin{equation} \widetilde{\scrA} \cong \bm{p}^* \underline{\scrA}.\end{equation}
Choose a splitting $\theta$, and let $L^\theta$ be the corresponding object of $\cF^c(X^n_a)$.

\begin{corollary}
\label{corollary:weakembups}
There is an embedding
\begin{equation}
j \circ \iota: V_\C \hookrightarrow \hcM_{weak}(L^\theta),
\end{equation}
such that the disc potential is given by
\begin{equation}
\mathfrak{P} \circ j \circ \iota(v) = W^n_a(v).
\end{equation}
Furthermore, the objects $(L^\theta,j \circ \iota(v))$ and $(L^\theta,j \circ \iota(\chi \cdot v))$ are quasi-isomorphic, for any $\chi \in \Gamma^*$.
\end{corollary}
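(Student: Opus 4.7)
The plan is to assemble the three pieces of the statement from results already in hand, with only a small equivariance check required.

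First, the embedding $j \circ \iota$ is immediate: $\iota \colon V_\C \hookrightarrow \widehat{\mathcal{M}}_{weak}(L)$ is the embedding from Corollary \ref{corollary:weakemb}, and $j \colon \widehat{\mathcal{M}}_{weak}(L;\mathcal{A}) \hookrightarrow \widehat{\mathcal{M}}_{weak}(L^\theta;\bm{p}^*\mathcal{A})$ is the inclusion \eqref{eqn:jinc}; the composition of two injections is an injection. For the disk potential, one of the defining properties of $j$ (established just before \eqref{eqn:jinc}) is that $\mathfrak{P}\circ j = \mathfrak{P}$, so Corollary \ref{corollary:weakemb} gives $\mathfrak{P}\circ j \circ \iota = \mathfrak{P}\circ\iota = W^n_a$.

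Second, for the quasi-isomorphism statement, I would appeal to Proposition \ref{proposition:characts}, which says $(L^\theta, j(\alpha))$ and $(L^\theta, j(\chi\cdot\alpha))$ are quasi-isomorphic in $(\bm{p}^*\mathcal{A})^{wbc}_{\mathfrak{P}(\alpha)}$ for any $\chi\in\Gamma^*$. Applied to $\alpha = \iota(v)$, this reduces the claim to verifying that $\iota$ is $\Gamma^*$-equivariant, i.e.\ $\iota(\chi\cdot v) = \chi\cdot\iota(v)$. Using the explicit formula $\iota(v) = v + \mathfrak{P}'(v)\,f_L$ from Lemma \ref{lemma:prediskdisk}, this amounts to the two statements
\begin{equation*}
\mathfrak{P}'(\chi\cdot v) = \mathfrak{P}'(v), \qquad \chi\cdot f_L = f_L.
\end{equation*}
The first holds because, by the proof of Corollary \ref{corollary:weakemb}, $\mathfrak{P}'$ coincides with the polynomial $Z^n_a$, which is manifestly $(\Gamma^n_a)^*$-invariant by Definition \ref{definition:mir} (every monomial $u_j^a$ is invariant, and the degree condition defining $(\Gamma^n_a)^*$ kills the character on the product $u_1\cdots u_n$). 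The second holds because $f_L$, as part of the homotopy-unit enlargement in Section \ref{subsec:homungeom}, sits in $\mathrm{hom}^*_{\mathcal{A}^+}(L,L)$ with trivial $Y$-component of its $\bm{G}$-degree --- it implements the chain homotopy $\mu^1(f_L) = e_L^+ - e_L$ between the two $\Gamma^*$-fixed unit elements --- so $f_L$ lies in the trivial isotypic component and is fixed by every $\chi\in\Gamma^*$.

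The only real obstacle is this last bookkeeping point: one must ensure that the enlargement from $\mathcal{A}$ to $\mathcal{A}^+$ preserves $\Gamma^*$-equivariance, which requires checking that the formal generators $f_L,e_L^+$ introduced in Section \ref{subsec:homungeom} are $\Gamma^*$-fixed. This is not genuinely difficult: the homotopy-unital perturbation data in Section \ref{subsec:homungeom} can be chosen $\Gamma$-equivariantly on $L^\theta$ (as in the proof of Proposition \ref{proposition:atild}), which forces the strict unit $e^+_L$ and the connecting morphism $f_L$ to sit in $\bm{G}$-degrees with trivial $Y$-component, hence to be $\Gamma^*$-fixed under the character action.
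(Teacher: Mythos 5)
Your proof is correct and follows the same route as the paper's: the paper's own proof is just two sentences, citing Corollary \ref{corollary:weakemb}, the embedding $j$ of \eqref{eqn:jinc}, and Proposition \ref{proposition:characts}, and you have correctly identified and filled in the $\Gamma^*$-equivariance of $\iota$ (i.e. $\chi\cdot\iota(v)=\iota(\chi\cdot v)$), which the paper leaves implicit but which is genuinely required to deduce the quasi-isomorphism claim from Proposition \ref{proposition:characts}. One small simplification: the final paragraph about choosing $\Gamma$-equivariant homotopy-unit perturbation data is unnecessary --- the $\Gamma^*$-action on a $\bm{G}_2$-graded category and the $\Gamma^*$-fixedness of $f_L,e^+_L$ are determined purely by the grading (their $\bm{G}$-degrees lie in $f_2(\Z)\subset\bm{p}(Y_1)$, hence map to $0\in\Gamma=Y_2/Y_1$), regardless of the perturbation data.
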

\begin{proof}
The embedding $j \circ \iota$ is the composition of the embedding $\iota$ given in Corollary \ref{corollary:weakemb} with the embedding $j$ of \eqref{eqn:jinc}.
The fact that $\chi$ acts via quasi-isomorphisms follows from Proposition \ref{proposition:characts}.
 \end{proof}

\begin{remark}
\label{rmk:notgauge}
Corollary \ref{corollary:weakembups} is our version of the folklore result that `the mirror is the Maurer--Cartan moduli space, with the superpotential given by the disc potential'.
We recall that the mirror is $(\C^n/\Gamma^*,W^n_a)$ (see Definition \ref{definition:mir}).
It is interesting to note that the quotient by $\Gamma^*$ that appears in the definition of the mirror does not appear in Corollary \ref{corollary:weakembups} as the quotient by gauge equivalence: gauge equivalence is an equivalence relation generated by the flow of a vector field, not by a discrete group action.
We also note that the quotient by $\Gamma^*$ is not simply quotient by quasi-isomorphism: most of the objects $(L^\theta,j \circ \iota(v))$ are quasi-isomorphic to the zero object, but are not identified by the action of $\Gamma^*$.
\end{remark}

Now, because $\scrA$ is an $A_\infty$ algebra of type A$^n_a$, its underlying vector space can be identified with an exterior algebra on the generators $\theta_1,\ldots, \theta_n$.
Let $\scrA_k$ denote the subspace of $\scrA$ spanned by elements $\theta_{i_1} \wedge \ldots \wedge \theta_{i_k}$, so
\begin{equation}
\label{eqn:decompa}
 \scrA \cong \bigoplus_{k=0}^n \scrA_k.
\end{equation}

\begin{lemma}
\label{lemma:hypsat2}
The subspace $V \subset \scrA$, together with the decomposition \eqref{eqn:decompa}, satisfies the hypotheses of Proposition \ref{proposition:critpd}, namely:
\begin{itemize}
\item $A_0 = R \cdot e $;
\item $A_1 = V$;
\item $\mu^2_0$ sends
\begin{equation} \mu^2_0 : A_k \otimes A_l \To A_{k+l}; \end{equation}
\item If $s > 2$ or $j>0$, then $\mu^s_j$ sends
\begin{equation} \mu^s_j: V^{\otimes b} \otimes A_k \otimes V^{\otimes c} \otimes A_l \otimes V^{\otimes d} \To \bigoplus_{m < k+l} A_m,\end{equation}
whenever $b+c+d + 2 = s$.
\item $V$ generates $A$ as an associative algebra, with respect to $\mu^2_0$, and $e_L$ is a unit for this product;
\end{itemize}
\end{lemma}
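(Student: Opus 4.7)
The first three items and the fifth item follow directly from Definition \ref{definition:typean}: by construction, the underlying $R$-module and $\mu^2_0$-product of $\mathscr{A}$ identify it with the exterior algebra $A \otimes R \cong \Lambda^*U \otimes R$ on the generators $\theta_1,\ldots,\theta_n$, with $\mu^2_0$ the wedge product. This immediately yields $\mathscr{A}_0 = R\cdot e_L$, $\mathscr{A}_1 = V$, the length-additivity $\mu^2_0(\mathscr{A}_k\otimes\mathscr{A}_l)\subset\mathscr{A}_{k+l}$, the fact that $V$ generates $\mathscr{A}$ as an algebra under $\mu^2_0$, and the fact that $e_L$ is a strict unit for $\mu^2_0$.

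The substantive point is the fourth item. My plan is to prove it by a direct $\bm{G}^n_1$-grading computation, modeled closely on \cite[Lemma 2.94]{Sheridan2014} (and on the proofs of Lemmas \ref{lemma:mu1} and \ref{lemma:j2} above). Fix any non-vanishing component of $\mu^s_j$, say the coefficient of the monomial $r^{\bm{c}}\theta^{K_0}$ in
\[
\mu^s_j\bigl(\theta_{i_1},\ldots,\theta_{i_b},\theta^K,\theta_{j_1},\ldots,\theta_{j_c},\theta^L,\theta_{l_1},\ldots,\theta_{l_d}\bigr)
\]
with $b+c+d=s-2$, $|\bm{c}|=j$, $|K|=k$, $|L|=l$. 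Using the conventions $\deg(\theta^{K_i}) = (-|K_i|, y_{K_i})$, $\deg(r_i) = (2-2a, ay_i)$, and the fact that $\mu^s$ has degree $(2-s,0)$, the grading datum identity, lifted from $Y^n_1 = (\Z\oplus\Z^n)/(2(1-n),y_{[n]})$ back to $\Z\oplus\Z^n$, yields an integer $q$ satisfying
\[
(n-2)q = 2-s-(2-a)j, \qquad a\bm{c} + y_{K_0} = \textstyle\sum_\alpha y_{K_\alpha} + q\, y_{[n]}.
\]

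Summing coefficients in the $Y$-equation gives $|K_0| + aj = \sum_\alpha |K_\alpha| + qn = (s-2+k+l) + qn$, so $|K_0| = (s-2)+k+l + qn - aj$. Substituting the formula for $q$ from the $\Z$-equation and using $n(2-a)+a(n-2) = 2(n-a)$, one obtains after routine simplification
\[
|K_0| - (k+l) \;=\; -\frac{2\bigl[(s-2)+j(n-a)\bigr]}{n-2}.
\]
Since $n>a$ by hypothesis and $(s,j)\neq(2,0)$, the bracket is strictly positive, so $|K_0|<k+l$. (If $n-2$ does not divide $2-s-(2-a)j$ there is no admissible integer $q$, and the corresponding $\mu^s_j$ vanishes outright for grading reasons.) This establishes the fourth condition.

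The main obstacle is essentially none: the argument is a bookkeeping exercise in the $\bm{G}^n_1$-grading, parallel in spirit to Lemmas \ref{lemma:mu1} and \ref{lemma:j2}. The only point requiring care is that the $s-2$ light inputs $\theta_{i_\alpha}$ from $V$ contribute symmetrically to both sides of the length balance and therefore drop out of the estimate on $|K_0|-(k+l)$; this is why the strictness of the inequality is controlled by the single quantity $(s-2)+j(n-a)$ rather than by the total input length.
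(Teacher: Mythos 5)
Your proof is correct and follows essentially the same approach as the paper: the first, second, third, and fifth bullets are immediate from the definition of type $\mathrm{A}^n_a$, and the fourth is the identical degree computation, yielding $(n-2)(|K_0|-k-l) = -2[(s-2)+j(n-a)]$, from which the conclusion follows since $n>a$. The only cosmetic difference is that the paper cites \cite[Lemma~2.93]{Sheridan2014} for the two grading equations rather than re-deriving them from the degree conventions as you do.
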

\begin{proof}
It is clear that $\scrA_0$ is spanned by the unit $e$, and that $\scrA_1 = V$ is the subspace spanned by the $\theta_i$.
$\mu^2_0$ coincides with the exterior product, by definition of an algebra of type A$^n_a$.
Hence it respects the decomposition, $V$ generates $\scrA$ as an associative algebra with respect to the product $\mu^2_0$, and $e_L$ is a unit.
Finally, we apply \cite[Lemma 2.93]{Sheridan2015} to prove the final hypothesis.
Using the notation from there, if the coefficient of $r^{\bm{c}}\theta^{K_0}$ in
\begin{equation} \mu^s(\theta_{j_1}, \ldots, \theta_{j_a},\theta^{K_1},\theta_{j_{b+1}},\ldots,\theta_{j_{b+c}},\theta^{K_2},\theta_{j_{b+c+1+1}},\ldots,\theta_{j_{b+c+d}})\end{equation}
is non-zero, then
\begin{equation}
|K_0| - |K_1| - |K_2| = (s-2) + nq - aj
\end{equation}
(dotting \cite[Equation (2.2)]{Sheridan2015} with $y_{\{1,\ldots,n\}}$), and
\begin{equation}
2-s = (n-2)q + (2-a)j
\end{equation}
(by \cite[Equation (2.3)]{Sheridan2015}, with $s+t = 2$).
Eliminating $q$ from these equations, and setting $|K_1| = k, |K_2| = l, |K_0| = m$ gives
\begin{equation} (n-2)(m - k -l) = 2(2-s) + 2(a-n)j.\end{equation}
In particular, if $s>2$ or $j>0$, then $m<k+l$, as required.
 \end{proof}

\begin{corollary}
\label{corollary:hessL}
Consider the immersed Lagrangian sphere $L$ as an object of $\cF^c(\phi) \otimes_R \C$, as in Corollary \ref{corollary:weakemb}.
Suppose that $v = (v_1,\ldots,v_n)$ is one of the small critical points of $\mathfrak{P} \circ \iota = W^n_a$ (see Lemma \ref{lemma:wnacrit}), and let $\alpha = \iota(v)$ be the corresponding weak bounding cochain.
Then we have an isomorphism of $\Z/2\Z$-graded algebras
\begin{equation} HF^*((L,\alpha),(L,\alpha)) \cong \Cl_n.\end{equation}
The right-hand side denotes the Clifford algebra of a non-degenerate quadratic form on a complex vector space of dimension $n$ (see \S \ref{subsec:cliff}).
\end{corollary}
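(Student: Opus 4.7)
The plan is to apply Proposition \ref{proposition:hess} to the subspace $V_\C \subset \mathscr{A} \otimes_R \C$ spanned by $\theta_1, \ldots, \theta_n$. Its hypotheses have essentially been verified already: Lemma \ref{lemma:hypsat} supplies the conditions of Lemma \ref{lemma:prediskdisk}, Lemma \ref{lemma:hypsat2} supplies the decomposition and generation hypotheses of Proposition \ref{proposition:critpd}, and the base change from $R$ to $\C$ is allowed by the remark following Proposition \ref{proposition:critpd}. By Corollary \ref{corollary:weakemb}, $\mathfrak{P} \circ \iota = W^n_a$, so $\mathfrak{P}' = Z^n_a = W^n_a - \bm{w}$, and a small critical point $v$ of $W^n_a$ is also a critical point of $\mathfrak{P}'$ with the same Hessian. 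Proposition \ref{proposition:hess} then produces a surjective homomorphism of $\Z/2\Z$-graded $\C$-algebras
\begin{equation}
\Cl\left(-\mathrm{Hess}_v(\mathfrak{P}')\right) \twoheadrightarrow HF^*((L,\alpha),(L,\alpha))
\end{equation}
carrying $V$ to $V$ (odd degree to odd degree); since $V$ generates the source as an algebra, this is automatically a $\Z/2\Z$-graded map.

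Next, I would identify the source with $\Cl_n$. Lemma \ref{lemma:wnacrit} asserts that the Hessian of $W^n_a$ at a small critical point is non-degenerate on $V_\C$; hence so is the Hessian of $\mathfrak{P}'$. Any non-degenerate complex quadratic form on $\C^n$ is equivalent to the standard one, so the domain of the surjection is $\Z/2\Z$-graded-isomorphic to $\Cl_n$.

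Finally, I would upgrade the surjection to an isomorphism by a dimension count. Proposition \ref{proposition:critpd} gives $HF^*((L,\alpha),(L,\alpha)) \cong \mathscr{A} \otimes_R \C$ as a $\C$-vector space (the cohomology collapses the homotopy-unit complement against $e_L$), and the right-hand side is an exterior algebra on $n$ generators and hence $2^n$-dimensional; $\Cl_n$ also has dimension $2^n$, so the surjection is forced to be a bijection, and the required $\Z/2\Z$-graded algebra isomorphism follows. No step in this outline is delicate: all the genuinely nontrivial inputs -- the finiteness of the Maurer-Cartan sum (Lemma \ref{lemma:conv} applied to the monotone representative $L_t$), the identity $\mathfrak{P} \circ \iota = W^n_a$ (Corollary \ref{corollary:weakemb}), the non-degeneracy of the small-critical-point Hessian (Lemma \ref{lemma:wnacrit}), and the Clifford recognition theorem (Proposition \ref{proposition:hess}) -- are already in place, and only need to be composed.
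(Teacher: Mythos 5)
Your proof is correct and takes the same route as the paper: verify the hypotheses of Proposition \ref{proposition:critpd} and Proposition \ref{proposition:hess} via Lemmata \ref{lemma:hypsat} and \ref{lemma:hypsat2}, use Corollary \ref{corollary:weakemb} to identify the disk potential with $W^n_a$, invoke Lemma \ref{lemma:wnacrit} for non-degeneracy of the Hessian, and then promote the surjection of Proposition \ref{proposition:hess} to an isomorphism by the rank-$2^n$ count. You are in fact slightly more careful than the paper in attributing the hypothesis verification (the paper cites only Lemma \ref{lemma:hypsat} where Lemma \ref{lemma:hypsat2} is also needed), but the argument is substantively identical.
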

\begin{proof}
By Lemma \ref{lemma:hypsat}, which verifies the hypotheses of Proposition \ref{proposition:critpd}, we can apply Proposition \ref{proposition:hess}. Corollary \ref{corollary:weakemb} shows that the disc potential is given by $W^n_a$.
Note that the surjection of Proposition \ref{proposition:hess} is in fact an isomorphism, as the domain and target both have rank $2^n$.
Also note that the Hessian at a small critical point is non-degenerate by Lemma \ref{lemma:wnacrit}.
 \end{proof}

\begin{corollary}
\label{corollary:smallLlift}
Let $v \in V_\C$ be one of the small critical points of $W^n_a$, and let $(L^\theta,j \circ \iota(v))$ be the corresponding object of $\cF^{wbc}(X^n_a)_w$, in the notation of Corollary \ref{corollary:weakembups}.
Then there is an isomorphism of $\Z/2\Z$-graded algebras
\begin{equation} HF^*((L^\theta,j(\alpha)),(L^\theta,j(\alpha))) \cong \Cl_n.\end{equation}
\end{corollary}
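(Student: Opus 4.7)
The plan is to invoke Proposition \ref{proposition:pullwbc} to lift the Clifford algebra computation of Corollary \ref{corollary:hessL} up the branched cover. Take $\mathcal{A} = \underline{\mathscr{A}}$, the $\bm{G}^n_1$-graded single-object category with endomorphism algebra the $A_\infty$ algebra $\mathscr{A}$ of type A$^n_a$ from Proposition \ref{proposition:atild}; then $\bm{p}^* \mathcal{A} \cong \widetilde{\mathscr{A}}$, so $L^\theta$ is naturally an object of $\bm{p}^* \mathcal{A}$, and $(L^\theta, j(\alpha))$ with $\alpha = \iota(v)$ is an object of $(\bm{p}^* \mathcal{A})^{wbc}$. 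The subspace $V = \C\langle \theta_1, \ldots, \theta_n \rangle$ satisfies the hypotheses of Propositions \ref{proposition:critpd} and \ref{proposition:hess} by Lemmata \ref{lemma:hypsat} and \ref{lemma:hypsat2}, and $V$ is $\Gamma^*$-invariant because $\Gamma^*$ scales each $\theta_i$ by an $a$-th root of unity.

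The one remaining hypothesis of Proposition \ref{proposition:pullwbc} is that, for every $\chi \in \Gamma^* \setminus \{1\}$, the differential $d_{v, \chi \cdot v}$ of Proposition \ref{proposition:diffcrit} on $A = \mathscr{A} \otimes_R \C$ admits a contracting homotopy. Using the sign convention $\mu^2(x,y) = (-1)^{|x|} x \wedge y$ together with the fact that $\mu^2_0$ is the exterior product (Lemma \ref{lemma:hypsat2}), a short computation gives
\begin{equation}
d_{v_1, v_2}(x) = \mu^2_0(v_2, x) + \mu^2_0(x, v_1) = (v_1 - v_2) \wedge x,
\end{equation}
i.e.\ left multiplication by $v_1 - v_2$. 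Lemma \ref{lemma:wnacrit} asserts that $\Gamma^*$ acts freely on small critical points, so $v - \chi \cdot v$ is a non-zero element of $V$. Left multiplication by a non-zero vector on the exterior algebra is the standard Koszul differential: picking any $\varphi \in V^*$ with $\varphi(v - \chi \cdot v) = 1$, the interior product $h = \iota_\varphi$ satisfies $L_{v - \chi \cdot v}\, h + h\, L_{v - \chi \cdot v} = \mathrm{id}$, yielding an explicit contracting homotopy.

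Proposition \ref{proposition:pullwbc} then produces a quasi-isomorphism
\begin{equation}
hom^*_{(\bm{p}^* \mathcal{A})^{wbc}}\bigl((L^\theta, j(\alpha)), (L^\theta, j(\alpha))\bigr) \cong hom^*_{\mathcal{A}^{wbc}}((L, \alpha), (L, \alpha)),
\end{equation}
whose right-hand side has cohomology algebra $\Cl_n$ by Corollary \ref{corollary:hessL}. The left-hand side is identified with the Floer cohomology group claimed in the statement, using that $j(\alpha)$ is a monotone weak bounding cochain on $L^\theta$ in the sense of Definition \ref{definition:strmonwbc} — arranged by first flowing $L$ under the reverse Liouville flow as in Section \ref{subsec:smalleig} — together with the chain of embeddings $\widetilde{\mathscr{A}} \subset \mathcal{F}_m(X^n_a, D) \otimes_R \C \hookrightarrow \mathcal{F}^c(X^n_a)$, which preserves the relevant endomorphism $A_\infty$ algebras up to quasi-isomorphism.

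I expect the main obstacle to be not any single difficult estimate but the bookkeeping across the several categorical layers: confirming that the transfer from the orbifold relative Fukaya category (via Proposition \ref{proposition:atild}, Remark \ref{remark:wbcrel}, and Corollary \ref{corollary:relpredisk2}) to the monotone Fukaya category of $X^n_a$, and the subsequent lift to the $\bm{p}$-pullback on which Proposition \ref{proposition:pullwbc} operates, all preserve the endomorphism algebra of $(L^\theta, j(\alpha))$ on cohomology. The genuinely algebraic content is compact, being essentially the Koszul acyclicity above combined with the machinery already assembled in Propositions \ref{proposition:critpd}, \ref{proposition:hess}, and \ref{proposition:pullwbc}.
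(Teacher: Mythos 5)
Your proposal is correct and follows essentially the same route as the paper: invoke Proposition \ref{proposition:pullwbc} to descend the Clifford algebra computation of Corollary \ref{corollary:hessL}, and verify the remaining hypothesis by observing that $d_{v,\chi\cdot v}$ is wedging with $\pm(v-\chi\cdot v)$, which is nonzero by free\-ness of the $\Gamma^*$-action on small critical points (Lemma \ref{lemma:wnacrit}), hence admits the standard Koszul contracting homotopy given by interior product with a dual covector. The paper's proof is exactly this, stated slightly more tersely and without the (legitimate but unaddressed) bookkeeping caveat you raise at the end.
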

\begin{proof}
Follows from Corollary \ref{corollary:hessL} and Proposition \ref{proposition:pullwbc}.
We must verify the extra hypothesis of Proposition \ref{proposition:pullwbc}: namely, that for any character $\chi \in (\Gamma^n_a)^*$ not equal to $1$, the differential $d_{v,\chi \cdot v}$ admits a contracting homotopy.
The differential acts on the exterior algebra $A$ by
\begin{equation} d_{v,\chi \cdot v}(a) = \mu^2_0(v,a) + \mu^2_0(a,\chi \cdot v) = ( - v + \chi \cdot v) \wedge a,\end{equation}
hence admits a contracting homotopy $\iota_{\eta}$, where $\eta \in V^\vee$ satisfies $\eta(\chi \cdot v-v) = 1$.
Note that such an $\eta$ exists because $\chi \cdot v \neq v$ for $\chi \neq 1$, because the character group acts freely on the small critical points by Lemma \ref{lemma:wnacrit}.
 \end{proof}

\begin{lemma}
\label{lemma:lthetacou}
The object $(L^\theta,j \circ \iota(v))$ of Corollary \ref{corollary:smallLlift} is $\CO^0$-unital.
\end{lemma}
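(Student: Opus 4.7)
The plan is to apply Corollary \ref{corollary:cowbcc3} directly to $L^\theta$ with the subspace $V^\theta_{\C} := j(V_{\C}) \subset A^\theta_{\C}$, in place of $(L,V_{\C})$. To do this, two things must be checked: first, that $2c_1(X^n_a \setminus D) = 0$ in $H^2(X^n_a \setminus D; \Z)$; second, that the hypotheses of Corollary \ref{corollary:relpredisk2} hold for $L^\theta$ with $V^\theta_{\C}$, producing the embedding $j \circ \iota$ of Corollary \ref{corollary:weakembups} as $\iota_{rel} \otimes_R \C$. The first is immediate: each divisor $D_j \subset X^n_a$ is Poincar\'{e} dual to $c_1(\mathcal{O}(1))|_{X^n_a}$, so this class vanishes in $H^2(X^n_a \setminus D; \Z)$, and hence so does $c_1(TX^n_a) = (n-a)c_1(\mathcal{O}(1))$.

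For the second, I would first note that by Proposition \ref{proposition:atild}, $L^\theta$ is an object of the monotone relative Fukaya category $\widetilde{\mathscr{A}} \subset \mathcal{F}_m(X^n_a,D)_{\bm{w}}$, and that $(\{L^\theta\}, \mathcal{P}^\theta)$ inherits both relative monotonicity and monotonicity from $(\{L\},\mathcal{P})$ because $L^\theta$ is a finite direct sum of graded shifts of $L$. Then I would verify the three hypotheses of Lemma \ref{lemma:prediskdisk} for $V^\theta_{\C} \subset A^\theta_{\C}$. The first two --- that $\mu^s(v,\ldots,v)$ is a multiple of $e_{L^\theta}$ for $v \in V^\theta_{\C}$ and that the pre-disk potential converges --- follow from the corresponding facts for $V_{\C} \subset A_{\C}$ (Lemma \ref{lemma:hypsat}) via the strict $A_\infty$ inclusion $j : A \hookrightarrow A^\theta$ sending $e_L$ to $e_{L^\theta}$.

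The remaining hypothesis, $CC^{\le 0}(V^\theta_{\C}, A^\theta_{\C}) \cong \C \cdot e_{L^\theta}$, requires the $\bm{G}^n_a$-grading on $A^\theta$ together with the Fourier decomposition $A^\theta \cong \bigoplus_{\chi} A^\theta_{\chi}$ of equation \eqref{eqn:fourier}. Since $V^\theta_{\C} \subset A^\theta_1$ (the trivial-character component, which contains the image of $j$), any Hochschild cochain from $V^\theta_{\C}$ to $A^\theta$ that respects the $\Gamma^n_a$-grading must land in a definite character component. Cochains with output in $A^\theta_1$ reduce via $j$ to the already-analyzed situation of Lemma \ref{lemma:hypsat}, forcing any non-zero degree-$\le 0$ cochain to be a multiple of $e_{L^\theta}$; cochains with output in $A^\theta_{\chi}$ for $\chi \ne 1$ are ruled out since all inputs live in the trivial $\Gamma^n_a$-component.

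With the hypotheses of Corollary \ref{corollary:relpredisk2} in hand for $L^\theta$, the pre-disk potential on $V^\theta_{\C}$ coincides with $Z^n_a$, so the disk potential equals $W^n_a$ (consistent with Corollary \ref{corollary:weakembups}). Since $v$ is a small critical point of $W^n_a$, Corollary \ref{corollary:cowbcc3} then yields $\mathcal{CO}^0(c_1) = w \cdot e^+_{L^\theta}$ and, via the proof of Lemma \ref{lemma:coeigen}, the $\mathcal{CO}^0$-unitality of $(L^\theta, j \circ \iota(v))$. The main obstacle is the verification of the third hypothesis of Lemma \ref{lemma:prediskdisk}, which requires careful bookkeeping between the $\bm{G}^n_a$-grading and the $\Gamma^n_a$-action on $A^\theta$; everything else reduces cleanly to results already established.
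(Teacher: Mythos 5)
Your overall route — reduce to the criterion of Corollary \ref{corollary:cowbcc3}, checking $2c_1(X \setminus D) = 0$ and the hypotheses of Corollary \ref{corollary:relpredisk2} for $L^\theta$ with $V^\theta_\C$ — is the same one the paper takes, and the paper's own proof is extremely terse (two sentences: the holomorphic volume form on $X \setminus D$, then Lemma \ref{lemma:cowbcc2} and Corollary \ref{corollary:cowbcc3}), so you are right that there are details to be filled in. Your treatment of the Chern class and of hypotheses (1) and (2) of Lemma \ref{lemma:prediskdisk} via the strict $A_\infty$ inclusion $j$ is fine.

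However, there is a real gap in your verification of the third hypothesis, $CC^{\le 0}(V^\theta_\C, A^\theta_\C) \cong \C \cdot e_{L^\theta}$. You decompose cochains by Fourier character $\chi$ and claim the pieces with $\chi \ne 1$ ``are ruled out since all inputs live in the trivial $\Gamma^n_a$-component.'' This does not follow: a Hochschild cochain $(V^\theta)^{\otimes s} \to A^\theta$ is just a linear map, and there is no a priori constraint forcing its output into the invariant piece $A^\theta_1$ just because the inputs lie there. Decomposing by the $\Gamma$-action does show that the $\chi$-isotypic piece of the cochain has output in $A^\theta_\chi$, and the $\chi = 1$ piece reduces to Lemma \ref{lemma:hypsat} as you say; but to rule out the $\chi \ne 1$ pieces you need an actual degree computation. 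The subtlety is that, under the Fourier isomorphism $A^\theta_\chi \cong A \otimes \chi^{-1}$ of equation \eqref{eqn:fouriest}, the $\bm{G}^n_a$-degree of an element of $A^\theta_\chi$ differs from the $\bm{G}^n_1$-degree of the corresponding element of $A$ by a $\theta$-correction that depends on $\chi$ (through the set-theoretic splitting $\theta : \Gamma^n_a \to Y_{n,1}$). So the $\le 0$ condition on $CC^{\le 0}(V^\theta, A^\theta_\chi)$ is a genuinely shifted version of the inequality chain in Lemma \ref{lemma:hypsat}, and it must be re-run for each $\chi$ rather than waved away. Until that computation is done, the hypothesis you need to invoke Corollary \ref{corollary:cowbcc3} for $L^\theta$ has not been verified.
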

\begin{proof}
It is easy to see that $X \setminus D$ admits a holomorphic volume form, so Lemma \ref{lemma:cowbcc2} applies.
The result then follows from Corollary \ref{corollary:cowbcc3}.
 \end{proof}

\begin{lemma}
\label{lemma:lthetagen}
The object $(L^\theta,j(\alpha))$ of Corollary \ref{corollary:smallLlift} split-generates $\cF^{wbc,u}(X^n_a)_w$.
\end{lemma}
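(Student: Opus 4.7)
The plan is to apply the weak-bounding-cochain version of the one-dimensional split-generation criterion, namely Corollary \ref{corollary:semisimpgenwbc}, to the object $(L^\theta,j(\alpha))$. That corollary requires three inputs, each of which has been established in the preceding results: (i) $QH^*(X^n_a)_w$ is one-dimensional; (ii) $(L^\theta,j(\alpha))$ lies in $\mathcal{F}^{wbc,u}(X^n_a)_w$; and (iii) its self-Floer cohomology is non-zero.

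For (i), since $w$ is a small eigenvalue, Proposition \ref{proposition:smallw} gives $\dim QH^*(X^n_a)_w = 1$. For (ii), we must check that the object actually lies in the $\mathcal{CO}^0$-unital subcategory and that it has disk potential equal to $w$. The disk potential computation $\mathfrak{P}\circ j \circ \iota(v) = W^n_a(v) = w$ was carried out in Corollaries \ref{corollary:weakemb} and \ref{corollary:weakembups} (using that $v$ is a small critical point of $W^n_a$, whose critical value is $w$ by Lemma \ref{lemma:wnacrit}), and the $\mathcal{CO}^0$-unitality is exactly Lemma \ref{lemma:lthetacou}. For (iii), Corollary \ref{corollary:smallLlift} identifies
\begin{equation}
HF^*((L^\theta,j(\alpha)),(L^\theta,j(\alpha))) \cong \Cl_n,
\end{equation}
which is a non-zero $\Z/2\Z$-graded $\C$-algebra.

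Applying Corollary \ref{corollary:semisimpgenwbc} now yields that $(L^\theta,j(\alpha))$ split-generates $\mathcal{F}^{wbc,u}(X^n_a)_w$. There is no real obstacle here: all the hard work has already been done in constructing the weak bounding cochain, verifying $\mathcal{CO}^0$-unitality, computing the endomorphism algebra as a Clifford algebra, and establishing the one-dimensionality of $QH^*(X^n_a)_w$. The present lemma is simply the final assembly of these ingredients via the abstract split-generation criterion.
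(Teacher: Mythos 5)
Your proposal is correct and follows essentially the same route as the paper: the paper's proof is a one-line invocation of Corollary \ref{corollary:semisimpgenwbc}, citing Proposition \ref{proposition:smallw} for the one-dimensionality of $QH^*(X^n_a)_w$, while the remaining hypotheses ($\mathcal{CO}^0$-unitality from Lemma \ref{lemma:lthetacou} and non-vanishing Floer cohomology from Corollary \ref{corollary:smallLlift}) are implicit in the surrounding development. You have simply made explicit the same chain of inputs.
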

\begin{proof}
Follows from Corollary \ref{corollary:semisimpgenwbc}, because $QH^*(X)_w$ is one-dimensional by Proposition \ref{proposition:smallw}.
 \end{proof}

\begin{corollary}
\label{corollary:smallcat}
If $w $ is a small eigenvalue of $c_1 \star$, then there is an $A_\infty$ quasi-equivalence
\begin{equation}
D^\pi \cF^{wbc,u}(X^n_a)_w \cong \left\{ \begin{array}{ll}
									D^b(\C) & \mbox{ if $n$ is even} \\
									D^\pi(\Cl_1) & \mbox{ if $n$ is odd}
								\end{array} \right.
\end{equation}
\end{corollary}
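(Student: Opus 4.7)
The plan is to obtain this corollary as a direct application of Corollary \ref{corollary:cliffcat} to the distinguished object $(L^\theta, j \circ \iota(v))$ constructed just above, where $v$ is any small critical point of the mirror superpotential $W^n_a$ whose critical value equals $w$.

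First I would fix $v \in V_\C$ to be a small critical point of $W^n_a$ with $W^n_a(v) = w$; such $v$ exist by Lemma \ref{lemma:wnacrit}. Setting $\alpha = \iota(v)$ in $\widehat{\mathcal{M}}_{weak}(L)$ (computed in $\mathcal{F}^c(\phi)$), we obtain the object $(L^\theta, j(\alpha))$ of $\mathcal{F}^{wbc}(X^n_a)_w$ via the embedding of Corollary \ref{corollary:weakembups}. Lemma \ref{lemma:lthetacou} tells us that this object is $\mathcal{CO}^0$-unital, so it lives in the subcategory $\mathcal{F}^{wbc,u}(X^n_a)_w$.

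Next, I would collect the two structural facts already established: by Corollary \ref{corollary:smallLlift} there is an isomorphism of $\Z/2\Z$-graded algebras $HF^*((L^\theta, j(\alpha)), (L^\theta, j(\alpha))) \cong \Cl_n$, and by Lemma \ref{lemma:lthetagen} the object $(L^\theta, j(\alpha))$ split-generates $\mathcal{F}^{wbc,u}(X^n_a)_w$. Feeding these two inputs into Corollary \ref{corollary:cliffcat} yields the desired quasi-equivalence
\[
D^\pi \mathcal{F}^{wbc,u}(X^n_a)_w \cong \begin{cases} D^b(\C) & \text{if } n \text{ is even,} \\ D^\pi(\Cl_1) & \text{if } n \text{ is odd,} \end{cases}
\]
since $\Cl_n$ is intrinsically formal (Corollary \ref{corollary:intform}) and Morita-equivalent as a $\Z/2\Z$-graded algebra to $\C$ or $\Cl_1$ according to the parity of $n$ (Corollary \ref{corollary:morita}).

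There is essentially no obstacle remaining at this stage; the substantive content of the theorem has already been absorbed into the earlier lemmas and corollaries. The only point worth being careful about is bookkeeping: one must check that $w$ appearing as a small eigenvalue of $c_1 \star$ on $QH^*(X^n_a)$ really matches the critical value $\mathfrak{P}(j(\alpha)) = W^n_a(v)$ of the disk potential, which is guaranteed by the identification of small eigenvalues with small critical values in Corollary \ref{corollary:c1eval} and Lemma \ref{lemma:wnacrit}, together with Corollary \ref{corollary:weakemb} identifying the disk potential with $W^n_a$. Once this matching is noted, the proof reduces to citation of the preceding results.
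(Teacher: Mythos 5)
Your proposal is correct and follows the same route as the paper, which simply cites Lemma \ref{lemma:lthetagen}, Corollary \ref{corollary:smallLlift}, and Corollary \ref{corollary:cliffcat}. You have merely spelled out the inputs more explicitly (including the prerequisite Lemma \ref{lemma:lthetacou} on $\mathcal{CO}^0$-unitality, which is implicitly used in the statement of Lemma \ref{lemma:lthetagen}), and the eigenvalue-critical-value matching remark is a sensible sanity check.
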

\begin{proof}
Follows from Lemma \ref{lemma:lthetagen}, Corollary \ref{corollary:smallLlift}, and Corollary \ref{corollary:cliffcat}.
 \end{proof}

\section{Matrix factorization computations}
\label{sec:bmodel}

We refer to \S \ref{subsec:ana} for the definition of the grading datum $\bm{G}$, the $\bm{G}$-graded ring
\begin{equation} R_a := \C[r_1, \ldots, r_n],\end{equation}
and the $\bm{G}$-graded polynomial ring
\begin{equation} S_a := R_a[u_1, \ldots, u_n],\end{equation}
together with the element
\begin{equation} \tilde{Z}^n_a :=- u_1 \ldots u_n + \sum_{j=1}^n r_j u_j^a \in S_a\end{equation}
of degree $2$.
We also consider the algebra homomorphism
\begin{equation} R_a \To \C\end{equation}
sending all $r_j$ to $1$, so that
\begin{equation} S_a \otimes_{R_a} \C \cong \C[u_1,\ldots,u_n],\end{equation}
and
\begin{equation} \tilde{Z}^n_a \otimes_{R_a} 1 = Z^n_a \in S_\C\end{equation}
(where $Z^n_a$ is as in \eqref{eqn:zna}).

We will sometimes drop the `$n$' or `$a$' from the notation to avoid clutter, when we feel no confusion is possible.

In accordance with \cite[Definition 7.2]{Sheridan2015}, we consider the differential $\bm{G}$-graded category of matrix factorizations of $\tilde{Z}^n_a$, $MF^{\bm{G}}(S_a,\tilde{Z}^n_a)$.
We introduce a matrix factorization $\mathcal{O}_0 := (K,\delta_K)$, where
\begin{equation} K := S_a \otimes \Lambda^*(U^{\vee}) \cong S_a[\theta_1, \ldots, \theta_n],\end{equation}
where the $\theta_i$ anti-commute.
The differential is given by
\begin{equation} \delta_K := \sum_{j} u_j \del{}{\theta_j} + w_j \theta_j,\end{equation}
where
\begin{equation} w_j := -\frac{u_1 \ldots u_n}{n u_j} + r_j u_j^{a-1},\end{equation}
so that $\sum u_j w_j = \tilde{Z}^n_a$.
It is easy to check that $\delta_K^2 = \tilde{Z}^n_a \cdot \mathrm{id}$, so $(K,\delta_K)$ is an object of $MF^{\bm{G}}(S_a,\tilde{Z}^n_a)$.
We define its endomorphism DG algebra,
\begin{equation} \mathscr{B} := \mathrm{Hom}_{MF^{\bm{G}}(S_a,\tilde{Z}^n_a)}(\mathcal{O}_0,\mathcal{O}_0).\end{equation}

\begin{proposition}
\label{proposition:bana}
If $3 \le a \le n-1$, then $\mathscr{B}$ is quasi-isomorphic to an $A_{\infty}$ algebra of type A$^n_a$.
\end{proposition}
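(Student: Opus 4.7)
The plan is to verify the three defining conditions of an $A_\infty$ algebra of type A$^n_a$ for $\mathscr{B}$ (Definition \ref{definition:typean}), and then to apply the classification Theorem \ref{theorem:typean} to conclude that $\mathscr{B}$ is $A_\infty$ quasi-isomorphic to such an algebra. The non-curved condition $\mu^0_0 = 0$ is immediate since $\mathscr{B}$ is an honest DG algebra; any choice of minimal model obtained by homological perturbation inherits this property. It remains to compute $H^*(\mathscr{B})$ as a $\bm{G}$-graded $R_a$-algebra and to determine the HKR image $\Phi(\mu^*)$ modulo $\mathfrak{m}^2$, where $\mathfrak{m} = (r_1,\ldots,r_n) \subset R_a$.

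For the cohomology, I would present $\mathscr{B}$ as the $S_a$-linear superalgebra generated by $\theta_j$ (left multiplication) and $\eta_j = \partial/\partial \theta_j$, subject to the Clifford relations $\{\theta_i, \eta_j\} = \delta_{ij}$, with differential acting as the derivation $d\theta_j = u_j$, $d\eta_j = w_j$. Viewing the underlying chain complex as a bicomplex in the $(\theta,\eta)$-bidegree, I would run the spectral sequence that computes the Koszul cohomology of $(u_1,\ldots,u_n)$ first. Since $(u_j)$ is a regular sequence in $S_a$, the second page is $R_a \otimes \Lambda^*(\eta_1,\ldots,\eta_n)$, concentrated in a single row. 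The remaining differential acts by $\eta_j \mapsto w_j \bmod (u_1,\ldots,u_n)$, but $w_j = -\tfrac{1}{n}u_1 \cdots \hat{u}_j \cdots u_n + r_j u_j^{a-1}$ lies in $(u_1,\ldots,u_n)$ for $a \ge 2$, so this differential vanishes and the spectral sequence collapses. The $\eta_j$'s anti-commute in $\mathscr{B}$, so the induced algebra structure on cohomology is exterior; tracking the $\bm{G}$-grading identifies $H^*(\mathscr{B}) \cong R_a \otimes \Lambda^*(\eta)$ with $A \otimes R_a$.

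For the HKR image, I would split the verification into the zeroth- and first-order parts in $\mathfrak{m}$. At $r=0$, the pair $(K,\delta_K|_{r=0})$ is the Koszul matrix factorization of the pair-of-pants superpotential $W_0 = -u_1 \cdots u_n$; its minimal $A_\infty$ model is the well-known Koszul dual of $\Lambda^* U$ computed in \cite{Sheridan2011a}, yielding the single non-trivial higher product $\mu^n_0(\theta_1,\ldots,\theta_n) = -e$ and hence $\Phi(\mu^*)|_{r=0} = -u_1 \cdots u_n$. For the first-order variation, differentiating the MF differential gives $\partial_{r_j}\delta_K = u_j^{a-1}\theta_j$; feeding this into the homological perturbation formula for the induced first-order $A_\infty$ correction produces $\mu^a(\theta_j,\ldots,\theta_j) = r_j e$ (with $a$ copies of $\theta_j$), equivalent to HKR image $r_j u_j^a$. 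Summing over $j$ and combining with the order-zero part gives $\Phi(\mu^*) = \tilde{Z}^n_a + O(\mathfrak{m}^2)$, as required.

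The main obstacle is the order-zero HKR computation, which is essentially the Koszul-duality statement for the pair-of-pants superpotential $-u_1 \cdots u_n$: the endomorphism DG algebra of the Koszul matrix factorization is $A_\infty$ quasi-isomorphic to $\Lambda^* U$ with the specified non-trivial $\mu^n$. I would import this from \cite{Sheridan2011a} rather than redo it; once it is in hand, the first-order calculation is a routine deformation-theoretic computation, and Theorem \ref{theorem:typean} assembles everything into the desired quasi-isomorphism.
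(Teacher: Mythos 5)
Your proposal matches the paper's approach: apply homological perturbation to produce a minimal model on $H^*(\mathscr{B})$ and check that the resulting $A_\infty$ structure is of type A$^n_a$. The paper defers the entire type-verification in one line to \cite[Proposition 7.1]{Sheridan2014}, whereas you unpack it: the spectral-sequence computation of $H^*(\mathscr{B})$, the order-zero HKR identification (which you import from \cite{Sheridan2011a}), and the first-order correction. The unpacking is correct in outline. Be careful, though, about the phrase ``routine deformation-theoretic computation'': determining that the perturbation $r_j u_j^{a-1}\theta_j$ of $\delta_K$ produces the first-order class $r_j u_j^a$ with the correct coefficient, rather than some other multiple or a contribution in a different HKR degree, still requires either a careful walk through the perturbation-lemma trees or an independent identification of the coefficients inside $THH^2$; this is exactly the content that \cite[Proposition 7.1]{Sheridan2014} packages, so the step should be treated as a citation, not a triviality. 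There is also a small notational clash: the inputs to $\mu^a$ and $\mu^n$ in your final formulae should be the cohomology-level generators (the $\eta_j$'s of your spectral sequence, identified with the exterior generators of $A$), not the matrix-factorization variables $\theta_j$ of $K$.

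One genuine logical misstep: Theorem \ref{theorem:typean} is not needed here. That theorem is a uniqueness result (any two algebras of type A$^n_a$ are related by a formal diffeomorphism) and is invoked later, in the proof of Theorem \ref{theorem:big}, to compare the Fukaya side with the matrix-factorization side. For the present proposition, once the homological perturbation lemma has produced a minimal model on $H^*(\mathscr{B})$ and you have verified that this minimal model satisfies Definition \ref{definition:typean}, the quasi-isomorphism between $\mathscr{B}$ and its minimal model is already part of the perturbation-lemma output, so no appeal to the classification theorem is required or even relevant.
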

\begin{proof}
We use the homological perturbation lemma to construct a quasi-isomorphic minimal $A_{\infty}$ algebra structure on the cohomology of $\mathscr{B}$; it follows from \cite[Proposition 7.1]{Sheridan2015} that this $A_{\infty}$ algebra is of type A$^n_a$.
 \end{proof}

Now we consider the morphisms of grading data
\begin{equation} \bm{p}: \bm{G}^n_a \To \bm{G}^n_1\end{equation}
of equation \eqref{eqn:morphp}, and the sign morphism
\begin{equation} \bm{\sigma}: \bm{G}^n_a \To \bm{G}_{\sigma}.\end{equation}
We obtain a $\C$-linear, differential $\Z/2\Z$-graded category
\begin{equation} \bm{\sigma}_* \bm{p}^* MF^{\bm{G}}(S_a,\tilde{Z}^n_a) \otimes_{R_a} \C.\end{equation}

\begin{proposition}
\label{proposition:btild}
Let $\widetilde{\mathscr{B}}_{\C}$ be the full subcategory of $MF^{\Gamma^*}(S_\C,Z)$ whose objects correspond to equivariant twists of the skyscraper sheaf at the origin (under Orlov's equivalence $H^*(MF^{\Gamma^*}) \cong D^bSing^{\Gamma^*}$).
Then $\widetilde{\mathscr{B}}_{\C}$ split-generates, and for $2 \le a \le n-1$ there is a quasi-isomorphism
\begin{equation} \widetilde{\mathscr{B}}_{\C} \cong \bm{\sigma}_* \bm{p}^* \underline{\mathscr{B}} \otimes_R \C\end{equation}
where $\mathscr{B}$ is an $A_{\infty}$ algebra of type A$^n_a$.
\end{proposition}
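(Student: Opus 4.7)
The plan is to treat the two claims separately: first the split-generation of $\widetilde{\mathscr{B}}_\C$ in $D^\pi MF^{\Gamma^*}(S_\C, Z^n_a)$, and then the $A_\infty$ quasi-isomorphism identifying $\widetilde{\mathscr{B}}_\C$ with the base-change of a category of type A$^n_a$.

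\smallskip

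\emph{Split-generation.} The big critical value of $W^n_a$ is realized by a single critical point, the origin, which is a $\Gamma^*$-fixed point of $Z^n_a$. For the non-equivariant category $MF(S_\C, Z^n_a)$, it is classical (see e.g.\ \cite{Dyckerhoff2009}, or equivalently Orlov's description of $D^bSing$ of an isolated hypersurface singularity \cite{Orlov2004}) that the skyscraper sheaf at the singular point split-generates. To upgrade this to the $\Gamma^*$-equivariant setting, I would use the standard adjunction: the forgetful functor $MF^{\Gamma^*}(S_\C, Z^n_a) \to MF(S_\C, Z^n_a)$ admits a left adjoint $\mathrm{Ind}$ given by tensoring with the regular representation of $\Gamma^*$. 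If $E$ split-generates $MF(S_\C, Z^n_a)$, then $\mathrm{Ind}(E)$ split-generates $MF^{\Gamma^*}(S_\C, Z^n_a)$. Applied to $E = \mathcal{O}_0$, which is $\Gamma^*$-equivariantly supported at a fixed point, the induction decomposes as the direct sum over all characters $\chi \in \Gamma$ of the corresponding equivariant twists of $\mathcal{O}_0$. These twists are precisely the objects of $\widetilde{\mathscr{B}}_\C$, so split-generation follows.

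\smallskip

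\emph{The quasi-isomorphism, case $3 \le a \le n-1$.} By Proposition \ref{proposition:bana}, $\mathscr{B}$ is $A_\infty$ quasi-isomorphic to a $\bm{G}^n_1$-graded $A_\infty$ algebra of type A$^n_a$. Applying successively $\bm{p}^*$, then $\bm{\sigma}_*$, then $- \otimes_R \C$ yields the right-hand side of the claimed quasi-isomorphism. It remains to identify this with $\widetilde{\mathscr{B}}_\C$. The core observation is that the natural map which tensors $S_a$ down to $S_\C$ (sending $r_j \mapsto 1$) carries $\tilde{Z}^n_a$ to $Z^n_a$ and $\mathcal{O}_0$ to a matrix factorization of $Z^n_a$ which, up to quasi-isomorphism, is the Koszul resolution of the skyscraper sheaf at the origin. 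The $\bm{G}^n_1$-to-$\bm{G}^n_a$ regrading carried out by $\bm{p}^*$ breaks the single object $\mathcal{O}_0$ into a $\Gamma^n_a$-indexed family of shifted copies; after applying $\bm{\sigma}_*$ and tensoring with $\C$ these become $\Z/2\Z$-graded objects. I will match these with the $\Gamma$-many equivariant twists of the skyscraper sheaf that form the object-set of $\widetilde{\mathscr{B}}_\C$ using Lemma \ref{lemma:semidirpullback}: the endomorphism algebra of the total sum of all equivariant twists in $MF^{\Gamma^*}$ is the semidirect product of the endomorphism algebra of the non-equivariant skyscraper with $\Gamma^*$, which in turn is strictly isomorphic to the endomorphism algebra of $\mathcal{O}_0^\theta$ in $\bm{p}^* \underline{\mathscr{B}} \otimes_R \C$. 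A straightforward check of compatibility with the Fourier decomposition \eqref{eqn:fouriest} identifies the full $A_\infty$ structures, not merely the underlying algebras.

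\smallskip

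\emph{The case $a = 2$.} Here Proposition \ref{proposition:bana} is not available, but the argument is simpler because both categories are intrinsically formal. The singularity of $Z^n_2$ at the origin is an ordinary $A_1$ (Morse) singularity, so $\widetilde{\mathscr{B}}_\C$ is quasi-isomorphic to the $\Z/2\Z$-graded Clifford algebra $\Cl_n$ on the cohomology level (with the appropriate semidirect product structure on equivariant twists producing the matrix algebra structure of Lemma \ref{lemma:cliff}). On the other side, any non-curved $A_\infty$ algebra of type A$^n_2$ has cohomology algebra isomorphic to $\Cl_n$ by Corollary \ref{corollary:cohalg}, and is intrinsically formal by Corollary \ref{corollary:intform}. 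Matching the underlying algebras through the same Fourier-transform bookkeeping as above then yields the quasi-isomorphism.

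\smallskip

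The main technical obstacle I anticipate is not in either half individually, but in making the identification of the equivariant twists of the Koszul/skyscraper matrix factorization with the $\theta$-shifts of $\mathcal{O}_0$ completely canonical, so that the grading data, the $\Gamma$-action, and the $A_\infty$-level morphism spaces all match consistently rather than just up to an undetermined automorphism. This is essentially a careful bookkeeping exercise with the grading data morphisms $\bm{p}$ and $\bm{\sigma}$ and the splitting $\theta$ used in Section \ref{subsec:fabg}.
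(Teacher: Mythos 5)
Your proposal is correct in substance but replaces two citation-heavy steps of the paper's argument with more explicit constructions, and that is worth noting. For split-generation, the paper observes that $Z$ has an isolated singularity at the origin and cites Seidel's account of equivariant categories of singularities together with Dyckerhoff; you instead unpack the standard $\mathrm{Ind}/\mathrm{Res}$ adjunction, using the splitting of the unit $F \to F \otimes \C[\Gamma^*]$ in characteristic zero to reduce equivariant split-generation to the non-equivariant statement. This is more self-contained, at the cost of being a tad longer. For the identification $\widetilde{\mathscr{B}}_{\C} \cong \bm{\sigma}_*\bm{p}^*\underline{\mathscr{B}} \otimes_R \C$, the paper cites a fully faithful DG embedding $\bm{\sigma}_*\bm{p}^* MF^{\bm{G}}(S,\tilde{Z}) \hookrightarrow MF^{\Gamma^*}(S,\tilde{Z})$ established in the companion paper, then tensors down to $\C$ and identifies the image of the lifts of $(K,\delta_K)$ with the equivariant twists of $\mathcal{O}_0$ via Dyckerhoff. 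You instead reconstruct this embedding using Lemma \ref{lemma:semidirpullback} (the Fourier isomorphism $A \rtimes \Gamma^* \cong A^\theta$), arguing that the endomorphism DG algebra of the sum of equivariant twists of $\mathcal{O}_0$ is the smash product $\mathrm{End}(\mathcal{O}_0) \rtimes \Gamma^*$ and then matching this with $\mathrm{End}(\mathcal{O}_0^\theta)$. This is a reasonable alternative route — and indeed Lemma \ref{lemma:semidirpullback} is stated abstractly enough to apply here — but the ``bookkeeping'' you flag at the end is exactly the substance of the cited remark, so you should expect to prove that the $\Gamma$-grading on $\mathrm{End}(\mathcal{O}_0)$ coming from the $\bm{G}^n_1$-grading (via the cokernel of $\bm{p}$) agrees with the $\Gamma^*$-action by rescaling the variables $u_j$ by $a$th roots of unity, which is not entirely trivial. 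Your $a=2$ case matches the paper's, though the parenthetical remark attributing a ``matrix algebra structure'' from the semidirect product to Lemma \ref{lemma:cliff} is somewhat muddled: Lemma \ref{lemma:cliff} is an internal fact about $\Cl_n$ alone and is not what controls the smash product with $\Gamma^*$; the correct statement is simply that the endomorphism algebra of a single twist is $\Cl_n$, and the full endomorphism algebra of $\widetilde{\mathscr{B}}_\C$ is identified with $\mathrm{End}(\mathcal{O}_0^\theta)$ by the same Fourier bookkeeping as in the $a \ge 3$ case.
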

\begin{proof}
$Z$ has an isolated singularity at the origin, by Lemma \ref{lemma:wnacrit}.
It follows that $\widetilde{\mathscr{B}}_{\C}$ split-generates the triangulated category of singularities (see \cite[\S 12]{Seidel2008a} and \cite{Dyckerhoff2009}), and hence the category of matrix factorizations.

By \cite[Remark 7.6]{Sheridan2015}, there is a fully faithful embedding of $\Z/2\Z$-graded DG categories
\begin{equation} \bm{\sigma}_* \bm{p}^* MF^{\bm{G}}(S,\tilde{Z}) \hookrightarrow MF^{\Gamma^*}(S,\tilde{Z}),\end{equation}
where $\Gamma^*$ acts on $S$ by multiplying the coordinate functions $u_j$ by $a$th roots of unity (and trivially on $R$).
There is then an obvious embedding
\begin{equation} MF^{\Gamma^*}(S,\tilde{Z}) \otimes_R \C \hookrightarrow MF^{\Gamma^*}(S_\C,Z),\end{equation}
by applying $-\otimes_R \C$.

Under this chain of embeddings and equivalences, the lifts of $(K,\delta_K)$ map to the objects corresponding to equivariant twists of the skyscraper sheaf of the origin under Orlov's equivalence (see \cite[\S 3]{Dyckerhoff2009}); the result now follows by Proposition \ref{proposition:bana}, for all $a \ge 3$.

When $a=2$, the proof is slightly different: we can not apply the homological perturbation lemma as in Proposition \ref{proposition:bana}.
Nevertheless, the superpotential $Z^n_2$ has a non-degenerate critical point at the origin, so we have $H^*(\mathscr{B} \otimes_R \C) \cong \Cl_n$ by \cite[\S 4.4]{Dyckerhoff2009}, which is isomorphic to $H^*(\scrA \otimes_R \C)$ for an $A_\infty$ algebra of type A$^n_2$ by Corollary \ref{corollary:cohalg}.
It follows from intrinsic formality of the Clifford algebra (Corollary \ref{corollary:intform}) that there is an $A_\infty$ quasi-isomorphism $\mathscr{B} \otimes_R \C \cong \scrA \otimes_R \C$.
The rest of the proof is as in the case $a \ge 3$.
 \end{proof}

\begin{proof}\textit {of Theorem \ref{theorem:big}.}
It follows from Corollary \ref{corollary:anaemb}, Proposition \ref{proposition:btild} and Theorem \ref{theorem:typean} that the subcategory of $\cF(X^n_a)_{\bm{w}}$ generated by lifts of $L$ is quasi-isomorphic to the subcategory of $MF^{\Gamma^*}(W - \bm{w})$ generated by equivariant twists of the skyscraper sheaf at the origin (note that $W-\bm{w} = Z$).
We recall here Remark \ref{remark:dgen}: namely, we take the DG enhancement of $D^bSing^{\Gamma^*}$ given by $MF^{\Gamma^*}$, by convention.
It follows from Corollary \ref{corollary:asplitgens} and Proposition \ref{proposition:btild} that these subcategories split-generate, so the proof is complete.
 \end{proof}

\begin{proof}\textit{of Theorem \ref{theorem:small}.}
Let $v \in \C^n$ be a small critical point of $W^n_a$ with critical value $w$.
On the category of singularities side, we have the $\Gamma^*$-equivariant object $\mathcal{O}_{crit}$, which is the direct sum of the skyscraper sheaves $\mathcal{O}_v$ at the small critical points $v$ in $W^{-1}(w)$.
Because $\Gamma^*$ acts freely and transitively on the critical points by Lemma \ref{lemma:wnacrit}, it is clear that the endomorphism algebra of $\mathcal{O}_{crit} $ is isomorphic to the endomorphism algebra of a single $\mathcal{O}_v$.
This endomorphism algebra is isomorphic to $\Cl_n$ by \cite[\S 4.4]{Dyckerhoff2009}, because the Hessian of $Z^n_a$ at $v$ is non-degenerate by Lemma \ref{lemma:wnacrit}.
Furthermore, because each $v$ is an isolated critical point of $W^n_a$, $\mathcal{O}_{crit}$ split-generates $MF^{\Gamma^*}(W-w)$ (see \cite[\S 12]{Seidel2008a} and \cite{Dyckerhoff2009}).
It follows by Corollary \ref{corollary:cliffcat} that there is an $A_\infty$ quasi-isomorphism
\begin{equation}
D^\pi MF^{\Gamma^*} (W-w) \cong \left\{ \begin{array}{ll}
									D^b(\C) & \mbox{ if $n$ is even} \\
									D^\pi(\Cl_1) & \mbox{ if $n$ is odd}
								\end{array} \right.
\end{equation}
Lemma \ref{lemma:wnacrit} shows that $w$ corresponds to a small eigenvalue of $c_1 \star$, so the result now follows by Corollary \ref{corollary:smallcat}.
 \end{proof}

\appendix

\section{$A_\infty$ bimodules}
\label{sec:ainf}

\subsection{$A_\infty$ categories}
\label{subsec:ainfcat}

We recall some basic facts about $A_{\infty}$ categories and modules.
Recall that a \emph{grading datum} $\bm{G}$ consists of an abelian group $Y$, with homomorphisms
\begin{equation}\Z \overset{f}{\To} Y \overset{\sigma}{\To} \Z/2\Z\end{equation}
whose composition $\sigma \circ f$ is the standard map $\Z \To \Z/2\Z$.
All of our categories will be $\bm{G}$-graded, which means that they are $Y$-graded, and when we say an operation has degree $j \in \Z$, we really mean its degree is $f(j) \in Y$; and all signs in our formulae will be determined via the map $\sigma$.

A $\bm{G}$-graded $A_{\infty}$ category $\cA$ has a set of objects $L$, with an action of $Y$ on the objects by `shifts'.
For each pair of objects there is a hom-space, which is a $\bm{G}$-graded free $R$-module, which we will denote $hom_\cA^*(L_0,L_1)$.
We introduce the convenient notation
\begin{equation} \cA(L_s, \ldots, L_0) := hom_\cA^*(L_{s-1},L_s)[1] \otimes \ldots \otimes hom_\cA^*(L_0,L_1)[1].\end{equation}

We define the \emph{Hochschild cochain complex} of $\cA$,
\begin{equation} CC^*(\cA) := \prod_{L_0,\ldots,L_s} \mathrm{Hom}(\cA(L_s,\ldots,L_0),\cA(L_s,L_0)[-1]).\end{equation}
It has the Gerstenhaber product
\begin{multline}
\phi \circ \psi (a_s,\ldots,a_1) := \\ \sum_{i+j+k = s} (-1)^{\sigma'(\psi) \cdot \maltese^i_1} \phi^{i+k+1}(a_{i+j+k},\ldots,\psi^j(a_{i+j},\ldots),a_i,\ldots,a_1),
\end{multline}
where we establish running notation that, for any sign $\sigma$, $\sigma':=\sigma+1$ denotes the opposite sign, and
\begin{equation} \maltese^i_j := \sum_{k=j}^i \sigma'(a_k).\end{equation}
It also has the Gerstenhaber bracket,
\begin{equation} [\phi,\psi] := \phi \circ \psi - (-1)^{\sigma'(\phi)\cdot \sigma'(\psi)} \psi \circ \phi,\end{equation}
which is a $\bm{G}$-graded Lie bracket.

A curved $A_{\infty}$ structure on $\cA$ is an element $\mu^* \in CC^2(\cA)$ satisfying $\mu^* \circ \mu^* = 0$.
If the length-zero component vanishes, $\mu^0 = 0$, then $\mu^*$ is called an $A_\infty$ structure.
In this appendix, we will always assume that $\mu^0 = 0$, with the exception of \S \ref{subsec:curved}.

We will always assume our $A_\infty$ categories to be cohomologically unital ($c$-unital) in the sense of \cite[\S 2a]{Seidel2008}, i.e., that the cohomology category $H^*(\cA)$ has identity morphisms.
We will denote the morphism spaces in the cohomology category by
\begin{equation} \mathrm{Hom}^*(K,L) := H^*(hom^*(K,L),\mu^1).\end{equation}
We recall that the composition of morphisms is defined by
\begin{equation}
\label{eqn:assoc}
 [a_2] \cdot [a_1] := (-1)^{\sigma(a_1)} [\mu^2(a_2,a_1)],
\end{equation}
and is associative.

It follows from the $A_\infty$ equation $\mu^* \circ \mu^* = 0$ that $[\mu^*,\mu^*] = 0$, and hence (by the Jacobi relation) that the Hochschild differential $[\mu^*,-]$ squares to zero.
We define the \emph{Hochschild cohomology} of $\cA$ to be
\begin{equation} HH^*(\cA) := H^*(CC^*(\cA),[\mu^*,-]).\end{equation}

\subsection{$A_\infty$ bimodules}
\label{subsec:ainfbi}

We now recall basic notions about the DG category of $A_{\infty}$ $\fmod{\cA}{\cA}$ bimodules from \cite{Seidel2008c} (our conventions are identical, except that the order of inputs is reversed in all operations; this is in line with the convention of \cite{Seidel2008}).
We denote this DG category by $\fmodf{\cA}{\cA}$.

The objects of $\fmodf{\cA}{\cA}$ are $A_{\infty}$ bimodules $\cM$ over the $A_{\infty}$ category $\cA$.
An \emph{$A_\infty$ bimodule} $\cM$ associates to each pair $K,L$ of objects of $\cA$ a $\bm{G}$-graded vector space $\cM(K,L)$, together with maps
\begin{equation} \mu^{k|1|l}:   \cA(K_k,\ldots, K_0) \otimes \cM(L_0,K_0) \otimes \cA(L_0, \ldots, L_l) \To \cM(L_l,K_k)\end{equation}
satisfying the $A_{\infty}$ associativity equation
\begin{align}
&&\sum_{i \le k,j \le l} (-1)^{\maltese^{|j+1}_{|l}} \mu^{k-i|1|l-j}(a_k,\ldots,\mu^{i|1|j}(a_i,\ldots,a_1,\bm{m},a_{|1},\ldots,a_{|j}),\ldots,a_{|l}) \\
 &&+ \sum_{i+j \le l} (-1)^{\maltese^{|i+j+1}_{|l}}\mu^{k|1|l-j}(a_k,\ldots,\bm{m},\ldots,\mu^j(b_{i+1},\ldots,b_{i+j}),\ldots,b_l) \\
 &&+ \sum_{i+j \le k} (-1)^{\maltese^j_{|l}} \mu^{k-j|1|l}(a_k,\ldots,\mu^i(a_{i+j},\ldots,a_{j+1}), \ldots, \bm{m}, \ldots,a_{|l}) = 0
\end{align}
where
\begin{align}
 \maltese^{|j}_{|l} &:=  \sum_{i=j}^l \sigma'(a_{|i}) \\
 \maltese^{j}_{|l} &:=  \sigma(\bm{m}) + \sum_{i=1}^j \sigma'(a_{|i}) + \sum_{i=1}^l \sigma'(a_i).
\end{align}
We require our bimodules to be cohomologically unital.

The morphism space $hom^*_{\fmodf{\cA}{\cA}}(\cM,\mathcal{N})$ is the space of $A_{\infty}$ bimodule pre-ho\-mo\-mor\-phisms.
An $A_\infty$ \emph{bimodule pre-homomorphism} from $\cM$ to $\mathcal{N}$ is a collection of maps
\begin{equation} F^{k|1|l}: \cA(K_k, \ldots, K_0) \otimes \cM(L_0,K_0) \otimes \cA(L_0, \ldots, L_l) \To \mathcal{N}(L_l,K_k).\end{equation}
There is a differential on the space of pre-homomorphisms (we refer to \cite[Equation (2.8)]{Seidel2008c} for the formula), and composition of morphisms is defined by
\begin{multline}
\label{eqn:bimodcomp}
 (F\circ G)^{k|1|l}(a_k,\ldots, a_1,\bm{m},a_{|1},\ldots,a_{|l}) := \\
\sum_{i \le k,j \le l} (-1)^{\dagger}F(a_k,\ldots,a_{i+1},G(a_i,\ldots,a_1,\bm{m},a_{|1},\ldots,a_{|j}),a_{|j+1},\ldots,a_{|l})
\end{multline}
where
\begin{equation}\dagger := \sigma(G)\cdot \maltese^{|j+1}_{|l}.\end{equation}
These differential and composition maps make $\fmodf{\cA}{\cA}$ into a DG category.
We denote the cohomology category of this DG category by $H^*(\fmodf{\cA}{\cA})$.

An $A_\infty$ pre-homomorphism which is closed is called an $A_\infty$ \emph{bimodule homomorphism}.
If $F^{k|1|l}$ is an $A_\infty$ bimodule homorphism from $\cM$ to $\mathcal{N}$, then
\begin{equation} F^{0|1|0}: (\cM(K,L),\mu^{0|1|0}) \To (\mathcal{N}(K,L),\mu^{0|1|0})\end{equation}
is a chain map; if it is a quasi-isomorphism, then we say that $F$ is a \emph{quasi-isomorphism}.
If $R$ is a field, then for any quasi-isomorphism $F$, the corresponding cohomology-level morphism $[F]$ is an isomorphism in $H^*(\fmodf{\cA}{\cA})$.

The simplest quasi-isomorphisms in $\fmodf{\cA}{\cA}$ are the identity maps.
$Id \in hom^*_{\fmodf{\cA}{\cA}}(\cM,\cM)$ is defined as follows:
\begin{equation} Id^{0|1|0}: \cM(K,L) \To \cM(K,L)\end{equation}
is the identity map, for all $K,L$, and all other $Id^{k|1|l}$ vanish.
These are identity morphisms in the DG category $\fmodf{\cA}{\cA}$.

One obvious $A_{\infty}$ bimodule is the \emph{diagonal bimodule}, $\cA_{\Delta}$:
\begin{align}
\cA_{\Delta}(K,L) &:= hom^*_\cA(K,L),\\
\mu^{k|1|l} &:= (-1)^{\maltese^{|1}_{|l}+1}\mu^{k+1+l}.
\end{align}

For any bimodule $\cM$ and $y \in Y$, we can define the shifted bimodule $\cM[y]$, with
\begin{align}
 \cM[y](K,L) &:= \cM(K,L)[y] \\
 \mu^{k|1|l}_{\cM[y]} &:= (-1)^{\sigma(y) \cdot (\maltese^{|1}_{|l} + 1)} \mu^{k|1|l}_{\cM}.
\end{align}

\subsection{Hochschild invariants}
\label{subsec:hhi}

For any $\fmod{\cA}{\cA}$ bimodule $\cM$, we define the \emph{Hochschild cochain complex}
\begin{equation} CC^*(\cA,\cM) := \prod_{L_0,\ldots,L_s} \mathrm{Hom}(\cA(L_s,\ldots,L_0),\cM(L_0,L_s)),\end{equation}
with the differential given in \cite[Equation (1.13)]{Seidel2011}.
Its cohomology is the \emph{Hochschild cohomology} of $\cA$ with coefficients in $\cM$, denoted $HH^*(\cA,\cM)$.
This accords with our previous definition of the Hochschild cohomology of $\cA$, in that there is an isomorphism
\begin{equation} HH^*(\cA) \cong HH^*(\cA,\cA_\Delta)\end{equation}
coming from an isomorphism of the underlying cochain complexes.

Following \cite{Ganatra2012}, for any $\fmod{\cA}{\cA}$ bimodule $\cM$, we also define the \emph{two-pointed Hochschild cochain complex}
\begin{equation} _2 CC^*(\cA,\cM) := hom^*_{\fmodf{\cA}{\cA}}(\cA_\Delta,\cM).\end{equation}
There is a chain map
\begin{equation}
\label{eqn:2cccc} CC^*(\cA,\cM) \To {} _2CC^*(\cA,\cM),\end{equation}
with the formula given in \cite[Equation (2.200)]{Ganatra2012}, where it is also proven that this map is a quasi-isomorphism if $R$ is a field.

We define the \emph{Hochschild chain complex}
\begin{equation} CC_*(\cA,\cM) := \bigoplus_{L_0,\ldots,L_s} \cM(L_s,L_0) \otimes \cA(L_s,\ldots,L_0),\end{equation}
with the differential given in \cite[Equation (5.15)]{Abouzaid2010a}.
Its cohomology is the Hochschild homology of $\cA$ with coefficients in $\cM$, denoted $HH_*(\cA,\cM)$.
We define the \emph{Hochschild homology} of $\cA$ to be
\begin{equation} HH_*(\cA) := HH_*(\cA,\cA_\Delta).\end{equation}

Now we recall that, given $\fmod{\cA}{\cA}$ bimodules $\cM$ and $\mathcal{N}$, we can define the chain complex
\begin{equation} \cM\otimes_{\fmod{\cA}{\cA}} \mathcal{N} := \bigoplus_{\substack{L_0,\ldots,L_l, \\ K_0,\ldots,K_k}}\cM(K_k,L_l) \otimes \cA(K_k,\ldots,K_0) \otimes \mathcal{N}(L_0,K_0) \otimes \cA(L_0,\ldots,L_l),\end{equation}
with differential given in \cite[Equation (5.1)]{Seidel2008c}.
This is functorial in $\cM$ and $\mathcal{N}$, in the following sense: for any $A_\infty$ bimodule homomorphism $F^{k|1|l}$ from $\mathcal{N}$ to $\mathcal{N}'$, there is an induced homomorphism of chain complexes
\begin{equation} F_\#: \cM \otimes_{\fmod{\cA}{\cA}} \mathcal{N} \To \cM \otimes_{\fmod{\cA}{\cA}} \mathcal{N}',\end{equation}
given explicitly by
\begin{multline}
\label{eqn:tensoract} F_\#(\bm{m} \otimes a_k \otimes \ldots \otimes a_1 \otimes \bm{n} \otimes a_{|1} \otimes \ldots \otimes a_{|l}) := \\
\sum_{i \le k, j \le l} (-1)^* \bm{m} \otimes a_k \otimes \ldots \otimes F^{i|1|j}(a_i,\ldots,a_1,\bm{n},a_{|1},\ldots,a_{|j})\otimes \ldots \otimes a_{|l}, \end{multline}
where
\begin{equation} * = \sigma(F) \cdot \maltese^{|j+1}_{|l}.\end{equation}
It is not difficult to see that $(F \circ G)_\# = F_\# \circ G_\#$.
Functoriality in $\cM$ is similar.

Following \cite{Ganatra2012}, we define the \emph{two-pointed Hochschild chain complex}
\begin{equation} _2 CC_*(\cA,\cM) := \cA_\Delta \otimes_{\fmod{\cA}{\cA}} \cM;\end{equation}
there is a chain map
\begin{equation} \label{eqn:2cccchom}_2CC_*(\cA,\cM) \To CC_*(\cA,\cM),\end{equation}
with the formula given in \cite[Equation (2.196)]{Ganatra2012}, where it is also proven that this map is a quasi-isomorphism if $R$ is a field.

We will call $CC^*(\cA,\cM)$ and $CC_*(\cA,\cM)$ the `one-pointed Hochschild (co)chain complexes' when we want to distinguish them from the two-pointed versions.

\subsection{Algebra and module structures}
\label{subsec:hhalgmod}

Because $HH^*(\cA)$ is the endomorphism algebra of the object $\cA_\Delta$ in $H^*(\fmodf{\cA}{\cA})$, it is naturally equipped with the structure of a unital associative $\bm{G}$-graded algebra.
The product on this algebra is called the \emph{Yoneda product}, and we denote it by $\cup$.
Equation \eqref{eqn:bimodcomp}  gives an explicit formula for $F \cup G$, written on the cochain level in $_2CC^*(\cA)$.

Here is a formula for the Yoneda product, on the cochain level in the one-pointed complex $CC^*(\cA)$:
\begin{equation}
\label{eqn:yon1pt}
\varphi \cup \psi(a_s,\ldots,a_1) := \sum (-1)^{\ddag} \mu^*(a_s,\ldots,\varphi(a_{l},\ldots),a_{k},\ldots,\psi(a_{j},\ldots),a_i,\ldots,a_1),
\end{equation}

where the sum is over all $s \ge l \ge k \ge j \ge i \ge 0$, and
\begin{equation} \ddag = \sigma'(\varphi) \cdot \maltese_1^k + \sigma'( \psi) \cdot \maltese_1^j \end{equation}
(compare \cite[Equation (2.183)]{Ganatra2012}).
This product, together with the Hochschild differential, can be extended to an $A_\infty$ structure on $CC^*(\cA)$ (see \cite{Getzler1993}); in particular, the product $(-1)^{\sigma(\psi)} \varphi \cup \psi$ is associative on $HH^*(\cA)$.
The chain map of \eqref{eqn:2cccc} respects the Yoneda product, up to an explicit homotopy.

The formula \eqref{eqn:yon1pt} has the advantage that, for any object $L$, it is clear that the map
\begin{equation}
\label{eqn:hhproj}
HH^*(\cA) \To \mathrm{Hom}^*(L,L),
\end{equation}
induced by projection to the $0$th graded piece with respect to the length filtration, is a homomorphism of algebras for any $L$.
If $\cA$ can be equipped with homotopy units, then this homomorphism is also unital.

It follows immediately from the interpretation of Hochschild cohomology as morphism spaces in $H^*(\fmodf{\cA}{\cA})$ that $HH^*(\cA,\cM)$ is an $HH^*(\cA)$-module for any bimodule $\cM$, and that Hochschild cohomology defines a unital functor
\begin{equation} HH^*(\cA,-): H^*(\fmodf{\cA}{\cA}) \To HH^*(\cA)\fmod{}{\mbox{mod}}.\end{equation}
If $R$ is a field, then because quasi-isomorphisms are invertible in $H^*(\fmodf{\cA}{\cA})$, this functor takes quasi-isomorphisms to isomorphisms.

From the fact that $\cA_\Delta \otimes_{\fmod{\cA}{\cA}} \cM$ is functorial in the first variable, we see that $HH_*(\cA,\cM)$ is also an $HH^*(\cA)$-module.
We denote the action of $F \in HH^*(\cA)$ on $\varphi \in HH_*(\cA,\cM)$ by $F \cap \varphi$, and call it the \emph{cap product}.
Equation \eqref{eqn:tensoract} gives an explicit formula for $F \cap \varphi$ on the cochain level in the two-pointed Hochschild complexes.

Here is a formula for the cap product, on the cochain level in the one-pointed complex $CC_*(\cA,\cM)$:
\begin{multline}
\label{eqn:cap1pt}
\alpha \cap (\bm{m} \otimes a_s \otimes \ldots \otimes a_1) := \\
\sum (-1)^{\diamond} \mu^{*|1|*}(a_i, \ldots, a_1, \bm{m}, a_s \ldots, \alpha(a_l,\ldots),a_k, \ldots) \otimes a_j \otimes \ldots \otimes a_{i+1}
\end{multline}
where the sum is over all $s \ge l \ge k \ge j \ge i \ge 0$, and
\begin{equation} \diamond = \maltese^i_1 \cdot (|\bm{m}| + \maltese^s_{i+1}) + \sigma'(\alpha) \cdot \maltese^k_{i+1} + \maltese^j_{i+1}\end{equation}
(compare \cite[Equation (2.187)]{Ganatra2012}).
This cap product, together with the Hochschild differential, can be extended to give $CC_*(\cA,\cM)$ the structure of a right $A_\infty$ module over $CC^*(\cA)$, with the $A_\infty$ structure referenced above (see \cite[Theorem 1.9]{Getzler1993}, although the sign convention for the Hochschild chain complex is different from ours).
In particular, the operation $(-1)^{\sigma(\alpha)}\alpha \cap -$ defines a structure of right $HH^*(\cA)$-module on $HH_*(\cA,\cM)$.
The chain maps \eqref{eqn:2cccc} and \eqref{eqn:2cccchom} between the one-pointed and two-pointed Hochschild complexes take one formula for the cap product to the other, up to an explicit homotopy.

The formula \eqref{eqn:cap1pt} has the advantage that it is clear that the obvious map
\begin{equation}
\label{eqn:hhinj}
\mathrm{Hom}^*(L,L) \To HH_*(\cA),
\end{equation}
induced by inclusion on the cochain level in the one-pointed complex, induces a homomorphism of $HH^*(\cA)$-algebras for any $L$.
Here, the $HH^*(\cA)$-module structure on $\mathrm{Hom}^*(L,L)$ factors through the $\mathrm{Hom}^*(L,L)$-module structure given by multiplication of endomorphisms, via the projection map \eqref{eqn:hhproj}.

From the fact that tensor product of bimodules is functorial in the second variable, we see that Hochschild homology defines a unital functor
\begin{equation} HH_*(\cA,-): H^*(\fmodf{\cA}{\cA}) \To HH^*(\cA)\fmod{}{\mbox{mod}},\end{equation}
which takes quasi-isomorphisms to isomorphisms, if $R$ is a field.

\subsection{$\infty$-inner products}
\label{subsec:infinprod}

In this section, we will assume that the coefficient ring $R$ is a field.

For any bimodule $\cM$, we can define the \emph{linear dual bimodule} $\cM^{\vee}$, where
\begin{equation} \cM^{\vee}(K,L) := \cM(L,K)^{\vee},\end{equation}
with structure maps $\check{\mu}^{k|1|l}$ defined by
\begin{equation} \check{\mu}^{l|1|k}(a_{|1}, \ldots, a_{|l}, \bm{\alpha}, a_{k}, \ldots, a_{1})(\bm{m}) := (-1)^{\sigma'(\bm{m})} \bm{\alpha}\left(\mu^{k|1|l} (a_{k}, \ldots, a_{1}, \bm{m}, a_{|1}, \ldots, a_{|l}) \right)\end{equation}
(compare \cite[Equation (2.7)]{Seidel2014}).

We observe that there is then an isomorphism of chain complexes
\begin{equation}
\label{eqn:lindual}
 _2 CC^*(\cA,\cM^{\vee}) \cong {_2 CC}_*(\cA,\cM)^{\vee}.
\end{equation}
Because these are complexes of $\C$-vector spaces, we have
\begin{equation} HH^*(\cA,\cM^{\vee}) \cong HH_*(\cA,\cM)^{\vee}.\end{equation}
One easily checks that this isomorphism respects the $HH^*(\cA)$-module structures.

We recall (from \cite[Definition 5.3]{Tradler2008}, see also \cite{Cho2008}) that an \emph{$\infty$-inner-product} on the $A_{\infty}$ category $\cA$ is an $A_{\infty}$ bimodule homomorphism
\begin{equation} \phi: \cA_{\Delta} \To \cA_{\Delta}^{\vee}.\end{equation}
We say that $\phi$ is $n$-dimensional if it is of degree $n$.
By definition, an $n$-dimensional $\infty$-inner product $\phi$ is a closed element of $_2CC^n(\cA,\cA^{\vee})$, and hence defines a class
\begin{equation} [\phi] \in HH^{n}(\cA,\cA^{\vee}) \cong HH_n(\cA)^\vee.\end{equation}
We say two $\infty$-inner products are equivalent if they have the same class in $HH_n(\cA)^\vee$; so the choice of an $\infty$-inner product $\phi$ up to equivalence is equivalent to the choice of a class $[\phi] \in HH_n(\cA)^\vee$.

For such a class $[\phi]$, we obtain a map
\begin{equation} \int: \mathrm{Hom}^n(L,L) \To \C\end{equation}
for any object $L$, as the composition of the map \eqref{eqn:hhinj} with $[\phi]$.

\begin{definition}
\label{definition:homnond}
We say that the class $[\phi] \in HH_n(\cA)^\vee$ is \emph{homologically non-degenerate} (compare \cite[Theorem 4.1]{Cho2008}) if the composition
\begin{align}
 \mathrm{Hom}^*(K,L) \otimes \mathrm{Hom}^{n-*}(L,K) & \To  \C \\
a \otimes b & \mapsto  \int  \mu^2(a , b)
\end{align}
is a perfect pairing of degree $n$, for any objects $K,L$.
\end{definition}

\begin{lemma}
\label{lemma:homnond}
The class $[\phi]$ is homologically non-degenerate if and only if the corresponding $\infty$-inner product $\phi$ is a quasi-isomorphism of $A_\infty$ bimodules.
\end{lemma}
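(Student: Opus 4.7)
The plan is to unfold both sides of the equivalence to a statement about the single map $\phi^{0|1|0}$, and then match the two resulting pairings.

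First I would observe that, by definition of a quasi-isomorphism of $A_\infty$ bimodules, $\phi$ is a quasi-isomorphism iff the chain map
\begin{equation}
\phi^{0|1|0}\colon \bigl(\mathcal{A}_\Delta(K,L),\mu^{0|1|0}\bigr)\longrightarrow \bigl(\mathcal{A}_\Delta^\vee(K,L),\mu^{0|1|0}\bigr)
\end{equation}
is a quasi-isomorphism for every pair of objects $K,L$. Unwinding the definitions of the diagonal bimodule and its linear dual, this is a chain map $hom^*(K,L)\to hom^{n-*}(L,K)^\vee$, and it is a quasi-isomorphism iff the induced map on cohomology
\begin{equation}
H^*(\phi^{0|1|0})\colon \mathrm{Hom}^*(K,L)\longrightarrow \mathrm{Hom}^{n-*}(L,K)^\vee
\end{equation}
is an isomorphism, equivalently iff the bilinear pairing
\begin{equation}
\langle a,b\rangle_\phi := \bigl(H^*(\phi^{0|1|0})(a)\bigr)(b),\qquad a\in\mathrm{Hom}^*(K,L),\ b\in\mathrm{Hom}^{n-*}(L,K),
\end{equation}
is perfect.

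The remaining and essentially only step is to identify $\langle a,b\rangle_\phi$ with the pairing $(a,b)\mapsto \int\mu^2(a,b)$ that appears in Definition \ref{definition:homnond}. For this I would work in the two-pointed complexes. By \eqref{eqn:lindual}, the class $[\phi]\in HH^n(\mathcal{A},\mathcal{A}_\Delta^\vee)$ corresponds under $_2CC^*(\mathcal{A},\mathcal{A}_\Delta^\vee)\cong {_2CC}_*(\mathcal{A})^\vee$ to a linear functional on $_2CC_*(\mathcal{A})=\mathcal{A}_\Delta\otimes_{\fmod{\mathcal{A}}{\mathcal{A}}}\mathcal{A}_\Delta$. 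On a length-$(0,0)$ element $a\otimes b$ with $a\in\mathcal{A}_\Delta(K,L)$, $b\in\mathcal{A}_\Delta(L,K)$, this functional evaluates (up to sign) to $\phi^{0|1|0}(a)(b)$. Moreover, one checks directly from the formulas for the two-pointed complex and from \eqref{eqn:bimodcomp} that if $a,b$ are closed morphisms then $a\otimes b\in{_2CC}_n(\mathcal{A})$ is a cycle, and its image in $HH_n(\mathcal{A})$ under the quasi-isomorphism \eqref{eqn:2cccchom} agrees (up to a universal sign and a coboundary) with the length-zero inclusion of $\mu^2(a,b)\in\mathrm{Hom}^n(L,L)$ via \eqref{eqn:hhinj}. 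Consequently
\begin{equation}
\int \mu^2(a,b) \;=\; [\phi]\bigl([\mu^2(a,b)]\bigr) \;=\; \pm\,\langle a,b\rangle_\phi
\end{equation}
on the cohomology level.

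The conclusion then follows: the pairing $(a,b)\mapsto\int\mu^2(a,b)$ is perfect for all $K,L$ iff $\langle a,b\rangle_\phi$ is perfect for all $K,L$ iff $H^*(\phi^{0|1|0})$ is an isomorphism for all $K,L$ iff $\phi$ is a quasi-isomorphism of bimodules. I expect the only genuinely delicate point to be step three: carefully matching signs and checking that, under the explicit chain map \eqref{eqn:2cccchom}, the length-zero inclusion of $\mu^2(a,b)$ is cohomologous to $a\otimes b$ in $_2CC_*(\mathcal{A})$. This is a standard but bookkeeping-heavy verification using the Yoneda and cap-product formulas \eqref{eqn:yon1pt} and \eqref{eqn:cap1pt}; with the signs settled, the equivalence is immediate.
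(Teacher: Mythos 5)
Your proposal is correct and takes essentially the same route as the paper's proof: both translate ``$\phi$ is a bimodule quasi-isomorphism'' into ``$H^*(\phi^{0|1|0})$ is an isomorphism for all $K,L$,'' reinterpret that as perfectness of a pairing, and then match this pairing with $(a,b)\mapsto\int\mu^2(a,b)$ by tracking through the explicit formulae for the one- and two-pointed Hochschild complexes and the comparison map \eqref{eqn:2cccchom}. You correctly flag the sign/coboundary bookkeeping as the only delicate point; the paper handles exactly this by exhibiting the relevant composition on the cochain level as $\mu^2$ followed by the inclusion \eqref{eqn:hhinj}.
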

\begin{proof}
$\phi$ is a quasi-isomorphism if and only if $F([\phi])$ is an isomorphism for any objects $K,L$, where $F$ denotes the composition
\begin{equation} HH_*(\cA)^\vee \To \mathrm{Hom}^*_{\fmodf{\cA}{\cA}}(\cA,\cA^\vee) \To \mathrm{Hom}^*(\mathrm{Hom}^*(K,L),\mathrm{Hom}^*(L,K)^\vee).\end{equation}
$F([\phi])$ is an isomorphism if and only if the composition
\begin{equation} \mathrm{Hom}^*(K,L) \otimes \mathrm{Hom}^*(L,K) \overset{F^\vee}{\To} HH_*(\cA) \overset{[\phi]}{\To} \C\end{equation}
is a perfect pairing, where we have identified
\begin{equation} \mathrm{Hom}^*(K,L) \otimes \mathrm{Hom}^*(L,K) \cong \mathrm{Hom}^*(\mathrm{Hom}^*(K,L),\mathrm{Hom}^*(L,K)^\vee)^\vee.\end{equation}
$F^\vee$ is given as a composition of maps on the cochain level
\begin{equation} hom^*(K,L) \otimes hom^*(L,K) \To {}_2CC^*(\cA,\cA^\vee)^\vee \To {} _2CC_*(\cA) \To CC_*(\cA),\end{equation}
for each of which we have an explicit formula; their composition is the map
\begin{equation} hom^*(K,L) \otimes hom^*(L,K) \overset{\mu^2}{\To} hom^*(L,L) \overset{\eqref{eqn:hhinj}}{\To} CC_*(\cA).\end{equation}
This completes the proof.
 \end{proof}

\begin{definition}
\label{definition:weakcy}
An \emph{$n$-dimensional weak proper Calabi--Yau structure} on an $A_\infty$ category $\cA$ is a class $[\phi] \in HH_n(\cA)^\vee$ which is homologically non-degenerate.
\end{definition}

There are various alternative notions of Calabi--Yau structures on $A_\infty$ categories: see \cite[\S 6.1]{Ganatra2015} for a summary.

\begin{remark}
According to \cite[\S 10.2]{Kontsevich2006a}, an \emph{$n$-dimensional proper Calabi--Yau structure} is an element $[\phi] \in HC_n(\cA)^\vee$ which is homologically non-degenerate, where $HC$ denotes the (positive) cyclic homology \cite[\S 7]{Kontsevich2006a}.
There is a natural map
\begin{equation} HH_*(\cA)  \To HC_*(\cA)\end{equation}
coming from an inclusion of chain complexes, whose dual takes $n$-dimensional proper Calabi--Yau structures to $n$-dimensional weak proper Calabi--Yau structures. 
See \cite[\S 6.1]{Ganatra2015} for a summary
\end{remark}

\begin{lemma}
\label{lemma:hhdual}
If $[\phi]$ is an $n$-dimensional weak proper Calabi--Yau structure on $\cA$, then the map
\begin{align}
HH^*(\cA) &\To HH_*(\cA)^\vee[-n] \\
\alpha & \mapsto  \alpha \cap [\phi]
\end{align}
is an isomorphism.
Here, `$\cap$' denotes the $HH^*(\cA)$-module structure dual to the cap product on $HH_*(\cA)$.
\end{lemma}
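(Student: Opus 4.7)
The plan is to interpret the cap product with $[\phi]$ as the map on Hochschild cohomology induced by the $A_\infty$ bimodule homomorphism $\phi$, and then invoke functoriality of $HH^*$ together with the non-degeneracy of $\phi$.

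First, I would recall that under the identification $HH_n(\mathcal{A})^\vee \cong HH^n(\mathcal{A}, \mathcal{A}_\Delta^\vee)$ coming from \eqref{eqn:lindual} (together with the quasi-isomorphism \eqref{eqn:2cccc} between the one- and two-pointed Hochschild cochain complexes, valid since $R$ is a field), the class $[\phi]$ corresponds to the cohomology class of an $A_\infty$ bimodule homomorphism
\begin{equation*}
\phi: \mathcal{A}_\Delta \To \mathcal{A}_\Delta^\vee[n].
\end{equation*}
By Lemma \ref{lemma:homnond}, the hypothesis that $[\phi]$ is homologically non-degenerate is equivalent to $\phi$ being a quasi-isomorphism in $\fmodf{\mathcal{A}}{\mathcal{A}}$.

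Second, I would apply the functoriality statement recalled in Section \ref{subsec:hhalgmod}: Hochschild cohomology defines a unital functor $HH^*(\mathcal{A},-): H^*(\fmodf{\mathcal{A}}{\mathcal{A}}) \To HH^*(\mathcal{A})\fmod{}{\mathrm{mod}}$, which sends quasi-isomorphisms to isomorphisms (using that $R$ is a field, so that quasi-isomorphisms are invertible in $H^*(\fmodf{\mathcal{A}}{\mathcal{A}})$). Applied to $\phi$, this yields an isomorphism of $HH^*(\mathcal{A})$-modules
\begin{equation*}
HH^*(\phi): HH^*(\mathcal{A}) = HH^*(\mathcal{A},\mathcal{A}_\Delta) \overset{\cong}{\To} HH^*(\mathcal{A},\mathcal{A}_\Delta^\vee)[-n] \cong HH_*(\mathcal{A})^\vee[-n].
\end{equation*}

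Third, I would show that under these identifications, the map $HH^*(\phi)$ coincides with $\alpha \mapsto \alpha \cap [\phi]$. The cleanest way is to work in the two-pointed models: $HH^*(\mathcal{A},\mathcal{M})$ is computed by $\mathrm{Hom}^*_{\fmodf{\mathcal{A}}{\mathcal{A}}}(\mathcal{A}_\Delta,\mathcal{M})$, the Yoneda product on $HH^*(\mathcal{A})$ is composition in $\fmodf{\mathcal{A}}{\mathcal{A}}$, and the action of $F$ on $\phi \in \mathrm{Hom}(\mathcal{A}_\Delta,\mathcal{A}_\Delta^\vee[n])$ induced by functoriality is precisely postcomposition $\phi \circ F$. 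Under the linear-dual identification \eqref{eqn:lindual}, which is an isomorphism of $HH^*(\mathcal{A})$-modules (with the cap-product module structure on the right-hand side, c.f.\ the discussion after \eqref{eqn:tensoract}), postcomposition with $\phi$ on the level of bimodule homomorphisms corresponds to dualizing the cap product action of $F$ on the Hochschild chain $[\phi]^\vee$. Chasing through the definitions, $HH^*(\phi)(F)$ is the class of $\phi \circ F$, which evaluates a Hochschild cycle $\psi$ to $[\phi](F \cap \psi) = (F \cap [\phi])(\psi)$.

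The main technical obstacle is the compatibility check in the third step: keeping track of the various chain maps and sign conventions between one- and two-pointed Hochschild complexes (equations \eqref{eqn:2cccc} and \eqref{eqn:2cccchom}) and verifying that all three of the Yoneda product, the cap product, and the functorial action of $\fmodf{\mathcal{A}}{\mathcal{A}}$ on $HH^*(\mathcal{A},-)$ are intertwined correctly. Once this is in place, the lemma follows immediately by combining the two displayed isomorphisms above. Note also that the conclusion is automatically compatible with the $HH^*(\mathcal{A})$-module structures, since both $HH^*(\phi)$ and $-\cap[\phi]$ manifestly are.
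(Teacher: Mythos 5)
Your proposal is correct and follows essentially the same path as the paper's proof: identify $HH_*(\mathcal{A})^\vee$ with $\mathrm{Hom}^*_{\fmodf{\mathcal{A}}{\mathcal{A}}}(\mathcal{A}_\Delta,\mathcal{A}_\Delta^\vee)$, observe via Lemma~\ref{lemma:homnond} that $\phi$ is an isomorphism in $H^*(\fmodf{\mathcal{A}}{\mathcal{A}})$, and note that the cap-product map is then (post-)composition with this isomorphism, hence itself an isomorphism. The paper packages your third step more compactly by simply quoting that the $HH^*(\mathcal{A})$-module structure on $\mathrm{Hom}^*(\mathcal{A}_\Delta,\mathcal{M})$ is composition in $H^*(\fmodf{\mathcal{A}}{\mathcal{A}})$, which was set up in Section~\ref{subsec:hhalgmod}, so your ``main technical obstacle'' is treated there as an already-established fact rather than something to be re-verified.
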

\begin{proof}
We recall the isomorphisms of $HH^*(\cA)$-modules:
\begin{equation} HH_*(\cA)^\vee \cong HH^*(\cA,\cA^\vee) \cong \mathrm{Hom}^*_{\fmodf{\cA}{\cA}}(\cA,\cA^\vee).\end{equation}
The module action of $HH^*(\cA)$ on the right-most of these is given by composition of morphisms in the category $H^*(\fmodf{\cA}{\cA})$.
Since $[\phi]$ is homologically non-degenerate, $\phi$ is a quasi-isomorphism by Lemma \ref{lemma:homnond}, i.e., it is an isomorphism in $H^*(\fmodf{\cA}{\cA})$.
The result follows, since post-composition with an isomorphism defines an isomorphism of morphism spaces in the category $H^*(\fmodf{\cA}{\cA})$.
 \end{proof}

\subsection{Split-generation}

Now we recall that, for any $A_{\infty}$ category $\cA$, we can form the $A_{\infty}$ category of twisted complexes, $D^b \cA$, which is triangulated in the $A_\infty$ sense, and its split-closure, $D^\pi \cA$, which is triangulated and split-closed (see \cite[Chapters 3 and 4]{Seidel2008}, where they are denoted $Tw \cA$ and $\Pi(Tw \cA)$ respectively).
For any full subcategory $\cG \subset \cA$, we can consider the smallest full subcategory of $D^\pi \cA$ which contains $\cG$, is closed under quasi-isomorphism, and is triangulated and split-closed.
We say that the objects of this category are split-generated by $\cG$.
If this category is all of $D^\pi \cA$, we say that $\cG$ split-generates $\cA$.

We recall that an $A_{\infty}$ left $\cA$-module $\cM$ associates to each object $L$ of $\cA$ a graded vector space $\cM^*(L)$, together with maps
\begin{equation} \mu^{k|1}: \cA(L_k, \ldots, L_0) \otimes \cM(L_0) \To \cM(L_k)\end{equation}
satisfying the $A_{\infty}$ relation, given in \cite[Equation 1.19]{Seidel2008}.
$A_{\infty}$ left $\cA$-modules are the objects of a DG category, which we denote by $\fmod{\cA}{\mbox{mod}}$.
Given an object $K$ of $\cA$, we define a left module $\mathcal{Y}^l_K$, with
\begin{align}
\mathcal{Y}^l_K(L) &:=  \cA(K,L) \\
\mu^{k|1} &:=  \mu^{k+1}.
\end{align}
This extends to a cohomologically full and faithful $A_{\infty}$ embedding
\begin{equation} \mathcal{Y}^l: \cA \To \fmod{\cA}{\mbox{mod}},\end{equation}
which is the $A_{\infty}$ version of the Yoneda embedding (see \cite[\S 2g]{Seidel2008}).

Similarly, one can define the DG category of $A_{\infty}$ right $\cA$-modules, and we denote it by $\fmod{\mbox{mod}}{\cA}$.
For each object $K$ of $\cA$, there is a right-module $\mathcal{Y}^r_K$, with $\mathcal{Y}^r_K(L) := \cA(L,K)$ and structure maps given by $\mu^*$.

We can then form the $\fmod{\cA}{\cA}$ bimodule $\mathcal{Y}^l_K \otimes \mathcal{Y}^r_K$, where
\begin{align}
\mathcal{Y}^l_K \otimes \mathcal{Y}^r_K(L_0,L_1) &:=  \cA(K,L_0) \otimes \cA(L_1,K) \\
\mu^{0|1|0}(p \otimes q) &:= (-1)^{\sigma'(q)}\mu^{1}(p)\otimes q - p\otimes\mu^{1}(q) \\
\mu^{k|1|0}(a_k,\ldots,a_1,p\otimes q)  & :=  (-1)^{\sigma'(q)}\mu^{k+1}(a_k,\ldots,a_1,p) \otimes q \\
\mu^{0|1|l}(p \otimes q,a_{|1},\ldots,a_{|l}) &:=  (-1)^{\maltese^{|1}_{|l}+1} p \otimes  \mu^{l+1}(q,a_{|1},\ldots,a_{|l}) \\
\mu^{k|1|l} &= 0 \mbox{ if both $k$ and $l$ are non-zero.}
\end{align}

There is a map $H^*(\mu)$:
\begin{equation} H^*(\mu): HH_*(\cA,\mathcal{Y}^l_K \otimes \mathcal{Y}^r_K) \To \mathrm{Hom}^*(K,K),\end{equation}
defined on the cochain level by contracting the chain of morphisms with the $A_{\infty}$ structure maps:
\begin{equation} \mu((p \otimes q) \otimes a_s \otimes \ldots \otimes a_1) := (-1)^{\sigma'(p) \cdot (\sigma'(q)+\maltese^s_1)} \mu^{s+2}(q,a_s, \ldots, a_1, p).\end{equation}

Now let $\cG \subset \cA$ be a full subcategory.
Denote by $\mathcal{YG}^l_K$ the restriction of $\mathcal{Y}^l_K$ to an object of $\cG$-mod, and similarly define $\mathcal{YG}^r_K$.

\begin{lemma}
\label{lemma:husplgen} (\cite[Lemma 1.4]{Abouzaid2010a})
If the identity of $\mathrm{Hom}^*(K,K)$ lies in the image of the map
\begin{equation} H^*(\mu): HH_*(\cG,\mathcal{YG}^l_K \otimes \mathcal{YG}^r_K) \To \mathrm{Hom}^*(K,K),\end{equation}
then $\cG$ split-generates $K$.
\end{lemma}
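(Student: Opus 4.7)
The plan is to use the Hochschild cycle witnessing the identity to construct an explicit factorization of $\mathrm{id}_K$ through a twisted complex built from objects of $\mathcal{G}$, following the strategy of \cite[Section 1.3]{Abouzaid2010a}. A class in $HH_*(\mathcal{G},\mathcal{YG}^l_K \otimes \mathcal{YG}^r_K)$ is represented by a finite sum
\begin{equation*}
\alpha = \sum_i (p_i \otimes q_i) \otimes a_{i,s_i} \otimes \ldots \otimes a_{i,1},
\end{equation*}
where, for each $i$, we have objects $L_{i,0},\ldots,L_{i,s_i}$ of $\mathcal{G}$, composable morphisms $a_{i,j}\in \mathcal{G}(L_{i,j-1},L_{i,j})$ (shifted), and morphisms $p_i \in \mathcal{A}(K,L_{i,0})$, $q_i \in \mathcal{A}(L_{i,s_i},K)$. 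The statement that $H^*(\mu)(\alpha) = \mathrm{id}_K$ on cohomology means that the sum of the $\mu^{s_i+2}(q_i,a_{i,s_i},\ldots,a_{i,1},p_i)$ is cohomologous to a cochain representative of the identity in $\mathrm{Hom}^*(K,K)$.

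First I would package the tensor factors $a_{i,j}$ into a twisted complex $Z$ over $\mathcal{G}$: its underlying module is $\bigoplus_i \bigoplus_{j=0}^{s_i} L_{i,j}[\text{shifts}]$, with differential whose components are either single $a_{i,j}$'s or higher $A_\infty$ contractions of consecutive blocks, chosen so that the twisted-complex Maurer–Cartan equation becomes precisely the bar differential on $\alpha$. This is the standard bar construction and exists because the $a_{i,j}$ are composable in $\mathcal{G}$. Next, I would use the $p_i$ (together with the leading segments of the $a_{i,j}$) to assemble an element $\iota \in \mathrm{Hom}^*_{D^b\mathcal{A}}(K,Z)$, and the $q_i$ (with trailing segments) to assemble $\pi \in \mathrm{Hom}^*_{D^b\mathcal{A}}(Z,K)$. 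The condition that $\alpha$ is a Hochschild cycle, expanded term by term, is exactly the statement that $\iota$ and $\pi$ are closed morphisms in $D^b\mathcal{A}$.

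Then I would compute $\pi \circ \iota \in \mathrm{Hom}^*_{D^b \mathcal{A}}(K,K) = \mathrm{Hom}^*_{\mathcal{A}}(K,K)$. By the way composition works in twisted complexes, this composition is precisely the total $A_\infty$ contraction $\sum_i \mu^{s_i+2}(q_i,a_{i,s_i},\ldots,a_{i,1},p_i)$, which is by hypothesis cohomologous to $\mathrm{id}_K$. Therefore $[\pi] \circ [\iota] = [\mathrm{id}_K]$ in $H^*(D^b\mathcal{A})$. Passing to $D^\pi \mathcal{A}$, the idempotent $[\iota] \circ [\pi] \in \mathrm{End}(Z)$ then splits, giving $K$ as a direct summand of $Z$, so $K$ is split-generated by the objects $L_{i,j}$ of $\mathcal{G}$.

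The main obstacle will be the careful sign-and-index bookkeeping required to certify that the bar construction on $\alpha$ assembles into valid twisted-complex differentials and closed Yoneda-module morphisms $\iota$, $\pi$, with the right signs so that the $A_\infty$ Maurer–Cartan, chain, and composition relations all translate into the Hochschild cycle condition and the identity $H^*(\mu)(\alpha)=\mathrm{id}_K$. Once this dictionary is established, the geometric conclusion (factorization through $Z$, hence split-generation) is essentially automatic; the proof is really a diagrammatic rearrangement of the Hochschild chain into a twisted-complex factorization, and the work is in matching conventions between the one-pointed Hochschild complex of \eqref{eqn:cap1pt} and the twisted-complex formalism of \cite[Chapter 3]{Seidel2008}.
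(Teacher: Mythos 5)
The paper gives no proof of this lemma; it is cited from \cite[Lemma 1.4]{Abouzaid2010a}. Your high-level strategy — factor $\mathrm{id}_K$ through a perfect object built from $\mathcal{G}$ and invoke split-closure — is exactly the right one, and it is the one Abouzaid uses. However, the key intermediate step as you describe it does not go through. You propose to build a twisted complex $Z$ whose underlying object is $\bigoplus_{i,j}L_{i,j}$ with differential given by the $a_{i,j}$'s and their partial $A_\infty$-contractions, and you assert that the Maurer--Cartan equation for $\delta_Z$ is equivalent to the Hochschild cycle condition $b(\alpha)=0$, while closedness of $\iota$ and $\pi$ extracts the remaining pieces. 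This separation is not possible. The Maurer--Cartan equation for the proposed $\delta_Z$ is a condition on the $a_{i,j}$'s alone: its $(j,k)$ component is $\sum_m\sum_{j=l_0<\cdots<l_m=k}\pm\,\mu^m(\delta_{l_{m-1}l_m},\ldots,\delta_{l_0l_1})$, and substituting $\delta_{lr}=\mu^{r-l}(a_r,\ldots,a_{l+1})$ produces the ``one side'' of the $A_\infty$ associativity relation, which is not zero for a generic chain of composable morphisms. The Hochschild cycle condition, by contrast, uses the bimodule structure on $\mathcal{Y}^l_K\otimes\mathcal{Y}^r_K$, whose only nonzero higher operations pull $p_i$ into the leading $a$'s or $q_i$ into the trailing $a$'s; the cancellations that make $b(\alpha)=0$ therefore necessarily mix the $p_i$'s, $q_i$'s and $a_{i,j}$'s across different terms $i$, and cannot be disaggregated into ``$Z$ is a twisted complex'' plus ``$\iota$ is closed'' plus ``$\pi$ is closed.''

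What actually makes the argument work is to take a perfect module whose structure is universal and does not depend on $\alpha$ at all. A Hochschild cycle $\alpha$ is a finite sum and hence is supported in lengths $\le N$ for some $N$; since the Hochschild differential does not increase length, $\alpha$ is already a cycle in the length-$\le N$ subcomplex. The truncated bar resolution $\mathcal{T}_N$ of $\mathcal{Y}^r_K|_\mathcal{G}$ at this length is a finite iterated cone of (shifted, finite-dimensional multiples of) representable $\mathcal{G}$-modules $\mathcal{Y}^r_L$, $L\in\mathcal{G}$, hence a perfect right $\mathcal{G}$-module, with the bar differential automatically satisfying the required relations. The cycle $\alpha$ then furnishes a closed module morphism $\mathcal{Y}^r_K|_\mathcal{G}\to\mathcal{T}_N$, and the hypothesis $H^*(\mu)(\alpha)=e_K$ is precisely the statement that composing with the bar augmentation $\mathcal{T}_N\to\mathcal{Y}^r_K|_\mathcal{G}$ gives a map homotopic to the identity. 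So $\mathcal{Y}^r_K|_\mathcal{G}$ is a summand of $\mathcal{T}_N$, hence perfect, and by the Yoneda embedding $K$ lies in the split-closure of $\mathcal{G}$. Your proposal has the right endpoint but wrongly locates where the Hochschild cycle condition is used: not as a Maurer--Cartan equation for an ad hoc twisted complex, but as closedness of a module morphism into an already-constructed perfect object.
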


\subsection{Curvature and units}
\label{subsec:curved}

We will also be concerned with \emph{curved} $A_\infty$ categories $\cA$.
The definition of the DG category of $A_\infty$ bimodules $\fmodf{\cA}{\cA}$ still makes perfect sense: the formulae are as in \S \ref{subsec:ainfbi}, modified to allow for $\mu^0$.
The diagonal bimodule is a well-defined object of $\fmodf{\cA}{\cA}$.
The one- and two-pointed Hochschild cochain complexes $CC^*(\cA,\cM)$ and $_2CC^*(\cA,\cM)$ also still make sense, with differentials defined by the same formulae as before.
The Yoneda products on $CC^*(\cA)$ and $_2CC^*(\cA)$ are defined on the cochain level by the same formulae as before, and again define associative products on the level of cohomology.
As before, there is a chain map
\begin{equation}
\label{eqn:cc2cc}
 CC^*(\cA) \To {} _2CC^*(\cA)
\end{equation}
defined by \cite[Equation (2.200)]{Ganatra2012}, which induces a homomorphism on the level of cohomology.
In fact it is an algebra homomorphism, as one can construct an explicit homotopy between the two Yoneda products.

However, in contrast to the non-curved case, this homomorphism need not be a quasi-isomorphism: the length filtration used in the proof of \cite[Proposition 2.5]{Ganatra2012} is no longer a filtration if the $A_\infty$ category is curved.
Thus we have two Hochschild cohomology algebras, together with an algebra homomorphism
\begin{equation}
\label{eqn:hh2hh}
HH^*(\cA) \To {}_2 HH^*(\cA).
\end{equation}

Similarly, the one- and two-pointed Hochschild chain complexes $CC_*(\cA,\cM)$ and $_2CC_*(\cA,\cM)$ are well-defined, via the same formulae as before, and there is a chain map
\begin{equation}
\label{eqn:cc2cc2}
_2CC_*(\cA,\cM) \To CC_*(\cA,\cM)
\end{equation}
defined by \cite[Equation (2.196)]{Ganatra2012}.
Thus we have a homomorphism
\begin{equation}
\label{eqn:hh2hh2}
_2 HH_*(\cA,\cM) \To HH_*(\cA,\cM),
\end{equation}
but it need not be a quasi-isomorphism.
In fact this is a homomorphism of $HH^*(\cA)$-modules, as one can check by constructing an explicit homotopy between the two cap products.

Now suppose that $\cA$ has strict units $e_L \in hom^*(L,L)$, and that the curvature $\mu^0_L$ is a multiple of $e_L$, for each object $L$.
In this case, for each $w \in R$ (where $R$ is the base ring), we can define an honest (non-curved) $A_\infty$ category $\cA_w$: its objects are those objects $L$ of $\cA$ such that $\mu^0_L = w \cdot e_L$, its morphism spaces are inherited from $\cA$, and its $A_\infty$ structure maps $\mu^{\ge 1}$ are inherited from $\cA$, with $\mu^0$ set equal to $0$.
It follows from strict unitality and the condition on $\mu^0$ that these structure maps satisfy the $A_\infty$ relations.

We also consider strictly unital bimodules $\cM$, in the sense of \cite[Equation (2.6)]{Seidel2008c} (with the appropriate adjustment to signs).
In particular, if $\cA$ is strictly unital, then the diagonal bimodule is strictly unital in this sense.
If $\cM$ is a strictly unital $\fmod{\cA}{\cA}$ bimodule, then for each $w \in R$ we obtain an $\fmod{\cA_w}{\cA_w}$ bimodule $\cM_w$; it satisfies the $A_\infty$ relations by the strict unitality condition and the condition on $\mu^0$.

We would like to compare the Hochschild invariants of the curved $A_\infty$ category $\cA$ with those of the non-curved categories $\cA_w$.
To this end, for any strictly unital bimodule $\cM$ we introduce the \emph{normalized Hochschild cochain complex} \cite[1.5.7]{Loday1998},
\begin{equation} \overline{CC}^*(\cA,\cM) \subset CC^*(\cA,\cM),\end{equation}
the subcomplex of \emph{normalized} Hochschild cochains, namely, those $\alpha$ such that
\begin{equation} \alpha(\ldots,e_L,\ldots) = 0.\end{equation}
We similarly introduce the \emph{normalized Hochschild chain complex} \cite[1.1.14]{Loday1998}, the quotient of the Hochschild chain complex by the \emph{degenerate} subcomplex
\begin{equation}  \overline{CC}_*(\cA,\cM) := CC_*(\cA,\cM)/D_*,\end{equation}
where the subcomplex $D_*$ is the span of all Hochschild chains $\bm{m} \otimes a_s \otimes \ldots e_L \otimes \ldots \otimes a_1$.

Our assumptions on $\cA$ and $\cM$ ensure that these are subcomplexes, so we can define the normalized Hochschild cohomology and homology, which come with maps
\begin{equation}
\label{eqn:hhnorm1} \overline{HH}^*(\cA,\cM) \To HH^*(\cA,\cM)
\end{equation}
and
\begin{equation}
\label{eqn:hhnorm2} HH_*(\cA,\cM) \To \overline{HH}_*(\cA,\cM).
\end{equation}
Furthermore, our assumptions on $\cA$ ensure that the Yoneda product defines an associative algebra structure on $\overline{HH}^*(\cA)$, so that the map \eqref{eqn:hhnorm1} is an algebra homomorphism when $\cM = \cA_\Delta$, and that the cap product makes $\overline{HH}_*(\cA)$ into an $\overline{HH}^*(\cA)$-module, so that \eqref{eqn:hhnorm2} is a homomorphism of $\overline{HH}^*(\cA)$-modules.

Now, our assumption that $\mu^0$ is proportional to $e_L$ means that the restriction morphism
\begin{equation} \overline{CC}^*(\cA,\cM) \To \overline{CC}^*(\cA_w,\cM_w) \end{equation}
and the inclusion morphism
\begin{equation} \overline{CC}_*(\cA_w,\cM_w) \To \overline{CC}_*(\cA,\cM) \end{equation}
are morphism of chain complexes.

Furthermore, the natural inclusion
\begin{equation} \overline{CC}^*(\cA_w,\cM_w) \hookrightarrow CC^*(\cA_w,\cM_w)\end{equation}
and projection
\begin{equation} CC_*(\cA_w,\cM_w) \twoheadrightarrow \overline{CC}_*(\cA_w,\cM_w) \end{equation}
are quasi-isomorphisms of cochain complexes, by the argument of \cite[1.6.5]{Loday1998} (we remark that the corresponding maps for the curved $A_\infty$ categories need \emph{not} be quasi-isomorphisms: the presence of curvature destroys the length filtration used in the argument).
Thus we have

\begin{lemma}
\label{lemma:curved}
There are algebra homomorphisms
\begin{equation} HH^*(\cA,\cM) \leftarrow \overline{HH}^*(\cA,\cM) \To \overline{HH}^*(\cA_w,\cM_w) \cong  HH^*(\cA_w,\cM_w)\end{equation}
and $\overline{HH}^*(\cA) $-module homomorphisms
\begin{equation} HH_*(\cA,\cM) \To \overline{HH}_*(\cA\cM) \leftarrow \overline{HH}_*(\cA_w,\cM_w) \cong HH_*(\cA_w,\cM_w).\end{equation}
\end{lemma}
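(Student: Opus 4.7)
The proof is essentially bookkeeping: every map in the statement is induced by a chain-level map already identified in the discussion preceding the lemma, and what remains is to verify chain-map properties and compatibility with the Yoneda and cap products. The main point of substance is the behaviour of the curvature under restriction to the normalized complex.

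For the cohomology sequence I would construct three chain-level maps and pass to cohomology. The leftmost arrow is induced by the inclusion $\overline{CC}^*(\mathcal{A},\mathcal{M}) \hookrightarrow CC^*(\mathcal{A},\mathcal{M})$. The middle arrow is the obvious restriction $\overline{CC}^*(\mathcal{A},\mathcal{M}) \To \overline{CC}^*(\mathcal{A}_w,\mathcal{M}_w)$, sending a normalized cochain to its restriction to inputs among objects of $\mathcal{A}_w$. The key check that this is a chain map is that the curved Hochschild differential on the source, when evaluated on inputs from $\mathcal{A}_w$, differs from the uncurved Hochschild differential on the target only by terms involving an insertion of $\mu^0_L = w \cdot e_L$; by normalization these terms contribute zero, so the differentials match. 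The rightmost arrow is the inverse of the inclusion $\overline{CC}^*(\mathcal{A}_w,\mathcal{M}_w) \hookrightarrow CC^*(\mathcal{A}_w,\mathcal{M}_w)$, which is a quasi-isomorphism by the length-filtration argument of \cite[1.6.5]{Loday1998}, valid here because $\mathcal{A}_w$ is genuinely non-curved. To upgrade to algebra homomorphisms, I observe that the Yoneda product \eqref{eqn:yon1pt} involves only $\mu^{\geq 1}$, and these structure maps agree between $\mathcal{A}$ and $\mathcal{A}_w$; hence both the inclusion and the restriction strictly commute with the Yoneda product on the cochain level, and the final quasi-isomorphism trivially respects it since $\overline{CC}^*(\mathcal{A}_w,\mathcal{M}_w)$ is closed under the product in $CC^*(\mathcal{A}_w,\mathcal{M}_w)$.

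The homology sequence is dual: the three chain-level maps are the projection $CC_*(\mathcal{A},\mathcal{M}) \twoheadrightarrow \overline{CC}_*(\mathcal{A},\mathcal{M})$ by the degenerate subcomplex, the inclusion $\overline{CC}_*(\mathcal{A}_w,\mathcal{M}_w) \hookrightarrow \overline{CC}_*(\mathcal{A},\mathcal{M})$, and the inverse of the quasi-isomorphism $CC_*(\mathcal{A}_w,\mathcal{M}_w) \twoheadrightarrow \overline{CC}_*(\mathcal{A}_w,\mathcal{M}_w)$. That these are chain maps uses the same curvature argument: the $\mu^0$ contributions to the curved Hochschild differential lie in the degenerate subcomplex. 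The cap product \eqref{eqn:cap1pt} again uses only $\mu^{\geq 1}$, so each chain-level map intertwines cap products with the appropriate algebra map from the cohomology sequence, making every arrow of the homology sequence a homomorphism of $\overline{HH}^*(\mathcal{A})$-modules, where the module structure on each of the four groups is inherited from this common ring via the algebra maps already constructed. The only non-trivial step throughout is the verification that restriction on normalized cochains commutes with the curved Hochschild differential; this is precisely the place where the assumption $\mu^0_L = w \cdot e_L$ together with the normalization condition is essential, and is also the reason we must pass to $\overline{HH}^*$ on the side of the curved category $\mathcal{A}$ even though the comparison map to $HH^*(\mathcal{A})$ is not a quasi-isomorphism there.
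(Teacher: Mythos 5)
Your proof is correct and takes essentially the same approach as the paper's discussion preceding the lemma: identify the chain-level inclusion, restriction, and Loday quasi-isomorphism maps, observe that the $\mu^0 \propto e_L$ assumption makes the curvature contributions vanish on normalized cochains (respectively, land in the degenerate subcomplex for chains), and note that the Yoneda and cap product formulas avoid $\mu^0$. One tiny imprecision, harmless here: the Yoneda product \eqref{eqn:yon1pt} actually involves only $\mu^{\ge 2}$ in the outer slot (since $\varphi(\ldots)$ and $\psi(\ldots)$ are always both present), not merely $\mu^{\ge 1}$.
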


\begin{remark}
Lemma \ref{lemma:curved} has a straightforward analogue for the two-pointed Hochschild complexes, compatible with the morphisms of \eqref{eqn:hh2hh} and \eqref{eqn:hh2hh2}.
Furthermore, the homomorphisms
\begin{equation} HH^*(\cA_w,\cM_w) \To {}_2HH^*(\cA_w,\cM_w)\end{equation}
of \eqref{eqn:hh2hh}, and
\begin{equation} _2HH_*(\cA_w,\cM_w) \To HH_*(\cA_w,\cM_w)\end{equation}
of \eqref{eqn:hh2hh2}, \emph{are} quasi-isomorphisms by the arguments of \cite[\S 2.11]{Ganatra2012}, because $\cA_w$ is not curved.
\end{remark}

\section{The cubic surface}
\label{sec:cubsurf}

For the purposes of this section, let $X = X^4_3$ be the cubic hypersurface in $\CP{3}$.

\subsection{The $27$ lines and an open Gromov--Witten invariant}

Following a computation from \cite[`Control example 3' after Corollary 10.9]{Givental1996}, we can compare the Gromov--Witten invariant counting the number of lines on $X$ to the open Gromov--Witten invariant $w(L)$ which counts the number of Maslov index $2$ discs with boundary on one of our Lagrangians $L$.

By Proposition \ref{proposition:qhfromhh}, the class $P$ Poincar\'{e} dual to a hyperplane satisfies the relation
\begin{equation} (P - w)^{\star 3} = 3^3 (P - w)^{\star 2}\end{equation}
in $QH^*(X)$.
Using the axioms of the Gromov--Witten invariants, we compute the number of rigid rational curves in $X$ (such a curve necessarily has degree $1$): it is equal to
\begin{align}
\langle P^2, P \rangle &= \langle P^3,1 \rangle \\
&= \langle 3w P^2 - 3 w^2 P + w^3 + 27P^2 - 54 w P + 27 w^2, 1 \rangle \\
&= 3(w+9) \langle P,P \rangle \\
&= 9(w+9).
\end{align}
Thus, the number of lines is $27$, if and only if $w(L) = -6$.

\subsection{The quantum cohomology of the cubic surface}

Recall that the cubic surface can be expressed as $\CP{2}$ blown up at six points.
Therefore, $QH^*(X)$ has a basis $\{1,h,p,e_1,\ldots, e_6\}$, where $1,h,p$ are the standard basis for $H^*(\CP{2})$ and $e_j$ are the classes of the exceptional divisors.
We introduce convenient auxiliary classes
\begin{equation} M := e_1 + \ldots + e_6,\end{equation}
and
\begin{equation} A := 3h-M + 6.\end{equation}
We remark that the first Chern class is $c_1 = A-6$.

\begin{proposition}
\label{proposition:craud} (From \cite{Crauder1995}).
We have a complete description of $QH^*(X)$:
\begin{align}
p\star p &= 84A + 36\\
h\star p &= 42A - 6h  \\
e_i \star  p &= 14A - 6e_i \\
h\star h &= p + 25A-12h-30\\
h\star e_i &= 9A - 2h -6e_i -12 \\
e_i \star  e_i &= -p+5A-4e_i-10 \\
e_i \star e_j &= 3A - 2(e_i+e_j) -4.
\end{align}
\end{proposition}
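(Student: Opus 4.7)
The content of Proposition \ref{proposition:craud} is a complete determination of the quantum product on $QH^*(X)$ in the blowup basis $\{1,h,p,e_1,\ldots,e_6\}$, and was carried out by Crauder and Miranda in \cite{Crauder1995}. My plan is to indicate the structure of their argument and to check it against the closed-string information we have already extracted from the Fukaya category.

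First I would fix the enumerative setup: realize $X \cong Bl_{p_1,\ldots,p_6}\CP{2}$ so that $h$ is the pullback of the hyperplane class on $\CP{2}$ and the $e_i$ are the exceptional divisors, the anticanonical class $-K_X = 3h - M = c_1(TX)$ coincides with the hyperplane class $P$ under the $|{-K_X}|$-embedding into $\CP{3}$, and the Mori cone of effective curve classes is generated by the 27 $(-1)$-curves. Monotonicity forces $c_1 \cdot \beta > 0$ on every effective class, so for each triple of cohomology classes the sum $GW^X_3(\alpha,\beta,\gamma) = \sum_\beta GW^X_{3,\beta}(\alpha,\beta,\gamma)$ is finite, and each product $\alpha \star \beta$ is determined by finitely many three-point invariants.

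The main step is to compute each three-point invariant. I would first reduce by symmetry: the $E_6$ Weyl group acts on $QH^*(X)$ through its action on the exceptional classes, so it suffices to compute invariants involving at most one of the $e_i$, plus the diagonal invariant $\langle e_i \star e_j, \gamma\rangle$ for a single $i \neq j$. The classical (degree-zero) part of each product is immediate from the intersection pairing on the blowup, and Crauder--Miranda reduce the higher-degree corrections to counts of rational curves on the blowup subject to incidence conditions. They then use WDVV associativity, together with the small number of directly computed invariants (for instance, the 27 lines contribute to $\langle p,p,p\rangle$-type invariants), to pin down all remaining products.

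Finally, as a consistency check I would verify that the relation $A^{\star 3} = 27\, A^{\star 2}$, which $A$ must satisfy by Proposition \ref{proposition:qhfromhh} (applied with $n=4$, $a=3$, so that $q^4_3(x) = x^3 - 27 x^2$) together with Corollary \ref{corollary:w} (which gives $\bm{w}^4_3 = -6$ and hence $A = P + 6$), actually holds when one substitutes the formulae listed in the proposition. Carrying out this substitution is a routine but revealing calculation, and it gives an independent Fukaya-theoretic check on the Crauder--Miranda formulas. The only serious content in the proof is the enumerative computation of degree $\geq 1$ genus-zero Gromov--Witten invariants on the cubic surface, and for this we import \cite{Crauder1995}.
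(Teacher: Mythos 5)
Your proposal is correct and takes essentially the same approach as the paper: Proposition~\ref{proposition:craud} is simply imported from \cite{Crauder1995} with no proof given, and the text immediately following it performs a consistency check against the Fukaya-theoretic computation of $QH^*(X)$ (Corollaries~\ref{corollary:c1eval} and~\ref{corollary:evects}), which is the same kind of check you propose with $A^{\star 3} = 27\,A^{\star 2}$. Your added outline of the Crauder--Miranda enumerative argument (Mori cone, $E_6$-symmetry, WDVV) is accurate expository detail, but the substance of the step in the paper is an external citation, exactly as in your proposal.
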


This allows us to check Corollaries \ref{corollary:c1eval} and \ref{corollary:evects} explicitly: the eigenvalues of $c_1 \star$ are $-6$ (the big eigenvalue) and $21$ (the small eigenvalue), and the ranks of their generalized eigenspaces are $8$ and $1$, respectively.
The $-6$-generalized eigenspace is spanned by $A-27$, $A\star(A-27)$, and $3e_i - A - 6$ for $i = 1, \ldots, 6$.

\subsection{The small eigenvalue}
\label{subsec:othereig}

Although we have proven homological mirror symmetry for the small components of the Fukaya category (Theorem \ref{theorem:small}), the only objects of the small components that we actually constructed were weak bounding cochains on the Lagrangian spheres which make up the big component.
It would be interesting to construct a monotone Lagrangian submanifold that lives in the small component of the Fukaya category, without any weak bounding cochains being required.
Here we speculate on a possible construction of such a Lagrangian, in the Fano index one case $a=n-1$, where there is only one small component of the Fukaya category.

The degree-$a$ hypersurface in $\CP{n-1}$ can be degenerated to the union of coordinate hyperplanes $\{z_1 \ldots z_a = 0 \}$, giving a tropical manifold in the sense of Gross and Siebert.
The resulting tropical manifold in the case of the cubic surface is illustrated in \cite[Figure 24]{Gross2011}.
There is a monotone Lagrangian torus $L$ in the cubic surface, which is the torus fibre over the centroid of the triangle which is the only compact cell of the tropical manifold.
This construction generalizes immediately to a construction of a monotone Lagrangian torus $L_n \subset X^n_a$ for all $a=n-1$, which is the torus fibre over the centroid of the simplex which is the only compact cell of the corresponding tropical manifold.

\begin{conjecture}
\label{conjecture:othereig}
Let $a=n-1$ and $L_n \subset X^n_a$ be the monotone Lagrangian torus just constructed.
Then $w(L_n) = a^a - a!$ (i.e., $L_n$ lies in the unique small component of the monotone Fukaya category), and $HF^*(L_n,L_n) \cong \Cl_{n-2}$.
\end{conjecture}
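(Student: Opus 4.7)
The plan is to compute both $w(L_n)$ and $HF^*(L_n,L_n)$ by enumerating Maslov index $2$ holomorphic disks with boundary on $L_n$ using the tropical structure of the degeneration. First I would fix the setup: the tropical manifold has a unique compact cell, an $(n-2)$-simplex $\Delta$, and $L_n$ is the SYZ torus fibre over the centroid of $\Delta$. Since $\dim_\R X^n_a = 2(n-2)$, $L_n$ is an $(n-2)$-dimensional torus, so $H^1(L_n;\C^*) \cong (\C^*)^{n-2}$ parametrizes choices of $\C^*$-local system. Turning on such a local system corresponds (formally) to a weak bounding cochain in $H^1(L_n;\C) \subset CF^*(L_n,L_n)$, and each choice gives a value $w \in \C$.

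Next I would enumerate Maslov index $2$ holomorphic disks with boundary on $L_n$ via a tropical correspondence theorem in the style of Mikhalkin, Nishinou--Siebert, and Tonkonog. The expectation is that such disks correspond bijectively to tropical disks rooted at the centroid of $\Delta$ and ending on the faces of $\Delta$ (the walls encoding the tropical hypersurface structure of $X^n_a$); summing over such tropical disks, weighted by boundary monodromy of the local system, produces the disk potential
\begin{equation}
W_{L_n} : H^1(L_n;\C^*) \To \C.
\end{equation}
The crucial claim, and the main technical input, is that $W_{L_n}$ agrees (up to an invertible change of variables) with the restriction of the Hori--Vafa superpotential $W^n_a$ of Definition \ref{definition:mir} to a neighbourhood of a small critical point. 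For the cubic surface case $(n,a) = (4,3)$ this should follow from a direct enumeration of tropical disks in the triangle $\Delta$; the general case would rely on the combinatorial structure of the simplex together with the Fermat symmetry.

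Once this identification is in hand, I would apply Lemma \ref{lemma:wnacrit}, which guarantees that $W^n_a$ has nondegenerate critical points at the small critical value $a^a - a!$. By the critical-point principle (the torus analogue of Proposition \ref{proposition:critpd}, originally due to Cho--Oh), choosing the local system corresponding to such a critical point makes $HF^*(L_n,L_n)$ nonvanishing and forces $w(L_n) = a^a - a!$, giving the first half of the conjecture. For the second half, I would invoke the Clifford algebra criterion (Proposition \ref{proposition:hess} or Cho's original result): since the Hessian of $W^n_a$ at the small critical point is a nondegenerate quadratic form on the $(n-2)$-dimensional tangent space $H^1(L_n;\C)$, the endomorphism algebra is $\Cl(-\mathrm{Hess})$, and nondegeneracy identifies this with $\Cl_{n-2}$.

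The main obstacle will be making the tropical-to-symplectic correspondence for Maslov index $2$ disks rigorous in the monotone setting: tropical correspondence theorems are classically stated for closed curves, and the open analogue for SYZ fibres inside a non-toric target (the Fermat hypersurface is not itself toric, only its central fibre is) requires either a neck-stretching argument along the degeneration, or a version of Tonkonog's framework matching the disk potential to the Landau--Ginzburg mirror. A secondary subtlety is that $L_n$ lies on the boundary of (or crosses) the scattering locus of the tropical structure, so wall-crossing contributions must be controlled to pin down the disk potential exactly rather than merely up to a coordinate change; this is genuine work already in the $n=4$ case, which explains why only a heuristic argument is given in Appendix \ref{sec:cubsurf}.
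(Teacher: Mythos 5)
The statement you are trying to prove is labelled a conjecture, and the paper supplies only a heuristic justification (for the cubic surface case), not a proof. Measured against that heuristic, your proposal overlaps in spirit — enumerating Maslov index $2$ disks tropically and invoking the Hessian criterion — but contains a structural error that the paper's argument avoids, and omits the paper's key simplifying observation.

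The error is dimensional. The Lagrangian $L_n$ is an $(n-2)$-torus, so the space of $\C^*$-local systems is $(\C^*)^{n-2}$, and the disk potential $W_{L_n}$ is a function of $n-2$ variables with an $(n-2)\times(n-2)$ Hessian. The Hori--Vafa superpotential $W^n_a$ of Definition \ref{definition:mir} is a function of $n$ variables on $\C^n/\Gamma^*$, and Lemma \ref{lemma:wnacrit} asserts nondegeneracy of an $n\times n$ Hessian. There is no "restriction of $W^n_a$ to a neighbourhood of a small critical point" that canonically produces a function of $n-2$ variables: near a nondegenerate critical point $W^n_a$ is (up to coordinate change) a rank-$n$ quadratic form, and you would need to exhibit a distinguished codimension-$2$ slice, or more honestly a Kn\"orrer-periodicity-type splitting $W^n_a \sim W_{L_n} \oplus (\text{rank-}2\text{ quadratic})$. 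That reduction is exactly the content that makes $\Cl_{n-2}$ and $\Cl_n$ Morita-equivalent and both appear in the small component (compare Corollary \ref{corollary:smallLlift}, where the weak-bounding-cochain object has endomorphisms $\Cl_n$, not $\Cl_{n-2}$) — but it cannot be swept into a "change of variables." As written, the plan to import nondegeneracy of $\mathrm{Hess}\,W^n_a$ to conclude nondegeneracy of $\mathrm{Hess}\,W_{L_n}$ does not follow.

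You also miss the cheapest part of the paper's heuristic: the $\Z/a\Z$ cyclic symmetry of the Fermat hypersurface acts on $L_n$ and on the space of local systems fixing the origin, so the trivial local system is automatically a critical point of $W_{L_n}$. This both identifies which local system to use (the conjecture implicitly takes the trivial one, since $w(L_n)$ is defined for $L_n$ as given) and pins down the critical value as the raw disk count — tropically $21 = 3^3 - 3!$ for the cubic surface. Your proposal instead "tunes the local system to a critical point," which proves a weaker statement (there exists a critical local system), and leaves you the extra task of showing the optimal local system is trivial. Your remaining obstacles — the tropical-to-holomorphic correspondence for Maslov $2$ disks, and wall-crossing at the scattering locus — are real, but the genuinely open point the paper flags is nondegeneracy of the Hessian of the $(n-2)$-variable disk potential at the origin, which your proposal neither addresses directly nor correctly derives from the mirror superpotential.
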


\begin{remark}
In particular, note that Conjecture \ref{conjecture:othereig} implies that $L_n$ split-generates the small component of the Fukaya category by Corollary \ref{corollary:semisimpgen}, and would give an alternative proof of Theorem \ref{theorem:small} with `$\cF^{wbc,u}(X)_w$' replaced by `$\cF(X)_w$'.
\end{remark}

We can give a non-rigorous explanation of why this conjecture ought to be true in the case of the cubic surface.
Firstly, there are exactly $21$ tropical discs of Maslov index 2 with boundary on this Lagrangian torus fibre, so one may hope that the honest holomorphic disc count is also $21$ (the tropical disc count is similar to the tropical curve count explained in \cite[\S 1]{Gross2011}).
Secondly, one can check that $HF^*(L,L) \neq 0$ when $L$ is equipped with the trivial $\C^*$-local system: that is because the obvious $\Z/3\Z$ symmetry implies that the disc potential has a critical point at the origin, so the Floer cohomology does not vanish.
If the origin were furthermore a \emph{non-degenerate} critical point, then $HF^*(L,L)$ would be a Clifford algebra as in Conjecture \ref{conjecture:othereig}.

\subsection{Non-semi-simplicity of $HF^*(L,L)$ and $w$}

This section explains a computation made by Seidel (private communication).
We consider the endomorphism algebra $HF^*(L,L)$ of a single lift of our Lagrangian sphere $L$ to $X$.
We know that
\begin{equation}CF^*(L,L) \cong \C\langle e, \theta \rangle\end{equation}
as a vector space.
The differential vanishes, so $HF^*(L,L)$ is a unital $\C$-algebra of rank $2$.
It must be of the form $\C[\theta]/p(\theta)$, where $p(\theta)$ is a quadratic polynomial.
There are two possibilities up to isomorphism: either the algebra is semisimple ($p$ has distinct roots) or not ($p$ has a double root).
It turns out not to be semi-simple, and the proof relies crucially on the value of $w(L) = -6$.

\begin{proposition}
\label{proposition:nonss} (Seidel)
The endomorphism algebra $HF^*(L,L)$ is not semi-simple.
\end{proposition}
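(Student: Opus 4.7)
The plan is to study the closed-open map $\mathcal{CO}^0 \colon QH^*(X)_{-6} \to HF^*(L,L)$, which is a unital $\C$-algebra homomorphism by Definition \ref{definition:co0} and Corollary \ref{corollary:co0van}. I will show (1) that $\mathcal{CO}^0$ is surjective and (2) that its source is a local ring; together these force $HF^*(L,L)$ to be $\C[\theta]/\theta^2$ rather than $\C \oplus \C$, proving the proposition.

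For (1), I use the duality between $\mathcal{CO}$ and $\mathcal{OC}$ (Proposition \ref{proposition:phico}), the $QH^*$-module property of $\mathcal{OC}$, and the Frobenius structure on $QH^*(X)$ to obtain, for any $\alpha \in QH^*(X)$, the identity
\[
\int \mathcal{CO}^0(\alpha) \;=\; \bigl\langle \alpha, \mathcal{OC}^0(e_L)\bigr\rangle.
\]
By Lemma \ref{lemma:ocLf}, $\mathcal{OC}^0(e_L) = PD(L) + (\text{lower-degree terms in }H^0)$, where $PD(L) \in H^2(X)$ is a primitive class lying in $QH^*(X)_{-6}$ (Corollary \ref{corollary:ocLe}). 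Setting $\alpha = PD(L)$ and noting that classes in $H^0$ pair trivially with $H^2$ under the intersection form, I compute
\[
\int \mathcal{CO}^0(PD(L)) \;=\; \langle PD(L), PD(L)\rangle \;=\; [L]\cdot [L] \;=\; -\chi(L) \;=\; -2,
\]
invoking the self-intersection formula for an embedded Lagrangian two-sphere. Meanwhile $\int e_L = \langle \mathcal{OC}^0(e_L), e\rangle = 0$ since $\mathcal{OC}^0(e_L)$ has no $H^4$-component. Non-degeneracy of the weakly cyclic pairing $(a,b) \mapsto \int \mu^2(a,b)$ (Lemma \ref{lemma:weakcy}) then forces $\mathcal{CO}^0(PD(L))$ to be linearly independent from $e_L$, so the image of $\mathcal{CO}^0$ exhausts $HF^*(L,L)$.

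For (2), Corollary \ref{corollary:hh2na} combined with the isomorphism in Proposition \ref{proposition:comon} gives $QH^*(X^4_3)_{-6}^{\Gamma^4_3} \cong \C[\hat\gamma]/\hat\gamma^2$, which is a local ring. I then promote this to the full ring $QH^*(X)_{-6}$ by observing that it is a finite extension of its $\Gamma$-invariants with $\Gamma$ acting by $\C$-algebra automorphisms: every maximal ideal of $QH^*(X)_{-6}$ restricts to the unique maximal ideal of the invariants, and either a $\Gamma$-orbit analysis on $\mathrm{Spec}$ or the mirror-symmetric identification of $QH^*(X)_{-6}$ with the local ring of $\mathrm{Jac}(W^4_3)$ at the origin (the unique big critical point, fixed by $\Gamma^*$) shows there is only one maximal ideal in the full ring.

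Finally, a local Artinian $\C$-algebra has only the trivial idempotents $0$ and $1$, so any unital algebra homomorphism to $\C \oplus \C$ must land in the diagonal $\C \cdot (1,1)$ and cannot be surjective. Surjectivity therefore precludes $HF^*(L,L) \cong \C \oplus \C$; the only remaining $2$-dimensional unital commutative $\C$-algebra is $\C[\theta]/\theta^2$, which is not semi-simple. The main difficulty is step (2): the $\Gamma$-invariant part is cleanly controlled by Corollary \ref{corollary:hh2na}, but ruling out additional maximal ideals coming from the primitive (non-invariant) directions is delicate and seems to require either invoking mirror symmetry or a representation-theoretic analysis of how $\Gamma$ acts on $\mathrm{Spec}(QH^*(X)_{-6})$.
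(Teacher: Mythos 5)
Your step (1) checks out: the identity $\int \mathcal{CO}^0(\alpha) = \langle \alpha, \mathcal{OC}^0(e_L)\rangle$ does follow from Proposition \ref{proposition:ocmod} and the Frobenius property, and with $\alpha = PD(L)$ this gives $[L]\cdot[L] = -2 \ne 0$, since the lower-degree correction in Lemma \ref{lemma:ocLf} lies in $H^0$ and so pairs trivially with $H^2$. Hence $\mathcal{CO}^0(PD(L))$ is not proportional to $e_L$ and $\mathcal{CO}^0 \colon QH^*(X)_{-6}\to HF^*(L,L)$ is surjective. This is an elegant repackaging of the geometric input the paper also uses, namely homological non-triviality of $L$ (Lemma \ref{lemma:primspheres}).

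Step (2) is a genuine gap, which you flag but do not close. Corollary \ref{corollary:hh2na} and Proposition \ref{proposition:comon} only identify the $\Gamma$-invariant subring $QH^*(X)^\Gamma_{-6} \cong \C[\hat\gamma]/\hat\gamma^2$; the full eight-dimensional $QH^*(X)_{-6}$ is never computed abstractly, nor is the non-equivariant Hochschild cohomology $HH^*(\widetilde{\mathscr{A}}_\C)$ (this would require the twisted sectors of the smash product $\mathscr{A}_\C \rtimes \Gamma^*$, which Section \ref{subsec:comphh} does not develop). Your orbit analysis on $\mathrm{Spec}$ is not enough on its own: localness of the $\Gamma$-invariants does not force localness of the extension (e.g.\ $\C^k$ with $\Gamma = \Z/k$ permuting the factors has invariants $\C$ but $k$ maximal ideals), and appealing to the local Jacobian ring of $W$ at the big critical point would require a closed-string mirror symmetry statement that this paper does not prove. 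The paper's own proof sidesteps the question entirely: from Proposition \ref{proposition:craud} one extracts relations expressing $(h-6)^{\star 2}$ and $(e_i+2)^{\star 2}$ as affine-linear combinations of $p$ and $c_1$, and applying $\mathcal{CO}^0$ together with $\mathcal{CO}^0(c_1) = -6\cdot e_L$ one finds that both $(\mathcal{CO}^0(h)-6)^2$ and $(\mathcal{CO}^0(e_i)+2)^2$ vanish precisely when $w=-6$; homological non-triviality of $L$ then forces some $\mathcal{CO}^0(h)$ or $\mathcal{CO}^0(e_i)$ to have a non-zero $\theta$-component, producing the desired non-zero nilpotent directly, with no need to decide whether $QH^*(X)_{-6}$ is local.
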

\begin{proof}
We recall the algebra homomorphism
\begin{equation} \CO^0: QH^*(X) \To HF^*(L,L),\end{equation}
and that we have
\begin{equation} \CO^0 (c_1) = w \cdot e\end{equation}
(Lemma \ref{lemma:c1u0}).

We derive the following relation in $QH^*(X)$ from Proposition \ref{proposition:craud}:
\begin{equation} c_1^{\star 2} = 3p + 9c_1 + 108.\end{equation}
Applying $\CO^0$, it follows that
\begin{equation} \CO^0(p) = \frac{w^2-9w-108}{3}.\end{equation}
We also have relations from Proposition \ref{proposition:craud}:
\begin{equation} (h-6)^{\star 2} = p + 25 c_1 + 156\end{equation}
and
\begin{equation} (e_i + 2)^{\star 2} = -p + 5c_1 + 20.\end{equation}
Applying $\CO^0$, we obtain
\begin{equation} (\CO^0(h) - 6)^2 = \frac{1}{3}(w+6)(w+60)\end{equation}
and
\begin{equation} (\CO^0(e_i) + 2)^2 = - \frac{1}{3}(w+6)(w-30).\end{equation}
In particular, because $w = -6$, the right-hand sides vanish.

Now if either $\CO^0(h)$ or $\CO^0(e_i)$ had a non-trivial $\theta$ term, these equations would immediately imply that $HF^*(L,L)$ is non-semisimple.
The $\theta$ term of $\CO^0(\alpha)$, for $\alpha \in H^2(X)$, corresponds to the image of $\alpha$ under the restriction map
\begin{equation} H^*(X) \To H^*(L),\end{equation}
because in this case the disc count defining $\CO^0$ reduces to a count of constant holomorphic discs (see, for example, \cite[\S 5a]{Seidel2008b} for the argument in the exact case, and compare Lemma \ref{lemma:ocLf}).
In particular, because the classes $h$ and $e_i$ span $H^2(X)$, it suffices to show that the homology class of $L$ is non-trivial.

We argue by contradiction: if $L$ were homologically trivial, then all the other lifts of the Lagrangian sphere to $X$ would be homologically trivial, by symmetry.
However we proved, in Lemma \ref{lemma:primspheres}, that the homology classes of the lifts of the Lagrangian sphere span the primitive homology, and we know the primitive homology to be non-trivial; therefore, the homology class of $L$ is non-trivial.
This completes the proof.
 \end{proof}


\bibliographystyle{spmpsci}
\bibliography{./library}

\begin{thebibliography}{10}
\providecommand{\url}[1]{{#1}}
\providecommand{\urlprefix}{URL }
\expandafter\ifx\csname urlstyle\endcsname\relax
  \providecommand{\doi}[1]{DOI~\discretionary{}{}{}#1}\else
  \providecommand{\doi}{DOI~\discretionary{}{}{}\begingroup
  \urlstyle{rm}\Url}\fi

\bibitem{Abouzaid2010a}
Abouzaid, M.: {A geometric criterion for generating the Fukaya category}.
\newblock Publ. Math. Inst. Hautes \'{E}tudes Sci. \textbf{112}, 191--240
  (2010).
\newblock \doi{10.1007/s10240-010-0028-5}

\bibitem{Abouzaid2012}
Abouzaid, M., Fukaya, K., Oh, Y.G., Ohta, H., Ono, K.: {Quantum cohomology and
  split generation in Lagrangian Floer theory}.
\newblock In preparation

\bibitem{Albers2007}
Albers, P.: {A Lagrangian Piunikhin--Salamon--Schwarz morphism and two comparison
  homomorphisms in Floer homology}.
\newblock Int. Math. Res. Not. IMRN \textbf{2008}(4), Art. ID rnm134, 56 (2008).
\newblock \doi{10.1093/imrn/rnm134}

\bibitem{Auroux2007}
Auroux, D.: {Mirror symmetry and T-duality in the complement of an
  anticanonical divisor}.
\newblock Jour. G\"{o}kova Geom. Topol. \textbf{1}(1), 51--91 (2007)

\bibitem{Auroux2013}
Auroux, D.: {A beginner's introduction to Fukaya categories}.
\newblock In: Contact and symplectic topology, \emph{Bolyai Soc. Math. Stud.},
vol.~26, pp. 85--136. J\'anos Bolyai Math. Soc., Budapest (2014).
\newblock \doi{10.1007/978-3-319-02036-5\_3}.

\bibitem{Beauville1995}
Beauville, A.: {Quantum cohomology of complete intersections}.
\newblock Mat. Fiz. Anal. Geom. \textbf{2}(3-4), 384--398 (1995)

\bibitem{Biran2012}
Biran, P., Cornea, O.: {Lagrangian topology and enumerative geometry}.
\newblock Geom. Topol. \textbf{16}(2), 963--1052 (2012).
\newblock \doi{10.2140/gt.2012.16.963}

\bibitem{Biran2013}
Biran, P., Cornea, O.: {Lagrangian cobordism and Fukaya categories}.
\newblock Geom. Funct. Anal. \textbf{24}(6), 1731--1830 (2014).
\newblock \doi{10.1007/s00039-014-0305-4}

\bibitem{Biran2014}
Biran, P., Membrez, C.: {The Lagrangian Cubic Equation}.
\newblock arXiv:1406.6004  (2014)

\bibitem{Bott1982}
Bott, R., Tu, L.: {Differential forms in algebraic topology}.
\newblock Springer (1982)

\bibitem{Buchweitz1986}
Buchweitz, R.: {Maximal Cohen--Macaulay modules and Tate-cohomology over
  Gorenstein rings}  (1986)

\bibitem{Cho2005}
Cho, C.H.: {Products of Floer cohomology of torus fibers in toric Fano
  manifolds}.
\newblock Comm. Math. Phys. \textbf{260}(3), 613--640 (2005).
\newblock \doi{10.1007/s00220-005-1421-7}

\bibitem{Cho2008}
Cho, C.H.: {Strong homotopy inner product of an $A_\infty$-algebra}.
\newblock Int. Math. Res. Not. IMRN \textbf{2008}(13), Art. ID rnn041, 35 (2008).
\newblock \doi{10.1093/imrn/rnn041}

\bibitem{Cho2013}
Cho, C.H., Hong, H., Lau, S.C.: {Localized mirror functor for Lagrangian
  immersions, and homological mirror symmetry for $\mathbb{P}^1_{a,b,c}$}. 
\newblock arXiv:1308.4651 (2013)

\bibitem{Cho2006}
Cho, C.H., Oh, Y.G.: {Floer cohomology and disc instantons of Lagrangian torus
  fibers in Fano toric manifolds}.
\newblock Asian J. Math \textbf{10}(4), 773--814 (2006).
\newblock \doi{10.4310/AJM.2006.v10.n4.a10}

\bibitem{Cohen1984}
Cohen, M., Montgomery, S.: {Group-graded rings, smash products, and group
  actions}.
\newblock Trans. Amer. Math. Soc. \textbf{282}(1), 237--258 (1984).
\newblock \doi{10.2307/1999586}

\bibitem{Crauder1995}
Crauder, B., Miranda, R.: {Quantum cohomology of rational surfaces}.
\newblock The moduli space of curves (Texel Island, 1994), Progr. Math.
  \textbf{129}, 33--80 (1995).
  \newblock \doi{10.1007/s10107-010-0402-6}

\bibitem{Dolgushev2005}
Dolgushev, V.: {A Proof of Tsyganâ€'s Formality Conjecture for an Arbitrary
  Smooth Manifold}.
\newblock Ph.D. thesis, MIT (2005)

\bibitem{Dyckerhoff2009}
Dyckerhoff, T.: {Compact generators in categories of matrix factorizations}.
\newblock Duke Math. J. \textbf{159}(2), 223--274 (2011).
\newblock \doi{10.1215/00127094-1415869}

\bibitem{Eisenbud1995}
Eisenbud, D.: {Commutative algebra with a view toward algebraic geometry}.
\newblock Graduate Texts in Mathematics, 150. Springer-Verlag (1995)

\bibitem{Floer1995}
Floer, A., Hofer, H., Salamon, D.: {Transversality in elliptic Morse theory for
  the symplectic action}.
\newblock Duke Math. J. \textbf{80}(1), 251--292 (1995).
\newblock \doi{10.1215/S0012-7094-95-08010-7}

\bibitem{Fukaya2010}
Fukaya, K.: {Cyclic symmetry and adic convergence in {L}agrangian {F}loer theory}.
\newblock Kyoto J. Math. \textbf{50}(3), 521--590 (2010).
\newblock \doi{10.1215/0023608X-2010-004}

\bibitem{Fukaya1997}
Fukaya, K., Oh, Y.G.: {Zero-loop open strings in the cotangent bundle and Morse
  homotopy}.
\newblock Asian J. Math \textbf{1}(1), 96--180 (1997)

\bibitem{Fukaya2010c}
Fukaya, K., Oh, Y., Ohta, H., Ono, K.: {Lagrangian Floer theory on compact
  toric manifolds: survey}.
\newblock In: Surveys in differential geometry. {V}ol. {XVII}, pp. 229--298, Int. Press, Boston (2012).
\newblock \doi{10.4310/SDG.2012.v17.n1.a6}

\bibitem{fooo}
Fukaya, K., Oh, Y.G., Ohta, H., Ono, K.: {Lagrangian intersection Floer theory
  -- anomaly and obstruction}.
\newblock American Mathematical Society (2007)

\bibitem{Fukaya2009a}
Fukaya, K., Oh, Y.G., Ohta, H., Ono, K.: {Lagrangian surgery and metamorphosis
  of pseudo-holomorphic polygons}.
\newblock Preprint, available at http://
  www.math.kyoto-u.ac.jp/∼fukaya/fukaya.html  (2009)

\bibitem{Fukaya2010d}
Fukaya, K., Oh, Y.G., Ohta, H., Ono, K.: {Lagrangian Floer theory on compact
  toric manifolds, I}.
\newblock Duke Math. J. \textbf{151}(1), 23--175 (2010).
\newblock \doi{10.1215/00127094-2009-062}

\bibitem{Fukaya2011}
Fukaya, K., Oh, Y.G., Ohta, H., Ono, K.: {Lagrangian Floer theory on compact
  toric manifolds II: bulk deformations}.
\newblock Selecta Mathematica \textbf{17}(3), 609--711 (2011).
\newblock \doi{10.1007/s00029-011-0057-z}

\bibitem{Ganatra2012}
Ganatra, S.: {Symplectic Cohomology and Duality for the Wrapped Fukaya
  Category}.
\newblock Ph.D. thesis, MIT (2012)

\bibitem{Ganatra2015}
Ganatra, S., Perutz, T., Sheridan, N.: {Mirror symmetry: from categories to curve counts}, arXiv:1510.03839 (2015).

\bibitem{Getzler1993}
Getzler, E.: {Cartan homotopy formulas and the Gauss--Manin connection in cyclic
  homology}.
\newblock Israel Math. Conf. Proc. \textbf{7}, 1--12 (1993)

\bibitem{Getzler2009}
Getzler, E.: {Lie theory for nilpotent $L_{\infty}$ algebras}.
\newblock Ann. of Math. \textbf{170}(1), 271--301 (2009)

\bibitem{Givental1996}
Givental, A.: {Equivariant Gromov-Witten invariants}.
\newblock Internat. Math. Res. Notices IMRN \textbf{1996}(13), 613--663 (1996).
\newblock \doi{10.1155/S1073792896000414}

\bibitem{Gross2011}
Gross, M.: {Tropical geometry and mirror symmetry}.
\newblock CBMS Regional Conference Series in Mathematics, 114. American
  Mathematical Society, Providence RI (2011)

\bibitem{Hochschild1962}
Hochschild, G., Kostant, B., Rosenberg, A.: {Differential forms on regular
  affine algebras}.
\newblock Trans. Amer. Math. Soc. \textbf{102}, 383--408 (1962)

\bibitem{Hori2001}
Hori, K.: {Linear models of supersymmetric $D$-branes}.
\newblock In: Symplectic geometry and mirror symmetry (Seoul, 2000), pp.
  111--186. World Sci. Publ., River Edge, NJ (2001)

\bibitem{Jinzenji1997}
Jinzenji, M.: {On Quantum Cohomology Rings for Hypersurfaces in
  $\mathbb{CP}^{N-1}$}.
\newblock J. Math. Phys. \textbf{38}(12), 6613--6638 (1997).
\newblock \doi{10.1063/1.532228}

\bibitem{Kapustin2004}
Kapustin, A., Li, Y.: {$D$-branes in topological minimal models: the
  Landau-Ginzburg approach}.
\newblock J. High Energy Phys. \textbf{07}(045), 26 pp. (electronic) (2004).
\newblock \doi{10.1088/1126-6708/2004/07/045}

\bibitem{Keating2014}
Keating, A.: {Lagrangian tori in four-dimensional Milnor fibres}.
\newblock Geom. Funct. Anal. \textbf{25}(6), 1822--1901 (2015).
\newblock \doi{10.1007/s00039-015-0353-4}

\bibitem{Kontsevich1994}
Kontsevich, M.: {Homological algebra of mirror symmetry}. 
\newblock Proceedings of the International Congress of Mathematicians (Z\"{u}rich, 1994), 120--139 (1994)

\bibitem{Kontsevich1998}
Kontsevich, M.: {Lectures at ENS Paris}.
\newblock Notes by J. Bellaiche, J.-F. Dat, I. Marin, G. Racinet and H.
  Randriambololona  (1998)

\bibitem{Kontsevich2003}
Kontsevich, M.: {Deformation quantization of Poisson manifolds}.
\newblock Letters in Mathematical Physics \textbf{66}(3), 157--216 (2003).
\newblock \doi{10.1023/B:MATH.0000027508.00421.bf}

\bibitem{Kontsevich2006a}
Kontsevich, M., Soibelman, Y.: {Notes on $A_\infty$ algebras, $A_\infty$
  categories and non-commutative geometry. I}.
\newblock In: Homological Mirror Symmetry: New Developments and Perspectives,
  Lecture Notes in Physics, vol. 757, pp. 153--219. Springer (2008)

\bibitem{Lada1995}
Lada, T., Markl, M.: {Strongly homotopy Lie algebras}.
\newblock Comm. Algebra \textbf{23}(6), 2147--2161 (1995).
\newblock \doi{10.1080/00927879508825335}

\bibitem{Lawson1989}
Lawson, H., Michelsohn, M.: {Spin geometry}.
\newblock Princeton University Press (1989)

\bibitem{Lazzarini2011}
Lazzarini, L.: {Relative frames on $J$-holomorphic curves}.
\newblock J. Fixed Point Theory Appl. \textbf{9}(2), 213--256 (2011).
\newblock \doi{10.1007/s11784-010-0004-1}

\bibitem{Loday1998}
Loday, J.L.: {Cyclic homology}.
\newblock Grundlehren der mathematischen Wissenschaften \textbf{301} (1998)

\bibitem{mcduffsalamon}
McDuff, D., Salamon, D.: {J-holomorphic Curves and Symplectic Topology}.
\newblock American Mathematical Society Colloquium Publications. American
  Mathematical Society (2004)

\bibitem{Oh1993}
Oh, Y.G.: {Floer cohomology of Lagrangian intersections and pseudo-holomorphic
  disks I}.
\newblock Comm. Pure Appl. Math. \textbf{46}(7), 949--993 (1993).
\newblock \doi{10.1002/cpa.3160460702}

\bibitem{Oh1995}
Oh, Y.G.: {Addendum to `Floer cohomology of Lagrangian intersections and
  pseudo-holomorphic discs, I'}.
\newblock Comm. Pure Appl. Math. \textbf{48}(11), 1299--1302 (1995).
\newblock \doi{10.1002/cpa.3160481104}

\bibitem{Oh2000}
Oh, Y.G., Kwon, D.: {Structure of the image of (pseudo)-holomorphic disks with
  totally real boundary conditions}.
\newblock Comm. Anal. Geom. \textbf{8}(1), 31--82 (2000)

\bibitem{Orlov2004}
Orlov, D.: {Triangulated categories of singularities and D-branes in
  Landau--Ginzburg models}.
\newblock Proc. Steklov Inst. Math. \textbf{246}, 227--248 (2004)

\bibitem{Piunikhin1996}
Piunikhin, S., Salamon, D., Schwarz, M.: {Symplectic Floer--Donaldson theory and
  quantum cohomology}.
\newblock in `Contact and symplectic geometry', Cambridge University Press pp.
  171--200 (1996)

\bibitem{Ritter2012}
Ritter, A.F., Smith, I.: {The monotone wrapped Fukaya category and the open-closed string map}.
\newblock arXiv:1201.5880  (2012)

\bibitem{Ruan1995}
Ruan, Y., Tian, G.: {A mathematical theory of quantum cohomology}.
\newblock J. Diff. Geom. \textbf{42}(2), 259--367 (1995)

\bibitem{Seidel1999}
Seidel, P.: {Graded Lagrangian submanifolds}.
\newblock Bull. Soc. Math. France \textbf{128}(1), 103--149 (1999)

\bibitem{Seidel2002}
Seidel, P.: {Fukaya categories and deformations}.
\newblock Proceedings of the International Congress of Mathematicians (Beijing)
  \textbf{2}, 351--360 (2002)

\bibitem{Seidel2003}
Seidel, P.: {Homological mirror symmetry for the quartic surface}.
\newblock Mem. Amer. Math. Soc. \textbf{236}(1116) (2015).
\newblock \doi{10.1090/memo/1116}

\bibitem{Seidel2008b}
Seidel, P.: {A biased view of symplectic cohomology}.
\newblock Current Developments in Mathematics (Harvard, 2006) pp. 211--253
  (2008)

\bibitem{Seidel2008c}
Seidel, P.: A$_{\infty}$ subalgebras and natural transformations.
\newblock Homology, Homotopy Appl. \textbf{10}(2), 83--114 (2008)

\bibitem{Seidel2008}
Seidel, P.: {Fukaya categories and Picard-Lefschetz Theory}.
\newblock European Mathematical Society (2008)

\bibitem{Seidel2010c}
Seidel, P.: {Suspending Lefschetz fibrations, with an application to local
  mirror symmetry}.
\newblock Comm. Math. Phys. \textbf{297}(2), 515--528 (2010).
\newblock \doi{10.1007/s00220-009-0944-8}

\bibitem{Seidel2011}
Seidel, P.: {Abstract analogues of flux as symplectic invariants}.
\newblock M\'em. Soc. Math. Fr. (N.S.) \textbf{137}, 1--135 (2014)

\bibitem{Seidel2008a}
Seidel, P.: {Homological mirror symmetry for the genus two curve}.
\newblock J. Algebraic Geom. \textbf{20}(4), 727--769 (2011).
\newblock \doi{10.1090/S1056-3911-10-00550-3}

\bibitem{Seidel2014}
Seidel, P.: {Fukaya $A_\infty$-structures associated to Lefschetz fibrations
  II}.
\newblock arXiv:1404.1352  (2014)

\bibitem{Thomas2001}
Seidel, P., Thomas, R.: {Braid group actions on derived categories of coherent
  sheaves}.
\newblock Duke Math. J. \textbf{108}(1), 37--108 (2001).
\newblock \doi{10.1215/S0012-7094-01-10812-0}

\bibitem{Sheridan2011}
Sheridan, N.: {On the homological mirror symmetry conjecture for pairs of
  pants}.
\newblock J. Diff. Geom. \textbf{89}(2), 271--367 (2011)

\bibitem{Sheridan2015}
Sheridan, N.: {Homological mirror symmetry for Calabi--Yau hypersurfaces in
  projective space}, Invent. Math. \textbf{199}(1), 1--186 (2015)

\bibitem{Smith2012}
Smith, I.: {Floer cohomology and pencils of quadrics}.
\newblock Invent. Math. \textbf{189}(1), 149--250 (2012).
\newblock \doi{10.1007/s00222-011-0364-1}

\bibitem{Tradler2008}
Tradler, T.: {Infinity-inner-products on A-infinity-algebras}.
\newblock J. Homotopy Relat. Struct. \textbf{3}(1), 245--271 (2008)

\bibitem{Weibel1994}
Weibel, C.: {An introduction to homological algebra}.
\newblock Cambridge University Press, Cambridge Studies in Advanced
  Mathematics, 38 (1994)

\bibitem{Yoshino1990}
Yoshino, Y.: {Cohen--Macaulay Modules over Cohen--Macaulay Rings}.
\newblock Cambridge University Press, London Mathematical Society Lecture Note
  Series. 146 (1990)

\end{thebibliography}

\end{document}